\documentclass[11pt,a4paper]{article}

\usepackage[top=1.3in, bottom=1.3in, left=1.3in, right=1.3in]{geometry}
\usepackage{parskip} 
\usepackage{enumitem} 
\setlist{topsep=3pt, itemsep=3pt, parsep=0pt}
\usepackage{titlesec}

\usepackage{setspace}
\titlespacing{\section}{0pt}{*1.5}{*0.8}
\titlespacing{\subsection}{0pt}{*1.2}{*0.5}

\usepackage[hidelinks]{hyperref}

\newcommand{\HypOne}{\textbf{Hyp 1}}
\newcommand{\HypTwo}{\textbf{Hyp 2}}
\newcommand{\Eell}{\mathcal{E}_{\mathrm{ell}}}

\usepackage{amsmath}
\usepackage{amsfonts}
\usepackage{amssymb}
\usepackage{amsthm}
\usepackage{mathptmx} 
\DeclareSymbolFont{cmlargesymbols}{OMX}{cmex}{m}{n}
\let\coprod\relax
\DeclareMathSymbol{\coprod}{\mathop}{cmlargesymbols}{"60}
\DeclareMathOperator{\inv}{inv}
\DeclareMathOperator{\obs}{obs}

\usepackage{stmaryrd}
\usepackage{amscd}
\usepackage{enumitem}
\usepackage[dvipsnames]{xcolor}
\usepackage{tikz-cd}
\usepackage{tikz}
\usetikzlibrary{cd}
\usepackage{mathrsfs}

\theoremstyle{definition}
\newtheorem{shared}{dummy}[subsection]
\newtheorem{D}[shared]{Definition}
\newtheorem{const}[shared]{Construction}

\theoremstyle{plain}

\newtheorem{Le}[shared]{Lemma}
\newtheorem{C}[shared]{Corollary}

\theoremstyle{remark}
\newtheorem{R}[shared]{Remark}
\newtheorem{Rc}[shared]{Complement}

\theoremstyle{plain}
\newtheorem*{Q}{Question}
\newtheorem*{LemA}{Lemma A}
\newtheorem*{LemB}{Lemma B}
\newtheorem*{T}{Theorem}

\DeclareMathOperator{\vol}{vol}
\DeclareMathOperator{\Aut}{Aut}
\DeclareMathOperator{\Frob}{Frob}
\DeclareMathOperator{\tr}{tr}
\DeclareMathOperator{\ST}{ST}
\DeclareMathOperator{\Isoc}{Isoc}
\DeclareMathOperator{\Rep}{Rep}
\DeclareMathOperator{\Vect}{Vec}
\DeclareMathOperator{\Hom}{Hom}
\DeclareMathOperator{\GL}{GL}
\DeclareMathOperator{\val}{val}
\DeclareMathOperator{\pr}{pr}
\DeclareMathOperator{\im}{im}
\DeclareMathOperator{\Lie}{Lie}
\DeclareMathOperator{\Out}{Out}
\DeclareMathOperator{\Int}{Int}
\DeclareMathOperator{\Ad}{Ad}
\DeclareMathOperator{\coker}{coker}
\DeclareMathOperator{\Br}{Br}
\DeclareMathOperator{\SL}{SL}
\DeclareMathOperator{\Gal}{Gal}
\DeclareMathOperator{\Res}{Res}
\DeclareMathOperator{\Ind}{Ind}
\DeclareMathOperator{\id}{id}
\DeclareMathOperator{\charpoly}{charpoly}
\DeclareMathOperator{\roots}{roots}
\DeclareMathOperator{\Spec}{Spec}
\DeclareMathOperator{\semisimplepart}{sspart}
\DeclareMathOperator{\GSp}{GSp}
\DeclareMathOperator{\Cl}{Cl}
\DeclareMathOperator{\Or}{O}
\DeclareMathOperator{\TO}{TO}
\DeclareMathOperator{\SO}{SO}

\title{Stable trace formula for Newton strata of Shimura varieties}
\author{Dhruva Kelkar}
\date{}

\begin{document}

\maketitle

\begin{abstract}
    \noindent Let $(G,X)$ be a Shimura datum of abelian type satisfying the hypotheses of the main theorem, and suppose that the associated Shimura varieties have hyperspecial good reduction at $p$. Their special fibres are stratified by the $\sigma$-conjugacy classes $b\in B(G_{\mathbb{Q}_p},\mu_h^{-1})$. For an individual Newton stratum, this paper constructs a stabilized formula for the alternating traces of Frobenius--Hecke correspondences on its compactly supported $\ell$-adic cohomology, with coefficients in an $\ell$-adic local system. The formula, obtained by modifying the Langlands--Kottwitz method, expresses these Lefschetz numbers as elliptic stable geometric distributions on endoscopic groups of $G$, placing the cohomological traces in a form suitable for comparison with automorphic spectral data. We also give a group-theoretic criterion determining which elliptic endoscopic groups can contribute to a given Newton stratum. For certain unitary Shimura varieties, we exhibit an intermediate Newton stratum with no non-trivial endoscopic contribution, although there are non-trivial endoscopic contributions in the cohomology of the ambient Shimura variety.
\end{abstract}

\setcounter{tocdepth}{2}
\tableofcontents

\section{Introduction}
\subsection{General context and motivation}\label{informalintro}
The Langlands program seeks deep connections between representations of Galois groups of number fields and automorphic representations of reductive algebraic groups. Shimura varieties offer a natural framework for exploring this relationship: their cohomology groups carry commuting actions of the Galois group of the reflex field and a Hecke algebra, and their decompositions reflect both Galois and automorphic representations. Many important Shimura varieties also admit moduli interpretations, making them central objects in algebraic geometry. This paper develops new methods for investigating the mod $p$ geometry of Shimura varieties using the theory of automorphic forms. Specifically, it studies the Newton stratification on the special fibre at a prime of good reduction and constructs a stable trace formula for the Lefschetz numbers of Frobenius--Hecke correspondences on an individual Newton stratum. The formula, obtained by modifying the Langlands--Kottwitz method, expresses these alternating traces as stable geometric distributions on elliptic endoscopic groups. A key feature is a group-theoretic criterion determining which elliptic endoscopic groups can contribute to a given Newton stratum.

Over $\mathbb{C}$, Shimura varieties are finite disjoint unions of arithmetic quotients of Hermitian symmetric domains, and they admit canonical models over number fields \cite{MR546620}. A classical example is the Siegel modular variety $\mathcal{A}_g$, which, with a suitable level structure, is the moduli space of $g$-dimensional principally polarised abelian varieties; the associated group is $\GSp_{2g}$. At a prime of good reduction, the Newton stratification on its special fibre decomposes the variety according to the Newton polygon, which records the slopes of the $p$-divisible group, or equivalently of the $F$-isocrystal, attached to an abelian variety. The geometry of the Newton stratification has been widely studied in this setting, starting with the work of Oort \cite{MR1722536}.

For the local group $G_{\mathbb{Q}_p}$, Kottwitz \cite{MR809866} introduced the set $B(G_{\mathbb{Q}_p})$ of isocrystals with $G_{\mathbb{Q}_p}$-structure (see Definition \ref{B(G)def}), and Rapoport--Richartz \cite{MR1411570} developed the corresponding Newton stratification for $F$-isocrystals with additional structure. This provides the group-theoretic parameter set for the Newton strata on the special fibre of an integral model of a Shimura variety associated with a Shimura datum $(G,X)$. The possible strata are indexed by the finite, partially ordered subset $B(G_{\mathbb{Q}_p},\mu_h^{-1})\subset B(G_{\mathbb{Q}_p})$, where $\mu_h$ is the minuscule cocharacter associated with the Shimura datum; see Remark \ref{BGmudef}. Group-theoretic methods have been used to study geometric questions about Newton strata in this setting; see \cite{MR4439210} for a survey. In particular, questions about non-emptiness
of the strata, the dimensions of the strata and closure relations between strata have been extensively studied, for
example in \cite{MR1781927}, \cite{MR3430453}, \cite{MR4484214}. The paper concerns the following fundamental question:

\begin{Q}
Is it possible to compute the Lefschetz numbers of Frobenius--Hecke correspondences on the compactly supported cohomology of a Newton stratum in terms of automorphic representations?
\end{Q}

Such an automorphic description can, in favourable cases, yield geometric information about Newton strata, such as their dimension or number of irreducible components. Kret \cite{MR3048605} obtained an automorphic description of these Lefschetz numbers, and hence of the zeta function, for the basic stratum of certain Kottwitz varieties at split primes of good reduction. Liu subsequently treated certain intermediate strata at split primes \cite{liu2025cohomologycertainintermediatestrata} and the basic stratum at inert primes \cite{liu2024automorphicdescriptionzetafunction}, again in cases of good reduction. Kottwitz varieties form a restrictive class of Shimura varieties, and the stabilized trace formulas arising in these results have no non-trivial endoscopic terms. This paper explores the question beyond that setting, when non-trivial endoscopic contributions occur.

The approach compares two trace formulas: (1) the Lefschetz--Verdier trace formula, which relates the alternating trace of a Frobenius--Hecke correspondence on compactly supported cohomology to its fixed points, and (2) the Arthur--Selberg trace formula, which relates automorphic representations to orbital integrals. This is an adaptation of the Langlands--Kottwitz method, originally developed to describe the zeta functions of Shimura varieties in terms of automorphic representations. A key step is to stabilize the fixed-point expression so that it can be compared with stable trace formulas on endoscopic groups. The main result of this paper carries out this stabilization for an individual Newton stratum, under the technical assumptions stated in Section \ref{intro2}, by adapting the stabilization for whole Shimura varieties developed by Kottwitz \cite{MR1044820} and, in greater generality, by Kisin--Shin--Zhu \cite{kisin2021stabletraceformulashimura}. It thereby puts the Newton-stratum Lefschetz numbers into the stable form required for comparison with automorphic spectral data.

\subsection{Statement of main result}\label{intro2}
\begin{T}\label{maintheorem}
    With notation as in Section \ref{details}, so that in particular $p$ is good for $G$, the level is $K=K^pK_p$ with $K_p=\mathcal{G}(\mathbb{Z}_p)$ hyperspecial and $K^p$ sufficiently small, and $b\in B(G_{\mathbb{Q}_p},\mu_h^{-1})$, $f^p\in\mathcal{H}(G(\mathbb{A}_f^p)//K^p)$ and a local system $\mathcal{F}_{\xi}$ as above are fixed, let $(G,X)$ be a Shimura datum of abelian type satisfying the conditions:
    \begin{enumerate}
        \item $G^{\text{der}}$ is simply connected
        \item The maximal $\mathbb{Q}$-split torus is equal to the maximal $\mathbb{R}$-split torus in the center $Z(G)$
    \end{enumerate}
    Then for sufficiently large $n$,
    \begin{equation*}
		 \sum_i (-1)^i \tr(\Frob_q^n \times f^p \mid H^i_{c,\mathrm{et}}(\overline{S}_{K^p}(b)_{\overline{\mathbb{F}}_q}, \mathcal{F}_{\xi})) = \sum_{\mathcal{H} \in \Eell(G)_b} \iota (G,H) \ST_e^H(h^p \cdot h_p \chi_{b,n}^{\mathcal{H}} \cdot h_{\infty})
	\end{equation*}
    where $\ST_e^H(-)$ denotes the elliptic part of the geometric side of the stable trace formula for $H$, taken with the central character datum determined by $\xi$, and $h = h^p h_p h_{\infty}$ is the function on $H(\mathbb{A})$ which appears in the stable trace formula for Shimura varieties (Theorem 7.2 of \cite{MR1044820}). The factor $h_{\infty}$ depends on $\xi$; the other two do not.
\end{T}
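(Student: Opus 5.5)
The plan is to follow the Langlands--Kottwitz--Kisin--Shin--Zhu stabilization, but to insert the Newton stratum condition at the $p$-adic place.

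\textbf{Step 1: fixed-point formula for the stratum.} Fujiwara's theorem (Deligne's conjecture) applies to the restriction to the locally closed subscheme $\overline{S}_{K^p}(b)$, since Hecke correspondences preserve Newton strata. Hence for $n$ large the left-hand side is a sum over fixed points of $\Frob_q^n\times f^p$ on $\overline{S}_{K^p}(b)(\overline{\mathbb{F}}_q)$. The fixed points are weighted by local traces of $\mathcal{F}_\xi$.

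\textbf{Step 2: group-theoretic parametrization.} We use the description of isogeny classes of points for abelian type with hyperspecial level (Kisin, and Kisin--Shin--Zhu). This rewrites the count as Kottwitz's sum over triples $(\gamma_0;\gamma,\delta)$ satisfying the usual conditions. The sum is weighted by $c(\gamma_0;\gamma,\delta)\,O_\gamma(f^p)\,TO_\delta(\phi_n)\,\tr\xi(\gamma_0)$. The only change is that the twisted orbital integral runs over $\delta\in G(L_n)$ whose $\sigma$-conjugacy class in $B(G_{\mathbb{Q}_p})$ is exactly $b$. Equivalently, $\phi_n=\mathbf{1}_{K_n\mu(p)K_n}$ is replaced by its product with the characteristic function of the $b$-locus, and we denote the resulting twisted orbital integrals accordingly. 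The Newton point of $\delta$ is determined by $\gamma_0$ through the norm map. So the cut-off is a condition on the stable class of $\gamma_0$: namely, that the $\sigma$-conjugacy class attached to $\gamma_0$ in $G(\mathbb{Q}_p)$ equals $b$. This is compatible with stable conjugacy.

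\textbf{Step 3: stabilization.} Next we apply Kottwitz's stabilization (\S\S 4--7 of \cite{MR1044820}, as generalized in \cite{kisin2021stabletraceformulashimura}). The Kottwitz invariant sum is Fourier-expanded over $\kappa\in\mathfrak{K}(I_0/\mathbb{Q})$, which produces elliptic endoscopic data $\mathcal{H}$. Off $p$ we use the fundamental lemma and transfer to obtain $h^p$. At $\infty$ we use Shelstad's transfer and pseudo-coefficients to obtain $h_\infty$; this is independent of $b$. The simple connectivity of $G^{\mathrm{der}}$ and the hypothesis on central tori are used exactly as in Kottwitz, namely for the invariants, the Tamagawa numbers and $\iota(G,H)$.

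At $p$ the main new input is needed. We must produce $h_p\chi^{\mathcal{H}}_{b,n}$ whose stable orbital integrals at $\gamma_H$ equal $\kappa$-weighted twisted orbital integrals of $\phi_n$ restricted to the $b$-locus. Here $h_p$ is the base-change/endoscopic image of $\phi_n$ (the unramified function from the twisted fundamental lemma). We take $\chi^{\mathcal{H}}_{b,n}$ to be the characteristic function of those semisimple $\gamma_H\in H(\mathbb{Q}_p)$ whose image $\gamma_0\in G(\mathbb{Q}_p)$ has Newton class $b$. This is well defined because the Newton class depends only on the stable class, via the transfer of conjugacy classes from $H$ to $G$. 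Since $\chi$ is a function of the stable class, multiplying by it commutes with stabilization and transfer, and the identity holds term by term.

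\textbf{Step 4: the set $\Eell(G)_b$ and the main obstacle.} The set $\Eell(G)_b$ consists of the $\mathcal{H}$ for which $\chi^{\mathcal{H}}_{b,n}h_p$ is not identically zero on elliptic classes. Equivalently, $b$ must lie in the image of $B(H_{\mathbb{Q}_p})$ transferred through $\gamma_H\mapsto\gamma_0$, compatible with $\mu_h$. The other $\mathcal{H}$ contribute zero and are dropped. I expect the main obstacle to be Step 3 at $p$. One must check that $\chi^{\mathcal{H}}_{b,n}h_p$ is a legitimate test function, that is, locally constant on the relevant semisimple locus, since the Newton map is only locally constant on regular elements. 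One must also check that the cut-off is compatible with the transfer factors and with the twisted fundamental lemma for nonregular $\gamma_0$. I would first treat $n$ large, as the theorem allows. For $n$ large the relevant $\gamma_0$ are sufficiently ``$p$-adically contracting''. Its Newton class is then read off from the valuations of eigenvalues, that is, from its Newton point via $\nu_{\gamma_0}$ and the Kottwitz invariant. This makes $\chi$ locally constant on an open neighbourhood containing the support, and we invoke Kottwitz's descent to centralizers to handle nonregular terms.
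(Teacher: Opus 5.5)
There is a genuine gap: the cut-off $\chi^{\mathcal H}_{b,n}$ you define is the wrong function. By the norm relation $\nu_G(\mathfrak N\delta)=j\,\nu_G(\delta)$ (with $p^j=q^n$), every pair with $\delta\mapsto b$ has $\overline\nu_G(\gamma_0)=j\,\overline\nu_G(b)$. So the class of $\gamma_0$ in $B(G_{\mathbb Q_p})$ is \emph{not} $b$ once $j>1$ and $\overline\nu_G(b)\neq 0$. Typically $\overline\nu_G(b)\neq 0$, because its image in $\pi_1(G)^{\Gamma_{\mathbb Q_p}}\otimes\mathbb Q$ is the Galois average of $-\mu_h^{\natural}$, which is non-zero for instance for $\GSp$ or $\mathrm{GU}$. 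Taken literally, your $\chi^{\mathcal H}_{b,n}$ (``the image $\gamma_0$ has Newton class $b$'') therefore vanishes at every contributing $\gamma_H$, and your right-hand side is $0$. Note also that it does not depend on $n$, which is the symptom.

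The correct condition is $f^{\mathcal H}(\overline\nu_H(\gamma_H))=j\,\overline\nu_G(b)$. Even reducing $\delta\mapsto b$ to this requires the Kottwitz point. The support of $\phi_j$ forces $\kappa_G(\delta)$ to be the restriction of $-\mu_h^{\natural}$, which equals $\kappa_G(b)$ in the covariant normalisation. Injectivity of $\overline\nu_G\times\kappa_G$ then gives $\delta\mapsto b\iff\overline\nu_G(\gamma_0)=j\,\overline\nu_G(b)$.

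Your smoothness concern rests on a false premise: the Newton point is locally constant on all of $H(\mathbb Q_p)$, not just on regular elements. For any $x\in H(\mathbb Q_p)$, $\overline\nu_H(x)$ is the Weyl orbit of the valuations of the eigenvalues of its semisimple part, since Deligne's cocharacter $m_H(x)$ equals the slope morphism. This is locally constant by continuity of roots of $p$-adic polynomials, after reducing to $\GL_n$ through finitely many representations that separate semisimple classes. No ``$p$-adically contracting'' or descent argument is needed.

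Second, Step 4 does not prove the theorem as stated. There $\Eell(G)_b$ is the intrinsic, $n$-independent set of $\mathcal H$ admitting some $b_H\in B(H_{\mathbb Q_p})$ with $f^{\mathcal H}(\overline\nu_H(b_H))=\overline\nu_G(b)$ and $\psi^{\mathcal H}(\kappa_H(b_H))=\kappa_G(b)$. Redefining it as the set of $\mathcal H$ with non-zero contribution makes the claim tautological, and your ``equivalently'' is unsupported. You must show that a contributing $\mathcal H$ admits such a $b_H$. The obvious candidate $[\gamma_H]$ fails, since its transferred Newton point is $j\,\overline\nu_G(b)$.

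The missing construction is as follows. Choose $c\in G(L)$ with $c\gamma_0c^{-1}=N\delta$ and put $\delta_0=c^{-1}\delta\sigma(c)\in I_0(L)$; its class in $B(I_0)$ maps to $b$. It is basic: $\Int(c)$ identifies $J_{\delta_0}$ with the $\sigma$-centralizer $I(p)$, an inner form of $I_0$, so $Z_{I_0}(\nu_{I_0}(\delta_0))$ has full dimension. Since $\gamma_H$ is $(G,H)$-regular, $I_H$ and $I_0$ are inner forms with $Z(\Hat{I_H})=Z(\Hat{I_0})$. Take the basic $b_{I_H}$ with the corresponding Kottwitz point and let $b_H$ be its image in $B(H)$. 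The two required equalities follow because $f^{\mathcal H}$ and $\psi^{\mathcal H}$ are compatible with $I_H\subset H$ and $I_0\subset G$, all maps being induced by a single admissible embedding.
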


The local system affects the formula only at the archimedean place, through the function $h_{\infty}$, whereas the passage to a single Newton stratum is a condition at $p$. The two modifications are therefore independent, and allowing a general $\xi$ costs no extra work. We nevertheless carry out the proof in this generality, so that the theorem can be applied with a non-constant local system.

The second hypothesis is a simplifying assumption in Kottwitz's form of the stabilization. Writing $A_G$ for the maximal $\mathbb{Q}$-split torus in $Z(G)$, it ensures the finiteness of the volume factors $c_{\infty}$ and $c_1(\gamma_0,\gamma,\delta)$ occurring in the Lefschetz formula; see Appendix \ref{hyp2appendix}. Kisin--Shin--Zhu remove this hypothesis in their more general treatment.
Due to the technical nature of the result, we define all the terms in the formula precisely only in Section \ref{details}. We first discuss it informally to motivate the result and explain its significance. The left-hand side is the Lefschetz number of the correspondence $\Frob_q^n \times f^p$ on the compactly supported cohomology of the Newton stratum $\overline{S}_{K^p}(b)$, where $\Frob_q$ denotes Frobenius and $f^p$ a Hecke operator, as in Section \ref{informalintro}. The right-hand side consists of terms on the geometric sides of stable Arthur--Selberg trace formulas for elliptic endoscopic groups of $G$; these are the stable distributions that can subsequently be compared with automorphic spectral data. In the rest of this section, we explain the Arthur--Selberg trace formula, its stabilization, and the endoscopic groups that occur.

To explain the Arthur-Selberg trace formula for a connected reductive group $G$ over a number field $F$, it is easiest to start with the case where the group $G$ satisfies the property that $G / A_G$ is anisotropic, where $A_G$ denotes the maximal $F$-split torus in the center of $G$. Note that a group is said to be anisotropic if it does not contain any proper parabolic subgroups defined over $F$. The group $G$ associated with Kottwitz varieties satisfies this condition.

Let $\mathbb{A}_F$ denote the ring of adeles of $F$. Equip $G(\mathbb{A}_F)$ with a Haar measure and the subgroup $G(F) \subset G(\mathbb{A}_F)$ with the counting measure. The starting point for the theory of automorphic representations is the space $L^2(G(F) \backslash G(\mathbb{A}_F))$, which has an action of $G(\mathbb{A}_F)$ by right translation. Instead of studying this representation directly, one often fixes a unitary character $\chi$ of $A_G^+$ and works with the fixed-central-character Hilbert space $L^2_{\chi}(G(F) \backslash G(\mathbb{A}_F))$. Here $A_G^+ \subset \prod_{v \mid \infty} A_G(F_v)$ is a certain subgroup. The precise definitions are recalled in Appendix \ref{hyp2appendix}.

Under the assumption that $G/ A_G$ is anisotropic, there are two important consequences which result in a significant simplification of the Arthur-Selberg trace formula. The first is that the $G(\mathbb{A}_F)$ representation $L^2_{\chi}(G(F) \backslash G(\mathbb{A}_F))$ has a purely discrete spectrum i.e. it decomposes as a Hilbert direct sum of irreducible representations
$$ L^2_{\chi}(G(F) \backslash G(\mathbb{A}_F)) = \widehat{\bigoplus}_{\pi} m(\pi) \pi$$
where the sum runs over all irreducible admissible representations $\pi$ of $G(\mathbb{A}_F)$ and $m(\pi)$ is the multiplicity with which the representation $\pi$ of $G(\mathbb{A}_F)$ appears. If $m(\pi) > 0$, then the representation is automorphic.

The second fact is that for a function $f \in \mathcal{C}^{\infty}_c(G(\mathbb{A}_F))$, the operator $$R(f): L^2_{\chi}(G(F) \backslash G(\mathbb{A}_F)) \xrightarrow[]{} L^2_{\chi}(G(F) \backslash G(\mathbb{A}_F))$$ is of trace class where the operator $R(f)$ is defined as $$(R(f)\phi)(x) : = \int_{G(\mathbb{A}_F)} f(y) \phi(xy)dy \text{ , for any } \phi \in L^2_{\chi}(G(F) \backslash G(\mathbb{A}_F)). $$

Using these two assumptions, and suppressing finite component-group factors and measure normalizations, one obtains the schematic identity
$$ \sum_{\pi}  m(\pi) \tr \pi (f) = \sum_{\gamma} \tau (G_{\gamma}^0) \Or_{\gamma}(f).$$
The sum on the left is over irreducible admissible representations $\pi$ of $G(\mathbb{A}_F)$ and the sum on the right is over $G(F)$-conjugacy classes $\gamma$ in $G(F)$. Here $G_{\gamma}^0$ is the identity component of the centralizer of $\gamma$, $\Or_{\gamma}(f)$ is the orbital integral defined in Section \ref{lvformula}, and $\tau(G_{\gamma}^0)$ is the Tamagawa number discussed in Section \ref{prestab} and Remark \ref{Tamagawa}; see also \cite[\S\S 3.5, 5.3]{MR4615820}.

This is the Arthur-Selberg trace formula, in the case when $G/ A_G$ is anisotropic. Both sides of this formula:
\begin{enumerate}
    \item Spectral side: $f \mapsto \sum_{\pi} m(\pi) \tr \pi (f)$
    \item Geometric side: $f \mapsto \sum_{\gamma} \tau(G_{\gamma}^0)\Or_{\gamma}(f)$ 
\end{enumerate}
are invariant distributions for the test function $f$ and the trace formula should be viewed as an identity of invariant distributions. Note that a distribution $d$ is invariant if $d(f) = d (^g f)$ for any test function $f \in \mathcal{C}^{\infty}_c(G(\mathbb{A}_F))$ and $g \in G(\mathbb{A}_F)$, where $^g f(x) = f( g^{-1} x g)$.

The Arthur-Selberg trace formula is significantly more complicated in the case when $G/ A_G$ is not anisotropic. On the spectral side, complications arise due to contributions coming from a continuous spectrum of $L^2_{\chi}(G(F) \backslash G(\mathbb{A}_F))$. On the geometric side, the quotient volumes and orbital integrals occurring before truncation need not be finite. For example, Arthur considers $G=\GL_2$ and the non-semisimple unipotent element
$$
\gamma=
\begin{pmatrix}
1 & 1\\
0 & 1
\end{pmatrix}.
$$
For a non-negative test function $f \in \mathcal{C}_c^{\infty}(G(\mathbb{A}))$, he shows that the adelic orbital integral
$$
\int_{G(\mathbb{A})_{\gamma}\backslash G(\mathbb{A})}
f(x^{-1}\gamma x),dx
$$
reduces to an expression of the form
$$
c(f)\prod_p(1-p^{-1})^{-1}
$$
for a certain positive constant $c(f)$ associated with $f$. This is the divergent Euler product for the Riemann zeta function at $s=1$; see \cite[\S 4]{MR2192011} for more details about this example.

Apart from the difficulties caused by the noncompactness of $G(F)\backslash G(\mathbb{A}_F)$, there is a second important issue in applying and comparing trace formulas, namely the distinction between rational conjugacy and stable conjugacy. Two elements of $G(F)$ are rationally conjugate if they are conjugate by an element of $G(F)$, whereas they are stably conjugate if, roughly speaking, they become conjugate by an element of $G(\overline{F})$; see \ref{StableConjugacy} for the precise definition. Thus, a single stable conjugacy class may contain several distinct $G(F)$-conjugacy classes.

This distinction already plays a prominent role in the work of Jacquet and Langlands, where a comparison between the trace formulas for $\GL_2$ and for the multiplicative groups of quaternion algebras is used to establish the Jacquet--Langlands correspondence; see \cite{MR401654}. More generally, the need to compare geometric terms attached to different groups led to the theory of endoscopy, which forms an important part of the broader framework of Langlands functoriality. The stabilization of the trace formula reorganizes terms indexed by rational conjugacy classes into terms indexed by stable conjugacy classes on $G$ and on its endoscopic groups. The definitions and notation used here are reviewed in Section \ref{2.2}; for further details on endoscopy and its natural extension to the twisted setting, see Kottwitz--Shelstad \cite{MR1687096}.

As a result, the stable trace formula is not merely an identity of invariant distributions, but an identity of \textit{stably} invariant distributions. Its construction therefore requires the stabilization of both the geometric and the spectral sides of the Arthur--Selberg trace formula. It was subsequently realized that the same issue arises in the Langlands--Kottwitz method for studying the cohomology of Shimura varieties. Indeed, the comparison between the Lefschetz--Verdier trace formula and the Arthur--Selberg trace formula carried out by Kottwitz in \cite{MR1044820} also requires a stabilized trace formula. A principal objective of this paper is to address the corresponding stabilization problem in the new setting of an individual Newton stratum, rather than for the entire Shimura variety.

We will now discuss in more detail, the stabilization of the geometric side, as this is what appears in the main theorem. The stabilization of the spectral side in the most general case is conditional on Arthur's conjectures (see $\S$12 of \cite{MR0757954} and Part II of \cite{MR1044820})).

While the geometric side of the Arthur-Selberg trace formula is quite complicated, there are some terms in it, known as the elliptic terms that are more tractable. We denote the elliptic part of the trace formula by the distribution $$ f \mapsto T_e(f) \text{ , where } T_e(f) = \sum_{\gamma} |(G_{\gamma}/G_{\gamma}^0)(F)|^{-1}\tau(G_{\gamma}^0) \Or_{\gamma}(f) $$
where now the sum is taken only over elliptic conjugacy classes $\gamma$ in $G(F)$ (see \ref{ellipticdef}). Note that the if $G/ A_G$ is anisotropic, then every conjugacy class in $G(F)$ is elliptic and hence the elliptic part corresponds to the entire geometric side of the trace formula in this case.

The stabilization in the case relevant to this paper takes the form (see \cite[\S 9.1]{kisin2021stabletraceformulashimura})
$$ T_e(f) = \sum_{\mathcal{H}=(H,s,\eta) \in \Eell(G)} \iota(G,H) \ST_e^H(f^H).$$
The sum is over the set $\Eell(G)$ of equivalence classes of elliptic endoscopic data $\mathcal{H}=(H,s,\eta)$ for $G$, and includes the trivial endoscopic datum, whose endoscopic group is the quasi-split inner form of $G$. For example, $\GL_n$ has no non-trivial elliptic endoscopy, whereas split $\GSp_4$ has the non-trivial elliptic endoscopic group $(\GL_2 \times \GL_2)/\mathbb{G}_m$. For a group $H$, the distribution $$ f_0 \mapsto \ST_e^H(f_0) \text{ , where } \ST_e^H(f_0) = \sum_{\gamma_H} | (H_{\gamma_H}/H_{\gamma_H}^0)(F) |^{-1} \cdot \tau(H) \cdot \SO_{\gamma_H}(f_0)$$ is a stably invariant distribution, where
\begin{itemize}
    \item The sum is over elliptic stable conjugacy classes $\gamma_H \in H(F)$ (see \ref{StableConjugacy})
    \item $(H_{\gamma_H}/H_{\gamma_H}^0)(F)$ is the group of $F$-points of the group of connected components of the centralizer of $\gamma_H$ in $H$; its order depends only on the stable conjugacy class of $\gamma_H$ and not on the function $f_0$.
    \item $\tau(H)$ is the Tamagawa number of $H$ (a volume term)
    \item $\SO_{\gamma_H}(f_0)$ denotes the stable orbital integral of $f_0$; see Section \ref{stab}. The definition for a general semisimple element, whose centralizer may be disconnected, is recalled in Complement \ref{SOgeneral}.
    \item $\iota(G,H)$ is a positive rational constant attached to the pair $(G,H)$.
    \item The function $f^H \in \mathcal{C}^{\infty}_c(H(\mathbb{A}_F))$ is an endoscopic transfer of $f \in \mathcal{C}^{\infty}_c(G(\mathbb{A}_F))$. It is characterized by identities involving its stable orbital integrals and is therefore determined only up to its stable orbital integrals; see Sections 5--6 of \cite{MR858284} and Section 7.4 of \cite{kisin2021stabletraceformulashimura}.
\end{itemize}

Hence, the stabilization of the elliptic part of the trace formula rewrites the invariant distribution $T_e$ in terms of stably invariant distributions $\ST_e^H$ on elliptic endoscopic groups of $G$. The transfer results, the fundamental lemma, and the required results on Tamagawa numbers entering this statement are now theorems; see \cite{MR942522,MR2653248,MR3051198}. For the modern formulation in the setting of Shimura varieties, see \cite[\S 9.1]{kisin2021stabletraceformulashimura}; Kottwitz's work \cite{MR858284} provides the original framework for the elliptic stabilization.

Associated to the identity connected component $I = G_{\gamma}^0$ of a semisimple $\gamma \in G(F)$, there is an abelian group $\mathfrak{K}(I) = \mathfrak{K}( G_{\gamma}^0)$ (see \ref{kappadef}) which controls the endoscopic contributions in the stabilization of the trace formula (in the sense of \ref{keylemma}). In the Kottwitz-variety cases treated by Kret \cite{MR3048605} and Liu \cite{liu2025cohomologycertainintermediatestrata,liu2024automorphicdescriptionzetafunction}, the relevant groups $\mathfrak{K}(G_{\gamma}^0)$ are trivial. Consequently, only the trivial endoscopic datum occurs in the stabilization.

To summarize, going beyond the case of Kottwitz varieties involves dealing with two main challenges:
\begin{enumerate}
    \item The Arthur-Selberg trace formula is more complicated due to contributions from the continuous spectrum on the spectral side and divergent orbital integrals and volumes on the geometric side. However, the elliptic terms on the geometric side in this formula are more tractable.
    \item The stabilization of the trace formula involves non-trivial endoscopic contributions. One has to understand the contribution of these non-trivial elliptic endoscopic groups to the Newton stratum. 
\end{enumerate}

Theorem 7.2 of \cite{MR1044820} gives an expression for the Lefschetz number of the Frobenius-Hecke operator $\Frob_q^n \times f^p$ on the Shimura variety $S_K$ in terms of elliptic parts of the stable trace formulas for elliptic endoscopic groups $\Eell(G)$ of $G$ as follows (under the same hypothesis as our main theorem):
\begin{equation*}
		 \sum_i (-1)^i \tr(\Frob_q^n \times f^p \mid H^i_{c,\mathrm{et}}((S_K)_{\overline{E}}, \overline{\mathbb{Q}}_\ell)) = \sum_{\mathcal{H} = (H,s,\eta) \in \Eell(G)} \iota (G,H) \ST_e^H(h)
\end{equation*}
For every elliptic endoscopic group $H$ of $G$, Kottwitz constructs a smooth function $h=h^ph_ph_{\infty}$ on $H(\mathbb{A})$, where $h^p$ and $h_p$ are compactly supported and $h_{\infty}$ is compactly supported modulo $A_G(\mathbb{R})^0$; the function is well defined only up to its stable orbital integrals. Its local components are recalled in Complement \ref{hconstruction}. Note that the functions $h$ here are NOT endoscopic transfers of a function in $\mathcal{C}^{\infty}_c(G(\mathbb{A}))$ - which is precisely why stably invariant trace formulas are essential for this question.

The main theorem of this paper can be viewed as an analogue of the above result for the Newton stratum corresponding to $b \in B(G_{\mathbb{Q}_p}, \mu_h^{-1})$. The subset $\Eell(G)_b \subset \Eell(G)$ determines which elliptic endoscopic groups of $G$ contribute to the stratum corresponding to $b$. One of the difficult parts of the proof of this result consists of constructing suitable truncation functions $\chi_{b,n}^{\mathcal{H}}$, which isolate the contributions of the endoscopic triples $\mathcal{H} = (H, s , \eta)$ to the Newton stratum $b$, and proving that these functions are smooth.

\subsection{Further details on main result}\label{details}
\subsubsection{Setup}\label{detailssetup}
	    \begin{itemize}
	    \item For every field $F$, fix an algebraic closure $\overline{F}$ and denote by $\Gamma_F=\operatorname{Gal}(\overline{F}/F)$ its absolute Galois group. In the global setting, fix an embedding $\overline{\mathbb{Q}}\hookrightarrow\mathbb{C}$. If $E\subset\overline{\mathbb{Q}}$ is a number field, take $\overline{E}=\overline{\mathbb{Q}}$, so that $\Gamma_E$ is naturally a subgroup of $\Gamma_{\mathbb{Q}}$
	    \item Fix a prime number $\ell \neq p$ and an embedding $\overline{\mathbb{Q}}_\ell\hookrightarrow\mathbb{C}$. We use this embedding throughout to regard $\overline{\mathbb{Q}}_\ell$-valued test functions, together with the orbital integrals and other distributions attached to them, as complex valued. This allows the $\overline{\mathbb{Q}}_\ell$-valued cohomological traces in the Main Theorem to be compared with its complex-valued trace-formula terms
	    \item $\mathbb{A}$, $\mathbb{A}_f$ and $\mathbb{A}_f^p$ denote the rings of adeles, finite adeles and adeles away from $p$ resp. over $\mathbb{Q}$
	\end{itemize}

\textbf{Shimura varieties.}
Let $(G,X)$ be a Shimura datum, for a connected reductive group $G$ over $\mathbb{Q}$ and a $G(\mathbb{R})$-conjugacy class $X$ of homomorphisms $h: R_{\mathbb{C}/\mathbb{R}} \mathbb{G}_m \xrightarrow{} G_{\mathbb{R}}$ (in the sense of Deligne \cite{MR546620}). From this, one obtains:

\begin{enumerate}
    \item A $G(\mathbb{C})$-conjugacy class of cocharacters of $G_{\mathbb{C}}$, with a representative denoted by $\mu_h$ and known as the Hodge cocharacter associated to $h$. This is constructed as follows. After base change to $\mathbb{C}$, there is an isomorphism
    $$
        \left(\Res_{\mathbb{C}/\mathbb{R}}\mathbb{G}_m\right)_{\mathbb{C}}
        \xrightarrow[]{\sim}
        \mathbb{G}_{m,\mathbb{C}}\times\mathbb{G}_{m,\mathbb{C}}
    $$
    defined on $A$-points, for every $\mathbb{C}$-algebra $A$, by
    $$
        \left(\mathbb{C}\otimes_{\mathbb{R}}A\right)^{\times}
        \xrightarrow[]{\sim}
        A^{\times}\times A^{\times},
        \qquad
        \sum_i z_i\otimes a_i
        \longmapsto
        \left(
            \sum_i z_i a_i,
            \sum_i \overline{z_i}a_i
        \right).
    $$
    Thus, the first factor corresponds to the identity embedding
    $\mathbb{C}\hookrightarrow\mathbb{C}$ and the second factor corresponds to complex conjugation. For $h\in X$, the associated Hodge cocharacter $\mu_h$ is
    $$
        \mu_h:\mathbb{G}_{m,\mathbb{C}}
        \xrightarrow[]{}
        G_{\mathbb{C}},
        \qquad
        z\longmapsto h_{\mathbb{C}}(z,1).
    $$
    Its $G(\mathbb{C})$-conjugacy class is independent of the choice of $h\in X$.

    \item A number field $E\subset\overline{\mathbb{Q}}$, called the reflex field of $(G,X)$. Using the fixed embedding $\overline{\mathbb{Q}}\hookrightarrow\mathbb{C}$, the $G(\mathbb{C})$-conjugacy class of $\mu_h$ determines a $G(\overline{\mathbb{Q}})$-conjugacy class. The reflex field is the fixed field of the stabilizer of this conjugacy class; equivalently,
    $$
        \Gamma_E
        =
        \left\{
            \tau\in \Gamma_{\mathbb{Q}}
            \ \middle|\
            {}^{\tau}\mu_h
            \text{ is }G(\overline{\mathbb{Q}})\text{-conjugate to }\mu_h
        \right\}.
    $$
    Here $\Gamma_{\mathbb{Q}}$ acts on $\Hom_{\overline{\mathbb{Q}}}(\mathbb{G}_m,G_{\overline{\mathbb{Q}}})$ by acting on the coefficients.
    \item A smooth quasi-projective variety $S_K$, for a sufficiently small (for example, neat) compact open subgroup $K\subset G(\mathbb{A}_f)$, which is known as the Shimura variety at level $K$. The complex points of this variety have the following group theoretic description:
    $$S_K(\mathbb{C}) = G(\mathbb{Q})\backslash(X \times (G(\mathbb{A}_f) / K)).$$
\end{enumerate}

To get a handle on the double cosets, one uses the fact that $G(\mathbb{Q}) \backslash G(\mathbb{A}_f) / K$ is finite. Choose representatives $g_1, g_2, \cdots, g_r \in G(\mathbb{A}_f)$ and set $\Gamma_i = G(\mathbb{Q}) \cap g_i K g_i^{-1}$. Then the map $$ \coprod_{i=1}^r \Gamma_i \backslash X \xrightarrow{} S_K(\mathbb{C})$$ given by $x \in \Gamma_i \backslash X \mapsto (x, g_i)$ is a bijection. 

Each connected component of $X$ is a Hermitian symmetric domain and each $\Gamma_i$ is
an arithmetic subgroup of $G(\mathbb{Q})$. Applying the Baily--Borel theorem componentwise, the
arithmetic quotient $\Gamma_i\backslash X$ has a natural structure of a
quasi-projective algebraic variety over $\mathbb{C}$ and admits a canonical
normal projective compactification
$$
    \Gamma_i\backslash X
    \lhook\joinrel\longrightarrow
    \left(\Gamma_i\backslash X\right)^{\mathrm{BB}},
$$
called the Baily--Borel compactification; see \cite{MR216035}. Transporting
these algebraic structures through the decomposition
$$
    S_K(\mathbb{C})
    \simeq
    \coprod_{i=1}^{r}\Gamma_i\backslash X
$$
gives $S_K(\mathbb{C})$ the structure of a complex quasi-projective algebraic
variety.

Using Deligne's theory \cite{MR546620}, it follows that $S_K$ has a canonical model defined over $E$ and satisfies the following properties:

\begin{enumerate}
    \item For compact, open subgroups $K' \subset K$ there is a finite étale surjective map $\pi: S_{K'} \xrightarrow[]{} S_K$
    \item For $g \in G(\mathbb{A}_f)$ there is an isomorphism $\psi_g: S_K \xrightarrow[]{\sim} S_{g^{-1}Kg}$ and if $g \in K$ then this isomorphism is the identity.
\end{enumerate}

These maps are constructed as follows. For a sufficiently small
$K\subset G(\mathbb{A}_f)$, we start with the description:
$$
S_K(\mathbb{C})
=
G(\mathbb{Q})\backslash
\left(X\times G(\mathbb{A}_f)/K\right).
$$
We will construct the maps $\pi$ and $\psi_g$ over $\mathbb{C}$; however, they also descend to the canonical model over $E$. Denote the class of $(x,a)\in X\times G(\mathbb{A}_f)$ in this
quotient by $[x,a]_K$. For compact open subgroups $K'\subset K$, the projection map $\pi$ is given by
$$
\pi:S_{K'}(\mathbb{C})\longrightarrow S_K(\mathbb{C}),
\qquad
[x,a]_{K'}\longmapsto [x,a]_K.
$$
For $g\in G(\mathbb{A}_f)$, the isomorphism $\psi_g$ is given by
$$
\psi_g:S_K(\mathbb{C})
\xrightarrow[]{\sim}
S_{g^{-1}Kg}(\mathbb{C}),
\qquad
[x,a]_K\longmapsto [x,ag]_{g^{-1}Kg}.
$$
This map is well defined because, for every $k\in K$,
$$
akg=ag(g^{-1}kg),
$$
and $g^{-1}kg\in g^{-1}Kg$. Its inverse is $\psi_{g^{-1}}$. Moreover,
if $g\in K$, then $g^{-1}Kg=K$ and $\psi_g$ is the identity map on
$S_K(\mathbb{C})$.

\textbf{Hecke algebra.}
The Hecke algebra $\mathcal{H}(G(\mathbb{A}_f)//K)$ consists of compactly supported, smooth, $K$-bi-invariant, $\overline{\mathbb{Q}}_\ell$-valued functions on $G(\mathbb{A}_f)$. There is a canonical topology on $G(\mathbb{A}_f)$ satisfying the property that for any closed immersion of $\mathbb{Q}$-schemes $G \hookrightarrow \mathbb{A}_{\mathbb{Q}}^N$, the topology on $G(\mathbb{A}_f)$ is the subspace topology with respect to the natural product topology on $\mathbb{A}_f^N$. Smooth functions (taking values in $\overline{\mathbb{Q}}_\ell$) on $G(\mathbb{A}_f)$ are locally constant functions with respect to this topology. The group $G(\mathbb{A}_f)$ is unimodular and the algebra structure on $\mathcal{H}(G(\mathbb{A}_f)//K)$ is obtained from convolution of functions with respect to a choice of a Haar measure on $G(\mathbb{A}_f)$.

\textbf{Hecke action.}
The algebra $\mathcal{H}(G(\mathbb{A}_f)//K)$ acts on the compactly supported Betti cohomology groups $H_c^i(S_K(\mathbb{C}),\overline{\mathbb{Q}}_\ell)$. For $g\in G(\mathbb{A}_f)$, put
$$
    T_g=\frac{1}{\vol(K)}\mathbf{1}_{KgK}.
$$
The action of $T_g$ is induced by the Hecke correspondence
$$
\begin{tikzcd}
& S_{gKg^{-1} \cap K} \arrow{dl}{\pi_1} \arrow{r}{\psi_g} \arrow{drr}{\pi_2} & S_{K \cap g^{-1}Kg} \arrow{dr} & \\
S_K &  & & S_K
\end{tikzcd}
$$

Here $\pi_1$ is the level projection and $\pi_2$ is the composite of $\psi_g$ with the level projection to $S_K$. If $d = \dim S_K$, then $(\pi_1, \pi_2)$ defines a codimension $d$ cycle $Z$ on $S_K \times S_K$. Let $p_1,p_2:S_K\times S_K\to S_K$ denote the two projections. The map on the cohomology $H_c^i(S_K(\mathbb{C}), \overline{\mathbb{Q}}_\ell)$ induced by $T_g$ is
$$
    (p_1)_*\bigl([Z]\cup p_2^*(-)\bigr),
$$
where $[Z]$ is the cycle class of $Z$. The pushforward is applied to a class supported on $Z$, and is therefore defined because $p_1|_Z:Z\to S_K$ is proper.

\textbf{Galois action.}
The canonical model over $E$ gives a natural action of $\Gamma_E$ on
$$
    H^i_{c,\mathrm{et}}(S_{K,\overline{E}},\overline{\mathbb{Q}}_\ell).
$$
Through the fixed embedding $\overline{E}=\overline{\mathbb{Q}}\hookrightarrow\mathbb{C}$, the comparison isomorphism identifies this group with $H_c^i(S_K(\mathbb{C}),\overline{\mathbb{Q}}_\ell)$. Since the Hecke correspondences are defined over $E$, they also act on $\ell$-adic cohomology, and the actions of $\Gamma_E$ and $\mathcal{H}(G(\mathbb{A}_f)//K)$ on $H^i_{c,\mathrm{et}}(S_{K,\overline{E}},\overline{\mathbb{Q}}_\ell)$ commute.

\textbf{Coefficient systems.}
Let $\xi$ be an algebraic representation of $G$ on a finite dimensional vector space over a number field contained in $\overline{\mathbb{Q}}_\ell$. It determines an $\ell$-adic local system $\mathcal{F}_{\xi}$ on $S_K$ over $E$, and the Galois and Hecke actions described above are defined on $H^i_{c,\mathrm{et}}(S_{K,\overline{E}},\mathcal{F}_{\xi})$ in the same way; see $\S$ 1 of \cite{MR1044820}. Since $\ell \neq p$, the local system $\mathcal{F}_{\xi}$ extends over the integral model of Section \ref{newtonstratdetails} and restricts to its special fibre, hence to each Newton stratum; see $\S$ 1.8 of \cite{kisin2021stabletraceformulashimura}. Taking $\xi$ trivial returns the constant coefficients $\overline{\mathbb{Q}}_\ell$ used above, and the discussion of Hecke correspondences in this section, as well as the whole of Section \ref{GUexample}, is written with $\xi$ trivial for simplicity.

\subsubsection{Newton stratification and automorphic forms}\label{newtonstratdetails}
Now assume that $(G,X)$ is an abelian type Shimura datum. We say a rational prime $p$ is \textit{good for $G$} if $G_{\mathbb{Q}_p}$ is unramified, that is, if it is quasi-split and split over an unramified extension of $\mathbb{Q}_p$. Choose a reductive group scheme $\mathcal{G}$ over $\mathbb{Z}_p$ with generic fibre $G_{\mathbb{Q}_p}$. Recall that $\ell \neq p$, since below we use $\ell$-adic étale cohomology of the special fibre.

Put $K_{p,0} = \mathcal{G}(\mathbb{Z}_p) \subset G(\mathbb{Q}_p)$, a hyperspecial subgroup. Fix a place $v$ of the reflex field $E$ above $p$. For a sufficiently small compact open $K^p \subset G(\mathbb{A}_f^p)$, we denote by $\mathcal{S}_{K^p}$ the canonical integral model of $S_K$ over $\mathcal{O}_{E_v}$, where $K = K^p K_{p,0}$. Its existence for abelian type Shimura data at hyperspecial level was established by Kisin \cite{MR2669706} for $p>2$, building on Vasiu \cite{MR1796512}, and by Kim and Madapusi Pera \cite{MR3569319} for $p=2$. Let $\mathbb{F}_q$ denote the residue field of $\mathcal{O}_{E_v}$ and $\overline{S}_{K^p}$ denotes the special fibre of $\mathcal{S}_{K^p}$, hence a variety over $\mathbb{F}_q$.

\textbf{Newton stratification.}
Put $L=W(\overline{\mathbb{F}}_p)[1/p]$, and let $\sigma$ denote the Witt-vector Frobenius induced by $x\mapsto x^p$ on $\overline{\mathbb{F}}_p$. Denote by $B(G_{\mathbb{Q}_p})$ the set of $\sigma$-conjugacy classes in $G(L)$, where $b,b'\in G(L)$ are $\sigma$-conjugate if
$$
    b'=g b\sigma(g)^{-1}
$$
for some $g\in G(L)$. Equivalently, $B(G_{\mathbb{Q}_p})$ is the set of isomorphism classes of isocrystals with $G_{\mathbb{Q}_p}$-structure; see \cite[\S 1]{MR1485921} and Definition \ref{B(G)def}. Choose an embedding $\overline{\mathbb{Q}}\hookrightarrow\overline{\mathbb{Q}_p}$ inducing the place $v$. Via this embedding, the $G(\overline{\mathbb{Q}})$-conjugacy class of $\mu_h$ determines a $G(\overline{\mathbb{Q}_p})$-conjugacy class of cocharacters. Kottwitz associates to the inverse of this class a finite subset $B(G_{\mathbb{Q}_p}, \mu_h^{-1}) \subset B(G_{\mathbb{Q}_p})$; see \cite[\S 6]{MR1485921} and Remark \ref{BGmudef}. The construction of the integral model supplies a Newton map
$$
\overline{S}_{K^p}(\overline{\mathbb{F}}_q)\longrightarrow B(G_{\mathbb{Q}_p}),
\qquad x\longmapsto b_x.
$$
For the construction in the Hodge-type case and its passage to the abelian-type case, see \cite[\S 1.3]{MR4484214}, \cite[\S 4.6]{MR3630089}, and \cite[\S\S 5.2, 6.2]{kisin2021stabletraceformulashimura}. Throughout this paper $b_x$ is normalised \textit{covariantly}, as in \cite{MR1044820} and \cite{kisin2021stabletraceformulashimura}: when the Shimura variety is a moduli space of abelian varieties, $b_x$ is the class of the covariant Dieudonn\'e module of $A[p^{\infty}]$. Then $\kappa_G(b_x)$ is the restriction of $-\mu_h^{\natural}$, so the image of the map lies in $B(G_{\mathbb{Q}_p},\mu_h^{-1})$; see Remark \ref{BGmudef} and Convention \ref{newtonnormalisation}.

The strata are constructed in two stages. For $(G,X)$ of Hodge type there is a universal abelian scheme, and Kisin's construction of the integral model attaches to it an $F$-isocrystal with $G_{\mathbb{Q}_p}$-structure over $\overline{S}_{K^p}$; the loci of constant Newton point are then locally closed by the semicontinuity theorem of Rapoport--Richartz \cite[Thm.~3.6]{MR1411570}, which holds for such an object over an arbitrary base of characteristic $p$. For $(G,X)$ of abelian type there is no abelian scheme at all, and the stratification is obtained from the Hodge type case by passing to the adjoint group and an auxiliary Hodge type datum \cite[\S 2.3]{MR4557079}. In either case one obtains, for each $b\in B(G_{\mathbb{Q}_p},\mu_h^{-1})$, a locally closed subscheme $\overline{S}_{K^p}(b)\subset\overline{S}_{K^p}$ defined over $\mathbb{F}_q$ \cite[Thm.~2.2.2, Thm.~2.3.6]{MR4557079}; these are the Newton strata. In particular $\Frob_q$ acts on the cohomology of $\overline{S}_{K^p}(b)_{\overline{\mathbb{F}}_q}$, as Theorem \ref{maintheorem} requires.

Let $\pi$, $\psi_g$ denote the morphisms as before. In the case when $K$, $K'$ and $g$ are of the special forms $K' = K'^pK_{p,0} \subset K = K^p K_{p,0}$ and $g = (g^p, 1) \in G(\mathbb{A}_f^p) \times G(\mathbb{Q}_p)$, these morphisms can be extended to morphisms on the canonical integral models of the corresponding Shimura varieties. Denote by $\overline{\pi}$ and $\overline{\psi_g}$ their base change to the corresponding special fibres. The Newton stratification satisfies the following properties with respect to these maps:

\begin{enumerate}
    \item For compact, open subgroups $K'^p \subset K^p$, $\overline{S}_{K'^p}(b)$ is the pullback of $\overline{S}_{K^p}(b)$ via $\overline{\pi}$
    \item For $g^p \in G(\mathbb{A}_f^p)$ $\overline{S}_{K^p}(b)$ is the pullback of $\overline{S}_{(g^p)^{-1}K^pg^p}(b)$ via $\overline{\psi_g}$.
\end{enumerate}

Hence, the Hecke algebra $\mathcal{H}(G(\mathbb{A}_f^p) // K^p)$ acts on the compactly supported $\ell$-adic cohomology groups $H^i_{c,\mathrm{et}}(\overline{S}_{K^p}(b)_{\overline{\mathbb{F}}_q}, \overline{\mathbb{Q}}_\ell)$. For $g^p\in G(\mathbb{A}_f^p)$, put
$$
    T_{g^p}=\frac{1}{\vol(K^p)}\mathbf{1}_{K^pg^pK^p}.
$$
The action of $T_{g^p}$ is induced by the restriction of the preceding Hecke correspondence to the Newton stratum:
$$
\begin{tikzcd}
\overline{S}_{g^pK^p(g^p)^{-1} \cap K^p}(b) \arrow{d}{\overline{\pi_1}} \arrow{r}{\overline{\psi_g}} \arrow{dr}{\overline{\pi_2}} & \overline{S}_{K^p \cap (g^p)^{-1}K^pg^p}(b) \arrow{d} \\
\overline{S}_{K^p}(b) & \overline{S}_{K^p}(b)
\end{tikzcd}
$$
where $g=(g^p,1)$ and $\overline{\pi_1},\overline{\pi_2}$ are the restrictions of the previously defined maps to the Newton strata.

\textbf{Lefschetz numbers.}
Let $\Frob_q \in \Gamma_{\mathbb{F}_q}$ denote the geometric Frobenius. Fix a Hecke function $f^p \in \mathcal{H}(G(\mathbb{A}_f^p) // K^p)$ and a representation $\xi$ as above. The Lefschetz number $c_b(n, f^p, \xi)$ of the operator $\Frob_q^n \times f^p$ on the compactly supported cohomology of the Newton stratum $\overline{S}_{K^p}(b)$ with coefficients in $\mathcal{F}_{\xi}$ is defined by
$$
c_b(n,f^p,\xi) = \sum_i (-1)^i \tr\left(\Frob_q^n \times f^p \mathrel{\Big|} H^i_{c,\mathrm{et}}\left(\overline{S}_{K^p}(b)_{\overline{\mathbb{F}}_q}, \mathcal{F}_{\xi}\right)\right).
$$

Having phrased things more precisely, we can reformulate our earlier question:

\begin{Q}\label{lknumberquestion}
    Is there a description of the Lefschetz number $c_b(n,f^p,\xi)$ in terms of data associated with automorphic representations?
\end{Q}

\subsubsection{Main result}
\begin{itemize}
    \item $\Hat{G}$ denotes the dual group over $\mathbb{C}$ of a connected reductive group $G$ over a field $F$; see Definition \ref{Lgroupdef}
    \item $\pi_1(G)$ denotes the algebraic fundamental group for a connected reductive group $G$ over a field $F$; see the discussion preceding Remark \ref{observation}
    \item $\mathcal{N}(G)$ denotes the set of Newton points for a connected reductive group $G$ over a $p$-adic field $F$; see Section \ref{newton}
\end{itemize}

For a group $G$ over a $p$-adic field $F$, there are maps $\overline{\nu}_G: B(G) \xrightarrow[]{} \mathcal{N}(G)$, $\kappa_G: B(G) \xrightarrow[]{} \pi_1(G)_{\Gamma_F}$ and $\delta_G: \mathcal{N}(G) \xrightarrow[]{} \pi_1(G)^{\Gamma_F} \otimes \mathbb{Q}$, recalled respectively in Definition \ref{newtonmap}, Lemma \ref{kottwitzmap}, and the construction preceding Lemma \ref{Newonkottwitzcharisoc}, where $\pi_1(G)_{\Gamma_F}$ and $\pi_1(G)^{\Gamma_F}$ denote the coinvariants and invariants respectively under the canonical action of $\Gamma_F$ on $\pi_1(G)$ (see \cite{MR1411570}). The following square commutes by Lemma \ref{Newonkottwitzcharisoc}, where the horizontal arrow in the bottom is the map $\overline{\mu} \mapsto \mid \Gamma_F \cdot \mu \mid ^{-1} \sum_{\mu' \in \Gamma_F \cdot \mu} \mu'$

\begin{center}
\begin{tikzcd}
    B(G) \arrow{r}{\overline{\nu}_G} \arrow{d}{\kappa_G} & \mathcal{N}(G) \arrow{d}{\delta_G} \\
    \pi_1(G)_{\Gamma_F} \arrow{r} & \pi_1(G)^{\Gamma_F} \otimes \mathbb{Q}
\end{tikzcd}
\end{center}

Let $\mathcal{N}(G)_{\mathbb{Z}}$ denote the image of $B(G)$ under the map $\overline{\nu}_G$, which is a discrete subset of $\mathcal{N}(G)$. In the case when $G$ is quasi-split, this set has a description in terms of the root datum of $G$, using $\S 3$ of \cite{MR1781927}.

Let $(H,s,\eta)$ be an endoscopic triple for $G$, for a quasi-split connected reductive group $H$ over $F$, an element $s \in Z(\Hat{H})$ in the center and an embedding $\eta: \Hat{H} \xrightarrow[]{} \Hat{G}$ over $\mathbb{C}$ satisfying the conditions recalled in Lemma \ref{endotriples}; see also $\S 7$ of \cite{MR0757954}.

\begin{LemA}
For an endoscopic triple $\mathcal{H} =(H,s,\eta)$ for a connected reductive group $G$ over a $p$-adic field there are canonical maps $f^{\mathcal{H}}: \mathcal{N}(H) \xrightarrow[]{} \mathcal{N}(G)$ and a $\Gamma_F$-equivariant map $\psi^{\mathcal{H}}: \pi_1(H) \xrightarrow[]{} \pi_1(G)$. We abuse notation by also using $\psi^{\mathcal{H}}$ to denote the induced maps on the invariants and the coinvariants. The following diagram commutes
\begin{center}
\begin{tikzcd}
    \mathcal{N}(H) \arrow{r}{\delta_H} \arrow{d}{f^{\mathcal{H}}} & \pi_1(H)^{\Gamma_F} \otimes \mathbb{Q} \arrow{d}{\psi^{\mathcal{H}}} & \pi_1(H)_{\Gamma_F} \arrow{l} \arrow{d}{\psi^{\mathcal{H}}} \\
    \mathcal{N}(G) \arrow{r}{\delta_G} & \pi_1(G)^{\Gamma_F} \otimes \mathbb{Q} & \pi_1(G)_{\Gamma_F} \arrow{l}
\end{tikzcd}
\end{center}

The map $f^{\mathcal{H}}:\mathcal{N}(H) \xrightarrow[]{} \mathcal{N}(G)$ has finite fibres.
\end{LemA}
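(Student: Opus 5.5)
The plan is to build both maps from the dual-group embedding $\eta:\hat H\to\hat G$. Because $(H,s,\eta)$ is an endoscopic triple, $\eta$ identifies $\hat H$ with $Z_{\hat G}(s)^0$. In particular, a maximal torus $\hat T_H\subset\hat H$ is sent isomorphically onto a maximal torus $\hat T\subset\hat G$. Dually, this gives an identification $X_*(T_H)\cong X^*(\hat T_H)\cong X^*(\hat T)\cong X_*(T)$ of cocharacter lattices, well defined up to the Weyl group $W_G$, together with an inclusion $W_H\hookrightarrow W_G$. I would then proceed in three steps.

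First, define $\psi^{\mathcal H}$. Recall that $\pi_1(G)=X_*(T)/\langle\text{coroots of }G\rangle$, which equals $X^*(\hat T)/\langle\text{roots of }\hat G\rangle = X^*(Z(\hat G))$. Since $\eta$ restricts to an inclusion $Z(\hat G)\hookrightarrow Z(\hat H)$, restriction of characters gives $X^*(Z(\hat H))\to X^*(Z(\hat G))$, that is, $\pi_1(H)\to\pi_1(G)$. Concretely, this is the quotient map $X_*(T_H)/\check\Phi_H \to X_*(T)/\check\Phi_G$, which makes sense because $\check\Phi_H\subset\check\Phi_G$. It is canonical because $W_G$ acts trivially on $\pi_1(G)$. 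For $\Gamma_F$-equivariance, I would use that $\eta$ is $\Gamma_F$-equivariant up to $\hat G$-conjugacy; this is part of the conditions in Lemma \ref{endotriples}. Conjugation acts trivially on $Z(\hat G)$, so the induced map on $X^*(Z(\cdot))$ is genuinely equivariant. Equivariance then immediately gives the induced maps on invariants and on coinvariants.

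Second, define $f^{\mathcal H}$. Take $\mathcal N(H)=(X_*(T_H)_{\mathbb Q}/W_H)^{\Gamma_F}$ as in Section \ref{newton}. Send the class of $\nu$ to the $W_G$-orbit of its image under $X_*(T_H)_{\mathbb Q}\cong X_*(T)_{\mathbb Q}$. This is well defined because $W_H\subset W_G$, and independent of choices because the identification is canonical up to $W_G$. The harder point is $\Gamma_F$-invariance of the image. Here I would argue that the $\Gamma_F$-actions on $X_*(T)_{\mathbb Q}/W_G$ and on $X_*(T_H)_{\mathbb Q}/W_H$ are the ones induced from the $L$-actions on $\hat T$ and $\hat T_H$. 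Since $\eta$ intertwines these actions up to conjugation, and conjugation becomes trivial after passing to $W_G$-orbits, $\Gamma_F$-fixed points map to $\Gamma_F$-fixed points. I expect this to be the main obstacle: the Galois action on $\hat H$ is not the restriction of that on $\hat G$ but is twisted by elements of $\hat G$ through the $\mathcal H$-structure. So I would work with the quasi-split forms and a Galois-stable based root datum, where the $L$-action on $\mathcal N$ is determined by its image in $\Out$.

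Third, check commutativity and finite fibres. In the left square, $\delta_G$ sends $\nu$ to its image in $\pi_1(G)\otimes\mathbb Q=X_*(T)_{\mathbb Q}/\check\Phi_{G,\mathbb Q}$, composed with averaging, which is harmless because $\nu$ is already invariant. Since $\check\Phi_H\subset\check\Phi_G$, projecting $\nu\in X_*(T_H)_{\mathbb Q}$ first to $\pi_1(H)_{\mathbb Q}$ and then by $\psi^{\mathcal H}$ gives the same result as projecting directly; this is a commutative diagram of quotients. The right square commutes because the horizontal maps are Galois averaging, which is natural in $\Gamma_F$-equivariant maps. For finite fibres, the map $X_*(T_H)_{\mathbb Q}/W_H\to X_*(T)_{\mathbb Q}/W_G$ on underlying sets has fibres of size at most $[W_G:W_H]$: a fibre over the $W_G$-orbit of $\nu$ consists of $W_H$-orbits inside a single $W_G$-orbit, of which there are at most $|W_G|/|W_H|$. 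Restricting to $\Gamma_F$-invariants preserves finiteness, which completes the proof.
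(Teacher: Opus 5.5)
Your proposal is correct and has the same architecture as the paper's proof. $\psi^{\mathcal H}$ is restriction of characters along $Z(\Hat G)\subset Z(\Hat H)$. $f^{\mathcal H}$ comes from identifying cocharacter spaces and the inclusion $\Omega_H\subset\Omega_{G_0}$. The diagram commutes because the coroot lattice of $H$ sits inside that of $G$ and Galois averaging is natural. The fibres are finite because an $\Omega_{G_0}$-orbit contains only finitely many $\Omega_H$-orbits.

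The one genuine difference is how you get $\Gamma_F$-invariance of the image of $f^{\mathcal H}$. The paper never confronts the twisting of $L$-actions that you flag as the main obstacle. It chooses a maximal torus $T_H$ of $H$ defined over $F$ and an admissible embedding $\iota:T_H\to G_0$ that is itself defined over $F$. Such an embedding exists because $G_0$ is quasi-split (Lemma \ref{quasisplit}), so $\iota_*$ is literally $\Gamma_F$-equivariant on cocharacters.

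Your dual-side argument also works, but two points need to be made precise:
\begin{itemize}
\item To justify ``conjugation becomes trivial after passing to $W_G$-orbits'', choose pinnings of $\Hat H$ and $\Hat G$ with a common torus $\mathscr T$, as in Remark \ref{rootdataendogrp}. By condition (2) of Lemma \ref{endotriples}, some $g\in\Hat G$ satisfies $\sigma_G\circ\eta\circ\sigma_H^{-1}=\Int(g)\circ\eta$. Both sides carry $\mathscr T$ onto $\mathscr T$, so $g$ normalizes $\mathscr T$ and acts on $X^*(\mathscr T)$ through $\Omega_{\Hat G}$.
\item You need the standard fact that the Galois action on $X_*(T)_{\mathbb Q}/\Omega$ coming from an $F$-rational maximal torus agrees with the one coming from the $L$-action.
\end{itemize}

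Your route avoids the rationality theorem for admissible embeddings. The paper's route has the advantage that $f^{\mathcal H}$ is described directly through $F$-rational admissible embeddings. That is exactly the form used later in Section \ref{completionmainproof} to obtain $f^{\mathcal H}(\overline\nu_H(\gamma_H))=\overline\nu_G(\gamma_0)$ for related $\gamma_H$ and $\gamma_0$.
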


The four assertions in Lemma A are proved in Lemmas \ref{endoscopicnewtonmap}, \ref{endoscopicpimap}, \ref{endoscopiccompatibility}, and \ref{endoscopicfinitefibres}.

\textbf{Endoscopic contributions to a stratum.} 
For a Shimura datum $(G,X)$, let $\Eell(G)$ denote the set of equivalence classes of elliptic endoscopic triples for $G$. The set $\Eell(G)$ is finite and admits a description in terms of the root datum of $G$. For $\mathcal{H}=(H,s,\eta)\in\Eell(G)$, apply Lemma A to the localization of $\mathcal{H}$ at $p$, and continue to denote the resulting maps by $f^{\mathcal{H}}$ and $\psi^{\mathcal{H}}$. For $b\in B(G_{\mathbb{Q}_p},\mu_h^{-1})$, define $\Eell(G)_b$ to consist of those $\mathcal{H}\in\Eell(G)$ for which there exists $b_H\in B(H_{\mathbb{Q}_p})$ such that
$$
f^{\mathcal{H}}\bigl(\overline{\nu}_H(b_H)\bigr)=\overline{\nu}_G(b),
\qquad
\psi^{\mathcal{H}}\bigl(\kappa_H(b_H)\bigr)=\kappa_G(b).
$$
This condition can be checked from the root data of $H$ and $G$. Moreover, only finitely many elements of $B(H_{\mathbb{Q}_p})$ satisfy these two conditions, by \ref{finitelymanybH}.

\textbf{Truncation function.} Let $j=n[\mathbb{F}_q:\mathbb{F}_p]$, so that $p^j=q^n$. Define
$$
\chi_{b,n}^{\mathcal H}:H(\mathbb Q_p)\longrightarrow\{0,1\}
$$
to be the characteristic function of the set of elements $x_H\in H(\mathbb Q_p)$ satisfying
$$
f^{\mathcal H}\bigl(\overline{\nu}_H([x_H])\bigr)=j\cdot\overline{\nu}_G(b),
$$
where $[x_H]\in B(H_{\mathbb Q_p})$ denotes the $\sigma$-conjugacy class of the image of $x_H$ in $H(L)$. The subscript $n$ records the dependence of this function on $n$, through $j$. By contrast the set $\Eell(G)_b$ defined above does not depend on $n$: it is cut out by the condition $f^{\mathcal H}(\overline{\nu}_H(b_H))=\overline{\nu}_G(b)$, in which no factor of $j$ occurs. The two are compared at the end of Section \ref{completionmainproof}.

\begin{LemB}
    The function $\chi_{b,n}^{\mathcal{H}}$ is smooth and constant on stable conjugacy classes of semisimple elements in $H(\mathbb{Q}_p)$.
\end{LemB}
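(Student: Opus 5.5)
Write $H=H_{\mathbb{Q}_p}$ and $F=\mathbb{Q}_p$. The plan is to reduce $\chi_{b,n}^{\mathcal H}$ to a function of the Newton point of $x_H$ viewed as an element of $H(F)$ with trivial $\sigma$-action on its coefficients. Since $F$ is unramified, the $\sigma$-conjugacy class $[x_H]\in B(H)$ of an element $x_H\in H(F)\subset H(L)$ is computed from $x_H$ itself. Indeed, $\sigma$ fixes $x_H$, so the $p^s$-th iterate is $x_H\sigma(x_H)\cdots\sigma^{s-1}(x_H)=x_H^s$. Hence the Newton cocharacter $\nu_{x_H}$ is the "slope" cocharacter attached to $x_H$: after passing to the semisimple part, it records the valuations of the eigenvalues of $x_H$ in any faithful representation.

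Concretely, first take the Jordan decomposition $x_H=s_Hu$. The unipotent part does not affect the Newton point, since $\nu$ is determined by any power and is insensitive to unipotent elements (Definition \ref{newtonmap}). So $\overline{\nu}_H([x_H])=\overline{\nu}_H([s_H])$. Next choose a maximal torus $T\subset H$ over $F$ containing $s_H$. The Newton point of $s_H$ is then the image in $\mathcal{N}(H)$ of the element $\nu_T(s_H)\in X_*(T)\otimes\mathbb{Q}$ characterised by
$$\langle \chi,\nu_T(s_H)\rangle=\val(\chi(s_H))$$
for all $\chi\in X^*(T)$. Here we average over $\Gamma_F$, since $\chi(s_H)$ lives in a finite extension, and use the normalised valuation.

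This description immediately gives constancy on stable conjugacy classes of semisimple elements. If $s'_H=gs_Hg^{-1}$ with $g\in H(\overline F)$, then $\mathrm{Int}(g)$ carries $T$ to $T'$, and the eigenvalue valuations are preserved. So $\nu_{T'}(s'_H)$ and $\nu_T(s_H)$ are $H(\overline F)$-conjugate and have the same image in $\mathcal{N}(H)=(X_*(T)_{\mathbb{Q}}/W)^{\Gamma_F}$. Since $\chi_{b,n}^{\mathcal H}$ is the pullback under $f^{\mathcal H}$ of the indicator of the single point $j\overline{\nu}_G(b)$, it is constant on stable classes. The same argument shows more: $\chi$ depends only on the semisimple part, and is invariant under $H(F)$-conjugation.

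For smoothness (local constancy), the key step is to show that $x\mapsto\overline{\nu}_H([x])$ is locally constant on $H(F)$. Fix a faithful representation $\rho:H\to\GL_N$. The valuations of the roots of the characteristic polynomial of $\rho(x)$ are determined by its Newton polygon, and the coefficients of this polynomial vary continuously in $x$. For $x'$ near $x$ the coefficients are close, so the Newton polygon, which depends on $\val$ of finitely many nonzero coefficients, is locally constant. The one subtlety is coefficients that vanish at $x$; I expect this to be the main obstacle. Since $\rho(x)$ is invertible, the constant term is nonzero. Zero intermediate coefficients lie strictly above the polygon, so small perturbations keep their valuations large, above the polygon, and the polygon is unchanged.

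Newton polygons of $\rho$ alone determine $\nu$ only up to more than $W$-conjugacy in general. To fix this, I would use the finitely many fundamental (or enough) representations $\rho_i$ together with the central characters. Equivalently, I would use the invariant-theoretic fact that a dominant rational cocharacter is determined by the functions $\lambda\mapsto\max$ weight pairing in each irreducible representation. For each $\rho_i$, the largest slope of $\rho_i(x)$ equals $\langle\lambda_i,\nu_{\mathrm{dom}}\rangle$ for the highest weight $\lambda_i$. These are all locally constant by the polygon argument, so the dominant $\nu$, hence $\overline\nu_H([x])$, is locally constant. The level sets of a locally constant function are open and closed, so $\chi_{b,n}^{\mathcal H}$ is smooth. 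The compactness of support is not claimed, and would come from multiplying by $h_p$.
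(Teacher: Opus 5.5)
Your proposal is correct. It follows the same skeleton as the paper: the Newton point of $x_H\in H(\mathbb{Q}_p)$ is given by the eigenvalue valuations of its semisimple part, this invariant is locally constant, and it is invariant under stable conjugacy. The two substantive steps, however, are proved differently.

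\textbf{Local constancy.} The paper (Lemma \ref{truncationcontinuous}) reduces to $\GL_n$ through a fundamental set of representations. For $\GL_n$ it proves continuity of roots via the Hensel--Rychl\'{\i}k lemma (Lemmas \ref{henselrychlik}, \ref{continuityofroots}). It then needs the separation Lemma \ref{arnolemma}, applied to the elements $(d\nu)(\pi)$, to recover the Weyl orbit of $\nu$ from its images in the $\GL_{n_i}$. You avoid both of these ingredients.
\begin{itemize}
\item You never need the roots, only their valuations, and you read these off the Newton polygon of the characteristic polynomial. Its local constancy is just the ultrametric inequality; your handling of vanishing coefficients, pushing them above the polygon's maximum height on $[0,N]$, is right.
\item You replace the separation lemma by linear algebra. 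The largest slope of $\rho_i(x)$ equals $\langle\lambda_i,\nu_{\mathrm{dom}}\rangle$. So finitely many representations whose highest weights span $X^*(T)\otimes\mathbb{Q}$, central characters included, determine $\nu_{\mathrm{dom}}$.
\end{itemize}
This route is shorter and more elementary than the paper's.

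\textbf{Stable conjugacy.} The paper cites Lemma \ref{isocrystalstableconj}, which shows the full class in $B(H)$ (Newton and Kottwitz point) is a stable invariant. You prove only invariance of the Newton point, but under $H(\overline{\mathbb{Q}}_p)$-conjugacy. That is all $\chi_{b,n}^{\mathcal H}$ sees, and it is the route of the remark following Lemma \ref{equalityofslopes}.

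\textbf{Two points to tidy.}
\begin{itemize}
\item Insensitivity of the Newton point to the unipotent part does not follow from Definition \ref{newtonmap}. It is the general case of Lemma \ref{equalityofslopes} (via Lemma \ref{unipotentlemma}), or alternatively the standard Dieudonn\'e--Manin computation for $(L^N,\rho(x)\sigma)$, which only sees eigenvalues.
\item If $s_H$ is not regular, $\Int(g)$ need not carry $T$ to $T'$. Either adjust $g$ by an element of $H^0_{s'_H}(\overline{\mathbb{Q}}_p)$, or note that the eigenvalue valuations of each $\rho_i(s_H)$ are conjugation invariant; your highest-weight argument then already fixes the Weyl orbit, with no tori needed.
\end{itemize}

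Your argument also shows that the finiteness of $(f^{\mathcal H})^{-1}(j\,\overline{\nu}_G(b))$, invoked in the paper's proof, is unnecessary for smoothness. The preimage of a point under a locally constant map to a discrete set is already open and closed.
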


Although $\chi_{b,n}^{\mathcal{H}}$ need not be compactly supported, the product $h_p\chi_{b,n}^{\mathcal{H}}$ is compactly supported, since $h_p\in\mathcal{C}^{\infty}_c(H(\mathbb{Q}_p))$.

\textbf{Stable trace formula for Newton strata.}
With this notation, the main theorem takes the following form. It is the analogue for Newton strata of Theorem 7.2 of \cite{MR1044820}: the set $\Eell(G)_b$ determines which endoscopic triples contribute, while $\chi_{b,n}^{\mathcal H}$ isolates the contribution of the stratum $b$ on $H$.

\begin{T}
    Let $(G,X)$ be a Shimura datum of abelian type satisfying the conditions:
    \begin{enumerate}
        \item $G^{\text{der}}$ is simply connected
        \item The maximal $\mathbb{Q}$-split torus is equal to the maximal $\mathbb{R}$-split torus in the center $Z(G)$
    \end{enumerate}
    Fix $b\in B(G_{\mathbb Q_p},\mu_h^{-1})$ and $f^p\in\mathcal H(G(\mathbb A_f^p)//K^p)$, and retain the notation introduced above. For all sufficiently large $n$, the Lefschetz number $c_b(n, f^p)$ is given by
    \begin{equation*}
		\sum_{\mathcal{H} \in \Eell(G)_b} \iota (G,H) \ST_e^H(h^p \cdot h_p \chi_{b,n}^{\mathcal{H}} \cdot h_{\infty})
	\end{equation*}
    Here $\ST_e^H(-)$ denotes the elliptic part of the geometric side of the stable trace formula for $H$. For each $\mathcal H$, the factors $h^p$, $h_p$, and $h_\infty$ are those occurring in the function constructed for $\mathcal H$ in Theorem 7.2 of \cite{MR1044820}; in the formula above, its $p$-component is replaced by $h_p\chi_{b,n}^{\mathcal H}$.
\end{T}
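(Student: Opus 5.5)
The proof follows the Langlands--Kottwitz method for the whole Shimura variety, inserting a Newton-stratum restriction at each step. It has four steps.

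\textbf{Step 1: fixed-point formula for the stratum.} The starting point is the Lefschetz--Verdier/Fujiwara fixed-point formula, valid for sufficiently large $n$, applied to the restriction of $\Frob_q^n\times f^p$ to $\overline{S}_{K^p}(b)$. The Newton strata are stable under Hecke correspondences and Frobenius (properties (1) and (2) above), and a fixed point of $\Frob_q^n\times f^p$ on $\overline{S}_{K^p}$ lies in $\overline{S}_{K^p}(b)$ exactly when its isocrystal has class $b$. Hence $c_b(n,f^p,\xi)$ is the sub-sum of the Kottwitz--Kisin--Shin--Zhu point-counting expression over those fixed points whose Newton point is $b$.

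Concretely, I would invoke the Langlands--Rapoport--Kottwitz type description of the fixed points, which is Kisin--Shin--Zhu in the abelian type case. This expresses the count as
\[
\sum_{(\gamma_0;\gamma,\delta)} c(\gamma_0;\gamma,\delta)\,\Or_\gamma(f^p)\,\TO_\delta(\phi_n)\,\tr\xi(\gamma_0).
\]
The stratum condition is that the $\sigma$-conjugacy class $[\delta]\in B(G_{\mathbb{Q}_p})$ equals $b$. Since $\nu$ of the norm $N\delta$ is $j\,\overline{\nu}_G([\delta])$, and $\kappa_G([\delta])$ is already forced to be $-\mu_h^\natural$, this is a condition on $\delta$ alone. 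The result is the same formula with $\phi_n$ replaced by $\phi_n\cdot\mathbf{1}_{[\delta]=b}$. The twisted orbital integral only sees the $\sigma$-conjugacy class of $\delta$, so the truncated function still yields a twisted orbital integral of a compactly supported smooth function. This is the Newton-stratum analogue of Kottwitz's formula (3.1) and is proved as in Kret/Liu.

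\textbf{Step 2: pass to stable classes via $\gamma_0$.} Next I would rewrite the condition $[\delta]=b$ in terms of $\gamma_0$. The element $\gamma_0$ is stably conjugate to $N\delta$, and the class of $\delta$ is determined by $\gamma_0$ through the Newton point of $\gamma_0$ viewed in $B(G_{\mathbb{Q}_p})$. More precisely, $\overline{\nu}_G(b)=j^{-1}\overline{\nu}_G([\gamma_0])$. Two elements of $B$ with the same basic/Kottwitz invariant and the same Newton point coincide, and $\kappa$ is fixed. So the condition becomes $\overline{\nu}_G([\gamma_0])=j\,\overline{\nu}_G(b)$, a condition constant on stable classes of semisimple $\gamma_0$.

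Kottwitz's stabilization (Sections 4--7 of \cite{MR1044820}) then runs verbatim. Write the $\kappa$-sum via $\mathfrak{K}(I_0)$, pass each $\kappa$ to an endoscopic datum $\mathcal{H}$ and an element $\gamma_H$ with image $\gamma_0$, and match terms with $\ST_e^H(h)$. The factor $\mathbf{1}[\overline{\nu}_G([\gamma_0])=j\overline{\nu}_G(b)]$ travels along. On the $H$-side it becomes $\chi^{\mathcal{H}}_{b,n}(\gamma_H)$ by the commutativity of Lemma A, because the Newton point of $\gamma_0=\eta(\gamma_H)$ is $f^{\mathcal{H}}(\overline{\nu}_H([\gamma_H]))$. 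Lemma B, which I take as given, guarantees that $h_p\chi^{\mathcal{H}}_{b,n}$ is smooth and compactly supported. Its stable orbital integrals are those of $h_p$ multiplied by the constant value of $\chi$ on each stable class, which justifies substituting it into $\ST_e^H$.

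\textbf{Step 3: identify the surviving data $\Eell(G)_b$.} Finally I would show that every $\mathcal{H}\notin\Eell(G)_b$ contributes zero. Suppose $\chi^{\mathcal{H}}_{b,n}(\gamma_H)\neq0$ and $h_p$'s stable orbital integral at $\gamma_H$ is nonzero. Then $[\gamma_H]\in B(H_{\mathbb{Q}_p})$ has Newton point mapping to $j\overline{\nu}_G(b)$.

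To descend from $j$ to $1$, I would use the fact that $h_p$ is the transfer of a function supported on the twisted class with $\kappa$ equal to $-\mu^\natural$. This yields some $b_H\in B(H_{\mathbb{Q}_p})$ with $\overline{\nu}_H(b_H)=j^{-1}\overline{\nu}_H([\gamma_H])$ and with $\psi^{\mathcal{H}}(\kappa_H(b_H))=\kappa_G(b)$. The Kottwitz-map compatibility in Lemma A is what forces the $\kappa$-condition. So $\mathcal{H}\in\Eell(G)_b$, and the remaining terms drop out.

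\textbf{Main obstacle.} I expect the hardest point to be this last descent. One must construct $b_H$ with the prescribed Kottwitz invariant from a $\gamma_H$ in the support of $h_p$ (the $h_p$ of \cite{MR1044820}). I would try to realise $b_H$ as the image of a $\delta_H$ with $N\delta_H$ stably conjugate to $\gamma_H$ in $H(\mathbb{Q}_{p^j})$. This uses that $h_p$ is a base-change transfer of the truncated $\phi^H_n$, and it requires the unramified structure at $p$.
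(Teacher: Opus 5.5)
Your Steps 1 and 2 are essentially the paper's argument. You restrict the Kisin--Shin--Zhu fixed-point formula to $[\delta]=b$, turn this into $\overline{\nu}_G(\gamma_0)=j\,\overline{\nu}_G(b)$ using Lemma \ref{normisolem2} and the equality of Kottwitz points, and move it to $\chi^{\mathcal H}_{b,n}(\gamma_H)$ via Lemma B. One correction: the identity $f^{\mathcal H}(\overline{\nu}_H(\gamma_H))=\overline{\nu}_G(\gamma_0)$ comes from the admissible-embedding definition of $f^{\mathcal H}$ and Lemma \ref{isocrystalstableconj}, not from the diagram of Lemma A.

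The gap is Step 3, showing that only $\mathcal H\in\Eell(G)_b$ contribute. There are two problems.
\begin{itemize}
\item Lemma A cannot force the Kottwitz condition. Its diagram compares Kottwitz points only after passing to $\pi_1(-)^{\Gamma_{\mathbb{Q}_p}}\otimes\mathbb{Q}$. So from $f^{\mathcal H}(\overline{\nu}_H(b_H))=\overline{\nu}_G(b)$ you get $\psi^{\mathcal H}(\kappa_H(b_H))=\kappa_G(b)$ only modulo the torsion of $\pi_1(G)_{\Gamma_{\mathbb{Q}_p}}$. That torsion is nonzero in general even when $G^{\mathrm{der}}$ is simply connected; it is $\mathbb{Z}/2$ for $\mathrm{GU}(N)$ with $N$ even at an inert $p$. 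There is also no map $B(H)\to B(G)$ to compare through.
\item The existence of $b_H$ is left as an ``obstacle''. Your suggested route needs an input you do not supply: that $h_p$ is a base change from $H(\mathbb{Q}_{p^j})$ of a function supported on cosets $K_H\mu_H(p^{-1})K_H$, with $\mu_H$ in the geometric conjugacy class of $\mu_h$. Only then would $\SO_{\gamma_H}(h_p)\neq 0$ force $\gamma_H$ to be a norm of some $\delta_H$ with known Kottwitz point. The identity (7.3) of Complement \ref{hconstruction} does not say this; it expresses $\SO_{\gamma_H}(h_p)$ through twisted orbital integrals of $\phi_j$ on $G(\mathbb{Q}_{p^j})$.
\end{itemize}
Your route could presumably be completed by decomposing the Satake transform of $b_{\widetilde{\eta}}(\phi_j)$ along $r_{-\mu}|_{\Hat{H}}$ and using the base-change fundamental lemma for $H$. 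The exact $\kappa$-condition would then come from $\mu_H$ and $\mu_h$ having the same image in $\pi_1(G)$, not from Lemma A. That is substantial extra input.

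The missing idea is that no descent on $H$ is needed: the witness $b_H$ is built from the $G$-side $\delta$. Such a $\delta$ with $\delta\mapsto b$ exists as soon as the $\mathcal H$-term is non-zero, by \eqref{endo} (or locally by (7.3) together with $\chi^{\mathcal H}_{b,n}(\gamma_H)=1$).
\begin{itemize}
\item Choose $c\in G(L)$ with $c\gamma_0c^{-1}=N\delta$ and set $\delta_0=c^{-1}\delta\sigma(c)\in I_0(L)$. Its class $b_{I_0}\in B(I_0)$ maps to $b$.
\item $b_{I_0}$ is basic, because $J_{\delta_0}\simeq I(p)$ is an inner form of $I_0$. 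By Lemma \ref{leviofcontracted} the centralizer of $\nu_{I_0}(\delta_0)$ is therefore all of $I_0$.
\item Since $\gamma_H$ is $(G,H)$-regular, $I_H$ and $I_0$ are inner forms with $Z(\Hat{I_H})=Z(\Hat{I_0})$. Corollary \ref{classificationofbasic} then gives a basic $b_{I_H}\in B(I_H)$ with the same Kottwitz point, hence the same Newton point. Let $b_H$ be its image in $B(H)$.
\item All the maps are induced by a single admissible embedding $T_H\to T\subset I_0$. Hence the squares built from $\mathcal N(I_H)\to\mathcal N(H)\to\mathcal N(G)$ and $\pi_1(I_H)\to\pi_1(H)\to\pi_1(G)$, together with the corresponding maps for $I_0\subset G$, commute integrally.
\item This yields $f^{\mathcal H}(\overline{\nu}_H(b_H))=\overline{\nu}_G(b)$ and the exact equality $\psi^{\mathcal H}(\kappa_H(b_H))=\kappa_G(b)$.
\end{itemize}
The factor is $1$ rather than $j$ because one starts from $\delta$ instead of from $N\delta\sim\gamma_0$.
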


\subsection{Example: $\mathrm{GU}(r,s)$}\label{GUexample}
The set $\Eell(G)_b$ introduced in Section \ref{details} records the elliptic endoscopic data that can contribute to the stabilized formula for a fixed Newton stratum. We now make this restriction explicit for a unitary Shimura variety: although there are non-trivial endoscopic contributions in the cohomology of the ambient Shimura variety, the intermediate Newton stratum considered below has only the trivial endoscopic contribution.

\subsubsection{The Shimura datum and its endoscopic groups}
Let $E_0/\mathbb{Q}$ be an imaginary quadratic extension, with non-trivial automorphism $a\mapsto\overline{a}$, and let $V$ be an $N$-dimensional Hermitian space over $E_0$ having signature $(r,s)$ at infinity, where $N=r+s$. Let $G=\mathrm{GU}(V)=\mathrm{GU}(r,s)$ be its group of unitary similitudes. Write $\mathbb{S}=\Res_{\mathbb{C}/\mathbb{R}}\mathbb{G}_m$. After identifying $V\otimes_{\mathbb Q}\mathbb R$ with $\mathbb C^N$ so that the Hermitian form has signature $(r,s)$, define
$$
h:\mathbb{S}\longrightarrow G_{\mathbb R},
\qquad
h(z)=\mathrm{diag}\bigl(zI_r,\overline{z}I_s\bigr).
$$
The multiplier of $h(z)$ is $z\overline{z}$. Let $X$ be the $G(\mathbb R)$-conjugacy class of $h$. Then $(G,X)$ is the PEL-type unitary Shimura datum considered here. In the case considered below, where $r\ne s$, its reflex field is $E_0$.

To describe its endoscopy, let $J_N$ be the anti-diagonal Hermitian matrix with entries $1$ on the anti-diagonal. The quasi-split unitary similitude group $\mathrm{GU}(N)$ is defined, for every $\mathbb{Q}$-algebra $R$, by
$$
\mathrm{GU}(N)(R)
=
\left\{
g\in\GL_N(E_0\otimes_{\mathbb Q}R)
\ \middle|\
{}^t\!\overline{g}J_Ng=c(g)J_N
\text{ for some }c(g)\in R^\times
\right\}.
$$
The scalar $c(g)$ defines the similitude character $c:\mathrm{GU}(N)\rightarrow\mathbb{G}_m$. This group is the quasi-split inner form of $G$.

For a decomposition $N=N_1+N_2$ with $N_1,N_2>0$, define
$$
H_{N_1,N_2}
=\mathrm{GU}(N_1)\times_{\mathbb G_m}\mathrm{GU}(N_2),
$$
where both maps to $\mathbb G_m$ are the similitude character $c$. Thus, for every $\mathbb Q$-algebra $R$,
$$
H_{N_1,N_2}(R)
=
\left\{
(g_1,g_2)\in\mathrm{GU}(N_1)(R)\times\mathrm{GU}(N_2)(R)
\ \middle|\ c(g_1)=c(g_2)
\right\}.
$$
This is precisely the group customarily denoted $\mathrm{G}\bigl(\mathrm{U}(N_1)\times\mathrm{U}(N_2)\bigr)$: both descriptions impose the equality of the two similitude factors.
Up to interchanging $N_1$ and $N_2$, these are the underlying groups of the non-trivial elliptic endoscopic data for $G$; the trivial endoscopic datum has underlying group $\mathrm{GU}(N)$. An endoscopic datum also includes an element $s$ and an $L$-embedding, which will not be needed for the present description.

\subsubsection{Integral Newton polygons for $\mathrm{GU}(N)$}
Fix a good prime $p$ which is inert in $E_0$, and write $F=\mathbb Q_p$ and $E=E_{0,p}$. Thus $E/F$ is unramified quadratic, and $G_F=\mathrm{GU}(N)_F$ is quasi-split because $p$ is good for $G$. Quasi-splitness at $p$ is a condition on the discriminant of the Hermitian form rather than a consequence of inertness, but it can always be arranged by choosing $V$ suitably, and is automatic when $N$ is odd. Its standard Levi subgroups are indexed by decompositions
$$
N=2(m_1+\cdots+m_t)+N_0.
$$
The Levi stabilizing the corresponding isotropic flag is
$$
M_{\boldsymbol m,N_0}\simeq
\prod_{i=1}^t\Res_{E/F}\GL_{m_i}\times
\begin{cases}
\mathrm{GU}(N_0),&N_0>0,\\
\mathbb G_m,&N_0=0.
\end{cases}
$$
Under this decomposition, the restriction of the similitude character to $M_{\boldsymbol m,N_0}$ is
$$
c_M((g_i)_i,g_0)=c(g_0)\quad\text{if }N_0>0,
\qquad
c_M((g_i)_i,z)=z\quad\text{if }N_0=0.
$$
Thus $c_M$ is projection to the final factor, followed by the similitude character when that factor is $\mathrm{GU}(N_0)$; the factors $\Res_{E/F}\GL_{m_i}$ do not affect it.

Now let $b\in B(G_F)$, let $d\in\mathbb Z$ be the slope of its similitude character, and write its slopes as
$$
\lambda_1^{m_1},\ldots,\lambda_t^{m_t},(d/2)^{N_0},
(d-\lambda_t)^{m_t},\ldots,(d-\lambda_1)^{m_1},
\qquad \lambda_1<\cdots<\lambda_t<d/2.
$$
By Lemma \ref{chailemma},
$$
M_{\nu_b}=Z_G(\nu_b)\simeq M_{\boldsymbol m,N_0},
$$
and $b$ is the image of a basic class in $B(M_{\nu_b})$. The same lemma gives the integrality condition $2m_i\lambda_i\in\mathbb Z$. Consequently, the component of $\mathcal N(G_{\mathbb Q_p})_{\mathbb Z}$ with similitude slope $d$ consists of the convex polygons from $(0,0)$ to $(N,Nd/2)$ whose breakpoints lie in $\mathbb Z\times\frac12\mathbb Z$ and whose ordered slopes satisfy
$$
\lambda_i+\lambda_{N+1-i}=d.
$$
The classes indexing Newton strata lie in $B(G_F,\mu_h^{-1})$ (Section \ref{newtonstratdetails}), so the component determined by the Shimura datum is the one with $d=-1$. Throughout this section we display instead the negatives of these polygons, so that $d=1$ and all slopes lie in $[0,1]$. Nothing is lost. Writing $P_{\nu}(k)=\lambda_1+\cdots+\lambda_k$ for the slopes in increasing order, the negated polygon is
$$
P_{-\nu}(k)=P_{\nu}(N-k)-P_{\nu}(N),
$$
from which one reads off that $P \mapsto -P$ is a bijection between the components $d=1$ and $d=-1$ of $\mathcal N(G_F)_{\mathbb Z}$ preserving integrality, and that it preserves the relation ``lies on or above'' between polygons with common endpoints. Moreover $f^{\mathcal H}$, computed below, takes the union of slope multisets and so commutes with it. The following two representative non-basic polygons illustrate the two possible forms of the final factor of $M_{\nu}$.

\begin{center}
\begin{minipage}{0.95\textwidth}
\centering
\begin{tikzpicture}[x=0.46cm,y=0.46cm]
    \begin{scope}
        \draw[->] (-0.3,0) -- (10.45,0);
        \draw[->] (0,-0.3) -- (0,5.45);
        \draw[thin,gray!55] (0,0) -- (10,5);
        \draw[densely dashed,gray] (5,-0.15) -- (5,5.25)
            node[above] {\scriptsize axis $x=N/2$};
        \draw[line width=1pt,RoyalBlue]
            (0,0) -- (1,0) -- (4,1) -- (6,2) -- (9,4) -- (10,5);
        \filldraw[black] (0,0) circle (1.8pt);
        \filldraw[black] (1,0) circle (1.8pt);
        \filldraw[black] (4,1) circle (1.8pt);
        \filldraw[black] (6,2) circle (1.8pt);
        \filldraw[black] (9,4) circle (1.8pt);
        \filldraw[black] (10,5) circle (1.8pt);
        \node[align=center] at (5,-1.05)
            {\scriptsize $0,(1/3)^3,(1/2)^2,(2/3)^3,1$\\[-1pt]
             \scriptsize $M_{\nu}\simeq\Res_{E/F}\GL_1\times\Res_{E/F}\GL_3\times\mathrm{GU}(2)$};
    \end{scope}

    \begin{scope}[xshift=7.25cm]
        \draw[->] (-0.3,0) -- (10.45,0);
        \draw[->] (0,-0.3) -- (0,5.45);
        \draw[thin,gray!55] (0,0) -- (10,5);
        \draw[densely dashed,gray] (5,-0.15) -- (5,5.25)
            node[above] {\scriptsize axis $x=N/2$};
        \draw[line width=1pt,BrickRed]
            (0,0) -- (2,0.5) -- (5,1.5) -- (8,3.5) -- (10,5);
        \filldraw[black] (0,0) circle (1.8pt);
        \filldraw[black] (2,0.5) circle (1.8pt);
        \filldraw[black] (5,1.5) circle (1.8pt);
        \filldraw[black] (8,3.5) circle (1.8pt);
        \filldraw[black] (10,5) circle (1.8pt);
        \node[align=center] at (5,-1.05)
            {\scriptsize $(1/4)^2,(1/3)^3,(2/3)^3,(3/4)^2$\\[-1pt]
             \scriptsize $M_{\nu}\simeq\Res_{E/F}\GL_2\times\Res_{E/F}\GL_3\times\mathbb G_m$};
    \end{scope}
\end{tikzpicture}

\small Two integral Newton polygons for $N=10$ and $d=1$. The dashed axis pairs segments whose slopes sum to $1$; the gray line has slope $1/2$.
\end{minipage}
\end{center}

\subsubsection{Integral Newton polygons for the endoscopic groups}
Let $H=H_{N_1,N_2}$. For $a=1,2$, choose a decomposition
$$
N_a=2(m_{a,1}+\cdots+m_{a,t_a})+N_{a,0}.
$$
The corresponding standard Levi subgroup of $H_F$ is
$$
M_H=M_{\boldsymbol m_1,N_{1,0}}
\times_{\mathbb G_m}
M_{\boldsymbol m_2,N_{2,0}}.
$$
Applying the preceding computation to the two factors gives
$$
\mathcal N(H_F)_{\mathbb Z}
=
\left\{
(P_1,P_2)\ \middle|\
P_a\in\mathcal N(\mathrm{GU}(N_a)_F)_{\mathbb Z},\quad
d(P_1)=d(P_2)
\right\},
\qquad
d(P_a)=\frac{2P_a(N_a)}{N_a}.
$$
Thus each $P_a$ has breakpoints in $\mathbb Z\times\frac12\mathbb Z$ and slopes satisfying
$$
\lambda_{a,i}+\lambda_{a,N_a+1-i}=d,
$$
for one common integer $d$. The equality of the two values of $d$ is exactly the condition imposed by the shared similitude factor in the fibre product.

\begin{center}
\begin{minipage}{0.92\textwidth}
\centering
\begin{tikzpicture}[x=0.72cm,y=0.72cm]
    \node[draw,rounded corners,inner sep=4pt] at (7.2,4.55)
        {\small shared similitude slope: $d(P_1)=d(P_2)=1$};

    \begin{scope}
        \draw[->] (-0.25,0) -- (6.4,0);
        \draw[->] (0,-0.25) -- (0,3.35);
        \draw[thin,gray!55] (0,0) -- (6,3);
        \draw[densely dashed,gray] (3,-0.1) -- (3,3.15)
            node[above] {\scriptsize $x=N_1/2$};
        \draw[line width=1pt,RoyalBlue]
            (0,0) -- (2,0.5) -- (4,1.5) -- (6,3);
        \filldraw[black] (0,0) circle (1.8pt);
        \filldraw[black] (2,0.5) circle (1.8pt);
        \filldraw[black] (4,1.5) circle (1.8pt);
        \filldraw[black] (6,3) circle (1.8pt);
        \node[align=center] at (3,-0.8)
            {\scriptsize $P_1:\ (1/4)^2,(1/2)^2,(3/4)^2$\\[-1pt]
             \scriptsize $N_1=6$};
    \end{scope}

    \begin{scope}[xshift=7.4cm]
        \draw[->] (-0.25,0) -- (4.4,0);
        \draw[->] (0,-0.25) -- (0,2.35);
        \draw[thin,gray!55] (0,0) -- (4,2);
        \draw[densely dashed,gray] (2,-0.1) -- (2,2.15)
            node[above] {\scriptsize $x=N_2/2$};
        \draw[line width=1pt,BrickRed]
            (0,0) -- (2,0.5) -- (4,2);
        \filldraw[black] (0,0) circle (1.8pt);
        \filldraw[black] (2,0.5) circle (1.8pt);
        \filldraw[black] (4,2) circle (1.8pt);
        \node[align=center] at (2,-0.8)
            {\scriptsize $P_2:\ (1/4)^2,(3/4)^2$\\[-1pt]
             \scriptsize $N_2=4$};
    \end{scope}
\end{tikzpicture}

\small An integral Newton point of $H_{6,4}$. Each polygon satisfies the unitary integrality conditions separately; the boxed equality is the additional condition coming from the shared $\mathbb G_m$.
\end{minipage}
\end{center}

\subsubsection{Transfer of Newton polygons}
Let $\mathcal H$ be the endoscopic datum with underlying group $H_{N_1,N_2}$. The map
$$
f^{\mathcal H}:\mathcal N(H_F)\longrightarrow\mathcal N(\mathrm{GU}(N)_F)
$$
is constructed in Lemma \ref{endoscopicnewtonmap}, using the admissible embeddings of Definition \ref{admissibleembedding}. Under the resulting identification of maximal tori---equivalently, the identification coming from a common maximal torus of the dual groups---the Weyl group of $H_F$ permutes the slopes within the two blocks, while the Weyl group of $\mathrm{GU}(N)_F$ also permutes slopes between them. Thus $f^{\mathcal H}$ has the following simple description: break $P_1$ and $P_2$ into their straight-line segments, arrange all the segments in ascending order of slope, and concatenate them. Equivalently, the slope multiset of the transferred polygon is the union of the slope multisets of $P_1$ and $P_2$.

For the pair displayed above, the following picture shows this construction. Blue segments come from $P_1$ and red segments from $P_2$.

\begin{center}
\begin{minipage}{0.95\textwidth}
\centering
\begin{tikzpicture}[x=0.45cm,y=0.45cm]
    \begin{scope}
        \draw[->] (-0.2,0) -- (6.35,0);
        \draw[->] (0,-0.2) -- (0,3.25);
        \draw[line width=1pt,RoyalBlue]
            (0,0) -- (2,0.5) -- (4,1.5) -- (6,3);
        \filldraw[black] (0,0) circle (1.7pt);
        \filldraw[black] (2,0.5) circle (1.7pt);
        \filldraw[black] (4,1.5) circle (1.7pt);
        \filldraw[black] (6,3) circle (1.7pt);
        \node[align=center] at (3,-1.05)
            {\scriptsize $P_1$\\[-1pt]
             \scriptsize slopes $(1/4)^2,(1/2)^2,(3/4)^2$};
    \end{scope}

    \begin{scope}[xshift=3.65cm]
        \draw[->] (-0.2,0) -- (4.35,0);
        \draw[->] (0,-0.2) -- (0,2.25);
        \draw[line width=1pt,BrickRed]
            (0,0) -- (2,0.5) -- (4,2);
        \filldraw[black] (0,0) circle (1.7pt);
        \filldraw[black] (2,0.5) circle (1.7pt);
        \filldraw[black] (4,2) circle (1.7pt);
        \node[align=center] at (2,-1.05)
            {\scriptsize $P_2$\\[-1pt]
             \scriptsize slopes $(1/4)^2,(3/4)^2$};
    \end{scope}

    \draw[->,line width=0.8pt] (12.5,1.25) -- (15.4,1.25);
    \node at (13.95,2.05) {\scriptsize sort by slope};

    \begin{scope}[xshift=7.25cm]
        \draw[->] (-0.2,0) -- (10.4,0);
        \draw[->] (0,-0.2) -- (0,5.3);
        \draw[thin,gray!55] (0,0) -- (10,5);
        \draw[densely dashed,gray] (5,-0.1) -- (5,5.15)
            node[above] {\scriptsize $x=N/2$};
        \draw[line width=1.2pt,RoyalBlue] (0,0) -- (2,0.5);
        \draw[line width=1.2pt,BrickRed] (2,0.5) -- (4,1);
        \draw[line width=1.2pt,RoyalBlue] (4,1) -- (6,2);
        \draw[line width=1.2pt,BrickRed] (6,2) -- (8,3.5);
        \draw[line width=1.2pt,RoyalBlue] (8,3.5) -- (10,5);
        \foreach \x/\y in {0/0,2/0.5,4/1,6/2,8/3.5,10/5}
            \filldraw[black] (\x,\y) circle (1.7pt);
        \node[align=center] at (5,-1.05)
            {\scriptsize $f^{\mathcal H}(P_1,P_2)$\\[-1pt]
             \scriptsize slopes $(1/4)^4,(1/2)^2,(3/4)^4$};
    \end{scope}
\end{tikzpicture}
\end{minipage}
\end{center}

\subsubsection{An intermediate stratum with only the trivial contribution}
The preceding computations reduce the condition defining $\Eell(G)_b$ to a decomposition problem for Newton polygons. We now give an example for which that condition excludes every non-trivial elliptic endoscopic datum.

Retain $N=r+s$, and suppose that, after interchanging $r$ and $s$ if necessary, $0<r<s$, and that
$$
\gcd(s-r,\,s+r)=2.
$$
This forces $r$ and $s$ to have the same parity, so that $N$ is even. It holds in particular when $r$ and $s$ are odd and coprime, but also for instance when $(r,s)=(2,4)$. Put
$$
\lambda=\frac{s-r}{2N}.
$$
With the increasing-slope convention used above, the Hodge polygon $P_{\mu_h}$---that is, the polygon attached to the $\Gamma_F$-average of the Hodge cocharacter---has slopes
$$
0^r,\ (1/2)^{s-r},\ 1^r.
$$
It is also the $\mu$-ordinary polygon displayed below. By Remark \ref{BGmudef}, for a class $b$ with the Kottwitz invariant determined by $\mu_h^{-1}$, membership in $B(G_F,\mu_h^{-1})$ is equivalent to its Newton polygon having the same endpoints as that of $\mu_h^{-1}$ and lying on or above it; in the displayed, negated, picture this says that the polygon shown for $b$ has the same endpoints as $P_{\mu_h}$ and lies on or above it.

Consider the intermediate polygon $P_b$ with slopes
$$
\lambda^{N/2},\ (1-\lambda)^{N/2}.
$$
Its unique interior breakpoint is
$$
\left(\frac N2,\frac{s-r}{4}\right).
$$
It is integral since $N\lambda=(s-r)/2\in\mathbb Z$, and it lies above $P_{\mu_h}$, meeting it at $x=N/2$. Together with the Kottwitz invariant determined by $\mu_h^{-1}$, its negative therefore defines a class $b\in B(G_F,\mu_h^{-1})$.

\begin{center}
\begin{minipage}{0.95\textwidth}
\centering
\begin{tikzpicture}[x=0.9cm,y=0.9cm]
    \draw[->] (-0.35,0) -- (8.45,0);
    \draw[->] (0,-0.35) -- (0,4.4);

    \draw[line width=0.9pt,RoyalBlue] (0.35,4.7) -- (0.95,4.7);
    \node[anchor=west] at (1.05,4.7) {\small basic};
    \draw[line width=0.9pt,Violet] (2.65,4.7) -- (3.25,4.7);
    \node[anchor=west] at (3.35,4.7) {\small intermediate};
    \draw[line width=0.9pt,BrickRed] (6.05,4.7) -- (6.65,4.7);
    \node[anchor=west] at (6.75,4.7) {\small ordinary};

    \draw[line width=0.9pt,RoyalBlue] (0,0) -- (8,4);
    \draw[line width=1.1pt,Violet] (0,0) -- (4,0.5) -- (8,4);
    \draw[line width=0.9pt,BrickRed] (0,0) -- (3,0) -- (5,1) -- (8,4);

    \filldraw[black] (0,0) circle (2pt) node[below left] {\scriptsize $(0,0)$};
    \filldraw[black] (3,0) circle (2pt) node[below] {\scriptsize $(r,0)$};
    \filldraw[black] (4,0.5) circle (2pt)
        node[above left] {\scriptsize $\left(\frac N2,\frac{s-r}{4}\right)$};
    \filldraw[black] (5,1) circle (2pt)
        node[below right] {\scriptsize $\left(s,\frac{s-r}{2}\right)$};
    \filldraw[black] (8,4) circle (2pt)
        node[above left] {\scriptsize $\left(N,\frac N2\right)$};
\end{tikzpicture}

\small The basic, selected intermediate, and ordinary Newton polygons, drawn to scale for $(r,s)=(3,5)$. The displayed formulas for the breakpoints hold for general $r$ and $s$.
\end{minipage}
\end{center}

We claim that
$$
\Eell(G)_b=\{\mathcal H_{\mathrm{triv}}\}.
$$
Indeed, suppose that $\mathcal H\in\Eell(G)_b$ were non-trivial, with underlying group $H_{N_1,N_2}$. By the description of $f^{\mathcal H}$ above, there would be a pair $(P_1,P_2)\in\mathcal N(H_F)_{\mathbb Z}$ whose transferred polygon is $P_b$. Since $P_b$ has only the two slopes $\lambda$ and $1-\lambda$, the unitary symmetry of each $P_a$ forces
$$
P_a:\quad \lambda^{k_a},\ (1-\lambda)^{k_a},
\qquad N_a=2k_a,
\qquad k_1+k_2=N/2.
$$
Since $N_1,N_2>0$, we have $0<k_a<N/2$. The breakpoint $(k_a,k_a\lambda)$ of $P_a$ must lie in $\mathbb Z\times\frac12\mathbb Z$, and hence
$$
2k_a\lambda=\frac{k_a(s-r)}{N}\in\mathbb Z.
$$
Writing $d=\gcd(s-r,N)$, this says that $N/d$ divides $k_a$, so that the smallest admissible value of $k_a$ is $N/d$. By hypothesis $d=2$, so $N/d=N/2$, contradicting $0<k_a<N/2$. Equivalently, the lower segment of $P_b$ contains no point of $\mathbb Z\times\frac12\mathbb Z$ at which it can be split into two smaller integral unitary polygons. Thus no non-trivial datum belongs to $\Eell(G)_b$; the trivial datum belongs to it by taking $b$ itself.

The hypothesis is exactly what the argument requires: if $d>2$, then $k_a=N/d$ satisfies $0<k_a<N/2$, the polygon $P_b$ does admit a splitting of the above shape, and the exclusion argument breaks down.

\subsection*{Structure of the text}
A reader interested mainly in the main result and its proof can read this introduction and then turn directly to Section \ref{mainproof}, where the modified Langlands-Kottwitz method is carried out and the Main Theorem is proved.

Section \ref{prelim} collects the preliminaries: isocrystals, endoscopy and base change, together with the notation used throughout. An expert reader may skip it and refer back to it only as needed, guided by the cross-references in Sections \ref{2.4} and \ref{mainproof}.

Section \ref{2.4} contains the proofs of Lemma A and Lemma B stated in the introduction. A reader who wishes to proceed directly to Section \ref{mainproof} may safely take these two lemmas as black boxes.

\subsection*{Acknowledgements}
This paper is a part of the author's PhD research at the University of Amsterdam, under the supervision of Arno Kret. I thank Arno Kret for suggesting this direction of research to extend his results to the endoscopic setting and for his comments on earlier drafts of this paper. I also thank Kaan Bilgin, Yachen Liu and Reinier Sorgdrager for helpful discussions.

\subsection*{AI use disclosure}
AI was used as an assistant in writing some parts of this paper, which consisted of polishing earlier drafts and outlines written by the author. In particular, the Complements in Section \ref{mainproof} were written with AI assistance. Additionally, the version of Hensel lemma, due to Rychl\'{\i}k, used in Lemma \ref{continuityofroots} was following a suggestion made by Claude Opus 5.

\section{Preliminaries}\label{prelim}
To explain the proofs of Lemma A and Lemma B as stated in the introduction, we start by reviewing isocrystals with $G$-structure in Section \ref{2.1}, the theory of endoscopy in Section \ref{2.2}, and cyclic base change in Section \ref{2.3} which are essential ingredients in the proofs of these lemmas and will also be used later on in the proof of the main theorem. Finally, in Section \ref{2.4} we will present new results and proofs of the lemmas. The results obtained there will be used in Section \ref{mainproof} in the proof of the main theorem.

\subsection{Review of isocrystals}\label{2.1}
\subsubsection{Setup}
We will use the following notation for the rest of this subsection.
\begin{itemize}
    \item Let $F$ denote a $p$-adic field, with residue field $k_F$ of cardinality $q_F$, and let $F^{\text{un}}$ denote a maximal unramified extension of $F$, whose residue field is an algebraic closure $\overline{k_F}$. We write $\sigma$ for the \textit{arithmetic} Frobenius automorphism of $F^{\text{un}}$ over $F$, that is, for the topological generator of $\Gal(F^{\text{un}}/F)$ inducing $x \mapsto x^{q_F}$ on $\overline{k_F}$. Let $L$ denote the completion of $F^{\text{un}}$. Then $\sigma$ admits a continuous extension to an automorphism of $L$. There is a valuation map $\val: L^{\times} \xrightarrow[]{} \mathbb{Z}$ normalized by the condition $\val(\pi) = 1$ for a uniformizer $\pi$ of $F$, which is also a uniformizer of $L$ since $L/F$ is unramified.
    \item Let $G$ denote a connected linear algebraic group over $F$ and let $\Rep(G)$ denote the category of representations of $G$ on a finite dimensional $F$-vector space. 
\end{itemize}

\begin{D}[Isocrystals] 
    An isocrystal is a pair $(V,\Phi)$ consisting of a finite dimensional vector space $V$ over $L$ and a bijective $\sigma$-linear endomorphism $\Phi: V \xrightarrow[]{} V$. Here a map $\Phi: V \xrightarrow[]{} V$ is called \textit{$\sigma$-linear} if it is additive and satisfies
    $$\Phi(\lambda v)=\sigma(\lambda)\Phi(v), \qquad \lambda\in L,\; v\in V.$$ A morphism of isocrystals is a linear map on the underlying vector spaces that is compatible with the $\sigma$-linear operators. The category of isocrystals is denoted by $\Isoc$.
\end{D}

By a result of Dieudonné-Manin, it follows that $\Isoc$ is semisimple. The simple objects are parametrized by the set of rational numbers. For $\lambda \in \mathbb{Q}$, with $\lambda = r/s$ such that $r,s \in \mathbb{Z}$ with $s > 0$ and $(r,s) = 1$, the corresponding simple object is $E_{\lambda} = (L^s, \Phi)$, where $\Phi: L^s \xrightarrow[]{} L^s$ is given by

$$ \Phi = \begin{pmatrix}
    0 & & & \pi^r \\
    1 & & & \\
    & \ddots & & \\
    & & 1 & 0
\end{pmatrix} \cdot \sigma. $$

Note the property that $\Phi^s = \pi^r \cdot \sigma^s$. An isocrystal which is isotypic of slope $r/s$, and in particular the simple object $E_{r/s}$, can be defined over the fixed field $F_s$ of $\sigma^s$ on $L$, that is, over the unramified extension of $F$ of degree $s$ contained in $L$. Explicitly, let $(V_L, \Phi_L)$ be isotypic of slope $r/s$. The operator $\pi^{-r}\Phi_L^s$ is $\sigma^s$-linear and therefore $F_s$-linear, so its fixed points
    $$V := V_L^{\pi^{-r}\Phi_L^s} = \{v \in V_L \mid \Phi_L^s(v) = \pi^r v\}$$
    form an $F_s$-vector space. This space generates $V_L$ over $L$: the natural map $V \otimes_{F_s} L \xrightarrow[]{} V_L$ is an isomorphism. Since $\Phi_L$ commutes with $\pi^{-r}\Phi_L^s$, it preserves $V$, and $\Phi := \Phi_L|_V$ is $\sigma$-linear over $F_s$, the automorphism $\sigma$ restricting to a generator of $\Gal(F_s/F)$. Thus $\Phi_L$ is the $\sigma$-linear extension of $\Phi$ to $V_L$. See $\S 3$ of \cite{MR809866}, where this is proved for an arbitrary $p$-adic field $F$, and $\S 1$ of \cite{MR2074057} for further details. The category $\Isoc$ is Tannakian over $F$ and admits a fiber functor $\omega$ over $L$ which maps an isocrystal to the underlying vector space.

\begin{D}[$B(G)$]\label{B(G)def}
    Let $B(G)$ denote the set of $\sigma$-conjugacy classes in $G(L)$, where $b, b' \in G(L)$ are said to be \textit{$\sigma$-conjugate} if there exists a $g \in G(L)$ such that $b' = g b \sigma(g)^{-1}$. This is equivalent to the elements $b\sigma$ and $b'\sigma$ of $G(L) \rtimes \langle \sigma \rangle$ being conjugate under $G(L)$. Hence $B(G)$ can also be identified with the pointed set $H^1(\langle \sigma \rangle, G(L))$.
\end{D}

\begin{D}[Isocrystals with $G$-structure]\label{isocGstructure}
     Let $\omega_G: \Rep(G) \xrightarrow[]{} \Vect_L$ denote the fiber functor over $L$ which maps $V$ to $L \otimes_F V$. An isocrystal with $G$-structure is a pair $(\alpha, \beta)$ consisting of an exact tensor functor $\beta: \Rep(G) \xrightarrow[]{} \Isoc$ together with a tensor isomorphism $\alpha: \omega \circ \beta \xrightarrow[]{\sim} \omega_G$, where $\omega$ denotes the fiber functor on $\Isoc$ introduced above. Such pairs are in bijection with $G(L)$. Moreover, the tensor functors $\beta$ which admit such an $\alpha$, taken up to tensor isomorphism, are in bijection with the set $B(G)$ of \ref{B(G)def}. See $\S 3.1$ of \cite{MR1485921} for more details.
\end{D}

\begin{const}\label{cohomisocrystal}
    There is an injective map $H^1(F, G) \xrightarrow[]{} B(G)$ given as follows. The set $H^1(F, G)$ is in bijection with isomorphism classes of fiber functors $\Rep(G) \xrightarrow[]{} \Vect_F$. Given such a fiber functor, one can construct an exact tensor functor $\Rep(G) \xrightarrow[]{} \Isoc$ by composing it with the functor $\Vect_F \xrightarrow[]{} \Isoc$ which maps $V$ to $L \otimes_F V$, equipped with the $\sigma$-linear operator $\Phi$ determined by $l \otimes v \mapsto \sigma(l) \otimes v$.
\end{const}

\begin{const}[$J_b$]\label{J}
    Given $b \in B(G)$, there is an algebraic group $J_b$ over $F$ defined as follows. For an $F$-algebra $R$, denote by $\Isoc^R$ the category with the same objects as $\Isoc$ but with an enlarged set of morphisms, namely
    $$\Hom_{\Isoc^R}((V_1,\Phi_1),(V_2,\Phi_2)) = \Hom_{\Isoc}((V_1, \Phi_1), (V_2, \Phi_2)) \otimes_F R.$$
    This is a tensor category and there is a natural tensor functor $\Isoc \xrightarrow[]{} \Isoc^R$. Let $\beta^R$ denote the composition of $\beta$ with this natural functor, where $\beta: \Rep(G) \xrightarrow[]{} \Isoc$ is associated to $b$ as in \ref{isocGstructure}. Then $J_b$ is the algebraic group defined by the condition that $J_b(R)$ is the group of tensor automorphisms of the tensor functor $\beta^R$. See Appendix A of \cite{MR1485921} for more details.
\end{const}

\subsubsection{Slope morphism}
Let $\mathbb{D}$ be the pro-torus over $F$ with character group $X^*(\mathbb{D}) = \mathbb{Q}$, equipped with the trivial action of $\Gamma_F$. These two conditions determine $\mathbb{D}$ uniquely; in particular $\mathbb{D}$ is split over $F$. Given an isocrystal with $G$-structure $b \in G(L)$ (which corresponds to a pair $(\alpha, \beta)$ as in \ref{isocGstructure}) and a finite dimensional representation $(V,\rho) \in \Rep(G)$, we obtain using $(\alpha, \beta)$ an isocrystal $V \otimes_F L$ with a $\sigma$-linear operator $\Phi$. Using the semisimplicity of $\Isoc$ and the fact that simple isocrystals are parametrized by rational numbers, the vector space $V \otimes_F L$ acquires a $\mathbb{Q}$-grading which encodes the decomposition into simple objects. The $\mathbb{Q}$-grading on the vector space $V \otimes_F L$ corresponds to a morphism $\mathbb{D}_L \xrightarrow{} \GL(V)_L$ over $L$.

\begin{D}[Slope morphism]\label{slopedef}
    Slope morphism of an isocrystal with $G$-structure $b \in G(L)$ is the morphism $\nu_G(b) \in \Hom_L(\mathbb{D}, G)$ characterized by the property that for any representation $(V, \rho) \in \Rep(G)$, the $\mathbb{Q}$-grading on $V \otimes_F L$ obtained from $b$, as described above, corresponds to the $\rho_L \circ \nu_G(b)$. See $\S$ 4.2 of \cite{MR809866} for more details.
\end{D}

\begin{Le}[Alternative characterization of the slope morphism]\label{slopealternativedef}
    The element $\nu_G(b)$ obtained from $b \in G(L)$ can also be characterized as the unique element in $\Hom_L(\mathbb{D}, G)$ for which there exists an integer $n > 0$, an element $c \in G(L)$ and a uniformizer $\pi$ of $F$ such that the following three conditions hold:
\begin{enumerate}
    \item $n\nu_G(b) \in \Hom_L(\mathbb{G}_m, G)$
    \item $c (n\nu_G(b)) c^{-1}$ is defined over the fixed field of $\sigma^n$ on $L$
    \item $c (b\sigma)^nc^{-1} = c (n\nu_G(b))(\pi)c^{-1} \cdot  \sigma^n$, viewed as elements of $G(L) \rtimes \langle \sigma \rangle $
\end{enumerate}
\end{Le}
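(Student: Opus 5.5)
The plan is to prove uniqueness by checking that any $\nu$ satisfying the three conditions induces the slope grading of Definition \ref{slopedef}. Existence will come from the fact that every $b$ is $\sigma$-conjugate to a \emph{decent} element, following \S 4 of \cite{MR809866}.

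\emph{Reduction to $c=1$.} Suppose $\nu\in\Hom_L(\mathbb{D},G)$, $n$, $c$ and $\pi$ satisfy conditions 1--3. Put $b'=cb\sigma(c)^{-1}$ and $\nu'=c\nu c^{-1}$. Then $b'\sigma=c(b\sigma)c^{-1}$ in $G(L)\rtimes\langle\sigma\rangle$, so condition 3 becomes
$$(b'\sigma)^n=(n\nu')(\pi)\,\sigma^n,$$
with $n\nu'$ a cocharacter defined over $F_n$, the fixed field of $\sigma^n$. Left multiplication by $c$ is an isomorphism from the isocrystal attached to $b$ to the one attached to $b'$, in every representation. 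Hence $\nu_G(b')=c\,\nu_G(b)\,c^{-1}$. It therefore suffices to show that $\nu'=\nu_G(b')$.

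\emph{Uniqueness.} Fix $(V,\rho)\in\Rep(G)$ and write $\Phi'=\rho(b')\sigma$ on $V_L=V\otimes_F L$. Since $n\nu'$ is defined over $F_n$, $V_L$ decomposes as $V_L=\bigoplus_a W_a$, where $W_a$ is the weight-$a$ space of $\rho\circ n\nu'$. Each $W_a$ is defined over $F_n$, so $\sigma^n$ preserves it. On $W_a$ the operator $\Phi'^n$ acts as $\pi^a\sigma^n$, so $(W_a,\Phi'^n|_{W_a})$ is isoclinic of slope $a$ as a $\sigma^n$-isocrystal. Here $\pi$ remains a uniformizer of $F_n$, because $F_n/F$ is unramified.

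Two facts remain to be checked:
\begin{itemize}
\item The slope decomposition of $(V_L,\Phi')$ is the slope decomposition of $(V_L,\Phi'^n)$, regarded as an isocrystal over $F_n$, with slopes divided by $n$. This is immediate from the Dieudonné--Manin normal forms $E_{r/s}$ recalled above, since $\Phi^s=\pi^r\sigma^s$.
\item The slope decomposition of $(V_L,\Phi'^n)$ is canonical. Consequently the $W_a$ are exactly its isotypic pieces, and in particular they are $\Phi'$-stable.
\end{itemize}
Given these, the slope-$a/n$ part of $(V_L,\Phi')$ is $W_a$, which is the weight-$a/n$ part of $\rho\circ\nu'$. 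Thus $\rho\circ\nu'$ induces the slope grading for every $\rho$. Taking $\rho$ faithful and using the characterization in Definition \ref{slopedef}, this gives $\nu'=\nu_G(b')$, and hence $\nu=\nu_G(b)$. In particular, any two elements satisfying conditions 1--3 coincide.

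\emph{Existence.} Take $\nu=\nu_G(b)$. The main step is to produce $n$ and $c$ such that $b'=cb\sigma(c)^{-1}$ is $n$-decent, that is, $(b'\sigma)^n=(n\nu_{b'})(\pi)\sigma^n$ with $n\nu_{b'}$ defined over $F_n$. This is Kottwitz's result (\cite[\S 4.3]{MR809866}, see also \cite[\S 1]{MR1485921}).

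I would argue it as follows. Choose $n$ with $n\nu$ integral and the $G(L)$-conjugacy class of $\nu$ fixed by $\sigma^n$. After $\sigma$-conjugating, move $b$ into the centralizer $M$ of a representative $\nu_0$ defined over a finite unramified extension. There $b$ is basic with central slope $\nu_0$. Then $\pi^{-n\nu_0}(b\sigma)^n\sigma^{-n}$ defines a basic class with trivial slope in $M$. Such a class is $\sigma^n$-conjugate into $M(\mathcal{O}_L)$, and hence trivializable by Lang's theorem on the special fibre after enlarging $n$. This lifting/Lang step, which makes the element $(b\sigma)^n\sigma^{-n}$ exactly equal to $(n\nu)(\pi)$ rather than merely equal up to a unit, is the main obstacle; I would import it from Kottwitz rather than reprove it. Setting $c$ to be the resulting conjugator, conditions 1--3 hold by construction.
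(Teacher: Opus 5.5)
Your proposal is correct and in substance follows the paper, which gives no argument of its own and simply cites \S 4.3 of \cite{MR809866}. Your uniqueness half (reduce to $c=1$, then identify the weight spaces $W_a$ of $n\nu'$ with the slope pieces using $\Phi'^n|_{W_a}=\pi^a\sigma^n$ and the canonicity of the slope decomposition) is sound, and for existence of a decent $\sigma$-conjugate you rely on the same reference. Treat your heuristic sketch of that decency step only as motivation, since it is not accurate for general $G$: $b'\in Z_G(\nu_0)$ forces $\nu_0$ to be $\sigma$-fixed, which is impossible for non-basic classes when $G/Z(G)$ is anisotropic (e.g.\ $G=D^{\times}$), and $M(\mathcal{O}_L)$ presupposes an unramified integral model.
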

\begin{proof}
    See $\S$ 4.3 of \cite{MR809866}.
\end{proof}

\begin{C}\label{slopebg}
    The $G(L)$ conjugacy class of the slope morphism $\nu_G(b) \in \Hom_L(\mathbb{D}, G)$ of an isocrystal with $G$-structure $b \in G(L)$ depends only on the $\sigma$-conjugacy class of $b$. Moreover, the $G(L)$-conjugacy class of $\nu_G(b)$ is fixed by $\langle \sigma \rangle$. Hence, there is a map $\nu_G:  B(G) \xrightarrow[]{} (\Hom_L(\mathbb{D}, G)/G(L))^{\langle \sigma \rangle}$.
\end{C}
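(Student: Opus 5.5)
The plan is to deduce both assertions of Corollary \ref{slopebg} directly from the characterization in Lemma \ref{slopealternativedef}, using its uniqueness clause.

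\textbf{Dependence only on the $\sigma$-conjugacy class.} Let $b' = g b \sigma(g)^{-1}$ with $g \in G(L)$. In $G(L) \rtimes \langle \sigma \rangle$ this reads $b'\sigma = g (b\sigma) g^{-1}$, so $(b'\sigma)^n = g (b\sigma)^n g^{-1}$ for every $n$. Choose $n$, $c$ and $\pi$ witnessing Lemma \ref{slopealternativedef} for $b$, and set $\nu' = g\,\nu_G(b)\,g^{-1}$ and $c' = c g^{-1}$. I then check the three conditions for $b'$ with the data $(n, c', \pi)$:
\begin{itemize}
\item $n\nu'$ is integral, since conjugation preserves integrality.
\item $c'(n\nu')c'^{-1} = c(n\nu_G(b))c^{-1}$, which is defined over the fixed field of $\sigma^n$.
\item The identity
\[
c'(b'\sigma)^n c'^{-1} = c(b\sigma)^n c^{-1} = c\,(n\nu_G(b))(\pi)\,c^{-1}\,\sigma^n = c'\,(n\nu')(\pi)\,c'^{-1}\,\sigma^n
\]
holds.
\end{itemize}
By uniqueness, $\nu_G(b') = g\,\nu_G(b)\,g^{-1}$. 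So the $G(L)$-conjugacy class depends only on the class of $b$ in $B(G)$.

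\textbf{$\sigma$-invariance of the class.} Let $\sigma$ act on $\Hom_L(\mathbb{D},G)$ through the coefficients; this is well defined because $\mathbb{D}$ and $G$ are defined over $F$. The key step is to compare $\nu_G(\sigma(b))$ with $\sigma(\nu_G(b))$. Applying $\sigma$ to the defining identity for $b$ gives
\[
\sigma(c)\,(\sigma(b)\sigma)^n\,\sigma(c)^{-1} = \sigma(c)\,(n\,\sigma(\nu_G(b)))(\pi)\,\sigma(c)^{-1}\,\sigma^n,
\]
since $\pi \in F$ is fixed by $\sigma$. The conjugate $\sigma(c)\,(n\sigma(\nu_G(b)))\,\sigma(c)^{-1}$ is still defined over the fixed field of $\sigma^n$, because $\sigma$ commutes with $\sigma^n$. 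By uniqueness, $\nu_G(\sigma(b)) = \sigma(\nu_G(b))$.

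It remains to note that $\sigma(b)$ is $\sigma$-conjugate to $b$: taking $g = b^{-1}$ gives $b^{-1} b\, \sigma(b) = \sigma(b)$. By the first part, $\sigma(\nu_G(b))$ is therefore $G(L)$-conjugate to $\nu_G(b)$. So the class is fixed by $\langle\sigma\rangle$, and the map $\nu_G$ is well defined.

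\textbf{Expected obstacle.} The only delicate point is the bookkeeping in the semidirect product. One must make sure that conjugating by $g$, or applying $\sigma$, is compatible with the factor $\sigma^n$ and with the rationality condition on the conjugate. Once these identities are written out in $G(L)\rtimes\langle\sigma\rangle$, everything else is formal from the uniqueness in Lemma \ref{slopealternativedef}.
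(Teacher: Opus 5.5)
Your argument is correct and is the deduction the paper intends: it states this result as a corollary of the uniqueness in Lemma \ref{slopealternativedef} and gives no further argument. Your two identities $\nu_G(gb\sigma(g)^{-1})=\Int(g)\circ\nu_G(b)$ and $\nu_G(\sigma(b))=\sigma(\nu_G(b))$, combined with the observation that $\sigma(b)$ is $\sigma$-conjugate to $b$ via $g=b^{-1}$, are exactly what that uniqueness yields, and your semidirect-product bookkeeping is correct.
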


\begin{D}[$\mathcal{N}(G)$]
    For a connected reductive group $G$ over $F$, the set $\mathcal{N}(G) := (\Hom_L(\mathbb{D}, G)/G(L))^{\langle \sigma \rangle}$ denotes the Newton points of $G$.
\end{D}

\begin{Le}\label{slopefunctoriality}
    The map $b \mapsto \nu_G(b)$ induces a natural transformation of set valued functors $B(-) \xrightarrow[]{} \mathcal{N}(-)$ from the category of connected linear algebraic groups
\end{Le}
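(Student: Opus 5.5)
Given a homomorphism $\phi: G \to G'$ of connected linear algebraic groups over $F$, I will show that the square formed by $B(\phi): B(G)\to B(G')$, $\mathcal{N}(\phi):\mathcal{N}(G)\to\mathcal{N}(G')$ and the two slope maps commutes. Here $B(\phi)$ sends the class of $b$ to the class of $\phi(b)$, and $\mathcal{N}(\phi)$ sends the class of $\nu$ to the class of $\phi\circ\nu$. Concretely, I want $\nu_{G'}(\phi(b)) = \phi\circ\nu_G(b)$ already at the level of elements, not only up to conjugacy.

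First I check that both maps are well defined. For $B(\phi)$, $\phi$ is defined over $F$, so it commutes with $\sigma$, and hence $\phi(gb\sigma(g)^{-1}) = \phi(g)\phi(b)\sigma(\phi(g))^{-1}$. For $\mathcal{N}(\phi)$, composition with $\phi$ takes $G(L)$-conjugate cocharacters to $G'(L)$-conjugate ones. It is also $\sigma$-equivariant, since ${}^\sigma(\phi\circ\nu) = \phi\circ{}^\sigma\nu$, so $\sigma$-stable classes go to $\sigma$-stable classes. Functoriality of both constructions with respect to composition is immediate.

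The key step is the identity $\nu_{G'}(\phi(b))=\phi\circ\nu_G(b)$, which I will deduce from the characterization in Lemma \ref{slopealternativedef}. Take $n$, $c$ and $\pi$ witnessing that characterization for $b$, and set $c'=\phi(c)$ and $\nu'=\phi\circ\nu_G(b)$. I then verify the three conditions for $\phi(b)$:
\begin{itemize}
\item $n\nu'=\phi\circ(n\nu_G(b))$ is a cocharacter of $G'$.
\item $c'(n\nu')c'^{-1}=\phi\circ\bigl(c(n\nu_G(b))c^{-1}\bigr)$ is defined over the fixed field of $\sigma^n$, because $\phi$ is defined over $F$.
\item The homomorphism $\phi\rtimes\mathrm{id}:G(L)\rtimes\langle\sigma\rangle\to G'(L)\rtimes\langle\sigma\rangle$ is well defined (again because $\phi$ commutes with $\sigma$). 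Applying it to $c(b\sigma)^nc^{-1}=c\,(n\nu_G(b))(\pi)\,c^{-1}\sigma^n$ gives $c'(\phi(b)\sigma)^nc'^{-1}=c'(n\nu')(\pi)c'^{-1}\sigma^n$.
\end{itemize}
By the uniqueness in Lemma \ref{slopealternativedef}, $\nu_{G'}(\phi(b))=\nu'$. Passing to classes and using Corollary \ref{slopebg} gives the commutativity of the square.

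As a cross-check, one can instead argue through Definition \ref{slopedef}. Restricting a representation $\rho'$ of $G'$ along $\phi$ gives a representation $\rho'\circ\phi$ of $G$, and the isocrystal attached to $\phi(b)$ and $\rho'$ equals the isocrystal attached to $b$ and $\rho'\circ\phi$. Hence the two slope gradings agree, so $\rho'\circ\nu_{G'}(\phi(b))=\rho'\circ\phi\circ\nu_G(b)$ for every $\rho'$. Taking $\rho'$ faithful then gives the same identity.

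The main point needing care is the uniqueness clause in Lemma \ref{slopealternativedef}. That uniqueness is stated for a general connected linear algebraic group, but if one only has it for reductive groups, the Tannakian argument above with a faithful $\rho'$ is the fallback. Everything else is formal.
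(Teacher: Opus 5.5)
Your proof is correct. The paper gives no argument of its own for this lemma; it simply refers to \S 1.9 of Rapoport--Richartz. There functoriality follows directly from the Tannakian definition of the slope morphism (Definition \ref{slopedef}), which is essentially your cross-check: the isocrystal attached to $\phi(b)$ and $\rho'$ is literally the one attached to $b$ and $\rho'\circ\phi$.

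Your main route is genuinely different. You transport Kottwitz's witnesses $(n,c,\pi)$ of Lemma \ref{slopealternativedef} along the homomorphism $\phi\rtimes\mathrm{id}$ of semidirect products and conclude by uniqueness. This is the same verification pattern the paper itself uses later in Lemma \ref{normisolem1}. All three of your checks are right, and since the paper states that lemma for connected linear algebraic groups, your fallback is not needed.

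The trade-off between the two routes is as follows. The Tannakian argument is shorter and more primitive: it needs only that a morphism $\mathbb{D}_L\to G'_L$ is determined by its composites with representations (or with one faithful one). The characterization in Lemma \ref{slopealternativedef}, by contrast, is itself derived in Kottwitz's \S 4.3 from the Tannakian definition of \S 4.2. Your route, however, stays inside $G'(L)\rtimes\langle\sigma\rangle$ and gives explicit witnesses $(n,\phi(c),\pi)$ for $\phi(b)$. That is convenient when the result has to be combined with further manipulations, such as the norm map.

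Either way you obtain $\nu_{G'}(\phi(b))=\phi\circ\nu_G(b)$ as an equality of morphisms. The statement about classes then follows via Corollary \ref{slopebg}.
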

\begin{proof}
    See $\S$ 1.9 of \cite{MR1411570}.
\end{proof}

\begin{Le}\label{slopesoverclosure}
    $\mathcal{N}(G) = (\Hom_L(\mathbb{D}, G)/G(L))^{\langle \sigma \rangle} \simeq (\Hom_{\overline{F}}(\mathbb{D}, G)/G(\overline{F}))^{\Gamma_F}$
\end{Le}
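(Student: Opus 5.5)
The plan is to write down an explicit map in both directions, check that each is well defined on conjugacy classes, and then compare the Galois actions. Neither side changes if we pass from $L$ to its algebraic closure $\overline{L}$. So the natural intermediate object is $\Hom_{\overline{L}}(\mathbb{D},G)/G(\overline{L})$. Fix an $F$-embedding $\overline{F}\hookrightarrow\overline{L}$; it restricts to the identity on $F^{\mathrm{un}}\subset L$. Every homomorphism $\mathbb{D}\to G$ over a field factors through $\mathbb{D}\to\mathbb{G}_m$, $x\mapsto x^{n}$, for some $n$, because its image is a torus with finitely generated character group. We may therefore work with the sets of cocharacters $\Hom(\mathbb{G}_m,G)$, tensored with $\mathbb{Q}$. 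For such cocharacters the conjugacy classes over any algebraically closed field of characteristic zero are classified by the dominant chamber $X_*(T)^+_{\mathbb{Q}}$ of a fixed maximal torus, once a Borel pair is chosen.

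The key steps, in order, are as follows.

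First, I will show that both natural maps
\[
\Hom_L(\mathbb{D},G)/G(L)\to \Hom_{\overline{L}}(\mathbb{D},G)/G(\overline{L})
\quad\text{and}\quad
\Hom_{\overline{F}}(\mathbb{D},G)/G(\overline{F})\to \Hom_{\overline{L}}(\mathbb{D},G)/G(\overline{L})
\]
are bijections. For the second map, conjugacy classes of cocharacters of a reductive group over an algebraically closed field are classified by the Weyl-orbit set $X_*(T)/W$. This set does not change under extension of algebraically closed fields, since $G_{\overline{F}}$ is already split.

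For the first map, $G_L$ is quasi-split by Steinberg's theorem, as $L$ has cohomological dimension $1$. A quasi-split group over $L$ splits over $\widehat{F^{\mathrm{un}}}\cdot E$ for a finite extension, but more to the point, any cocharacter over $L$ factors through a maximal $L$-split torus. All maximal $L$-split tori are $G(L)$-conjugate, so every $\overline{L}$-class is hit as long as each $W$-orbit meets $X_*(A)$, with $A$ the maximal split torus. This is where I expect the main obstacle. Since $G_L$ need not be split (for instance $G$ may be ramified), the maps are not obviously surjective. The honest statement is then that the $\sigma$-fixed classes, rather than all classes, correspond.

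Second, I will compare the Galois actions. The action of $\Gamma_F$ on $X_*(T)^+$ is the $L$-action on the based root datum. The subgroup $\Gamma_{F^{\mathrm{un}}}$ acts through inertia, and the cocharacters defined over $L$ up to conjugacy are exactly the inertia-invariant dominant elements; this uses quasi-splitness of $G_L$ and the standard fact (Kottwitz, \S4 of \cite{MR809866}) that a $\Gamma_L$-stable conjugacy class of cocharacters of a quasi-split group contains an $L$-rational member. The residual action of $\sigma$ then matches the action of a Frobenius lift. Taking invariants, both sides identify with $(X_*(T)^+_{\mathbb{Q}})^{\Gamma_F}$, which gives the isomorphism.

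Since this is a standard result, I will also cite \S1 of \cite{MR1411570} and \S4 of \cite{MR809866} for the rationality lemma on quasi-split groups, which is the only nontrivial input.
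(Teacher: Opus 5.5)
The missing step is injectivity of $\Hom_L(\mathbb{D},G)/G(L)\to\Hom_{\overline{L}}(\mathbb{D},G)/G(\overline{L})$. You never show that two homomorphisms $\nu,\nu'\in\Hom_L(\mathbb{D},G)$ that are conjugate under $G(\overline{L})$ are already conjugate under $G(L)$. Your second step asserts that the $G(L)$-classes of homomorphisms defined over $L$ are \emph{exactly} the inertia-invariant dominant elements. The only input you cite is that a $\Gamma_L$-stable class of cocharacters of the quasi-split group $G_L$ contains an $L$-rational member, and that describes the image, not the fibres. A priori one geometric class could contain several $G(L)$-classes, permuted by $\sigma$. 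In that case the bijection fails, and your ``explicit map in the other direction'' (pick a rational representative) is not well defined.

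This is precisely the content of the paper's one-line proof. Replace $\nu,\nu'$ by $n\nu,n\nu'$. Then the transporter $\{g\mid g\nu g^{-1}=\nu'\}$ is an $L$-torsor under the centralizer $Z_G(\nu)$, which is connected as the centralizer of a torus in a connected group. Since $L$ is perfect of cohomological dimension $\le 1$, Steinberg's theorem gives $H^1(L,Z_G(\nu))=1$, so the torsor has an $L$-point. You use Steinberg only to make $G_L$ quasi-split, which is not where it is essentially needed. Alternatively, you can close the gap with an ingredient you already mention. Move $\nu,\nu'$ by $G(L)$ into a fixed maximal $L$-split torus $A$, and use that elements of $X_*(A)$ conjugate under $G(\overline{L})$ are conjugate under $N_G(A)(L)/Z_G(A)(L)$. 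This holds because relatively dominant elements are absolutely dominant for a Borel subgroup contained in a minimal $L$-parabolic.

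There are also two smaller corrections. First, the map above is not a bijection, and the left side does change when you pass to $\overline{L}$. Its image is the set of $\Gamma_L$-stable, i.e.\ inertia-fixed, classes, not the ``$\sigma$-fixed'' classes; $\sigma$-invariance is imposed only afterwards. For example, for a torus split only by a ramified extension, $\Hom_L(\mathbb{D},T)$ is $(X_*(T)\otimes\mathbb{Q})^{\Gamma_L}$, a proper subspace. Second, to conclude you should state three facts explicitly: $\Gamma_L$ is identified with the inertia subgroup of $\Gamma_F$; $\sigma$ acts through a Frobenius lift; and the $\Gamma_F$-action on $(X_*(T)\otimes\mathbb{Q})/\Omega_G(\overline{F})$ factors through a finite quotient. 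The last point is what lets invariance under the dense Weil group give $\Gamma_F$-invariance.
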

\begin{proof}
    This follows using the fact (due to Steinberg) that $H^1(L,G)$ is trivial for connected linear algebraic groups $G$.
\end{proof}

\subsubsection{Newton map}\label{newton}
For the rest of the section on isocrystals we will assume (unless otherwise stated) that 
\begin{itemize}
    \item The group $G$ is connected reductive over a $p$-adic field $F$.
    \item Let $T$ be a maximal torus of $G$ over $\overline{F}$ and let $N_G(T)$ denote its normalizer in $G$. The \textit{absolute Weyl group} of $G$ is $\Omega_G(\overline{F}) := (N_G(T)/T)(\overline{F})$. Any two maximal tori of $G$ over $\overline{F}$ are conjugate under $G(\overline{F})$, so this group is independent of the choice of $T$ up to canonical isomorphism.
\end{itemize}

We will use the following notation in the context of the Newton map.
\begin{itemize}
    \item For a group of multiplicative type $D$ over a field $F$, denote by $X^*(D)$ and $X_*(D)$ the groups of characters and cocharacters of $D$ over $\overline{F}$, respectively. These have a natural action of $\Gamma_F$.
    \item Let $G_0$ denote the quasi-split inner form of $G$ over $F$. Let $A$ be a maximal split torus in $G_0$ over $F$. The centralizer $T$ of $A$ in $G_0$ is a maximal torus and since $G_0$ is quasi-split, there is a Borel subgroup $B$ of $G_0$ over $F$ which contains $T$. Let $B = TN$, where $N$ is the unipotent radical of $B$.
    \item The \textit{relative Weyl group} of $G_0$ is $\Omega_{G_0}(F) := (N_{G_0}(A)/Z_{G_0}(A))(F)$.
    \item There is a canonical identification of the absolute Weyl groups $\Omega_{G_0}(\overline{F}) \simeq \Omega_G(\overline{F})$.
    \item Let $\mathfrak{U}_{\mathbb{Q}} = X_*(A) \otimes \mathbb{Q}$ and $\mathfrak{U} = X_*(A) \otimes \mathbb{R}$. Let $\overline{C}$ denote the closed relative Weyl chamber $\{ x \in \mathfrak{U} \mid \langle \alpha, x \rangle \geq 0 \text{ for every root } \alpha \text{ of } A \text{ in } \Lie(N) \}$ and let $\overline{C}_{\mathbb{Q}} = \overline{C} \cap \mathfrak{U}_{\mathbb{Q}}$.
\end{itemize}

\begin{D}[Newton map]\label{newtonmap}
    Let $b \in G(L)$ and let $\nu_G(b) \in \Hom_L(\mathbb{D}, G)$ be the corresponding slope morphism. Choose an inner twisting $\psi: G_{\overline{F}} \xrightarrow[]{\sim} G_{0,\overline{F}}$, an isomorphism over $\overline{F}$. Then $\psi \circ \nu_G(b)$ lies in $\Hom_{\overline{F}}(\mathbb{D}, G_0)$. Note that, in view of \ref{slopebg} and \ref{slopesoverclosure}, the $G_0(\overline{F})$-conjugacy class of this morphism depends only on the $\sigma$-conjugacy class of $b$ and this conjugacy class is $\Gamma_F$-invariant. We have the identifications $$(\Hom_{\overline{F}}(\mathbb{D}, G_0)/G_0(\overline{F}))^{\Gamma_F} \simeq ((X_*(T) \otimes \mathbb{Q}) / \Omega_{G_0}(\overline{F}))^{\Gamma_F} \simeq (X_*(A) \otimes \mathbb{Q})/{\Omega_{G_0}(F)} \simeq \overline{C}_{\mathbb{Q}}.$$
    Using this, we obtain a map $\overline{\nu}_G: B(G) \xrightarrow[]{}\overline{C}_{\mathbb{Q}} \subset \mathfrak{U}$, called the Newton map. The map does not depend on the choice of $\psi$.
\end{D}

\begin{Le}\label{slopetori}
    Let $T$ be a torus over $F$. The slope morphism of $b \in T(L)$ is the unique element $\nu_T(b) \in X_*(T)^{\Gamma_F} \otimes \mathbb{Q}$ satisfying the property that $\val(\lambda(b)) = \langle \nu_T(b), \lambda \rangle$ for all $\lambda \in X^*(T)^{\Gamma_F}$.
\end{Le}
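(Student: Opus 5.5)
Let $b \in T(L)$ and put $\nu := \nu_T(b)$. The plan is to show two things: that $\nu$ lies in $X_*(T)^{\Gamma_F}\otimes\mathbb{Q}$ and satisfies the displayed identity, and that this identity determines it uniquely. Since $T$ is commutative, conjugation by $T(L)$ is trivial. So $\mathcal{N}(T) = \Hom_L(\mathbb{D},T)^{\langle\sigma\rangle}$, and by Lemma \ref{slopesoverclosure} this is $\Hom_{\overline{F}}(\mathbb{D},T)^{\Gamma_F} = (X_*(T)\otimes\mathbb{Q})^{\Gamma_F} = X_*(T)^{\Gamma_F}\otimes\mathbb{Q}$. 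Corollary \ref{slopebg} therefore already places $\nu$ in the asserted group.

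\emph{Uniqueness.} The pairing $X_*(T)^{\Gamma_F}\otimes\mathbb{Q} \times X^*(T)^{\Gamma_F}\otimes\mathbb{Q} \to \mathbb{Q}$ is perfect. To see this, note that the averaging (norm) map over the finite quotient of $\Gamma_F$ through which it acts identifies $X^*(T)^{\Gamma_F}\otimes\mathbb{Q}$ with the coinvariants $(X^*(T)\otimes\mathbb{Q})_{\Gamma_F}$. The latter is the dual of $(X_*(T)\otimes\mathbb{Q})^{\Gamma_F}$. Consequently an element of $X_*(T)^{\Gamma_F}\otimes\mathbb{Q}$ is determined by its pairings with the $\Gamma_F$-invariant characters, and uniqueness follows.

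\emph{Existence of the identity.} Fix $\lambda \in X^*(T)^{\Gamma_F}$. Since $\lambda$ is defined over $F$, it is a morphism $T\to\mathbb{G}_m$ over $F$, and Lemma \ref{slopefunctoriality} gives $\lambda\circ\nu_T(b) = \nu_{\mathbb{G}_m}(\lambda(b))$. For $\mathbb{G}_m$, with $a = \lambda(b)\in L^\times$, the isocrystal attached to the standard representation is $(L, a\sigma)$. Write $a = \pi^{m}u$ with $m = \val(a)$ and $u \in \mathcal{O}_L^\times$. Lang's theorem (surjectivity of $x\mapsto x\sigma(x)^{-1}$ on $\mathcal{O}_L^\times$, via successive approximation over $\overline{k_F}$) lets us $\sigma$-conjugate $u$ to $1$. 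Hence $(L,a\sigma)\simeq(L,\pi^m\sigma) = E_m$, which has slope $m$. So $\nu_{\mathbb{G}_m}(a)$ is the cocharacter $m\in X_*(\mathbb{G}_m)\otimes\mathbb{Q} = \mathbb{Q}$. Alternatively, one can bypass the Dieudonné–Manin description and verify the three conditions of Lemma \ref{slopealternativedef} with $n=1$ and $c=u$-correction. Pairing then gives $\langle\nu_T(b),\lambda\rangle = \val(\lambda(b))$.

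The only step needing care is this $\mathbb{G}_m$ computation, specifically the $\sigma$-conjugation of the unit $u$ to $1$. It is standard, and I would cite it or prove it by the usual approximation argument modulo $\pi^k$. Everything else is formal functoriality together with linear algebra on invariants and coinvariants.
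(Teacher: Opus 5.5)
Your proof is correct. The paper gives no argument of its own here; it only cites \S 2.8 of \cite{MR809866}, so your write-up is a self-contained replacement for that citation rather than a variant of something in the text.

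Your argument has three steps:
\begin{enumerate}
\item You locate $\nu_T(b)$ in $X_*(T)^{\Gamma_F}\otimes\mathbb{Q}$ using Corollary \ref{slopebg} and Lemma \ref{slopesoverclosure}, since commutativity makes the conjugation trivial.
\item You get uniqueness from perfectness of the pairing on $\Gamma_F$-invariants. This holds because invariants and coinvariants agree rationally when $\Gamma_F$ acts through a finite quotient.
\item You get the identity by pushing forward along each $F$-rational character $\lambda:T\to\mathbb{G}_m$ via Lemma \ref{slopefunctoriality}.
\end{enumerate}
This reduces everything to the single computation $\nu_{\mathbb{G}_m}(a)=\val(a)$. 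Its only real input is the surjectivity of $x\mapsto x\sigma(x)^{-1}$ on $\mathcal{O}_L^{\times}$.

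What your version buys is that it uses only the Tannakian definition of the slope morphism, its functoriality, and the normalization $E_m=(L,\pi^m\sigma)$ of the simple isocrystals. In particular it does not pass through Lemma \ref{isocrystalsfortori} or Corollary \ref{galoisaverage}, and the latter is derived in the paper \emph{from} this lemma, so your route is non-circular with respect to the paper's ordering. The citation buys brevity, and it places the statement in Kottwitz's treatment, where the isomorphism $B(T)\simeq X_*(T)_{\Gamma_F}$ and the averaging description are developed together.

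One small remark: your ``alternative'' via Lemma \ref{slopealternativedef} does not actually bypass the key input. With $n=1$ the element $c$ must satisfy $c\,\sigma(c)^{-1}=u^{-1}$, which is the same Lang-type surjectivity on $\mathcal{O}_L^{\times}$.
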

\begin{proof}
    See $\S$ 2.8 of \cite{MR809866}.
\end{proof}

\begin{C}\label{slopetorispecialcase}
    Let $T$ be a torus over $F$ and let $b \in T(F)$. Then the slope morphism $\nu_T(b) \in X_*(T)^{\Gamma_F} \otimes \mathbb{Q}$ of $b$ (viewing it as an element of $T(L)$) satisfies $\val(\lambda (b)) = \langle \nu_T(b), \lambda \rangle$ for all $\lambda \in X^*(T) \otimes \mathbb{Q}$.
\end{C}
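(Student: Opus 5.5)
The plan is to reduce to Lemma \ref{slopetori} by extending scalars so that the statement concerns only $F$-rational characters. Since $b\in T(F)$, it is fixed by $\sigma$, and its $\sigma$-conjugacy class in $T(L)$ is canonically defined. Lemma \ref{slopetori} already gives $\val(\lambda(b))=\langle\nu_T(b),\lambda\rangle$ for $\lambda\in X^*(T)^{\Gamma_F}$. It therefore remains to pass from $\Gamma_F$-invariant characters to all of $X^*(T)\otimes\mathbb{Q}$. Here $\val$ on $\overline{F}^\times$ is extended $\mathbb{Q}$-linearly, normalised so that $\val(\pi)=1$.

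\textbf{Step 1: reduce to the split case.} Choose a finite Galois extension $F'/F$ splitting $T$, with ramification index $e$ and residue degree $f$. The slope morphism is compatible with restriction of the base field in the following sense. Viewing $b\in T(F)\subset T(F')$ as an isocrystal with $T_{F'}$-structure over $L'$, the completion of the maximal unramified extension of $F'$, with Frobenius $\sigma^{f}$ and valuation $\val'=e\cdot\val$, the resulting slope morphism is $f\cdot e\cdot\nu_T(b)$ up to the normalisation. I would check this most cleanly via Lemma \ref{slopealternativedef}. If $s\nu_T(b)$ is integral and $(b\sigma)^s=(s\nu_T(b))(\pi)\sigma^s$ up to conjugation, then raising to the $f$-th power and rewriting $\pi$ in terms of a uniformizer $\pi'$ of $F'$ gives the corresponding identity over $F'$; only $\val$ of the elements matters for tori. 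Over $F'$ every character is $\Gamma_{F'}$-invariant, so Lemma \ref{slopetori} applied to $T_{F'}$ gives $\val'(\lambda(b))=\langle\nu_{T_{F'}}(b),\lambda\rangle$ for all $\lambda\in X^*(T)$. After dividing by the normalisation factors, this becomes $\val(\lambda(b))=\langle\nu_T(b),\lambda\rangle$.

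\textbf{Alternative route: averaging.} This route avoids the base-change bookkeeping. For $\lambda\in X^*(T)$ and $b\in T(F)$, Galois equivariance gives $\lambda^{\tau}(b)=\tau(\lambda(b))$ for $\tau\in\Gamma_F$, and $\val$ is $\Gamma_F$-invariant on $\overline{F}^\times$. Hence $\val(\lambda(b))=\val(\bar\lambda(b))$, where $\bar\lambda=|\Gamma\cdot\lambda|^{-1}\sum_{\tau}\tau\lambda\in (X^*(T)\otimes\mathbb{Q})^{\Gamma_F}$. Since $\nu_T(b)$ is $\Gamma_F$-invariant, $\langle\nu_T(b),\lambda\rangle=\langle\nu_T(b),\bar\lambda\rangle$. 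Some multiple of $\bar\lambda$ lies in $X^*(T)^{\Gamma_F}$, so both sides agree by Lemma \ref{slopetori} and linearity. I would present this argument as the main proof, since it uses only the stated lemma.

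\textbf{Main obstacle.} The one point needing care is the $\Gamma_F$-invariance of $\val$ on $\overline{F}^\times$. This holds because the valuation extends uniquely to $\overline{F}$, as $F$ is complete. I also need the identity $\lambda^\tau(b)=\tau(\lambda(b))$ for $F$-rational $b$. Finally, I must make sure the pairing with non-invariant characters is consistent with viewing $\nu_T(b)$ in $X_*(T)^{\Gamma_F}\otimes\mathbb{Q}$; this is immediate.
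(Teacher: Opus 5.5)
Your averaging argument is the paper's proof: it uses $(\tau\lambda)(b)=\tau(\lambda(b))$ for $b\in T(F)$ together with the Galois invariance of $\val$ to replace $\lambda$ by its $\Gamma_F$-average. It then uses the $\Gamma_F$-invariance of $\nu_T(b)$ to make the same replacement inside the pairing, and concludes by Lemma \ref{slopetori}. You are right not to rely on the base-change sketch of Step 1, whose scaling factor is off: taking $b$ with the Frobenius $\sigma'$ of $F'$ (rather than forming $(b\sigma)^f=b^f\sigma^f$) rescales the slope by $e$ only, not by $fe$.
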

\begin{proof}
    Note that the pairing $\langle \cdot, \cdot \rangle: (X_*(T)\otimes \mathbb{Q}) \times (X^*(T) \otimes \mathbb{Q}) \xrightarrow[]{} \mathbb{Q}$ is $\Gamma_F$-equivariant. Since $\nu_T(b) \in X_*(T)^{\Gamma_F} \otimes  \mathbb{Q}$, it follows that $$\langle \nu_T(b), \lambda \rangle = \langle \nu_T(b), \tau(\lambda) \rangle$$ for all $\lambda \in X^*(T) \otimes \mathbb{Q}$ and $\tau \in \Gamma_F$. Let $\lambda \in X^*(T) \otimes \mathbb{Q}$ be a $\mathbb{Q}$-linear combination of characters defined over a finite Galois extension $E$ of $F$, and let $$\overline{\lambda} = \frac{1}{\mid \Gamma_{E/F} \mid}\cdot (\sum_{\tau \in \Gamma_{E/F}} \tau  (\lambda) ) \in X^*(T)^{\Gamma_F}\otimes \mathbb{Q}$$ denote the $\Gamma_F$-average of $\lambda$. Then it follows that $$\langle \nu_T(b) , \overline{\lambda} \rangle = \langle \nu_T(b), \lambda \rangle.$$ 
    Using the fact that $b \in T(F)$, it follows that $(\tau(\lambda))(b) = \tau(\lambda(b))$ for all $\tau \in \Gamma_F$ and hence $$ \val((\tau(\lambda))(b)) = \val(\lambda(b))$$ for all $\lambda \in X^*(T) \otimes \mathbb{Q}$ and $\tau \in \Gamma_F$. Hence, we can conclude that $\val(\overline{\lambda}(b)) = \val (\lambda(b))$. Since $\overline{\lambda} \in X^*(T)^{\Gamma_F} \otimes \mathbb{Q}$, it is a $\mathbb{Q}$-linear combination of characters of $T$ which are $\Gamma_F$-invariant and using Lemma \ref{slopetori} it follows that $\val(\overline{\lambda}(b)) = \langle \nu_T(b), \overline{\lambda} \rangle$. Combining all this, it now follows that $$\val(\lambda (b)) = \langle \nu_T(b), \lambda \rangle$$ for all $\lambda \in X^*(T) \otimes \mathbb{Q}$.
\end{proof}

\begin{R}\label{valQextension}
    In \ref{slopetorispecialcase}, the quantity $\val(\lambda(b))$ only makes sense for $\lambda \in X^*(T)$. It is extended $\mathbb{Q}$-linearly to all $\lambda \in X^*(T) \otimes \mathbb{Q}$, and it is this extension which appears in the statement.
\end{R}

\subsubsection{Basic isocrystals}
We will need the following additional notation.
\begin{itemize}
    \item Let $P$ denote a standard parabolic subgroup of $G_0$ over $F$. Let $M_P$ be the standard Levi subgroup of $P$ and let $N_P$ denote the unipotent radical of $P$. Let $A_P$ be a maximal split torus in the center of $M_P$.
    \item Let $X(M_P)$ denote the group of characters of $M_P$ defined over $F$.
\end{itemize}

\begin{const}\label{paraboliclemma}
     For standard parabolic subgroups $P_1 \subset P_2$, there are maps $X^*(A_{P_1}) \xrightarrow[]{} X^*(A_{P_2})$ and $X^*(A_{P_2}) \otimes \mathbb{Q} \xrightarrow[]{} X^*(A_{P_1}) \otimes \mathbb{Q}$ constructed as follows. The map $X^*(A_{P_1}) \xrightarrow[]{} X^*(A_{P_2})$ is obtained as a restriction map with respect to the inclusion $A_{P_2} \subset A_{P_1}$ over $F$. Note that the restriction map $X(M_P) \xrightarrow[]{} X^*(A_P)$ is injective and has a finite cokernel. Hence, there is an identification $X(M_P) \otimes \mathbb{Q} \simeq X^*(A_P)\otimes \mathbb{Q}$. The inclusion $M_{P_1} \subset M_{P_2}$ over $F$ induces the restriction map $X^*(A_{P_2}) \otimes \mathbb{Q} \xrightarrow[]{} X^*(A_{P_1}) \otimes \mathbb{Q}$, using the above identification.
\end{const}

\begin{D}[Parabolic contracted by an isocrystal]\label{paracontractediso}
    With the notation as above, let $\mathfrak{U}_P := X_*(A_P) \otimes \mathbb{R}$. For $P_1 \subset P_2$, consider the map $\mathfrak{U}_{P_2} \xrightarrow[]{} \mathfrak{U}_{P_1}$, obtained by dualizing the corresponding map in \ref{paraboliclemma}. From the inclusion $A_P \subset A$, we obtain the inclusion $\mathfrak{U}_P \xrightarrow[]{} \mathfrak{U}$. We identify $\mathfrak{U}_P$ with its image under this map and thus we view $\mathfrak{U}_P$ as a subspace of $\mathfrak{U}$. Let $\mathfrak{U}_P^+ = \{ x \in \mathfrak{U}_P \mid \langle \alpha , x \rangle > 0 \text{ for every positive root } \alpha \text{ of } A_P \text{ in } \Lie(N_P)\}$. Then
    $$ \overline{C} = \coprod_P \mathfrak{U}_P^+.$$
    The parabolic subgroup $P_b$ contracted by $b \in B(G)$ is the unique standard parabolic subgroup of $G_0$ over $F$ such that the Newton point $\overline{\nu}_G(b)$ lies in $\mathfrak{U}_{P_b}^+$.
\end{D}

\begin{D}[Basic isocrystals]\label{basicisocrystaldef}
    The set $B(G)_b$ of \textit{basic} isocrystals is the subset of those elements of $B(G)$ for which the slope morphism $\nu_G(b) \in \Hom_L(\mathbb{D}, G)$ of a representative $b \in G(L)$ factors through the center $Z(G)$ of $G$. This condition does not depend on the choice of the representative. It implies that $\nu_G(b) \in \Hom_F(\mathbb{D}, Z(G))$, and hence basic isocrystals are precisely the elements $b \in B(G)$ for which $P_b = G_0$.
\end{D}

\begin{Le}\label{basicandcohomlemma}
    The composition
    $$H^1(F,G) \xrightarrow[]{} B(G) \xrightarrow[]{} (\Hom_L(\mathbb{D}, G)/G(L))^{\langle \sigma \rangle}$$
    is trivial. Hence, there is a map $H^1(F,G) \xrightarrow[]{} B(G)_b$.
\end{Le}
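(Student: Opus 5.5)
The plan is to use the two descriptions of $H^1(F,G)\to B(G)$ coming from Construction \ref{cohomisocrystal} and from Definition \ref{B(G)def}, and to compute the slope morphism of the resulting isocrystal with $G$-structure directly. Let $\beta_0:\Rep(G)\to\Vect_F$ be a fibre functor representing a class in $H^1(F,G)$. Its image in $B(G)$ is the exact tensor functor $V\mapsto (L\otimes_F\beta_0(V),\sigma\otimes\id)$. For every $V\in\Rep(G)$ this isocrystal is isoclinic of slope $0$. Indeed, a basis of $\beta_0(V)$ gives $n$ vectors fixed by $\Phi=\sigma\otimes\id$, so the isocrystal is a direct sum of copies of $E_0=(L,\sigma)$. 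Hence the $\mathbb{Q}$-grading on $\omega(\beta(V))$ is concentrated in degree $0$ for every $V$. By Definition \ref{slopedef}, $\rho_L\circ\nu_G(b)$ is then trivial for every $\rho$, and choosing $\rho$ faithful gives $\nu_G(b)=1$. So the composite to $(\Hom_L(\mathbb{D},G)/G(L))^{\langle\sigma\rangle}$ is the class of the trivial morphism, which is what we mean by ``trivial''.

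If one prefers the cocycle picture, I will check the same thing with Lemma \ref{slopealternativedef}. By Steinberg's theorem and the identification $H^1(F,G)=H^1(\Gal(F^{\mathrm{un}}/F),G(F^{\mathrm{un}}))$ (Lang plus Steinberg for $F^{\mathrm{un}}$), a class is represented by a $\sigma$-cocycle given by $b\in G(F^{\mathrm{un}})\subset G(L)$ such that $(b\sigma)^n=\sigma^n$ in $G(L)\rtimes\langle\sigma\rangle$ for some $n>0$. This is the cocycle condition for the finite quotient $\Gal(F_n/F)$ through which the cocycle factors. Taking $c=1$ and $\nu=0$, all three conditions of Lemma \ref{slopealternativedef} hold, because $(0)(\pi)=1$. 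By uniqueness, $\nu_G(b)=0$.

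It remains to get the second assertion. The trivial slope morphism factors through $Z(G)$, so by Definition \ref{basicisocrystaldef} every element in the image is basic, and the map factors through $B(G)_b$.

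The only step that needs care is matching the two descriptions of $H^1(F,G)\to B(G)$. This means checking that the tensor-functor construction in \ref{cohomisocrystal} corresponds, under Definition \ref{isocGstructure}, to the inflation of an unramified cocycle. The first argument avoids this entirely, since it only uses the fact that every isocrystal in the image has an $F$-structure fixed by $\Phi$. I would therefore present the first argument and mention the second only as a consistency check.
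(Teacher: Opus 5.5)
Your proof is correct. It differs from the paper, which gives no argument of its own and simply refers to \S 3.2 of \cite{MR1485921}.

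Your main argument uses only the definitions recalled in Section \ref{2.1}. Every isocrystal produced by Construction \ref{cohomisocrystal} has an $F$-basis fixed by $\Phi$, so it is a direct sum of copies of $E_0$. By Definition \ref{slopedef} the slope morphism therefore becomes trivial after a faithful representation, and so it is trivial.

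This gives the lemma in exactly the form the paper relies on later: the Newton point is $0$, not merely central. The proof of Lemma \ref{isocrystalstableconj} needs this when it treats translation by $\overline{\nu}_I(b_\tau^I)$ as the identity.

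Your cocycle check through Lemma \ref{slopealternativedef} is also sound. You take $c=1$, and the relation $(b\sigma)^n=\sigma^n$ follows from continuity of the cocycle on $\Gal(F^{\mathrm{un}}/F)$. This matches how the lemma is applied in Lemma \ref{isocrystalstableconj}, to a class of the form $g^{-1}\sigma(g)$.

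One point is implicit in your first argument. The functor from Construction \ref{cohomisocrystal} must admit a trivialization $\alpha$ over $L$ to define an element of $B(G)$ at all. This follows from Steinberg's theorem $H^1(L,G)=1$, and the paper's construction already presupposes it.

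The citation is shorter and places the statement in Kottwitz's more general setting $B(G)=H^1(W_F,G(\overline{L}))$ of Definition \ref{generalBG}, but that generality is not needed here. Your argument makes the lemma self-contained.
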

\begin{proof}
    See $\S$ 3.2 of \cite{MR1485921}.
\end{proof}

\begin{Le}\label{leviofcontracted}
\begin{enumerate}
    \item For a $G$-isocrystal $b \in G(L)$, there is an isomorphism $u$ over $L$ from $J_{b,L}$ to the centralizer in $G_L$ of $\nu_G(b) \in \Hom_L(\mathbb{D}, G)$, where the group $J_b$ was constructed in \ref{J}.
    \item Let $\psi: G_{\overline{F}} \xrightarrow[]{\sim} G_{0,\overline{F}}$ be an inner twisting such that $\psi \circ \nu_G(b) \in \Hom_{\overline{F}}(\mathbb{D}, G_0)$ factors through $A$, and is thus defined over $F$. The centralizer of $\psi \circ \nu_G(b)$ in $G_0$ is $M_{P_b}$, where $P_b$ is the parabolic contracted by $b$. Then $J_b$ is an inner form of $M_{P_b}$, with $\psi \circ u$ an inner twisting from $J_b$ to $M_{P_b}$.
\end{enumerate} 
\end{Le}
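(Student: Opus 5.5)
Both parts of Lemma \ref{leviofcontracted} are instances of the Tannakian description of $J_b$ in Construction \ref{J}, and I treat them in order.

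\textbf{Part (1).} Write $\beta$ for the exact tensor functor attached to $b$. For any $F$-algebra $R$, a tensor automorphism of $\beta^R$ consists of compatible automorphisms of the isocrystals $\beta(V)$, $V\in\Rep(G)$, with coefficients extended to $R$. After tensoring with $L$ over $F$ and transporting along $\alpha$, such an automorphism becomes a tensor automorphism of $\omega_G$ over $L\otimes_F R$, hence an element of $G(L\otimes_F R)$. It must commute with $b\sigma$.

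Every endomorphism of an isocrystal preserves the slope decomposition. Therefore this element also preserves the $\mathbb{Q}$-grading, i.e.\ it centralizes $\nu_G(b)$. This gives a morphism $u: J_{b,L}\to Z_{G_L}(\nu_G(b))$.

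To see that $u$ is an isomorphism, I would use Lemma \ref{slopealternativedef}: after replacing $b$ by a $\sigma$-conjugate, $b\sigma$ is conjugate to $(n\nu)(\pi)\sigma^n$ with $n\nu$ defined over $F_n$. In this normal form I would verify the isomorphism by Galois descent from $L$ to the fixed field of $\sigma^n$, using that $\Hom$ between isoclinic isocrystals of the same slope $r/s$ is computed by the $F_s$-structures recalled earlier. This is the standard argument of \cite[\S 3.3, Appendix A]{MR1485921} (Kottwitz), and I would cite it there.

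\textbf{Part (2).} The Newton point $\overline{\nu}_G(b)$ is dominant and lies in $\mathfrak{U}_{P_b}^+$ (Definition \ref{paracontractediso}). So I choose $\psi$ so that $\psi\circ\nu_G(b)$ is the dominant representative in $X_*(A)\otimes\mathbb{Q}$; by Definition \ref{newtonmap} this is possible after composing with $\operatorname{Int}(g)$.

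The centralizer of a cocharacter $x\in\mathfrak{U}_{P_b}^+$ is the Levi subgroup generated by $T$ and the root subgroups for roots $\alpha$ with $\langle\alpha,x\rangle=0$. These are exactly the roots of $M_{P_b}$, since $x$ pairs strictly positively with the roots of $A_{P_b}$ in $N_{P_b}$ and trivially with the roots in $M_{P_b}$. Hence $Z_{G_0}(\psi\circ\nu_G(b))=M_{P_b}$.

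Composing with part (1), $\psi\circ u$ is an isomorphism $J_{b,\overline{F}}\to M_{P_b,\overline{F}}$. It remains to check that it is an inner twisting, i.e.\ that for each $\tau\in\Gamma_F$ the map $(\psi\circ u)\circ\tau(\psi\circ u)^{-1}$ is inner.

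\begin{itemize}
\item Since $\psi$ is an inner twist, $\psi\,\tau(\psi)^{-1}=\operatorname{Int}(g_\tau)$ for some $g_\tau\in G_0(\overline{F})$.
\item Both $\psi\circ\nu_G(b)$ and its $\tau$-conjugate are defined over $F$ and equal. Hence $g_\tau$ centralizes $\psi\circ\nu_G(b)$, so $g_\tau\in M_{P_b}(\overline{F})$.
\item The Frobenius semilinearity of $u$ contributes an inner automorphism by $b$, and $b$ lies in the centralizer of $\nu_G(b)$, as follows from condition (3) of Lemma \ref{slopealternativedef}.
\end{itemize}

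\textbf{Main obstacle.} The delicate step is making the descent of $u$ from $L$ to $\overline{F}$ precise, i.e.\ comparing the $\sigma$-twisted rational structure with the $\Gamma_F$-structure. My plan is to reduce to the decent case of Lemma \ref{slopealternativedef}, where $b\in M(F_n)$, and then follow Kottwitz \cite[\S 4.3]{MR809866}.
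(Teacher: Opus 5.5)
The paper proves this lemma purely by citation (\S 3.3 and \S 4.3 of \cite{MR1485921}). Your part (1) is consistent with that, and so is your root-theoretic identification $Z_{G_0}(\psi\circ\nu_G(b))=M_{P_b}$; your choice of a dominant representative is needed there, since otherwise one only gets a Weyl conjugate of $M_{P_b}$.

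The genuine problem is your bulleted check that $\psi\circ u$ is an inner twisting. Write $\nu=\nu_G(b)$. The relations $\psi=\Int(g_\tau)\circ{}^{\tau}\psi$ and ${}^{\tau}(\psi\circ\nu)=\psi\circ\nu$ only give $\Int(g_\tau)\circ(\psi\circ\nu)=\psi\circ{}^{\tau}\nu$. To conclude that $g_\tau$ centralizes $\psi\circ\nu$ you need ${}^{\tau}\nu=\nu$, i.e.\ that $\nu$ is defined over $F$. Similarly, condition (3) of Lemma \ref{slopealternativedef} does not put $b$ in $Z_G(\nu)$. It only gives $\Int(b)\circ\sigma(\nu)=\nu$, equivalently $\nu_{\sigma(b)}=\Int(b^{-1})\circ\nu_b$, so $b$ centralizes $\nu$ exactly when $\sigma(\nu)=\nu$. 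Both bullets therefore assume that the slope morphism is $F$-rational, and this fails in general. Take $G=D^\times$ with $D$ a quaternion division algebra and $b$ non-basic. Then $G$ has no non-central $F$-rational cocharacter, and $M_{P_b}$ is the diagonal torus $T$ of $\GL_2$. If every $g_\tau$ lay in $T(\overline{F})$, the class of $D$ in $H^1(F,\mathrm{PGL}_2)$ would come from $H^1(F,T/\mathbb{G}_m)=H^1(F,\mathbb{G}_m)=0$, which is impossible.

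What is true is that only the product of $g_\tau$ with a contribution from $b$ lies in $M_{P_b}$, and this is where the decency reduction you mention has to be used. Take $b\in G(F_s)$ decent, so $\nu$ is defined over $F_s$. Then $J_b$ is the $F$-form of $Z_{G_{F_s}}(\nu)$ given by the semilinear map $x\mapsto b\,\sigma(x)\,b^{-1}$. This map preserves $Z_G(\nu)$ precisely because $\Int(b)\circ\sigma(\nu)=\nu$. Its $s$-th power is $\Int((s\nu)(\pi))\circ\sigma^s$, which is trivial on $Z_G(\nu)(F_s)$. Hence ${}^{\tau}u=\Int(b_\tau)^{-1}\circ u$, where $b_\tau=b\,\sigma(b)\cdots\sigma^{k-1}(b)$ when $\tau|_{F_s}=\sigma^k$, and the cocycle of $\psi\circ u$ is $\tau\mapsto\Int(\psi(b_\tau)\,g_\tau)$. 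Since $\Int(b_\tau)\circ{}^{\tau}\nu=\nu$, the identity above gives $\Int(\psi(b_\tau)g_\tau)\circ(\psi\circ\nu)=\psi\circ\nu$. So $\psi(b_\tau)g_\tau\in M_{P_b}(\overline{F})$, which is the inner-twisting statement, even though neither factor lies in $M_{P_b}(\overline{F})$ on its own. Your bullets are correct only when $\nu_G(b)$ is $F$-rational, for instance $G$ quasi-split, $b$ replaced by a basic element of a standard Levi, and $\psi=\mathrm{id}$. That special case does not cover the lemma as stated.
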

\begin{proof}
    See $\S$ 3.3 of \cite{MR1485921} for the first part and $\S$ 4.3 of \cite{MR1485921} for the second part.
\end{proof}

\begin{Le}\label{characterizationJ}
    For a basic $G$-isocrystal $b \in G(L)$, the group $J_b$ and the isomorphism $u$ are characterized uniquely by the following conditions:
    \begin{enumerate}
        \item $J_b$ is an inner form of $G$
        \item $u$ is an isomorphism $J_{b,L} \simeq G_L$ over $L$
        \item $u(\sigma(x)) = b \cdot \sigma(u(x)) \cdot b^{-1}$
    \end{enumerate}
\end{Le}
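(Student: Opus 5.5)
The plan is to build $J_b$ by descent from an explicit $\sigma$-semilinear structure on $G_L$, check the three conditions, and then get uniqueness from the same descent formalism.

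\textbf{Step 1: the twisted Frobenius.} Define the $\sigma$-semilinear automorphism $\sigma_b := \Int(b)\circ\sigma$ of $G_L$, so that $\sigma_b(g)=b\,\sigma(g)\,b^{-1}$ on points. Replacing $b$ by a $\sigma$-conjugate $c\,b\,\sigma(c)^{-1}$ changes $\sigma_b$ by conjugation by $c$. Hence the resulting $F$-form of $G$, if it exists, is independent of the representative up to canonical isomorphism.

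\textbf{Step 2: reducing to finite Galois descent (the main obstacle).} The descent is along $L/F$, which is neither algebraic nor finite, so ordinary Galois descent does not apply directly. I would use Lemma \ref{slopealternativedef}. Choose $n$, $c$ and $\pi$ as there and replace $b$ by $c\,b\,\sigma(c)^{-1}$. Then
$$(b\sigma)^n=(n\nu_G(b))(\pi)\cdot\sigma^n .$$
Because $b$ is basic, $\nu_G(b)$ factors through $Z(G)$ (Definition \ref{basicisocrystaldef}), so $(n\nu_G(b))(\pi)$ is central. Conjugation by it is trivial, and therefore
$$\sigma_b^{\,n}=\sigma^n \quad\text{on } G_L .$$
Two consequences follow.
\begin{itemize}
\item The identity $b\,\sigma(b)\cdots\sigma^{n-1}(b)=(n\nu_G(b))(\pi)$ lets one arrange, after a further $\sigma$-conjugation if needed, that $b\in G(F_n)$, where $F_n$ is the fixed field of $\sigma^n$.
\item The datum $\sigma_b$ then restricts to a semilinear action of the finite cyclic group $\Gal(F_n/F)$ on $G_{F_n}$.
\end{itemize}
Ordinary Galois descent applied to this finite action produces an $F$-group $J$, together with an isomorphism $u\colon J_L\simeq G_L$ satisfying $u(\sigma(x))=b\,\sigma(u(x))\,b^{-1}$. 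This is condition 3. Condition 2 holds by construction.

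For condition 1, the cocycle $\tau\mapsto u\circ\tau(u)^{-1}$ takes values in inner automorphisms, namely $\Int$ of products of Galois conjugates of $b$. It therefore defines a class in $H^1(F,G^{\mathrm{ad}})$, so $J$ is an inner form of $G$.

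\textbf{Step 3: identifying $J$ with $J_b$.} Next I would check that $J$ agrees with the Tannakian $J_b$ of Construction \ref{J}. On $R$-points, a tensor automorphism of $\beta^R$ is an element $g\in G(L\otimes_F R)$ commuting with $b\sigma$, that is, with $g=b\,\sigma(g)\,b^{-1}$. This is exactly a $\sigma_b$-fixed point, i.e.\ a point of $J(R)$. The inclusion into $G(L\otimes R)$ is the map $u$ of Lemma \ref{leviofcontracted}. In the basic case the centralizer of $\nu_G(b)$ is all of $G_L$, which is consistent with that lemma.

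\textbf{Step 4: uniqueness.} Suppose $(J',u')$ also satisfies conditions 1–3. Then
$$\phi:=u'^{-1}\circ u\colon J_{b,L}\longrightarrow J'_L$$
is an $L$-isomorphism. Condition 3 for both pairs shows that $\phi$ commutes with $\sigma$, where $\sigma$ acts through the $F$-structures of $J_b$ and $J'$. Both forms are split by $F_n$ compatibly with this structure, so $\phi$ is $\Gal(F_n/F)$-equivariant on $F_n$-points. Hence $\phi$ descends to a unique $F$-isomorphism $J_b\simeq J'$ carrying $u$ to $u'$. This proves that $J_b$ and $u$ are characterized uniquely by conditions 1–3.
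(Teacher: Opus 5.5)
Your proposal is correct, and it is essentially the argument of \S 5.2 of Kottwitz's paper, which is the only proof the paper gives: pass to a decent representative, so that $(b\sigma)^n=z\sigma^n$ with $z=(n\nu_G(b))(\pi)\in Z(G)(F)$ and $b\in G(F_n)$, twist $G$ by the inner cocycle $\sigma\mapsto\Int(b)$ of $\Gal(F_n/F)$, and get uniqueness by descending $u'^{-1}\circ u$. Two small sharpenings: first, $b\in G(F_n)$ is automatic, because computing $(b\sigma)^{n+1}$ in two ways gives $\sigma^n(b)=z^{-1}b\,\sigma(z)=b$, so no further $\sigma$-conjugation is needed; second, in Step 4 the claim that both forms are split by $F_n$ needs a justification, which you get by iterating condition 3 to obtain $u'\circ\sigma^n=\sigma^n\circ u'$ (alternatively, descend $u'^{-1}\circ u$ directly along $L/F$, since it commutes with $\sigma$ and $L^{\sigma}=F$, which works for any representative of $b$).
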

\begin{proof}
    See $\S$ 5.2 of \cite{MR809866}.
\end{proof}

\subsubsection{Tate-Nakayama duality}
Tate-Nakayama duality plays an important role in the discussion of the Kottwitz map in the theory of isocrystals, and it will also come up later in the theory of endoscopy. Before stating this duality, we recall the definition of a dual torus.
\begin{D}[Dual torus]\label{dualtorusdef}
    For a torus $T$ over $F$, denote by $\Hat{T}$ the dual torus over $\mathbb{C}$ defined by the property $X^*(\Hat{T}) = X_*(T)$ and $X_*(\Hat{T}) = X^*(T)$. The $\Gamma_F$-action on $X_*(T) = X^*(\Hat{T})$ induces an action of $\Gamma_F$ on $\Hat{T}$, acting via automorphisms over $\mathbb{C}$.
\end{D}

We will use the following notation.
\begin{itemize}
    \item Write $H^0(F, X^*(T))^c$ for the completion of $H^0(F,X^*(T))$ in the topology of subgroups of finite index.
    \item Write $H^0(F,T)^c$ for the completion of $H^0(F,T) = T(F)$ in the topology of open subgroups of finite index.
\end{itemize}

The pairing $X^*(T) \times T(\overline{F}) \xrightarrow[]{} \overline{F}^{\times}$ induces, via cup products, the following pairing for $0 \leq r \leq 2$:
$$H^r(F,X^*(T)) \times H^{2-r}(F,T) \xrightarrow[]{\cup} H^2(F, \mathbb{G}_m) = \mathbb{Q}/\mathbb{Z}.$$
Note that we use the following facts: $F$ has cohomological dimension $2$, that is, for any $\Gamma_F$-module $A$, we have $H^r(F,A) = 0$ for all $r \geq 3$, and $H^2(F,\mathbb{G}_m) = \Br(F) = \mathbb{Q}/\mathbb{Z}$, where $\Br(F)$ denotes the Brauer group of $F$.

\begin{Le}[Tate-Nakayama duality for tori]\label{tatenakayamatoriduality}
With respect to this pairing, there is a duality of topological groups as follows.
    \begin{enumerate}
        \item The compact group $H^0(F, X^*(T))^c$ and the discrete group $H^2(F,T)$ are dual.
        \item The finite groups $H^1(F,X^*(T))$ and $H^1(F,T)$ are dual.
        \item The discrete group $H^2(F,X^*(T))$ and the compact group $H^0(F,T)^c$ are dual.
    \end{enumerate}
\end{Le}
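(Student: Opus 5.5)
The plan is to prove the statement by dévissage: first treat induced tori, then pass to a general $T$ by embedding it in an induced torus. In both steps we use only Tate local duality for finite Galois modules, local class field theory, and Shapiro's lemma. This is the standard route (cf.\ Milne, \emph{Arithmetic Duality Theorems}, I.2.3, or Kottwitz's use of it), and in the paper itself I would simply cite it. The sketch below records how the argument would go.

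\textbf{Step 1: induced tori.} Let $T=\Res_{E/F}\mathbb{G}_m$ for a finite extension $E/F$. Then $X^*(T)=\Ind_{\Gamma_E}^{\Gamma_F}\mathbb{Z}$, and Shapiro's lemma, which is compatible with cup products and with the corestriction $\Br(E)\to\Br(F)$ (an isomorphism $\mathbb{Q}/\mathbb{Z}\to\mathbb{Q}/\mathbb{Z}$), reduces everything to $\mathbb{G}_m$ over $E$. There the three assertions are as follows.
\begin{enumerate}
\item $H^0(E,\mathbb{Z})^c=\widehat{\mathbb{Z}}$ is dual to $H^2(E,\mathbb{G}_m)=\mathbb{Q}/\mathbb{Z}$; this is the invariant map.
\item $H^1(E,\mathbb{Z})=\Hom_{\mathrm{cont}}(\Gamma_E,\mathbb{Z})=0$ and $H^1(E,\mathbb{G}_m)=0$ by Hilbert 90.
\item $H^2(E,\mathbb{Z})\cong H^1(E,\mathbb{Q}/\mathbb{Z})=\Hom_{\mathrm{cont}}(\Gamma_E^{\mathrm{ab}},\mathbb{Q}/\mathbb{Z})$, via the boundary map of $0\to\mathbb{Z}\to\mathbb{Q}\to\mathbb{Q}/\mathbb{Z}\to0$. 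The local reciprocity map identifies $(E^\times)^c$ with $\Gamma_E^{\mathrm{ab}}$, so this is the Pontryagin dual of $(E^\times)^c$.
\end{enumerate}
We must also check that the cup-product pairing agrees with the class-field-theoretic one, up to sign. This is the standard compatibility of the reciprocity map with $\chi\mapsto \operatorname{inv}(\delta\chi\cup a)$.

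\textbf{Step 2: general tori.} Choose a finite Galois $E/F$ splitting $T$, and a surjection of $\Gamma_F$-lattices $P\twoheadrightarrow X^*(T)$ with $P$ a permutation module. Its kernel $Y$ is again a lattice, which gives an exact sequence of tori $1\to T\to R\to S\to1$ with $R$ a product of induced tori as in Step 1. The long exact cohomology sequences for the tori and for their character modules map to each other's duals via the cup-product pairings. These maps commute up to sign because $\cup$ is compatible with connecting homomorphisms. Since $R$ is handled by Step 1 and $F$ has cohomological dimension $2$, a diagram chase (the five lemma, applied in a descending induction) gives the claims for $T$.

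The diagram chase runs as follows. For every torus, $H^2(F,X^*)\to H^0(F,T)^{c,\vee}$ is surjective and the map in (2) is injective. Applying this to $S$ as well as to $T$ yields bijectivity for $T$ in degree $2$, then degree $1$, then degree $0$. Where the induction on its own is circular, I would instead obtain the finite-level statements directly: the sequence $0\to X^*(T)\xrightarrow{n}X^*(T)\to X^*(T)/n\to0$ and the Kummer sequence $1\to T[n]\to T\xrightarrow{n}T\to1$ are exchanged by the pairing, so Tate local duality for the finite dual modules $X^*(T)/n$ and $T[n]=\Hom(X^*(T)/n,\mu_n)$ gives the duality of $H^0(F,T)/n$ with $H^2(F,X^*)[n]$. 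Taking the direct limit over $n$ then gives (3), and similarly (1).

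\textbf{Main obstacle.} The delicate point is topological: $H^0(F,T)=T(F)$ and $H^0(F,X^*(T))$ are not themselves compact, so duality holds only after profinite completion. Completion is not exact in general, so the five-lemma chase must be carried out with the $n$-torsion and mod-$n$ versions of the long exact sequences, and the limit over $n$ taken only at the end. I would handle this through the Kummer-sequence argument above. There, finiteness of $T(F)/n$ and of $H^1(F,T)$, the latter obtained from the Kummer sequence and Tate duality, makes the relevant $\varprojlim^1$ terms vanish, so the completions behave exactly.
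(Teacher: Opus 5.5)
The paper gives no proof of this lemma. It quotes it as the classical Tate--Nakayama duality for tori over a $p$-adic field and uses it as a black box; its part (2) is what the following reformulation lemma rests on. So your decision to cite it is exactly what the paper does. Your d\'evissage is the standard proof, as in Milne's \emph{Arithmetic Duality Theorems}, I.2: induced tori via Shapiro and class field theory, then $1\to T\to R\to S\to 1$ with $R$ induced.

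One sentence of your sketch overreaches. Tate duality for $X^*(T)/n$ and $T[n]$, combined with the Kummer and reduction sequences, does \emph{not} directly give a duality between $T(F)/n$ and $H^2(F,X^*(T))[n]$.
\begin{itemize}
\item What it gives is orthogonality: the images $T(F)/n\subset H^1(F,T[n])$ and $H^1(F,X^*(T))/n\subset H^1(F,X^*(T)/n)$ pair to zero, because their cup product factors through $H^1(F,\mathbb{G}_m)=0$.
\item Orthogonality yields only injections $T(F)/n\hookrightarrow (H^2(F,X^*(T))[n])^{\vee}$ and $H^1(F,X^*(T))/n\hookrightarrow (H^1(F,T)[n])^{\vee}$.
\item That the two images are \emph{exact} annihilators is equivalent to non-degeneracy of the degree-one pairing on the $H^1(F,T)$ side, i.e.\ to part (2). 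So it cannot be obtained ``directly''; it has to come out of the d\'evissage.
\end{itemize}

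Fortunately, after passing to the limit over $n$, these one-sided statements are precisely the a priori inputs your chase uses: surjectivity of $H^2(F,X^*(T))\to (H^0(F,T)^c)^{\vee}$ and injectivity in (2). With them the chase closes.
\begin{itemize}
\item \emph{Degree 2, for every torus.} Use surjectivity for $T$ and $S$, bijectivity for $R$, and middle exactness of $(S(F)^c)^{\vee}\to (R(F)^c)^{\vee}\to (T(F)^c)^{\vee}$. The latter holds because $R(F)\to S(F)$ is open with image of finite index, and finite-order characters extend from a finite-index open subgroup.
\item \emph{Degree 1.} This follows from degree $2$ for $S$ and $R$, using $H^1(F,X^*(R))=H^1(F,R)=0$.
\item \emph{Degree 0.} Injectivity comes from the analogous orthogonality $X^*(T)^{\Gamma_F}/n\hookrightarrow (H^2(F,T)[n])^{\vee}$. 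Surjectivity comes from the four lemma, since completing finitely generated abelian groups is exact.
\end{itemize}

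Finally, finiteness of $H^1(F,T)$ needs that this group is killed by $[E:F]$: restrict to the splitting field and use Hilbert 90. The Kummer sequence alone only shows that each $H^1(F,T)[n]$ is finite.
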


\begin{Le}[Tate-Nakayama duality reformulation]\label{Tatenakayamatori}
    There is a canonical isomorphism of abelian groups $H^1(F,T) \simeq X^*(\pi_0(\Hat{T}^{\Gamma_F}))$, where $\pi_0(-)$ denotes the group of connected components. This isomorphism extends to an isomorphism of the functors $H^1(F,-)$ and $X^*(\pi_0(\Hat{(-)}^{\Gamma_F}))$ from the category of tori over $F$ to abelian groups.
\end{Le}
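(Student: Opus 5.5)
The plan is to derive the statement from part (2) of Lemma \ref{tatenakayamatoriduality} together with a computation of $H^1(F,X^*(T))$ through the exponential sequence of the dual torus. Part (2) gives a perfect pairing between the finite groups $H^1(F,T)$ and $H^1(F,X^*(T))$, hence an isomorphism
$$H^1(F,T)\simeq \Hom\bigl(H^1(F,X^*(T)),\mathbb{Q}/\mathbb{Z}\bigr).$$
It therefore suffices to construct a canonical isomorphism $\pi_0(\Hat{T}^{\Gamma_F})\simeq H^1(F,X^*(T))$. Once this is done, we use that for a finite abelian group $A$ one has
$$\Hom(A,\mathbb{Q}/\mathbb{Z})\simeq \Hom(A,\mathbb{C}^\times)=X^*(A),$$
via $x\mapsto e^{2\pi i x}$.

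\textbf{Step 1: pass to a finite Galois group.} Choose a finite Galois extension $E/F$ splitting $T$. Inflation gives $H^1(\Gamma_{E/F},X^*(T))\to H^1(F,X^*(T))$, and the inflation–restriction sequence shows this map is an isomorphism. Indeed, $H^1(\Gamma_E,X^*(T))$ is the group of continuous homomorphisms from the profinite group $\Gamma_E$ to the discrete torsion-free group $X^*(T)$. Any such homomorphism has finite, hence trivial, image, so this group is zero.

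\textbf{Step 2: the exponential sequence.} Since $X^*(T)=X_*(\Hat{T})$ by Definition \ref{dualtorusdef}, there is a $\Gamma_{E/F}$-equivariant exact sequence
$$0\to X_*(\Hat{T})\to \Lie(\Hat{T})\xrightarrow{\exp}\Hat{T}\to 1,$$
where $\exp(\lambda\otimes z)=\lambda(e^{2\pi i z})$. Its long exact cohomology sequence for the finite group $\Gamma_{E/F}$ reads
$$\Lie(\Hat{T})^{\Gamma}\to \Hat{T}^{\Gamma}\to H^1(\Gamma_{E/F},X^*(T))\to H^1(\Gamma_{E/F},\Lie(\Hat{T})).$$
The last group vanishes, because $\Lie(\Hat{T})$ is a $\mathbb{C}$-vector space and the order of $\Gamma_{E/F}$ is invertible on it.

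\textbf{Step 3: identify the image of $\exp$.} We need the image of $\Lie(\Hat{T})^\Gamma$ in $\Hat{T}^\Gamma$ to be the identity component $(\Hat{T}^{\Gamma})^0$. The space $\Lie(\Hat{T})^\Gamma$ is the Lie algebra of the diagonalizable group $\Hat{T}^\Gamma$. The exponential of a connected complex Lie group is surjective onto its identity component when that component is a torus, and it lands in the identity component since $\Lie(\Hat{T})^\Gamma$ is connected. This yields $\pi_0(\Hat{T}^\Gamma)\simeq H^1(\Gamma_{E/F},X^*(T))\simeq H^1(F,X^*(T))$. Independence of the choice of $E$ follows from compatibility of inflation with the connecting maps.

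\textbf{Step 4: naturality.} Every map used is natural in $T$: the cup-product pairing, inflation, the exponential sequence (which is functorial for morphisms of dual tori), and its connecting homomorphism. Hence the composite isomorphism is an isomorphism of functors from tori over $F$ to abelian groups.

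I expect the main obstacle to be this last bookkeeping step rather than any conceptual point. Tate–Nakayama duality is contravariant in the lattice argument, so one must check that the directions of functoriality agree, and that the sign conventions of the cup product and of the connecting map match. I would first verify everything for $T=\Res_{E/F}\mathbb{G}_m$ and for norm-one tori, and then argue in general by naturality.
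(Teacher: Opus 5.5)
Your proposal is correct and follows the paper's route. Both arguments identify $H^1(F,X^*(T))$ with $\pi_0(\Hat{T}^{\Gamma_F})$ as the cokernel of the exponential map in the long exact sequence attached to $0\to X_*(\Hat{T})\to X_*(\Hat{T})\otimes\mathbb{C}\to\Hat{T}\to 1$, and then dualize using part (2) of Lemma \ref{tatenakayamatoriduality}. The differences are only cosmetic: you apply inflation--restriction to the lattice $X^*(T)$ to pass to $\Gamma_{E/F}$ at the outset, whereas the paper applies it to $X_*(\Hat{T})\otimes\mathbb{C}$ to get the vanishing of $H^1$; you also justify the identity-component claim and the naturality that the paper leaves implicit, and for that no sign-convention check on special tori is needed, since a composite of natural isomorphisms is natural whatever the sign conventions.
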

\begin{proof}
    Note that $H^1(F,X^*(T)) = H^1(F,X_*(\Hat{T}))$. There is a short exact sequence
$$ 0 \xrightarrow[]{} X_*(\Hat{T}) \xrightarrow[]{} X_*(\Hat{T}) \otimes \mathbb{C} \xrightarrow{} X_*(\Hat{T}) \otimes \mathbb{C}/\mathbb{Z} \xrightarrow[]{} 0.$$
The corresponding long exact sequence in Galois cohomology is as follows:
$$ \cdots \xrightarrow[]{} (X_*(\Hat{T}) \otimes \mathbb{C})^{\Gamma_F} \xrightarrow[]{} (X_*(\Hat{T}) \otimes \mathbb{C}/\mathbb{Z})^{\Gamma_F} \xrightarrow[]{} H^1(F, X_*(\Hat{T})) \xrightarrow[]{} 0 \xrightarrow[]{} \cdots$$
using the fact that $H^1(F, X_*(\Hat{T}) \otimes \mathbb{C}) = 0$. Here $M := X_*(\Hat{T}) \otimes \mathbb{C}$ is a $\mathbb{Q}$-vector space on which $\Gamma_F$ acts through the finite quotient $\Delta = \Gal(E/F)$, for $E/F$ a finite extension splitting $T$. The inflation-restriction sequence for the normal subgroup $\Gamma_E \subset \Gamma_F$ reads
$$0 \xrightarrow[]{} H^1(\Delta, M^{\Gamma_E}) \xrightarrow[]{\mathrm{inf}} H^1(F, M) \xrightarrow[]{\mathrm{res}} H^1(\Gamma_E, M)^{\Delta}.$$
Since $\Gamma_E$ acts trivially on $M$, the left hand term is $H^1(\Delta, M)$, and the right hand term is $\Hom_{\text{cont}}(\Gamma_E, M)^{\Delta} = 0$, because a continuous homomorphism from a profinite group to the torsion free group $M$ has finite image and is therefore trivial. Hence inflation identifies $H^1(F, M)$ with $H^1(\Delta, M)$, and the latter vanishes because taking $\Delta$-invariants is exact on $\mathbb{Q}[\Delta]$-modules: the averaging operator $\mid \Delta \mid^{-1} \sum_{\delta \in \Delta} \delta$ is a functorial projector onto the invariants, so $(-)^{\Delta}$ is a direct summand of the identity functor and its higher derived functors vanish. Hence
$$H^1(F, X^*(T)) = \coker ((X_*(\Hat{T}) \otimes \mathbb{C})^{\Gamma_F} \xrightarrow[]{} (X_*(\Hat{T}) \otimes \mathbb{C}/\mathbb{Z})^{\Gamma_F}).$$
Note the following.
\begin{enumerate}
    \item We have $\Lie(\Hat{T}) = X_*(\Hat{T}) \otimes \mathbb{C}$, and hence $\Lie(\Hat{T}^{\Gamma_F}) = (X_*(\Hat{T}) \otimes \mathbb{C})^{\Gamma_F}$. The second identity says that passing to invariants commutes with $\Lie$, which holds because the action factors through the finite group $\Delta$ and taking $\Delta$-invariants is exact, as above.
    \item We have $\Hat{T}(\mathbb{C}) = X_*(\Hat{T}) \otimes \mathbb{C}^{\times}$. Using the exponential map $\exp: z \mapsto e^z$, we get an isomorphism $\mathbb{C}/\mathbb{Z} \xrightarrow[]{\sim} \mathbb{C}^{\times}$, where $\mathbb{Z} \xrightarrow[]{} \mathbb{C}$ is $n \mapsto 2\pi ni$. Hence $\Hat{T}^{\Gamma_F}(\mathbb{C}) = (X_*(\Hat{T}) \otimes \mathbb{C}/\mathbb{Z})^{\Gamma_F}$.
\end{enumerate}
Using this, it follows that
$$\coker ((X_*(\Hat{T}) \otimes \mathbb{C})^{\Gamma_F} \xrightarrow[]{} (X_*(\Hat{T}) \otimes \mathbb{C}/\mathbb{Z})^{\Gamma_F}) = \coker (\Lie(\Hat{T}^{\Gamma_F}) \xrightarrow[]{\exp} \Hat{T}^{\Gamma_F}).$$
Hence, we have shown that $H^1(F, X^*(T)) = \pi_0(\Hat{T}^{\Gamma_F})$, and the result now follows from part (2) of \ref{tatenakayamatoriduality}.
\end{proof}

\subsubsection{Kottwitz map}
To define the Kottwitz map, we first need to discuss the characterization of $B(T)$ for a torus $T$. We will also discuss the compatibility of the Kottwitz map with the Tate-Nakayama isomorphism.

\begin{Le}\label{isocrystalsfortori}
    There is a canonical isomorphism of pointed sets $B(T) \simeq X_*(T)_{\Gamma_F}$, satisfying the conditions:
    \begin{enumerate}
        \item The map $B(\mathbb{G}_m) \xrightarrow[]{} X_*(\mathbb{G}_m)_{\Gamma_F} = \mathbb{Z}$ sends a $\sigma$-conjugacy class in $L^{\times}$ to the normalized valuation of any of its elements.
        \item The isomorphism extends to an isomorphism of the functors $B(-)$ and $X_*(-)_{\Gamma_F}$ from the category of tori over $F$ to pointed sets.
    \end{enumerate}  
\end{Le}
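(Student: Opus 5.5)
Since $\mathcal{N}$ and the Kottwitz invariant are both functorial, I will build the isomorphism $B(T)\simeq X_*(T)_{\Gamma_F}$ in three steps: first for split tori, then for induced tori, and finally for arbitrary tori by resolving them with induced ones.

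\textbf{Split tori.} Every $\sigma$-conjugacy class in $L^\times$ has a representative $\pi^{k}$, because the map $u\mapsto u\,\sigma(u)^{-1}$ is surjective on $\mathcal{O}_L^\times$. This is Lang's theorem on the residue field combined with successive approximation. The valuation is constant on $\sigma$-conjugacy classes, since $\val(g b\sigma(g)^{-1})=\val(b)$. Hence $B(\mathbb{G}_m)\simeq\mathbb{Z}$ via $\val$, which is property (1). For a split torus $T\simeq\mathbb{G}_m^r$ we have $B(T)=T(L)/\{t\sigma(t)^{-1}\}$, and this gives $B(T)\simeq X_*(T)$ through $\lambda(\pi)\mapsto\lambda$. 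A change of uniformizer alters $\pi$ by a unit, so the map does not depend on the choice of $\pi$.

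\textbf{Induced tori.} Let $T=\Res_{E/F}\mathbb{G}_m$ with $E/F$ finite. Write $L\otimes_F E$ as a product of copies of the completion $L_E$ of $E^{\mathrm{un}}$. These copies are indexed by $\Hom_F(E_0,F^{\mathrm{un}})$, where $E_0\subset E$ is the maximal unramified subextension, and $\sigma$ permutes them cyclically. A Shapiro-type argument then shows $B(T)\simeq B_{E}(\mathbb{G}_m)\simeq\mathbb{Z}$, where $B_E$ uses $\sigma^{[E_0:F]}$. On the other side, $X_*(T)=\mathbb{Z}[\Gamma_F/\Gamma_E]$, so $X_*(T)_{\Gamma_F}=\mathbb{Z}$. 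The two identifications agree once one checks that the valuation of $L_E$ together with the normalisation by the ramification index gives the augmentation map. Both sides commute with finite products, so the isomorphism holds for all induced tori. Functoriality for maps between induced tori reduces to checking norm maps and diagonal maps, and for these it suffices to compare the effect on valuations.

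\textbf{General tori.} For an arbitrary $T$, choose a resolution $T_2\to T_1\to T\to 1$ by induced tori; this is possible because $X_*(T)$ is a quotient of a permutation module. The functor $X_*(-)_{\Gamma_F}$ is right exact. I will show that $B(-)$, which equals $H^1(\langle\sigma\rangle,T(L))$ for a torus, is right exact on such sequences. The key input is surjectivity of $T_1(L)\to T(L)$ together with the vanishing of $H^1(L,\cdot)$ for the kernel, which is Steinberg's theorem already used in Lemma \ref{slopesoverclosure}, and also for $\sigma$-cohomology of the kernel torus. Defining $B(T)\to X_*(T)_{\Gamma_F}$ as the induced map on cokernels, the five lemma gives an isomorphism. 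Independence of the resolution and functoriality follow by the standard argument of comparing two resolutions through a third that dominates both.

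\textbf{Main obstacle.} I expect the hardest step to be the right exactness of $B(-)$ in the last stage. The issue is surjectivity of $T_1(L)\to T(L)$ when the kernel is a torus that need not be connected, since it can acquire a finite part. To handle this I would choose the resolution so that the kernel $S$ is a torus and then use $H^1(L,S)=0$, which holds because $L$ has cohomological dimension $1$ (Steinberg). Exactness in the middle requires $H^1(\langle\sigma\rangle,S(L))\to H^1(\langle\sigma\rangle,T_1(L))$ to be compatible. I would obtain this from the long exact sequence of $\langle\sigma\rangle$-cohomology, noting that $\langle\sigma\rangle\simeq\mathbb{Z}$ has cohomological dimension $1$, so there are no $H^2$ terms to worry about.
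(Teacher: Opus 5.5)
The paper gives no argument of its own for this lemma; it only refers to \S 2.4 of Kottwitz's \emph{Isocrystals with additional structure}. Your d\'evissage is a correct, self-contained replacement: $\val$ for $\mathbb{G}_m$, Shapiro for $\Res_{E/F}\mathbb{G}_m$, then right exactness along a presentation $T_2\to T_1\to T\to 1$ by induced tori. It also buys something the bare citation does not. It shows that property (1) plus functoriality already force the isomorphism, because the norm $\Res_{E/F}\mathbb{G}_m\to\mathbb{G}_m$ induces an isomorphism on $X_*(-)_{\Gamma_F}\cong\mathbb{Z}$ and $B(T_1)\to B(T)$ is onto.

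A few points should be tidied.

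\textbf{The Kottwitz invariant.} Drop the opening appeal to it. Lemma \ref{kottwitzmap} is built on this lemma, so using it would be circular; fortunately nothing in your argument needs it.

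\textbf{Your main obstacle.} This is not actually an obstacle. If $X_*(T_1)\to X_*(T)$ is onto with kernel $K$, the sequence $0\to K\to X_*(T_1)\to X_*(T)\to 0$ splits over $\mathbb{Z}$ because $X_*(T)$ is free. Dualizing it shows $X^*(S)=\coker\bigl(X^*(T)\to X^*(T_1)\bigr)\simeq\Hom(K,\mathbb{Z})$ is torsion free, so the kernel $S$ is automatically a torus. Then $H^1(L,S)=0$ gives $T_1(L)\twoheadrightarrow T(L)$. Since $H^2(\langle\sigma\rangle,-)=0$, the sequence $B(S)\to B(T_1)\to B(T)\to 0$ is exact.

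\textbf{Exactness at $B(T_1)$.} This comes from surjectivity of $B(T_2)\to B(S)$, which is the same argument applied to $T_2\to S$. It does not come from any vanishing of $\sigma$-cohomology of the kernel torus, which is false in general (e.g.\ $B(\mathbb{G}_m)=\mathbb{Z}$).

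\textbf{Functoriality on induced tori.} Every homomorphism $\Res_{E/F}\mathbb{G}_m\to\Res_{E'/F}\mathbb{G}_m$ is a $\mathbb{Z}$-combination of double-coset maps $\Gamma_E\tau\Gamma_{E'}$. Each such map is the composite of three maps: the one induced by $E\subset E\cdot\tau E'$, a norm, and the isomorphism $\tau E'\simeq E'$. Both functors are additive, so it suffices to check these three types, and that is your valuation computation. A uniformizer of $E$ has total valuation $[E'':E]$ in $L\otimes_F E''$. The norm of a uniformizer of $L_{E''}$ is a uniformizer of $L_E$, since $L_{E''}/L_E$ is totally ramified.
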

\begin{proof}
    See $\S$ 2.4 of \cite{MR809866}.
\end{proof}

\begin{C}\label{galoisaverage}
    The slope morphism for tori, discussed in \ref{slopetori}, admits the alternative description as the map $B(T) \simeq X_*(T)_{\Gamma_F} \xrightarrow[]{} X_*(T)^{\Gamma_F}\otimes {\mathbb{Q}}$ given by $\overline{\mu} \mapsto \mid \Gamma_F \cdot \mu \mid ^{-1} \sum_{\mu' \in \Gamma_F \cdot \mu} \mu'$.
\end{C}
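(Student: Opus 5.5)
The plan is to deduce the statement from the two descriptions already available: the description of $\nu_T$ in Lemma \ref{slopetori}, and the identification $B(T)\simeq X_*(T)_{\Gamma_F}$ of Lemma \ref{isocrystalsfortori}. Write $\kappa_T:B(T)\to X_*(T)_{\Gamma_F}$ for this isomorphism and $\mathrm{av}$ for the averaging map $\overline{\mu}\mapsto |\Gamma_F\cdot\mu|^{-1}\sum_{\mu'\in\Gamma_F\cdot\mu}\mu'$. The averaging map is well defined on coinvariants, since $\mu$ and $\tau(\mu)$ have the same orbit and the map is additive on orbit sums. The claim to prove is $\nu_T(b)=\mathrm{av}(\kappa_T(b))$ for every $b\in B(T)$.

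First I would reduce to the split case. By Lemma \ref{slopetori}, $\nu_T(b)$ is the unique element of $X_*(T)^{\Gamma_F}\otimes\mathbb{Q}$ with $\langle\nu_T(b),\lambda\rangle=\val(\lambda(b))$ for all $\lambda\in X^*(T)^{\Gamma_F}$. It therefore suffices to check that
$$\langle \mathrm{av}(\kappa_T(b)),\lambda\rangle=\val(\lambda(b))$$
for every $\Gamma_F$-invariant character $\lambda$. For such $\lambda$, the $\Gamma_F$-equivariance of the pairing gives $\langle\mathrm{av}(\mu),\lambda\rangle=\langle\mu,\lambda\rangle$, where $\mu$ is any lift of $\kappa_T(b)$.

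Next, an invariant character $\lambda$ is a morphism of tori $T\to\mathbb{G}_m$ over $F$. By the functoriality in Lemma \ref{isocrystalsfortori}(2), $\kappa_{\mathbb{G}_m}(\lambda(b))$ is the image of $\kappa_T(b)$ under $\lambda_*:X_*(T)_{\Gamma_F}\to X_*(\mathbb{G}_m)_{\Gamma_F}=\mathbb{Z}$, and that image is $\langle\mu,\lambda\rangle$. By Lemma \ref{isocrystalsfortori}(1), $\kappa_{\mathbb{G}_m}(\lambda(b))=\val(\lambda(b))$. Combining these gives $\langle\mathrm{av}(\kappa_T(b)),\lambda\rangle=\val(\lambda(b))$, and uniqueness in Lemma \ref{slopetori} concludes the proof.

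The only delicate point is the uniqueness: the invariant characters must pair nondegenerately with $X_*(T)^{\Gamma_F}\otimes\mathbb{Q}$. This holds because averaging identifies $(X^*(T)\otimes\mathbb{Q})^{\Gamma_F}$ with the dual of $(X_*(T)\otimes\mathbb{Q})^{\Gamma_F}$, and it is already implicit in Lemma \ref{slopetori}. Otherwise the argument is formal.
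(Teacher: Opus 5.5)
Your proposal is correct and follows the paper's proof essentially step for step: functoriality of $B(T)\simeq X_*(T)_{\Gamma_F}$ along an $F$-rational character $\lambda : T\to\mathbb{G}_m$, together with the normalization for $\mathbb{G}_m$, gives $\val(\lambda(b))=\langle\mu,\lambda\rangle$; Galois-equivariance of the pairing transfers this to the orbit average; and the uniqueness in Lemma \ref{slopetori} concludes. Two cosmetic points: what you call a ``reduction to the split case'' is really a reduction to testing against $\Gamma_F$-invariant characters, and well-definedness on coinvariants is cleanest seen by noting that the orbit average equals the average over the finite quotient through which $\Gamma_F$ acts on $X_*(T)$, hence is additive and kills $(\tau-1)X_*(T)$.
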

\begin{proof}
    Let $b \in B(T)$ correspond to $\overline{\mu} \in X_*(T)_{\Gamma_F}$ and let $\lambda \in X^*(T)^{\Gamma_F}$, so that $\lambda: T \xrightarrow[]{} \mathbb{G}_m$ is defined over $F$. Applying the functoriality in \ref{isocrystalsfortori} to $\lambda$, together with the normalization for $\mathbb{G}_m$ given there, yields $\val(\lambda(b)) = \langle \mu, \lambda \rangle$. Let $\nu$ denote the average of $\mu$ over its $\Gamma_F$-orbit; it lies in $X_*(T)^{\Gamma_F} \otimes \mathbb{Q}$ and depends only on $\overline{\mu}$, since averaging annihilates $(\tau - 1)X_*(T)$ for every $\tau \in \Gamma_F$. As the pairing is $\Gamma_F$-equivariant and $\lambda$ is $\Gamma_F$-invariant, we get $\langle \nu, \lambda \rangle = \langle \mu, \lambda \rangle = \val(\lambda(b))$, so $\nu$ has the property characterizing $\nu_T(b)$ in \ref{slopetori}. The uniqueness asserted there gives $\nu = \nu_T(b)$.
\end{proof}

We need the following notions associated with dual groups for the rest of the discussion.
\begin{itemize}
    \item For a connected reductive group $G$ over $F$, we denote by $\Hat{G}$ the connected reductive group over $\mathbb{C}$ whose root datum is dual to the root datum of $G$.
    \item Choosing a pinning of $\Hat{G}$ gives an action of $\Gamma_F$ on $\Hat{G}$, acting via automorphisms over $\mathbb{C}$.
    \item The restriction of the action of $\Gamma_F$ on $\Hat{G}$ to the center $Z(\Hat{G})$ is independent of the choice of the pinning on $\Hat{G}$ and hence the diagonalizable group $Z(\Hat{G})^{\Gamma_F}$ is well defined.
\end{itemize}

We will also introduce the algebraic fundamental group for a connected reductive group $G$, which will later be used in relation with the Kottwitz map.

\begin{D}[$\pi_1(G)$]
    The algebraic fundamental group $\pi_1(G)$ of a connected reductive group $G$ is the quotient of the cocharacter lattice by the coroot lattice, that is, $\pi_1(G) = X_*(T) / \sum_{\alpha \in \Phi(G,T)} \mathbb{Z} \Check{\alpha}$, where $\Phi(G,T)$ denotes the set of roots of $G$ with respect to a maximal torus $T$. Then $\pi_1(G)$ is an abelian group with a $\Gamma_F$-action, and is independent of the choice of $T$.
\end{D}

\begin{R}\label{observation}
    There are canonical identifications between the following groups:  
    \begin{itemize}
        \item $\pi_1(G) = X^*(Z(\Hat{G}))$.
        \item $\pi_1(G)_{\Gamma_F} = X^*(Z(\Hat{G})^{\Gamma_F})$, for the $\Gamma_F$-coinvariants.
        \item $(\pi_1(G)_{\Gamma_F})_{\text{tors}} \simeq X^*(\pi_0(Z(\Hat{G})^{\Gamma_F}))$, for the torsion subgroup.
    \end{itemize}
\end{R}

\begin{Le}[Kottwitz map]\label{kottwitzmap}
    There is a canonical map of pointed sets
    $$\kappa_G: B(G) \xrightarrow[]{} X^*(Z(\Hat{G})^{\Gamma_F}),$$
    which extends to a morphism between the functors $B(-)$ and $X^*(Z(\Hat{(-)})^{\Gamma_F})$ from the category of connected reductive groups to pointed sets. This morphism extends the isomorphism \ref{isocrystalsfortori} of functors for tori.
\end{Le}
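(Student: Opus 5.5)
This is Kottwitz's construction (\S\S 4.4--4.9, 7.1--7.5 of \cite{MR1485921}), and I would reproduce it in three steps: tori, groups with simply connected derived group, and general groups via $z$-extensions.

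\textbf{Tori.} For a torus $T$, set $\kappa_T$ to be the composite
$$B(T)\simeq X_*(T)_{\Gamma_F}=X^*(\Hat{T}^{\Gamma_F})$$
given by \ref{isocrystalsfortori}. The identification on the right holds because $X_*(T)=X^*(\Hat{T})$ and taking characters turns $\Gamma_F$-invariants into coinvariants. Functoriality in $T$ is then automatic.

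\textbf{Simply connected derived group.} Suppose $G^{\mathrm{der}}$ is simply connected and let $D=G/G^{\mathrm{der}}$ be the cocentre. Then $\Hat{D}=Z(\Hat{G})$ as $\Gamma_F$-groups, so $X^*(Z(\Hat{G})^{\Gamma_F})=X_*(D)_{\Gamma_F}=B(D)$. I would define $\kappa_G$ as the composite $B(G)\to B(D)\simeq X^*(Z(\Hat{G})^{\Gamma_F})$. Functoriality for maps between two such groups follows from functoriality of $B(-)$ together with the torus case.

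\textbf{General $G$.} Choose a $z$-extension $1\to Z\to G'\to G\to 1$, where $Z$ is an induced torus and $G'^{\mathrm{der}}$ is simply connected. The map $B(G')\to B(G)$ is surjective: since $Z$ is induced, $H^1(L,Z)=0$ by Steinberg and Shapiro, so $G'(L)\to G(L)$ is surjective. Dually we get an injection $Z(\Hat{G})\hookrightarrow Z(\Hat{G'})$, and $X^*(Z(\Hat{G})^{\Gamma_F})$ is the quotient of $X^*(Z(\Hat{G'})^{\Gamma_F})$ by the image of $X^*(\Hat{Z}^{\Gamma_F})=X_*(Z)_{\Gamma_F}=B(Z)$.

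I would then define $\kappa_G$ by lifting a class to $B(G')$ and applying $\kappa_{G'}$. Well-definedness is the key point. Two lifts of $b$ differ by translation by an element of $Z(L)$, up to $\sigma$-conjugacy. The $\kappa_{G'}$ values of the two lifts therefore differ by the image of $\kappa_Z$ of that element, by functoriality for the central multiplication map $Z\times G'\to G'$. This difference dies in the quotient.

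Independence of the chosen $z$-extension is shown by comparing two $z$-extensions via their fibre product, which is again a $z$-extension dominating both. Functoriality for an arbitrary homomorphism $G_1\to G_2$ is handled the same way. One chooses compatible $z$-extensions by pulling back a $z$-extension of $G_2$ along $G_1\to G_2$ and then $z$-extending the result, and reduces to the simply connected-derived case.

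\textbf{Main obstacle.} I expect the bookkeeping of functoriality to be the hardest part. The issue is that a homomorphism $G_1\to G_2$ does not induce a map $\Hat{G}_2\to\Hat{G}_1$. It only induces a canonical map on centres, and only after passing to $\pi_1$: the map $\pi_1(G_1)\to\pi_1(G_2)$ gives $X^*(Z(\Hat{G}_1))\to X^*(Z(\Hat{G}_2))$. So I would phrase everything in terms of $\pi_1(G)_{\Gamma_F}$ using Remark \ref{observation}, and check compatibility at the level of $X_*(T)_{\Gamma_F}\to\pi_1(G)_{\Gamma_F}$.

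Finally, for $G=T$ a torus, $Z$-extensions are unnecessary and the construction returns the isomorphism of \ref{isocrystalsfortori} by definition. This gives the extension property claimed in the lemma.
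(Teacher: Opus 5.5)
Your proposal is correct and follows the same route as the paper. The paper cites Kottwitz's construction of $\kappa_G$ (Lemma 6.1 of \cite{MR1044820}) and, as in your final paragraph, extends functoriality from normal homomorphisms to all homomorphisms by rewriting the target as $\pi_1(G)_{\Gamma_F}$ via Remark \ref{observation}, following \S\S 1.13--1.14 of \cite{MR1411570}; your torus / cocentre / $z$-extension reduction is the standard construction behind that citation, and its steps (surjectivity of $B(G')\to B(G)$, the quotient description of the target, independence of the $z$-extension via fibre products) check out.
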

\begin{proof}
    The map is constructed in Lemma 6.1 of \cite{MR1044820}. However, only functoriality with respect to normal homomorphisms of algebraic groups is discussed there, since the functoriality of $G \mapsto X^*(Z(\Hat{G})^{\Gamma_F})$ beyond this case is not clear, a priori. To obtain functoriality for all homomorphisms, we need to use the algebraic fundamental group $\pi_1(G)$ of $G$. The assignment $G \mapsto \pi_1(G)$ is functorial for all homomorphisms of connected reductive groups, and combining this with Remark \ref{observation}, the functoriality now follows. For more details see $\S$ 1.13 and $\S$ 1.14 of \cite{MR1411570}.
\end{proof}

\begin{C}\label{classificationofbasic}
    The map $\kappa_G$ of \ref{kottwitzmap} restricts to an isomorphism
    $$\kappa_G: B(G)_b \xrightarrow[]{\sim} X^*(Z(\Hat{G})^{\Gamma_F}).$$
\end{C}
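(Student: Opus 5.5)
The plan is to prove injectivity and surjectivity of $\kappa_G$ restricted to $B(G)_b$ separately, reducing each to the torus case \ref{isocrystalsfortori} through the Newton point and through elliptic maximal tori. Throughout I use the commutative square relating $\overline{\nu}_G$, $\kappa_G$ and $\delta_G$: the Newton point is determined by the image of $\kappa_G(b)$ in $\pi_1(G)^{\Gamma_F}\otimes\mathbb{Q}$ via the Galois-average map, which for tori is \ref{galoisaverage}.

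\textbf{Surjectivity.} Take a maximal torus $T\subset G$ that is elliptic, meaning $T/Z(G)$ is anisotropic; such tori exist over a $p$-adic field, for example from an unramified anisotropic torus of the adjoint group. The map $X_*(T)\to\pi_1(G)$ is $\Gamma_F$-equivariant. For an elliptic torus it induces a surjection $X_*(T)_{\Gamma_F}\to\pi_1(G)_{\Gamma_F}$. To check this, use the coroot lattice: the cokernel is a quotient of $\pi_1(G)_{\Gamma_F}$, and ellipticity should make the relevant $H^1$-type obstruction vanish. If this is awkward, I would instead use Kottwitz's argument, which reduces to $G^{\mathrm{der}}$ simply connected via a $z$-extension and then applies Steinberg and Kneser, giving $H^1(F,G^{\mathrm{sc}})=0$. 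Given $\overline\mu\in X_*(T)_{\Gamma_F}=B(T)$, the element $b\in T(L)$ has slope morphism equal to the Galois average of $\mu$ by \ref{galoisaverage}. Since $T$ is elliptic, $X_*(T)^{\Gamma_F}\otimes\mathbb{Q}=X_*(Z(G))^{\Gamma_F}\otimes\mathbb{Q}$, so the slope morphism factors through the centre and the image of $b$ in $B(G)$ is basic. Functoriality of $\kappa$ (\ref{kottwitzmap}) then shows that $\kappa_G(b)$ is the image of $\overline\mu$, which gives surjectivity.

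\textbf{Injectivity.} Let $b,b'$ be basic with $\kappa_G(b)=\kappa_G(b')$. By \ref{characterizationJ}, $J_b$ is an inner form of $G$, and twisting by $b$ gives a bijection between $B(J_b)$ and $B(G)$ that carries basic elements to basic elements and shifts $\kappa$ by $\kappa_G(b)$. This reduces the claim to showing that the only basic class with trivial $\kappa$ is the trivial class. A basic $b$ with $\kappa_G(b)=0$ has central slope $\nu$ whose image in $\pi_1(G)^{\Gamma_F}\otimes\mathbb{Q}$ is zero by the commutative square. Since $\nu$ is central, $\nu=0$. Lemma \ref{slopealternativedef} then gives $(b\sigma)^n$ conjugate to $\sigma^n$, so $b$ is $\sigma$-conjugate to an element of $G(F_n)$ defining a class in $H^1(F_n/F,G)$. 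It therefore comes from $H^1(F,G)$ via \ref{cohomisocrystal}, using Steinberg's theorem over $L$. Kottwitz's isomorphism $H^1(F,G)\simeq\pi_0(Z(\hat G)^{\Gamma_F})^D$ is compatible with $\kappa_G$, so triviality of $\kappa_G$ forces the cohomology class to be trivial, hence $b$ is trivial.

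\textbf{Expected obstacle.} The hard steps are the compatibility between the map $H^1(F,G)\to B(G)$ and $\kappa_G$ with Kottwitz's $H^1$-duality (the torus case is \ref{Tatenakayamatori}), and the surjectivity of $X_*(T)_{\Gamma_F}\to\pi_1(G)_{\Gamma_F}$ for elliptic $T$. I would handle both by reducing to $G^{\mathrm{der}}$ simply connected via a $z$-extension, where $\pi_1(G)=X_*(D)$ for $D=G/G^{\mathrm{der}}$ and the statements follow from the torus case together with $H^1(F,G^{\mathrm{der}})=0$. In the end this is Kottwitz's result (\S5.6 of \cite{MR809866}, \S4 of \cite{MR1485921}), which may simply be cited.
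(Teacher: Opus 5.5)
Your sketch is correct, and it is an actual proof, whereas the paper gives none: it disposes of this corollary by citing Proposition 5.6 of \cite{MR809866}. In outline you reconstruct the standard argument behind that citation from the preliminaries of Section \ref{2.1}:
\begin{itemize}
\item surjectivity comes from an elliptic maximal torus together with functoriality of $\kappa$;
\item injectivity comes from twisting through $J_b$ (Lemma \ref{isocrystalsandinnerforms}) down to the case $\kappa=0$. There $\delta_G$ (Lemma \ref{Newonkottwitzcharisoc}) kills the central slope, Lemma \ref{slopealternativedef} places the class in the image of $H^1(F,G)$, and Kottwitz's $H^1$-duality finishes.
\end{itemize}
What your route buys over the bare citation is that it exposes the real input. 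That input is Kneser's vanishing of $H^1$ for simply connected groups, hidden inside the bijection $H^1(F,G)\simeq X^*(\pi_0(Z(\Hat{G})^{\Gamma_F}))$ of \cite{MR0757954}. What the citation buys is brevity.

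A few points to tidy.

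\emph{The flagged obstacle.} The step you list as hard is not one. $X_*(T)\to\pi_1(G)$ is surjective for every maximal $F$-torus $T$, and taking $\Gamma_F$-coinvariants is right exact. So $X_*(T)_{\Gamma_F}\to\pi_1(G)_{\Gamma_F}$ is surjective with no ellipticity, no $H^1$-obstruction and no $z$-extension. Ellipticity is needed only to make the image of $B(T)$ in $B(G)$ basic.

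\emph{Existence of elliptic tori.} For a possibly non-quasi-split $G$, cite the existence lemma for elliptic maximal tori in Section \ref{2.2} (from \cite{MR4615820}). The adjoint-group heuristic does not by itself produce one.

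\emph{Apparent circularity.} Lemma \ref{KottwitzTateNakayama} is phrased in the paper as the restriction of the very isomorphism you are proving, so quoting it verbatim looks circular. You should state that you use only two things: the injectivity of Kottwitz's map on $H^1(F,G)$, and the compatibility of $\kappa_G$ with it on the image of $H^1(F,G)\to B(G)$. Neither depends on the corollary.

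\emph{Generality of the reduction.} The claim ``basic with $\kappa=0$ implies trivial'' is applied to the inner form $J_b$. It must therefore be proved for arbitrary connected reductive groups; your argument does this, but say so.

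\emph{Landing in $G(F_n)$.} Spell out this step. Put $b_1=cb\sigma(c)^{-1}$, so that $(b_1\sigma)^n=\sigma^n$. Since $b_1\sigma$ commutes with its own $n$-th power, $\sigma^n(b_1)=b_1$, so $b_1\in G(F_n)$ with trivial norm.
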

\begin{proof}
    See Proposition 5.6 of \cite{MR809866}.
\end{proof}

\begin{Le}[Kottwitz's generalization of Tate-Nakayama duality]\label{KottwitzTateNakayama}
    There is a canonical isomorphism of pointed sets $\kappa_G: H^1(F,G) \simeq X^*(\pi_0(Z(\Hat{G})^{\Gamma_F}))$ which is a restriction of the isomorphism $\kappa_G: B(G)_b \xrightarrow[]{} X^*(Z(\Hat{G})^{\Gamma_F})$ to $H^1(F,G)$ with respect to the inclusion discussed in \ref{cohomisocrystal}. This extends to a functorial isomorphism from the category of connected reductive groups to pointed sets. This extends the functorial isomorphism \ref{Tatenakayamatori} for tori.
\end{Le}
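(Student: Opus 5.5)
The plan is to assemble the isomorphism from the torus case and the basic case, both already available. First, I would construct the map $H^1(F,G)\to X^*(\pi_0(Z(\Hat{G})^{\Gamma_F}))$ as a composite. By Construction \ref{cohomisocrystal} and Lemma \ref{basicandcohomlemma}, $H^1(F,G)$ injects into $B(G)_b$. Corollary \ref{classificationofbasic} then identifies $B(G)_b$ with $X^*(Z(\Hat{G})^{\Gamma_F})$ via $\kappa_G$. It remains to show that the image of $H^1(F,G)$ is exactly the torsion subgroup $X^*(\pi_0(Z(\Hat{G})^{\Gamma_F}))$, which is the third identification in Remark \ref{observation}.

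For the image, I would use the characterization of $H^1(F,G)$ inside $B(G)_b$: a basic $b$ comes from $H^1(F,G)$ exactly when its slope morphism is trivial. One direction is Lemma \ref{basicandcohomlemma}. For the other, if $\nu_G(b)=1$, then $J_b$ is an inner form of $G$ by Lemma \ref{characterizationJ}, and one can $\sigma$-conjugate $b$ into $G(F^{\text{un}})$ with $b\sigma$ of finite order modulo $G(L)$-conjugation. This produces a cocycle of $\Gal(F^{\text{un}}/F)$, and Steinberg's vanishing of $H^1(F^{\text{un}},G)$ lets the inflation to $H^1(F,G)$ be surjective (I would cite \S 5 of \cite{MR809866} here).

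Next, I would translate the condition $\nu_G(b)=1$ on the $\kappa_G$ side. By the commutative square preceding Lemma A (Lemma \ref{Newonkottwitzcharisoc}), for basic $b$ the Newton point equals the Galois average of $\kappa_G(b)$ in $\pi_1(G)^{\Gamma_F}\otimes\mathbb{Q}$. It vanishes exactly when $\kappa_G(b)$ is torsion in $\pi_1(G)_{\Gamma_F}$, since the averaging map $\pi_1(G)_{\Gamma_F}\otimes\mathbb{Q}\to\pi_1(G)^{\Gamma_F}\otimes\mathbb{Q}$ is an isomorphism. Since $\kappa_G$ is a bijection on $B(G)_b$, this yields the asserted bijection onto $X^*(\pi_0(Z(\Hat{G})^{\Gamma_F}))$. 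By construction it is the restriction of $\kappa_G$.

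For functoriality, I would argue that each ingredient is functorial. The inclusion $H^1\to B$ is functorial since it is defined by composing fibre functors. $\kappa_G$ is functorial by Lemma \ref{kottwitzmap}, via $\pi_1$. Passing to torsion is functorial. Compatibility with Lemma \ref{Tatenakayamatori} for tori then follows because $\kappa_T$ extends the isomorphism of Lemma \ref{isocrystalsfortori}. One still has to check that the resulting identification $H^1(F,T)\simeq (X_*(T)_{\Gamma_F})_{\mathrm{tors}}$ agrees with the Tate–Nakayama one. I would do this by reducing, via functoriality and a flasque/induced resolution, to $T=\Res_{E/F}\mathbb{G}_m$ and norm-one tori, where both sides are explicit.

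The main obstacle is this last sign and normalization compatibility with Tate–Nakayama. It is also the surjectivity of $H^1(F,G)\to\{b \text{ basic}: \nu=1\}$. For both I would lean on \cite{MR809866} and \cite[\S 1]{MR1411570} rather than redo them.
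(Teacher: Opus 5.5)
Your argument is correct, but it runs in the opposite direction from the paper. The paper's proof simply cites Proposition 6.4 of \cite{MR0757954} and \S 5.7 of \cite{MR809866}. There the isomorphism $H^1(F,G)\simeq X^*(\pi_0(Z(\Hat{G})^{\Gamma_F}))$ is built directly in Galois cohomology, by extending Tate--Nakayama from maximal tori using Kneser's vanishing of $H^1$ for simply connected groups. So agreement with Lemma \ref{Tatenakayamatori} holds by construction, and the real content of the citation is the compatibility with $\kappa_G$ on $B(G)_b$.

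You instead \emph{define} the map as the restriction of $\kappa_G$ along Construction \ref{cohomisocrystal}. The restriction property and functoriality (via $\pi_1$) are then automatic. Bijectivity follows from three inputs:
\begin{itemize}
\item injectivity of $H^1(F,G)\to B(G)$;
\item the identification of its image with the basic classes of trivial slope;
\item the square of Lemma \ref{Newonkottwitzcharisoc}.
\end{itemize}
For the last input, note that on central Newton points $\delta_G$ is just the isomorphism $X_*(A_G)\otimes\mathbb{Q}\simeq\pi_1(G)^{\Gamma_F}\otimes\mathbb{Q}$; this is what makes ``slope zero'' equivalent to ``$\kappa_G(b)$ torsion''. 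In your route the deep input is hidden in Corollary \ref{classificationofbasic}. The price is that compatibility with Tate--Nakayama for tori becomes a separate check rather than a tautology.

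Two places can be tightened.

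\emph{The trivial-slope direction.} This needs neither Steinberg nor a finiteness argument. If $\nu_G(b)$ is trivial, Lemma \ref{slopealternativedef} gives $c$ and $n$ such that $b'=cb\sigma(c)^{-1}$ satisfies $(b'\sigma)^n=\sigma^n$. Since $b'\sigma$ commutes with $\sigma^n$, we get $\sigma^n(b')=b'$. Hence $b'$ lies in $G(F_n)$, for $F_n$ the unramified extension of degree $n$, and $b'\sigma(b')\cdots\sigma^{n-1}(b')=1$. This is a $1$-cocycle of $\Gal(F_n/F)$ whose inflation maps to $[b]$.

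\emph{The torus check.} $\Res_{E/F}\mathbb{G}_m$ carries no information, since both sides vanish. Norm-one tori do not detect all classes when $\Gal(E/F)$ is non-cyclic. For example, if $\Gal(E/F)\simeq(\mathbb{Z}/2)^2$ and $T_E=\Res_{E/F}\mathbb{G}_m/\mathbb{G}_m$, every homomorphism from a norm-one torus into $T_E$ has image in $H^1(F,T_E)\simeq\mathbb{Z}/4$ killed by $2$.

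The tori $T_E$ themselves are the right test objects. Let $T$ be split by the finite Galois extension $E$. Every torsion class in $X_*(T)_{\Gamma_F}$ is represented by some $x$ with $\sum_{\tau\in\Gal(E/F)}\tau x=0$. It is therefore the image of the generator of $X_*(T_E)_{\Gamma_F}\simeq\mathbb{Z}/[E:F]$ under the homomorphism $T_E\to T$ sending the class of $1\in\mathbb{Z}[\Gal(E/F)]$ to $x$. Since $H^1(F,T_E)=\Br(E/F)$, everything reduces to matching the valuation normalization of Lemma \ref{isocrystalsfortori} with the local invariant map. That is exactly the sign issue you flagged.
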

\begin{proof}
    See Proposition 6.4 of \cite{MR0757954} and $\S$ 5.7 of \cite{MR809866}.
\end{proof}

Note that via the isomorphism $H^1(F,G) \simeq X^*(\pi_0(Z(\Hat{G})^{\Gamma_F}))$ one can view $H^1(F,G)$ as an abelian group, which is a priori just a pointed set.

\subsubsection{Compatibility between the Newton map and Kottwitz map}
The Newton point $\overline{\nu}_G(b)$ and the Kottwitz point $\kappa_G(b)$ characterize an isocrystal $b \in B(G)$, and the following result also gives a relation between $\overline{\nu}_G(b)$ and $\kappa_G(b)$.

\begin{const}[$\delta_G: \mathcal{N}(G) \rightarrow \pi_1(G)^{\Gamma_F} \otimes \mathbb{Q}$]
    There is a map $\delta_G: \mathcal{N}(G) \simeq \overline{C}_{\mathbb{Q}} \subset \mathfrak{U}_{\mathbb{Q}} \xrightarrow[]{} \pi_1(G)^{\Gamma_F}\otimes \mathbb{Q}$ which is constructed as follows. Recall that $A_G$ denotes the maximal $F$-split torus in the center of $G$, and that $A$ denotes the maximal split torus in $G_0$. Note that the center of $G$ is isomorphic over $F$ to the center of $G_0$. A cocharacter of $A_G$ determines a homomorphism $\Hat{G} \xrightarrow[]{} \mathbb{C}^{\times}$, which can be restricted to $Z(\Hat{G})^{\Gamma_F}$. This induces an isomorphism $X_*(A_G) \otimes \mathbb{Q} \xrightarrow[]{} X^*(Z(\Hat{G})^{\Gamma_F}) \otimes \mathbb{Q}$. Precomposing this with the map $X_*(A) \otimes \mathbb{Q} \xrightarrow[]{} X_*(A_G) \otimes \mathbb{Q}$ of \ref{paraboliclemma}, we obtain the map $\delta_G: \mathcal{N}(G) \xrightarrow[]{} \pi_1(G)^{\Gamma_F} \otimes \mathbb{Q}$.
\end{const}

\begin{Le}\label{Newonkottwitzcharisoc}
    The map $\overline{\nu}_G \times \kappa_G:  B(G) \xrightarrow[]{} \mathcal{N}(G) \times X^*(Z(\Hat{G})^{\Gamma_F})$ is injective. The following square commutes:
\begin{center}
\begin{tikzcd}
    B(G) \arrow{r}{\overline{\nu}_G} \arrow{d}{\kappa_G} & \mathcal{N}(G) \arrow{d}{\delta_G} \\
    \pi_1(G)_{\Gamma_F} \arrow{r} & \pi_1(G)^{\Gamma_F} \otimes \mathbb{Q}
\end{tikzcd}
\end{center}
Note that the bottom map is the natural map $\pi_1(G)_{\Gamma_F} \xrightarrow[]{} \pi_1(G)_{\Gamma_F} \otimes \mathbb{Q} \simeq \pi_1(G)^{\Gamma_F} \otimes \mathbb{Q}$.
\end{Le}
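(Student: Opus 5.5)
Injectivity of $\overline{\nu}_G \times \kappa_G$ is a classical theorem of Kottwitz (\S 4.13 of \cite{MR1485921}), so the plan concentrates on the commutativity of the square and recalls the injectivity argument only briefly. The strategy for commutativity is to reduce to tori via functoriality and then use Corollary \ref{galoisaverage}.

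First I will reduce to $b$ in a maximal torus. Every $\sigma$-conjugacy class in $B(G)$ meets the image of $B(T)$ for a suitable maximal torus $T \subset G$ over $F$. One can even take $T$ elliptic in $M_{P_b}$ (or its inner form $J_b$), by Kottwitz's result that basic classes come from elliptic maximal tori, \S 5 of \cite{MR1485921}. Now consider the inclusion $i: T \hookrightarrow G$. By Lemma \ref{slopefunctoriality}, the Newton point of $i(b_T)$ is the $G$-conjugacy class of $\nu_T(b_T)$. By Lemma \ref{kottwitzmap}, $\kappa_G(i(b_T))$ is the image of $\kappa_T(b_T) \in X_*(T)_{\Gamma_F}$ under $X_*(T)_{\Gamma_F} \to \pi_1(G)_{\Gamma_F}$. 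It therefore suffices to check that two elements of $\pi_1(G)^{\Gamma_F} \otimes \mathbb{Q}$ agree:
\begin{itemize}
\item $\delta_G$ of the Newton point of $\nu_T(b_T)$;
\item the image of $\kappa_T(b_T)$.
\end{itemize}

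The torus case is then handled as follows. By Corollary \ref{galoisaverage}, $\nu_T(b_T)$ is the $\Gamma_F$-average $\nu$ of a lift $\mu \in X_*(T)$ of $\kappa_T(b_T)$. The bottom arrow sends $\overline{\mu}$ to the $\Gamma_F$-average of its image in $\pi_1(G)$, which is the image of $\nu$ under $X_*(T)\otimes\mathbb{Q} \to \pi_1(G)\otimes\mathbb{Q}$. So the remaining task is to identify $\delta_G$ with the map induced by $X_*(T)\otimes\mathbb{Q}\to\pi_1(G)\otimes\mathbb{Q}$, restricted to $\Gamma_F$-invariant, $\Omega$-orbit-classes.

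The main obstacle is this identification, because $\delta_G$ is defined through the dominant representative in $X_*(A)\otimes\mathbb{Q}$ of the quasi-split form, followed by projection to $X_*(A_G)\otimes\mathbb{Q}$. I would argue as follows.
\begin{itemize}
\item The map $X_*(T)\otimes\mathbb{Q}\to \pi_1(G)\otimes\mathbb{Q}$ kills coroots, so it is invariant under the absolute Weyl group and under inner twisting. Hence it may be computed on any representative, in particular the dominant one in $X_*(A)\otimes \mathbb{Q}$ for $G_0$.
\item Rationally, $\pi_1(G)\otimes\mathbb{Q} \simeq X_*(Z(G)^0)\otimes\mathbb{Q}$. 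Here the projection $X_*(T)\otimes\mathbb{Q}\to X_*(Z^0)\otimes\mathbb{Q}$ is orthogonal to the coroots, and is given by averaging over the Weyl group.
\item On $\Gamma_F$-invariants this becomes $X_*(A_G)\otimes\mathbb{Q}$, with $X_*(A)\otimes\mathbb{Q}\to X_*(A_G)\otimes\mathbb{Q}$ being exactly the map dual to restriction from Construction \ref{paraboliclemma}.
\item Finally, the isomorphism $X_*(A_G)\otimes\mathbb{Q}\simeq X^*(Z(\Hat G)^{\Gamma_F})\otimes\mathbb{Q}$ used to define $\delta_G$ has to be matched with Remark \ref{observation}. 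This means checking that a cocharacter of $A_G$, viewed as a character of $\Hat{G}$ restricted to $Z(\Hat{G})$, is its class in $\pi_1(G)$. That holds by the definition of the duality $X^*(Z(\Hat G))=X_*(T)/\langle\Check\alpha\rangle$.
\end{itemize}

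This bookkeeping with the identifications is where care is needed, in particular checking that the sign and normalisation conventions agree. The injectivity statement I would cite directly: two classes with the same Newton point both reduce to basic classes of the same Levi $M_{P_b}$ with equal $\kappa_{M}$, and Corollary \ref{classificationofbasic} then identifies them.
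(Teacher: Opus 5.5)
Your argument for the square is correct, but it takes a different route from the paper. The paper proves nothing here: it cites \S 4.13 of Kottwitz for injectivity and Theorem 1.15(iii) of Rapoport--Richartz for commutativity. You instead argue as follows:
\begin{itemize}
\item reduce, via functoriality of $\overline{\nu}$ (Lemma \ref{slopefunctoriality}) and of $\kappa$ (Lemma \ref{kottwitzmap}), to classes coming from some $B(T)$;
\item settle the torus case by Corollary \ref{galoisaverage};
\item identify $\delta_G$ with the map induced by $X_*(T)\otimes\mathbb{Q}\to\pi_1(G)\otimes\mathbb{Q}$.
\end{itemize}
Your checks for the last step are all sound: Weyl averaging kills the coroot span; $(X_*(Z(G)^0)\otimes\mathbb{Q})^{\Gamma_F}=X_*(A_G)\otimes\mathbb{Q}$; the map agrees with the dual of restriction of $F$-rational characters in Construction \ref{paraboliclemma}; and it agrees with restriction $X^*(\Hat{T})\to X^*(Z(\Hat{G}))$. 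What this buys over a bare citation is that the averaging normalizations of the bottom arrow and of $\delta_G$ become explicit. These are exactly what Lemma \ref{endoscopiccompatibility} later relies on.

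Your reduction to tori is heavier than needed. It requires three inputs:
\begin{itemize}
\item decent representatives;
\item an elliptic maximal torus of the $F$-Levi $Z_G(\nu)$ of $G$ itself (not of $M_{P_b}$, which is a Levi of $G_0$, nor of $J_b$);
\item the bijectivity in Corollary \ref{classificationofbasic}.
\end{itemize}
Pushing forward along $G\to D(G)=G/G^{\mathrm{der}}$ avoids all of this. The map $\pi_1(G)\otimes\mathbb{Q}\to X_*(D(G))\otimes\mathbb{Q}$ is a $\Gamma_F$-equivariant isomorphism, since its kernel $\pi_1(G^{\mathrm{der}})$ is finite. So the torus case for $D(G)$, together with your identification of $\delta_G$, gives the square for every $b$ at once.

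Your one-line sketch of injectivity hides the real content. Equal Newton points and equal $\kappa_G$ give $\kappa_M(b)=\kappa_M(b')$ only after two further steps:
\begin{itemize}
\item apply the commutative square for $M$ to see that $\kappa_M(b)-\kappa_M(b')$ is torsion in $\pi_1(M)_{\Gamma_F}$;
\item use that $(\pi_1(M)_{\Gamma_F})_{\mathrm{tors}}\to\pi_1(G)_{\Gamma_F}$ is injective.
\end{itemize}
The injectivity holds because $\ker(\pi_1(M)\to\pi_1(G))$ is the permutation module on the simple coroots outside $M$. Its $\Gamma_F$-coinvariants are free and map injectively into $\pi_1(M)_{\Gamma_F}\otimes\mathbb{Q}$, so their image in $\pi_1(M)_{\Gamma_F}$ meets the torsion trivially. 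Thus injectivity logically depends on the commutativity you prove first. Citing Kottwitz for it is fine, but the sketch as written is not an argument.
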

\begin{proof}
    See $\S$ 4.13 of \cite{MR1485921} for the first part. For the second part, see Theorem 1.15 (iii) of \cite{MR1411570}.
\end{proof}

\subsubsection{Isocrystals and inner forms}
If $G$ and $G'$ are inner forms of each other, there are natural questions which explore the relationship between $B(G)$ and $B(G')$. The following is a result in this direction in the special case $G' = J_b$, for $b \in B(G)_b$.

\begin{Le}\label{isocrystalsandinnerforms}
    Let $b \in G(L)$ be a basic $G$-isocrystal. Let $J_b$ denote the group constructed in \ref{J} and let $u: J_{b,L} \simeq G_L$ be the isomorphism of \ref{characterizationJ}. Then the assignment $b' \mapsto u(b')b$, for $b' \in J_b(L)$, induces a map $B(J_b) \xrightarrow[]{\times b} B(G)$, since $$h b' \sigma(h)^{-1} \mapsto u(h) u(b')u(\sigma(h)^{-1})b = u(h)u(b')b\sigma(u(h)^{-1})$$ for all $h \in J_b(L)$, using condition (3) of \ref{characterizationJ}. Since $J_b$ is an inner form of $G$, the two groups share the quasi-split form $G_0$ and the dual group $\Hat{G}$, so the vertical maps below have the same targets for $J_b$ as for $G$. This map has the following property with respect to the Newton map:
    \begin{center}
        \begin{tikzcd}
            B(J_b) \arrow{rr}{\times b} \arrow{d}{\overline{\nu}_{J_b}} && B(G) \arrow{d}{\overline{\nu}_G} \\
            \mathfrak{U} \arrow{rr}{+ \overline{\nu}_G(b)} && \mathfrak{U}
        \end{tikzcd}
    \end{center}
    and the following property with respect to the Kottwitz map:
    \begin{center}
        \begin{tikzcd}
            B(J_b) \arrow{rr}{\times b} \arrow{d}{\kappa_{J_b}} && B(G) \arrow{d}{\kappa_G} \\
            X^*(Z(\Hat{G})^{\Gamma_F}) \arrow{rr}{+ \kappa_G(b)} && X^*(Z(\Hat{G})^{\Gamma_F})
        \end{tikzcd}
    \end{center}
\end{Le}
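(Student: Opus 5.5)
The plan is to check the three assertions in turn: that $b'\mapsto u(b')b$ is a well-defined map $B(J_b)\to B(G)$, that it is compatible with the Newton maps, and that it is compatible with the Kottwitz maps. Well-definedness is the displayed computation, resting on condition (3) of Lemma \ref{characterizationJ}. The two compatibilities are where the work lies.

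\textbf{Newton compatibility.} I would use the characterization of slope morphisms in Lemma \ref{slopealternativedef}. Since $b$ is basic, $\nu_G(b)$ factors through $Z(G)$ and is defined over $F$. After replacing $b$ by a $\sigma$-conjugate, I may assume there is an integer $s>0$ with $s\nu_G(b)\in X_*(Z(G))$ and
$$(b\sigma)^s=(s\nu_G(b))(\pi)\,\sigma^s.$$
Given $b'\in J_b(L)$, the transport $u$ converts the $\sigma$-twisted structure on $J_b$ into $b\sigma$ on $G$, since $u\circ\sigma_{J_b}=\mathrm{Int}(b)\circ\sigma\circ u$. This gives
$$u(b')b\sigma=u(b')\cdot(b\sigma).$$
For a sufficiently divisible $n$, expanding the $n$-th power and moving the factors $b\sigma$ past the $u$-terms (each move is exactly the transport via $u$) yields
$$(u(b')b\sigma)^n=u\bigl((b'\sigma_{J})^n\bigr)\,(b\sigma)^n.$$
Choose $n$ so that $(b'\sigma_J)^n$ is conjugate to $(n\nu_{J}(b'))(\pi)\sigma_J^n$ by some $c$, as in Lemma \ref{slopealternativedef}. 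Because $(b\sigma)^n$ is central up to $\sigma^n$, the element $u(c)$ conjugates $(u(b')b\sigma)^n$ into
$$u\bigl(n\nu_J(b')\bigr)(\pi)\cdot n\nu_G(b)(\pi)\,\sigma^n.$$
The central cocharacter $\nu_G(b)$ commutes with $u(\nu_J(b'))$, so their sum is a cocharacter and it satisfies the three conditions of Lemma \ref{slopealternativedef}. Uniqueness then gives
$$\nu_G(u(b')b)=u\circ\nu_{J_b}(b')+\nu_G(b).$$
Passing to $\mathfrak U$ through inner twistings, the map $u$ is compatible with $\psi$ (it is an inner twisting). Adding a central element commutes with taking the dominant representative, so the Newton square commutes.

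\textbf{Kottwitz compatibility.} I expect this to be the main obstacle, because $\kappa$ is not characterized by an elementary formula. Two strategies are available.

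The first uses the commutative square of Lemma \ref{Newonkottwitzcharisoc}. It shows that $\kappa_G(u(b')b)-\kappa_{J_b}(b')-\kappa_G(b)$ is torsion, since its image under $\delta_G$ vanishes by the Newton computation. But that alone only determines the answer up to torsion.

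The second strategy is therefore to reduce to tori. I would pick a maximal torus $T_J\subset J_b$ over $F$ that is elliptic, so that $b'$ can be $\sigma$-conjugated into $T_J(L)$; any $\sigma$-conjugacy class in $J_b$ meets some such maximal torus, in the sense of the Kottwitz reference in \S4 of \cite{MR1485921}. I would also arrange that $b$ is represented in a transported torus $T=u(T_J)$, using the basic representative from $Z$-decency. The element $b$ is basic and $\mathrm{Int}(b)$ acts on $T$ through $u$. In the favourable case where $b$ lies in $T(L)$ and $u|_{T_J}$ is defined over $F$, the restriction $u|_{T_J}$ is an $F$-isomorphism $T_J\cong T$. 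In that case $\kappa$ on tori is a group homomorphism by Lemma \ref{isocrystalsfortori}, so
$$\kappa_T\bigl(u(b')b\bigr)=\kappa_T(u(b'))+\kappa_T(b),$$
and functoriality of $\kappa$ (Lemma \ref{kottwitzmap}) pushes this to $G$ and to $J_b$. This uses the identification $X^*(Z(\hat J_b)^{\Gamma})=X^*(Z(\hat G)^{\Gamma})$ for inner forms.

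Making this reduction rigorous is the delicate step. If it proves awkward, I would instead cite Kottwitz \cite[\S4.18]{MR1485921}, where exactly this twisting compatibility is established.
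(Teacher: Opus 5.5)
Your Newton argument is correct, and it takes a different route from the paper. The paper's proof consists only of citations, to \cite[\S4.18]{MR1485921} for the Newton square and \cite[\S4.12]{MR1485921} for the Kottwitz square, plus the remark that translation by the central vector $\overline{\nu}_G(b)$ preserves $\overline{C}_{\mathbb{Q}}$. Working directly with the characterization in Lemma \ref{slopealternativedef} gives you more than the square: it gives the identity of slope morphisms $\nu_G(u(b')b)=u\circ\nu_{J_b}(b')+\nu_G(b)$.

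You should state explicitly the fact on which both the conjugation by $u(c)$ and condition (2) rest. Once $n$ is a multiple of the decency period of $b$, one has $(b\sigma)^n=z\sigma^n$ with $z=(n\nu_G(b))(\pi)$ central, and hence $u\circ\sigma_{J_b}^n=\sigma^n\circ u$. You should also note why your normalization is harmless. Replacing $b$ by $gb\sigma(g)^{-1}$ replaces $u$ by $\Int(g)\circ u$ and $u(b')b$ by $g\,u(b')b\,\sigma(g)^{-1}$, so the map on $B(J_b)$ is unchanged.

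The Kottwitz square, however, has a genuine gap: the torus reduction cannot be carried out.

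\begin{itemize}
\item A class coming from an elliptic maximal torus $T_J$ is basic, since $X_*(T_J)^{\Gamma_F}\otimes\mathbb{Q}=X_*(Z(J_b))^{\Gamma_F}\otimes\mathbb{Q}$. So your claim that every class of $B(J_b)$ meets such a torus fails for every non-basic $b'$.
\item Dropping ellipticity does not help once one side is not quasi-split. Take $G=\GL_2$ and $b$ basic of slope $1/2$, so $J_b\cong D^\times$ for the quaternion division algebra $D$. Then the non-basic classes of $B(D^\times)$ come from no maximal $F$-torus at all, since every maximal $F$-torus of $D^\times$ is elliptic.
\item Conversely, take $G=D^\times$ and $b$ basic with $\kappa_G(b)$ odd, so $J_b\cong\GL_2$, and let $b'=\mathrm{diag}(1,\pi)$. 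This class comes only from the split torus, which admits no $F$-embedding into $D^\times$. So your favourable case, with $b\in u(T_J)(L)$ and $u|_{T_J}$ defined over $F$, cannot occur for this $b'$.
\end{itemize}

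As written, your Kottwitz square therefore rests entirely on the citation, as the paper's does.

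The idea that works is to replace the maximal torus by the cocenter.

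\begin{itemize}
\item First reduce to $G^{\mathrm{der}}$ simply connected by a $z$-extension $G'\to G$. Lift $b$ to $b_1\in G'(L)$. It is basic, because the preimage of $Z(G)$ in $G'$ is $Z(G')$.
\item Then $J'_{b_1}\to J_b$ is again a $z$-extension. Surjectivity of $B(J'_{b_1})\to B(J_b)$ together with functoriality of $\kappa$ carries the identity down from $G'$ to $G$.
\item When $G^{\mathrm{der}}$ is simply connected, $\pi_1(G)=X_*(D)$ with $D=G/G^{\mathrm{der}}$. In that case $\kappa_G$ is the composite $B(G)\to B(D)\simeq X_*(D)_{\Gamma_F}$.
\item Since $\Int(b)$ acts trivially on $D$, condition (3) of Lemma \ref{characterizationJ} shows that $u$ descends to a $\sigma$-equivariant isomorphism $\bar u: J_b/J_b^{\mathrm{der}}\to D$. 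This isomorphism induces the canonical identification of $\pi_1$ for inner forms.
\item The Kottwitz square then reduces to additivity of $\kappa_D$ on the group $B(D)$: $\kappa_D(\bar u(\bar b')\bar b)=\kappa_D(\bar u(\bar b'))+\kappa_D(\bar b)$.
\end{itemize}
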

\begin{proof}
    See $\S$ 4.18 of \cite{MR1485921} for the property with respect to the Newton map and $\S$ 4.12 of \textit{loc. cit.} for the property with respect to the Kottwitz map. Note that $\overline{\nu}_G(b) \in X_*(A_G) \otimes \mathbb{Q}$, since $b$ is basic, and translation by this vector on $\mathfrak{U} = X_*(A)\otimes \mathbb{R}$ preserves the cone $\overline{C}_\mathbb{Q}$.
\end{proof}

\subsubsection{Isocrystals and unipotent groups}

Before we discuss isocrystals in the context of unipotent groups, we will first need to define the set $B(G)$ in greater generality than for connected linear algebraic groups. Hence, for this subsubsection, we assume that
\begin{itemize}
    \item Let $G$ be a linear algebraic group over $F$.
\end{itemize}
The theory of isocrystals in this generality is worked out in \cite{MR1485921}. We recall the setup.

\begin{itemize}
    \item Let $F$ denote a $p$-adic field and let $\sigma$ denote the arithmetic Frobenius automorphism of $F^{\text{un}}$ over $F$.
    \item Let $W_F$ denote the Weil group of $\overline{F}$ over $F$, that is, the subgroup of $\Gamma_F$ consisting of elements whose restriction to $F^{\text{un}}$ is an integral power of $\sigma$.
\end{itemize}

\begin{D}[$B(G)$ for linear algebraic groups $G$]\label{generalBG}
    Let $G$ be a linear algebraic group over a $p$-adic field $F$. Let $L$ be the completion of $F^{\text{un}}$ as before. We can regard elements of $W_F$ as automorphisms of $\overline{L}$ over $F$. We define
    $$ B(G) = H^1(W_F, G(\overline{L})).$$
\end{D}

\begin{Le}\label{generalBGagrees}
    For a connected linear algebraic group $G$ over $F$, the set $B(G)$ of \ref{generalBG} agrees with the set of $\sigma$-conjugacy classes in $G(L)$ of \ref{B(G)def}.
\end{Le}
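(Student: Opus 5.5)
The plan is to compare the two definitions through the exact inflation--restriction sequence for the inertia subgroup. Let $I_F \subset W_F$ be the inertia subgroup, i.e.\ the kernel of the map $W_F \to \langle \sigma \rangle \simeq \mathbb{Z}$ recording the power of $\sigma$ by which an element acts on $F^{\mathrm{un}}$. Then $I_F$ is normal in $W_F$ and $W_F/I_F \simeq \mathbb{Z}$, generated by the image of a Frobenius lift. The first step is to identify $I_F$, acting on $\overline{L}$, with $\Gal(\overline{L}/L)$. Restriction from $\overline{L}$ to $\overline{F}$ gives an isomorphism $\Gal(\overline{L}/L) \simeq \Gal(\overline{F}/F^{\mathrm{un}}) = I_F$, since $\overline{L} = L\overline{F}$ and $L \cap \overline{F} = F^{\mathrm{un}}$ (the completion $L/F^{\mathrm{un}}$ adds no new algebraic elements, by Krasner's lemma). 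In particular $G(\overline{L})^{I_F} = G(L)$.

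Next I would write down the exact sequence of pointed sets (valid for non-abelian coefficients):
\[
1 \to H^1\bigl(W_F/I_F,\, G(\overline{L})^{I_F}\bigr) \xrightarrow{\ \mathrm{inf}\ } H^1\bigl(W_F,\, G(\overline{L})\bigr) \xrightarrow{\ \mathrm{res}\ } H^1\bigl(I_F,\, G(\overline{L})\bigr).
\]
Two of its terms can be computed directly.

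\emph{The left term.} $H^1(W_F/I_F, G(\overline{L})^{I_F}) = H^1(\langle \sigma\rangle, G(L))$. For the infinite cyclic group $\langle\sigma\rangle$, a $1$-cocycle is determined freely by its value $b \in G(L)$ on $\sigma$. Cohomologous cocycles correspond to $b' = g b \sigma(g)^{-1}$. So this set is exactly the set of $\sigma$-conjugacy classes of Definition \ref{B(G)def}, which was already noted there to equal $H^1(\langle\sigma\rangle, G(L))$.

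\emph{The right term.} $H^1(I_F, G(\overline{L})) = H^1(L, G)$, with continuous cochains for the discrete module $G(\overline{L})$. By Steinberg's theorem this vanishes, because $L$ (the completion of the maximal unramified extension, of characteristic zero) has cohomological dimension $\le 1$ and $G$ is connected. This is the same input used in Lemma \ref{slopesoverclosure}.

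Exactness at the middle term says that the fibre of $\mathrm{res}$ over the trivial class is the image of inflation. Since the target of $\mathrm{res}$ is trivial, inflation is surjective. Inflation is also injective on $H^1$ in the non-abelian setting, by the standard argument: a coboundary relation in $W_F$ between two inflated cocycles is given by an element that must be $I_F$-invariant. Hence inflation is a bijection, and $B(G)$ in the sense of Definition \ref{generalBG} agrees with the $\sigma$-conjugacy classes in $G(L)$.

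The main obstacle I expect is bookkeeping of topologies and conventions rather than any deep point. One must check that $H^1(W_F, -)$ is taken with respect to the topology for which $W_F$ acts continuously on the discrete set $G(\overline{L})$. One must also check that the identification $I_F \simeq \Gal(\overline{L}/L)$ is compatible with that topology, so that Steinberg's theorem applies to $I_F$-cohomology. If this is delicate, I would instead argue directly. Given a cocycle $a$ on $W_F$, its restriction to $I_F$ is a coboundary $x \mapsto c^{-1}x(c)$ with $c\in G(\overline L)$, by Steinberg. Replacing $a$ by the cohomologous cocycle $x\mapsto c\,a_x\,x(c)^{-1}$ makes it trivial on $I_F$. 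The new cocycle then factors through $\langle\sigma\rangle$ with values in $G(L)$, by the cocycle relation. This is the argument of \S 1 of \cite{MR1485921}, to which I would ultimately refer for the details.
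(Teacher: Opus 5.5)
Your proposal is correct and is essentially the argument of \S 1.4 of Kottwitz's paper, which is all the paper cites. You identify the inertia subgroup of $W_F$ with $\Gal(\overline{L}/L)$, use Steinberg's theorem to get $H^1(L,G)=1$ for connected $G$, and conclude from the non-abelian inflation--restriction sequence that inflation $H^1(\langle\sigma\rangle, G(L)) \to H^1(W_F, G(\overline{L}))$ is bijective; the bookkeeping points you flag ($\overline{L}=L\overline{F}$, $L\cap\overline{F}=F^{\mathrm{un}}$, continuity on the discrete set $G(\overline{L})$) are routine and you handle them correctly.
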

\begin{proof}
    See $\S$ 1.4 of \cite{MR1485921}.
\end{proof}

Let $P$ be a linear algebraic group over $F$, that is, $P$ admits an embedding $P \hookrightarrow \GL_n$ over $F$ for some $n$ (note that $P$ is not necessarily reductive). Let $U$ be the unipotent radical of $P$ and let $M$ be a Levi factor in $P$, so that $P=MU$. Then the following holds.

\begin{Le}\label{unipotentlemma}
    The natural map $B(P) \xrightarrow[]{} B(P/U)$ is a bijection.
\end{Le}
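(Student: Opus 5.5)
We want to show that $B(P)\to B(P/U)$ is bijective, where $B(-)=H^1(W_F,(-)(\overline{L}))$ as in Definition \ref{generalBG}. The plan is the standard dévissage in nonabelian cohomology: first surjectivity from the splitting, then injectivity by twisting, which reduces everything to a vanishing statement for unipotent groups.

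\textbf{Surjectivity.} Since $P=MU$ with $M$ a Levi factor defined over $F$, the composite $M\hookrightarrow P\to P/U$ is an isomorphism over $F$. Hence $B(P/U)\simeq B(M)\to B(P)\to B(P/U)$ is the identity. So $B(P)\to B(P/U)$ has a section and is surjective.

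\textbf{Injectivity.} We use the exact sequence of $W_F$-groups
$$1\to U(\overline{L})\to P(\overline{L})\to (P/U)(\overline{L})\to 1.$$
Surjectivity on $\overline{L}$-points holds because $P\to P/U$ is split by $M$. By the standard twisting argument (Serre, \emph{Cohomologie galoisienne}, I.5.5), the fibre of $H^1(W_F,P(\overline{L}))\to H^1(W_F,(P/U)(\overline{L}))$ through a class $[c]$ is the quotient of $H^1(W_F,{}_cU(\overline{L}))$ by an action of $({}_c(P/U))(\overline{L})^{W_F}$. Here ${}_cU$ is the inner twist of $U$ by the cocycle $c$ acting through conjugation.

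Injectivity therefore follows once we know that $H^1(W_F,{}_cU(\overline{L}))$ is trivial for every such twist. Each ${}_cU$ is again a unipotent group, so it suffices to prove the following claim: for any unipotent group $U'$ with a $W_F$-action of this kind on $\overline{L}$-points, $H^1(W_F,U'(\overline{L}))=1$.

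\textbf{The vanishing claim.} We filter $U'$ by its $W_F$-stable lower central series, refined to a characteristic filtration with vector-group quotients $V_i\simeq\mathbb{G}_a^{n_i}$, each carrying a semilinear twisted $W_F$-action. In characteristic $0$ such a filtration exists and is preserved by the twisted action, because it is canonical. By induction along central extensions, using the exact sequences in nonabelian $H^1$ (with a further twist at each step), it suffices to show $H^1(W_F,V(\overline{L}))=0$ for a finite-dimensional $\overline{L}$-vector space $V$ with a continuous semilinear $W_F$-action.

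\textbf{Main obstacle.} This last vanishing is the delicate step, because $W_F$ is not profinite, so Hilbert 90 does not apply verbatim. We handle it in two stages. First, apply Hilbert 90 additive vanishing for the inertia subgroup $I_F$ acting on $\overline{L}$, which has the completion of $F^{\mathrm{un}}$ as its fixed field. This reduces the question to $H^1(\langle\sigma\rangle,V_L)$ for an $L$-vector space $V_L$ with a $\sigma$-semilinear bijection. Second, $H^1(\mathbb{Z},-)$ is the cokernel of $\Phi-1$. The needed surjectivity of $\Phi-1$ on $V_L$ holds because, after choosing a $\Phi$-stable lattice (or via the slope decomposition), the map $x\mapsto \Phi x - x$ is surjective on $L$-isocrystals: on the slope-$0$ part this is Lang's theorem / Artin–Schreier solvability over $\overline{k_F}$ combined with $p$-adic successive approximation, and on nonzero slopes $\Phi-1$ is invertible via a convergent geometric series in $\Phi^{\pm1}$.

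Alternatively, we could simply cite Kottwitz \cite[\S 1.4, \S 2]{MR1485921}, where these steps are carried out.
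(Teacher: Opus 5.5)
Your proof is correct, but it is not what the paper does. The paper gives no argument and simply cites \S 3.6 of \cite{MR1485921}; that, rather than \S\S 1.4 and 2, is the right reference for your fallback.

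You instead give a self-contained d\'evissage:
\begin{itemize}
\item the Levi section $B(M)\xrightarrow{\sim}B(P/U)$ for surjectivity;
\item Serre's twisting description of the fibres for injectivity;
\item the lower central series to reduce to twisted vector groups;
\item inflation--restriction along $I_F\subset W_F$ to reach isocrystals.
\end{itemize}
Compared with the bare citation, this makes two things visible. First, the only real input is the surjectivity of $\Phi-1$ on isocrystals. Second, connectedness of $P$ is never used; only $U$ is connected. That is exactly what the paper needs, since it applies the lemma to the possibly disconnected group $\langle\gamma\rangle$, where $B(P)$ means $H^1(W_F,P(\overline{L}))$ and not $\sigma$-conjugacy classes.

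Three steps should be worded more carefully.
\begin{itemize}
\item ${}_cU$ is not an $F$-form of $U$. It is only $U(\overline{L})$ with a twisted semilinear $W_F$-action, which is in fact how you use it.
\item At the inertia step, additive Hilbert 90 alone does not apply, because the twisted action of $I_F=\Gal(\overline{L}/L)$ on $V$ is not the standard one on $\overline{L}^n$. You first need descent for continuous semilinear representations, i.e.\ $H^1(L,\GL_n)=1$. This gives $V=V^{I_F}\otimes_L\overline{L}$, hence $H^1(I_F,V)=0$ and $\dim_L V^{I_F}=\dim_{\overline{L}}V$, so that $(V^{I_F},\Phi)$ is an isocrystal to which Dieudonn\'e--Manin applies.
\item A lattice with $\Phi\Lambda\subset\Lambda$ does not exist once negative slopes occur. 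So the surjectivity of $\Phi-1$ must be argued slope by slope, as in your parenthetical alternative.
\end{itemize}
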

\begin{proof}
    See $\S$ 3.6 of \cite{MR1485921}.
\end{proof}

\subsubsection{Image of the Newton map in the quasi-split case}\label{integralnewtonpoints}
Let $G$ be quasi-split (hence $G = G_0$ in our previous notation). By definition, $\mathcal{N}(G)_{\mathbb{Z}}$ denotes the image of the map $\overline{\nu}_G: B(G) \xrightarrow[]{} \mathcal{N}(G) = \overline{C}_{\mathbb{Q}} \subset \mathfrak{U}$. The following result gives an explicit description of $\mathcal{N}(G)_{\mathbb{Z}}$ as a subset of $\mathfrak{U}$ in terms of the root datum of $G$.

\begin{Le}\label{chailemma}
    For a quasi-split group $G$, we have the following description of $\mathcal{N}(G)_{\mathbb{Z}}$. We start by using Proposition 6.2 of \cite{MR809866}, which states that every element of $B(G)$ lies in the image of $B(M)_b \xrightarrow[]{} B(G)$ for some standard Levi subgroup $M$. Recall from \ref{classificationofbasic} the isomorphism $B(M)_b \simeq X^*(Z(\Hat{M})^{\Gamma_F})$. From the proof of \ref{Newonkottwitzcharisoc}, it follows that there is a map 
    \begin{equation*}
        \phi_M: X^*(Z(\Hat{M})^{\Gamma_F}) \xrightarrow[]{} X^*(Z(\Hat{M})^{\Gamma_F}) \otimes \mathbb{R} \simeq X_*(A_P) \otimes \mathbb{R} = \mathfrak{U}_P
    \end{equation*}
    where $P$ denotes the standard parabolic subgroup with Levi factor $M$, and we are using the notation introduced in \ref{paracontractediso}. Hence, combining these observations, we are reduced to understanding $\im(\phi_M) \subset \mathfrak{U}_P \subset \mathfrak{U}$.
    
    We have the identifications $$X^*(Z(\Hat{M})^{\Gamma_F}) = \pi_1(M)_{\Gamma_F} = \left( X_*(T) \bigg/ \sum_{\alpha \in \Phi(M,T)} \mathbb{Z} \Check{\alpha} \right)_{\Gamma_F}$$
    where we recall that $T$ is a maximal torus of $G$ defined over $F$, namely the centralizer of a maximal split torus $A$ of $G$, and in particular is contained in a Borel subgroup $B$ over $F$. The image $\im(\phi_M)$ can be described as the image of 
    $$\pr^{\Gamma_F} \circ \pr^{\Omega_M(\overline{F})} = \pr^{\Omega_M(\overline{F})} \circ \pr^{\Gamma_F}: X_*(T) \xrightarrow[]{} X_*(A_P) \otimes \mathbb{Q}$$
    where $\pr^{\Gamma_F}$ and $\pr^{\Omega_M(\overline{F})}$ are the averaging operators on $X_*(T)$ with respect to $\Gamma_F$ and $\Omega_M(\overline{F})$, for the natural actions of these groups on $X_*(T)$, the Galois group acting through a finite quotient. This is the same as the composition 
    $$X_*(T) \xrightarrow{\pr^{\Gamma_F}} X_*(A) \otimes \mathbb{Q} \xrightarrow[]{} X_*(A_P) \otimes \mathbb{Q}$$
    where the second map is the one constructed in \ref{paraboliclemma}. Let $\overline{C}_P^+$ denote the intersection of $\im(\phi_M)$ with $\mathfrak{U}_P^+$. Then $$\mathcal{N}(G)_{\mathbb{Z}} = \bigcup_P \overline{C}_P^+.$$
\end{Le}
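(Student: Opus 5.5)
The plan is to prove the two inclusions between $\mathcal{N}(G)_{\mathbb{Z}}$ and $\bigcup_P \overline{C}_P^+$ separately. Throughout I take as given Proposition 6.2 of \cite{MR809866}, the isomorphism \ref{classificationofbasic} for each standard Levi $M$, and the compatibility of Newton points with the maps $B(M)\to B(G)$ from \ref{slopefunctoriality}. For $b\in B(G)$, choose the standard parabolic $P=P_b$ contracted by $b$ (\ref{paracontractediso}) with Levi $M=M_{P_b}$. The first step is to show that $b$ comes from a basic class $b_M\in B(M)_b$. This is the refined form of Kottwitz's Proposition 6.2: the Levi can be taken to be the centralizer of the dominant Newton point, using \ref{leviofcontracted}. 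Functoriality \ref{slopefunctoriality} then gives $\overline{\nu}_G(b)=\overline{\nu}_M(b_M)$ viewed in $\mathfrak{U}$. Since $b_M$ is basic, $\overline{\nu}_M(b_M)$ lies in $X_*(A_P)\otimes\mathbb{Q}=\mathfrak{U}_P$. By the commutative square \ref{Newonkottwitzcharisoc} for $M$, it equals $\delta_M$ of the image of $\kappa_M(b_M)$. In other words, it equals $\phi_M(\kappa_M(b_M))$, with $\delta_M$ an isomorphism on $\mathfrak{U}_P$ because $M$ is basic-central there. Hence $\overline{\nu}_G(b)\in \im(\phi_M)\cap\mathfrak{U}_P^+=\overline{C}_P^+$.

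For the reverse inclusion, take $x\in\overline{C}_P^+$, so $x=\phi_M(y)$ with $y\in\pi_1(M)_{\Gamma_F}$. By \ref{classificationofbasic} there is a unique $b_M\in B(M)_b$ with $\kappa_M(b_M)=y$. Again \ref{Newonkottwitzcharisoc} for $M$ shows that $\overline{\nu}_M(b_M)=\phi_M(y)=x$. Let $b$ be its image in $B(G)$. Its slope morphism is that of $b_M$, namely $x$. Because $x\in\mathfrak{U}_P^+\subset\overline{C}$ is already dominant, no Weyl conjugation is needed, and $\overline{\nu}_G(b)=x$. The disjointness $\overline{C}=\coprod_P\mathfrak{U}_P^+$ only serves to make the union well organized; it is not needed for the equality.

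It remains to justify the explicit description of $\im(\phi_M)$ as the image of $\pr^{\Gamma_F}\circ\pr^{\Omega_M(\overline{F})}$ on $X_*(T)$. First, the natural map $X_*(T)\to\pi_1(M)_{\Gamma_F}$ is surjective. Second, $\phi_M$ corresponds, under $\pi_1(M)_{\Gamma_F}\otimes\mathbb{Q}\simeq\pi_1(M)^{\Gamma_F}\otimes\mathbb{Q}\simeq X_*(A_P)\otimes\mathbb{Q}$, to averaging. Indeed, averaging over $\Omega_M(\overline{F})$ kills the $M$-coroots and projects $X_*(T)\otimes\mathbb{Q}$ onto $X_*(Z(M)^0)\otimes\mathbb{Q}$. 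Averaging over $\Gamma_F$ then lands in $X_*(A_P)\otimes\mathbb{Q}$, which is the analogue for $M$ of \ref{galoisaverage}. The two averagings commute because $\Gamma_F$ normalizes $\Omega_M$, since $M$ is standard and defined over $F$. The factorization through $X_*(A)\otimes\mathbb{Q}\to X_*(A_P)\otimes\mathbb{Q}$ follows by doing the $\Gamma_F$-average first and then the map of \ref{paraboliclemma}, which is orthogonal projection along the span of the $M$-coroots.

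I expect the main obstacle to be the compatibility of the identification $\delta_M$ with these averaging projections. The identification is made via restriction to $Z(\Hat{M})^{\Gamma_F}$ and requires checking normalizations and signs. I would first verify it on tori via \ref{galoisaverage}, and then reduce to the torus $M/M^{\mathrm{der}}$ or $Z(M)^0$ by functoriality of $\pi_1$.
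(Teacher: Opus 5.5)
Your proposal is correct and follows the route of the paper's sketch, which the paper delegates to \S 3 of \cite{MR1781927}: Kottwitz's Proposition 6.2, the isomorphism $B(M)_b\simeq X^*(Z(\Hat{M})^{\Gamma_F})$ together with the square of Lemma \ref{Newonkottwitzcharisoc} for $M$, and the averaging description of $\im(\phi_M)$. You also add a point the sketch glosses over, namely taking $M=M_{P_b}$ so that the Newton point lands in $\overline{C}_P^+$ for that same $P$.

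Two small corrections. First, that refined choice does not follow from Lemma \ref{leviofcontracted}. It follows from taking a representative of $b$ whose slope morphism is the dominant $F$-rational Newton point; the identity $\nu_b=\Int(b)\circ\sigma(\nu_b)$ then places $b$ in $M_{P_b}(L)$, where it is basic. Second, the sign worry is moot. On $\mathfrak{U}_P$ the map $\delta_M$ is induced by the natural map $X_*(A_P)\to\pi_1(M)_{\Gamma_F}$, and $\lambda-\pr^{\Gamma_F}\pr^{\Omega_M(\overline{F})}\lambda$ dies in $\pi_1(M)_{\Gamma_F}\otimes\mathbb{Q}$: each $w\lambda-\lambda$ with $w\in\Omega_M(\overline{F})$ lies in the coroot lattice of $M$, and each $\tau\lambda-\lambda$ with $\tau\in\Gamma_F$ vanishes in the coinvariants.
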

\begin{proof}
    For more details, see $\S$ 3 of \cite{MR1781927}.
\end{proof}

\subsection{Review of endoscopy}\label{2.2}
\subsubsection{Setup}
We will use the following notation for the rest of this subsection.
\begin{itemize}
    \item Let $F$ be a $p$-adic field or a number field.
    \item Let $G$ be a connected reductive group over $F$, with a maximal torus $T$ and a Borel subgroup $B$, both defined over $\overline{F}$.
    \item The adjoint action of $T$ on $\mathfrak{g} = \Lie (G)$ is completely reducible. The (finite) subset of roots $\Phi(G,T) \subset X^*(T)$ is defined by the condition
    $$\det (xI - \Ad_G(t) \mid  \mathfrak{g}) = (x-1)^r \prod_{a \in \Phi(G,T)} (x - a(t))$$
    for all $t \in T$, where $r = \dim T$ is the rank of $G$.
    \item There is a weight space decomposition $\mathfrak{g} = \mathfrak{g}_0 \oplus \left( \bigoplus_{a \in \Phi(G,T)} \mathfrak{g}_a \right)$, where $\mathfrak{g}_0 = \mathfrak{t} = \Lie(T)$ and the weight space $\mathfrak{g}_a$ is the one dimensional subspace of $\mathfrak{g}$ on which $T$ acts by $a$.
    \item For any $a \in \Phi(G,T)$ there exists a homomorphism $\phi_a: \SL_2 \xrightarrow{} G$ carrying the diagonal torus $D$ into $T$ and the strictly upper triangular and strictly lower triangular unipotent subgroups $U^{\pm}$ isomorphically onto the respective root groups $U_{\pm a}$; here the root group $U_a$ is the unique subgroup of $G$ with $U_a \simeq \mathbb{G}_a$ and $\Lie(U_a) = \mathfrak{g}_a$. Moreover, such a homomorphism is unique up to $T$-conjugation (see Theorem 1.2.7 of \cite{ConradSGA3}).
    \item For $a \in \Phi(G,T)$ we define the associated coroot $\Check{a} \in X_*(T)$ by
    $$\Check{a}(c) = \phi_a \left( \begin{pmatrix} c & 0 \\ 0 & c^{-1} \end{pmatrix} \right)$$
    which is independent of the choice of $\phi_a$ and hence is well defined. This defines the finite subset of coroots $\Check{\Phi}(G,T) \subset X_*(T)$.
    \item We write $\Psi(G,T) = (X^*(T), \Phi(G,T), X_*(T), \Check{\Phi}(G,T))$ for the root datum of $G$.
    \item We write $\Psi_0(G,B,T) = (X^*(T), \Phi_0(G,B,T), X_*(T), \Check{\Phi}_0(G,B, T))$ for the based root datum of $G$.
    \item A pinning of a connected reductive group $G$ defined over an algebraically closed field $k$ is a triple $(B,T, \{ X_a \}_{a \in \Phi_0(G,B,T)})$, where $(B,T)$ is a Borel pair and $X_a \in \mathfrak{g}_a$ is a non-zero vector for each $a \in \Phi_0(G,B,T)$. Giving $X_a$ is equivalent to giving an isomorphism $p_a: U_a \simeq \mathbb{G}_a$, and in turn to giving a homomorphism $\phi_a: \SL_2 \xrightarrow{} G$ as above.
\end{itemize}

\subsubsection{Langlands dual groups} 
There are different ways to define Langlands dual groups in the literature, and we start by specifying the definition we will be using in this paper.

\begin{D}[Langlands dual group]\label{Lgroupdef}
    A Langlands dual group for a connected reductive group $G$ over a field $F$ is the data of
    \begin{itemize}
        \item A connected reductive group $\Hat{G}$ over $\mathbb{C}$ with maximal torus $\mathscr{T}$, whose root datum $\Psi(\Hat{G}, \mathscr{T})$ is dual to $\Psi(G,T)$.
        \item An action $\Gamma_F \curvearrowright \Hat{G}$, known as the $L$-action.
    \end{itemize}
    such that the following conditions hold:
    \begin{enumerate}
    \item The action of $\Gamma_F$ on $\Hat{G}$ preserves a pinning of $\Hat{G}$ whose maximal torus is $\mathscr{T}$.
    \item The isomorphism from $\Psi(\Hat{G}, \mathscr{T})$ to the dual of $\Psi(G,T)$ is $\Gamma_F$-equivariant, where the $\Gamma_F$ action on $\Psi(\Hat{G}, \mathscr{T})$ is induced from $\Gamma_F \curvearrowright \Hat{G}$.
    \end{enumerate}
    The $L$-group ${}^L G = \Hat{G} \rtimes \Gamma_F$ is independent, up to isomorphism, of the choice of $\Gamma_F \curvearrowright \Hat{G}$. Recall, once again, that the action of $\Gamma_F$ on $Z(\Hat{G})$ is well defined, independent of the choice of the action of $\Gamma_F$ on $\Hat{G}$.
\end{D}

The following construction will be used later in the definition of admissible embeddings.

\begin{const}\label{firstconst}
    Let $T \subset G$ be a maximal torus. Denote by $\Hat{T}$ the torus over $\mathbb{C}$ such that $X^*(\Hat{T}) = X_*(T)$. There is a canonical $\Hat{G}$-conjugacy class of embeddings of complex algebraic groups $\iota_T: \Hat{T} \xrightarrow[]{} \Hat{G}$. This is constructed as follows. Let $(B_0, T_0)$ be a Borel pair of $G$, let $(\mathcal{B}_0, \mathcal{T}_0)$ be a Borel pair of $\Hat{G}$ and let $\iota: \Hat{T_0} \xrightarrow[]{} \mathcal{T}_0$ be an isomorphism such that $\Psi_0(G,B_0,T_0)$ is the dual of $\Psi_0(\Hat{G}, \mathcal{B}_0, \mathcal{T}_0)$ under the identifications $X^*(T_0) \simeq X_*(\mathcal{T}_0)$ and $X_*(T_0) \simeq X^*(\mathcal{T}_0)$ induced by $\iota$. Choose $g \in G(\overline{F})$ such that $g T_0 g^{-1} = T$, which gives the isomorphism $\Ad(g): T_0 \xrightarrow[]{} T$. Dualizing this map and composing it with $\iota$ gives an embedding $\iota_T: \Hat{T} \xrightarrow[]{} \Hat{G}$. It can be checked that the $\Hat{G}$-conjugacy class of $\iota_T$ is independent of the choice of $g$.
\end{const}

\begin{R}\label{secondconst}
    In the special case when the maximal torus $T \subset G$ is defined over $F$, the above construction has the additional property that the canonical $\Hat{G}$-conjugacy class $\iota_T: \Hat{T} \xrightarrow[]{} \Hat{G}$ is $\Gamma_F$-invariant. The group $\Gamma_F$ has an action on $\Hat{T}$ via automorphisms over $\mathbb{C}$ induced from the action of $\Gamma_F$ on $X_*(T)$, coming from the $F$-structure on $T$. The $\Gamma_F$-action on the embedding $\iota_T: \Hat{T} \xrightarrow[]{} \Hat{G}$ is given by $\sigma_G \circ \iota_T \circ \sigma_T^{-1}$, where $\sigma_T$ denotes the action of $\Gamma_F$ on $\Hat{T}$ and $\sigma_G$ denotes the action on $\Hat{G}$, well defined up to $\Int(\Hat{G})$. The $\Hat{G}$-conjugacy class of $\iota_T$ is invariant under this action.
\end{R}

\subsubsection{Endoscopic datum}
We need the following lemma before we can state the definition of an endoscopic datum (equivalently, endoscopic triple) for a connected reductive group $G$ over $F$.

\begin{Le}\label{keylemmaendo}
    Let $1 \xrightarrow[]{} D_1 \xrightarrow[]{} D_2 \xrightarrow[]{} D_3 \xrightarrow[]{} 1$ be an exact sequence of diagonalizable groups over $\mathbb{C}$ with a $\Gamma_F$-action. Then there is a long exact sequence
\begin{center}
\begin{tikzcd} 
  0 \arrow{r}
  & X_*(D_1)^{\Gamma_F} \arrow{r} 
  & X_*(D_2)^{\Gamma_F} \arrow{r} 
  & X_*(D_3)^{\Gamma_F} \\
  \arrow{r} 
  & \pi_0(D_1^{\Gamma_F}) \arrow{r} 
  & \pi_0(D_2^{\Gamma_F}) \arrow{r} 
  & \pi_0(D_3^{\Gamma_F}) \\
  \arrow{r}
  & H^1(\Gamma_F, D_1) \arrow{r}
  & H^1(\Gamma_F, D_2) \arrow{r}
  & H^1(\Gamma_F, D_3) \\
  \arrow{r}
  & H^2(\Gamma_F, D_1) \arrow{r}
  & H^2(\Gamma_F, D_2) \arrow{r}
  & H^2(\Gamma_F, D_3) \arrow{r}
  & \cdots 
\end{tikzcd}
\end{center}
\end{Le}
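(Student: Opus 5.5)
The plan is to obtain the sequence by splicing three ingredients: the character-group sequence, the standard long exact sequence in continuous group cohomology for $\Gamma_F$-modules, and a piece in degree zero built from the exponential sequences of the three groups. Applying $X^*(-)$ to $1 \to D_1 \to D_2 \to D_3 \to 1$ gives an exact sequence of finitely generated $\Gamma_F$-modules $0 \to X^*(D_3) \to X^*(D_2) \to X^*(D_1) \to 0$, by exactness of $X^*$ on diagonalizable groups over $\mathbb{C}$. Dually, applying $X_*$ gives the left exact sequence $0 \to X_*(D_1) \to X_*(D_2) \to X_*(D_3)$. This becomes exact on the right only after tensoring with $\mathbb{Q}$; its failure on the right is measured by the component groups $\pi_0(D_i)$. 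Taking $\Gamma_F$-invariants gives the first row.

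The core step is to construct the connecting map $X_*(D_3)^{\Gamma_F} \to \pi_0(D_1^{\Gamma_F})$ and prove exactness around it. I would imitate the proof of Lemma \ref{Tatenakayamatori}. For the identity components, there are exponential sequences $0 \to X_*(D_i) \to \Lie(D_i) \to D_i^0 \to 1$, with $\Lie(D_i) = X_*(D_i)\otimes\mathbb{C}$. These are compatible with the maps in the original sequence.

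Take $\lambda \in X_*(D_3)^{\Gamma_F}$. Lift it to $\tilde\lambda \in \Lie(D_2)^{\Gamma_F}$; this is possible because taking $\Gamma_F$-invariants is exact on $\mathbb{Q}$-vector spaces with action through a finite quotient, by the averaging argument used in Lemma \ref{Tatenakayamatori}. Then $\exp(\tilde\lambda)$ lies in $D_2^{\Gamma_F}$ and maps to $1$ in $D_3$. So it lies in $D_1^{\Gamma_F}$, and we take its class in $\pi_0(D_1^{\Gamma_F})$. Two choices of $\tilde\lambda$ differ by an element of $\Lie(D_1)^{\Gamma_F}$, whose exponential lies in $(D_1^{\Gamma_F})^0$, so the class is well defined.

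Exactness of the middle row and the boundary into $H^1$ would come from the cohomology sequence of $\Gamma_F$ acting on $D_1(\mathbb{C}) \to D_2(\mathbb{C}) \to D_3(\mathbb{C})$. This gives $D_1^{\Gamma_F} \to D_2^{\Gamma_F} \to D_3^{\Gamma_F} \to H^1(\Gamma_F,D_1) \to \cdots$. One then has to check that this factors through $\pi_0$. The key claim is that the image of $(D_2^{\Gamma_F})^0$ is exactly $(D_3^{\Gamma_F})^0$. This again follows from exactness of invariants on Lie algebras, since the map $\Lie(D_2)^{\Gamma_F} \to \Lie(D_3)^{\Gamma_F}$ is surjective. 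Consequently the connected components die in $H^1$, and $D_3^{\Gamma_F} \to H^1(\Gamma_F,D_1)$ factors through $\pi_0(D_3^{\Gamma_F})$. The rows in $H^1$ and $H^2$ are then the usual long exact sequence, with $\Gamma_F$ acting on $D_i(\mathbb{C})$ through a finite quotient (or continuously for the discrete topology).

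The main obstacle is exactness at $\pi_0(D_1^{\Gamma_F})$ and at $\pi_0(D_2^{\Gamma_F})$, where connected components must be tracked carefully. For the first, I would show that an element of $D_1^{\Gamma_F}$ mapping into $(D_2^{\Gamma_F})^0$ can be written as $\exp$ of an invariant Lie element of $D_2$. Its image in $\Lie(D_3)^{\Gamma_F}$ exponentiates to $1$, so it lies in $X_*(D_3)^{\Gamma_F}$ modulo the image of $X_*(D_2)^{\Gamma_F}$. Here the subtlety is that $(D^{\Gamma_F})^0 = \exp(\Lie(D)^{\Gamma_F})$, which I would verify by passing to the finite quotient $\Delta$ and using that $\Lie(D^{\Delta}) = \Lie(D)^{\Delta}$. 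For exactness at $\pi_0(D_2^{\Gamma_F})$, I would use that the kernel of $D_2^{\Gamma_F} \to \pi_0(D_3^{\Gamma_F})$ is $D_1^{\Gamma_F}\cdot(D_2^{\Gamma_F})^0$, by the surjectivity on identity components already proved. If these component arguments become delicate, the fallback is to cite Kottwitz (Cuspidal tempered spectrum, \S 2), from which this lemma is taken.
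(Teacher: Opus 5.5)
Your proof is correct. The paper gives no argument of its own: it quotes Corollary 2.3 of \cite{MR0757954}, which is exactly the fallback you mention.

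Your route proves the lemma directly by splicing three pieces:
\begin{itemize}
\item the left exact sequence of invariant cocharacters;
\item the usual long exact cohomology sequence for $1\to D_1(\mathbb{C})\to D_2(\mathbb{C})\to D_3(\mathbb{C})\to 1$;
\item an explicit connecting map $X_*(D_3)^{\Gamma_F}\to\pi_0(D_1^{\Gamma_F})$, given by $\lambda\mapsto[\exp\tilde\lambda]$ for an invariant Lie lift $\tilde\lambda$.
\end{itemize}
All the work at the $\pi_0$ nodes rests on two facts, and you justify both correctly. First, $(D^{\Gamma_F})^0=\exp(\Lie(D)^{\Gamma_F})$, because $(D^{\Gamma_F})^0$ is a torus whose Lie algebra is $\Lie(D)^{\Gamma_F}$ in characteristic zero. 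Second, $\Lie(D_2)^{\Gamma_F}\to\Lie(D_3)^{\Gamma_F}$ is surjective, by averaging over a finite quotient. Together they give surjectivity $(D_2^{\Gamma_F})^0\to(D_3^{\Gamma_F})^0$, which yields the factorization of the coboundary through $\pi_0(D_3^{\Gamma_F})$ and exactness at $\pi_0(D_2^{\Gamma_F})$. Your argument at $\pi_0(D_1^{\Gamma_F})$ is also right.

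Compared with the citation, your version is self-contained and gives explicit formulas for both connecting maps. The first uses the same exponential device as the paper's own proof of Lemma \ref{Tatenakayamatori}. The citation buys only brevity.

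The one node you do not check explicitly is exactness at $X_*(D_3)^{\Gamma_F}$, but it follows from the same identity. Suppose $\exp(\tilde\lambda)\in(D_1^{\Gamma_F})^0$, and write $\exp(\tilde\lambda)=\exp(y)$ with $y\in\Lie(D_1)^{\Gamma_F}$. Then $\tilde\lambda-y$ is $\Gamma_F$-invariant, lies in $\ker(\exp)=X_*(D_2)$, and maps to $\lambda$, so $\lambda$ comes from $X_*(D_2)^{\Gamma_F}$. The reverse inclusion is immediate by choosing $\tilde\lambda$ in $X_*(D_2)^{\Gamma_F}$.
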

\begin{proof}
    See Corollary 2.3 of \cite{MR0757954}.
\end{proof}

For a connected reductive group $G$, let $\Int(G)$ denote the group of inner automorphisms of $G$ and let $\Out(G)$ denote the group of outer automorphisms of $G$.

\begin{D}[Endoscopic datum]\label{endodef}
    An endoscopic datum for $G$ is a pair $(s, \rho)$ consisting of an element $s \in \Hat{G}/Z(\Hat{G})$ and a homomorphism $\rho: \Gamma_F \xrightarrow[]{} \Out(\Hat{H})$, where $\Hat{H} = \Hat{G}_s^0$ denotes the identity component of the centralizer in $\Hat{G}$ of any lift of $s$, which does not depend on the lift, such that
    \begin{enumerate}
        \item For $\sigma \in \Gamma_F$, the element $\rho(\sigma)$ is induced by an element of the normalizer $N_{{}^L G}(\Hat{H})$ whose image under the canonical map ${}^L G \xrightarrow[]{} \Gamma_F$ is $\sigma$.
        \item The element $s \in Z(\Hat{H})/Z(\Hat{G})$ is fixed by $\Gamma_F$, and its image in $\pi_0([Z(\Hat{H})/Z(\Hat{G})]^{\Gamma_F})$ maps to a locally trivial element of $H^1(F, Z(\Hat{G}))$ in the long exact sequence of \ref{keylemmaendo} associated with $1 \xrightarrow[]{} Z(\Hat{G}) \xrightarrow[]{} Z(\Hat{H}) \xrightarrow[]{} Z(\Hat{H}) / Z(\Hat{G}) \xrightarrow[]{} 1$.
    \end{enumerate}
    There is a notion of isomorphism for endoscopic data $(s_1, \rho_1)$ and $(s_2, \rho_2)$ as given in $\S$ 7.2 of \cite{MR0757954}.
\end{D}

\begin{D}[$\mathfrak{K}(H/F)$]
The set of such elements of $\pi_0([Z(\Hat{H})/Z(\Hat{G})]^{\Gamma_F})$ mapping to a locally trivial element of $H^1(F, Z(\Hat{G}))$ in the long exact sequence of \ref{keylemmaendo} associated with $1 \xrightarrow[]{} Z(\Hat{G}) \xrightarrow[]{} Z(\Hat{H}) \xrightarrow[]{} Z(\Hat{H}) / Z(\Hat{G}) \xrightarrow[]{} 1$ is denoted by $\mathfrak{K}(H/F)$.
\end{D}

\begin{R}
    The choice of a pinning on $\Hat{H}$, together with $\rho$, gives an action of $\Gamma_F$ on $\Hat{H}$. Note that although $\Hat{H} \subset \Hat{G}$, this $\Gamma_F$-action on $\Hat{H}$ is not necessarily the restriction to $\Hat{H}$ of the $\Gamma_F$-action on $\Hat{G}$. However, it can be checked that on the common subgroup $Z(\Hat{G})$, the two $\Gamma_F$-actions obtained by restriction from $\Hat{G}$ and from $\Hat{H}$ coincide. This is what makes the map $Z(\Hat{G}) \xrightarrow[]{} Z(\Hat{H})$ used in \ref{endodef} $\Gamma_F$-equivariant.
\end{R}

\begin{Le}[Endoscopic triples]\label{endotriples}
    An endoscopic triple for $G$ is a datum $(H,s,\eta)$ consisting of a quasi-split connected reductive group $H$ over $F$ (referred to as the endoscopic group), an element $s \in Z(\Hat{H})$ and an embedding $\eta: \Hat{H} \xrightarrow[]{} \Hat{G}$ over $\mathbb{C}$ such that
    \begin{enumerate}
        \item $\eta (\Hat{H}) = \Hat{G}^0_{\eta(s)}$.
        \item The $\Hat{G}$-conjugacy class of $\eta$ is fixed by $\Gamma_F$, where the $\Gamma_F$-action on the $\Hat{G}$-conjugacy class of $\eta: \Hat{H} \xrightarrow[]{} \Hat{G}$ is given by $\sigma(\eta) = \sigma_G\circ \eta \circ \sigma_H^{-1}$; here $\sigma_H$ and $\sigma_G$ denote the actions of $\sigma \in \Gamma_F$ on $\Hat{H}$ and $\Hat{G}$, well defined up to $\Int(\Hat{H})$ and $\Int(\Hat{G})$.
        \item The image of $s$ in $Z(\Hat{H})/Z(\Hat{G})$ is fixed by $\Gamma_F$ and its image in $\pi_0([Z(\Hat{H})/Z(\Hat{G})]^{\Gamma_F})$ belongs to $\mathfrak{K}(H/F)$.
    \end{enumerate}
    There is a notion of isomorphism of endoscopic triples as given in $\S$ 7.5 of \cite{MR0757954}. There is a bijection between the set of isomorphism classes of endoscopic data and the set of isomorphism classes of endoscopic triples.
\end{Le}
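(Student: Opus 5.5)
The plan is to write down explicit maps in both directions between isomorphism classes of endoscopic triples and of endoscopic data (Definition \ref{endodef}), and then check that the two notions of isomorphism from $\S$ 7.2 and $\S$ 7.5 of \cite{MR0757954} correspond.

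\textbf{From triples to data.} Starting from a triple $(H,s,\eta)$, set $s'$ to be the image of $\eta(s)$ in $\Hat{G}/Z(\Hat{G})$. By condition (1), $\eta$ identifies $\Hat{H}$ with $\Hat{G}^0_{\eta(s)}=\Hat{G}^0_{s'}$, which is independent of the lift. For each $\sigma\in\Gamma_F$, condition (2) provides $g_\sigma\in\Hat{G}$ with
$$\sigma_G\circ\eta\circ\sigma_H^{-1}=\Ad(g_\sigma)\circ\eta.$$
Hence the element $g_\sigma^{-1}\rtimes\sigma\in{}^LG$ normalizes $\eta(\Hat{H})$ and induces on it the automorphism $\eta\circ\sigma_H\circ\eta^{-1}$. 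Taking its class in $\Out(\Hat{H})$ defines $\rho(\sigma)$, which is a homomorphism because $\sigma\mapsto\sigma_H$ is an action modulo $\Int(\Hat{H})$. This gives condition (1) of Definition \ref{endodef}.

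For condition (2), I use that inner automorphisms act trivially on centres. The action of $\Gamma_F$ on $Z(\Hat{H})$ induced by the normalizing elements $g_\sigma^{-1}\rtimes\sigma$ therefore coincides with the action coming from $\sigma_H$. By the Remark preceding this lemma, both actions agree with the $\Hat{G}$-action on $Z(\Hat{G})$. So the $\Gamma_F$-invariance of $s$ in $Z(\Hat{H})/Z(\Hat{G})$ and the $\mathfrak{K}(H/F)$-condition transfer verbatim, using the long exact sequence of Lemma \ref{keylemmaendo} for $1\to Z(\Hat{G})\to Z(\Hat{H})\to Z(\Hat{H})/Z(\Hat{G})\to1$.

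\textbf{From data to triples.} Starting from a datum $(s,\rho)$, let $\Hat{H}=\Hat{G}^0_s$ and choose a pinning of $\Hat{H}$. Lifting each $\rho(\sigma)$ to the unique pinning-preserving automorphism gives a genuine action of $\Gamma_F$ on $\Hat{H}$ preserving the pinning. Dually, this is a based root datum with a continuous $\Gamma_F$-action. By the standard existence and uniqueness of quasi-split forms with prescribed based root datum and Galois action, there is a quasi-split $H$ over $F$, unique up to $F$-isomorphism, with $\Hat{H}$ as its dual group in the sense of Definition \ref{Lgroupdef}. Take $\eta$ to be the inclusion and $s$ any lift to $Z(\Hat{H})$; this is possible because $s$ is central in $\Hat{G}^0_s$.

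Condition (1) of the triple definition holds by construction. Condition (2) holds because $\rho(\sigma)$ is induced by an element $n_\sigma\rtimes\sigma$ of the normalizer, which gives $\sigma_G\circ\eta\circ\sigma_H^{-1}=\Ad(n_\sigma)\circ\eta$ modulo $\Int(\Hat{H})$. Condition (3) holds by the same centre argument as above. Changing the lift of $s$ by $Z(\Hat{G})$, or changing the pinning, alters the triple only up to isomorphism. The two constructions are visibly inverse on isomorphism classes.

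\textbf{Matching the isomorphisms (main obstacle).} The step I expect to be hardest is the bookkeeping of the equivalence relations, which are defined rather differently. On the data side, an isomorphism is conjugation by $g\in\Hat{G}$ carrying $\Hat{G}^0_{s_1}$ to $\Hat{G}^0_{s_2}$, intertwining $\rho_1$ and $\rho_2$, and matching $s_1$ with $s_2$ modulo $Z(\Hat{G})$ (and modulo the identity component, as in \cite{MR0757954}). On the triple side, an isomorphism is an $F$-isomorphism $\alpha:H_1\to H_2$ together with $g$ satisfying $g\,\eta_1(\cdot)g^{-1}=\eta_2\circ\Hat{\alpha}$ and the corresponding condition on $s$.

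I would first show that an $F$-isomorphism $\alpha$ of quasi-split groups is determined, up to $H_2(\overline{F})$-conjugacy, by its $\Gamma_F$-equivariant dual $\Hat{\alpha}$ modulo $\Int(\Hat{H}_2)$. Then conjugation by $g$ on the data side lifts to an isomorphism of triples, and conversely. The $s$-condition needs no further work, since it only involves centres, where everything is canonical.
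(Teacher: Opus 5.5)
Your proposal is correct and is essentially the argument behind the paper's proof, which consists only of a citation of $\S$ 7.6 of \cite{MR0757954}. There the bijection is exactly the one you describe: $(H,s,\eta)$ is sent to the image of $\eta(s)$ in $\Hat{G}/Z(\Hat{G})$ together with the transported outer action. The inverse comes from the quasi-split form attached to the pinned action on $\Hat{G}^0_s$, so you are supplying details the paper leaves to the reference.

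One point in your last paragraph should be stated more carefully. To lift an isomorphism of data to an isomorphism of triples, you need the \emph{existence} of an $F$-isomorphism of the quasi-split groups realizing a given $\Gamma_F$-equivariant (modulo inner) isomorphism of their dual groups. Uniqueness up to $H_2(\overline{F})$-conjugacy is not enough, and this existence step is where quasi-splitness of $H$ is actually used.
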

\begin{proof}
    See $\S$ 7.6 of \cite{MR0757954}.
\end{proof}

\begin{R}\label{rootdataendogrp}
    For an endoscopic datum $(s, \rho)$ of $G$, let $\mathscr{T}$ be a maximal torus of $\Hat{G}$ which contains $s$. Let $\Psi(\Hat{G}, \mathscr{T}) = (X^*(\mathscr{T}), \Phi( \Hat{G}, \mathscr{T}), X_*(\mathscr{T}), \Check{\Phi}( \Hat{G}, \mathscr{T}))$. Then the root datum of $\Hat{H} = \Hat{G}_s^0$ is $\Psi(\Hat{H}, \mathscr{T}) = (X^*(\mathscr{T}), \Phi( \Hat{H}, \mathscr{T}), X_*(\mathscr{T}), \Check{\Phi}( \Hat{H}, \mathscr{T}))$ such that $\Phi(\Hat{H}, \mathscr{T}) = \{ \alpha \in \Phi(\Hat{G}, \mathscr{T}) \mid \alpha(s) = 1 \}$. Choose pinnings on $\Hat{H}$ and $\Hat{G}$ that contain $\mathscr{T}$. This gives $\Gamma_F$ actions on $\Hat{H}$ and $\Hat{G}$, denoted by $\rho_H$ and $\rho_G$, which preserve $\mathscr{T}$. The actions of $\rho_H$ and $\rho_G$ on $\mathscr{T}$ differ by an element of the Weyl group $\Omega_{\Hat{G}}$ depending on $\sigma \in \Gamma_F$, and this defines a cocycle in $Z^1(\Gamma_F, \Omega_{\Hat{G}})$.
\end{R}

\subsubsection{Stable conjugacy} 
We introduce the notion of stable conjugacy, which is important when one wants to relate conjugacy classes in the endoscopic group $H$ with conjugacy classes in $G$. The notion of stable conjugacy is a refinement of the notion of $\overline{F}$-conjugacy.

\begin{D}[$\overline{F}$-conjugacy]\label{Fbarconjugacy}
    Two elements $x, y \in G(F)$ are $\overline{F}$-conjugate if there exists $g \in G(\overline{F})$ such that $y = gxg^{-1}$.
\end{D}

\begin{D}[Stable conjugacy]\label{StableConjugacy}
    Two elements $x, y \in G(F)$ are stably conjugate if there exists $g \in G(\overline{F})$ such that $y = gxg^{-1}$ and $g^{-1}\sigma(g) \in G_s^0(\overline{F})$ for all $\sigma \in \Gamma_F$, where $s \in G(F)$ is the semisimple part of $x$ in the Jordan decomposition and $G_s^0$ is the identity component of the centralizer of $s$.
\end{D}

\begin{R}\label{rmkaboutstableconj}
    Note that it follows from the definitions that an $F$-homomorphism $\alpha: G_1 \xrightarrow[]{} G_2$ induces the following maps, which we also denote by $\alpha$:
    $$ \{ \overline{F} \text{-conjugacy classes in } G_1(F) \} \xrightarrow[]{\alpha} \{ \overline{F} \text{-conjugacy classes in } G_2(F) \},$$
    $$ \{ \text{stable conjugacy classes in } G_1(F) \} \xrightarrow[]{\alpha} \{ \text{stable conjugacy classes in } G_2(F) \}.$$
\end{R}

\begin{D}[$G_x^*$]\label{stargroupdef}
    For an element $x \in G(F)$, let $G_x^* = G_s^0 \cap G_x$, where $s \in G(F)$ is the semisimple part of $x$. Then $G_x^*$ is a group over $F$.
\end{D}

\begin{Le}\label{simplyconnectedlemma}
    If $G^{\text{der}}$ is simply connected, then $G_x = G_x^*$ for every $x$, and hence stable conjugacy in $G(F)$ is the same as $\overline{F}$-conjugacy. 
\end{Le}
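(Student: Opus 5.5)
The plan is to reduce everything to Steinberg's connectedness theorem for centralizers of semisimple elements, and then read off the statement about stable conjugacy directly from Definition \ref{StableConjugacy}. All statements concern centralizers, which may be computed after base change to $\overline{F}$, so I will work over $\overline{F}$ throughout.

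\emph{Step 1: centralizers of semisimple elements are connected.} Let $s\in G(\overline{F})$ be semisimple. Since $G=Z(G)^0\cdot G^{\text{der}}$, I write $s=zs'$ with $z\in Z(G)^0(\overline{F})$ and $s'\in G^{\text{der}}(\overline{F})$. The factor $s'$ is again semisimple, since $z$ is central and semisimple.

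Then $G_s=G_{s'}$, because $z$ is central. Moreover, writing any $g\in G(\overline{F})$ as $g=z_1g'$ with $z_1$ central and $g'\in G^{\text{der}}(\overline{F})$, the element $g$ commutes with $s'$ if and only if $g'$ does. Hence
\[
G_s=Z(G)^0\cdot (G^{\text{der}})_{s'}.
\]
Since $G^{\text{der}}$ is semisimple and simply connected, Steinberg's theorem gives that $(G^{\text{der}})_{s'}$ is connected. Hence $G_s$ is the image of the connected group $Z(G)^0\times (G^{\text{der}})_{s'}$, and so $G_s=G_s^0$.

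\emph{Step 2: $G_x=G_x^*$.} Let $x=su$ be the Jordan decomposition of $x$. Any element commuting with $x$ commutes with its semisimple part $s$ by uniqueness of the Jordan decomposition, so $G_x\subset G_s=G_s^0$. It follows that
\[
G_x^*=G_s^0\cap G_x=G_x.
\]

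\emph{Step 3: stable conjugacy equals $\overline{F}$-conjugacy.} Stable conjugacy trivially implies $\overline{F}$-conjugacy, so only the converse needs checking. Suppose $x,y\in G(F)$ and $y=gxg^{-1}$ with $g\in G(\overline{F})$. Applying $\sigma\in\Gamma_F$ and using that $x$ and $y$ are $F$-rational, I get $\sigma(g)x\sigma(g)^{-1}=gxg^{-1}$. Therefore $g^{-1}\sigma(g)\in G_x(\overline{F})$. By Step 2,
\[
g^{-1}\sigma(g)\in G_x^*(\overline{F})\subset G_s^0(\overline{F}),
\]
which is exactly the condition in Definition \ref{StableConjugacy}.

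The only substantive input is Step 1, namely the passage from Steinberg's theorem for simply connected semisimple groups to reductive groups whose derived group is simply connected. The decomposition $s=zs'$ handles this cleanly, so I do not expect a real obstacle.
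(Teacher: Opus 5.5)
Your proof is correct and follows the same route as the paper, whose proof simply cites Steinberg's theorem that semisimple centralizers in a simply connected semisimple group are connected. You additionally spell out the steps the paper leaves implicit: the reduction from $G$ to $G^{\text{der}}$ via $G = Z(G)^0\cdot G^{\text{der}}$, the inclusion $G_x\subset G_s$ from uniqueness of the Jordan decomposition, and the fact that $g^{-1}\sigma(g)$ automatically lies in $G_x$.
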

\begin{proof}
    Use the fact, due to Steinberg, that in a simply connected semisimple group the centralizer of any semisimple element is connected.
\end{proof}

\begin{Le}\label{stabconjlem1}
    Let $x,y \in G(F)$ be stably conjugate and let $g \in G(\overline{F})$ be such that $y = gxg^{-1}$. Then $\Int(g): G_x^* \xrightarrow[]{} G_y^*$ is an inner twisting.
\end{Le}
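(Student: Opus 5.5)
The plan is to verify directly the two defining properties of an inner twisting: that $\Int(g)$ is an isomorphism $G^*_{x,\overline{F}} \xrightarrow{\sim} G^*_{y,\overline{F}}$, and that for every $\tau \in \Gamma_F$ the map $\Int(g)^{-1}\circ \tau(\Int(g))$ is an inner automorphism of $G_x^*$ defined over $\overline{F}$. Here $\tau(\Int(g)) := \tau\circ\Int(g)\circ\tau^{-1} = \Int(\tau(g))$. The key input is Definition \ref{StableConjugacy}: writing $s$ for the semisimple part of $x$, the elements $g^{-1}\tau(g)$ lie in $G_s^0(\overline{F})$ for all $\tau\in\Gamma_F$.

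\textbf{First step: $\Int(g)$ carries $G_x^*$ onto $G_y^*$.} Jordan decomposition is functorial under automorphisms, so $y = gxg^{-1}$ has semisimple part $gsg^{-1}$. Since the semisimple part of $y \in G(F)$ is $F$-rational, call it $s_y$; then $s_y = gsg^{-1}$. Conjugation by $g$ maps $G_x$ onto $G_y$ and $G_s$ onto $G_{s_y}$. It therefore maps the identity component $G_s^0$ onto $G_{s_y}^0$, and hence $G_x^* = G_s^0\cap G_x$ onto $G_y^*$, as an isomorphism of $\overline{F}$-groups. Both $G_x^*$ and $G_y^*$ are defined over $F$ because $x, y, s, s_y$ are $F$-rational.

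\textbf{Second step: the cocycle condition.} For $\tau \in \Gamma_F$ we have $\tau(\Int(g)) = \Int(\tau(g))$, so
$$\Int(g)^{-1}\circ\tau(\Int(g)) = \Int(g^{-1}\tau(g)).$$
Put $u_\tau = g^{-1}\tau(g)$. Since $x$ and $y$ are $F$-rational, applying $\tau$ to $y = gxg^{-1}$ gives $y = \tau(g)x\tau(g)^{-1}$. Hence $u_\tau$ centralizes $x$, so $u_\tau \in G_x(\overline{F})$. Combined with $u_\tau \in G_s^0(\overline{F})$ from stable conjugacy, this gives $u_\tau \in G_x^*(\overline{F})$. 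Therefore $\Int(u_\tau)$ is an inner automorphism of $G_x^*$, which is exactly the inner-twisting condition.

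\textbf{Main point of care.} This is the only step needing attention. The stable conjugacy hypothesis is what places $u_\tau$ in $G_x^*$ rather than merely in $G_x$. Without it, $\Int(u_\tau)$ would be an automorphism of $G_x^*$ induced by an element of the possibly disconnected group $G_x$, and such an automorphism need not be inner for $G_x^*$. One should also note that if $G_x^*$ is disconnected (when $x$ is not semisimple), "inner twisting" is understood in the same sense of conjugation by points of the group itself. The argument above handles this uniformly, so no separate case analysis is needed.
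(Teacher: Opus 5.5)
Your two steps are correct; the paper gives no argument of its own here and only cites Lemma 3.2 of \cite{MR683003}. There is one point you need to make explicit: which $g$ you are working with.

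You invoke Definition \ref{StableConjugacy} to get $g^{-1}\tau(g)\in G_s^0(\overline{F})$. That definition only asserts that \emph{some} conjugator has this property, whereas the lemma as worded allows any $g$ with $y=gxg^{-1}$. Stable conjugacy of the pair $(x,y)$ does not control an arbitrary conjugator. If $g_0$ is a witness, then every other conjugator is $g=g_0h$ with $h\in G_x(\overline{F})$, and $g^{-1}\tau(g)=h^{-1}\bigl(g_0^{-1}\tau(g_0)\bigr)\tau(h)$ need not lie in $G_x^*(\overline{F})$. Your closing paragraph credits the stable conjugacy hypothesis with putting $u_\tau$ in $G_x^*$ rather than $G_x$. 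In fact this comes from the choice of $g$.

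This is not cosmetic: the conclusion fails for an arbitrary conjugator. Take the following data:
\begin{itemize}
\item $G=\mathrm{PGL}_3$ over $F$, with $\zeta\notin F$ a primitive cube root of unity;
\item $x=y$ the image of the cyclic permutation matrix $c$;
\item $g$ the image of $d=\mathrm{diag}(1,\zeta,\zeta^2)$.
\end{itemize}
Then $dcd^{-1}=\zeta c$, so $gxg^{-1}=x$. Since $c$ has distinct eigenvalues, $G_x^*=G_x^0$ is the maximal torus $T$ given by the image of $\overline{F}[c]^\times$. For $\tau\in\Gamma_F$ with $\tau(\zeta)=\zeta^2$, one computes $d^{-1}\tau(d)=d$. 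Hence $\Int(g)^{-1}\circ\tau(\Int(g))=\Int(g)|_T$, which sends $a+bc+ec^2$ to $a+b\zeta c+e\zeta^2c^2$. This is a nontrivial automorphism of the torus $T$ (compare $1+c$ with $1+\zeta c$ modulo scalars), hence not inner.

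So you should state that $g$ is a conjugator as in Definition \ref{StableConjugacy}. This is exactly how the lemma is applied in the proof of Lemma \ref{isocrystalstableconj}, and with that reading your proof is complete. When $G_x=G_x^*$ (for instance when $G^{\mathrm{der}}$ is simply connected, by Lemma \ref{simplyconnectedlemma}), every conjugator works, since $u_\tau\in G_x$ alone then suffices.
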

\begin{proof}
    See Lemma 3.2 of \cite{MR683003}.
\end{proof}

\begin{Le}
    Assume that $G$ is quasi-split. Then, for a semisimple $\gamma \in G(F)$, there exists a stable conjugate $\gamma_0 \in G(F)$ such that $G_{\gamma_0}^0 = G_{\gamma_0}^*$ is quasi-split.
\end{Le}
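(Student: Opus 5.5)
The plan is to reduce to Kottwitz's construction of a quasi-split maximal torus in the centralizer, using Steinberg's theorem that quasi-split groups contain elliptic-free (i.e.\ arbitrary) maximal tori up to transport. Fix a semisimple $\gamma \in G(F)$ and put $I = G_\gamma^0$. I would choose a maximal torus $T \subset I$ defined over $F$; such a torus exists for any connected reductive group over $F$. It automatically contains $\gamma$, because $\gamma$ is semisimple and central in $I$.

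Since $G$ is quasi-split, I would invoke Steinberg's result. Every maximal $F$-torus of an inner form transfers to $G$: there is $g \in G(\overline{F})$ with $\Int(g)|_{T}: T \to T' := gTg^{-1}$ defined over $F$, with $T' \subset G$ an $F$-torus. This is the standard fact, used in Kottwitz's \S 3 of \cite{MR683003} and Lemma 3.3 there, that any maximal torus of the quasi-split inner form $I^*$ of $I$ embeds admissibly into $G$.

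The more precise step is to work with $I^*$ rather than $T$. Let $I^*$ be the quasi-split inner form of $I$, and let $T^* \subset I^*$ be a maximal $F$-torus contained in an $F$-Borel of $I^*$. Then $\gamma$ corresponds to a central element $\gamma^* \in Z(I^*)(F)$, since $Z(I)\simeq Z(I^*)$ over $F$. Next I would use that $\Hat{I}$, and hence the $\Gamma_F$-twisted root datum of $T^*$, comes with a $\Gamma_F$-equivariant embedding into that of $G$ up to Weyl twisting. Applying Steinberg's theorem to $G$ quasi-split, I would find an $F$-embedding $j: T^* \hookrightarrow G$ in the canonical stable class such that the roots of $I^*$ map to roots of $G$ vanishing on $j(\gamma^*)$.

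Set $\gamma_0 = j(\gamma^*)$. I would then check three things:
\begin{enumerate}
\item $\gamma_0 \in G(F)$ is stably conjugate to $\gamma$. It is $\overline{F}$-conjugate to $\gamma$ via an element $h$ whose cocycle $h^{-1}\sigma(h)$ lies in $I$, because the twisting $I \leadsto I^*$ is inner.
\item $G_{\gamma_0}^0$ has root system $\Phi(I^*,T^*)$ transported by $j$, so $G_{\gamma_0}^0$ is an inner form of $I^*$ containing $j(T^*)$.
\item The $\Gamma_F$-stable Borel of $I^*$ containing $T^*$ transports to an $F$-Borel of $G_{\gamma_0}^0$.
\end{enumerate}
The main obstacle is the third check. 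It requires arranging that the chosen embedding $j$ carries the Galois action on positive roots compatibly, i.e.\ that the Weyl-group twisting cocycle for $j$ restricted to $\Phi(I^*)$ is trivial. I would first try to handle this by choosing $T^*$ to be the centralizer of a maximal split torus of $I^*$. Its Galois action then preserves a positive system, and Steinberg's theorem lets us realize $T^*$ in $G$ with exactly this twisted action. Finally, the equality $G_{\gamma_0}^0 = G_{\gamma_0}^*$ is immediate from Definition \ref{stargroupdef}, since $\gamma_0$ is semisimple.
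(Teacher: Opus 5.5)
Your overall route is the right one. It is the argument behind Kottwitz's Lemma 3.3, which is all the paper cites. You pass to the quasi-split inner form $\psi\colon I\to I^*$, take a maximal $F$-torus $T^*$ inside an $F$-Borel subgroup $B^*$ of $I^*$, use Lemma \ref{quasisplitlemma} to replace $\psi^{-1}|_{T^*}$ by an $F$-rational conjugate $j=\Int(h)\circ\psi^{-1}|_{T^*}$, and set $\gamma_0=j(\psi(\gamma))=h\gamma h^{-1}$. Lemma \ref{quasisplitlemma} needs the $G(\overline F)$-class of $\psi^{-1}|_{T^*}$ to be $\Gamma_F$-invariant, and this requires no dual-group input: being an inner twisting means exactly that $\tau(\psi^{-1})=\Int(x_\tau)\circ\psi^{-1}$ with $x_\tau\in I(\overline F)$. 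Your second paragraph, applied to $T\subset I\subset G$, is vacuous, since $T$ is already an $F$-torus of $G$.

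\emph{The gap is in check 1.} That the twisting is inner says nothing about $h$. From $\gamma,\gamma_0\in G(F)$ alone you only get $h^{-1}\tau(h)\in G_\gamma(\overline F)$. Since the lemma is stated without assuming $G^{\mathrm{der}}$ simply connected, $G_\gamma$ may be disconnected, and landing in $G_\gamma^0=I$ is precisely the content of stable conjugacy. The missing input is the rationality of $j$. Comparing $j$ with $\tau(j)=\Int(\tau(h)x_\tau)\circ\psi^{-1}|_{T^*}$ shows that $h^{-1}\tau(h)x_\tau$ centralizes the maximal torus $T_1=\psi^{-1}(T^*)$ of $G$. Hence it lies in $T_1(\overline F)$, and so $h^{-1}\tau(h)\in T_1(\overline F)\,x_\tau^{-1}\subset I(\overline F)$.

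By contrast, the ``main obstacle'' you locate in check 3 is not there, and there is no residual Weyl-group cocycle to trivialize. Put $\phi=\Int(h)\circ\psi^{-1}\colon I^*\to G^0_{\gamma_0}$. The same computation gives $\phi^{-1}\circ\tau(\phi)=\Int\bigl(\psi(h^{-1}\tau(h)x_\tau)\bigr)$ with $\psi(h^{-1}\tau(h)x_\tau)\in T^*(\overline F)\subset B^*(\overline F)$. Hence $\tau(\phi(B^*))=\phi(B^*)$, and $\phi(B^*)$ is an $F$-Borel subgroup of $G^0_{\gamma_0}$. Put differently, $j$ is an $F$-isomorphism $T^*\to j(T^*)$ carrying $\Phi(I^*,T^*)$ onto $\Phi(G^0_{\gamma_0},j(T^*))$, so it transports the $\Gamma_F$-stable positive system of $B^*$ to a $\Gamma_F$-stable one. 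This uses only that $T^*$ lies in an $F$-Borel subgroup, which you arranged at the outset; taking $T^*$ to be the centralizer of a maximal split torus adds nothing. Once check 1 is repaired by the fact $h^{-1}\tau(h)x_\tau\in T_1(\overline F)$, the argument is complete.
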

\begin{proof}
    See Lemma 3.3 of \cite{MR683003}.
\end{proof}

\begin{const}\label{stableconjconst}
    For a semisimple $\gamma \in G(F)$, let $I = G_{\gamma}^0$. Choose a maximal torus $T$ over $F$ in $I$ (this can be done since $I$ is reductive). Then $T$ contains $\gamma$, since $\gamma$ lies in $Z(I)$. Moreover, $T$ is also a maximal torus of $G$. Using \ref{secondconst}, we get a canonical $\Gamma_F$-invariant $\Hat{I}$-conjugacy (resp. $\Hat{G}$-conjugacy) class of embeddings $\Hat{T} \xrightarrow[]{} \Hat{I}$ (resp. $\Hat{T} \xrightarrow[]{} \Hat{G}$). Using these embeddings, identify $Z(\Hat{I})$ and $Z(\Hat{G})$ with subgroups of $\Hat{T}$, which is independent of the choice of the particular embedding in the $\Hat{I}$-conjugacy (resp. $\Hat{G}$-conjugacy) class. Via these identifications, we obtain that $Z(\Hat{G}) \subset Z(\Hat{I})$ and that this is a $\Gamma_F$-equivariant embedding.
\end{const}

\begin{Le}\label{conjclassinsidestable}
    Let $\gamma \in G(F)$ be semisimple. Let $I = G_{\gamma}^0$. Then the set of conjugacy classes within the stable conjugacy class of $\gamma$ is parametrized by the pointed set $\ker(H^1(F,I) \xrightarrow[]{} H^1(F,G))$. Further, if $F$ is a $p$-adic field, then the following diagram commutes
    \begin{center}
        \begin{tikzcd}
            H^1(F,I) \arrow{r} \arrow{d}{\simeq} & H^1(F,G) \arrow{d}{\simeq}\\
            X^*(\pi_0(Z(\Hat{I})^{\Gamma_F})) \arrow{r} & X^*(\pi_0(Z(\Hat{G})^{\Gamma_F}))
        \end{tikzcd}
    \end{center}
    where the bottom map is induced by the $\Gamma_F$-equivariant injection $Z(\Hat{G}) \xrightarrow[]{} Z(\Hat{I})$.
\end{Le}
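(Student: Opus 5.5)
The plan has two parts: first a direct cocycle argument for the parametrization, then a reduction of the commutative square to functoriality of Kottwitz's isomorphism, using a maximal torus of $I$.

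\textbf{Parametrization.} Let $y \in G(F)$ be stably conjugate to $\gamma$. Choose $g \in G(\overline{F})$ with $y = g\gamma g^{-1}$ and $g^{-1}\sigma(g) \in G_\gamma^0(\overline{F}) = I(\overline{F})$ for all $\sigma\in\Gamma_F$; since $\gamma$ is semisimple, this is exactly Definition \ref{StableConjugacy}. Then $a_\sigma := g^{-1}\sigma(g)$ is a $1$-cocycle of $\Gamma_F$ in $I(\overline{F})$, and it is visibly a coboundary in $G$.

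I will check well-definedness in two steps.
\begin{itemize}
\item Any other admissible choice for the same $y$ has the form $g' = gz$ with $z \in G_\gamma(\overline{F})$. Since $g'^{-1}\sigma(g') = z^{-1}a_\sigma\sigma(z)$ must lie in $I$, and $a_\sigma \in I$, we get $z^{-1}\sigma(z) \in I(\overline{F})$ for all $\sigma$.
\item If $G_\gamma$ is connected, then $z \in I(\overline{F})$ and the class in $H^1(F,I)$ is unchanged. In general I expect to have to work modulo the $I$-class, and I will follow Kottwitz, Lemma 3.2 of \cite{MR683003}. The same issue appears in Definition \ref{stargroupdef} through $G_x^*$.
\end{itemize}
Replacing $y$ by $hyh^{-1}$ with $h \in G(F)$ replaces $g$ by $hg$, which leaves the cocycle $a_\sigma$ unchanged. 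So we get a map from conjugacy classes in the stable class of $\gamma$ to $\ker(H^1(F,I)\to H^1(F,G))$.

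For surjectivity, take a class in the kernel and write it as $a_\sigma = g^{-1}\sigma(g)$ with $g\in G(\overline{F})$. Put $y = g\gamma g^{-1}$. Then
$$\sigma(y) = \sigma(g)\gamma\sigma(g)^{-1} = g a_\sigma\gamma a_\sigma^{-1} g^{-1} = y,$$
because $a_\sigma$ centralizes $\gamma$. Hence $y \in G(F)$, and $y$ is stably conjugate to $\gamma$.

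For injectivity, suppose the cocycles of $y$ and $y'$ are cohomologous in $I$, say $g'^{-1}\sigma(g') = c^{-1}g^{-1}\sigma(g)\sigma(c)$ with $c\in I(\overline{F})$. Then $h := g' c^{-1} g^{-1}$ is $\Gamma_F$-fixed, so $h \in G(F)$, and it conjugates $y$ to $y'$.

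\textbf{Commutativity of the square.} I will reduce to tori. Choose a maximal $F$-torus $T \subset I$ as in Construction \ref{stableconjconst}. For $p$-adic $F$, every class in $H^1(F,I)$ comes from $H^1(F,T)$ for a suitable maximal $F$-torus $T$ of $I$ (Kottwitz). It therefore suffices to prove commutativity after precomposing with $H^1(F,T)\to H^1(F,I)$, for every such $T$.

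By Lemma \ref{KottwitzTateNakayama}, the isomorphisms are functorial for all homomorphisms, through $\pi_1$ and Remark \ref{observation}. Applying this to $T\to I$ and to $T\to G$, the square reduces to showing that the map $X^*(\pi_0(Z(\Hat I)^{\Gamma_F})) \to X^*(\pi_0(Z(\Hat G)^{\Gamma_F}))$ induced by $\pi_1(I)\to\pi_1(G)$ agrees with the map induced by the inclusion $Z(\Hat G)\hookrightarrow Z(\Hat I)$ of Construction \ref{stableconjconst}.

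Both maps are quotients of $X_*(T)=X^*(\Hat T)$, obtained by restricting characters of $\Hat T$ to $Z(\Hat I)\supset Z(\Hat G)$ via the embeddings $\iota_T$. So the comparison is a compatibility of identifications that should follow from $\pi_1(I) = X^*(Z(\Hat I))$ and $\pi_1(G)=X^*(Z(\Hat G))$ being computed from the same torus $T$.

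The main obstacle I expect is this last compatibility. One has to check that the canonical identifications in Remark \ref{observation} really are restriction along the $\iota_T$ of Remark \ref{secondconst}, independently of the $\Hat G$- and $\Hat I$-conjugacy choices. Conjugation acts trivially on centers, so this should go through. The disconnected-centralizer subtlety in the first part is the second point needing care.
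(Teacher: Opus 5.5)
\textbf{The step that fails is your plan for disconnected $G_\gamma$.} Suppose $z\in G_\gamma(\overline{F})$ satisfies $z^{-1}\sigma(z)\in I(\overline{F})$. Then $[z^{-1}a_\sigma\sigma(z)]$ is the image of $[a]$ under the action of $zI\in\pi_0(G_\gamma)(F)=(G_\gamma/I)(F)$, and this action is non-trivial in general. So $y\mapsto[a]$ is not well defined, and it cannot be made so. Lemma 3.2 of \cite{MR683003} does not help: it only says that $\Int(g)$ is an inner twisting (Lemma \ref{stabconjlem1}).

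The fibre over the class of $\gamma$ itself is the image of the connecting map $\pi_0(G_\gamma)(F)\to H^1(F,I)$. That image is non-trivial as soon as some $\Gamma_F$-stable component of $G_\gamma$ has no $F$-point, and this already happens for $p$-adic $F$. Take
\begin{itemize}
\item $\zeta=\zeta_3\notin F$ and $E=F(\zeta)$;
\item $V=E^3$ with hermitian form $\mathrm{diag}(1,1,\epsilon)$, where $\epsilon\notin N_{E/F}(E^\times)$;
\item $G=U(V)/U(1)$, and $\gamma$ the image of $\mathrm{diag}(1,\zeta,\zeta^2)$.
\end{itemize}
An $F$-point of $G$ in the component $g\gamma g^{-1}=\zeta\gamma$ lifts, by Hilbert 90, to a similitude $g\in GU(V)(F)$. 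Such a $g$ permutes the three eigenlines cyclically. Its multiplier $c$ would then satisfy $c^3\in N_{E/F}(E^\times)$, hence $c\in N_{E/F}(E^\times)$, and then $\epsilon\in N_{E/F}(E^\times)$, a contradiction.

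What your computations do prove, with no connectedness hypothesis, is the following. Sending $[a]$ to the class of $g\gamma g^{-1}$, for any $g$ with $a_\sigma=g^{-1}\sigma(g)$, is a well-defined surjection from $\ker(H^1(F,I)\to H^1(F,G))$ onto the $G(F)$-classes in the stable class. Its fibres are the $\pi_0(G_\gamma)(F)$-orbits. Your first bullet shows it is a bijection when $G_\gamma=I$, for example when $G^{\text{der}}$ is simply connected (Lemma \ref{simplyconnectedlemma}). That case covers the uses of bijectivity in Section \ref{mainproof} under (\HypOne{}). The paper's own proof is only a citation of \cite[\S 4.3]{MR858284}. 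The failure of injectivity is exactly what the factor $\lvert\ker[H^1(F,I')\to H^1(F,H_{\gamma'})]\rvert$ in Complement \ref{SOgeneral} records. You should therefore either add the connectedness hypothesis or state the first assertion as this surjection.

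\textbf{The square.} Your argument is correct and more explicit than the paper's citation, but the reduction to tori is a detour. Lemma \ref{KottwitzTateNakayama}, with the $\pi_1$-functoriality of Lemma \ref{kottwitzmap}, applies directly to $I\hookrightarrow G$, so you do not need surjectivity of $H^1(F,T)\to H^1(F,I)$. The real content is the compatibility you isolate, and your reason for it is the right one:
\begin{itemize}
\item For a maximal $F$-torus $T\subset I$, both $\pi_1(I)=X^*(Z(\Hat{I}))$ and $\pi_1(G)=X^*(Z(\Hat{G}))$ are quotients of $X_*(T)=X^*(\Hat{T})$, obtained by restricting along $\iota_T$.
\item Hence $\pi_1(I)\to\pi_1(G)$ is restriction along $Z(\Hat{G})\subset Z(\Hat{I})$, as in Construction \ref{stableconjconst}.
\item Inner automorphisms act trivially on centres. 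So neither the choices within the conjugacy classes of $\iota_T$ nor the failure of $\iota_T$ to be strictly $\Gamma_F$-equivariant affect this.
\end{itemize}
Passing to $\Gamma_F$-coinvariants and then to torsion subgroups gives the square.
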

\begin{proof}
    See $\S$ 4.3 of \cite{MR858284}. See also \ref{KottwitzTateNakayama}.
\end{proof}

\begin{D}[$\mathcal{D}(I/F)$]
    The pointed set $\ker(H^1(F,I) \xrightarrow[]{} H^1(F,G))$ parameterizing conjugacy classes with the stable conjugacy class of $\gamma$ has the structure of an abelian group by \ref{conjclassinsidestable}. We will denote this abelian group by $\mathcal{D}(I/F)$.
\end{D}

\begin{D}[$\mathfrak{K}(I/F)$]\label{kappadef}
    Consider the long exact sequence of \ref{keylemmaendo} associated with $1 \xrightarrow[]{} Z(\Hat{G}) \xrightarrow[]{} Z(\Hat{I}) \xrightarrow[]{} Z(\Hat{I}) / Z(\Hat{G}) \xrightarrow[]{} 1$. Let $\mathfrak{K}(I/F)$ denote the subgroup of $\pi_0((Z(\Hat{I})/Z(\Hat{G}))^{\Gamma_F})$ consisting of elements which map to a locally trivial element of $H^1(F, Z(\Hat{G}))$, in this long exact sequence. Note that if $F$ is a $p$-adic field, then $\mathfrak{K}(I/F) =X^*(\mathcal{D}(I/F))$
\end{D}

The following lemma regarding the $\Gamma_F$-invariant $\overline{F}$-conjugacy class of embeddings of tori into a quasi-split group will be applied later on in the context of admissible embeddings.

\begin{Le}\label{quasisplitlemma}
    Let $G$ be a quasi-split group over $F$. Let $\iota: T \xrightarrow[]{} G$ be an embedding over $\overline{F}$ of a torus $T$ defined over $F$ such that $\iota(T)$ is a maximal torus of $G$. Assume that this embedding has the property that the $G(\overline{F})$-conjugacy class of $\iota$ is $\Gamma_F$-invariant. Then there is a $G(\overline{F})$-conjugate of $\iota$ that is defined over $F$.
\end{Le}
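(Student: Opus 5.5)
The idea is to reduce the statement about embeddings of tori to the existence of a rational element in a Galois-stable regular semisimple conjugacy class, following Steinberg and Kottwitz \cite{MR683003}. Write $T' := \iota(T)$, a maximal torus of $G_{\overline{F}}$. The hypothesis says that for each $\sigma \in \Gamma_F$ there is $g_\sigma \in G(\overline{F})$ with $\sigma(\iota) = \Int(g_\sigma)\circ\iota$, where $\sigma(\iota)=\sigma_G\circ\iota\circ\sigma_T^{-1}$.

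First, I will choose $t \in T(F)$ such that $\iota(t)$ is \emph{strongly regular} in $G$, that is, its centralizer $G_{\iota(t)}$ is exactly the torus $T'$, not merely a group with identity component $T'$. This is possible for the following reasons. The strongly regular locus of $T'$ is the complement of the finitely many subvarieties $\{x : \alpha(x)=1\}$ and $\{x : w(x)=x\}$ for $\alpha$ a root and $w \neq 1$ in the Weyl group. It is therefore a non-empty Zariski open subset of $T'$. Its preimage in $T$ is a non-empty open subset, and $T(F)$ is Zariski dense in $T$ because $F$ is infinite.

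Second, since $t$ is $F$-rational, $\sigma(\iota(t)) = \sigma(\iota)(t) = g_\sigma \iota(t) g_\sigma^{-1}$. So the $G(\overline{F})$-conjugacy class of $\iota(t)$ is $\Gamma_F$-stable. The key input is that, for quasi-split $G$, every $\Gamma_F$-stable conjugacy class of strongly regular semisimple elements contains an element of $G(F)$. When $G^{\mathrm{der}}$ is simply connected this is Steinberg's theorem, proved using a rational section of the Chevalley map $G \to T/\Omega$, which exists because $G$ is quasi-split. In general it is the corresponding result of Kottwitz in \cite{MR683003}. To obtain it one passes to a $z$-extension $\widetilde{G} \to G$ with central kernel an induced torus $Z$. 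One lifts the class to a $\Gamma_F$-stable class in $\widetilde{G}$, adjusting by $Z(\overline{F})$ and using $H^1(F,Z)=0$ to make the lift Galois stable. One then applies Steinberg in $\widetilde{G}$ and projects back to $G$. I expect this step to be the main obstacle: both the $z$-extension bookkeeping and the reliance on quasi-splitness are concentrated here. If the lifting is awkward, I would instead cite Kottwitz's corollary directly.

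Third, pick $g \in G(\overline{F})$ with $\gamma := g\iota(t)g^{-1} \in G(F)$, and put $\iota' := \Int(g)\circ\iota$. Then $\iota'(T) = G_\gamma$, which is a maximal torus defined over $F$ because $\gamma$ is rational and strongly regular. For $\sigma \in \Gamma_F$, $\sigma(\iota')$ is again a $G(\overline{F})$-conjugate of $\iota'$, say $\sigma(\iota') = \Int(h_\sigma)\circ\iota'$. Evaluating at $t$ gives
$$h_\sigma \gamma h_\sigma^{-1} = \sigma(\iota'(t)) = \sigma(\gamma) = \gamma,$$
so $h_\sigma \in G_\gamma = \iota'(T)$. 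Since $\iota'(T)$ is commutative, $\Int(h_\sigma)$ acts trivially on it, and hence $\sigma(\iota') = \iota'$ for all $\sigma$. Therefore $\iota'$ is defined over $F$, which proves the lemma. The choice of a strongly regular element in the first step is exactly what makes this last verification work without any assumption on $G^{\mathrm{der}}$.
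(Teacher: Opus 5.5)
Your argument is correct, but it differs from the paper's, which gives no argument and simply cites Corollary 2.2 of \cite{MR683003}. You instead derive the lemma from Steinberg's rationality theorem, which the paper records as Lemma \ref{conjugacyclassdefoverFlemma} for $G^{\mathrm{der}}$ simply connected. You pick $t\in T(F)$ with $\iota(t)$ strongly regular and move $\iota(t)$ to a rational $\gamma$. The defect $h_\sigma=\sigma(g)g_\sigma g^{-1}$ of $\iota'=\Int(g)\circ\iota$ then lies in the commutative group $G_\gamma=\iota'(T)$, so it acts trivially on $\iota'$. This gives a transparent proof using only tools already in the paper: $z$-extensions, the vanishing of $H^1$ of induced tori, and Steinberg's theorem. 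The price is that the case of arbitrary $G^{\mathrm{der}}$ must be handled by hand.

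Your ``key input'' is not weaker than the lemma. Given a $\Gamma_F$-stable strongly regular class through $x$, twisting the Galois action on the torus $G_x$ by the $g_\sigma$ produces an $F$-torus with a $\Gamma_F$-stable class of embeddings. So the two statements are equivalent. Your fallback of citing Kottwitz directly would therefore amount to citing the lemma itself, and the $z$-extension step has to be carried out.

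That step does go through, but strong regularity is used there as well, not only in your last step, and you should say so. Let $\alpha:\widetilde G\to G$ be a $z$-extension with kernel $Z$. Then $\widetilde G$ is quasi-split, because $\alpha^{-1}(B)$ is an $F$-Borel subgroup. Lift $x=\iota(t)$ to $\tilde x$ and $g_\sigma$ to $\tilde g_\sigma$, and write $\sigma(\tilde x)=z_\sigma\tilde g_\sigma\tilde x\tilde g_\sigma^{-1}$ with $z_\sigma\in Z(\overline F)$.

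Two facts are needed: that $z_\sigma$ does not depend on the choice of $\tilde g_\sigma$, and the cocycle identity $z_{\sigma\tau}=\sigma(z_\tau)z_\sigma$, obtained by comparing $\sigma(\tilde g_\tau)\tilde g_\sigma$ with $\tilde g_{\sigma\tau}$. Both rest on the fact that any $y\in\widetilde G(\overline F)$ with $\alpha(y)\in G_x$ centralizes $\tilde x$. This holds because $G_x=\iota(T)$, so $\alpha^{-1}(G_x)$ is a maximal torus of $\widetilde G$ containing $\tilde x$. If $x$ were merely regular, an element of $G_x\setminus\iota(T)$ could lift to some $y$ with $y\tilde x y^{-1}\in Z(\overline F)\tilde x$ but $y\tilde x y^{-1}\neq\tilde x$, and $z_\sigma$ would be ill defined.

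With this in place, $H^1(F,Z)=0$ gives $z_\sigma=\sigma(w)w^{-1}$ for some $w\in Z(\overline F)$. The class of $w^{-1}\tilde x$ is then $\Gamma_F$-stable. Applying Lemma \ref{conjugacyclassdefoverFlemma} in $\widetilde G$ and then $\alpha$ yields the rational $\gamma$ you need.
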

\begin{proof}
    See Corollary 2.2 of \cite{MR683003}.
\end{proof}

\subsubsection{Endoscopic transfer of conjugacy classes}
Now we will explain how to relate conjugacy classes in an endoscopic group $H$ with conjugacy classes in $G$. For the rest of the section, let $(H,s,\eta)$ denote an endoscopic triple for $G$.

\begin{D}[Admissible embeddings]\label{admissibleembedding}
    Let $T_H$ be a maximal torus of $H$. An embedding $\iota_{T_H}^G: T_H \xrightarrow[]{} G$ is admissible if the $\Hat{G}$-conjugacy class of embeddings $\Hat{T}_H \xrightarrow[]{} \Hat{G}$ determined by $\iota_{T_H}^G$ is the composition of the canonical $\Hat{H}$-conjugacy class $\Hat{T}_H \xrightarrow[]{} \Hat{H}$ of \ref{firstconst} with the canonical $\Gamma_F$-invariant $\Hat{G}$-conjugacy class of $\eta$. Note here that the $\Hat{G}$-conjugacy class $\Hat{T}_H \xrightarrow[]{} \Hat{G}$ is constructed as in \ref{firstconst}. Further, it can be checked that any two admissible embeddings $T_H \xrightarrow[]{} G$ are conjugate under $G(\overline{F})$. If $T_H$ is defined over $F$, then the $G(\overline{F})$-conjugacy class of admissible embeddings $T_H \xrightarrow[]{} G$ is $\Gamma_F$-invariant, which follows by using \ref{secondconst}.
\end{D}

\begin{Le}\label{quasisplit}
    If $G$ is quasi-split and $T_H$ is a maximal torus of $H$ defined over $F$ for an endoscopic triple $(H,s,\eta)$ of $G$, then there exists an admissible embedding $\iota_{T_H}^G: T_H \xrightarrow[]{} G$ defined over $F$.
\end{Le}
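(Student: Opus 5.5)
Start from the Borel pair $(B_H, T_H')$ for a quasi-split $H$, and aim to produce an embedding $T_H \to G$ that is admissible in the sense of Definition \ref{admissibleembedding} and whose $G(\overline{F})$-conjugacy class is $\Gamma_F$-invariant. Then Lemma \ref{quasisplitlemma} will give a $G(\overline{F})$-conjugate defined over $F$. Since conjugating an admissible embedding by $G(\overline{F})$ keeps it admissible, this conjugate is the embedding we want. The whole argument therefore reduces to two things: showing that an admissible embedding exists over $\overline{F}$, and showing that its conjugacy class is Galois stable.

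For existence over $\overline{F}$, we work on the dual side. Choose the maximal torus $\mathscr{T}$ of $\Hat{G}$ containing $\eta(s)$, as in Remark \ref{rootdataendogrp}. Then $\eta$ identifies a maximal torus of $\Hat{H}$ with $\mathscr{T}$. By Construction \ref{firstconst}, we have the canonical class $\Hat{T}_H \to \Hat{H}$, and composing with $\eta$ gives an embedding $\Hat{T}_H \to \Hat{G}$ whose image is a maximal torus. This embedding identifies $X_*(T_H)=X^*(\Hat{T}_H)$ with $X^*(\mathscr{T})$, and hence with $X_*(T)$ for a maximal torus $T\subset G$, using Construction \ref{firstconst} for $G$. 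The resulting isomorphism $T_H \to T$ over $\overline{F}$ is admissible essentially by construction. Its $G(\overline{F})$-class is well defined because the choices involved differ by elements of $\Omega_{\Hat{G}}$, and these are realised by $N_G(T)(\overline{F})$.

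For Galois invariance, we use Remark \ref{secondconst}. Since $T_H$ is defined over $F$, the canonical class $\Hat{T}_H \to \Hat{H}$ is $\Gamma_F$-invariant. Condition (2) of Lemma \ref{endotriples} says that the $\Hat{G}$-class of $\eta$ is $\Gamma_F$-invariant. Together these show that the $\Hat{G}$-class of the composite $\Hat{T}_H\to\Hat{G}$ is $\Gamma_F$-invariant, which is the remark already made in Definition \ref{admissibleembedding}. This dual invariance then has to be transported back to $G$.

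I expect the main obstacle to be exactly this transfer: checking that $\Hat{G}$-conjugacy classes of embeddings $\Hat{T}_H\to\Hat{G}$ correspond bijectively and $\Gamma_F$-equivariantly to $G(\overline{F})$-conjugacy classes of embeddings $T_H\to G$. My plan is to fix a Borel pair and identify both sets with the set of isomorphisms $X_*(T_H)\simeq X_*(T_0)$ modulo the Weyl group $\Omega_G(\overline{F})=\Omega_{\Hat{G}}$. On each side, the Galois action is twisting by the $L$-action, or respectively the $F$-structure, composed with a Weyl element. This compatibility is the one encoded in condition (2) of Definition \ref{Lgroupdef}. With the transfer in hand, Lemma \ref{quasisplitlemma} applies and completes the proof; this is the argument of Kottwitz \cite{MR683003}, §§ 2–3.
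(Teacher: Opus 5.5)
Your proposal is correct and takes the same route as the paper. The paper deduces the lemma directly from Lemma \ref{quasisplitlemma}, using the $\Gamma_F$-invariance of the $G(\overline{F})$-class of admissible embeddings asserted in Definition \ref{admissibleembedding} (via Remark \ref{secondconst} and condition (2) of Lemma \ref{endotriples}); you additionally spell out the existence of an admissible embedding over $\overline{F}$ and the $\Gamma_F$-equivariant passage from $\Hat{G}$-classes to $G(\overline{F})$-classes, which the paper leaves implicit.
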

\begin{proof}
    Follows from \ref{quasisplitlemma}.
\end{proof}

\begin{D}[Transfer of tori]
    If there is an admissible embedding $\iota_{T_H}^G$ over $F$ with $T_G = \iota_{T_H}^G(T_H)$ a maximal torus in $G$ over $F$, then we say that $T_H$ transfers to $T_G$.
\end{D}

\begin{D}[Related conjugacy classes]\label{relatedelementsdef}
    Let $\gamma_H \in H(F)$ and $\gamma \in G(F)$ be semisimple. We say that $\gamma_H$ is related to $\gamma$ if there exists an admissible embedding $\iota_{T_H}^G: T_H \xrightarrow[]{} G$ such that $\iota_{T_H}^G(\gamma_H) = \gamma$. We say that an $\overline{F}$-conjugacy class in $H$ is related to an $\overline{F}$-conjugacy class in $G$, if there exist representatives $\gamma_H \in H(F)$ and $\gamma \in G(F)$ of the corresponding $\overline{F}$-conjugacy classes that are related. Note that the condition of two $\overline{F}$-conjugacy classes being related is independent of the choice of the representatives $\gamma_H$ and $\gamma$.
\end{D}

\begin{D}[$(G,H)$-regular conjugacy classes]\label{GH-regular}
    Let $\gamma_H \in H(F)$ be semisimple. Assume that there exists a $\gamma \in G(F)$ related to $\gamma_H$ with respect to an admissible embedding $\iota_{T_H}^G: T_H \xrightarrow[]{} G$. Let $T = \iota_{T_H}^G(T_H)$. Let $\Phi(G,T)$ and $\Phi(H,T_H)$ denote the sets of roots of $G$ and $H$ respectively. Then, with the identification of $T_H$ with $T$ via $\iota_{T_H}^G$, we can view $\Phi(H,T_H) \subset \Phi(G,T) \subset X^*(T)$. We say that $\gamma_H$ is $(G,H)$-regular if $\alpha(\gamma_H) \neq 1$ for every $\alpha \in \Phi(G,T) \setminus \Phi(H,T_H)$. Note that this condition does not depend on the choice of the admissible embedding and is a property of the $\overline{F}$-conjugacy class of $\gamma_H$.
\end{D}

\begin{const}\label{GHregular}
    Let $\gamma_H \in H(F)$ be related to $\gamma \in G(F)$ such that $\gamma_H$ is $(G,H)$-regular. Let $I = G_{\gamma}^0$ and $I_H = H_{\gamma_H}^0$ be the identity components of the respective centralizers, taken over $F$. Then, there is an inner twisting from $I_H$ to $I$ constructed as follows. Let $T_H \xrightarrow[]{} T \subset G$ be an admissible embedding, as before. Then the sets of roots are $\Phi(I,T) = \{\alpha \in \Phi(G,T) \mid \alpha(\gamma) = 1 \}$ and $\Phi(I_H, T_H) = \{ \alpha \in \Phi(H,T_H) \mid \alpha(\gamma_H) = 1 \}$. Using that $\gamma_H$ is $(G,H)$-regular, it follows that $\Phi(I,T) = \Phi (I_H, T_H)$, under the identification of $T$ and $T_H$. Hence, $I$ and $I_H$ are inner forms of each other, and there exists an inner twisting $j_{I_H}^G: I_{H,\overline{F}} \xrightarrow[]{\sim} I_{\overline{F}}$ which extends $\iota_{T_H}^G$ and is defined uniquely up to conjugation by elements of $T$.
\end{const}

\begin{Le}\label{connectedlemmaGHreg}
    Let $\gamma_H \in H(F)$ and $\gamma \in G(F)$ be related with $\gamma_H$ being $(G,H)$-regular. If the centralizer $I$ of $\gamma$ in $G$ is connected, then so is the centralizer $I_H$ of $\gamma_H$ in $H$.
\end{Le}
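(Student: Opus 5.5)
The plan is to compute both component groups with the standard Weyl-group description and compare them through the admissible embedding. I would work over $\overline{F}$ throughout, since connectedness of a centralizer can be checked after base change.

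\textbf{Step 1 (component groups via Weyl groups).} Let $\iota_{T_H}^G:T_H\to G$ be an admissible embedding with $\iota_{T_H}^G(\gamma_H)=\gamma$, and put $T=\iota_{T_H}^G(T_H)$. For a semisimple element $\gamma$ of a connected reductive group $G$ with maximal torus $T\ni\gamma$, I would use the classical structure result (Steinberg; see also Humphreys, \emph{Conjugacy classes in semisimple algebraic groups}, \S2.2). It says that $G_\gamma^0$ is generated by $T$ together with the root groups $U_\alpha$ for $\alpha\in\Phi(G,T)$ with $\alpha(\gamma)=1$. It also says that
$$
G_\gamma/G_\gamma^0\;\simeq\;\Omega(G,T)_\gamma/\Omega(G_\gamma^0,T),
\qquad
\Omega(G,T)_\gamma=\{w\in\Omega(G,T)\mid w(\gamma)=\gamma\},
$$
where $\Omega(G_\gamma^0,T)$ is the Weyl group of the root system $\Phi(I,T)=\{\alpha\mid\alpha(\gamma)=1\}$. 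The same description holds for $\gamma_H$ in $H$ with respect to $T_H$.

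\textbf{Step 2 (embedding Weyl groups).} Under the identification $T_H\simeq T$ given by $\iota_{T_H}^G$, the definition of $(G,H)$-regularity (\ref{GH-regular}) already uses that $\Phi(H,T_H)\subset\Phi(G,T)$. The same holds for coroots, by dualising the inclusion $\Phi(\Hat{H},\mathscr{T})\subset\Phi(\Hat{G},\mathscr{T})$ of \ref{rootdataendogrp}, which defines the admissible class. Hence each reflection $s_\alpha$ with $\alpha\in\Phi(H,T_H)$ acts on $X^*(T)=X^*(T_H)$ by the same formula in both groups. This gives $\Omega(H,T_H)\subset\Omega(G,T)$ as subgroups of $\Aut(X^*(T))$, and therefore of $\Aut(T)$. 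Since the identification sends $\gamma_H$ to $\gamma$, we get
$$
\Omega(H,T_H)_{\gamma_H}\subset\Omega(G,T)_\gamma .
$$
I expect this step to be the main point requiring care. One must check that the identification of $T_H$ with $T$ is compatible with the full root data (roots and coroots), not merely with the roots, so that the Weyl groups really match. I would verify it directly from \ref{firstconst} and the definition of admissibility, which asserts that the embedding $\Hat{T}_H\to\Hat{G}$ factors through $\eta$.

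\textbf{Step 3 (conclusion).} Assume $I=G_\gamma$ is connected. By Step 1, $\Omega(G,T)_\gamma=\Omega(I,T)$, so $\Omega(H,T_H)_{\gamma_H}\subset\Omega(I,T)$. By \ref{GHregular}, $(G,H)$-regularity gives $\Phi(I,T)=\Phi(I_H,T_H)$, and hence $\Omega(I,T)=\Omega(I_H^0,T_H)$. Combining these,
$$
\Omega(H,T_H)_{\gamma_H}\subset\Omega(I_H^0,T_H)\subset\Omega(H,T_H)_{\gamma_H}.
$$
Thus the quotient $\Omega(H,T_H)_{\gamma_H}/\Omega(I_H^0,T_H)$ is trivial. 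By Step 1 applied to $H$, this quotient is $H_{\gamma_H}/H_{\gamma_H}^0$, so $I_H=H_{\gamma_H}$ is connected.
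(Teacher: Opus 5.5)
Your proposal is correct. It is essentially the argument behind Lemma 3.2 of \cite{MR858284}, which the paper cites in place of a proof: write $\pi_0$ of a semisimple centralizer as $\Omega(G,T)_\gamma/\Omega(G_\gamma^0,T)$, use $\Omega(H,T_H)\subset\Omega(G,T)$ under the admissible embedding, and use $\Phi(I,T)=\Phi(I_H,T_H)$ from $(G,H)$-regularity. The same comparison in fact shows that $H_{\gamma_H}/H_{\gamma_H}^0$ injects into $G_\gamma/G_\gamma^0$, and the lemma is the case where the target is trivial.
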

\begin{proof}
    See Lemma 3.2 of \cite{MR858284}.
\end{proof}

\subsubsection{Ellipticity}
We will now define the important related notions of elliptic tori, elliptic conjugacy classes and elliptic endoscopic groups, which will be used in a crucial lemma. Recall the following setup regarding the cocenter of a connected reductive group $G$, which we will use in a later argument.

\begin{itemize}
    \item Let $G$ be a connected reductive group over a field $F$. Let $G^{\text{der}} = (G,G)$ be the derived subgroup, which is semisimple and let $D(G) = G / G^{\text{der}}$ be the cocenter of $G$.
    \item For a dual group $\Hat{G}$ of $G$, there are canonical isomorphisms $X^*(Z(\Hat{G})^0) \simeq X_*(D(G))$ and $X_*(Z(\Hat{G})^0) \simeq X^*(D(G)) $. These isomorphisms are induced from the isomorphisms $X^*(\mathscr{T}) \simeq X_*(T)$ and $X_*(\mathscr{T}) \simeq X^*(T)$, for maximal tori $T$ and $\mathscr{T}$ in $G$ and $\Hat{G}$ respectively. Hence, $D(G)$ and $Z(\Hat{G})^0$ are dual tori.
    \item The natural map $Z(G) \xrightarrow[]{} D(G)$ is surjective with finite kernel.
\end{itemize}

\begin{R}
    Note that one should not confuse the above isomorphism
    $$X^*(Z(\Hat{G})^0) = X_*(D(G))$$
    with the one in \ref{kottwitzmap}, which was $X^*(Z(\Hat{G})) = \pi_1(G)$.
\end{R}

\begin{D}[Elliptic tori]
    A torus $T$ of $G$ over $F$ is elliptic if $T/Z(G)$ is anisotropic, that is, if it has no non-trivial characters defined over $F$.
\end{D}

The following lemma allows us to express the condition for a maximal torus to be elliptic in terms of dual groups.

\begin{Le}\label{ellipticmain}
    A maximal torus $T$ of $G$ over $F$ is elliptic iff $(\Hat{T}/Z(\Hat{G}))^{\Gamma_F}$ is finite.
\end{Le}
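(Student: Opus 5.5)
The plan is to translate ellipticity into a statement about $F$-rational characters, and then use duality between character and cocharacter lattices to read this off from $\Gamma_F$-invariants on the dual side. By definition, $T$ is elliptic iff $X^*(T/Z(G))^{\Gamma_F}=0$. Rationally it suffices to test this on cocharacters. Since $T/Z(G)$ is a torus and $\Gamma_F$ acts on its lattices through a finite quotient, the averaging argument of Lemma \ref{Tatenakayamatori} gives
\[
\dim_{\mathbb{Q}} X^*(T/Z(G))^{\Gamma_F}\otimes\mathbb{Q}=\dim_{\mathbb{Q}}X_*(T/Z(G))^{\Gamma_F}\otimes\mathbb{Q}.
\]
Hence $T$ is elliptic iff $(X_*(T/Z(G))\otimes\mathbb{Q})^{\Gamma_F}=0$.

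The first key step is to identify this lattice on the dual side. Let $Z=Z(G)$, and use the canonical embedding $Z(\Hat{G})\subset\Hat{T}$ of Construction \ref{stableconjconst}. The lattice $X^*(\Hat{T}/Z(\Hat{G}))$ is the subgroup of $X^*(\Hat{T})=X_*(T)$ consisting of characters trivial on $Z(\Hat{G})$. A character $\lambda\in X_*(T)$ kills $Z(\Hat{G})$ exactly when it lies in the saturation-free span of the coroots, that is, in $\mathbb{Z}\check{\Phi}$. Indeed, $Z(\Hat{G})$ is the common kernel of the roots of $\Hat{G}$, which are the coroots of $G$, and the character group of a diagonalizable quotient is exactly the annihilating subgroup. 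So
\[
X^*(\Hat{T}/Z(\Hat{G}))=\mathbb{Z}\check{\Phi}(G,T).
\]
Rationally this is $X_*(T_{\mathrm{der}})\otimes\mathbb{Q}$, where $T_{\mathrm{der}}=T\cap G^{\mathrm{der}}$. Via the isogeny $T_{\mathrm{der}}\to T/Z$, it is also $X_*(T/Z)\otimes\mathbb{Q}$. All these identifications are $\Gamma_F$-equivariant, because the $\Gamma_F$-action on $\Hat{T}$ comes from $X_*(T)$ and $Z(\Hat{G})$ is $\Gamma_F$-stable (Remark \ref{secondconst}).

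The second step is to relate finiteness of invariants to this lattice. For a diagonalizable group $D=\Hat{T}/Z(\Hat{G})$ over $\mathbb{C}$ with $\Gamma_F$-action through a finite quotient, $D$ is a torus here because its character group is torsion-free. The identity component of $D^{\Gamma_F}$ is the torus with cocharacter group $X_*(D)^{\Gamma_F}$, and its dimension equals $\dim (X^*(D)\otimes\mathbb{Q})^{\Gamma_F}$ by the same averaging argument. The Lie algebra computation in Lemma \ref{Tatenakayamatori}, showing that invariants commute with $\Lie$, gives this directly. Thus $D^{\Gamma_F}$ is finite iff $\pi_0$ is everything, iff $(X^*(D)\otimes\mathbb{Q})^{\Gamma_F}=0$. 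Combining the two steps yields the equivalence.

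I expect the main obstacle to be the careful, equivariant identification of $X^*(\Hat{T}/Z(\Hat{G}))$ with $X_*(T/Z(G))$ up to finite index. This matters when $G^{\mathrm{der}}$ is not simply connected or when $Z(\Hat{G})$ is disconnected. Working throughout with $\otimes\mathbb{Q}$ is meant to absorb these finite discrepancies.
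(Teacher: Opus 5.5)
Your proof is correct, but it takes a different route from the paper's. The paper never computes $\Hat T/Z(\Hat G)$ itself. It applies $X_*(-)\otimes\mathbb{Q}$ and $\Gamma_F$-invariants to $1\to Z(\Hat G)\to\Hat T\to\Hat T/Z(\Hat G)\to 1$, and $X^*(-)\otimes\mathbb{Q}$ and invariants to $1\to Z(G)\to T\to T/Z(G)\to 1$. Each condition then becomes the statement that a natural map between the rational $\Gamma_F$-invariants of $X^*(T)=X_*(\Hat T)$ and of a central lattice ($X_*(Z(\Hat G))$, respectively $X^*(Z(G))$) is an isomorphism. The two central lattices are matched through the duality $X_*(Z(\Hat G)^0)=X^*(D(G))$ with the cocentre and the isogeny $Z(G)\to D(G)$.

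You instead identify the quotient torus directly: $X^*(\Hat T/Z(\Hat G))=\mathbb{Z}\check\Phi(G,T)$. So $\Hat T/Z(\Hat G)$ is exactly the dual of the maximal torus lying over $T\cap G^{\mathrm{der}}$ in the simply connected cover of $G^{\mathrm{der}}$, hence up to $F$-isogeny the dual of $T/Z(G)$. The lemma then reduces to the general fact that an $F$-torus is anisotropic iff its dual has finite $\Gamma_F$-invariants.

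Your version is more conceptual and says precisely which torus is dual to which. It costs two inputs: the description of $Z(\Hat G)$ as the common kernel of the roots of $\Hat G$, and the equality $\dim (V^*)^{\Gamma}=\dim V^{\Gamma}$ for a finite group acting on a $\mathbb{Q}$-vector space. You use the latter twice, and could avoid it by testing finiteness on $X_*(\Hat T/Z(\Hat G))\otimes\mathbb{Q}\cong X^*(T/Z(G))\otimes\mathbb{Q}$ directly. The paper's version needs neither roots nor dual representations, since it only uses the single identification $X_*(\Hat T)=X^*(T)$ together with the centre--cocentre duality.
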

\begin{proof}
    The canonical $\Gamma_F$-invariant $\Hat{G}$-conjugacy class of embeddings $\Hat{T} \xrightarrow[]{} \Hat{G}$, obtained from \ref{secondconst} applied to $T$, allows us to view $Z(\Hat{G})$ as a subgroup of $\Hat{T}$. Note that since $\Hat{T}/Z(\Hat{G})$ is a torus, one has that $(\Hat{T}/Z(\Hat{G}))^{\Gamma_F}$ is finite iff $$X_*((\Hat{T}/Z(\Hat{G}))^{\Gamma_F}) = X_*(\Hat{T}/ Z(\Hat{G}))^{\Gamma_F} = X_*((\Hat{T}/ Z(\Hat{G}))^0)^{\Gamma_F}$$ is trivial. Further, this is equivalent to $X_*(\Hat{T}/Z(\Hat{G}))^{\Gamma_F} \otimes \mathbb{Q}$ being trivial since $X_*(\Hat{T}/Z(\Hat{G}))^{\Gamma_F}$ is a free $\mathbb{Z}$-module for any torus $T$. We have the exact sequence (with the canonical identification $\mathscr{T} = \Hat{T}$ for a maximal torus $\mathscr{T}$ of $\Hat{G}$):
    $$ 1 \xrightarrow[]{} Z(\Hat{G}) \xrightarrow[]{} \Hat{T} \xrightarrow[]{} \Hat{T}/Z(\Hat{G}) \xrightarrow[]{} 1.$$
    Applying the functors $X_*(-)$, $(-) \otimes \mathbb{Q}$ and $(-)^{\Gamma_F}$ in this order to the above exact sequence gives
    $$ 1 \xrightarrow[]{} (X_*(Z(\Hat{G}))\otimes \mathbb{Q})^{\Gamma_F} \xrightarrow[]{} (X_*(\Hat{T})\otimes \mathbb{Q})^{\Gamma_F} \xrightarrow[]{} (X_*(\Hat{T}/Z(\Hat{G}))\otimes \mathbb{Q})^{\Gamma_F} \xrightarrow[]{} 1. $$
    Note that the resulting sequence is exact because: $X_*(-)$ is left exact in the category of reductive algebraic groups and exact in the category of tori; $(-) \otimes \mathbb{Q}$ is exact in the category of $\mathbb{Z}$-modules and $(-)^{\Gamma_F}$ is exact in the category of $\mathbb{Q}$-vector spaces with $\Gamma_F$-action. Thus, it follows that $(\Hat{T}/Z(\Hat{G}))^{\Gamma_F}$ is finite iff $(X_*(Z(\Hat{G}))\otimes \mathbb{Q})^{\Gamma_F} \xrightarrow[]{\sim} (X_*(\Hat{T})\otimes \mathbb{Q})^{\Gamma_F}$ is an isomorphism.

    Next, note that $T$ is an elliptic maximal torus iff $X^*(T/Z(G))^{\Gamma_F}$ is trivial. For any torus $T$, $X^*(T/Z(G))^{\Gamma_F}$ is a free $\mathbb{Z}$-module and hence the condition for $T$ to be elliptic is further equivalent to $X^*(T/Z(G))^{\Gamma_F} \otimes \mathbb{Q}$ being trivial. Consider the exact sequence
    $$ 1 \xrightarrow[]{} Z(G) \xrightarrow[]{} T \xrightarrow[]{} T / Z(G) \xrightarrow[]{} 1.$$
    Applying the functors $X^*(-)$, $(-) \otimes \mathbb{Q}$ and $(-)^{\Gamma_F}$ in that order to the above exact sequence gives
    $$ 1 \xrightarrow[]{} (X^*(Z(G))\otimes \mathbb{Q})^{\Gamma_F} \xrightarrow[]{} (X^*(T) \otimes \mathbb{Q})^{\Gamma_F} \xrightarrow[]{} (X^*(T / Z(G)) \otimes \mathbb{Q})^{\Gamma_F} \xrightarrow[]{} 1.$$
    Once again, the resulting sequence is exact because: $X^*(-)$ is exact in the category of groups of multiplicative type, $(-) \otimes \mathbb{Q}$ is exact in the category of $\mathbb{Z}$-modules and $(-)^{\Gamma_F}$ is exact in the category of $\mathbb{Q}$-vector spaces with $\Gamma_F$-action. Hence, $T$ is elliptic iff $(X^*(Z(G))\otimes \mathbb{Q})^{\Gamma_F} \xrightarrow[]{\sim} (X^*(T) \otimes \mathbb{Q})^{\Gamma_F}$ is an isomorphism.

    Combining all this, we are reduced to showing that $(X_*(Z(\Hat{G}))\otimes \mathbb{Q})^{\Gamma_F} \xrightarrow[]{\sim} (X_*(\Hat{T})\otimes \mathbb{Q})^{\Gamma_F}$ is an isomorphism iff $(X^*(Z(G))\otimes \mathbb{Q})^{\Gamma_F} \xrightarrow[]{\sim} (X^*(T) \otimes \mathbb{Q})^{\Gamma_F}$ is an isomorphism. Note that 
    $$(X_*(Z(\Hat{G}))\otimes \mathbb{Q})^{\Gamma_F} = (X_*(Z(\Hat{G})^0)\otimes \mathbb{Q})^{\Gamma_F} = (X^*(D(G))\otimes \mathbb{Q})^{\Gamma_F}$$
    and hence $(X_*(Z(\Hat{G}))\otimes \mathbb{Q})^{\Gamma_F} \xrightarrow[]{\sim} (X_*(\Hat{T})\otimes \mathbb{Q})^{\Gamma_F}$ is an isomorphism iff $(X^*(D(G))\otimes \mathbb{Q})^{\Gamma_F} \xrightarrow[]{\sim} (X^*(T)\otimes \mathbb{Q})^{\Gamma_F}$ is an isomorphism.

    Finally, note that since the composition $Z(G) \xrightarrow[]{} T \xrightarrow[]{} D(G)$ is surjective with finite kernel, it follows that the composition
    $$(X^*(Z(G))\otimes \mathbb{Q})^{\Gamma_F} \xrightarrow[]{\sim} (X^*(T) \otimes \mathbb{Q})^{\Gamma_F} \xrightarrow[]{} (X^*(D(G))\otimes \mathbb{Q})^{\Gamma_F}$$
    is an isomorphism. The result now follows.
\end{proof}

\begin{Le}\label{ellipticlem1}
    The quotient $T/ Z(G)$ is anisotropic iff $T/Z(G)^0$ is anisotropic.
\end{Le}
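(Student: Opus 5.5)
The plan is to reduce both anisotropy conditions to a statement about rational character groups, and then observe that passing from $Z(G)$ to its identity component $Z(G)^0$ changes nothing once we tensor with $\mathbb{Q}$. Throughout, $T$ is a maximal torus of $G$ over $F$, so it contains $Z(G)$ and $Z(G)^0$. A group of multiplicative type $D$ over $F$ has no non-trivial $F$-rational characters if and only if $X^*(D)^{\Gamma_F}\otimes\mathbb{Q}=0$. The condition $X^*(D)^{\Gamma_F}=0$ is exactly anisotropy for a torus, and $X^*(D)^{\Gamma_F}$ is torsion free whenever $D$ is a torus. Both $T/Z(G)$ and $T/Z(G)^0$ are tori, being quotients of a torus by a closed subgroup of multiplicative type, so this criterion applies to each.

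First, as in the proof of Lemma \ref{ellipticmain}, I would use the exact sequences of groups of multiplicative type
$$1\to Z(G)\to T\to T/Z(G)\to 1,\qquad 1\to Z(G)^0\to T\to T/Z(G)^0\to 1.$$
Applying the exact functors $X^*(-)$, then $(-)\otimes\mathbb{Q}$, then $(-)^{\Gamma_F}$ (the last is exact on $\mathbb{Q}$-vector spaces with $\Gamma_F$ acting through a finite quotient) gives two short exact sequences. Hence $T/Z(G)$ is anisotropic if and only if the restriction $(X^*(T)\otimes\mathbb{Q})^{\Gamma_F}\to(X^*(Z(G))\otimes\mathbb{Q})^{\Gamma_F}$ is injective. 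Equivalently, it holds if and only if the inclusion $(X^*(T/Z(G))\otimes\mathbb{Q})^{\Gamma_F}\hookrightarrow(X^*(T)\otimes\mathbb{Q})^{\Gamma_F}$ has zero source. The analogous statement holds with $Z(G)^0$ in place of $Z(G)$.

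Next, the isogeny-type step. The inclusion $Z(G)^0\hookrightarrow Z(G)$ has finite cokernel $\pi_0(Z(G))$. Therefore the restriction $X^*(Z(G))\to X^*(Z(G)^0)$ has finite kernel, namely the characters of the finite group $\pi_0(Z(G))$. It is also surjective, since $X^*$ is exact on groups of multiplicative type. After $\otimes\mathbb{Q}$ this map is a $\Gamma_F$-equivariant isomorphism, and so it remains an isomorphism on $\Gamma_F$-invariants. Composing with the restriction from $T$, the two restriction maps out of $(X^*(T)\otimes\mathbb{Q})^{\Gamma_F}$ differ by this isomorphism. One is therefore injective if and only if the other is, which gives the claim.

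The only point needing care is the exactness of $X^*(-)$ for the possibly disconnected group $Z(G)$ and the resulting finiteness of the kernel. This is standard, because $X^*$ gives an anti-equivalence between groups of multiplicative type and finitely generated $\Gamma_F$-modules. I do not expect a genuine obstacle here; the argument is a routine rationalization.
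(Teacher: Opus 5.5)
Your argument is correct and is essentially the paper's. Both rest on the exactness of $X^*(-)$, $(-)\otimes\mathbb{Q}$ and $(-)^{\Gamma_F}$, and on the finite group $Z(G)/Z(G)^0$ becoming invisible after rationalization; the only difference is that the paper applies this directly to the single sequence $1\to Z(G)/Z(G)^0\to T/Z(G)^0\to T/Z(G)\to 1$, whereas you compare the two restriction maps out of $(X^*(T)\otimes\mathbb{Q})^{\Gamma_F}$. One small slip: your opening criterion is false for arbitrary groups of multiplicative type (e.g.\ $\mu_n$ with $\mu_n\subset F$ has non-trivial $F$-rational characters although $X^*(\mu_n)\otimes\mathbb{Q}=0$), but you only apply it to the tori $T/Z(G)$ and $T/Z(G)^0$, where it holds.
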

\begin{proof}
    Note that for a torus $T$, $X^*(T/Z(G))^{\Gamma_F}$ and $X^*(T/Z(G)^0)^{\Gamma_F}$ are free $\mathbb{Z}$-modules and hence, they are trivial iff $X^*(T/Z(G))^{\Gamma_F} \otimes \mathbb{Q}$ and $X^*(T/Z(G)^0)^{\Gamma_F} \otimes \mathbb{Q}$ respectively are trivial. There is an exact sequence
    $$ 1 \xrightarrow[]{} Z(G)/Z(G)^0 \xrightarrow[]{} T/Z(G)^0 \xrightarrow[]{} T/Z(G) \xrightarrow[]{} 1.$$
    Applying the functors $X^*(-)$, $(-) \otimes \mathbb{Q}$ and $(-)^{\Gamma_F}$ preserves exactness (using the same reasoning as in \ref{ellipticmain}) and hence the following map
    $$ (X^*(T/Z(G)^0) \otimes \mathbb{Q})^{\Gamma_F} \xrightarrow[]{\sim} (X^*(T/Z(G)) \otimes \mathbb{Q})^{\Gamma_F} $$
    is an isomorphism. The result now follows.
\end{proof}

\begin{Le}\label{elliptictemp}
For an elliptic maximal torus $T$ of $G$ over $F$, the map $H^1(F,T) \xrightarrow[]{} H^1(F,G)$ is surjective.
\end{Le}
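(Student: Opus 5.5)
The plan is to work over a $p$-adic field $F$, where Kottwitz's duality applies; this is the setting in which the lemma will be used. The idea is to translate the map $H^1(F,T) \xrightarrow[]{} H^1(F,G)$ into a map of character groups of component groups on the dual side, and then deduce surjectivity from an injectivity statement that ellipticity supplies.

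First, by Lemma \ref{KottwitzTateNakayama} (extending Lemma \ref{Tatenakayamatori}) we have functorial identifications $H^1(F,T) \simeq X^*(\pi_0(\Hat{T}^{\Gamma_F}))$ and $H^1(F,G) \simeq X^*(\pi_0(Z(\Hat{G})^{\Gamma_F}))$. We need these to be compatible with the inclusion $T \hookrightarrow G$, where on the dual side the map is induced by the $\Gamma_F$-equivariant inclusion $Z(\Hat{G}) \subset \Hat{T}$ coming from the canonical embedding of Remark \ref{secondconst}. This is the same compatibility used in Lemma \ref{conjclassinsidestable}, specialised to $\gamma$ regular semisimple with $I = T$. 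I would cite $\S$4.3 of \cite{MR858284} for it, together with the functoriality via $\pi_1$ discussed in Lemma \ref{kottwitzmap}. I expect this to be the main obstacle: the inclusion $T \hookrightarrow G$ is not a normal homomorphism, so the functoriality has to be checked through the identification $Z(\Hat{G}) \subset \Hat{T}$ rather than read off directly. With the compatibility in hand, it suffices to show that the restriction map $X^*(\pi_0(\Hat{T}^{\Gamma_F})) \xrightarrow[]{} X^*(\pi_0(Z(\Hat{G})^{\Gamma_F}))$ is surjective.

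Second, apply Lemma \ref{keylemmaendo} to the exact sequence $1 \xrightarrow[]{} Z(\Hat{G}) \xrightarrow[]{} \Hat{T} \xrightarrow[]{} \Hat{T}/Z(\Hat{G}) \xrightarrow[]{} 1$. This gives the exact segment
$$X_*(\Hat{T}/Z(\Hat{G}))^{\Gamma_F} \xrightarrow[]{} \pi_0(Z(\Hat{G})^{\Gamma_F}) \xrightarrow[]{} \pi_0(\Hat{T}^{\Gamma_F}).$$
Since $T$ is elliptic, Lemma \ref{ellipticmain} shows that $(\Hat{T}/Z(\Hat{G}))^{\Gamma_F}$ is finite. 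Its proof shows that this is equivalent to $X_*(\Hat{T}/Z(\Hat{G}))^{\Gamma_F} = 0$, using that $\Hat{T}/Z(\Hat{G})$ is a torus even when $Z(\Hat{G})$ is disconnected. Hence $\pi_0(Z(\Hat{G})^{\Gamma_F}) \xrightarrow[]{} \pi_0(\Hat{T}^{\Gamma_F})$ is injective.

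Finally, dualise. The groups $\pi_0(-)$ of diagonalizable groups here are finitely generated abelian groups, and $X^*(-) = \Hom(-,\mathbb{C}^{\times})$. Since $\mathbb{C}^{\times}$ is divisible, hence injective as a $\mathbb{Z}$-module, $\Hom(-,\mathbb{C}^{\times})$ turns injections into surjections. Therefore $X^*(\pi_0(\Hat{T}^{\Gamma_F})) \xrightarrow[]{} X^*(\pi_0(Z(\Hat{G})^{\Gamma_F}))$ is surjective, and by the compatibility of the first step so is $H^1(F,T) \xrightarrow[]{} H^1(F,G)$.
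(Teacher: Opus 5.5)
Your proposal is correct and follows the paper's own argument: reduce via Kottwitz's duality (Lemma \ref{KottwitzTateNakayama}) to injectivity of $\pi_0(Z(\Hat{G})^{\Gamma_F}) \to \pi_0(\Hat{T}^{\Gamma_F})$, then get that injectivity from the exact sequence of Lemma \ref{keylemmaendo} together with the vanishing of $X_*(\Hat{T}/Z(\Hat{G}))^{\Gamma_F}$ supplied by Lemma \ref{ellipticmain}. The paper leaves two points implicit that you spell out: the compatibility of the duality isomorphisms with $T \hookrightarrow G$, and the dualization step. For the latter, note that the component groups are in fact finite, so $\Hom(-,\mathbb{C}^{\times})$ is plainly exact on them.
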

\begin{proof}
    Using \ref{KottwitzTateNakayama}, it reduces to showing that the map $\pi_0(Z(\Hat{G})^{\Gamma_F}) \xrightarrow[]{} \pi_0(\Hat{T}^{\Gamma_F})$ is injective. Consider the long exact sequence of \ref{keylemmaendo} associated with
    $$ 1 \xrightarrow[]{} Z(\Hat{G}) \xrightarrow[]{} \Hat{T} \xrightarrow[]{} \Hat{T}/Z(\Hat{G}) \xrightarrow[]{} 1.$$
    Since \ref{ellipticmain} implies that $X_*(\Hat{T}/Z(\Hat{G}))^{\Gamma_F}$ is trivial, the exactness of 
    $$ \cdots \xrightarrow[]{} X_*(\Hat{T}/Z(\Hat{G}))^{\Gamma_F} \xrightarrow[]{} \pi_0(Z(\Hat{G})^{\Gamma_F}) \xrightarrow[]{} \pi_0(\Hat{T}^{\Gamma_F}) \xrightarrow[]{} \cdots$$
    gives the desired result.
\end{proof}

\begin{Le}
    If $F$ is a number field or a $p$-adic field, then any connected reductive group $G$ over $F$ admits an elliptic maximal torus over $F$.
\end{Le}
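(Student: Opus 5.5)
The proof reduces to the adjoint group, handles the $p$-adic case by constructing an elliptic torus in the quasi-split inner form and transporting it to $G$, and deduces the number field case from one local place by approximation.

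\emph{Reduction to the adjoint group.} Since $T$ is elliptic iff $T/Z(G)$ is anisotropic, and $T \mapsto T/Z(G)$ is a bijection between maximal $F$-tori of $G$ and of $G_{\mathrm{ad}} = G/Z(G)$, it suffices to treat $G$ adjoint. Write $G$ as an inner form of its quasi-split inner form $G_0$, via a class $c \in H^1(F, G_{0})$ (here $G_0$ is adjoint).

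\emph{The $p$-adic case.} First I would produce an elliptic maximal torus in $G_0$. Enlarge, if needed, to the situation where $G_0$ splits over a finite extension. Let $\theta$ be the automorphism of the based root datum induced by a Frobenius/Galois generator. A twisted Coxeter element $w\theta$ acts on $X_*(T)\otimes\mathbb{Q}$ without nonzero fixed vectors. Tori of $G_0$ are classified by $H^1(F, N_{G_0}(T_0))$, where $T_0$ is the maximally split torus. I would realise the class of $w$ by a maximal $F$-torus $T^*$ whose Galois action on $X_*(T^*)$ is through $w\theta$. In the unramified case, I would take a reductive $\mathcal{O}_F$-model, use Lang's theorem over the residue field to get a torus of type $w$ over $k_F$, and lift it by smoothness of the scheme of maximal tori. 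In general I would appeal to the known surjectivity of $H^1(F,N(T_0)) \to H^1(F,\Omega)$ on classes fixed by the Galois action on $\Omega$ for quasi-split groups. The torus $T^*$ is then anisotropic, i.e.\ elliptic.

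\emph{Transport to $G$.} Next I would move $T^*$ to $G$. By Lemma \ref{elliptictemp} applied in $G_0$, the map $H^1(F,T^*) \to H^1(F,G_0)$ is surjective. Hence $c$ is the image of some $t \in Z^1(F,T^*)$. Twisting $G_0$ by $t$ gives $G$, and since $T^*$ is commutative the twist of $T^*$ by $t$ is $T^*$ itself. This yields an $F$-embedding $T^* \hookrightarrow G$ as a maximal torus, with the same Galois action on characters, so it is still elliptic.

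\emph{The number field case.} Pick any finite place $v$ and, by the local case, an elliptic maximal torus $T_v$ of $G_{F_v}$. The variety $\mathcal{T}$ of maximal tori of $G$ is $G/N_G(T)$ and is unirational, hence rational points are dense in $\mathcal{T}(F_v)$. The $G(F_v)$-orbit of $T_v$ is open (tori near $T_v$ are $G(F_v)$-conjugate to it), so there is a maximal $F$-torus $T$ with $T_{F_v}$ conjugate to $T_v$. Any $F$-rational character of $T/Z(G)$ would give an $F_v$-rational one, so $T$ is elliptic.

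\emph{Main obstacle.} I expect the main obstacle to be the local existence of a torus of prescribed twisted-Coxeter type in the quasi-split group, especially in the ramified case. I would first try to cite the Lang-lifting argument for unramified groups, and otherwise quote the classification of rational tori in quasi-split groups (Raghunathan/Gille).
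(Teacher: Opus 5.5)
The paper gives no argument of its own here; it simply cites Theorem 6.21 of \cite{MR4615820}. Your outline is therefore an independent proof, and two of its steps are correct: the reduction to the adjoint group, and the transfer from $G_0$ to $G$ via the surjectivity of $H^1(F,T^*)\to H^1(F,G_0)$ from Lemma \ref{elliptictemp}.

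The gap is where you expected it, and quoting Raghunathan/Gille does not close it. Surjectivity of $H^1(F,N_{G_0}(T_0))\to H^1(F,\Omega)$ (equivalently Lemma \ref{quasisplitlemma}) realizes any class that exists. But in the ramified case you never exhibit a class in $H^1(F,\Omega)$ whose twisted action on $X_*(T_0)\otimes\mathbb{Q}$ has no nonzero fixed vector. Your only candidate is a torus on which Galois acts through a twisted Coxeter element $w\theta$. This first presupposes that the Galois action on the based root datum is given by a single automorphism $\theta$, which is false for triality forms with image $S_3$. Even when the image is cyclic, you need a cyclic extension of $F$ of degree $\mathrm{ord}(w\theta)$ containing the splitting field $K$, and that embedding problem can fail. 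For quasi-split ${}^2D_4$ over $\mathbb{Q}_2$ split by $K=\mathbb{Q}_2(i)$, the twisted Coxeter element is a negative $4$-cycle of order $8$. However, $\mathbb{Q}_2(i)$ does not even embed in a cyclic quartic extension, since $(-1,-1)_2=-1$. In the unramified case your Lang-lifting argument is fine, because the needed extension is then unramified.

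A short repair is to write the adjoint group as $\prod_i\Res_{F_i/F}G_i$ with $G_i$ absolutely simple (restriction of scalars preserves anisotropy of tori), and to use $-1$ instead of Coxeter elements. If $-1\in\Omega$, take the cocycle $\tau\mapsto\chi(\tau)\in\{\pm1\}$, where $\chi$ is a quadratic character of $\Gamma_F$ nontrivial on $\Gamma_K$. Such $\chi$ exists because $K$ has at most one quadratic subfield while $F$ has at least three quadratic extensions, and this is a cocycle because $-1$ is central and fixed by diagram automorphisms. If $-1\notin\Omega$, then $\Aut(\mathrm{Dyn})=\{1,-w_0\}$. Either $G_0$ is split and your unramified argument applies, or you take $\tau\mapsto w_0$ for $\tau\notin\Gamma_K$, so that $\Gamma_F$ acts on the twisted cocharacter lattice through $\{\pm1\}$. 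In both cases $-1$ lies in the image, and Lemma \ref{quasisplitlemma} embeds the resulting anisotropic torus in $G_0$.

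Separately, the number-field step says ``unirational, hence $F$-points are $v$-adically dense''; this is not a valid inference, since unirational varieties can fail weak approximation. Instead, approximate a regular semisimple $X_v\in\Lie(T_v)$ by some $X\in\Lie(G)(F)$, which is possible because the Lie algebra is an affine space, and take $T=Z_G(X)^0$. For $X$ close enough to $X_v$, this torus is $G(F_v)$-conjugate to $T_v$.
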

\begin{proof}
    See Theorem 6.21 of \cite{MR4615820}.
\end{proof}

For the rest of the discussion of elliptic conjugacy classes and elliptic endoscopic groups, assume from now on that
\begin{itemize}
    \item Let $F$ be a number field or a $p$-adic field.
\end{itemize}

\begin{D}[Elliptic conjugacy classes]\label{ellipticdef}
    Let $\gamma \in G(F)$ be semisimple. Then $\gamma$ is elliptic if $Z(I) / Z(G)$ is anisotropic, that is, if it has no non-trivial characters defined over $F$, where $I = G_{\gamma}^0$ denotes the identity component of the centralizer of $\gamma$.
\end{D}

\begin{Le}\label{ellipticlem2}
    For a semisimple $\gamma \in G(F)$, the condition that $\gamma$ is elliptic is equivalent to $\gamma$ being contained in an elliptic maximal torus.
\end{Le}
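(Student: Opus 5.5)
The plan is to prove both implications by comparing characters through the chain $Z(G)\subset Z(I)\subset T$, where $I=G_\gamma^0$ and $T$ is a maximal torus of $I$. Throughout, $Z(I)/Z(G)$ is only a group of multiplicative type, not necessarily connected. So I will interpret anisotropy as in the proofs of Lemma \ref{ellipticmain} and Lemma \ref{ellipticlem1}: a group $D$ of multiplicative type over $F$ is anisotropic iff $(X^*(D)\otimes\mathbb{Q})^{\Gamma_F}=0$. For tori this agrees with the definition, because $X^*(D)^{\Gamma_F}$ is torsion free. The functor $D\mapsto (X^*(D)\otimes\mathbb{Q})^{\Gamma_F}$ is exact on groups of multiplicative type: $X^*$ is exact, $\otimes\mathbb{Q}$ is exact, and $\Gamma_F$-invariants are exact on $\mathbb{Q}$-vector spaces with a continuous $\Gamma_F$-action. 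This exactness is the only tool needed in both directions.

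($\Leftarrow$) Suppose $\gamma\in T$ with $T$ an elliptic maximal torus of $G$ over $F$. Since $T$ is connected and commutes with $\gamma$, we have $T\subset G_\gamma^0=I$, so $T$ is a maximal torus of $I$. The center of a connected reductive group lies in every maximal torus, hence $Z(G)\subset Z(I)\subset T$. This gives an injection $Z(I)/Z(G)\hookrightarrow T/Z(G)$ over $F$. Dually, restriction $X^*(T/Z(G))\to X^*(Z(I)/Z(G))$ is surjective, and it stays surjective after $\otimes\mathbb{Q}$ and taking $\Gamma_F$-invariants. The source $(X^*(T/Z(G))\otimes\mathbb{Q})^{\Gamma_F}$ vanishes because $T$ is elliptic. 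Hence the target vanishes, i.e.\ $\gamma$ is elliptic.

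($\Rightarrow$) Suppose $Z(I)/Z(G)$ is anisotropic. The group $I$ is connected reductive over $F$, because $\gamma$ is semisimple. By the lemma just above (Theorem 6.21 of \cite{MR4615820}), $I$ admits an elliptic maximal torus $T$ over $F$, so $T/Z(I)$ is anisotropic. As in Construction \ref{stableconjconst}, $T$ is also a maximal torus of $G$, and $\gamma\in Z(I)\subset T$. Now apply the exact functor above to
$$1\to Z(I)/Z(G)\to T/Z(G)\to T/Z(I)\to 1,$$
obtaining
$$0\to (X^*(T/Z(I))\otimes\mathbb{Q})^{\Gamma_F}\to (X^*(T/Z(G))\otimes\mathbb{Q})^{\Gamma_F}\to (X^*(Z(I)/Z(G))\otimes\mathbb{Q})^{\Gamma_F}\to 0.$$
Both outer terms vanish, so the middle term vanishes. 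Thus $T/Z(G)$ is anisotropic, and $T$ is an elliptic maximal torus of $G$ containing $\gamma$.

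The only delicate point is the one addressed at the outset: anisotropy of the possibly disconnected group $Z(I)/Z(G)$ has to be read rationally, exactly as in Lemma \ref{ellipticlem1}. With that reading in place, the rest is formal exactness.
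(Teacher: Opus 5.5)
Your proof is correct, and your ($\Rightarrow$) direction is exactly the paper's argument. Choose an elliptic maximal torus $T$ of $I$, apply the exact functor $(X^*(-)\otimes\mathbb{Q})^{\Gamma_F}$ to $1\to Z(I)/Z(G)\to T/Z(G)\to T/Z(I)\to 1$, and read off vanishing from the outer terms; you also orient the resulting contravariant sequence correctly. The paper writes out only this direction, so two things you add go beyond it: your ($\Leftarrow$) argument via the surjection $X^*(T/Z(G))\to X^*(Z(I)/Z(G))$, and your explicit rational reading of anisotropy for the possibly disconnected group $Z(I)/Z(G)$. The paper uses that reading only implicitly (cf.\ Remark~\ref{ellipticremarkequiv}), and the converse would fail without it, for instance when $Z(I)/Z(G)\simeq\mu_2$.
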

\begin{proof}
    Since $\gamma$ is semisimple, it follows that $I$ is reductive. Let $T$ be an elliptic maximal torus of $I$ over $F$. Consider the following exact sequence
    $$ 1 \xrightarrow[]{} Z(I)/Z(G) \xrightarrow[]{} T/Z(G) \xrightarrow[]{} T/Z(I) \xrightarrow[]{} 1.$$
    Applying the functors $X^*(-)$, $(-) \otimes \mathbb{Q}$ and $(-)^{\Gamma_F}$ preserves exactness (using the same reasoning as in \ref{ellipticmain}) and hence the following map
    $$ (X^*(T/Z(G)) \otimes \mathbb{Q})^{\Gamma_F} \xrightarrow[]{\sim} (X^*(T/Z(I))\otimes \mathbb{Q})^{\Gamma_F} $$
    is an isomorphism. Using that $T$ is elliptic in $I$, it follows that $X^*(T/Z(G))^{\Gamma_F}$ is trivial, and hence $\gamma$ is contained in an elliptic maximal torus $T$ of $G$.
\end{proof}

\begin{R}\label{ellipticremarkequiv}
    Note that the following groups being anisotropic are all equivalent conditions: $Z(I)/Z(G)$, $Z(I)^0/Z(G)$, $Z(I) / Z(G)^0$ and $Z(I)^0/Z(G)^0$.
    The proof follows by arguments similar to those in \ref{ellipticlem1} and \ref{ellipticlem2}.
\end{R}

Now we state the analogue of \ref{ellipticmain} for elliptic conjugacy classes. In the formulation of this analogue, note that the embedding $Z(\Hat{G}) \xrightarrow[]{} Z(\Hat{I})$ is the one constructed in \ref{stableconjconst}.

\begin{Le}\label{ellipticlem3}
    For a semisimple $\gamma \in G(F)$, the condition that $\gamma$ is elliptic is equivalent to $(Z(\Hat{I})/Z(\Hat{G}))^{\Gamma_F}$ being finite.
\end{Le}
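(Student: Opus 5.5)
We follow the proof of Lemma \ref{ellipticmain}, with the pair $Z(G)\subset T$ replaced by the pair $Z(G)\subset Z(I)$. On the dual side the corresponding pair is $Z(\Hat{G})\subset Z(\Hat{I})$, embedded $\Gamma_F$-equivariantly as in Construction \ref{stableconjconst}. Since $Z(\Hat{I})/Z(\Hat{G})$ is diagonalizable, its group of $\Gamma_F$-invariants is finite iff the lattice $X_*(Z(\Hat{I})/Z(\Hat{G}))^{\Gamma_F}$ vanishes, i.e. iff $(X_*(Z(\Hat{I})/Z(\Hat{G}))\otimes\mathbb{Q})^{\Gamma_F}=0$. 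The torsion and disconnectedness of these groups are invisible after tensoring with $\mathbb{Q}$. So we may replace $Z(\Hat{I})$ and $Z(\Hat{G})$ by their identity components $Z(\Hat{I})^0$ and $Z(\Hat{G})^0$.

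First step. Apply $X_*(-)\otimes\mathbb{Q}$ followed by $(-)^{\Gamma_F}$ to
$$1\to Z(\Hat{G})^0\to Z(\Hat{I})^0\to Z(\Hat{I})^0/Z(\Hat{G})^0\to 1.$$
Exactness is preserved for the same reasons as in \ref{ellipticmain}. Hence finiteness is equivalent to the map $(X_*(Z(\Hat{G})^0)\otimes\mathbb{Q})^{\Gamma_F}\to(X_*(Z(\Hat{I})^0)\otimes\mathbb{Q})^{\Gamma_F}$ being an isomorphism. By the cocenter duality $X_*(Z(\Hat{H})^0)=X^*(D(H))$, this is equivalent to $(X^*(D(G))\otimes\mathbb{Q})^{\Gamma_F}\to(X^*(D(I))\otimes\mathbb{Q})^{\Gamma_F}$ being an isomorphism.

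Second step. Translate ellipticity in the same way. By Remark \ref{ellipticremarkequiv}, $\gamma$ is elliptic iff $Z(I)^0/Z(G)^0$ is anisotropic. Applying the same three functors to $1\to Z(G)^0\to Z(I)^0\to Z(I)^0/Z(G)^0\to1$, this holds iff $(X^*(Z(G)^0)\otimes\mathbb{Q})^{\Gamma_F}\to(X^*(Z(I)^0)\otimes\mathbb{Q})^{\Gamma_F}$ is an isomorphism. For any connected reductive $H$, the composite $Z(H)^0\to H\to D(H)$ is an isogeny, so $X^*(D(H))\otimes\mathbb{Q}\simeq X^*(Z(H)^0)\otimes\mathbb{Q}$, $\Gamma_F$-equivariantly. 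Under these identifications the two conditions agree, provided we check the compatibility of maps described next.

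Third step, and the main obstacle: naturality. We must show that the map $X^*(D(G))\to X^*(D(I))$ arising from $Z(\Hat{G})^0\subset Z(\Hat{I})^0$ matches, after $\otimes\mathbb{Q}$ and the isogeny identifications, the restriction map $X^*(Z(G)^0)\to X^*(Z(I)^0)$. We will choose a maximal $F$-torus $T\subset I$, which is also maximal in $G$, and compute everything inside $X^*(T)=X_*(\Hat{T})$.

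On the group side, $X^*(D(H))\otimes\mathbb{Q}$ is the subspace of $X^*(T)\otimes\mathbb{Q}$ annihilating the coroots of $H$. On the dual side, $X_*(Z(\Hat{H})^0)\otimes\mathbb{Q}$ is the subspace of $X_*(\Hat{T})\otimes\mathbb{Q}$ annihilating the roots of $\Hat{H}$, and these roots are the coroots of $H$. So both sides are literally the same subspaces of $X^*(T)\otimes\mathbb{Q}$. The inclusion coming from $Z(\Hat{G})\subset Z(\Hat{I})\subset\Hat{T}$ is the inclusion of these subspaces, since the coroots of $I$ form a subset of the coroots of $G$.

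The restriction map $X^*(Z(G)^0)\otimes\mathbb{Q}\to X^*(Z(I)^0)\otimes\mathbb{Q}$ corresponds to this inclusion under the projections from $X^*(T)\otimes\mathbb{Q}$, because restriction to $Z(H)^0$ is injective on the coroot annihilator. Once this identification is checked, including its $\Gamma_F$-equivariance (guaranteed by Remark \ref{secondconst}), the lemma follows.
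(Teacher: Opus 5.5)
Your argument is correct, but it takes a different route from the paper. The paper does not redo the linear algebra of Lemma \ref{ellipticmain}; it reduces to that lemma. For ``elliptic $\Rightarrow$ finite'' it chooses an elliptic maximal torus $T\ni\gamma$ via Lemma \ref{ellipticlem2} and uses $Z(\Hat{G})\subset Z(\Hat{I})\subset\Hat{T}$. Then $(Z(\Hat{I})/Z(\Hat{G}))^{\Gamma_F}$ is a subgroup of the finite group $(\Hat{T}/Z(\Hat{G}))^{\Gamma_F}$. For the converse it takes an elliptic maximal torus $T$ of $I$ and applies left exactness of $\Gamma_F$-invariants to $1\to Z(\Hat{I})/Z(\Hat{G})\to\Hat{T}/Z(\Hat{G})\to\Hat{T}/Z(\Hat{I})\to 1$. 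This shows $T$ is elliptic in $G$, and Lemma \ref{ellipticlem2} finishes.

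Given the earlier lemmas this is shorter, but it depends on the existence of elliptic maximal tori (in $I$, and inside Lemma \ref{ellipticlem2}). That is exactly where $F$ being $p$-adic or a number field is used. Your direct comparison of $Z(I)^0/Z(G)^0$ with $Z(\Hat{I})^0/Z(\Hat{G})^0$ needs no such input, so it proves the lemma over any field. This includes $\mathbb{R}$, where the paper applies the lemma in Remark \ref{kappasimplification} and where the torus route fails: $\gamma=1$ in $\GL_3(\mathbb{R})$ is elliptic but lies in no elliptic maximal torus.

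Two small corrections to your second and third steps. First, restriction of characters goes $X^*(Z(I)^0)\to X^*(Z(G)^0)$, not the other way; the template in Lemma \ref{ellipticmain} has the same reversal. Ellipticity is equivalent to this restriction being an isomorphism on $\Gamma_F$-invariants. The map $X^*(Z(G)^0)\otimes\mathbb{Q}\to X^*(Z(I)^0)\otimes\mathbb{Q}$ that corresponds to the inclusion of coroot annihilators is ``extend to a rational character of $G/G^{\mathrm{der}}$, then restrict to $Z(I)^0$''. This is a section of the restriction map, so one is an isomorphism iff the other is.

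Second, the naturality check you call the main obstacle is not actually needed. By your two exact sequences, each condition is equivalent to an equality of dimensions of the relevant $\Gamma_F$-invariant subspaces. The separate $\Gamma_F$-equivariant isomorphisms $X_*(Z(\Hat{H})^0)\otimes\mathbb{Q}\simeq X^*(Z(H)^0)\otimes\mathbb{Q}$, for $H=G$ and for $H=I$, already match those dimensions. No compatibility between the two identifications is required.
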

\begin{proof}
    Assume that $\gamma$ is elliptic. Let $T$ be an elliptic maximal torus of $G$ over $F$ containing $\gamma$ (using \ref{ellipticlem2}). As in construction \ref{stableconjconst}, there are inclusions $Z(\Hat{G}) \xrightarrow[]{} Z(\Hat{I}) \xrightarrow[]{} \Hat{T}$. Since $(\Hat{T}/Z(\Hat{G}))^{\Gamma_F}$ is finite using \ref{ellipticmain}, it follows that $(Z(\Hat{I})/Z(\Hat{G}))^{\Gamma_F}$ is finite.
    
    For the other direction, assume that $(Z(\Hat{I})/Z(\Hat{G}))^{\Gamma_F}$ is finite. Let $T$ be an elliptic maximal torus of $I$. There is the exact sequence
    $$ 1 \xrightarrow[]{} Z(\Hat{I})/Z(\Hat{G}) \xrightarrow{} \Hat{T}/Z(\Hat{G}) \xrightarrow[]{} \Hat{T} / Z(\Hat{I}) \xrightarrow[]{} 1. $$
    Applying the left exact functor $(-)^{\Gamma_F}$ gives
    $$ 1 \xrightarrow[]{} (Z(\Hat{I})/Z(\Hat{G}))^{\Gamma_F} \xrightarrow[]{} (\Hat{T}/Z(\Hat{G}))^{\Gamma_F} \xrightarrow[]{} (\Hat{T} / Z(\Hat{I}))^{\Gamma_F}.$$
    From this it follows that $(\Hat{T}/Z(\Hat{G}))^{\Gamma_F}$ is finite and hence $T$ is an elliptic maximal torus in $G$ which contains $\gamma$. This completes the proof.
\end{proof}

\begin{Le}
    For an elliptic semisimple $\gamma \in G(F)$, the map $H^1(F,I) \xrightarrow[]{} H^1(F,G)$ is surjective.
\end{Le}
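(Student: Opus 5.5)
The plan is to follow the proof of \ref{elliptictemp}, with the elliptic maximal torus replaced by $I = G_{\gamma}^0$. Implicitly $F$ is a $p$-adic field here: only in that case do we have Kottwitz's isomorphism $H^1(F,-) \simeq X^*(\pi_0(Z(\Hat{(-)})^{\Gamma_F}))$, and this makes the global statement meaningful. By \ref{conjclassinsidestable}, the map $H^1(F,I) \to H^1(F,G)$ is identified, via \ref{KottwitzTateNakayama}, with the map
$$X^*(\pi_0(Z(\Hat{I})^{\Gamma_F})) \longrightarrow X^*(\pi_0(Z(\Hat{G})^{\Gamma_F}))$$
induced by the $\Gamma_F$-equivariant inclusion $Z(\Hat{G}) \hookrightarrow Z(\Hat{I})$ of \ref{stableconjconst}. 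These are character groups of finite abelian groups, so Pontryagin duality reduces surjectivity to injectivity of
$$\pi_0(Z(\Hat{G})^{\Gamma_F}) \longrightarrow \pi_0(Z(\Hat{I})^{\Gamma_F}).$$

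To prove this injectivity, I would apply the long exact sequence of \ref{keylemmaendo} to
$$1 \to Z(\Hat{G}) \to Z(\Hat{I}) \to Z(\Hat{I})/Z(\Hat{G}) \to 1.$$
This gives the exact segment
$$X_*(Z(\Hat{I})/Z(\Hat{G}))^{\Gamma_F} \to \pi_0(Z(\Hat{G})^{\Gamma_F}) \to \pi_0(Z(\Hat{I})^{\Gamma_F}).$$
So it suffices to show that $X_*(Z(\Hat{I})/Z(\Hat{G}))^{\Gamma_F} = 0$. By \ref{ellipticlem3}, ellipticity of $\gamma$ means that $(Z(\Hat{I})/Z(\Hat{G}))^{\Gamma_F}$ is finite. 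A $\Gamma_F$-invariant cocharacter $\lambda$ of $Z(\Hat{I})/Z(\Hat{G})$ has image contained in the invariants, and that image is a connected subgroup. Since the invariants are finite, the image is trivial, so $\lambda = 0$.

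The main point needing care is that $Z(\Hat{I})/Z(\Hat{G})$ is diagonalizable but possibly disconnected, so \ref{keylemmaendo} must be used in its diagonalizable-group form, as stated there. The argument that invariant cocharacters vanish does not need connectedness. I should also check that the $\Gamma_F$-action on $Z(\Hat{G})$ coming from $Z(\Hat{I})$ agrees with the intrinsic one. This holds by \ref{stableconjconst}, since both are embedded compatibly in $\Hat{T}$ for a maximal torus $T \subset I$ over $F$. Apart from these checks the proof is formal, exactly as in \ref{elliptictemp}.
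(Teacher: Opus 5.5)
Your proposal is correct and is the argument the paper intends when it says the proof is similar to that of \ref{elliptictemp}: reduce via \ref{KottwitzTateNakayama} and \ref{conjclassinsidestable} to injectivity of $\pi_0(Z(\Hat{G})^{\Gamma_F}) \to \pi_0(Z(\Hat{I})^{\Gamma_F})$, then use the exact sequence of \ref{keylemmaendo} and the vanishing of $X_*(Z(\Hat{I})/Z(\Hat{G}))^{\Gamma_F}$, which follows from \ref{ellipticlem3}. Restricting to $p$-adic $F$ is also right, since both your argument and the paper's rely on Kottwitz's duality, and over a number field the surjectivity can genuinely fail (e.g.\ $G=\mathrm{PGL}_2$ over $\mathbb{Q}$ with $I$ a non-split torus, whose $H^1$ reaches only the quaternion algebras split by the corresponding quadratic field).
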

\begin{proof}
    Similar to the proof of \ref{elliptictemp}.
\end{proof}

Finally, we define the notion of ellipticity for endoscopic triples.

\begin{D}[Elliptic endoscopic triple]
    An endoscopic triple $(H,s,\eta)$ for $G$ over $F$ is elliptic if $(Z(\Hat{H})/Z(\Hat{G}))^{\Gamma_F}$ is a finite group.
\end{D}

\begin{Le}
    If $(H,s,\eta)$ is elliptic, then $\pi_0(Z(\Hat{G})^{\Gamma_F}) \xrightarrow[]{} \pi_0(Z(\Hat{H})^{\Gamma_F})$ is injective and the induced map $H^1(F,H) \xrightarrow[]{} H^1(F,G)$ is surjective.
\end{Le}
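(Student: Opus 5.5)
The plan is to mimic the proof of Lemma \ref{elliptictemp}, replacing the inclusion $Z(\Hat{G}) \subset \Hat{T}$ by $Z(\Hat{G}) \subset Z(\Hat{H})$. The $\Gamma_F$-equivariance of this inclusion was noted in the remark following the definition of endoscopic data. We have the exact sequence
$$1 \xrightarrow[]{} Z(\Hat{G}) \xrightarrow[]{} Z(\Hat{H}) \xrightarrow[]{} Z(\Hat{H})/Z(\Hat{G}) \xrightarrow[]{} 1$$
of diagonalizable groups with $\Gamma_F$-action, and we apply the long exact sequence of Lemma \ref{keylemmaendo}. The relevant portion is
$$X_*(Z(\Hat{H})/Z(\Hat{G}))^{\Gamma_F} \xrightarrow[]{} \pi_0(Z(\Hat{G})^{\Gamma_F}) \xrightarrow[]{} \pi_0(Z(\Hat{H})^{\Gamma_F}).$$
By ellipticity, $(Z(\Hat{H})/Z(\Hat{G}))^{\Gamma_F}$ is finite. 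Hence its identity component is trivial, and so
$$X_*(Z(\Hat{H})/Z(\Hat{G}))^{\Gamma_F} = X_*\bigl((Z(\Hat{H})/Z(\Hat{G}))^{\Gamma_F}\bigr) = 0.$$
This is the same identification used in the proof of Lemma \ref{ellipticmain}: a cocharacter fixed by $\Gamma_F$ lands in the invariants. Exactness then gives the claimed injectivity of $\pi_0(Z(\Hat{G})^{\Gamma_F}) \to \pi_0(Z(\Hat{H})^{\Gamma_F})$.

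For the surjectivity, take $F$ to be $p$-adic, which is the only case in which Lemma \ref{KottwitzTateNakayama} is available. That lemma gives functorial isomorphisms $H^1(F,H) \simeq X^*(\pi_0(Z(\Hat{H})^{\Gamma_F}))$ and $H^1(F,G) \simeq X^*(\pi_0(Z(\Hat{G})^{\Gamma_F}))$. Under them, the map $H^1(F,H) \to H^1(F,G)$ should be the restriction of characters along the injection just proved. Since these are finite abelian groups (or more generally diagonalizable), $X^*(-)$ is exact on them, so restriction along an injection is surjective.

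The main obstacle is making sense of "the induced map $H^1(F,H)\to H^1(F,G)$". In general $H$ is not a subgroup of $G$, so there is no homomorphism $H\to G$ to induce it. The cleanest choice is to \emph{define} this map as the dual of $\pi_0(Z(\Hat{G})^{\Gamma_F}) \to \pi_0(Z(\Hat{H})^{\Gamma_F})$ through the Kottwitz isomorphisms, after which surjectivity is immediate. To show this agrees with a geometric construction, I would proceed in three steps. First choose an elliptic maximal torus $T_H$ of $H$ over $F$ and an admissible embedding $T_H \to G$ over $F$; the latter exists by Lemma \ref{quasisplit} once $G$ is replaced by its quasi-split inner form, which has the same $Z(\Hat{G})$. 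Then compose $H^1(F,T_H) \to H^1(F,H)$ with $H^1(F,T_H) \to H^1(F,G)$. Finally, use the commutativity in Lemma \ref{conjclassinsidestable} to see that both routes agree on the dual side, since the inclusions $Z(\Hat{G}) \subset Z(\Hat{H}) \subset \Hat{T}_H$ are compatible with $\eta$ and the admissible embedding. Surjectivity of $H^1(F,T_H)\to H^1(F,H)$ follows from Lemma \ref{elliptictemp}, so the map is well defined.
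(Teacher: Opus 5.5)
Your argument is correct and is the paper's own, which simply refers back to Lemma \ref{elliptictemp}. There, ellipticity forces $X_*(Z(\Hat{H})/Z(\Hat{G}))^{\Gamma_F}=0$, so the long exact sequence of Lemma \ref{keylemmaendo} gives injectivity on $\pi_0$, and Kottwitz's duality (Lemma \ref{KottwitzTateNakayama}, hence only for $F$ $p$-adic) turns this into surjectivity on $H^1$. The paper leaves implicit your observation that the map $H^1(F,H)\to H^1(F,G)$ only makes sense through that duality, or via an elliptic maximal torus and an admissible embedding as you sketch, and your handling of it is sound.
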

\begin{proof}
    Similar to the proof of \ref{elliptictemp}.
\end{proof}

\begin{Le}
    Assume that $G$ is quasi-split. Then $(H,s,\eta)$ is elliptic if and only if the transfer to $G$, via an admissible embedding, of one (hence every) elliptic maximal torus of $H$ is elliptic.
\end{Le}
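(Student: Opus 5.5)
The plan is to reduce both directions to the dual-group criteria already established: Lemma \ref{ellipticmain} for maximal tori, and the definition of ellipticity of $(H,s,\eta)$ as finiteness of $(Z(\Hat{H})/Z(\Hat{G}))^{\Gamma_F}$. Let $T_H$ be a maximal torus of $H$ over $F$. By Lemma \ref{quasisplit}, since $G$ is quasi-split, there is an admissible embedding $\iota: T_H \to G$ over $F$ with image $T$. By Definition \ref{admissibleembedding}, the induced map $\Hat{T}_H = \Hat{T} \to \Hat{G}$ lies in the composite of the canonical class $\Hat{T}_H \to \Hat{H}$ with $\eta$. Under this identification we get $\Gamma_F$-equivariant inclusions
$$Z(\Hat{G}) \subset Z(\Hat{H}) \subset \Hat{T}_H = \Hat{T},$$
where the $\Gamma_F$-action on $\Hat{T}$ comes from the $F$-structure of $T_H \cong T$. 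One point needs checking here: the inclusion $Z(\Hat{H}) \to \Hat{T}_H$ is equivariant for the $\rho$-twisted action on $Z(\Hat{H})$, and its composite $Z(\Hat{G}) \to \Hat{T}$ agrees with the one from Construction \ref{stableconjconst}. This follows as in Remark \ref{secondconst}, because the classes of embeddings are $\Gamma_F$-invariant and inner automorphisms act trivially on centres.

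Next, the exact sequence
$$1 \to Z(\Hat{H})/Z(\Hat{G}) \to \Hat{T}/Z(\Hat{G}) \to \Hat{T}/Z(\Hat{H}) \to 1$$
is used as in the proof of Lemma \ref{ellipticlem3}. Apply $X_*(-)\otimes\mathbb{Q}$ and then $\Gamma_F$-invariants; this is exact, by the argument in Lemma \ref{ellipticmain}. The sequence $0 \to A \to B \to C \to 0$ of invariant rational cocharacter spaces then gives two facts. First, $B = 0$ if and only if $T$ is elliptic in $G$, by Lemma \ref{ellipticmain}. Second, $C = 0$ if and only if $T_H$ is elliptic in $H$, by Lemma \ref{ellipticmain} applied to $H$. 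Finally, $A = 0$ if and only if $\mathcal{H}$ is elliptic, since a diagonalizable group has finite invariants exactly when its invariant rational cocharacters vanish.

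Now take $T_H$ elliptic in $H$, which exists by the existence lemma above. Then $C = 0$, so $A \cong B$. Hence $\mathcal{H}$ is elliptic if and only if the transferred torus $T$ is elliptic in $G$. The argument works for any elliptic $T_H$, so this gives the ``one (hence every)'' clause. It also does not depend on the choice of admissible embedding over $F$: any two such embeddings are $G(\overline{F})$-conjugate, and since both are defined over $F$ they induce the same $\Gamma_F$-structure on $X_*(T)$.

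I expect the main obstacle to be the equivariance check in the first paragraph. The $\Gamma_F$-action on $\Hat{H}$ is defined through $\rho$ and a pinning, not by restriction from $\Hat{G}$. One must therefore verify that $Z(\Hat{H}) \hookrightarrow \Hat{T}_H$ intertwines this action with the Galois action coming from $T_H$. My approach would be to note that the canonical class $\Hat{T}_H \to \Hat{H}$ is $\Gamma_F$-invariant up to $\Int(\Hat{H})$, as in Remark \ref{secondconst}, and that restricting to the centre removes the conjugation ambiguity. Once this is settled, the rest is the linear algebra already carried out in Lemmas \ref{ellipticmain} and \ref{ellipticlem3}.
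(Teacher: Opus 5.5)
Your proposal is correct and follows essentially the paper's argument: both use the exact sequence $1\to Z(\Hat{H})/Z(\Hat{G})\to \Hat{T}/Z(\Hat{G})\to \Hat{T}/Z(\Hat{H})\to 1$ together with Lemma \ref{ellipticmain} for $G$ and for $H$, the paper arguing with finiteness of the invariant groups (left exactness, and a subgroup of a finite group is finite) where you pass to invariant rational cocharacters. Your explicit check that $Z(\Hat{G})\subset Z(\Hat{H})\subset \Hat{T}_H$ is $\Gamma_F$-equivariant is a step the paper leaves implicit when it says it can ``canonically identify $\Hat{T_H}$ and $\Hat{T_G}$''.
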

\begin{proof}
    Assume $(H,s,\eta)$ is elliptic. Let $T_H$ be an elliptic maximal torus in $H$ over $F$. Suppose that $T_H$ transfers to $T_G$, a maximal torus of $G$ over $F$. We can canonically identify $\Hat{T_H}$ and $\Hat{T_G}$ and denote both of them by $\Hat{T}$. Since $T_H$ is elliptic, $(\Hat{T}/Z(\Hat{H}))^{\Gamma_F}$ is finite by \ref{ellipticmain}. By an argument similar to \ref{ellipticlem3}, it follows now that $(\Hat{T} / Z(\Hat{G}))^{\Gamma_F}$ is finite.

    For the other direction, assume that there is an elliptic maximal torus $T_H$ of $H$ that transfers to an elliptic maximal torus $T_G$ of $G$. Then, once again, we can set $\Hat{T} = \Hat{T_H} = \Hat{T_G}$. Since $(Z(\Hat{H})/Z(\Hat{G}))^{\Gamma_F}$ is a subgroup of $(\Hat{T} / Z(\Hat{G}))^{\Gamma_F}$, it follows that $(H,s,\eta)$ is elliptic.
\end{proof}

\subsubsection{A key construction}
We will explain the statement of a lemma that will play an important role in the proof of the main theorem. We will first need to explain a construction before we can state the lemma. We will now make the following additional assumption.
\begin{itemize}
    \item The derived group $G^{\text{der}}$ is simply connected.
\end{itemize}

\begin{const}\label{prestabconst}
    Let $(H,s,\eta)$ be an endoscopic triple for a connected reductive group $G$ over $F$. Let $\gamma_H \in H(F)$ be a $(G,H)$-regular element. Assume that $G$ is quasi-split. Then by \ref{quasisplit} there exists $\gamma_0 \in G(F)$ such that $\gamma_H$ is related to $\gamma_0$. Let $I_H$ and $I_0$ denote the identity components of the centralizers of $\gamma_H$ and $\gamma_0$ respectively. By \ref{GHregular}, $I_H$ and $I_0$ are inner forms of each other, and in particular there is a canonical $\Gamma_F$-invariant isomorphism between $Z(\Hat{I_H})$ and $Z(\Hat{I_0})$. Let $\kappa$ denote the image of $s \in Z(\Hat{H})$ under the $\Gamma_F$-invariant map
    $$ Z(\Hat{H}) \hookrightarrow Z(\Hat{I_H}) \xrightarrow[]{\sim} Z(\Hat{I_0}).$$
    Since $s$ lies in $\mathfrak{K}(H/F)$, it follows that $\kappa$ lies in $\mathfrak{K}(I_0/F)$. We will denote a pair $(\gamma_0, \kappa)$ consisting of $\gamma_0 \in G(F)$ and $\kappa \in \mathfrak{K}(I_0/F)$ constructed in this way starting from an endoscopic triple $(H,s,\eta)$ and a $(G,H)$-regular $\gamma_H \in H(F)$ by the notation $((H,s,\eta), \gamma_H) \rightarrow (\gamma_0, \kappa)$.
\end{const}

\begin{Le}\label{keylemma}
    Let $G$ be a quasi-split group over a number field or a $p$-adic field $F$. Let $\gamma_0 \in G(F)$ be an elliptic semisimple element and let $\kappa \in \mathfrak{K}(I_0/F)$. Then there exists an elliptic endoscopic triple $(H,s,\eta)$ and a $(G,H)$-regular semisimple $\gamma_H \in H(F)$ such that $((H,s,\eta), \gamma_H) \rightarrow (\gamma_0, \kappa)$, in the sense of \ref{prestabconst}. Moreover, $((H',s',\eta'),\gamma'_H) \rightarrow (\gamma_0, \kappa)$ also holds if and only if there is an isomorphism of endoscopic triples $(H,s,\eta) \rightarrow (H',s',\eta')$ such that the induced $F$-isomorphism $H \rightarrow H'$ maps $\gamma_H$ to a stable conjugate of $\gamma'_H$.
\end{Le}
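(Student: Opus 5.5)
Given the elliptic semisimple $\gamma_0$ with $I_0=G_{\gamma_0}^0$ and $\kappa\in\mathfrak{K}(I_0/F)$, I would run Kottwitz's dual-group construction (Lemma 9.7 of \cite{MR858284}). The argument has three steps: existence of the triple, existence of $\gamma_H$, and uniqueness.

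\textbf{Construction of $(H,s,\eta)$.} Fix an elliptic maximal $F$-torus $T\subset I_0$, which exists by \ref{ellipticlem2}. Via \ref{stableconjconst}, view $Z(\Hat{G})\subset Z(\Hat{I_0})\subset\Hat{T}$, using a $\Gamma_F$-invariant $\Hat{G}$-conjugacy class of embeddings $\Hat{T}\to\Hat{G}$. Lift $\kappa$ to $\tilde s\in Z(\Hat{I_0})$ whose image in $Z(\Hat{I_0})/Z(\Hat{G})$ is $\Gamma_F$-fixed, and set $\Hat{H}:=\Hat{G}^0_{\tilde s}$.

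\begin{itemize}
\item \emph{Containment.} Since $\tilde s$ is central in $\Hat{I_0}$, we have $\Hat{I_0}\subset\Hat{H}$.
\item \emph{Galois action.} The $\Gamma_F$-action on $\Hat{T}$ comes from $T$ and normalizes $\Hat{I_0}$ up to $\Hat{G}$-conjugacy. Twisting by Weyl elements as in Remark \ref{rootdataendogrp}, it yields a homomorphism $\rho:\Gamma_F\to\Out(\Hat{H})$ preserving $\tilde s$ modulo $Z(\Hat{G})$.
\item \emph{Endoscopic datum.} Condition (2) of \ref{endodef} follows from the condition defining $\mathfrak{K}(I_0/F)$, since local triviality in $H^1(F,Z(\Hat{G}))$ is the same statement. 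Hence $(s,\rho)$ is an endoscopic datum, and by \ref{endotriples} we obtain a triple $(H,s,\eta)$ with $H$ quasi-split.
\item \emph{Ellipticity.} Because $Z(\Hat{H})\subset Z(\Hat{I_0})$, ellipticity of $\gamma_0$ (\ref{ellipticlem3}) implies $(Z(\Hat{H})/Z(\Hat{G}))^{\Gamma_F}$ is finite, so the triple is elliptic.
\end{itemize}

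\textbf{Construction of $\gamma_H$.} The embedding $\Hat{T}\hookrightarrow\Hat{H}$ determines a $\Gamma_F$-invariant $H(\overline F)$-conjugacy class of embeddings $T\to H$. Since $H$ is quasi-split, \ref{quasisplitlemma} lets me pick one defined over $F$, with image $T_H$. Its inverse is an admissible embedding $T_H\to T\subset G$ by construction, and I set $\gamma_H$ to be the preimage of $\gamma_0$.

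Next, $(G,H)$-regularity. Roots of $G$ on $T$ that are trivial on $\gamma_0$ are roots of $I_0$. Since $\Hat{I_0}\subset\Hat{H}$, these are roots of $H$. So every root in $\Phi(G,T)\setminus\Phi(H,T_H)$ is nontrivial on $\gamma_H$.

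Finally I must check that the image of $s$ under $Z(\Hat{H})\to Z(\Hat{I_H})\cong Z(\Hat{I_0})$ is $\kappa$. This holds by design, because $I_H$ and $I_0$ share the root system $\Phi(I_0,T)$ and the identifications are all made inside $\Hat{T}$.

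\textbf{Uniqueness.} For the \emph{if} direction, transport of structure along an isomorphism of triples preserves the construction \ref{prestabconst}. For the \emph{only if} direction, suppose $((H',s',\eta'),\gamma'_H)\to(\gamma_0,\kappa)$. Then $\Hat{H'}$ is identified with $\Hat{G}^0_{\tilde s'}$, where $\tilde s'$ differs from $\tilde s$ by $Z(\Hat{G})$ and by the choice of $\pi_0$-representative. Conjugating inside $\Hat{G}$ gives an isomorphism of triples, and the induced $F$-isomorphism $H\to H'$ carries the admissible embedding of $T_H$ to one of $T_{H'}$ up to $H'(\overline F)$-conjugacy. Therefore it maps $\gamma_H$ to an $\overline F$-conjugate of $\gamma'_H$. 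To upgrade this to stable conjugacy I would use the cocycle condition in \ref{StableConjugacy}, reducing to a Galois-invariance argument on the torus.

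\textbf{Main obstacle.} I expect the hardest point to be the Galois bookkeeping. Specifically, the $\Gamma_F$-action on $\Hat{H}$ induced from $T$ must be an $L$-action compatible with $\eta$, meaning condition (1) of \ref{endodef}. Also, the choice of lift $\tilde s$ within its $\pi_0$-class must not change the datum up to isomorphism. I would handle both by working entirely inside ${}^L T\to{}^L G$ and checking that Galois elements are realized in $N_{{}^LG}(\Hat{H})$.
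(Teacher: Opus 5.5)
Your proposal follows the same route as the paper, which outlines Kottwitz's Lemma 9.7: view $\kappa$ inside $\Hat{T}$ modulo $Z(\Hat{G})$, set $\Hat{H}=\Hat{G}^0_{s}$ with the Galois action induced from $T$, realize $T$ as an $F$-torus of $H$, and get $(G,H)$-regularity from $\alpha(\gamma_0)=1\Rightarrow\check{\alpha}(s)=1$; the paper likewise leaves ellipticity and the uniqueness statement to Kottwitz. For your last step, upgrading $\overline{F}$-conjugacy to stable conjugacy, no torus argument is needed: under the standing assumption that $G^{\text{der}}$ is simply connected, Lemma \ref{connectedlemmaGHreg} makes $H_{\gamma_H}$ connected, so for $(G,H)$-regular $\gamma_H$ the two notions coincide.
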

\begin{proof}
    This is Lemma 9.7 in \cite{MR858284} and we refer to it for more details. We will only outline the construction of $((H,s,\eta), \gamma_H)$ with the property $((H,s,\eta), \gamma_H) \rightarrow (\gamma_0, \kappa)$ here.
    
    \textit{Construction of $s$.} Since $\gamma_0$ is elliptic, there exists an elliptic maximal $F$-torus $T$ of $G$ containing $\gamma_0$, by \ref{ellipticlem2}. Write $I = I_0 = G_{\gamma_0}^0$. It also follows, using \ref{ellipticlem3}, that $\pi_0((Z(\Hat{I})/Z(\Hat{G}))^{\Gamma_F}) = (Z(\Hat{I})/Z(\Hat{G}))^{\Gamma_F}$. Hence, $\kappa \in \mathfrak{K}(I_0/F)$ can be viewed as an element of $(Z(\Hat{I})/Z(\Hat{G}))^{\Gamma_F}$. Since $\gamma_0 \in T$, $T$ is also an elliptic maximal torus of $I_0$. Using \ref{stableconjconst}, there is a $\Gamma_F$-invariant embedding $Z(\Hat{I}) \xrightarrow[]{} \Hat{T}$. Denote by $s$ the image of $\kappa$ under the induced embedding $(Z(\Hat{I})/ Z(\Hat{G}))^{\Gamma_F} \xrightarrow[]{} (\Hat{T}/Z(\Hat{G}))^{\Gamma_F}$. Denote by $\Hat{H}$ the identity component of the centralizer of $s$ in $\Hat{G}$.
    
    \textit{Construction of endoscopic triple $(H,s,\eta)$.} For $\sigma \in \Gamma_F$, let $\sigma_T$ denote the action of $\sigma$ on $\Hat{T}$ induced by the $F$-structure on $T$. Let $\sigma_G$ denote the action of $\sigma$ on $\Hat{G}$ induced by the $F$-structure on $G$ and the choice of a pinning on $\Hat{G}$ which includes $\Hat{T}$. Then, since the $\Hat{G}$-conjugacy class of $\Hat{T} \xrightarrow[]{} \Hat{G}$ is $\Gamma_F$-invariant by \ref{secondconst}, the restriction of $\sigma_G$ to $\Hat{T}$ differs from $\sigma_T$ by an element of the Weyl group $\Omega_G(\overline{F}) = \Omega_{\Hat{G}}(\overline{F})$. Hence, since $\sigma_G$ preserves the roots of $\Hat{T}$ in $\Hat{G}$, it follows also that $\sigma_T$ preserves the roots of $\Hat{T}$ in $\Hat{G}$. Using the description of roots of $\Hat{T}$ in $\Hat{H}$ in \ref{rootdataendogrp} and the fact that $s \in (\Hat{T}/Z(\Hat{G}))^{\Gamma_F}$, it can be checked that $\sigma_T$ preserves the roots of $\Hat{T}$ in $\Hat{H}$. This gives a map $\rho: \Gamma_F \xrightarrow[]{} \Out(\Hat{H})$ sending $\sigma$ to the action induced by $\sigma_T$. The conditions in \ref{endodef} can be checked to conclude that $(s,\rho)$ is an endoscopic datum for $G$. Let $(H,s,\eta)$ be an endoscopic triple associated to $(s,\rho)$, as in \ref{endotriples}.
    
    \textit{Construction of $\gamma_H$.} The $F$-structure on $H$ is induced from $\rho: \Gamma_F \xrightarrow[]{} \Out(\Hat{H})$. Since $\rho$ is induced from the action of $\Gamma_F$ on $\Hat{T}$ coming from the $F$-structure of $T$, it follows that $T$ can be viewed as a maximal torus in $H$ that is defined over $F$. Let $\gamma_H \in T(F) \subset H(F)$ be such that $\gamma_H$ is related to $\gamma_0 \in T(F) \subset G(F)$ (see \ref{relatedelementsdef}). We will now show that $\gamma_H$ is $(G,H)$-regular. Let $I_H$ denote the identity component of the centralizer of $\gamma_H$ in $H$. It suffices to check that the set of roots $R(\gamma_0)$ of $T$ in $I$ is equal to the set of roots $R_H(\gamma_0)$ of $T$ in $I_H$ (see discussion in \ref{GHregular}). Let $R$ denote the set of roots of $T$ in $G$ and $R_H$ denote the set of roots of $T$ in $H$. Then,
    $$ R_H = \{ \alpha \in R \mid \check{\alpha}(s) = 1 \} $$
    $$ R(\gamma_0) = \{ \alpha \in R \mid \alpha(\gamma_0) = 1 \}. $$
    Hence,
    $$ R_H(\gamma_0) = \{ \alpha \in R \mid \alpha(\gamma_0) = 1 \text{ and } \check{\alpha}(s) = 1\}.$$
    However, $\alpha(\gamma_0) = 1$ implies that $\check{\alpha} (s) = 1$, since $s$ lies in $Z(\Hat{I})$. Hence $R_H(\gamma_0) = R(\gamma_0)$.
    
    This shows the existence of $((H,s,\eta), \gamma_H)$ with the property that $((H,s,\eta),\gamma_H) \rightarrow (\gamma_0, \kappa)$.
\end{proof}

\begin{R}
    Since $G^{\text{der}}$ is simply connected, stable conjugacy in $G(F)$ is the same as $\overline{F}$-conjugacy, since the centralizer of $\gamma$ in $G$ is connected. Now, using \ref{connectedlemmaGHreg}, it follows that the centralizer of $\gamma_H$ in $H$ is also connected. Hence, the stable conjugacy class of $\gamma_H$ in $H(F)$ is the same as the $\overline{F}$-conjugacy class in $H(F)$.
\end{R}

\subsection{Review of base change}\label{2.3}
In this subsection, we will review the alternate versions of the norm mapping and its properties. We will also review the restriction of scalars group which appears in connection with the norm mapping and is also used later in defining twisted orbital integrals and in the setting of twisted endoscopy.
\subsubsection{Restriction of scalars}\label{restrictionofscalars}
We will recall some basic facts about restriction of scalars and set up some notation.
\begin{itemize}
    \item Let $F$ denote a field of characteristic zero and let $E$ be a finite extension of $F$. 
    \item Let $G$ be an affine algebraic group over $E$.
    \item Let $\Res_{E/F}G$ denote the affine algebraic group over $F$ with the property that
    $$\Res_{E/F}G(R) = G(R \otimes_F E)$$
    for any $F$-algebra $R$.
\end{itemize}

In the following lemma, we compute the action of $\Gamma_F$ on $\Res_{E/F}G(\overline{F})$.

\begin{Le}\label{twistedlem1}
    There is an isomorphism of groups $\Res_{E/F}G(\overline{F}) \simeq \prod_{\iota: E \hookrightarrow \overline{F}} G(\overline{F})$, where the product on the right is over all $F$-embeddings of $E$ into a fixed algebraic closure $\overline{F}$ of $F$. There is a $\Gamma_F$-action on $\Res_{E/F}G(\overline{F})$, since $\Res_{E/F}G$ is a group over $F$, and there is a $\Gamma_F$-action on the set of embeddings $\iota: E \hookrightarrow \overline{F}$ which sends $\iota \mapsto \gamma \circ \iota$ for $\gamma \in \Gamma_F$. For $(x_{\iota})_{\iota} \in \prod_{\iota: E \xrightarrow[]{} \overline{F}} G(\overline{F})$ and $\gamma \in \Gamma_F$, we define $\gamma \cdot (x_{\iota})_{\iota} := (\gamma(x_{\gamma^{-1} \circ \iota}))_{\iota}$. The isomorphism is $\Gamma_F$-equivariant for this action.
\end{Le}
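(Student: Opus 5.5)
The plan is to reduce the statement to the case of an $E$-algebra of the form $R\otimes_F E$ and then compute directly. First I would observe that $R \mapsto R\otimes_F E$ with $R=\overline{F}$ gives $\overline{F}\otimes_F E$. Since $E/F$ is finite separable (characteristic zero), the primitive element theorem writes $E = F[x]/(f)$. The Chinese remainder theorem then gives a ring isomorphism
$$\overline{F}\otimes_F E \xrightarrow{\sim} \prod_{\iota: E\hookrightarrow \overline{F}} \overline{F}, \qquad a\otimes e \mapsto (a\,\iota(e))_{\iota}.$$
Applying the functor of points of $G$, which commutes with finite products of rings because $G$ is affine, yields the group isomorphism
$$\Res_{E/F}G(\overline{F}) = G(\overline{F}\otimes_F E) \simeq \prod_{\iota} G(\overline{F}),$$
where the $\iota$-th factor is $G$ evaluated along the $E$-algebra structure $\iota: E\to\overline{F}$.

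Next I would identify the Galois action. For $\gamma\in\Gamma_F$, the $F$-structure of $\Res_{E/F}G$ makes $\gamma$ act on $\Res_{E/F}G(\overline{F})$ through the ring automorphism $\gamma\otimes \id_E$ of $\overline{F}\otimes_F E$. Transporting this automorphism through the product decomposition, the element $a\otimes e$ is sent to $(\gamma(a)\,\iota(e))_{\iota}$. Writing $\iota(e) = \gamma\bigl((\gamma^{-1}\circ\iota)(e)\bigr)$, this equals $\bigl(\gamma(b_{\gamma^{-1}\circ\iota})\bigr)_{\iota}$, where $(b_{\iota'})_{\iota'} = (a\,\iota'(e))_{\iota'}$. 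So on the product of rings, $\gamma$ acts by $(b_\iota)_\iota \mapsto (\gamma(b_{\gamma^{-1}\circ\iota}))_\iota$.

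The only step needing care is passing from rings to $G$-points. The map $\gamma: \overline{F}\to\overline{F}$ is an isomorphism of $E$-algebras from $\overline{F}$ with structure map $\gamma^{-1}\circ\iota$ to $\overline{F}$ with structure map $\iota$. Consequently $G(\gamma)$ sends the $(\gamma^{-1}\circ\iota)$-factor to the $\iota$-factor, and this is exactly the action $x\mapsto\gamma(x)$ on $G(\overline{F})$ after identifying factors. Functoriality of $G$ then gives $\gamma\cdot(x_\iota)_\iota = (\gamma(x_{\gamma^{-1}\circ\iota}))_\iota$, which is the asserted equivariance. I expect this bookkeeping of which $E$-structure each factor carries to be the main obstacle, and I would check it on the coordinate ring of $G$ with the explicit formula.
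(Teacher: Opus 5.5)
Your proposal is correct and follows essentially the same route as the paper. Both use the ring isomorphism $\overline{F}\otimes_F E \simeq \prod_{\iota}\overline{F}$, $a\otimes e\mapsto (a\,\iota(e))_{\iota}$, and check that $\gamma\otimes\id_E$ becomes $(b_\iota)_\iota\mapsto(\gamma(b_{\gamma^{-1}\circ\iota}))_\iota$. Your additional observation, that $\gamma$ is an $E$-algebra isomorphism from $(\overline{F},\gamma^{-1}\circ\iota)$ to $(\overline{F},\iota)$, makes explicit the passage from rings to $G$-points, which the paper leaves implicit after its commutative square.
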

\begin{proof}
    The result follows from the isomorphism $\overline{F} \otimes_F E \simeq \prod_{\iota: E \hookrightarrow \overline{F}} \overline{F}$ mapping
    $$x \otimes y \mapsto (x\iota(y))_{\iota} = (x \sigma_1(y), x \sigma_2(y), \cdots, x \sigma_d(y)),$$
    where $\{ \sigma_1, \sigma_2, \cdots, \sigma_d\}$ denotes the set of embeddings $\iota: E \hookrightarrow \overline{F}$ and the following commutative square
\begin{center}
\begin{tikzcd}
\overline{F} \otimes_F E \arrow{r}{\simeq} \arrow{d}{\gamma \cdot()} & \prod_{\iota: E \hookrightarrow \overline{F}} \overline{F} \arrow{d}{\gamma \cdot()} \\
\overline{F} \otimes_F E \arrow{r}{\simeq} & \prod_{\iota: E \hookrightarrow \overline{F}} \overline{F}
\end{tikzcd}
\quad
\begin{tikzcd}
x \otimes y \arrow{r}{} \arrow{d}{} & (x\iota(y))_{\iota} \arrow{d}{} \\
\gamma(x) \otimes y \arrow{r}{} & (\gamma(x(\gamma^{-1} \circ \iota)(y)))_{\iota}
\end{tikzcd}
\end{center}
\end{proof}

\begin{C}\label{twistedcor1}
    There is an isomorphism over $E$: $(\Res_{E/F}G)_E \xrightarrow[]{\sim} G_E^l$, where $l = [E:F]$ and the factors of $G_E$ are indexed by the set of $F$-embeddings of $E$ into $\overline{F}$ as in \ref{twistedlem1}.
\end{C}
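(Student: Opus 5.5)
The plan is to prove this directly from Lemma \ref{twistedlem1} by base-changing along $F \subset E$. By definition of restriction of scalars, for any $E$-algebra $R$ we have
$$
(\Res_{E/F}G)_E(R) = \Res_{E/F}G(R) = G(R\otimes_F E),
$$
where $R$ is viewed as an $F$-algebra. The key step is the $E$-algebra decomposition
$$
E\otimes_F E \simeq \prod_{\iota} E,
$$
indexed by the $F$-embeddings $\iota:E\hookrightarrow\overline{F}$. This requires $E$ to contain all conjugates $\iota(E)$, i.e.\ $E/F$ Galois, which is the situation in the base-change setting of Section \ref{2.3}. For a general finite extension one would instead base change to the Galois closure, or simply to $\overline{F}$, where Lemma \ref{twistedlem1} already gives the splitting. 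I would state and prove it for $E/F$ Galois, and note that over $\overline{F}$ the statement is exactly Lemma \ref{twistedlem1}.

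Concretely, write $E=F[x]/(f)$ by the primitive element theorem. Since $f$ splits into distinct linear factors over $E$ (separability in characteristic zero and normality), the Chinese remainder theorem gives
$$
R\otimes_F E = R[x]/(f) \simeq \prod_{\iota} R, \qquad r\otimes y\mapsto (r\,\iota(y))_{\iota}.
$$
This is the same formula as in the proof of Lemma \ref{twistedlem1}, and it is functorial in the $E$-algebra $R$. Because $G$ is affine, $G$ commutes with finite products of algebras, so
$$
G(R\otimes_F E) \simeq \prod_{\iota} G(R).
$$
Here each factor acquires its $E$-algebra structure through the corresponding $\iota$, which identifies it with $G_E(R)$. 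By Yoneda this gives an isomorphism of group schemes $(\Res_{E/F}G)_E \simeq G_E^{\,l}$ with $l=[E:F]=\#\{\iota\}$, with factors indexed as in Lemma \ref{twistedlem1}.

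The only point needing care is the bookkeeping of the $E$-structure on each factor. Under the $\iota$-component map, $E$ acts via $\iota$, so the $\iota$-th factor is naturally $G\otimes_{E,\iota}E$, the twist of $G_E$ by $\iota$. For $G$ obtained by base change from $F$ this twist is canonically $G_E$. In general one should write the factors as ${}^{\iota}G$ and identify them with $G_E$ whenever $G$ is defined over $F$, as in the applications in Section \ref{2.3}.
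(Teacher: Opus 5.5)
Your argument is correct and is essentially the paper's. The paper gives no separate proof; it reads the corollary off the decomposition $x\otimes y\mapsto (x\iota(y))_{\iota}$ used in the proof of Lemma \ref{twistedlem1}, and you run that same decomposition with an arbitrary $E$-algebra $R$ in place of $\overline{F}$, which gives the isomorphism of group schemes directly by Yoneda. Your caveats are also right: as literally stated, the corollary needs $E/F$ Galois (for $E=\mathbb{Q}(\sqrt[3]{2})$, for instance, $(\Res_{E/\mathbb{Q}}\mathbb{G}_m)_E$ is not split), and it needs $G$ defined over $F$ for the factors to be literally $G_E$; this is exactly the setting of Section \ref{twistedconjsection}, where the paper uses it.
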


Now we make the following additional assumptions:
\begin{itemize}
    \item Let $E$ be a Galois extension of $F$.
    \item Let $G$ be a group defined over $F$ (instead of $E$).
\end{itemize}
Then for every $\gamma \in \Gal(E/F)$, there is an automorphism $\theta_{\gamma}: \Res_{E/F}G_E \xrightarrow[]{} \Res_{E/F}G_E$ defined over $F$ which on $R$-points is induced by the $F$-algebra map $R \otimes_F E \xrightarrow[]{} R \otimes_F E$ sending $x \otimes y \mapsto x \otimes \gamma(y)$, for every $F$-algebra $R$. In the following lemma, we compute the action of $\theta_{\gamma}$ on $\Res_{E/F}G_E(\overline{F})$, for any $\gamma \in \Gal(E/F)$.

\begin{Le}
    The action of $\theta_{\gamma}$ on $\Res_{E/F}G_E(\overline{F}) \simeq \prod_{\iota: E \hookrightarrow \overline{F}} G(\overline{F})$ is given by $(x_{\iota})_{\iota} \mapsto (x_{\iota \circ \gamma})_{\iota}$.
\end{Le}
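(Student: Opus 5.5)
The plan is to reduce everything to a computation at the level of the coefficient ring $\overline{F}\otimes_F E$. Both $\theta_\gamma$ and the isomorphism of Lemma \ref{twistedlem1} are obtained by applying the functor $A\mapsto G(A)$ to maps of $F$-algebras. Concretely, $\Res_{E/F}G_E(\overline{F})=G(\overline{F}\otimes_F E)$, and $\theta_\gamma$ is $G$ applied to the $\overline{F}$-algebra automorphism $x\otimes y\mapsto x\otimes\gamma(y)$. The identification with $\prod_\iota G(\overline{F})$ is $G$ applied to the algebra isomorphism
$$\Phi:\overline{F}\otimes_F E\xrightarrow{\sim}\prod_{\iota}\overline{F},\qquad x\otimes y\mapsto (x\iota(y))_\iota,$$
combined with the canonical identification $G(\prod_\iota \overline{F})=\prod_\iota G(\overline{F})$. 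This last identification holds because $G$ is affine, so its functor of points commutes with finite products of rings.

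First I will compute the transported automorphism $\Phi\circ(\mathrm{id}\otimes\gamma)\circ\Phi^{-1}$ of $\prod_\iota\overline{F}$. On a pure tensor,
$$\Phi(x\otimes\gamma(y))=(x\,\iota(\gamma(y)))_\iota=(x\,(\iota\circ\gamma)(y))_\iota .$$
So the $\iota$-component of the image equals the $(\iota\circ\gamma)$-component of $\Phi(x\otimes y)$. Pure tensors span, and both sides are additive, so the transported map is the coordinate permutation $(a_\iota)_\iota\mapsto(a_{\iota\circ\gamma})_\iota$. This map is well defined since $\iota\mapsto\iota\circ\gamma$ is a bijection of the set of $F$-embeddings; here it matters that $E/F$ is Galois, so $\gamma\in\Aut(E/F)$.

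Second, I will apply $G$ to this permutation. A ring map $\prod_\iota\overline{F}\to\prod_\iota\overline{F}$ permuting the factors can be written as $(\mathrm{pr}_{\iota\circ\gamma})_\iota$. Under $G(\prod_\iota\overline{F})=\prod_\iota G(\overline{F})$ it therefore induces $(x_\iota)_\iota\mapsto(G(\mathrm{pr}_{\iota\circ\gamma})(x))_\iota=(x_{\iota\circ\gamma})_\iota$. This is the claimed formula, by naturality of the product decomposition in the projections.

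I expect no serious obstacle. The only delicate point is bookkeeping of the index convention, namely $\iota\circ\gamma$ versus $\iota\circ\gamma^{-1}$, which the pure-tensor computation settles. As a consistency check, I would verify that $\theta_\gamma$ commutes with the twisted $\Gamma_F$-action of Lemma \ref{twistedlem1}, i.e. is defined over $F$: precomposing $\iota$ with $\gamma$ commutes with postcomposing by elements of $\Gamma_F$.
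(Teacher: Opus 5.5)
Your proposal is correct and is essentially the paper's proof. The paper likewise computes, on pure tensors, that $x\otimes y\mapsto x\otimes\gamma(y)$ becomes $(x\,(\iota\circ\gamma)(y))_\iota$ under $\overline{F}\otimes_F E\simeq\prod_\iota\overline{F}$. Your extra step of applying the functor of points of $G$ to the resulting coordinate permutation only makes explicit what the paper leaves implicit.
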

\begin{proof}
    The proof follows from the commutative square
\begin{center}
\begin{tikzcd}
\overline{F} \otimes_F E \arrow{r}{\simeq} \arrow{d}{\theta_{\gamma}} & \prod_{\iota: E \hookrightarrow \overline{F}} \overline{F} \arrow{d}{\theta_{\gamma}} \\
\overline{F} \otimes_F E \arrow{r}{\simeq} & \prod_{\iota: E \hookrightarrow \overline{F}} \overline{F}
\end{tikzcd}
\quad
\begin{tikzcd}
x \otimes y \arrow{r}{} \arrow{d}{} & (x\iota(y))_{\iota} \arrow{d}{} \\
x \otimes \gamma(y) \arrow{r}{} & (x (\iota \circ \gamma)(y))_{\iota}
\end{tikzcd}
\end{center}
\end{proof}

\begin{C}\label{twistedcor2}
    There is an inclusion $G \hookrightarrow \Res_{E/F}G_E$ over $F$ induced by the $F$-algebra map $R \xrightarrow[]{} R \otimes_F E$ sending $r \mapsto r \otimes 1$, for an $F$-algebra $R$. Then, we have that $G = (\Res_{E/F}G_E)^{\Gamma_{E/F}}$.
\end{C}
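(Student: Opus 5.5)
The statement is Corollary \ref{twistedcor2}: the natural map $G \hookrightarrow \Res_{E/F}G_E$ is a closed embedding over $F$, and it identifies $G$ with the fixed points of the automorphisms $\theta_\gamma$, $\gamma \in \Gal(E/F)$. I would prove both assertions on $R$-points for an arbitrary $F$-algebra $R$, functorially in $R$, and then use Yoneda.

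\textbf{The fixed-point group.} The subgroup $(\Res_{E/F}G_E)^{\Gamma_{E/F}}$ is the subfunctor
$$R \longmapsto \{x \in G(R\otimes_F E) \mid \theta_\gamma(x)=x \text{ for all } \gamma\in\Gal(E/F)\}.$$
It is representable by a closed subgroup scheme. Indeed, it is the equalizer of the finitely many pairs of morphisms $\theta_\gamma,\id$ into the separated scheme $\Res_{E/F}G_E$.

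\textbf{The embedding.} The map $R\to R\otimes_F E$, $r\mapsto r\otimes 1$, is functorial in $R$. So it induces a homomorphism $G(R)\to G(R\otimes_F E)$, natural in $R$. Its image lies in the $\theta_\gamma$-fixed points, because $\theta_\gamma$ is induced by $\id_R\otimes\gamma$, and this fixes $r\otimes 1$.

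\textbf{Key step: fixed points of the algebra.} I must show that
$$(R\otimes_F E)^{\Gal(E/F)} = R\otimes_F E^{\Gal(E/F)} = R\otimes_F F = R.$$
$R$ is flat over $F$, being a vector space over a field. Taking invariants of the finite group $\Gal(E/F)$ is the kernel of the $F$-linear map
$$E\to\prod_{\gamma}E,\qquad y\mapsto(\gamma(y)-y)_\gamma.$$
Tensoring this map with $R$ preserves kernels by flatness. This gives the displayed equality, using $E^{\Gal(E/F)}=F$ since $E/F$ is Galois.

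\textbf{Passing to points of $G$.} Since $G$ is affine, $G(A)=\Hom_F(\mathcal{O}(G),A)$ for every $F$-algebra $A$. The automorphism $\theta_\gamma$ acts on $G(R\otimes_F E)$ by postcomposition with $\id_R\otimes\gamma$. A homomorphism $\mathcal{O}(G)\to R\otimes_F E$ is fixed by every $\theta_\gamma$ exactly when its image lies in $(R\otimes_F E)^{\Gal(E/F)}=R$. Hence the fixed points are exactly $G(R)$. Injectivity of $G(R)\to G(R\otimes_F E)$ follows the same way, because $R\to R\otimes_F E$ is injective ($E$ is free over $F$).

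\textbf{Conclusion.} Both functors agree, naturally in $R$, so Yoneda identifies $G$ with the closed subgroup $(\Res_{E/F}G_E)^{\Gamma_{E/F}}$, and the inclusion is a closed immersion. On $\overline{F}$-points this is consistent with the description of $\theta_\gamma$ in the preceding lemma: the fixed tuples $(x_\iota)_\iota$ are the constant ones, which is the diagonal $G(\overline{F})$.

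There is no real obstacle here. The only point needing care is the exactness of invariants after base change, and flatness over a field handles it.
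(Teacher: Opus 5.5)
Your argument is correct. It differs from the paper's route in a useful way.

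The paper gives no separate proof. The corollary is meant to be read off from the preceding lemma, which writes $\theta_\gamma$ on $\Res_{E/F}G_E(\overline{F})\simeq\prod_{\iota}G(\overline{F})$ as $(x_\iota)_\iota\mapsto(x_{\iota\circ\gamma})_\iota$. Since $\Gal(E/F)$ permutes the embeddings $\iota$ simply transitively, the fixed tuples are the constant ones. These form the diagonal image of $G(\overline{F})$ under $r\mapsto r\otimes 1$.

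That route is concrete, and it is exactly the picture the paper uses later, when the map $B(G)\to B(\Res_{E/F}G_E)$ is written as $b\mapsto(b,\dots,b)$ and compared with the norm map. On its own, however, it only matches $\overline{F}$-points. To conclude an equality of $F$-group schemes one must add two facts. First, the fixed locus is a closed subscheme defined over $F$, which holds because the $\theta_\gamma$ are $F$-automorphisms. Second, it is reduced, which holds because group schemes of finite type over a field of characteristic zero are smooth.

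Your functor-of-points argument avoids both points. Flat base change of $E^{\Gal(E/F)}=F$ gives $(R\otimes_F E)^{\Gal(E/F)}=R$ for every $F$-algebra $R$. Together with $G(A)=\Hom_F(\mathcal{O}(G),A)$, this yields the equality scheme-theoretically and functorially in $R$. The closed-immersion property comes for free from the equalizer description. No smoothness or characteristic hypothesis is used, only that $G$ is affine and $E/F$ is finite Galois.
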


\subsubsection{Restriction of scalars and isocrystals}
We will use the same notation as in Section \ref{2.1}. Recall the setup.
\begin{itemize}
    \item Let $F$ denote a $p$-adic field and let $F'$ be a finite extension of $F$.
    \item Let $L'$ and $\sigma'$ be the analogues, for $F'$, of $L$ and $\sigma$ introduced there.
    \item Let $G$ be an affine algebraic group defined over $F'$.
\end{itemize}

We can view $L$ and $L'$ as subfields of $\overline{F}$ and since the compositum of unramified extensions is unramified, it follows that $L$ is contained in $L'$. Hence, automorphisms of $L'$ over $F'$ can be restricted to automorphisms of $L$ over $F$. This gives a map between the groups $\langle\sigma'\rangle \xrightarrow[]{} \langle \sigma \rangle$.

\begin{itemize}
    \item Let $G_1$ denote a locally compact topological group and let $G_2 \subset G_1$ be a closed subgroup.
    \item Let $\Ind_{G_2}^{G_1}$ denote the induction functor from the category of (continuous) $G_2$-modules to (continuous) $G_1$-modules.
\end{itemize}

With respect to the map $\langle\sigma'\rangle \xrightarrow[]{} \langle \sigma \rangle$, we have the following.

\begin{Le}\label{Shapiro}
\begin{enumerate}
    \item The $\langle \sigma \rangle$-module $\Res_{F'/F}G(L)$ is $\Ind_{\langle \sigma' \rangle}^{\langle \sigma \rangle} G(L')$.
    \item There is a bijection $B(\Res_{F'/F}G) \xrightarrow[]{} B(G)$.
\end{enumerate}
\end{Le}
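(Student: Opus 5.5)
The plan is to prove part (1) directly from the definition of restriction of scalars, and then deduce part (2) from (1) by a Shapiro-type argument for $\sigma$-conjugacy.

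\textbf{Part (1).} Here $\Res_{F'/F}G(L) = G(L\otimes_F F')$, so I will first decompose $L\otimes_F F'$. Let $F'_0 = F'\cap F^{\text{un}}$ be the maximal unramified subextension, of degree $f$ over $F$. Then
\[
L\otimes_F F' \simeq L\otimes_F F'_0\otimes_{F'_0}F' \simeq \prod_{i=0}^{f-1} L\otimes_{F'_0,\sigma^i}F'.
\]
Each factor is identified with $L'$, which is the completion of $F'^{\text{un}} = F' F^{\text{un}}$. Under $x\otimes y\mapsto x\otimes y$, the map $\sigma\otimes 1$ permutes the factors cyclically. Its $f$-th power stabilises each factor and acts there as $\sigma^f\otimes 1$. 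On $L'$ this is exactly the arithmetic Frobenius $\sigma'$ of $F'$, since the residue field of $F'$ has $q_F^f$ elements.

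Hence $G(L\otimes_F F') \simeq \prod_{i\in\mathbb{Z}/f} G(L')$, with $\sigma$ shifting the index and $\sigma^f$ acting as $\sigma'$ on each coordinate. This is precisely the coinduced module $\Ind_{\langle\sigma'\rangle}^{\langle\sigma\rangle}G(L')$. Here $\langle\sigma'\rangle\to\langle\sigma\rangle$ is the inclusion $\sigma'\mapsto\sigma^f$ of a finite index subgroup, so induced and coinduced modules agree. The only point needing care is to check that the identification respects the $\langle\sigma\rangle$-actions, which is a direct computation as in Lemma \ref{twistedlem1}.

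\textbf{Part (2).} By Definition \ref{B(G)def}, $B(\Res_{F'/F}G) = H^1(\langle\sigma\rangle,\Res_{F'/F}G(L))$ and $B(G) = H^1(\langle\sigma'\rangle,G(L'))$. Given (1), part (2) is nonabelian Shapiro's lemma for the subgroup $\langle\sigma^f\rangle\subset\langle\sigma\rangle\simeq\mathbb{Z}$.

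I will make the bijection explicit at the level of $\sigma$-conjugacy. An element $b = (b_0,\dots,b_{f-1})$ is sent to the norm-type element
\[
N(b) = b_0\,\sigma(b_{f-1})\cdots\sigma^{f-1}(b_1),
\]
in the suitable ordering so that $(b\sigma)^f = (N(b)\sigma', \ast,\dots)$ in the first coordinate. To see that this map is well defined, observe that $\sigma$-conjugating $b$ by $g=(g_i)$ changes $N(b)$ by $\sigma'$-conjugation by $g_0$.

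\emph{Surjectivity.} Given $c\in G(L')$, the element $(c,1,\dots,1)$ maps to $c$.

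\emph{Injectivity.} Any $b$ can be $\sigma$-conjugated into the form $(c,1,\dots,1)$ by choosing $g_1,\dots,g_{f-1}$ successively to kill the coordinates $b_1,\dots,b_{f-1}$. Two elements of this form are $\sigma$-conjugate if and only if their first coordinates are $\sigma'$-conjugate. For the "only if" direction, a conjugating $g$ must satisfy $g_i=\sigma^i(g_0)$ coordinate-wise, and these are compatible exactly under the condition $c' = g_0c\,\sigma'(g_0)^{-1}$.

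This is the same mechanism as Kottwitz's \S1 treatment of $B$ for restriction of scalars, so I can alternatively cite it. The main obstacle is bookkeeping: getting the indexing and orientation of the cyclic shift right so that the norm formula is consistent with $\sigma\otimes1$. I would fix conventions by checking the case $G=\mathbb{G}_m$, where both sides must be $\mathbb{Z}$ via valuation, using Lemma \ref{isocrystalsfortori}.
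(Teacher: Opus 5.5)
Your argument is correct, but it takes a different route from the paper's. The paper's proof never works with $\langle\sigma\rangle$ and $L$ directly.
\begin{itemize}
\item It cites Kottwitz for the $\langle\sigma\rangle$-statement (1).
\item It then passes to the Weil-group definition $B(G)=H^1(W_F,G(\overline{L}))$ of Definition \ref{generalBG}.
\item It uses Lemma \ref{twistedlem1} to identify $\Res_{F'/F}G(\overline{L})$ with $\Ind_{W_{F'}}^{W_F}G(\overline{L'})$. Here $\overline{L}=\overline{L'}$, and there are $[F':F]$ factors, indexed by the embeddings of $F'$.
\item It concludes by Shapiro's lemma for $W_{F'}\subset W_F$.
\end{itemize}

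You instead prove (1) literally as stated. You split off the maximal unramified subextension $F'_0$, so that only $f$ factors appear, each a copy of the field $L'\simeq L\otimes_{F'_0}F'$ with the totally ramified part absorbed. You then prove (2) by hand at the level of $\sigma$-conjugacy classes, with an explicit inverse.

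\textbf{What your route buys.} It is elementary and stays within Definition \ref{B(G)def}. It also produces the explicit norm-type bijection $b\mapsto b_0\,\sigma(b_{f-1})\cdots\sigma^{f-1}(b_1)$. This is essentially the formula the paper has to recompute in the later lemma identifying $B(G)\to B(\Res_{E/F}G_E)$ with the norm map of \ref{normisocrystal}.

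\textbf{What the paper's route buys.} It is shorter and needs no separation of the unramified and ramified parts. Because it is phrased via Definition \ref{generalBG}, it applies to an arbitrary affine algebraic $G$, as allowed in the setup. Your identification of $B$ with $\sigma$-conjugacy classes in $G(L)$ is the definition only for connected groups (Lemma \ref{generalBGagrees}). This is harmless here: the lemma is only applied to connected reductive groups, and $\Res_{F'/F}G$ is connected exactly when $G$ is.
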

\begin{proof}
    This is $\S 1.10$ of \cite{MR809866} and $\S 1.6$ of \cite{MR1485921}. Using Lemma \ref{twistedlem1}, it can be checked that the $W_F$-module $\Res_{F'/F}G(\overline{L}) = \Ind_{W_{F'}}^{W_F} G(\overline{L'})$. Note that $\overline{L} = \overline{L'}$. By Shapiro's lemma, it follows that $H^1(W_F, \Res_{F'/F}G(\overline{L})) = H^1(W_{F'}, G(\overline{L'}))$. Hence, by \ref{generalBG}, the isomorphism $B(\Res_{F'/F}G) \xrightarrow[]{} B(G)$ follows.
\end{proof}

\subsubsection{Norm map}
We need the following additional notation before introducing the norm map.
\begin{itemize}
    \item Let $F$ be a field of characteristic $0$ and let $E$ be a cyclic extension of degree $l$.
    \item Let $\sigma$ denote a generator of $\Gal(E/F)$.
    \item As before, let $G$ be a connected reductive group over $F$ with quasi-split form $G_0$ and an inner twisting $\psi: G_{\overline{F}} \xrightarrow[]{\sim} G_{0,\overline{F}}$.
    \item For $g \in G(E)$ and $\tau \in \Gal(E/F)$, $g^{\tau}$ denotes the action of $\tau$ on $g$.
\end{itemize}

\begin{D}
    A conjugacy class $C$ in $G(\overline{F})$ is said to be defined over $F$ if for every $x \in C$ and for every $\tau \in \Gamma_F$, we have that $x^{\tau} \in C$.
\end{D}

\begin{Le}\label{conjugacyclassdefoverFlemma}
    If $G^{\text{der}}$ is simply connected, then every semisimple conjugacy class in $G_0(\overline{F})$ that is defined over $F$, contains an element of $G_0(F)$. Hence, to such a semisimple conjugacy class in $G_0(\overline{F})$, one can associate an $\overline{F}$-conjugacy class in $G_0(F)$. Since $G^{\text{der}}$ is simply connected, an $\overline{F}$-conjugacy class in $G_0(F)$ is the same as a stable conjugacy class in $G_0(F)$ (see \ref{Fbarconjugacy} and \ref{StableConjugacy}).
\end{Le}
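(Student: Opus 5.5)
The plan is to reduce both assertions to the corresponding statements for tori, using Steinberg's theorem and the quasi-split inner form. Let $C$ be a semisimple conjugacy class in $G_0(\overline{F})$ defined over $F$. I would first pass to the simply connected cover. Since $G^{\text{der}}$ is simply connected, so is $G_0^{\text{der}}$, and the standard exact sequence $1 \to G_0^{\text{der}} \to G_0 \to D \to 1$ is available, where $D = D(G_0)$ is the cocenter torus. Steinberg's theorem says that in a quasi-split group whose derived group is simply connected, every $F$-rational semisimple conjugacy class meets $G_0(F)$. This is \S 4 of Kottwitz's ``Rational conjugacy classes in reductive groups'' (\cite{MR683003}, Theorem 4.1). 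The simplest route is therefore to cite that theorem directly. For a self-contained argument I would proceed as follows.

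\textbf{Step 1.} Pick $x \in C$ and a maximal torus $T' \subset G_0$ over $\overline{F}$ containing $x$. Let $T$ be a maximal $F$-torus of $G_0$ contained in an $F$-Borel $B$; this exists because $G_0$ is quasi-split. Conjugate so that $x \in T(\overline{F})$. Because $C$ is $F$-rational, for each $\tau \in \Gamma_F$ the element $\tau(x)$ is $G_0(\overline{F})$-conjugate to $x$. Two elements of $T$ that are conjugate in $G_0$ are conjugate under the absolute Weyl group $\Omega$. Hence the image $\bar{x}$ of $x$ in $T/\Omega$ is a $\Gamma_F$-fixed point of the variety $T/\Omega = G_0/\!/G_0$. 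Here one uses that the $\Gamma_F$-action on $T$ preserves the Weyl group action.

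\textbf{Step 2.} Show that the map $T(F) \to (T/\Omega)(F)$ is surjective for this quasi-split $T$. This is the main obstacle. My first attempt would be Steinberg's argument in the semisimple simply connected case. There $T/\Omega \simeq \mathbb{A}^r$ via the fundamental characters, and the Steinberg quasi-section of the regular unipotent/Bruhat cell yields an $F$-rational section $\mathbb{A}^r \to G_0$ whenever $G_0$ is quasi-split. The simple-connectedness is exactly what makes the fundamental representations exist, so that $G_0/\!/G_0$ is affine space with a rational section. Applying the section gives an $F$-point $y \in G_0(F)$, not necessarily semisimple, with the same image in $G_0/\!/G_0$ as $x$. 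Its semisimple part $y_s$ lies in $G_0(F)$, by rationality of the Jordan decomposition in characteristic $0$, and lies in $C$, since semisimple classes are determined by their image in $G_0/\!/G_0$.

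\textbf{Step 3.} Extend from $G_0^{\text{der}}$ to $G_0$ using the cocenter. Consider $Z(G_0)^0 \times G_0^{\text{der}} \to G_0$, or rather the $z$-extension-free version. Write $x = z x'$ up to the fibre of $G_0 \to D$. The image of $C$ in $D(\overline{F})$ is a single $\Gamma_F$-fixed point $d$, and $G_0(F) \to D(F)$ is surjective. The surjectivity holds because $H^1(F, G_0^{\text{der}})$ maps trivially to the relevant obstruction, and more simply because $T(F) \to D(F)$ is surjective for $T$ a maximal $F$-torus with $T \cap G_0^{\text{der}}$ an induced torus in the simply connected quasi-split case. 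Lift $d$ to $g \in T(F)$. Then $g^{-1}C$ is an $F$-rational semisimple class of $G_0^{\text{der}}$, and by Step 2 it contains an $F$-point. Translating back gives the claim.

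The final sentence of the lemma is then formal. The map from $\overline{F}$-conjugacy classes in $G_0(F)$ to rational classes in $G_0(\overline{F})$ is well defined by definition, and Steps 1--3 give surjectivity onto semisimple $F$-rational classes. Finally, \ref{simplyconnectedlemma} identifies $\overline{F}$-conjugacy with stable conjugacy, because all centralizers are connected.
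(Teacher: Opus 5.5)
Your first suggestion, citing Theorem 4.1 of \cite{MR683003} and then using Lemma \ref{simplyconnectedlemma} for the last sentence, is exactly the paper's proof and is complete on its own. The self-contained argument you offer in its place does not work as written.

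A minor point first. The goal announced in Step 2, surjectivity of $T(F)\to(T/\Omega)(F)$ for the torus $T$ of an $F$-Borel pair, is false. For split $\SL_2$ over $\mathbb{Q}_p$, take $a$ with $x^2-ax+1$ irreducible; then no $t\in F^\times$ satisfies $t+t^{-1}=a$, and the rational points of that class lie in an elliptic torus. What your Steinberg-section argument actually proves, and what you need, is surjectivity of $G_0^{\mathrm{der}}(F)\to(G_0^{\mathrm{der}}/\!/G_0^{\mathrm{der}})(F)$. Moreover, the existence of an $F$-rational section when $G_0^{\mathrm{der}}$ is quasi-split but not split is itself part of Steinberg's theorem, so this step is still a citation.

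The real gap is in Step 3. Your surjectivity of $T(F)\to D(F)$, via the induced torus $T\cap G_0^{\mathrm{der}}$, is fine. But for non-central $g\in T(F)$, the set $g^{-1}C$ is not a conjugacy class of $G_0^{\mathrm{der}}$, because left translation by $g$ does not commute with conjugation. For example, in $\GL_2$ take $a\in F^\times\setminus\{1\}$, let $C$ be the class of $\mathrm{diag}(1,a)$, and let $g=\mathrm{diag}(a,1)$. Then $g^{-1}C$ contains $1$, $\mathrm{diag}(a^{-1},a)$, and the non-semisimple element $g^{-1}\left(\begin{smallmatrix}a&1\\0&1\end{smallmatrix}\right)=\left(\begin{smallmatrix}1&a^{-1}\\0&1\end{smallmatrix}\right)$. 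In general $g^{-1}C$ is an orbit of $\Int(g^{-1})$-twisted conjugation on $G_0^{\mathrm{der}}$, to which Step 2 does not apply. Nor can you choose $g$ central, since $Z(G_0)(F)\to D(F)$ is not surjective in general; for $\GL_n$ it is $z\mapsto z^n$ on $F^\times$.

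One repair avoids the derived group entirely.
\begin{itemize}
\item Take $x\in C\cap T(\overline F)$. By Lemma \ref{simplyconnectedlemma}, $G_{0,x}$ is connected, so the stabilizer of $x$ in $\Omega$ is the Weyl group of $\Phi_x=\{\alpha:\alpha(x)=1\}$, and it acts simply transitively on the positive systems of $\Phi_x$.
\item For $\tau\in\Gamma_F$, let $w_\tau\in\Omega$ be the unique element with $w_\tau\tau(x)=x$ such that $w_\tau\circ\tau$ (acting through the $F$-structure of $T$) preserves the roots of $\Phi_x$ that are positive for $B$. Uniqueness gives $w_{\sigma\tau}=w_\sigma\,\sigma(w_\tau)$.
\item Twisting the Galois action on $T$ by $w$ gives an $F$-torus $T'$ with $x\in T'(F)$. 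Its identity embedding into $G_0$ has $\Gamma_F$-invariant $G_0(\overline F)$-conjugacy class, since each $\tau$ changes it only by $N(T)$-conjugation.
\item Lemma \ref{quasisplitlemma} then conjugates this embedding to an $F$-rational one, which carries $x$ to an element of $C\cap G_0(F)$.
\end{itemize}
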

\begin{proof}
    See Theorem 4.1 of \cite{MR683003}.
\end{proof}

\begin{D}[Norm mapping]\label{normmap}
    We define two versions of the norm mapping, $N: G(E) \xrightarrow[]{} G(E)$ and $\mathfrak{N}: G(E) \xrightarrow[]{} \{ \text{stable conjugacy classes in }G_0(F) \}$.
    \begin{enumerate}
        \item Consider the map $N: G(E) \xrightarrow[]{} G(E)$ defined as $Nx = x \cdot x^{\sigma} \cdots x^{\sigma^{l-1}}$. This has the property that $(Nx)^{\sigma} = x^{-1} (Nx) x$. Hence, it follows that the $G(\overline{F})$-conjugacy class of $Nx$ is defined over $F$.
        \item We also have that the $G_0(\overline{F})$-conjugacy class of $\psi(Nx)$ is defined over $F$. Assume now that $G^{\text{der}}$ is simply connected. Hence, by \ref{conjugacyclassdefoverFlemma}, there is a stable conjugacy class in $G_0(F)$ corresponding to this $G_0(\overline{F})$-conjugacy class, which will be denoted by $\mathfrak{N}x$.
    \end{enumerate}
\end{D}

We can extend the definition of $\mathfrak{N}$ to groups $G$ beyond the $G^{\text{der}}$ simply connected case. This is done in $\S$ 5 of \cite{MR683003} using $z$-extensions.

\begin{D}[$z$-extension]
    An $F$-homomorphism $\alpha:G' \xrightarrow[]{} G$ is a $z$-extension of $G$ if
    \begin{enumerate}
        \item The group $G'$ is connected reductive over $F$ and its derived group is simply connected.
        \item The map $\alpha$ is surjective.
        \item The kernel $\ker(\alpha)$ is central in $G'$ and is a product of tori of the form $\Res_{K/F} \mathbb{G}_m$, where each $K$ is a finite extension of $F$.
    \end{enumerate}
\end{D}

\begin{Le}\label{zextensionlemma0}
    Let $\alpha: G' \xrightarrow[]{} G$ be a $z$-extension. Then for any subfield $E$ of $\overline{F}$, the map $G'(E) \xrightarrow[]{} G(E)$ is surjective.
\end{Le}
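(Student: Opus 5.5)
The plan is to reduce to the torus case, where surjectivity can be read off from cohomology, and then lift arbitrary points through a maximal torus. Put $Z=\ker(\alpha)$. By assumption $Z \simeq \prod_i \Res_{K_i/F}\mathbb{G}_m$ is central, so we have a short exact sequence of algebraic groups over $F$,
$$1 \longrightarrow Z \longrightarrow G' \xrightarrow{\ \alpha\ } G \longrightarrow 1.$$
Restrict it to a subfield $E \subset \overline{F}$ containing $F$. (If $E$ is not an extension of $F$, replace $E$ by the compositum $EF$; the statement only makes sense for $F$-algebras.) Nonabelian Galois cohomology then gives an exact sequence of pointed sets
$$G'(E) \longrightarrow G(E) \longrightarrow H^1(E, Z).$$
So it suffices to prove that $H^1(E,Z)=0$.

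For this I will use Shapiro's lemma. Write $Z_E = \prod_i (\Res_{K_i/F}\mathbb{G}_m)_E$. Base change of a Weil restriction decomposes as
$$(\Res_{K_i/F}\mathbb{G}_m)_E \simeq \prod_j \Res_{L_{ij}/E}\mathbb{G}_m,$$
where $K_i \otimes_F E \simeq \prod_j L_{ij}$ is the decomposition into fields; this uses that $K_i/F$ is separable, since we are in characteristic zero. Each factor satisfies
$$H^1(E,\Res_{L_{ij}/E}\mathbb{G}_m) \simeq H^1(L_{ij},\mathbb{G}_m) = 0$$
by Shapiro's lemma and Hilbert 90. Because cohomology commutes with finite products, $H^1(E,Z)=0$, and hence $G'(E) \to G(E)$ is surjective.

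The step that needs care is justifying the exact sequence. The connecting map $G(E)\to H^1(E,Z)$ exists and has the stated exactness once we know two things. First, $G'(\overline{F}) \to G(\overline{F})$ is surjective, which holds because $\alpha$ is a surjective morphism of smooth algebraic groups over an algebraically closed field. Second, $\ker\bigl(G'(\overline{F})\to G(\overline{F})\bigr)=Z(\overline{F})$, which holds because $Z$ is smooth in characteristic zero. With both in hand, the standard nonabelian cohomology sequence for $\Gamma_E$ acting on
$$1\to Z(\overline{F})\to G'(\overline{F})\to G(\overline{F})\to 1$$
applies and yields the sequence above. I expect this bookkeeping, together with the decomposition of $\Res_{K_i/F}\mathbb{G}_m$ over $E$, to be the only real content. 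For an infinite extension $E$ one can either run the same argument with $\Gamma_E$ directly, or write $E$ as a union of finitely generated subextensions and use that cohomology commutes with direct limits.
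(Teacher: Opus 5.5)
Your argument is correct and is essentially the proof behind the paper's citation of Lemma 1.1 of \cite{MR683003}. That proof uses the exact sequence $G'(E)\to G(E)\to H^1(E,\ker\alpha)$, together with $H^1(E,\ker\alpha)=0$ by Shapiro's lemma and Hilbert 90, since $(\ker\alpha)_E$ is again a product of tori $\Res_{L/E}\mathbb{G}_m$. The reduction to a maximal torus announced in your first sentence is never used and can be dropped, and the statement implicitly assumes $F\subset E$, so the remark about the compositum $EF$ is unnecessary.
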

\begin{proof}
    See Lemma 1.1 of \cite{MR683003}.
\end{proof}

\begin{Le}\label{zextensionlemma}
    Let $\alpha: G' \xrightarrow[]{} G$ be a $z$-extension. Let $G'_0$ denote the quasi-split form of $G'$. Then there exists an inner twisting $\psi': G'_{\overline{F}} \xrightarrow[]{\sim} G'_{0,\overline{F}}$ and a $z$-extension $\beta: G'_0 \xrightarrow[]{} G_0$ such that the following diagram of groups over $\overline{F}$ commutes
    \begin{center}
        \begin{tikzcd}
        G'_{\overline{F}} \arrow{r}{\psi'} \arrow{d}{\alpha} & G'_{0,\overline{F}} \arrow{d}{\beta} \\
        G_{\overline{F}} \arrow{r}{\psi} & G_{0,\overline{F}}
    \end{tikzcd}
    \end{center}    
\end{Le}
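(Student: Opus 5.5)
We will prove the statement by building $G'_0$ and $\psi'$ from a $z$-extension of the quasi-split inner form, using that inner twistings are controlled by the adjoint group. This group is unchanged under central extensions.

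\textbf{Step 1: the kernel is already defined over $F$.} Put $Z=\ker(\alpha)$. It is a central induced torus of $G'$, defined over $F$.

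\textbf{Step 2: compare adjoint groups.} Since $Z$ is central and $\alpha$ is surjective, $\alpha$ induces an isomorphism $G'^{\mathrm{ad}}\simeq G^{\mathrm{ad}}$ over $F$. The inner twisting $\psi$ determines a cocycle
$$
u_\tau=\psi\circ{}^{\tau}\psi^{-1}\in G_0^{\mathrm{ad}}(\overline{F}),
$$
and $G$ is the inner form of $G_0$ obtained by twisting with $u$. Inner automorphisms of $G'_{\overline{F}}$ and of $G_{\overline{F}}$ correspond under $\alpha$. So it is natural to construct $G'_0$ by untwisting $G'$ by the same class, viewed in $H^1(F,G'^{\mathrm{ad}})=H^1(F,G^{\mathrm{ad}})$.

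\textbf{Step 3: the untwisting and its properties.} Concretely, let $G'_0$ be the $F$-form of $G'_{\overline{F}}$ with Galois action
$$
\tau\ast g=\mathrm{Ad}(u'_\tau)^{-1}\,{}^{\tau}g,
$$
where $u'_\tau$ is the element of $G'^{\mathrm{ad}}(\overline{F})$ corresponding to $u_\tau$ under the transported identification; we arrange the signs so that the resulting form matches $G_0$. Let $\psi'$ be the identity of $G'_{\overline{F}}$, viewed as a map $G'_{\overline{F}}\to G'_{0,\overline{F}}$, and let $\beta=\psi\circ\alpha\circ\psi'^{-1}$. Then:
\begin{itemize}
\item The diagram commutes by construction.
\item $\beta$ is defined over $F$. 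Indeed, $\alpha$ intertwines the twisted actions, because $\alpha\circ\mathrm{Ad}(u'_\tau)=\mathrm{Ad}(u_\tau)\circ\alpha$, with $u_\tau$ read in $G^{\mathrm{ad}}$.
\item $G'_0$ is quasi-split. Its adjoint group is the untwisted form $G_0^{\mathrm{ad}}$, which is quasi-split, and quasi-splitness depends only on the adjoint group: Borel subgroups correspond bijectively.
\item $\beta$ is a $z$-extension. Twisting by inner automorphisms fixes the center pointwise, so $\ker\beta=Z$ with its original $F$-structure, an induced central torus. Also $G_0'^{\mathrm{der}}\simeq G'^{\mathrm{der}}$ over $\overline{F}$, hence it is simply connected, and surjectivity holds geometrically.
\end{itemize}

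\textbf{Step 4: identify $G'_0$ as the quasi-split form of $G'$.} By construction $G'_0$ is quasi-split and inner to $G'$ via $\psi'$, so it is the quasi-split inner form of $G'$.

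\textbf{Main obstacle.} The main obstacle is bookkeeping: making the cocycle conventions for $\psi$ and $u$ consistent so that the twisted form is $G_0$-compatible and $\beta$ is $F$-rational. We expect no deeper issue, since this is Lemma 5.1-type material in \cite{MR683003} and we may alternatively cite it directly.
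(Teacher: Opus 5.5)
Your argument is correct, and it is a complete proof where the paper gives none: the paper simply refers to Lemma~5.1 of \cite{MR683003}. Your route is direct Galois descent. You take the cocycle $v_\tau=\psi^{-1}\circ{}^{\tau}\psi=\mathrm{Ad}(\psi^{-1}(u_\tau))^{-1}\in Z^1(F,G^{\mathrm{ad}})$, transport it along the $F$-isomorphism $G'^{\mathrm{ad}}\simeq G^{\mathrm{ad}}$ induced by $\alpha$, and twist $G'$ by it. Your formula $\tau\ast g=\mathrm{Ad}(u'_\tau)^{-1}\,{}^{\tau}g$ is exactly this twist. For that reason, the phrase in Step~2 about ``the same class in $H^1(F,G'^{\mathrm{ad}})=H^1(F,G^{\mathrm{ad}})$'' should refer to the class of $v$, not to $[u]\in H^1(F,G_0^{\mathrm{ad}})$; the two are different pointed sets.

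What your approach makes explicit, and the citation hides, is why each condition survives. $\beta$ is $F$-rational because $\alpha$ intertwines $\mathrm{Ad}(v'_\tau)$ with $\mathrm{Ad}(v_\tau)$. The $z$-extension conditions either are geometric (surjectivity, simple connectedness of the derived group) or are untouched by twisting with inner automorphisms: the central kernel $Z$ keeps its $F$-structure as an induced torus. Quasi-splitness passes through the central quotient because Borel subgroups correspond.

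One small point in Step~4: the statement fixes $G'_0$ in advance. You should therefore compose with an $F$-isomorphism from your twisted group to the given quasi-split inner form, which exists because quasi-split inner forms are unique up to $F$-isomorphism. Replace $\psi'$ and $\beta$ accordingly; this preserves all the required properties.
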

\begin{proof}
    See Lemma 5.1 of \cite{MR683003}.
\end{proof}

\begin{Le}\label{normmapdef2}
    There is a map $\mathfrak{N}: G(E) \xrightarrow[]{} \{ \text{stable conjugacy classes in } G_0(F) \}$ such that for any $z$-extension $\alpha: G' \xrightarrow[]{} G$ with $\psi': G'_{\overline{F}} \xrightarrow{\sim} G'_{0,\overline{F}}$ and $\beta: G'_0 \xrightarrow[]{} G_0$ as in \ref{zextensionlemma}, the following square commutes
    \begin{center}
        \begin{tikzcd}
        G'(E) \arrow{r}{\mathfrak{N}} \arrow{d}{\alpha} & \{ \text{stable conjugacy classes in } G'_0(F) \} \arrow{d}{\beta} \\
        G(E) \arrow{r}{\mathfrak{N}} & \{ \text{stable conjugacy classes in } G_0(F) \}
    \end{tikzcd}
    \end{center}
    where here the horizontal map $\mathfrak{N}$ on the top is the one discussed in \ref{normmap}, since $G'$ has a simply connected derived subgroup. See \ref{rmkaboutstableconj} for the vertical map on the right. Note that, using \ref{zextensionlemma0}, it follows that the above property characterizes the map $\mathfrak{N}$ uniquely.
\end{Le}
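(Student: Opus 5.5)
The plan is to define $\mathfrak{N}x$ for $x\in G(E)$ by lifting through a $z$-extension, and then to show that the result is independent of the choices made. First I fix a $z$-extension $\alpha: G'\to G$; one exists by \S 1 of \cite{MR683003}. I then choose $\psi'$ and $\beta:G'_0\to G_0$ as in Lemma \ref{zextensionlemma}. Given $x\in G(E)$, Lemma \ref{zextensionlemma0} provides a lift $x'\in G'(E)$. I set
$$\mathfrak{N}x:=\beta(\mathfrak{N}x'),$$
where $\mathfrak{N}x'$ is the stable conjugacy class of Definition \ref{normmap}, which makes sense because $G'^{\text{der}}$ is simply connected. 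The map $\beta$ on stable classes is the one of Remark \ref{rmkaboutstableconj}. The commutativity of the square is then built in, and uniqueness follows from the surjectivity in Lemma \ref{zextensionlemma0}, as the statement already notes.

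Independence of the lift is the first check. Two lifts differ by $z\in Z'(E)$, with $Z'=\ker\alpha$ central. Since $Z'$ is central, $N(x'z)=N(x')\,N_{E/F}(z)$. Hence $\psi'(N(x'z))$ and $\psi'(N(x'))$ differ by the central element $\psi'(N_{E/F}z)$, whose image under $\beta$ is trivial because $\beta\circ\psi'=\psi\circ\alpha$ kills $Z'$. So the $G_0(\overline{F})$-conjugacy class of $\beta\psi'(Nx')=\psi(N x)$ does not depend on the lift. The real issue is that $\beta(\mathfrak{N}x')$ should be a stable class in $G_0(F)$ determined by this $\overline{F}$-class, and not merely an $\overline{F}$-class.

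To deal with this, I would characterize the output intrinsically. I claim that $\mathfrak{N}x$ is the unique stable class in $G_0(F)$ of the form $\beta(\gamma')$, with $\gamma'\in G'_0(F)$ semisimple and $\overline{F}$-conjugate to $\psi'(Nx')$. Two such $\gamma'$ are stably conjugate in $G'_0$ by Lemma \ref{simplyconnectedlemma}. Remark \ref{rmkaboutstableconj} then shows that their images are stably conjugate in $G_0$.

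The second check is independence of the auxiliary data. For $\psi'$ and $\beta$ attached to a fixed $\alpha$, the inner twistings differ by inner automorphisms, and $\overline{F}$-conjugacy absorbs this. For two $z$-extensions $G'_1,G'_2$ I pass to the fibre product $G'_3=G'_1\times_G G'_2$. This is again a $z$-extension of $G$, since its kernel is $Z'_1\times Z'_2$ and its derived group is simply connected. It dominates both $G'_1$ and $G'_2$, via maps that are themselves $z$-extensions. It therefore suffices to prove compatibility along a $z$-extension $\gamma:G'_3\to G'_1$ between groups with simply connected derived groups, that is, $\mathfrak{N}(\gamma x)=\gamma(\mathfrak{N}x)$ for the map of Definition \ref{normmap}. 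This holds because $N$ commutes with the group homomorphism $\gamma$ defined over $F$, and stable conjugacy classes map by Remark \ref{rmkaboutstableconj}.

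I expect the main obstacle to be matching the choices of the quasi-split data $\psi'_3,\beta_3$ with $\psi'_1,\beta_1$. That is, I need an $F$-map $G'_{3,0}\to G'_{1,0}$ compatible with the inner twistings up to inner automorphism. I would take this from the construction in the proof of Lemma 5.1 of \cite{MR683003}, applied to the $z$-extension $G'_3\to G'_1$.
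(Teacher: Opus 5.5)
Your overall route (define $\mathfrak{N}x$ through a lift to a $z$-extension, then prove independence of the lift, of the data $(\psi',\beta)$, and of the $z$-extension via $G'_1\times_G G'_2$) is the standard argument of \S 5 of \cite{MR683003}, which is all the paper cites. The fibre-product reduction is fine.

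The step that does not work as written is independence of $(\psi',\beta)$ for a fixed $\alpha$, and this is where the content of the lemma sits. There are two problems.

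First, for two choices $(\psi'_1,\beta_1)$, $(\psi'_2,\beta_2)$, the inner twistings need not differ by an inner automorphism. The map $\phi=\psi'_2\circ(\psi'_1)^{-1}$ only has $\Gamma_F$-invariant outer class. Since $G'_0$ is quasi-split, this gives $\phi=\Int(g)\circ\theta$ with $g\in G'_0(\overline{F})$ and $\theta$ an $F$-automorphism of $G'_0$. For instance, $\psi'_2=\theta\circ\psi'_1$ and $\beta_2=\beta_1\circ\theta^{-1}$ is an allowed choice.

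Second, and more seriously, the constraint $\beta_i\psi'_i=\psi\alpha$ forces $\beta_2\circ\phi=\beta_1$. So you are comparing images of stable classes under two \emph{different} $F$-maps into $G_0$. There, ``$\overline{F}$-conjugacy absorbs this'' is exactly what fails: in $G_0$, whose derived group need not be simply connected, $\overline{F}$-conjugate rational semisimple elements need not be stably conjugate. That failure is the whole reason for passing to $z$-extensions. For the same reason, your remark that $\beta(\mathfrak{N}x')$ should be determined by the $\overline{F}$-class of $\psi(Nx)$ is false, since such a class can contain several stable classes.

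The repair is short. Let $\gamma'_1\in G'_0(F)$ be $\overline{F}$-conjugate to $\psi'_1(Nx')$. Then $\gamma'_2:=\theta(\gamma'_1)$ is rational and $\overline{F}$-conjugate to $\psi'_2(Nx')$. Moreover $\beta_1(\gamma'_1)=h\,\beta_2(\gamma'_2)\,h^{-1}$ with $h=\beta_2(g)$. Since $\Int(h)\circ\beta_2=\beta_1\circ\theta^{-1}$ and both sides are surjective $F$-maps, $\Int(h)$ is defined over $F$. Hence $h^{-1}\tau(h)\in Z(G_0)(\overline{F})$. This group lies in every maximal torus, hence in $(G_0)^0_s$ for every semisimple $s$. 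So the two images are stably conjugate.

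Your fibre-product step genuinely needs this. Lemma \ref{zextensionlemma} applied to $G'_3\to G'_1$ produces its own $\psi'_3$, and then forces $\beta_3$ to be $\beta_1$ composed with the resulting map $G'_{3,0}\to G'_{1,0}$. These are not the arbitrary data you started with.

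The lift step also needs one more line. The observation that the $\overline{F}$-class of $\psi(Nx)$ is lift-independent is vacuous, since $\psi(Nx)$ never involved the lift. What you need is that $\psi'(N_{E/F}z)\in Z(G'_0)(F)$, which holds because inner twistings are $F$-rational on centres. Then $\gamma'\,\psi'(N_{E/F}z)$ is a rational representative for the lift $x'z$ with the same image under $\beta$.
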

\begin{proof}
    See $\S$ 5 of \cite{MR683003}.
\end{proof}

\subsubsection{Twisted conjugacy}\label{twistedconjsection}

We need to define additional notions of conjugacy to be able to describe the fibres of the map $\mathfrak{N}$. This is what we will do in this subsubsection. Recall that $G$ is a group over $F$ and there is an $F$-automorphism $\theta_{\sigma}:\Res_{E/F}G \xrightarrow[]{} \Res_{E/F}G$, for $\sigma \in \Gal(E/F)$ a generator of the cyclic group. We will use the following shorthand notation.

\begin{itemize}
    \item Let $R = \Res_{E/F}G$ and $s = \theta_{\sigma} \in \Aut_F(R)$.
\end{itemize}

In \ref{normmap}, we defined $N: G(E) \xrightarrow[]{} G(E)$. Now, we can upgrade $N$ to an $F$-homomorphism $R \xrightarrow[]{} R$, as follows.

\begin{D}[Norm mapping]
    The map $N: R \xrightarrow[]{} R$ is the $F$-homomorphism defined by the condition $Nx = x \cdot x^s \cdots x^{s^{l-1}}$. On the $F$-points, this coincides with $N: R(F) \xrightarrow[]{} R(F)$ defined in \ref{normmap}.
\end{D}

Now, we will define the notion of $\sigma$-conjugacy and $\sigma$-centralizers.

\begin{D}[$\sigma$-conjugacy]\label{sigmaconjdef}
    Two elements $x , y \in G(E) = R(F)$ are $\sigma$-conjugate if there exists $g \in G(E) = R(F)$ such that $y = gx\sigma(g^{-1}) = gxs(g^{-1})$.
\end{D}

\begin{D}[$\sigma$-centralizers]\label{sigmacentralizerdef}
    For $x \in G(E) = R(F)$, the $\sigma$-centralizer of $x$ is the $F$-group $I_{sx} = \{ g \in R \mid gxs(g^{-1}) = x \}$.
\end{D}

Using \ref{twistedcor1}, we can write $R_E \simeq G_E \times \cdots \times G_E$, where the factors are indexed by elements of $\Gal(E/F)$ as in \ref{twistedlem1}. Let $p: R_E \xrightarrow{} G_E$ be the projection onto the factor indexed by the identity element of $\Gal(E/F)$.

\begin{Le}\label{twistedlemma}
    For $x \in G(E) = R(F)$, the map $p$ induces an isomorphism $(I_{sx})_E \simeq (G_E)_{Nx}$.
\end{Le}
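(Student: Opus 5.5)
The plan is to compute everything after base change to $E$, where $R=\Res_{E/F}G$ splits as a product of copies of $G_E$ and the twisted automorphism $s=\theta_\sigma$ becomes a cyclic permutation of the factors. The twisted centralizer condition should then reduce to a recursion along the cycle that is closed up by the norm. I will argue functorially on $A$-points for an arbitrary $E$-algebra $A$. This gives an isomorphism of group schemes over $E$, not merely a bijection on $E$-points.

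\emph{Step 1: coordinates.} By \ref{twistedcor1} and \ref{twistedlem1} (with $\overline{F}$ replaced by any $E$-algebra $A$, using $A\otimes_F E\simeq\prod_{\tau\in\Gal(E/F)}A$), identify $R(A)\simeq\prod_{\tau\in\Gal(E/F)}G(A)$. The factors are indexed by $\tau\in\Gal(E/F)$, viewed as embeddings $E\hookrightarrow E\subset A$. In these coordinates the following hold.
\begin{itemize}
\item An $F$-point $x\in G(E)=R(F)$ has components $x_\tau=\tau(x)$.
\item By the lemma computing $\theta_\gamma$, the automorphism $s$ acts by $(g_\tau)_\tau\mapsto(g_{\tau\sigma})_\tau$.
\item The projection $p$ is $(g_\tau)\mapsto g_{1}$.
\end{itemize}
The one point requiring care is to fix these conventions consistently, so that the ordering in $Nx=x\,x^\sigma\cdots x^{\sigma^{l-1}}$ matches the ordering produced by the recursion below. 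If the indexing comes out reversed, I would replace $\sigma$ by $\sigma^{-1}$ in the identification, or compose with the appropriate coordinate relabelling.

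\emph{Step 2: solve the equation.} For $g=(g_\tau)\in R(A)$, the condition $g\,x\,s(g)^{-1}=x$ reads componentwise
\[
g_\tau\,x_\tau\,g_{\tau\sigma}^{-1}=x_\tau,\qquad\text{that is,}\qquad g_{\tau\sigma}=x_\tau^{-1}g_\tau x_\tau .
\]
Iterating from $\tau=1$ along $1,\sigma,\dots,\sigma^{l-1}$ gives
\[
g_{\sigma^k}=\bigl(x\,x^{\sigma}\cdots x^{\sigma^{k-1}}\bigr)^{-1}g_1\bigl(x\,x^{\sigma}\cdots x^{\sigma^{k-1}}\bigr).
\]
Thus every component is determined by $g_1$. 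The only remaining constraint is the wrap-around equation at $k=l$, which forces $g_1=(Nx)^{-1}g_1(Nx)$, i.e.\ $g_1\in G_{Nx}(A)$. Conversely, any $g_1\in G_{Nx}(A)$ defines, via these formulas, an element of $I_{sx}(A)$.

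\emph{Step 3: conclude.} Step 2 shows that $p$ restricts to a group homomorphism $I_{sx}(A)\to G_{Nx}(A)$ that is bijective for every $E$-algebra $A$, naturally in $A$. By Yoneda, $p$ induces an isomorphism of $E$-group schemes $(I_{sx})_E\simeq(G_E)_{Nx}$. The main (mild) obstacle is Step 1, namely getting the indexing of $s$ and of the components of $x$ exactly right so that the cycle product is $Nx$ and not a conjugate or reversed product. Even if it comes out as a cyclic rotation of $Nx$, say $x^{\sigma^k}\cdots x^{\sigma^{k-1}}$, that product is conjugate to $Nx$ by a partial product. The statement would then still follow after composing with that conjugation, or, more cleanly, after choosing the base factor appropriately.
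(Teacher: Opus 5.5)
Your proposal is correct. The paper gives no argument beyond citing Lemma 5.4 of \cite{MR683003}, and your coordinate computation over $E$ is the standard proof of that lemma. With the paper's conventions ($x_\tau=\tau(x)$, and $s$ acting by $(g_\tau)\mapsto(g_{\tau\sigma})$ as in the lemma computing $\theta_\gamma$), the recursion $g_{\tau\sigma}=x_\tau^{-1}g_\tau x_\tau$ closes up to exactly $Nx=x\,x^{\sigma}\cdots x^{\sigma^{l-1}}$ on the factor indexed by $1$. So your hedging at the end is unnecessary, and the fallback of composing with a conjugation would in any case not literally give the map induced by $p$.
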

\begin{proof}
    See Lemma 5.4 of \cite{MR683003}.
\end{proof}

Now, we will define some important notions of conjugacy which are twisted analogues of $\overline{F}$-conjugacy and stable conjugacy defined in \ref{Fbarconjugacy} and \ref{StableConjugacy} respectively.

\begin{D}[$\overline{F}$-$\sigma$-conjugacy]
    Two elements $x,y \in G(E) = R(F)$ are $\overline{F}$-$\sigma$-conjugate if there exists $g \in R(\overline{F})$ such that $y = gxs(g^{-1})$.
\end{D}

We need to define the group $I_{sx}^*$ which will be the analogue of $G_x^*$ of \ref{stargroupdef} before we can define stable $\sigma$-conjugacy.

\begin{D}[$I_{sx}^*$]
    The group $I_{sx}^*$ is the inverse image under the $E$-isomorphism $p: I_{sx} \xrightarrow[]{} G_{Nx}$ (\ref{twistedlemma}) of the subgroup $G_{Nx}^* \subset G_{Nx}$.
\end{D}

This defines $I_{sx}^*$ as a group over $E$. However, it admits an $F$-structure, which we will describe below. Note that, using \ref{simplyconnectedlemma}, it follows that if $G^{\text{der}}$ is simply connected, then $G_{Nx}^* = G_{Nx}$ and hence $I_{sx}^* = I_{sx}$. Hence, there is an $F$-structure in this case. For the general case, we choose a $z$-extension $\alpha: G' \xrightarrow[]{} G$. Let $R' = \Res_{E/F}G'$. Then, it follows that the induced map $\gamma: R' \xrightarrow[]{} R$ is a $z$-extension.

\begin{Le}
    Let $x \in G(E)$. Then, by \ref{zextensionlemma0}, we can choose a $y \in G'(E) = R'(F)$ such that $\alpha(y) = x$. Let $I'_{sy}$ denote the $\sigma$-centralizer of $y$. Then, we have $\gamma(I'_{sy}) = I_{sx}^*$.
\end{Le}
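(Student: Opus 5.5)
The plan is to prove the equality after base change to $E$, where both sides can be compared inside $G_E$ through the projection $p$ of \ref{twistedlemma}. Since $\gamma(I'_{sy})$ is the image of an $F$-group under an $F$-homomorphism, it is a closed $F$-subgroup of $R$. Once we know it agrees with $I_{sx}^*$ over $E$, the equality endows $I_{sx}^*$ with the desired $F$-structure.

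\textbf{Step 1: reduce to a statement about centralizers in $G_E$.} Write $p': R'_E \to G'_E$ for the projection analogous to $p$. We have $p\circ\gamma = \alpha\circ p'$, because $\gamma=\Res_{E/F}\alpha$ acts factorwise in the decomposition of \ref{twistedcor1}. The norm maps are also compatible: $N(\gamma(y)) = \gamma(Ny)$, so $p(N x) = \alpha(p'(Ny))$. Applying \ref{twistedlemma} to both $G$ and $G'$, the claim becomes the following equality of subgroups of $G_E$, where we abuse notation by writing $Ny$ and $Nx$ for their images under $p'$ and $p$:
$$\alpha\bigl(G'_{Ny}\bigr) = G_{Nx}^* = G^0_{s}\cap G_{Nx}.$$
Here $s$ denotes the semisimple part of $Nx$. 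Since $G'^{\text{der}}$ is simply connected, \ref{simplyconnectedlemma} gives $G'_{Ny} = G'^*_{Ny}\subset G'^0_{s'}$, where $s'$ is the semisimple part of $Ny$. Because $\alpha$ preserves Jordan decompositions, $\alpha(s') = s$.

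\textbf{Step 2: the inclusion $\subseteq$.} The homomorphism $\alpha$ is surjective with central kernel $Z'$, a torus. It follows that $\alpha$ maps $G'_{s'}$ into $G_s$, and hence maps $G'^0_{s'}$ into $G^0_s$, since the image is connected. It also obviously maps $G'_{Ny}$ into $G_{Nx}$. Combining these gives $\alpha(G'_{Ny})\subseteq G^0_s\cap G_{Nx}$.

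\textbf{Step 3: the inclusion $\supseteq$, the main point.} First I will show $\alpha(G'^0_{s'}) = G^0_s$. Take $g\in G_s(\overline{F})$ and lift it to $g'$. Then $g's'g'^{-1} = zs'$ for some $z\in Z'$. The map $g\mapsto z$ is a morphism $G_s\to Z'$ that is trivial on the image of $G'_{s'}$. Comparing Lie algebras, $\mathrm{Lie}(G'_{s'})$ maps onto $\mathrm{Lie}(G_s)$, because $\ker\alpha$ is central and contained in every centralizer. Hence the identity components correspond, which gives surjectivity onto $G^0_s$.

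Now let $g\in G^0_s\cap G_{Nx}$, and lift it to $g'\in G'^0_{s'}$. Then $g'(Ny)g'^{-1} = z\,Ny$ with $z\in Z'$ central. The semisimple part of $z\,Ny$ is $zs'$, while conjugation carries $s'$ to the semisimple part of $g'(Ny)g'^{-1}$. Since $g'$ centralizes $s'$, we get $s' = zs'$, so $z=1$ and $g'\in G'_{Ny}$. This proves the reverse inclusion.

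The step I expect to need the most care is $\alpha(G'^0_{s'}) = G^0_s$. It relies on the kernel being a connected central torus, so that no extra components of $G_s$ appear. If the Lie algebra argument is awkward, the fallback is to argue with maximal tori: choose a maximal torus $T'\ni s'$ and $T=\alpha(T')$. Then the root systems of $G'^0_{s'}$ and $G^0_s$ coincide, since roots of $G'$ factor through the quotient by $Z'$, and both identity components are generated by $T$ and these root groups.

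Finally, I would remark that the result is independent of the choice of the lift $y$. Two lifts differ by an element of $Z'(E)$, which changes $Ny$ only by a central factor and so leaves $G'_{Ny}$ unchanged.
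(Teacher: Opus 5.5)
Your argument is correct. The paper gives no proof of this lemma; it simply refers to Lemma 5.5 of Kottwitz, so your write-up is a self-contained substitute for that citation.

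\textbf{Comparison.} You transport everything into $G_E$ via $p$ (\ref{twistedlemma}). This is natural, since $I^*_{sx}$ is \emph{defined} through $p$, and it reduces the twisted statement to the untwisted identity $\alpha(G'_{Ny})=G^0_{s}\cap G_{Nx}$. You then prove that identity from three ingredients:
\begin{itemize}
\item connectedness of semisimple centralizers in $G'$ (\ref{simplyconnectedlemma});
\item the equality $\alpha(G'_{s'})=G^0_s$;
\item uniqueness of Jordan decomposition, which kills the central discrepancy $z$.
\end{itemize}
The citation buys brevity. Your version shows exactly where simple connectivity of $G'^{\mathrm{der}}$ and centrality of $\ker\alpha$ are used.

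\textbf{Two places need one more line.}

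First, to pass from equality of $p$-images back to equality inside $R$, note that $\gamma(I'_{sy})\subseteq I_{sx}$. This holds because $\gamma$ commutes with $\theta_\sigma$ and $\gamma(y)=x$. Only then does injectivity of $p$ on $I_{sx}$ apply.

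Second, the surjectivity $\Lie(G'_{s'})\to\Lie(G_s)$ does not follow from centrality of $\ker\alpha$ alone. You also need $\Ad(s')$ to be semisimple: if $\Ad(s')X'-X'\in\Lie(\ker\alpha)$, then $(\Ad(s')-1)^2X'=0$, and semisimplicity gives $(\Ad(s')-1)X'=0$.

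\textbf{A shorter route.} You can get $\alpha(G'_{s'})=G^0_s$ without Lie algebras. Your element $z=g's'g'^{-1}s'^{-1}$ is a commutator, so it lies in $\ker\alpha\cap G'^{\mathrm{der}}$. This group is finite, because $\ker\alpha$ is a central torus and $Z(G'^{\mathrm{der}})$ is finite. Hence $g\mapsto z$ is a homomorphism from $G_s$ to a finite group, with kernel $\alpha(G'_{s'})$. That kernel is a closed subgroup of finite index, so it contains $G^0_s$. Being connected (as the image of the connected group $G'_{s'}$), it equals $G^0_s$.
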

\begin{proof}
    See Lemma 5.5 of \cite{MR683003}.
\end{proof}

Hence, with this lemma, we have an alternate description of $I_{sx}^*$ as the subgroup of $I_{sx}$ which is the image of the map $\gamma: I'_{sy} \xrightarrow[]{} I_{sx}$. This description gives an $F$-structure on $I_{sx}^*$.

\begin{C}
    The group $I_{sx}^*$ is an $F$-subgroup of $I_{sx}$.
\end{C}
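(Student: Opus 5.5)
The statement to prove is the last corollary: $I_{sx}^*$ is an $F$-subgroup of $I_{sx}$. The plan is to show that the image $\gamma(I'_{sy})$ of the $F$-morphism $\gamma\colon I'_{sy}\to I_{sx}$ is a closed $F$-subgroup, and then to invoke the preceding lemma, which identifies this image with $I_{sx}^*$. The argument has three steps.

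\emph{Step 1: $\gamma$ restricts to an $F$-morphism $I'_{sy}\to I_{sx}$.} The map $\gamma\colon R'\to R$ is $\Res_{E/F}$ applied to the $F$-homomorphism $\alpha$, so it is defined over $F$. Since $\alpha$ is defined over $F$, $\gamma$ also commutes with the automorphisms $s'=\theta_\sigma$ on $R'$ and $s=\theta_\sigma$ on $R$. Suppose $g\in R'$ satisfies $g\,y\,s'(g^{-1})=y$. Applying $\gamma$ gives $\gamma(g)\,x\,s(\gamma(g)^{-1})=x$, using $\gamma(y)=\alpha(y)=x$. So $\gamma$ maps $I'_{sy}$ into $I_{sx}$. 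Both groups are defined over $F$: each is the stabilizer of an $F$-rational point for an $F$-rational twisted-conjugation action, since $y\in R'(F)$ and $x\in R(F)$.

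\emph{Step 2: the image is a closed $F$-subgroup.} The image of a homomorphism of affine algebraic groups over a field is a closed subgroup. When the homomorphism is defined over $F$ and $F$ has characteristic $0$, this image is defined over $F$: it is the scheme-theoretic image, which commutes with flat base change $F\to\overline F$. Hence $\gamma(I'_{sy})\subset I_{sx}$ is an $F$-subgroup.

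\emph{Step 3: conclude with the preceding lemma.} That lemma gives $\gamma(I'_{sy})=I_{sx}^*$ as subgroups after base change to $E$ (or $\overline F$). Therefore $I_{sx}^*$ acquires the $F$-structure described in Step 2.

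The main point needing care is that this $F$-structure does not depend on the choices of the $z$-extension $\alpha$ and of the lift $y$. Any two $F$-subgroups of $I_{sx}$ with the same $\overline F$-points coincide as $F$-subgroups: a closed subgroup is determined by its $\overline F$-points (reduced, characteristic $0$), and it is $\Gamma_F$-stable exactly when it descends. So independence follows once we know the underlying subgroup $I_{sx}^*$ is intrinsic, and it is, being defined through $p$ and $G_{Nx}^*$.
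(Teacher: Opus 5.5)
Your proposal is correct and follows the paper's route. The paper also deduces the corollary directly from the preceding lemma (Lemma 5.5 of Kottwitz), taking the $F$-structure on $I_{sx}^*$ to be the one it acquires as the image $\gamma(I'_{sy})$ of an $F$-homomorphism into $I_{sx}$; your Steps 1--2 and your remark on independence of the $z$-extension and of the lift $y$ make explicit points the paper leaves implicit.
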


\begin{D}[Stable $\sigma$-conjugacy]
    Two elements $x,y \in G(E) = R(F)$ are stably $\sigma$-conjugate if there exists $g \in R(\overline{F})$ such that $y = gxs(g^{-1})$ and $g^{-1} \cdot g^{\tau} \in I_{sx}^*(\overline{F})$ for all $\tau \in \Gamma_F$. For $E = F$, so that $s$ is the identity, this recovers \ref{StableConjugacy}.
\end{D}

Now, we can finally describe the fibres of the map $\mathfrak{N}$.

\begin{Le}
    Let $x,y \in G(E)$. Then $x,y$ are stably $\sigma$-conjugate if and only if $\mathfrak{N}x = \mathfrak{N}y$.
\end{Le}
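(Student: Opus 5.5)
We reduce to the case where $G^{\text{der}}$ is simply connected by means of a $z$-extension, and there relate stable $\sigma$-conjugacy to $\overline{F}$-conjugacy of norms.

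\emph{Simply connected case.} Assume first that $G^{\text{der}}$ is simply connected, so that $I_{sx}^*=I_{sx}$.

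For the forward direction, suppose $y=gxs(g^{-1})$ with $g\in R(\overline{F})$. A direct computation gives
$$Ny=g\cdot Nx\cdot s^l(g^{-1})=g\,Nx\,g^{-1},$$
since $s^l=\mathrm{id}$. Passing through the projection $p$ of Lemma \ref{twistedlemma}, the components $Nx$ and $Ny$ in $G(\overline{F})$ are therefore $\overline{F}$-conjugate. Hence $\psi(Nx)$ and $\psi(Ny)$ define the same $G_0(\overline{F})$-conjugacy class. By Lemma \ref{conjugacyclassdefoverFlemma} this class determines a single stable class, so $\mathfrak{N}x=\mathfrak{N}y$.

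For the converse, suppose $\mathfrak{N}x=\mathfrak{N}y$. Then $Ny=h\,Nx\,h^{-1}$ for some $h\in G(\overline{F})$. We first lift $h$ to $g\in R(\overline{F})$ with $y=gxs(g^{-1})$. Identify $R_{\overline{F}}\simeq G^l$ via Corollary \ref{twistedcor1}, so that $x=(x_1,\dots,x_l)$ and $s$ cyclically permutes the factors. The equation $y=gxs(g^{-1})$ then becomes
$$g_{i+1}=y_i^{-1}g_ix_i \qquad (1\le i\le l).$$
Set $g_1=h$ and define $g_2,\dots,g_l$ by this recursion. The closing condition, returning to $g_1$ after $l$ steps, is exactly $Ny\cdot h=h\cdot Nx$, which holds by the choice of $h$. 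This is the classical "$\overline{F}$-$\sigma$-conjugacy equals $\overline{F}$-conjugacy of norms" statement.

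Next we check the cocycle condition $g^{-1}g^{\tau}\in I_{sx}(\overline{F})$ for every $\tau\in\Gamma_F$. Since $x$ and $y$ are $F$-rational, $g^{\tau}$ also $\sigma$-conjugates $x$ to $y$. Hence $g^{-1}g^{\tau}$ $\sigma$-centralizes $x$, so it lies in $I_{sx}(\overline{F})=I_{sx}^*(\overline{F})$. Thus $x$ and $y$ are stably $\sigma$-conjugate.

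\emph{General case.} Choose a $z$-extension $\alpha:G'\to G$ and let $\gamma:R'\to R$ be the induced map. By Lemma \ref{zextensionlemma0}, lift $x$ and $y$ to elements $x',y'\in G'(E)$. By Lemma \ref{normmapdef2}, $\mathfrak{N}x=\mathfrak{N}y$ means $\beta(\mathfrak{N}x')=\beta(\mathfrak{N}y')$.

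The main obstacle is adjusting the lifts so that $\mathfrak{N}x'=\mathfrak{N}y'$ holds on the nose. Let $Z=\ker\alpha$, a product of tori $\Res_{K/F}\mathbb{G}_m$. The stable classes $\mathfrak{N}x'$ and $\mathfrak{N}y'$ then differ by some $z\in Z(F)$. We would show that $z$ is a norm from $Z(E)$. Since $Z$ is induced, Hilbert 90 gives the vanishing of the relevant $\hat H^0$-type obstruction, and Shapiro's lemma handles each factor $\Res_{K/F}\mathbb{G}_m$. Once $z=N(z')$ with $z'\in Z(E)$, we replace $y'$ by $y'z'^{-1}$, which still lifts $y$ and now satisfies $\mathfrak{N}x'=\mathfrak{N}y'$.

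Apply the simply connected case to $G'$. This gives $g'$ with $y'=g'x's(g'^{-1})$ and $g'^{-1}g'^{\tau}\in I'_{sx'}$. Set $g=\gamma(g')$. Then $g$ $\sigma$-conjugates $x$ to $y$, and
$$g^{-1}g^{\tau}\in\gamma(I'_{sx'})=I_{sx}^*$$
by the lemma identifying $\gamma(I'_{sy})$ with $I_{sx}^*$. So $x$ and $y$ are stably $\sigma$-conjugate.

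For the forward direction in general, lift a conjugating $g$ to $R'(\overline{F})$. The lifted element conjugates $x'$ to $y'\cdot c$ for some central $c\in Z(\overline{F})$. The key point is that the cocycle condition forces $c$ to be arranged $F$-rationally, after which Lemma \ref{normmapdef2} pushes the equality of stable norm classes down to $G$. We would compare this step against Kottwitz's original §5 argument in \cite{MR683003}.
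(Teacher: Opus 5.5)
The simply connected case is fine. There $I^*_{sx}=I_{sx}$, so stable $\sigma$-conjugacy is just $\overline{F}$-$\sigma$-conjugacy, and the telescoping identity together with your recursion $g_{i+1}=y_i^{-1}g_ix_i$ handles both directions.

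\textbf{The gap is in the passage through the $z$-extension.} For the converse you need $z\in Z(F)$ to be a norm from $Z(E)$, and that is false in general. Hilbert 90 (with Shapiro) kills $H^1(\mathrm{Gal}(E/F),Z(E))$, which for a cyclic group is $\hat H^{-1}$. Your obstruction is instead $\hat H^{0}(\mathrm{Gal}(E/F),Z(E))=Z(F)/NZ(E)$, which is already $F^\times/N_{E/F}E^\times\neq 1$ for $Z=\mathbb{G}_m$ and $F$ local.

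No better choice of lifts rescues the strategy. Take $E/F$ quadratic, $G=\mathrm{PGL}_2$, $G'=\GL_2$, $z\in F^\times\smallsetminus N_{E/F}E^\times$, $Y=\begin{pmatrix}0&z\\1&0\end{pmatrix}$, $x=1$, and $y=\bar Y$ the image of $Y$.
\begin{itemize}
\item Since $Y^2=z$, we have $\mathfrak{N}x=\mathfrak{N}y=1$.
\item $x,y$ are stably $\sigma$-conjugate: $g=(\bar Y,1)\in R(\overline F)=\mathrm{PGL}_2(\overline F)^2$ gives $gs(g)^{-1}=y$ and $g^{-1}\tau(g)\in\{1,(\bar Y,\bar Y)\}\subset I_{s1}(\overline F)=I^*_{s1}(\overline F)$.
\item Yet all lifts $x'=aI$, $y'=bY$ satisfy $Nx'=N(a)I\neq N(b)zI=Ny'$.
\end{itemize}
So a stable $\sigma$-conjugacy in $G$ need not lift to one between any pair of lifts in $G'$.

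For the same reason, your forward-direction claim that $c$ can be arranged $F$-rationally fails. For $g'=(Y,1)$ one gets $c=(1,z^{-1})$, and every $c\,ws(w)^{-1}$ has component product $z^{-1}$, which is not a norm.

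Separately, ``the stable classes differ by $z\in Z(F)$'' needs the untwisted analogue of $\gamma(I'_{sy})=I^*_{sx}$. From $\overline F$-conjugacy alone you only get $z\in Z(\overline F)$.

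\textbf{The missing idea is to make the central correction over $\overline F$ and ask only that the norm of the central discrepancy be rational.} Over $\overline F$ the correction is unobstructed: in $Z_{R'}(\overline F)=Z(\overline F)^l$, with $Z_{R'}=\ker\gamma$, the norm-one elements are exactly the $vs(v)^{-1}$.

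For the converse, write $\mathfrak{N}y'=\mathfrak{N}x'\cdot z$ with $z\in Z(F)$. Take $w=(z,1,\dots,1)\in Z_{R'}(\overline F)$ and use your recursion to get $g'\in R'(\overline F)$ with $g'x's(g')^{-1}=y'w^{-1}$. Then $h_\tau=g'^{-1}\tau(g')$ satisfies $h_\tau x's(h_\tau)^{-1}=x'\,w\tau(w)^{-1}$. Because $z$ is rational, $w\tau(w)^{-1}$ has norm $1$, so $h_\tau v_\tau\in I'_{sx'}(\overline F)$ for some central $v_\tau$. Hence $g=\gamma(g')$ satisfies $gxs(g)^{-1}=y$ and $g^{-1}\tau(g)=\gamma(h_\tau v_\tau)\in I^*_{sx}(\overline F)$.

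For the forward direction, lift $g$ to $g'$ with $g'x's(g')^{-1}=y'c$. Since $g^{-1}\tau(g)\in\gamma(I'_{sx'})$, you can write $g'^{-1}\tau(g')=hu$ with $h\in I'_{sx'}(\overline F)$ and $u$ central. This gives $\tau(c)=c\,us(u)^{-1}$. So $Nc$, all of whose components equal $z:=\prod_i c_i$, is $\Gamma_F$-fixed, i.e.\ $z\in Z(F)$. Then $Nx'$ is $\overline F$-conjugate to $Ny'\cdot z$, so $\mathfrak{N}x'=\mathfrak{N}y'\cdot z$, and applying $\beta$ gives $\mathfrak{N}x=\mathfrak{N}y$.

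This is the content of Kottwitz's Proposition 5.7, which the paper simply cites.
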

\begin{proof}
    See Proposition 5.7 of \cite{MR683003}.
\end{proof}

\section{Proofs of Lemma A and Lemma B}\label{2.4}
\subsection{Isocrystal attached to a stable conjugacy class}
We will continue using the notation introduced in the previous sections.
\begin{itemize}
    \item Let $G$ be a connected reductive group over a $p$-adic field $F$.
    \item Let $\gamma$ be a semisimple conjugacy class in $G(F)$, with identity connected component of the centralizer $I = G_{\gamma}^0$.
    \item Using the map $G(F) \xrightarrow[]{} G(L)$, we can associate to $\gamma$ an element of $B(G)$, we denote by $b_{\gamma}^G \in B(G)$.
\end{itemize}

\begin{Le}\label{isocrystalstableconj}
    If $\gamma' \in G(F)$ is stably conjugate to $\gamma \in G(F)$, then $b_{\gamma}^G = b_{\gamma'}^G$.
\end{Le}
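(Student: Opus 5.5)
The plan is to use the injectivity of $\overline{\nu}_G \times \kappa_G$ from Lemma \ref{Newonkottwitzcharisoc}: two classes in $B(G)$ coincide as soon as their Newton points and their Kottwitz points coincide. So it suffices to prove $\overline{\nu}_G(b_{\gamma}^G)=\overline{\nu}_G(b_{\gamma'}^G)$ and $\kappa_G(b_{\gamma}^G)=\kappa_G(b_{\gamma'}^G)$ separately. Throughout, fix $g \in G(\overline{F})$ with $\gamma'=g\gamma g^{-1}$ and $g^{-1}\tau(g)\in I(\overline{F})$ for all $\tau\in\Gamma_F$.

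\textbf{Newton points.} Choose a maximal $F$-torus $T\subset I$. It contains $\gamma$, since $\gamma\in Z(I)$. Put $T'=gTg^{-1}$. By Lemma \ref{stabconjlem1}, $\Int(g)\colon I\to I'=G^0_{\gamma'}$ is an inner twisting. Since $g^{-1}\tau(g)\in I$ and $\gamma\in Z(I)$, the map $\Int(g)|_T\colon T\to T'$ is $\Gamma_F$-equivariant after modifying $g$ by an element of $I(\overline{F})$ (a maximal $F$-torus of $I'$ is the image of one of $I$ under an inner twisting up to $I'(\overline{F})$-conjugacy). By functoriality of the slope morphism (Lemma \ref{slopefunctoriality}) applied to $T\hookrightarrow G$, the Newton point of $b^G_\gamma$ is the $G(\overline{F})$-conjugacy class of $\nu_T(\gamma)$. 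Corollary \ref{slopetorispecialcase} characterises $\nu_T(\gamma)$ by $\langle\nu_T(\gamma),\lambda\rangle=\val(\lambda(\gamma))$ for all $\lambda\in X^*(T)\otimes\mathbb{Q}$, and similarly for $\nu_{T'}(\gamma')$. Transporting characters by $\Int(g)$ gives $\lambda(\gamma)=(\lambda\circ\Int(g)^{-1})(\gamma')$. Hence $\nu_{T'}(\gamma')=\Int(g)\circ\nu_T(\gamma)$, which is $G(\overline{F})$-conjugate to $\nu_T(\gamma)$. By Lemma \ref{slopesoverclosure} and Definition \ref{newtonmap}, the two Newton points agree. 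I would take care that only $F$-rationality of the values $\lambda(\gamma)$ up to Galois conjugation is used; this is exactly the content of Corollary \ref{slopetorispecialcase}.

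\textbf{Kottwitz points.} I would first treat the case $G^{\mathrm{der}}$ simply connected. Then $\pi_1(G)\to\pi_1(D(G))=X_*(D(G))$ is a $\Gamma_F$-isomorphism, where $D(G)=G/G^{\mathrm{der}}$. By functoriality (Lemma \ref{kottwitzmap}), $\kappa_G(b^G_\gamma)$ is identified with $\kappa_{D(G)}$ of the image of $\gamma$ in $D(G)(F)$. Since $D(G)$ is abelian and $\gamma'=g\gamma g^{-1}$, the elements $\gamma$ and $\gamma'$ have the same image in $D(G)(\overline{F})$, hence in $D(G)(F)$. For general $G$, I choose a $z$-extension $\alpha\colon G'\to G$ and lift $\gamma,\gamma'$ to $\tilde\gamma,\tilde\gamma'\in G'(F)$ by Lemma \ref{zextensionlemma0}. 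The step I expect to be the main obstacle is arranging these lifts to be stably conjugate in $G'$, which requires adjusting one of them by an element of $\ker\alpha(F)$. I would argue as follows. Since $\ker\alpha$ is central, $\tilde\gamma'$ and $\tilde g\tilde\gamma\tilde g^{-1}$ differ by some $z\in\ker\alpha(\overline{F})$, for any lift $\tilde g$ of $g$. This $z$ is $\Gamma_F$-invariant, because $\tilde g^{-1}\tau(\tilde g)$ lies in $\alpha^{-1}(I)$, and that group centralises $\tilde\gamma$ modulo the central torus; indeed $\alpha^{-1}(I)=G'^0_{\tilde\gamma}$, using connectedness of centralisers in $G'$ (Lemma \ref{simplyconnectedlemma}). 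Replacing $\tilde\gamma'$ by $z^{-1}\tilde\gamma'$ then gives stably conjugate lifts. Applying the simply connected case to $G'$ and pushing forward along $\alpha$ via Lemma \ref{kottwitzmap} yields $\kappa_G(b^G_\gamma)=\kappa_G(b^G_{\gamma'})$.

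\textbf{Fallback.} If the $z$-extension bookkeeping becomes awkward, I would instead try a direct argument. By Steinberg's theorem, $H^1(L,I)=1$ for the connected group $I$, so one can modify $g$ by an element of $I(\overline{L})$ to get $h\in G(L)$ with $h\gamma h^{-1}=\gamma'$. Then $c=h^{-1}\sigma(h)$ lies in $I(L)$, and the remaining task is to show that $\gamma c$ is $\sigma$-conjugate to $\gamma$ inside $I$. That is a Lang-type statement on $I$, which is less clean. For this reason the $(\overline{\nu}_G,\kappa_G)$ route is the one I would pursue first.
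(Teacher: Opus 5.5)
Your argument is correct, but it is organized differently from the paper's.

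The paper works inside $I$. It trivializes the cocycle $g^{-1}\tau(g)$ over $L$ to obtain $g\in G(L)$ and the basic class $b^I_\tau=g^{-1}\sigma(g)\in B(I)$ coming from $\ker(H^1(F,I)\to H^1(F,G))$. It identifies $I'$ with $J_{b^I_\tau}$ and uses Lemma \ref{isocrystalsandinnerforms} to write $g^{-1}\gamma'\sigma(g)=b^I_\gamma b^I_\tau$. The Newton points then agree because $b^I_\tau$ has trivial Newton point. The Kottwitz points agree in $\pi_1(G)_{\Gamma_F}$ because $\kappa_I(b^I_\tau)$ is trivial on $Z(\Hat{G})^{\Gamma_F}$ (Lemma \ref{conjclassinsidestable}).

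You handle the two invariants by separate means. The Newton point comes from a torus computation that uses only $\overline{F}$-conjugacy; the paper makes the same observation, via $m_G$, in the remark after Lemma \ref{equalityofslopes}. The Kottwitz point comes from the cocenter and a $z$-extension, and stable conjugacy enters only to make the central correction $z$ rational. That is exactly where it has to enter. In $\mathrm{PGL}_2(\mathbb{Q}_p)$ the images of $\mathrm{diag}(1,-1)$ and $\left(\begin{smallmatrix}0&1\\ p&0\end{smallmatrix}\right)$ are $\overline{F}$-conjugate but have Kottwitz points $0$ and $1$, and correspondingly no rational central correction exists. Your route avoids $J_b$, the map $H^1(F,I)\to B(I)$ and Tate--Nakayama, at the cost of $z$-extensions. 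The paper's route gives an explicit $\sigma$-conjugation in $I(L)$ and identifies the class in $B(I)$ that dies in $B(G)$.

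A few details need tidying.

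\textbf{Newton step.} Making $\Int(g)|_T$ $\Gamma_F$-equivariant by changing $g$ within $gI(\overline{F})$ requires the class of $g^{-1}\tau(g)$ to lie in the image of $H^1(F,T)\to H^1(F,I)$. This holds for $T$ elliptic in $I$ (Lemma \ref{elliptictemp}) but not for an arbitrary maximal $F$-torus. You do not need it. Replace $g$ by $i'g$ with $i'\in I'(\overline{F})$ so that $T'=gTg^{-1}$ is a maximal $F$-torus of $I'$; it then contains $\gamma'$. Corollary \ref{slopetorispecialcase} applied to $T$ and to $T'$ gives $\nu_{T'}(\gamma')=\Int(g)\circ\nu_T(\gamma)$, which is all you use.

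\textbf{Kottwitz step.} Connectedness of $G'_{\tilde\gamma}$ only gives the inclusion $G'_{\tilde\gamma}\subset\alpha^{-1}(I)$. For equality, note that $\alpha^{-1}(I)$ is connected. Both groups also have dimension $\dim I+\dim\ker\alpha$, because the roots of $G'$ are pulled back from those of $G$.

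\textbf{Fallback.} Your fallback aims at a statement that is false in general: $\gamma c$ is usually \emph{not} $\sigma$-conjugate to $\gamma$ inside $I(L)$. Indeed $\kappa_I(\gamma c)=\kappa_I(\gamma)+\kappa_I(c)$, and $c$ represents the class of $g^{-1}\tau(g)$ in $H^1(F,I)$, which is nontrivial unless $\gamma'$ is $G(F)$-conjugate to $\gamma$. The paper's proof is essentially the repaired version of this fallback: it compares the two classes only after passing to $B(G)$.
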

\begin{proof}
    Since $\gamma, \gamma'$ are stably conjugate, there exists $g \in G(\overline{F})$ such that $\gamma' = g \gamma g^{-1}$ and the cocycle $\tau \mapsto c_{\tau} = g^{-1} \tau (g)$ for $\tau \in \Gamma_F$ lies in $H^1(F,I)$ (see \ref{StableConjugacy}).
    
    \textit{Centralizers $I, I'$ are inner forms.} Let $I' = G_{\gamma'}^0$. Semisimplicity of $\gamma$ and $\gamma'$ implies that $I$ and $I'$ are connected reductive groups over $F$, while stable conjugacy implies that $\Int(g):I \xrightarrow[]{} I'$ is an inner twisting by \ref{stabconjlem1}. Moreover, $gG_{\gamma}g^{-1}=G_{\gamma'}$ and $gIg^{-1}=I'$.

    \textit{The stable conjugacy cocycle determines an element $b_{\tau}^I \in B(I)$.} The coset $X=gI$ is defined over $F$, since $g^{-1}\tau(g)=c_\tau\in I$ for every $\tau\in\Gamma_F$, and is an $I$-torsor whose class is represented by the cocycle $\tau\mapsto c_\tau$. Now $H^1(L,I)$ is trivial: indeed, $L$ is complete with algebraically closed residue field $\overline{k_F}$ and has cohomological dimension at most $1$, so Steinberg's theorem applies. Hence $X(L)$ is non-empty, and after choosing an $L$-point of $X$ we may assume that $g\in G(L)$. In the description of the map $H^1(F,I)\xrightarrow[]{}B(I)$ of \ref{cohomisocrystal} obtained from this trivialization over $L$, the class of $X$ maps to
    $$b_\tau^I=g^{-1}\sigma(g)\in B(I).$$
    This element is basic by Lemma \ref{basicandcohomlemma}.
    
    \textit{Transfer of isocrystals for inner forms $I,I'$.} Now, we will apply Lemma \ref{isocrystalsandinnerforms} for the element $b_{\tau}^I \in B(I)_b$. First, note that $J_{b_{\tau}^I} = I'$, by using Lemma \ref{characterizationJ} (in the application of this lemma, we have $u = \Int(g^{-1})$, where $\Int(g^{-1}): (-) \mapsto g^{-1}(-)g$). Next, we can view $\gamma'$ as an element of $B(I') = B(J_{b_{\tau}^I})$ using the map $I'(F) \xrightarrow[]{} I'(L)$. Denote this element by $b_{\gamma'}^{I'}$ (we use superscripts to distinguish it from $b_{\gamma'}^G$). Similarly, we have the element $b_{\gamma}^I \in B(I)$. Thus, under the map $B(J_{b_{\tau}}) \xrightarrow[]{} B(I)$ of lemma \ref{isocrystalsandinnerforms}, the element $b_{\gamma'}^{I'}$ is mapped to
    $$ u(b_{\gamma'}^{I'}) b_{\tau}^I = \Int(g^{-1}) (\gamma') b_{\tau}^I = (g^{-1} \gamma' g) (g^{-1} \sigma(g)) = g^{-1} \gamma' \sigma(g) = \gamma g^{-1} \sigma(g) = b_{\gamma}^Ib_{\tau}^I.$$

    \textit{Relationship between Newton points of $b_{\gamma'}^{I'} \in B(I')$ and $b_{\gamma}^Ib_{\tau}^I \in B(I)$}. Let $I_0$ denote the quasi split inner form of $I$ (and also $I'$) with a maximal split torus $A_{I_0} \subset I_0$. Let $\mathfrak{U}_{A_{I_0}} = X_*(A_{I_0}) \otimes \mathbb{R}$ (similar to the notation for the Newton map in Section \ref{2.1}). Let $\overline{\nu}_I$ and $\overline{\nu}_{I'}$ denote the Newton maps of $I$ and $I'$ respectively. Then, we have the following commutative square from Lemma \ref{isocrystalsandinnerforms}
    \begin{center}
        \begin{tikzcd}
            B(I') \arrow{r}{\times b_{\tau}^I} \arrow{d}{\overline{\nu}_{I'}} & B(I) \arrow{d}{\overline{\nu}_I} \\
            \mathfrak{U}_{A_{I_0}} \arrow{r}{+ \overline{\nu}_I(b_{\tau}^I)} & \mathfrak{U}_{A_{I_0}}
        \end{tikzcd}
    \end{center}
    Here the bottom map is translation by $\overline{\nu}_I(b_{\tau}^I)$, which is trivial by Lemma \ref{basicandcohomlemma}, so the bottom map is the identity. Hence the Newton points of $b_{\gamma'}^{I'} \in B(I')$ and $b_{\gamma}^I b_{\tau}^I \in B(I)$ in $\mathfrak{U}_{A_{I_0}}$ are the same.

    \textit{Relationship between Kottwitz points of $b_{\gamma'}^{I'} \in B(I')$ and $b_{\gamma}^I b_{\tau}^I \in B(I)$}.
    Let $\kappa_I$ and $\kappa_{I'}$ denote the Kottwitz maps of $I$ and $I'$ respectively. Note that, since $I$ and $I'$ are inner forms, $X^*(Z(\Hat{I_0})^{\Gamma_F}) = X^*(Z(\Hat{I})^{\Gamma_F}) = X^*(Z(\Hat{I'})^{\Gamma_F})$. We have the following commutative square from Lemma \ref{isocrystalsandinnerforms}
    \begin{center}
        \begin{tikzcd}
            B(I') \arrow{r}{\times b_{\tau}^I} \arrow{d}{\kappa_{I'}} & B(I) \arrow{d}{\kappa_I} \\
            X^*(Z(\Hat{I_0})^{\Gamma_F}) \arrow{r}{+ \kappa_I(b_{\tau}^I)} & X^*(Z(\Hat{I_0})^{\Gamma_F})
        \end{tikzcd}
    \end{center}
    Hence, the Kottwitz point of $b_{\gamma}^I b_{\tau}^I \in B(I)$ is the sum of the Kottwitz point of $b_{\gamma'}^{I'} \in B(I')$ and the Kottwitz point of $b_{\tau}^{I} \in B(I)$.

    \textit{Property of the Kottwitz point of $b_{\tau}^I \in B(I)$}. Combining Lemma \ref{KottwitzTateNakayama} and Lemma \ref{conjclassinsidestable} we get the diagram
    \begin{center}
        \begin{tikzcd}
            B(I) \arrow{d}{\gamma} & H^1(F,I) \arrow{r}{} \arrow{l}{} \arrow{d}{\sim} & H^1(F,G) \arrow{d}{\sim} \\
            X^*(Z(\Hat{I})^{\Gamma_F}) & X^*(\pi_0(Z(\Hat{I})^{\Gamma_F})) \arrow{l}{} \arrow{r}{} & X^*(\pi_0(Z(\Hat{G})^{\Gamma_F}))
        \end{tikzcd}
    \end{center}
    Since $b_{\tau}^I \in \ker(H^1(F,I) \xrightarrow[]{} H^1(F,G))$, it follows that $\kappa_I(b_{\tau}^I)$ is a character of $Z(\Hat{I})^{\Gamma_F}$ that is constant on the connected components and trivial on the image of $Z(\Hat{G})^{\Gamma_F}$ under the embedding $Z(\Hat{G}) \xrightarrow[]{} Z(\Hat{I})$ of \ref{stableconjconst}.

    \textit{The isocrystal $b_{\gamma}^I \in B(I)$ is basic}. Since $\gamma \in Z(I)(F)$, choose a maximal torus $T_I$ of $I$ defined over $F$. Then $\gamma \in T_I$, and we denote by $b_{\gamma}^{T_I}$ the corresponding element of $B(T_I)$. For every root $\alpha$ of $I$ with respect to $T_I$, Corollary \ref{slopetorispecialcase} gives
    $$\langle \nu_{T_I}(b_{\gamma}^{T_I}),\alpha\rangle=\val(\alpha(\gamma))=0,$$
    since $\gamma \in Z(I)$. Thus $\nu_{T_I}(b_{\gamma}^{T_I})$ factors through $Z(I)$, or equivalently is invariant under $\Omega_I(\overline{F})$. By the functoriality of the slope morphism for $T_I \xrightarrow[]{} I$ (see \ref{slopefunctoriality}), the slope morphism of $b_{\gamma}^I$ also factors through $Z(I)$, so $b_{\gamma}^I$ is basic.

    \textit{Transfer of isocrystals on $I$.} We will apply Lemma \ref{isocrystalsandinnerforms} for the element $b_{\gamma}^I \in B(I)_b$. Since $\gamma \in Z(I)(F)$, it follows from \ref{characterizationJ} that $J_{b_{\gamma}^I} = I$ and $u$ is the identity map. Thus, under the map $B(I) \xrightarrow[]{} B(I)$ of Lemma \ref{isocrystalsandinnerforms}, the element $b_{\tau}^I \in B(I)$ is mapped to $b_{\tau}^I b_{\gamma}^I = b_{\gamma}^Ib_{\tau}^I$ (the commutativity holds since $\gamma \in Z(I)$).
    
    \textit{Relationship between Newton points of $b_{\gamma}^I \in B(I)$ and $b_{\gamma}^I b_{\tau}^I \in B(I)$}. Application of Lemma \ref{isocrystalsandinnerforms} in the above setup gives
    \begin{center}
        \begin{tikzcd}
            B(I) \arrow{r}{\times b_{\gamma}^I} \arrow{d}{\overline{\nu}_I} & B(I) \arrow{d}{\overline{\nu}_I} \\
            \mathfrak{U}_{A_{I_0}} \arrow{r}{+ \overline{\nu}_I(b_{\gamma}^I)} & \mathfrak{U}_{A_{I_0}}
        \end{tikzcd}
    \end{center}
    Applying this square to $b_{\tau}^I$ and using $\overline{\nu}_I(b_{\tau}^I)=0$ by \ref{basicandcohomlemma}, we obtain
    $$\overline{\nu}_I(b_{\gamma}^I b_{\tau}^I)=\overline{\nu}_I(b_{\tau}^I)+\overline{\nu}_I(b_{\gamma}^I)=\overline{\nu}_I(b_{\gamma}^I).$$
    Thus $b_{\gamma}^I$ and $b_{\gamma}^I b_{\tau}^I$ have the same Newton point.

    \textit{Relationship between Kottwitz points of $b_{\gamma}^I \in B(I)$ and $b_{\gamma}^I b_{\tau}^I \in B(I)$}. Continuing with the application of Lemma \ref{isocrystalsandinnerforms} we get
    \begin{center}
        \begin{tikzcd}
            B(I) \arrow{r}{\times b_{\gamma}^I} \arrow{d}{\kappa_I} & B(I) \arrow{d}{\kappa_I} \\
            X^*(Z(\Hat{I})^{\Gamma_F}) \arrow{r}{+ \kappa_I(b_{\gamma}^I)} & X^*(Z(\Hat{I})^{\Gamma_F})
        \end{tikzcd}
    \end{center}
    Hence, the Kottwitz point of $b_{\gamma}^I b_{\tau}^I \in B(I)$ is the sum of the Kottwitz point of $b_{\gamma}^I$ and the Kottwitz point of $b_{\tau}^I$.

    \textit{Relation between Newton points of $b_{\gamma}^G, b_{\gamma'}^G \in B(G)$}. We want to show that the isocrystals $b_{\gamma}^G, b_{\gamma'}^G \in B(G)$ have the same Newton point in $\mathcal{N}(G)$. It suffices to show this for $\gamma$ and $g^{-1}\gamma'\sigma(g)$. Note that, $g^{-1} \gamma' \sigma(g) = b_{\gamma}^I b_{\tau}^I$ in $B(I)$. Since $b_{\gamma}^I$ and $b_{\gamma}^I b_{\tau}^I$ have the same Newton point, they also have the same slope morphism: $\nu_I(b_{\gamma}^I) = \nu_I(b_{\gamma}^I b_{\tau}^I) \in \mathcal{N}(I)$. Using the functoriality of the slope morphism \ref{slopefunctoriality}, it follows that $\gamma$ and $g^{-1}\gamma'\sigma(g)$ have the same slope morphism, and hence the same Newton point in $\mathcal{N}(G)$. Equality of the isocrystals themselves is deduced only in the conclusion below, from the Newton and Kottwitz points together.

    \textit{Relation between Kottwitz points of $b_{\gamma}^G, b_{\gamma'}^G \in B(G)$}. By the preceding discussion, the character $\kappa_I(b_{\tau}^I)$ restricts trivially to $Z(\Hat{G})^{\Gamma_F}$. Functoriality of the Kottwitz map for $I \xrightarrow[]{} G$, together with
    $$\kappa_I(b_{\gamma}^I b_{\tau}^I)=\kappa_I(b_{\gamma}^I)+\kappa_I(b_{\tau}^I),$$
    therefore shows that $b_{\gamma}^I$ and $b_{\gamma}^I b_{\tau}^I$ have the same Kottwitz point after mapping to $B(G)$. Since $g^{-1}\gamma'\sigma(g)=b_{\gamma}^I b_{\tau}^I$ is $\sigma$-conjugate to $\gamma'$ in $G(L)$, it follows that $b_{\gamma}^G$ and $b_{\gamma'}^G$ have the same Kottwitz point.

    \textit{Conclusion}. We have shown that the Newton point and Kottwitz point of $\gamma, \gamma' \in B(G)$ are equal. Hence, by Lemma \ref{Newonkottwitzcharisoc}, it follows that $b_{\gamma}^G = b_{\gamma'}^G \in B(G)$.
\end{proof}

\subsection{Alternative construction of slope morphism}
We will work with the following setup in this subsection.
\begin{itemize}
    \item Let $G$ be a linear algebraic group over a $p$-adic field $F$.
    \item Let $\gamma \in G(F)$, which we can naturally view as a $G$-isocrystal under the embedding $G(F) \xrightarrow[]{} G(L)$.
\end{itemize}

In this subsection, we want to give an alternative construction of the slope morphism $\nu_G(\gamma) \in \Hom_L(\mathbb{D},G)$ associated to $\gamma$.

\begin{D}[$\langle \gamma \rangle$]
    For $\gamma \in G(F)$, let $\langle \gamma \rangle$ denote the Zariski closure of the group generated by $\gamma$. This is a closed commutative $F$-subgroup of $G$. It need not be connected: for a $\gamma$ of finite order in $\mathbb{G}_m$ it is a finite group. By \ref{delignelemma} below, it is the direct product of a group of multiplicative type with a unipotent group.
\end{D}

\begin{Le}\label{delignelemma}
    Let $\gamma = \gamma_m\gamma_u$ be the Jordan decomposition of $\gamma$ with $\gamma_m \in G(F)$ the semisimple part and $\gamma_u \in G(F)$ the unipotent part. The group $\langle \gamma \rangle$ is the direct product of a group $\langle \gamma \rangle_m$ of multiplicative type with a unipotent group $\langle \gamma \rangle_u$, where $\langle \gamma \rangle_m = \langle \gamma_m \rangle$ and $\langle \gamma \rangle_u = \langle \gamma_u \rangle$.
\end{Le}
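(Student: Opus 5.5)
The plan is to reduce to a faithful representation and then apply the multiplicative Jordan decomposition. This is the standard argument, as in Borel's \emph{Linear algebraic groups}, \S 4, or Deligne's treatment.

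First I fix a closed embedding $G \hookrightarrow \GL_n$ over $F$. Zariski closures in $G$ and in $\GL_n$ agree because $G$ is closed, so it suffices to treat $G = \GL_n$. The Jordan decomposition $\gamma = \gamma_m\gamma_u$ is defined over $F$, since $F$ is perfect of characteristic zero. Moreover $\gamma_m$ and $\gamma_u$ are polynomials in $\gamma$, so all three elements commute. The closed subgroup $A = \langle \gamma \rangle$ is commutative, because its defining condition (commuting with $\gamma$) is Zariski closed. I then invoke the structure theorem for commutative linear algebraic groups over a perfect field: $A = A_s \times A_u$, where $A_s$ is the set of semisimple elements and $A_u$ the set of unipotent elements. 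Both are closed $F$-subgroups, $A_s$ is diagonalizable over $\overline{F}$ and hence of multiplicative type, and $A_u$ is unipotent. Projection onto the factors is given by the Jordan decomposition of elements of $A$. In particular $\gamma_m, \gamma_u \in A$, since Jordan components of elements of a closed subgroup lie in that subgroup.

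Next I identify the factors. We have $\langle \gamma_m \rangle \subset A_s$, since $A_s$ is a closed subgroup containing $\gamma_m$, and likewise $\langle \gamma_u \rangle \subset A_u$. Conversely, $\langle\gamma_m\rangle\times\langle\gamma_u\rangle$ is a closed subgroup of $A_s \times A_u$ containing $\gamma_m\gamma_u=\gamma$. It is closed because the multiplication map $A_s\times A_u\to A$ is an isomorphism of varieties. Hence it contains $A$, which forces $A_s = \langle\gamma_m\rangle$ and $A_u = \langle\gamma_u\rangle$.

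It remains to note rationality. $A$, $\langle\gamma_m\rangle$ and $\langle\gamma_u\rangle$ are defined over $F$ because they are Zariski closures of subsets of $F$-points. The decomposition $A \simeq A_s\times A_u$ is also defined over $F$ because $A_s$ and $A_u$ are $\Gamma_F$-stable. Finally, $\langle\gamma_u\rangle$ is either trivial or isomorphic to $\mathbb{G}_a$, via $t \mapsto \exp(t\log\gamma_u)$, which is the closure of $\{\gamma_u^k\}$ in characteristic zero. This is not needed, but it confirms that the unipotent factor is connected.

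I expect the main obstacle to be the structure theorem for commutative groups in the not necessarily connected case. Here $A_s$ can have finitely many components, for example when $\gamma$ has finite order, so the connected-group version does not apply directly. I would handle this by simultaneously diagonalizing the commuting semisimple parts of elements of $A$ over $\overline{F}$. That exhibits $A_s$ as a closed subgroup of a diagonal torus, so it is diagonalizable even if disconnected. Closedness of $A_u$ then follows from $A_u = A \cap \mathcal{U}_n$, where $\mathcal{U}_n$ is the closed unipotent variety.
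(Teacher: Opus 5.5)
Your proof is correct. The paper gives no argument for this lemma and only refers to \S 1 of Deligne, so what you wrote is a self-contained version of the standard proof. You reduce to $\GL_n$ and use the decomposition $A = A_s \times A_u$ of a commutative closed subgroup, which holds without connectedness (Borel \S 4.7, Humphreys \S 15.5). You then identify $A_s=\langle\gamma_m\rangle$ and $A_u=\langle\gamma_u\rangle$ by the minimality of $\langle\gamma\rangle$, and get $F$-rationality because Zariski closures of sets of $F$-points are defined over $F$.

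Two steps deserve one more line each:
\begin{itemize}
\item Commutativity of $A$ is a two-step centralizer argument. First $A\subset Z(\gamma)$. Then, for each $a\in A$, the closed subgroup $Z(a)$ contains $\gamma$ and hence contains $A$.
\item Closedness of $A_s$ is cleanest as $A_s=A\cap gD_ng^{-1}$. Here $g\in\GL_n(\overline{F})$ simultaneously diagonalizes the commuting semisimple elements of $A$, and $D_n$ is the diagonal torus.
\end{itemize}

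Your remark that $\langle\gamma_u\rangle$ is trivial or isomorphic to $\mathbb{G}_a$ is not needed for the statement itself. It is, however, what justifies the later use, in Lemma \ref{equalityofslopes}, of $\langle\gamma_u\rangle$ as the connected unipotent radical of $\langle\gamma\rangle$.
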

\begin{proof}
    See $\S$ 1 of \cite{MR425033}.
\end{proof}

In the case when $G = \GL_n$, there is a description of the multiplicative group $\langle \gamma \rangle_m = \langle \gamma_m \rangle$ as follows.

\begin{Le}\label{deligneGLn}
    Let $G = \GL_n$. Evaluation at $\gamma_m$ identifies the group $X^*(\langle \gamma \rangle_m)$ of characters over $\overline{F}$ of $\langle \gamma \rangle_m$ with the subgroup of $\overline{F}^{\times}$ generated by the eigenvalues of $\gamma_m \in \GL_n(F)$. For a maximal torus $T$ such that $\gamma_m \in T$, the group $\langle \gamma \rangle_m$ is precisely the diagonalizable subgroup of $T$ defined by the multiplicative relations between the eigenvalues.
\end{Le}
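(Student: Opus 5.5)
The plan is to reduce to a diagonalizable computation inside a maximal torus and then use the Zariski-closure characterization of a diagonalizable subgroup by the characters vanishing on it. Choose a maximal torus $T \subset \GL_n$, defined over $\overline{F}$, containing $\gamma_m$, and diagonalize: over $\overline{F}$ we may take $T$ to be the diagonal torus and $\gamma_m = \mathrm{diag}(a_1,\dots,a_n)$, where the $a_i$ are the eigenvalues. By Lemma \ref{delignelemma}, $\langle \gamma\rangle_m = \langle \gamma_m\rangle$, which is the Zariski closure of $\{\gamma_m^k : k \in \mathbb{Z}\}$. Since $T$ is closed and contains every power of $\gamma_m$, this closure lies in $T$. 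Formation of the Zariski closure of a subgroup commutes with the base change $F \to \overline{F}$, so it suffices to work over $\overline{F}$.

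Next we recall the standard fact about closed subgroups of a split torus. Every closed subgroup $D \subset T$ is diagonalizable, and $D = \bigcap_{\chi \in X^*(T),\ \chi|_D = 1} \ker \chi$. This follows because the coordinate ring of $D$ is the quotient of the group algebra $\overline{F}[X^*(T)]$ by an ideal spanned by the elements $\chi - \chi'$ with $\chi|_D = \chi'|_D$, using the linear independence of characters. Applied to $D = \langle\gamma_m\rangle$: a character $\chi = \prod t_i^{e_i}$ is trivial on $D$ if and only if it is trivial on the dense subgroup generated by $\gamma_m$, that is, if and only if $\prod a_i^{e_i} = 1$. This gives the second assertion: $\langle\gamma\rangle_m$ is the subgroup of $T$ cut out by the multiplicative relations among the eigenvalues.

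For the first assertion, restriction gives a surjection $X^*(T) \to X^*(D)$, with kernel the set of characters trivial on $D$. Surjectivity holds for any closed subgroup of a diagonalizable group. Consider evaluation at $\gamma_m$, given by $\chi \mapsto \chi(\gamma_m)$, from $X^*(T)$ to $\overline{F}^\times$. Its image is the subgroup generated by $a_1,\dots,a_n$, and by the previous paragraph its kernel is exactly the set of characters trivial on $D$. Hence evaluation at $\gamma_m$ induces an isomorphism from $X^*(D)$ onto $\langle a_1,\dots,a_n\rangle \subset \overline{F}^\times$. Finally, this map is intrinsic, namely $\chi \mapsto \chi(\gamma_m)$ for $\chi \in X^*(D)$, so it does not depend on the choice of $T$.

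The only nontrivial input is the description of closed subgroups of a torus by their annihilators, equivalently the density of $\langle\gamma_m^{\mathbb{Z}}\rangle$ in $D$ together with the duality between closed subgroups of $T$ and quotients of $X^*(T)$. I expect this to be the main point requiring care, together with the remark that base change to $\overline{F}$ is harmless. Everything else is bookkeeping.
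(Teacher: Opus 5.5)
Your argument is correct. The paper gives no proof of its own here; it simply refers to \S 1 of \cite{MR425033}, and what you have written is the standard argument behind that reference. The ingredients are these: a closed subgroup $D$ of a split torus $T$ is the common kernel of the characters of $T$ that are trivial on $D$; restriction $X^*(T)\to X^*(D)$ is surjective; and for $D=\langle\gamma_m\rangle$ a character is trivial on $D$ exactly when it is trivial at $\gamma_m$, by density of $\gamma_m^{\mathbb{Z}}$. Your version therefore buys self-containedness at the cost of a few lines.

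The two facts you assert without proof are easily supplied.
\begin{enumerate}
    \item Surjectivity of restriction follows from the same Artin-independence argument you use for the ideal. A character of $D$ lies in $\overline{F}[D]$, which is spanned by restrictions of characters of $T$, so it must equal one of them.
    \item Compatibility of the Zariski closure with base change to $\overline{F}$ holds because $\gamma_m$ is $F$-rational. Write $f\in\overline{F}[G]$ as $\sum_\alpha e_\alpha f_\alpha$, with $(e_\alpha)$ an $F$-basis of $\overline{F}$ and $f_\alpha\in F[G]$. Then $f$ vanishes on the powers of $\gamma_m$ if and only if every $f_\alpha$ does.
\end{enumerate}
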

\begin{proof}
    See $\S$ 1 of \cite{MR425033}.
\end{proof}

\begin{D}[Alternative construction of slope morphism]\label{delignecocharacter}
    For $\gamma \in G(F)$, we construct a map $m_G(\gamma): \mathbb{D} \xrightarrow[]{} \langle \gamma \rangle_m \subset G$ over $F$. Since $\langle \gamma \rangle_m$ is a group of multiplicative type, giving such a map is the same as giving a $\Gamma_F$-equivariant homomorphism $X^*(\langle \gamma \rangle_m) \xrightarrow[]{} X^*(\mathbb{D}) = \mathbb{Q}$, and we take it to be
    $$\chi \mapsto \val(\chi(\gamma_m)), \qquad \chi \in X^*(\langle \gamma \rangle_m).$$
    This is $\Gamma_F$-equivariant because $\val$ is Galois-invariant, so $m_G(\gamma)$ is defined over $F$. The construction involves no choice of embedding, and $m_G(\gamma)$ takes values in $\langle \gamma \rangle_m \subset G$ by definition. It has the following two properties.
    \begin{enumerate}
        \item Let $G = \GL_n$. Under the identification of \ref{deligneGLn}, which is evaluation at $\gamma_m$, the map on character groups is described by the commutative square
        \begin{center}
            \begin{tikzcd}
            X^*(\langle \gamma \rangle_m) \arrow{r}{m_G(\gamma)} \arrow{d}{\sim} & X^*(\mathbb{D}) \arrow{d}{\sim} \\
            \langle \text{eigenvalues of } \gamma \rangle \subset \overline{F}^{\times} \arrow{r}{\val_{\pi}} & \mathbb{Q}
        \end{tikzcd}
        \end{center}
        so that $m_G(\gamma)$ records the $\pi$-adic valuations of the eigenvalues of $\gamma$.
        \item The construction is functorial: for any homomorphism $f: G \xrightarrow[]{} H$ of linear algebraic groups over $F$ one has $f \circ m_G(\gamma) = m_H(f(\gamma))$. Indeed, since $F$ has characteristic zero, $f$ preserves Jordan decompositions, so $f(\gamma_m) = f(\gamma)_m$ and $f$ carries $\langle \gamma \rangle_m$ into $\langle f(\gamma) \rangle_m$; the induced map on character groups is compatible with $\chi \mapsto \val(\chi(\gamma_m))$, because $\val(\chi(f(\gamma)_m)) = \val((\chi \circ f)(\gamma_m))$. In particular this applies to a representation $\rho: G \xrightarrow[]{} \GL(W)$ and to the inclusion of a subgroup of $G$.
    \end{enumerate}
    This cocharacter is due to Deligne; see $\S$ 1 of \cite{MR425033}.
\end{D}

\begin{R}\label{deligneslopedecomposition}
    Choose a positive integer $n$ such that $n \cdot \val(\chi(\gamma_m)) \in \mathbb{Z}$ for every $\chi \in X^*(\langle \gamma \rangle_m)$. Then, the mapping $n \cdot m_G(\gamma): \mathbb{D} \xrightarrow[]{} G$ factors through $\mathbb{G}_m$, where $\mathbb{D} \xrightarrow[]{} \mathbb{G}_m$ corresponds to the inclusion $\mathbb{Z} \subset \mathbb{Q}$ on character lattices. Hence, we can view $n \cdot m_G(\gamma): \mathbb{G}_m \xrightarrow[]{} G$. For a representation $\rho: G \xrightarrow[]{} \GL(W)$, using $n \cdot m_G(\gamma)$ we get an action of $\mathbb{G}_m$ on the $F$-vector space $W$. This gives a $\mathbb{Z}$-grading on $W$, by decomposing it as a sum of characters of $\mathbb{G}_m$. The degree $d$ graded part of $W$ is the sum of generalized eigenspaces of $\rho(\gamma): W \xrightarrow[]{} W$ corresponding to eigenvalues $\alpha \in \overline{F}^{\times}$ such that $n \cdot \val(\alpha) = d$. Although the eigenvalues themselves lie in $\overline{F}$, this condition on $\alpha$ is $\Gamma_F$-stable, since $\val$ is Galois-invariant, so each graded piece is defined over $F$.
\end{R}

For the rest of this subsection, we have the setup
    \begin{itemize}
        \item Let $G$ be a connected reductive group over a $p$-adic field $F$ with uniformizer $\pi$.
        \item Let $m: \mathbb{G}_m \xrightarrow[]{} G$ be a cocharacter of $G$ defined over $F$.
    \end{itemize}

Such an $m$ determines a pair of opposite parabolic subgroups of $G$ over $F$, denoted by $P_m^+$ and $P_m^-$ and referred to as the parabolics contracted and dilated by $m$, with unipotent radicals $U_m^+$ and $U_m^-$. The groups $P_m^+$ and $U_m^+$ are defined as follows; the groups $P_m^-$ and $U_m^-$ are defined by the same recipe applied to $m^{-1}$, equivalently by using $\lim_{\lambda \to \infty}$ in place of $\lim_{\lambda \to 0}$.

\begin{D}\label{contracteddilated}
    For $x \in G(\overline{F})$, write $c_x$ for the morphism
    $$c_x: \mathbb{G}_m \xrightarrow[]{} G, \qquad \lambda \mapsto m(\lambda) \, x \, m(\lambda)^{-1}.$$
    \begin{enumerate}
        \item The element $x$ lies in $P_m^+$ if and only if $c_x$ extends to a morphism $\mathbb{A}^1 \xrightarrow[]{} G$, that is, if and only if $\lim_{\lambda \to 0} c_x(\lambda)$ exists.
        \item The element $x$ lies in the unipotent part $U_m^+$ if and only if, in addition, $\lim_{\lambda \to 0} c_x(\lambda) = e$.
    \end{enumerate}
\end{D}

\begin{R}\label{parabolicsofgamma}
    The morphism $m_G(\gamma): \mathbb{D} \xrightarrow[]{} G$ is only a rational cocharacter, so \ref{contracteddilated} does not apply to it directly. We therefore set $P_{\gamma}^{\pm} := P_{n \cdot m_G(\gamma)}^{\pm}$ and $U_{\gamma}^{\pm} := U_{n \cdot m_G(\gamma)}^{\pm}$, for a positive integer $n$ as in \ref{deligneslopedecomposition}, and refer to these as the parabolics contracted and dilated by $\gamma$. The result does not depend on $n$: replacing $n$ by $kn$ replaces the cocharacter $\mu$ by $\lambda \mapsto \mu(\lambda^k)$, and $\lambda \to 0$ if and only if $\lambda^k \to 0$, so the limit conditions above are unchanged. The intuitive idea is that the homomorphism $m_G(\gamma)$ records the $\pi$-adic valuations of the eigenvalues of $\gamma$.
\end{R}

\begin{R}\label{liealgebraparabolic}
    The Lie algebra of $P_{\gamma}^+$ (resp. $U_{\gamma}^+$) is the sum of the generalized eigenspaces of $\Ad(\gamma)$ on $\Lie(G)$ corresponding to eigenvalues of valuation $\geq 0$ (resp. $>0$).
\end{R}

Note that both $\nu_G(\gamma)$ and $m_G(\gamma)$ are honest elements of $\Hom_L(\mathbb{D}, G)$ attached to the element $\gamma$, so that the equality in the following lemma is an equality of morphisms, not merely of $G(L)$-conjugacy classes. It is only the induced map on $B(G)$ that takes values in
$$(\Hom_L(\mathbb{D}, G)/G(L))^{\langle \sigma \rangle},$$
by \ref{slopebg}.

\begin{Le}\label{equalityofslopes}
    Let $G$ be a connected reductive group over $F$. For $\gamma \in G(F)$, we have that $\nu_G(\gamma) = m_G(\gamma) \in \Hom_L(\mathbb{D}, G)$.
\end{Le}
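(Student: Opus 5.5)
We will verify that $m_G(\gamma)$ satisfies the three conditions of the alternative characterization in Lemma \ref{slopealternativedef}; the uniqueness asserted there then forces $\nu_G(\gamma)=m_G(\gamma)$ as honest elements of $\Hom_L(\mathbb{D},G)$.

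First, choose $n>0$ as in Remark \ref{deligneslopedecomposition}, so that $n\cdot m_G(\gamma)$ is a genuine cocharacter $\mathbb{G}_m\to\langle\gamma\rangle_m\subset G$. This is condition (1).

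Next we handle condition (2), taking $c=1$. The cocharacter $m_G(\gamma)$ is defined over $F$ by Construction \ref{delignecocharacter}, since $\val$ is Galois-invariant. It is therefore certainly defined over the fixed field of $\sigma^n$.

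Condition (3) is the substantive one. With $c=1$ and $\gamma\in G(F)$ we have $\sigma(\gamma)=\gamma$, so $(\gamma\sigma)^n=\gamma^n\sigma^n$. We therefore need
$$\gamma^n=(n\,m_G(\gamma))(\pi).$$
This identity is false in general, since $\gamma^n$ also carries a unit part and a unipotent part. We therefore reformulate condition (3) up to $\sigma$-conjugacy, as Kottwitz does in \S4.3 of \cite{MR809866}. Write
$$\gamma=\gamma_m\gamma_u,\qquad \gamma_m=(m_G(\gamma))(\pi)\cdot\gamma_0,$$
where $\gamma_0$ lies in the maximal bounded subgroup of the commutative group $\langle\gamma\rangle_m(\overline{F})$, with all characters taking unit values. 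All three factors commute, since they lie in the commutative group $\langle\gamma\rangle$ (after passing to a finite extension to make sense of $m_G(\gamma)(\pi)$, or replacing $\gamma$ by a power).

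The key step is to show that, after replacing $n$ by a multiple, the element $\gamma_0\gamma_u$ (or a suitable power) is $\sigma$-conjugate to $1$ inside the centralizer of $n\,m_G(\gamma)$. Here we use Lang's theorem style arguments over $L$: a bounded element commuting with $\sigma$ is $\sigma$-conjugate to $1$ in a parahoric or unipotent group. Equivalently, we can apply Lemma \ref{unipotentlemma} to the unipotent part, and the triviality of $H^1$ for the bounded torus part.

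A cleaner route, which we try first, is to reduce to tori and $\GL_n$ via functoriality. By Lemma \ref{slopefunctoriality} and property (2) of Construction \ref{delignecocharacter}, both $\nu$ and $m$ are functorial. Both are also compatible with the Tannakian description through representations, via Definition \ref{slopedef} and Remark \ref{deligneslopedecomposition}. Hence it suffices to check that for every $\rho:G\to\GL(W)$ the two $\mathbb{Q}$-gradings on $W\otimes L$ coincide. For the isocrystal $(W\otimes L,\rho(\gamma)\sigma)$, the slope decomposition is the decomposition into generalized eigenspaces of $\rho(\gamma)$ grouped by valuation of the eigenvalue. Indeed, $\rho(\gamma)$ is $F$-rational, so on each such $F$-rational graded piece the isocrystal is isoclinic of slope equal to that valuation, by Dieudonné–Manin applied to $\Phi^s=\rho(\gamma)^s\sigma^s$. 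This matches Remark \ref{deligneslopedecomposition} exactly. Since a morphism $\mathbb{D}_L\to G_L$ is determined by its compositions with a faithful representation, the gradings agreeing gives equality of morphisms, not just of conjugacy classes.

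We expect the main obstacle to be the isoclinic claim for a non-semisimple $\rho(\gamma)$ with eigenvalues of a common valuation $r/s$. To handle it, we will pass to a power and a finite unramified extension, show that $\pi^{-r}\rho(\gamma)^s$ has unit eigenvalues, and hence that it stabilizes a lattice up to bounded distortion. This forces all slopes to be $r/s$. Checking that this computation is independent of the choice of $\pi$ is routine.
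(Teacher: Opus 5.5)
Your argument is correct, but the proof is the Tannakian argument, and it takes a different route from the paper's. Drop the opening attempt via Lemma \ref{slopealternativedef}. With $c=1$ it fails, as you note, and the $\sigma$-conjugation repair is only sketched. The difficulty is that bounded elements need not lie in a parahoric with connected special fibre, so ``bounded implies $\sigma$-conjugate to $1$'' is not available as stated: $-1$ in the norm-one torus of a ramified quadratic extension is bounded but has non-trivial Kottwitz invariant.

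Your second argument needs none of this. Definition \ref{slopedef} characterizes $\nu_G(\gamma)$ by the slope gradings of $(W\otimes L,\rho(\gamma)\sigma)$. Functoriality and Remark \ref{deligneslopedecomposition} identify the grading of $\rho_L\circ m_G(\gamma)$ with the generalized eigenspaces grouped by valuation. A faithful $\rho$ then upgrades agreement of gradings to equality of morphisms.

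The isoclinic step is easier than you expect. First, $(A\sigma)^s=A^s\sigma^s$ multiplies slopes by $s$; this is Lemma \ref{normisolem1} with $x=A$, since $NA=A^s$. Second, $B=\pi^{-r}A^s$ has characteristic polynomial in $\mathcal{O}_F[x]$ with unit constant term, so $B^{\pm1}\in\mathcal{O}_F[B]$, which is a finite $\mathcal{O}_F$-module. Hence $B$ stabilizes a lattice exactly and $B\sigma^s$ has slope $0$. No extension of $F$ and no ``bounded distortion'' estimate is needed.

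The paper argues differently, in three steps:
\begin{enumerate}
\item It proves the torus case from Kottwitz's characterization $\val(\lambda(b))=\langle\nu_T(b),\lambda\rangle$ for $F$-rational $\lambda$ (Lemma \ref{slopetori}).
\item It reaches semisimple $\gamma$ by functoriality along a maximal $F$-torus of the centralizer.
\item It removes the unipotent part by applying Lemma \ref{unipotentlemma} to the commutative, possibly disconnected group $\langle\gamma\rangle$, which requires Kottwitz's slope morphisms for non-connected groups.
\end{enumerate}
Your route treats the semisimple and unipotent parts uniformly through generalized eigenspaces. It never leaves connected groups and uses only the Tannakian definition plus an elementary lattice computation in $\GL(W)$. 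In effect, you prove directly the statement the paper records only afterwards as a consequence of this lemma: that the grading of $n\cdot m_G(\gamma)$ on $W$ is the slope decomposition of $(W\otimes L,\rho(\gamma)\circ\sigma)$. You then deduce the lemma from it. The paper's route instead stays within the functorial formalism of $B(-)$ and reuses Kottwitz's results as black boxes.
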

\begin{proof}
    \textit{Case of a torus}. We start with the case that $G = T$ is a torus over $F$ and $\gamma \in T(F)$. The slope morphism of $\gamma$ has a characterization using Lemma \ref{slopetori}. Hence, it suffices to show that $m_G(\gamma)$ satisfies the property that $$\val(\lambda(\gamma)) = \langle m_G(\gamma), \lambda \rangle$$ for all $\lambda \in X^*(T)^{\Gamma_F}$, that is, for all characters $\lambda$ of $T$ defined over $F$. Viewing a character of $T$ as a 1-dimensional representation, this property follows from the characterization of $m_G(\gamma)$ in \ref{delignecocharacter}. Hence $m_G(\gamma) = \nu_G(\gamma)$ in this case.
    
    \textit{Case of a semisimple $\gamma \in G(F)$}. Assume now that $\gamma \in G(F)$ is semisimple. Let $I$ denote the identity component of the centralizer of $\gamma$ in $G$. Then, $I$ is a reductive group over $F$ and hence admits a maximal torus $T$ over $F$. Then, $\gamma \in T(F)$ and $T$ is also a maximal torus in $G$. The slope morphism is functorial for connected linear algebraic groups (see \ref{slopefunctoriality}), and $m_G(\gamma)$ is functorial for all homomorphisms of linear algebraic groups by (2) of \ref{delignecocharacter}. Applying both to the inclusion $T \hookrightarrow G$, the equality $m_G(\gamma) = \nu_G(\gamma)$ now follows from the case of the torus.

    \textit{General case}. Decompose $\gamma \in G(F)$ as $\gamma = \gamma_m \cdot \gamma_u$ where $\gamma_m \in G(F)$ is semisimple and $\gamma_u \in G(F)$ is unipotent. It follows from \ref{delignecocharacter} that $m_G(\gamma) = m_G(\gamma_m)$. We now want to show that $\nu_G(\gamma) = \nu_G(\gamma_m)$. Note that the unipotent radical of $\langle \gamma \rangle$ is $\langle \gamma_u \rangle$ and $\langle \gamma_m \rangle$ is a Levi factor in $\langle \gamma \rangle$, by using Lemma \ref{delignelemma}. Applying Lemma \ref{unipotentlemma} to the group $P = \langle \gamma \rangle$, which does not require $\langle \gamma \rangle$ to be connected, we get that $$ B(\langle \gamma \rangle) \xrightarrow[]{} B(\langle \gamma \rangle/\langle \gamma_u \rangle)$$ is a bijection. Since $\gamma \gamma_m^{-1} = \gamma_u$, the elements $\gamma$ and $\gamma_m$ of $\langle \gamma \rangle(F)$ have the same image in $(\langle \gamma \rangle/\langle \gamma_u \rangle)(F)$, and hence define the same class in $B(\langle \gamma \rangle)$. By the construction of slope morphisms for arbitrary linear algebraic groups in $\S$ 3.2 of \cite{MR1485921}, cohomologous cocycles have slope morphisms related by inner conjugation. Since $\langle \gamma \rangle$ is commutative, this conjugation is trivial, and therefore $\nu_{\langle \gamma \rangle}(\gamma) = \nu_{\langle \gamma \rangle}(\gamma_m)$ as morphisms. Functoriality along $\langle \gamma \rangle \hookrightarrow G$ then gives the equality of morphisms $\nu_G(\gamma) = \nu_G(\gamma_m)$ in $\Hom_L(\mathbb{D}, G)$. Hence, the general case is now reduced to the semisimple case and the proof is complete.
\end{proof}

\begin{R}
    Note that Lemma \ref{equalityofslopes} gives an independent proof of the fact that if $\gamma, \gamma' \in G(F)$ are stably conjugate, then $\nu_G(b^G_\gamma) = \nu_G(b^G_{\gamma'}) \in (\Hom_L(\mathbb{D},G)/G(L))^{\langle \sigma \rangle}$, which also follows from Lemma \ref{isocrystalstableconj}. Recall that $b_{\gamma}^G \in B(G)$ and $b_{\gamma'}^G \in B(G)$ denote the $\sigma$-conjugacy class of $\gamma \in G(L)$ and $\gamma' \in G(L)$ resp. In fact, for the equality $\nu_G(b^G_\gamma) = \nu_G(b_{\gamma'}^G)$, it is sufficient to assume that $\gamma, \gamma'$ are $\overline{F}$-conjugate. This is because it follows from \ref{delignecocharacter} that, if $\gamma, \gamma'$ are $\overline{F}$-conjugate then the rational cocharacters $m_G(\gamma): \mathbb{D} \xrightarrow[]{} \langle \gamma \rangle_m \subset G$ and $m_G(\gamma'): \mathbb{D} \xrightarrow[]{} \langle \gamma' \rangle_m \subset G$ over $F$ are conjugate by an element of $G(\overline{F})$. Using Lemma \ref{equalityofslopes}, it follows that $\nu_G(\gamma)$ and $\nu_G(\gamma')$ are $G(\overline{F})$-conjugate. Hence, by using Lemma \ref{slopesoverclosure} it follows that $\nu_G(b^G_\gamma) = \nu_G(b_{\gamma'}^G) \in (\Hom_{\overline{F}}(\mathbb{D},G)/G(\overline{F}))^{\Gamma_F}$. 
\end{R}

\begin{R}
    Using Lemma \ref{equalityofslopes}, it follows that the decomposition of $F$-vector space $W$ for a representation $\rho: G \xrightarrow[]{} \GL(W)$ with respect to the cocharacter $n \cdot m_G(\gamma): \mathbb{G}_m \xrightarrow[]{} G$ explained in Remark \ref{deligneslopedecomposition} coincides with the slope decomposition of the isocrystal $(W \otimes L, \rho(\gamma)\circ\sigma )$.
\end{R}

\begin{R}
    In the case when $G$ is a quasi-split reductive group over $F$, every element of $B(G)$ is obtained as the image of $B(M)_b \xrightarrow[]{} B(G)$ for some standard Levi subgroup (see Lemma \ref{chailemma}). Let $b \in B(G)$ denote the class of $\gamma \in G(F)$ and let $M = M_{P_b}$ be the Levi factor of the parabolic subgroup contracted by $b$, as in \ref{paracontractediso}; this is the minimal standard Levi for which $b$ is basic. Then the Levi factor $M_{\gamma} = P_{\gamma}^+ \cap P_{\gamma}^-$ of the parabolic subgroup $P_{\gamma}^+$ contracted by $\gamma$ is a $G(\overline{F})$-conjugate of $M$.

    Both groups are centralizers of the Newton cocharacter. On the one hand, $P_{\gamma}^{\pm} = P_{n \cdot m_G(\gamma)}^{\pm}$ by \ref{parabolicsofgamma}, and for an honest cocharacter $\mu$ the dynamic description of these subgroups gives $P_{\mu}^+ \cap P_{\mu}^- = Z_G(\mu)$, see \cite{ConradSGA3}; on Lie algebras this is visible from \ref{liealgebraparabolic}, since the eigenvalues of $\Ad(\gamma)$ of valuation $\geq 0$ and those of valuation $\leq 0$ meet in the part of valuation $0$. As $Z_G(n\mu) = Z_G(\mu)$ for $n > 0$, and $m_G(\gamma) = \nu_G(\gamma)$ by \ref{equalityofslopes}, we obtain $M_{\gamma} = Z_G(\nu_G(\gamma))$. On the other hand, choose $g \in G(\overline{F})$ such that $\Int(g) \circ \nu_G(\gamma)$ factors through $A$. Applying part (2) of \ref{leviofcontracted} with the inner twisting $\psi = \Int(g)$ gives
    $$M_{P_b} = Z_G(\Int(g) \circ \nu_G(\gamma)) = g Z_G(\nu_G(\gamma))g^{-1} = gM_{\gamma}g^{-1},$$
    which proves the claim.
\end{R}

\subsection{Isocrystals and Norm map}
In this subsection, we will formulate an alternative version of the Norm map which allows us to relate it to isocrystals. We recall the setup:
\begin{itemize}
    \item Let $F$ denote a $p$-adic field and let $E$ denote the unramified extension of $F$ of degree $l$.
    \item Let $G$ be a connected reductive group over $F$.
    \item Let $L$ denote the completion of the maximal unramified extension of $F$, which is also the completion of the maximal unramified extension of $E$.
    \item Let $\sigma$ denote the Frobenius automorphism of $L$ over $F$, that is, the continuous extension of the Frobenius automorphism of $F^{\text{un}}$ over $F$. Then the Frobenius automorphism of $L$ over $E$ is $\sigma^l$.
\end{itemize}

\begin{D}[$B_l(G)$]
    For $l \in \mathbb{Z}_{>0}$, denote by $B_l(G)$ the set of $\sigma^l$-conjugacy classes in $G(L)$. This is a generalization of \ref{B(G)def}. Note that $B_1(G) = B(G)$ and $B_l(G) = B(G_E)$.
\end{D}

\begin{D}[Norm mapping on isocrystals]\label{normisocrystal}
    Consider the map $N: G(L) \xrightarrow[]{} G(L)$ given by $Nx = x \cdot x^{\sigma} \cdots x^{\sigma^{l-1}}$. This induces a map $B(G) \xrightarrow[]{} B_l(G)$ which we will also denote by $N$. Indeed, if $x' = gx\sigma(g)^{-1}$ as in \ref{B(G)def}, then $$Nx' = \prod_{i=0}^{l-1} \sigma^i(g) \, x^{\sigma^i} \, \sigma^{i+1}(g)^{-1} = g \, (Nx) \, \sigma^l(g)^{-1},$$ the interior factors telescoping, so $N$ carries the $\sigma$-conjugacy class of $x$ to the $\sigma^l$-conjugacy class of $Nx$.
\end{D}

\begin{D}[Slope morphism for $B_l(G)$]
    For $b \in G_E(L)$, viewing it as an isocrystal with $G_E$-structure, we get a slope morphism $\nu_{G_E}(b) \in \Hom_L(\mathbb{D}, G)$ using \ref{slopedef}. We will denote this by $\nu_{G,l}(b)$. Passing to $\sigma^l$-conjugacy classes, this induces the Newton map $B_l(G) \xrightarrow[]{} (\Hom_L(\mathbb{D}, G)/G(L))^{\langle \sigma^l \rangle}$.
\end{D}

The following result gives a relationship between $\nu_G(x)$ and $\nu_{G,l}(Nx)$. As in \ref{equalityofslopes}, both sides below are elements of $\Hom_L(\mathbb{D},G)$ attached to the elements $x$ and $Nx$, and the asserted equality is one of morphisms, not of $G(L)$-conjugacy classes; this is what the uniqueness in \ref{slopealternativedef} provides.

\begin{Le}\label{normisolem1}
    Let $x \in G(L)$. Then we have $\nu_{G,l}(Nx) = l \cdot \nu_G (x) \in \Hom_L(\mathbb{D},G)$.
\end{Le}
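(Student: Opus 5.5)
Since the statement is an equality of honest morphisms in $\Hom_L(\mathbb{D},G)$, I will use the uniqueness in the characterization of Lemma \ref{slopealternativedef}, applied twice: once to $x$ as a $G$-isocrystal over $F$ (Frobenius $\sigma$), and once to $Nx$ as a $G_E$-isocrystal over $E$ (Frobenius $\sigma^l$). Since $E/F$ is unramified, a uniformizer $\pi$ of $F$ is also one of $E$. Lemma \ref{slopealternativedef} for $x$ then provides $n>0$, $c\in G(L)$ and $\pi$ with $n\nu_G(x)\in\Hom_L(\mathbb{G}_m,G)$, with $c(n\nu_G(x))c^{-1}$ defined over the fixed field $L^{\sigma^n}$, and with
$$c(x\sigma)^nc^{-1}=c\,(n\nu_G(x))(\pi)\,c^{-1}\cdot\sigma^n .$$
The one available freedom is that in the uniqueness argument of \S4.3 of \cite{MR809866} any positive multiple of $n$ also works. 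So I will replace $n$ by $ln$; the identity then follows by raising the displayed identity to the $l$-th power, because $(n\nu)(\pi)$ conjugated by $c$ is fixed by $\sigma^n$.

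The main step is the identity
$$(Nx\cdot\sigma^l)=(x\sigma)^l \quad\text{in } G(L)\rtimes\langle\sigma\rangle,$$
which holds since $(x\sigma)^l=x\,\sigma(x)\cdots\sigma^{l-1}(x)\,\sigma^l$. From it I get $(Nx\,\sigma^l)^{n}=(x\sigma)^{ln}$. I then verify the three conditions of Lemma \ref{slopealternativedef} for the candidate $\nu':=l\nu_G(x)$ relative to the Frobenius $\sigma^l$ of $E$, taking the integer $n$, the same $c$, and the same uniformizer $\pi$.

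Condition (1) holds because $n\nu'=ln\nu_G(x)$ is an integral cocharacter. Condition (2) asks that $c(n\nu')c^{-1}$ be defined over the fixed field of $(\sigma^l)^n=\sigma^{ln}$; this holds because $c(ln\nu_G(x))c^{-1}$ is defined over $L^{\sigma^{ln}}$ after enlarging $n$ as above. Condition (3) reads
$$c(Nx\,\sigma^l)^nc^{-1}=c(x\sigma)^{ln}c^{-1}=c\,(ln\nu_G(x))(\pi)\,c^{-1}\sigma^{ln}=c\,(n\nu')(\pi)\,c^{-1}(\sigma^l)^n,$$
where the middle equality is exactly condition (3) for $x$ with the integer $ln$. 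The uniqueness in Lemma \ref{slopealternativedef}, applied to $G_E$ over $E$, therefore gives $\nu_{G,l}(Nx)=l\nu_G(x)$.

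The step I expect to need care is justifying that the characterization is insensitive to replacing $n$ by a multiple. Concretely, I must show that from the data for $n$ one obtains data for $ln$ with the same $c$. To do this I will compute $(c(x\sigma)^nc^{-1})^l$. Writing $\mu=c(n\nu_G(x))c^{-1}$, which is $\sigma^n$-fixed, this power equals $(\mu(\pi)\sigma^n)^l=\mu(\pi)^l\sigma^{ln}=(l\mu)(\pi)\sigma^{ln}$. Here I use that $\pi\in F$ is fixed by $\sigma$ and that $\mu$ is $\sigma^n$-invariant. A second point to check is that Lemma \ref{slopealternativedef} applies verbatim over the base field $E$, with $L$ unchanged since $E/F$ is unramified, and with $\sigma^l$ the arithmetic Frobenius of $E$.
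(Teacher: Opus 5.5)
Your proposal is correct and follows essentially the same route as the paper. Both keep the same $n$, $c$ and $\pi$ for $Nx$ over $E$, use $(Nx\,\sigma^l)^n=(x\sigma)^{ln}$, and evaluate this via the $\sigma^n$-invariance of $c\,(n\nu_G(x))(\pi)\,c^{-1}$; the paper phrases that last step as applying $\sigma^{nj}$ for $j=0,\dots,l-1$ and multiplying. Your remarks about enlarging $n$ are unnecessary, since condition (2) follows directly from $L^{\sigma^n}\subset L^{\sigma^{ln}}$, but they do no harm.
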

\begin{proof}
    Using Lemma \ref{slopealternativedef}, there exists an integer $n > 0$, an element $c \in G(L)$ and a uniformizer $\pi$ of $F$ such that
    \begin{enumerate}
    \item $n\nu_G(x) \in \Hom_L(\mathbb{G}_m, G)$
    \item $c (n\nu_G(x)) c^{-1}$ is defined over the fixed field of $\sigma^n$ on $L$
    \item $c (x\sigma)^nc^{-1} = c (n\nu_G(x))(\pi)c^{-1} \cdot  \sigma^n$, viewed as elements of $G(L) \rtimes \langle \sigma \rangle $
\end{enumerate}
From 1. it follows that
\begin{itemize}
    \item $n (l \cdot \nu_G(x)) = l (n\nu_G(x)) \in \Hom_L(\mathbb{G}_m, G)$
\end{itemize}
Note that $c (n(l \cdot \nu_G(x))) c^{-1} = l \cdot (c(n\nu_G(x))c^{-1})$ and hence $c (n(l \cdot \nu_G(x))) c^{-1}$ is defined over the fixed field of $\sigma^n$ on $L$. In particular, we also have the weaker statement
\begin{itemize}
    \item $c (n(l \cdot \nu_G(x))) c^{-1}$ is defined over the fixed field of $(\sigma^l)^n$ on $L$
\end{itemize}
From 3., it follows that
$$c(x \cdot x^{\sigma} \cdots x^{\sigma^{n-1}})\sigma^n(c^{-1}) =  c (n\nu_G(x))(\pi)c^{-1}.$$
Note that $c (n\nu_G(x))(\pi)c^{-1}$ is fixed by $\sigma^n$ (using 2.). For any integer $j \geq 0$ we can apply $\sigma^{nj}$ to both sides of the equation above to obtain 
$$\sigma^{nj}(c)(x^{\sigma^{nj}} \cdot x^{\sigma^{nj+1}} \cdots x^{\sigma^{n(j+1)-1}})\sigma^{n(j+1)}(c^{-1}) =  c (n\nu_G(x))(\pi)c^{-1}.$$
Taking the product of all these equations for $j = 0$ to $l-1$ we obtain
$$ c((Nx) \cdot (Nx)^{(\sigma^l)} \cdots (Nx)^{(\sigma^l)^{(n-1)}}) (\sigma^l)^n(c^{-1}) =  c (n(l \cdot \nu_G(x)))(\pi)c^{-1}.$$
From this it follows that
\begin{itemize}
    \item $c (Nx \cdot (\sigma^l))^nc^{-1} = c (n(l\cdot \nu_G(x)))(\pi)c^{-1} \cdot  (\sigma^l)^n$, viewed as elements of $G(L) \rtimes \langle \sigma^l \rangle$
\end{itemize}
Thus the three conditions of Lemma \ref{slopealternativedef} hold for the element $Nx \in G_E(L)$, with the same integer $n$ and element $c$, and with $\pi$, which is also a uniformizer of $E$ because $E/F$ is unramified. The result follows.
\end{proof}

\begin{Le}
    Under the identification $B(\Res_{E/F}G_E) \xrightarrow[]{} B(G_E)$ of Lemma \ref{Shapiro}, the map $B(G) \xrightarrow[]{} B(\Res_{E/F}G_E)$ induced by Corollary \ref{twistedcor2} corresponds to the map $B(G) \xrightarrow[]{} B(G_E)$ defined in \ref{normisocrystal}.
\end{Le}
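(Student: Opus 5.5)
We will prove the statement by making every identification explicit at the level of $\sigma$-conjugacy classes and reducing to a telescoping computation, in the same spirit as \ref{normisocrystal}. Write $R=\Res_{E/F}G_E$. By \ref{twistedlem1} and the fact that $E\subset L$, we have $R(L)=G(L\otimes_F E)\simeq\prod_{i=0}^{l-1}G(L)$, the factors indexed by the embeddings $E\hookrightarrow L$. These are the restrictions to $E$ of $\sigma^i$, $0\le i\le l-1$. Under this identification the Frobenius $\sigma$ on $R(L)$ acts as $(x_0,\dots,x_{l-1})\mapsto(\sigma(x_{l-1}),\sigma(x_0),\dots,\sigma(x_{l-2}))$, that is, by $\sigma$ composed with a cyclic shift of the factors. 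The first step is to fix this explicit description, including the direction of the shift, from the commutative square in the proof of \ref{twistedlem1}.

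The second step is to make the Shapiro bijection $B(R)\to B(G_E)$ of \ref{Shapiro} explicit. The claim is that it sends the $\sigma$-conjugacy class of $(x_0,\dots,x_{l-1})$ to the $\sigma^l$-conjugacy class of the ``twisted product''
\[
x_{0}\,\sigma(x_{l-1})\,\sigma^2(x_{l-2})\cdots\sigma^{l-1}(x_1),
\]
or a product of this shape, depending on the shift convention. To check this, observe that $(b\sigma)^l$ in $R(L)\rtimes\langle\sigma\rangle$ preserves each factor and acts on the $0$-th factor as this product times $\sigma^l$. This is the standard description of Shapiro's lemma for an induced module. We verify directly that it is well defined on $\sigma$-conjugacy classes: for $g=(g_i)$ the interior factors telescope exactly as in \ref{normisocrystal}, leaving $g_0(\cdots)\sigma^l(g_0)^{-1}$. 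We would also cite $\S1.10$ of \cite{MR809866} to confirm that this agrees with the bijection used there.

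The third step applies this to the image of $B(G)\to B(R)$ from \ref{twistedcor2}. An element $x\in G(L)$ embeds diagonally as $(x,x,\dots,x)$. The twisted product then becomes $x\,\sigma(x)\cdots\sigma^{l-1}(x)=Nx$, possibly in reversed order. If the order comes out reversed, we fix it either by using the opposite ordering convention for the factors, or by noting that $\sigma^{l-1}(x)\cdots x$ is $\sigma^l$-conjugate to $Nx$ via an explicit telescoping element.

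The main obstacle is bookkeeping rather than substance. The identification of Frobenius on $R(L)$ with shift-and-$\sigma$ must be pinned down, and the chosen factor and ordering must match the normalization of Shapiro's isomorphism in \ref{Shapiro}. Otherwise the result is a product that is only $\sigma^l$-conjugate to $Nx$ after a further explicit conjugation, and that conjugation has to be written down.
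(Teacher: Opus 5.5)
Your proposal is correct and is essentially the paper's proof: identify $(\Res_{E/F}G_E)(L)$ with $G(L)^l$ via \ref{twistedlem1}, read off the Shapiro bijection as one component of $(b\sigma)^l$, and evaluate it on the diagonal image of $G(L)$. Your worry about the ordering does not arise, since for a diagonal element every component of $(x\sigma)^l$ equals $x\,\sigma(x)\cdots\sigma^{l-1}(x)=Nx$ exactly, so the fallback via a reversed product is not needed (the paper happens to use the factor indexed by $\sigma^{l-1}$ rather than the identity, which changes nothing here).
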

\begin{proof}
    We start with the bijection $$B(\Res_{E/F}G_E) \xrightarrow[]{} B(G_E) = B_l(G) = G(L)/\{\sigma^l\text{-conjugacy}\}$$   Further, using Lemma \ref{twistedlem1} it follows that $$B(\Res_{E/F}G_E) =  (\Res_{E/F}G_E)(L)/\{\sigma \text{-conjugacy} \} = (G(L)\times \cdots \times G(L))/\{ \sigma\text{-conjugacy} \} $$ where the product on the right is indexed by the set of $F$-embeddings $\iota: E \xrightarrow[]{} L$ and the $\sigma$-action on the product is given by $\sigma \cdot (x_{\iota})_{\iota} = (\sigma (x_{\sigma^{-1} \circ \iota}))_{\iota}$. Explicitly, the set of $F$-embeddings $\iota: E \xrightarrow[]{} L$ is given by $\{ \id, \sigma, \cdots, \sigma^{l-1} \}$ and we order the factors in the product $\prod_{\iota: E \xrightarrow[]{} L} G(L)$ such that the $i$-th factor corresponds to the embedding $\sigma^{l - i}$. With this setup, the map $$ (G(L)\times \cdots \times G(L))/\{ \sigma\text{-conjugacy} \} \xrightarrow[]{} G(L)/\{\sigma^l\text{-conjugacy}\}$$ is given by $$ (b_1, b_2, \cdots, b_l) \in G(L) \times G(L) \times \cdots G(L) \mapsto b_1 b_2^{\sigma} \cdots b_l^{\sigma^{l-1}} \in G(L)$$ Finally, we have the $F$-embedding $G \xrightarrow[]{} \Res_{E/F}G_E$, using Corollary \ref{twistedcor2}. Using the notation of subsection \ref{restrictionofscalars}, the image of this embedding is $(\Res_{E/F}G_E)^{\theta_{\sigma}}$. Hence, the induced map $B(G) \xrightarrow[]{} B(\Res_{E/F}G_E)$ is given by $$ b \in G(L) \mapsto (b,b,\cdots,b) \in (\Res_{E/F}G_E)(L) = G(L) \times G(L) \times \cdots \times G(L). $$
    Combining this with the earlier computation, we now get the result.
\end{proof}

\begin{R}
    The set of Newton points of $\Res_{E/F}G_E$ is $$\mathcal{N}(\Res_{E/F} G_E) = (\Hom_L(\mathbb{D},\Res_{E/F}G_E)/\Res_{E/F}G_E(L))^{\langle \sigma \rangle},$$ which can be written as $$\mathcal{N}(\Res_{E/F} G_E) =(\Hom_L(\mathbb{D}, (G\times \cdots \times G))/(G \times \cdots \times G)(L))^{\langle \sigma \rangle}$$ where the expression on the right simplifies to $$(\Hom_L(\mathbb{D},G)/G(L) \times \Hom_L(\mathbb{D},G)/G(L) \times \cdots \times \Hom_L(\mathbb{D},G)/G(L))^{\langle \sigma \rangle}$$ Again, using Lemma \ref{twistedlem1}, projection onto the last factor gives an isomorphism with $$(\Hom_L(\mathbb{D}, G)/G(L))^{\langle \sigma^l \rangle},$$ whose inverse is given by $$\chi \in (\Hom_L(\mathbb{D}, G)/G(L))^{\langle \sigma^l \rangle} \mapsto (\chi^{\sigma^{l-1}}, \cdots, \chi^{\sigma}, \chi) \in \mathcal{N}(\Res_{E/F}G_E).$$ The set of Newton points of $G_E$ is $$ \mathcal{N}(G_E) = (\Hom_L(\mathbb{D}, G)/G(L))^{\langle \sigma^l \rangle}.$$ Under the Shapiro bijection $B(\Res_{E/F}G_E) \xrightarrow[]{} B(G_E)$, the Newton point of the image is $l$ times the first component of the Newton point of the original element: indeed, the product $b_1b_2^{\sigma}\cdots b_l^{\sigma^{l-1}}$ is the coefficient of the $l$-th iterate of $b\sigma$ on the first factor, and taking the $l$-th iterate multiplies slopes by $l$. With the above parametrization, the induced map on Newton points is therefore $$\chi \mapsto l\chi^{\sigma^{l-1}}.$$ In particular, on the $\sigma$-fixed Newton points arising from $B(G)$ under the diagonal inclusion $G \xrightarrow[]{} \Res_{E/F}G_E$, this is simply the multiplication by $l$ map.
\end{R}

The setup for the rest of the subsection will be as follows.
\begin{itemize}
    \item Let $G$ be quasi-split.
    \item Let $x \in G(E)$.
    \item Let $\mathfrak{N}x$ be the associated stable conjugacy class in $G(F)$ (see \ref{normmapdef2}), defined whenever $Nx$ is semisimple.
\end{itemize}

Next, we want to understand the relationship between the slopes $\nu_G(x)$ and $\nu_G(\mathfrak{N}x)$. Note here that
\begin{itemize}
    \item $\nu_G(x)$ denotes the slope of $x \in G(E)$ when we view it as an element of $G(L)$.
    \item $\nu_G(\mathfrak{N}x)$ denotes slope of the element of $B(G)$ attached to the stable conjugacy class $\mathfrak{N}x$ in $G(F)$, as constructed in Lemma \ref{isocrystalstableconj}.
\end{itemize}

\begin{Le}\label{normisolem2}
    Let $x \in G(E)$ be such that $Nx$ is semisimple, so that $\mathfrak{N}x$ is defined by \ref{normmapdef2} and carries an isocrystal by \ref{isocrystalstableconj}. Then $\nu_{G}(\mathfrak{N}x) = l \cdot \nu_G(x)$ in $\mathcal{N}(G)$.
\end{Le}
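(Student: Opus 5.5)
We would prove Lemma \ref{normisolem2} by chaining Lemma \ref{normisolem1} with Lemma \ref{equalityofslopes}, reducing first to the case where $G^{\text{der}}$ is simply connected.

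\textit{Step 1: the simply connected case.} Assume first that $G^{\text{der}}$ is simply connected, so that $\mathfrak{N}x$ is the stable conjugacy class in $G(F)$ of some $\gamma_0 \in G(F)$ that is $G(\overline{F})$-conjugate to $Nx \in G(E)$ (Lemma \ref{conjugacyclassdefoverFlemma}, with $\psi$ trivial since $G$ is quasi-split). By Lemma \ref{normisolem1}, $\nu_{G,l}(Nx) = l\cdot\nu_G(x)$ as morphisms. On the other hand $Nx \in G(E)$, so Lemma \ref{equalityofslopes}, applied to the group $G_E$ over the $p$-adic field $E$, gives $\nu_{G,l}(Nx) = \nu_{G_E}(Nx) = m_{G_E}(Nx)$, where $\val$ on $E$ agrees with $\val$ on $F$ because $E/F$ is unramified. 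Similarly $\nu_G(\gamma_0) = m_G(\gamma_0)$. Since $\gamma_0$ and $Nx$ are $G(\overline{F})$-conjugate, the Deligne cocharacters $m(\gamma_0)$ and $m(Nx)$ are $G(\overline{F})$-conjugate: the construction in \ref{delignecocharacter} is transported by conjugation and only involves valuations of character values. By Lemma \ref{slopesoverclosure}, $\mathcal{N}(G)$ is the set of $\Gamma_F$-fixed $G(\overline{F})$-conjugacy classes, and $\mathcal{N}(G)\to\mathcal{N}(G_E)$ is injective on these classes. Hence the class of $\nu_G(\gamma_0)$ equals the class of $l\cdot\nu_G(x)$ in $\mathcal{N}(G)$. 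By Lemma \ref{isocrystalstableconj}, $\nu_G(\mathfrak{N}x)$ does not depend on the representative $\gamma_0$, which finishes this case.

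\textit{Step 2: general $G$ via $z$-extensions.} Choose a $z$-extension $\alpha: G' \to G$ with $G'$ quasi-split and $\beta = \alpha$ as in Lemma \ref{zextensionlemma}. By Lemma \ref{zextensionlemma0}, lift $x$ to $y \in G'(E)$. By Lemma \ref{normmapdef2}, $\mathfrak{N}x = \alpha(\mathfrak{N}y)$. Since $\alpha$ commutes with $N$ and preserves semisimplicity, $Ny$ is semisimple. Step 1 applied to $G'$ gives $\nu_{G'}(\mathfrak{N}y) = l\cdot\nu_{G'}(y)$. We then push forward along $\alpha$ using functoriality of the slope morphism (Lemma \ref{slopefunctoriality}). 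This yields $\nu_G(\alpha(\gamma'_0)) = \alpha_*\nu_{G'}(\gamma'_0)$ and $\nu_G(x) = \alpha_*\nu_{G'}(y)$. Finally, $\alpha_*$ is compatible with multiplication by $l$ on $\mathcal{N}$.

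\textit{Main obstacle.} The delicate point is in Step 1: comparing classes that live naturally in $\mathcal{N}(G_E)$ (fixed by $\sigma^l$) with classes in $\mathcal{N}(G)$. The quantity $l\cdot\nu_G(x)$ is a priori an honest morphism, while $\nu_{G,l}(Nx)$ is computed over $E$. We must ensure that $G(\overline{F})$-conjugacy of the two rational cocharacters suffices, which is exactly what Lemma \ref{slopesoverclosure} provides once both classes are known to be $\Gamma_F$-stable. The second subtlety is that $Nx$ is merely semisimple in $G(E)$, not necessarily in $G(F)$. We therefore apply Lemma \ref{equalityofslopes} over $E$ rather than over $F$, and check that the resulting Deligne cocharacter $m_{G_E}(Nx)$ coincides with the $\overline{F}$-conjugate of $m_G(\gamma_0)$; this holds because both are defined by the same valuation recipe on eigenvalues.
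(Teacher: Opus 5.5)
Your proof is correct, but it takes a different route from the paper.

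\textbf{How the paper argues.} The paper does not split into a simply connected case followed by a reduction. It uses the $z$-extension only to show that $Nx$ is \emph{stably} conjugate in $G_E(E)$ to a representative $y \in G(F)$ of $\mathfrak{N}x$. It then applies Lemma \ref{isocrystalstableconj} over $E$ to get the equality of classes $b_{Nx} = b_y$ in $B(G_E)$, and combines this with Lemma \ref{normisolem1}. Finally it proves $\nu_G(y) = \nu_{G_E}(y)$ by comparing $m_G(y)$ and $m_{G_E}(y)$ through an embedding $G \hookrightarrow \GL_n$, using $\val_{\pi_E} = \val_{\pi}$.

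\textbf{How your route differs.} You use only $G(\overline{F})$-conjugacy. You transport Deligne's cocharacter along the conjugation, which is the observation in the remark after Lemma \ref{equalityofslopes}. You then descend the equality of conjugacy classes via Lemma \ref{slopesoverclosure}. This avoids the stable-conjugacy bookkeeping over $E$ and any use of Lemma \ref{isocrystalstableconj} over $E$. The cost is that you obtain only the Newton-point equality, which is all the lemma claims. The paper's route gives the stronger statement $b_{Nx} = b_y$ in $B(G_E)$.

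\textbf{Step 2 is unnecessary.} Your Step 1 uses only $\overline{F}$-conjugacy, and $Nx = \alpha(Ny)$ is $G(\overline{F})$-conjugate to $\alpha(\gamma'_0) \in \mathfrak{N}x$. So the Step 1 argument applies verbatim to $G$ itself, with no need to push forward along $\alpha$.

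\textbf{A justification to fix if you keep Step 2.} $Ny$ is semisimple, but not because $\alpha$ preserves semisimplicity; that gives the wrong direction. The correct reason is that the unipotent part of $Ny$ maps to the unipotent part of $Nx$, which is trivial. Hence it lies in the torus $\ker\alpha$, whose elements are semisimple, so it is trivial.
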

\begin{proof}
    \textit{Relation between $Nx$ and $\mathfrak{N}x$}. For $x \in G(E)$, the element $Nx$ again lies in $G(E)$. Let $\alpha: G' \xrightarrow[]{} G$ be a $z$-extension of $G$ and let $x' \in G'(E)$ be such that $\alpha(x') = x$ (using \ref{zextensionlemma0}). Since $(G')^{\text{der}}$ is simply connected, it follows from \ref{normmap} that $Nx'$ lies in the $G'(\overline{F})$-conjugacy class corresponding to $\mathfrak{N}x'$, which is a stable conjugacy class in $G'(F)$; here stable conjugacy and $\overline{F}$-conjugacy agree in $G'(F)$, by \ref{simplyconnectedlemma}. Since $N$ commutes with $\alpha$, that is $N(\alpha(x')) = \alpha(Nx')$, we obtain $Nx = \alpha(Nx')$, and the commutative square of \ref{normmapdef2} identifies $\mathfrak{N}x$ with the image of $\mathfrak{N}x'$ under the map on stable conjugacy classes induced by $\alpha$. Finally, the natural map $G(F) \xrightarrow[]{} G_E(E)$ induces a map from stable conjugacy classes in $G(F)$ to stable conjugacy classes in $G_E(E)$, where $G_E$ denotes the extension of scalars of $G$ to $E$, a connected reductive group over $E$. It now follows that $Nx$ lies in the stable conjugacy class of $G_E(E)$ obtained from the stable conjugacy class $\mathfrak{N}x$ in $G(F)$.

    \textit{Slope of the isocrystal attached to the stable conjugacy class of $Nx$ in $G_E(E)$.} If we view $Nx$ as a stable conjugacy class in $G_E(E)$, we have an isocrystal $b_{Nx}$ attached to it in $B_l(G) = B(G_E)$, by using Lemma \ref{isocrystalstableconj}. By combining this with Lemma \ref{normisolem1}, it follows that $$\nu_{G_E}(b_{Nx}) = \nu_{G_E}(Nx) = \nu_{G,l}(Nx) = l \cdot \nu_G(x).$$
    \textit{Slope of the isocrystal attached to the stable conjugacy class $\mathfrak{N}x$}. Let $y \in G(F)$ denote an element of the stable conjugacy class $\mathfrak{N}x$. The isocrystal $b_{\mathfrak{N}x} \in B(G)$ attached to $\mathfrak{N}x$ by Lemma \ref{isocrystalstableconj} is the isocrystal corresponding to $y$ when we view it as an element of $G(L)$. Using the above relation between $Nx$ and $\mathfrak{N}x$, it follows that the isocrystal $b_{Nx} \in B(G_E)$ attached to the stable conjugacy class of $Nx$ in $G_E(E)$ is the isocrystal corresponding to $y$ when we view it as an element of $G_E(L)$. Let $\nu_G(y)$ and $\nu_{G_E}(y)$ denote the slopes of $y$ when we view it as an element of $G(L)$ and $G_E(L)$ respectively. We claim that $\nu_G(y) = \nu_{G_E}(y)$. Using Lemma \ref{equalityofslopes}, it suffices to show that $m_G(y) = m_{G_E}(y)$. Choose an embedding $\iota: G \hookrightarrow \GL_n$ over $F$; by \ref{delignecocharacter} the composite $\iota \circ m_G(y) = m_{\GL_n}(\iota(y))$ takes values in $\langle \iota(y) \rangle$. We get the following diagram:
    \begin{center}
        \begin{tikzcd}
            \mathbb{D} \arrow{r}{m_G(y)} \arrow{d}{\sim} & G \arrow{r}{\iota} \arrow{d}{} & \GL_n \arrow{d}{} \\
            \mathbb{D} \arrow{r}{m_{G_E}(y)} & G_E \arrow{r}{\iota} & (\GL_n)_E
        \end{tikzcd}
    \end{center}
    The outer square in the diagram is seen to commute because the corresponding map on the character lattices commutes. The commutativity of the maps on character lattices follows from the fact that since $E$ is an unramified extension of $F$ and a uniformizer of $F$ is a uniformizer of $E$:
    \begin{center}
        \begin{tikzcd}
            \mathbb{Q} = X^*(\mathbb{D}) \arrow{d}{\sim} && \langle \text{eigenvalues of } \iota(y) \rangle \subset \overline{F}^{\times} \arrow{ll}{\val_{\pi}} \arrow{d}{\sim} \\
            \mathbb{Q} = X^*(\mathbb{D}) && \langle \text{eigenvalues of } \iota(y) \rangle \subset \overline{E}^{\times} = \overline{F}^{\times} \arrow{ll}{\val_{\pi_E} =\val_{\pi}}
        \end{tikzcd}
    \end{center}
    This allows us to conclude that $\nu_G(y) = m_G(y) = m_{G_E}(y) = \nu_{G_E}(y)$ and hence it follows that $$ \nu_{G_E}(b_{Nx}) = \nu_{G_E}(y) = \nu_G(y) = \nu_G(b_{\mathfrak{N}x})$$ which implies our result by combining it with our earlier computations.    
\end{proof}

\subsection{Truncation functions}
In this subsection we will construct certain truncation functions on $G(F)$, that is, continuous functions on $G(F)$ valued in $\{ 0,1 \}$. We have the following setup:
\begin{itemize}
    \item Let $F$ denote a $p$-adic field with a uniformizer $\pi$.
    \item Let $M_n$ denote the $F$-scheme such that $M_n(R)$ is the set of $n \times n$ matrices with entries in $R$, for any $F$-algebra $R$. Hence, $M_n$ is isomorphic to the affine space $\mathbb{A}^{n^2}_F$, with the isomorphism being the map that sends a matrix to its coefficients.
\end{itemize}

We will first define these truncation functions in the case of $\GL_n$ before generalizing them to a general connected reductive group $G$.

\begin{D}[charpoly]
    Let $\charpoly: M_n \xrightarrow[]{} P_n$ denote the affine morphism sending a matrix to its characteristic polynomial. Here, $P_n$ denotes the $F$-scheme such that $P_n(R)$ is the set of degree $n$ monic polynomials with coefficients in $R$, for any $F$-algebra $R$. Hence, $P_n$ is isomorphic to the affine space $\mathbb{A}^n_F$ with the isomorphism sending a degree $n$ monic polynomial with coefficients in $R$ $$x^n + a_{n-1}x^{n-1} + \cdots + a_0$$ to the element $(a_0, a_1, \cdots, a_{n-1}) \in \mathbb{A}_F^{n}(R)$.
\end{D}

\begin{D}[Quotient $\mathbb{A}_F^n / S_n$]\label{GIT}
    The symmetric group $S_n$ acts on the affine space $\mathbb{A}_F^n = \Spec (F[x_1, x_2, \cdots, x_n])$ as $$\sigma \cdot (x_1, x_2, \cdots, x_n) = (x_{\sigma(1)}, x_{\sigma(2)}, \cdots, x_{\sigma(n)})$$ for any $\sigma \in S_n$. This is an action over $F$. The quotient $\mathbb{A}_F^n/S_n$ is the $F$-scheme represented by the algebra of functions on $\mathbb{A}^n_F$ invariant under the $S_n$ action: $$\Spec(F[x_1, x_2, \cdots x_n]^{S_n}) = \Spec(F[s_1, s_2, \cdots s_n ])$$ where $s_i$ denotes the degree $i$ elementary symmetric polynomial. This naturally gives an isomorphism of $\mathbb{A}_F^n/S_n$ with the affine space $\mathbb{A}^n_F$. The $F$-scheme $\mathbb{A}^n_F/S_n$ can also be characterized by the following properties, using classical invariant theory (invariant theory in the case of a finite group action on an affine variety):
    \begin{enumerate}
        \item The points over the closure $\overline{F}$ are given by $(\mathbb{A}^n_F/S_n)(\overline{F}) = \mathbb{A}^n_F(\overline{F})/S_n$.
        \item The points over a field extension $E$ of $F$ are given by $(\mathbb{A}^n_F/S_n)(E) = (\mathbb{A}^n_F(\overline{F})/S_n)^{\Gamma_E}$.
    \end{enumerate}
\end{D}

\begin{D}[roots]
    Let $\roots: P_n \xrightarrow[]{} \mathbb{A}^n_F/S_n$ denote the affine morphism sending a characteristic polynomial to its roots. This map is given as follows on the points over a field extension $E$ of $F$: Let $x^n + a_{n-1}x^{n-1} + \cdots + a_0 \in P_n(E)$ be factorized as $$x^n + a_{n-1}x^{n-1} + \cdots + a_0 = (x-\alpha_1)(x-\alpha_2)\cdots(x - \alpha_n) \in \overline{F}[x]$$ Then, we map $$ x^n + a_{n-1}x^{n-1} + \cdots + a_0 \mapsto [(\alpha_1, \alpha_2, \cdots ,\alpha_n)] \in (\overline{F}^n/S_n)^{\Gamma_E}.$$
\end{D}

We will need below the following factorization form of Hensel's lemma. Although slightly non-standard, it is a very classical result: the $p$-adic case is due to Hensel, and the case of complete discretely valued fields was proved by Rychl\'{\i}k in a Czech paper in 1919. We state the result and supply its proof here because it will be used in the proof of the root-continuity lemma below.

\begin{Le}[Hensel--Rychl\'{\i}k]\label{henselrychlik}
    Let $K$ be a complete discretely valued field, with ring of integers $\mathcal{O}_K$ and maximal ideal $\mathfrak{m}$. Let $f(x),g_0(x),h_0(x)\in\mathcal{O}_K[x]$ satisfy
    $$\deg f=\deg g_0+\deg h_0,$$
    and suppose that the leading coefficient of $f$ is the product of the leading coefficients of $g_0$ and $h_0$. Let $R(g_0,h_0)$ denote the resultant of $g_0$ and $h_0$. If, for some integer $s\geq 0$,
    $$R(g_0,h_0)\notin\mathfrak{m}^{s+1}\qquad\text{and}\qquad f\equiv g_0h_0\mod \mathfrak{m}^{2s+1},$$
    then there exist $g(x),h(x)\in\mathcal{O}_K[x]$ such that
    $$f=gh,\qquad g\equiv g_0\mod\mathfrak{m}^{s+1},\qquad h\equiv h_0\mod\mathfrak{m}^{s+1},$$
    with $\deg g=\deg g_0$ and $\deg h=\deg h_0$. Moreover, the leading coefficients of $g$ and $h$ are those of $g_0$ and $h_0$, respectively.
\end{Le}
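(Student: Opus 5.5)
I will prove Lemma \ref{henselrychlik} by a successive-approximation argument, improving the factorization one power of $\mathfrak{m}$ at a time (linear, not quadratic, convergence suffices). Write $\pi$ for a uniformizer, $v$ for the normalized valuation, $r=\deg g_0$, $k=\deg h_0$, $n=r+k$. First I reduce to the case where $g_0$ and $h_0$ are monic-like in a controlled sense. The key algebraic input is the resultant identity: there exist $a_0,b_0\in\mathcal{O}_K[x]$ with $\deg a_0<k$, $\deg b_0<r$ and
$$a_0g_0+b_0h_0=R(g_0,h_0).$$
This is the classical Sylvester-matrix adjugate argument. Write $\rho=R(g_0,h_0)$; by hypothesis $v(\rho)\le s$. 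Consequently, for any $u\in\mathcal{O}_K[x]$ with $\deg u<n$ and $u\equiv 0\bmod \pi^{s+m}$, one can solve $u=\alpha g_0+\beta h_0$ with $\deg\alpha<k$, $\deg\beta<r$ and $\alpha,\beta\equiv 0\bmod \pi^{m}$. To do this, multiply $u$ by $a_0/\rho$ and $b_0/\rho$, then reduce $b_0u/\rho$ modulo $g_0$ by Euclidean division. This is the step that needs care: since $g_0$ need not be monic, division by $g_0$ may introduce denominators from its leading coefficient. I would sidestep this by using the Sylvester linear system directly. The map $(\alpha,\beta)\mapsto\alpha g_0+\beta h_0$ on polynomials of degree $<k$, $<r$ into polynomials of degree $<n$ has matrix with determinant $\rho$, so its inverse is $\rho^{-1}\operatorname{adj}$ with integral adjugate. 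This gives the divisibility bound directly, with no division algorithm.

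Next comes the iteration. Set $g_1=g_0$, $h_1=h_0$, and inductively suppose $f\equiv g_mh_m\bmod \pi^{2s+m}$, with $g_m\equiv g_0$ and $h_m\equiv h_0\bmod\pi^{s+1}$, where the leading coefficients and degrees are unchanged. Two points need checking in the induction.

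\textbf{Degree.} The error $u=f-g_mh_m$ has degree $<n$, because the leading terms cancel by the leading-coefficient hypothesis.

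\textbf{Resultant.} We have $R(g_m,h_m)\equiv\rho\bmod\pi^{s+1}$, so $v(R(g_m,h_m))=v(\rho)\le s$, and the same adjugate bound applies to $(g_m,h_m)$.

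Solve $u=\alpha g_m+\beta h_m$ with $\alpha,\beta\equiv0\bmod\pi^{s+m}$, degrees $<k$ and $<r$, using the bound $v(u)\ge 2s+m$ and the loss of at most $s$ from $\rho^{-1}$. Put
$$g_{m+1}=g_m+\beta,\qquad h_{m+1}=h_m+\alpha.$$
Then
$$f-g_{m+1}h_{m+1}=-\alpha\beta\equiv0\bmod\pi^{2s+2m}.$$
Since $2s+2m\ge 2s+m+1$ for $m\ge1$, the congruence improves. The degree bounds on $\alpha,\beta$ preserve the leading coefficients and degrees, and $s+m\ge s+1$ preserves the congruences to $g_0,h_0$.

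Finally, since $g_{m+1}-g_m$ and $h_{m+1}-h_m$ lie in $\pi^{s+m}\mathcal{O}_K[x]$ with bounded degree, the sequences converge coefficientwise by completeness of $K$. The limits $g,h$ satisfy $f=gh$, have the stated degrees and leading coefficients, and satisfy the congruences modulo $\mathfrak{m}^{s+1}$. The case $m$ starting at $1$ uses the hypothesis $f\equiv g_0h_0\bmod\mathfrak{m}^{2s+1}$.

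The main obstacle I anticipate is the solvability estimate with non-monic $g_0$, which the Sylvester-adjugate formulation is meant to handle cleanly. The remaining steps are bookkeeping of valuations.
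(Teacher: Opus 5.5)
Your proposal is correct and is essentially the paper's argument. The Sylvester-adjugate bound is the paper's containment $\mathfrak{m}^s\mathcal{O}_K[x]_{<n}\subset g_m\mathcal{O}_K[x]_{<k}+h_m\mathcal{O}_K[x]_{<r}$, and your linear iteration (corrections in $\mathfrak{m}^{s+m}$, new error $-\alpha\beta$) is the same induction as the paper's, shifted by one in the index. The opening remarks about reducing to a ``monic-like'' case and about Euclidean division are never used and can simply be deleted.
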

\begin{proof}
    Put $m=\deg g_0$ and $n=\deg h_0$, and choose a uniformizer $\varpi$ of $K$. We first recall an elementary consequence of the definition of the resultant. For $a,b\in\mathcal{O}_K[x]$ of degrees $m,n$, the determinant of the map
    $$\mathcal{O}_K[x]_{<n}\oplus\mathcal{O}_K[x]_{<m}\longrightarrow\mathcal{O}_K[x]_{<m+n},\qquad (u,v)\longmapsto au+bv,$$
    is, up to sign, $R(a,b)$. The adjugate matrix therefore shows that $R(a,b)q$ lies in the image for every $q\in\mathcal{O}_K[x]_{<m+n}$. Consequently, if $R(a,b)\notin\mathfrak{m}^{s+1}$, then
    \begin{equation}\label{resultantcontainment}
        \mathfrak{m}^s\mathcal{O}_K[x]_{<m+n}\subset a\mathcal{O}_K[x]_{<n}+b\mathcal{O}_K[x]_{<m}.
    \end{equation}
    Indeed, writing $R(a,b)=\varpi^r u$ with $u\in\mathcal{O}_K^{\times}$, we have $r\leq s$, and the assertion follows after multiplying the adjugate identity by $\varpi^{s-r}u^{-1}$.

    We now construct inductively polynomials $g^{(i)},h^{(i)}\in\mathcal{O}_K[x]$, starting with $g^{(0)}=g_0$ and $h^{(0)}=h_0$, which retain the leading coefficients of $g_0,h_0$ and satisfy
    $$f-g^{(i)}h^{(i)}\in\mathfrak{m}^{2s+i+1}[x],\qquad g^{(i)}\equiv g^{(i-1)}\mod\mathfrak{m}^{s+i},\qquad h^{(i)}\equiv h^{(i-1)}\mod\mathfrak{m}^{s+i}.$$
    Suppose that $g^{(i-1)},h^{(i-1)}$ have been constructed. We may write
    $$f-g^{(i-1)}h^{(i-1)}=\varpi^{2s+i}q_i$$
    with $\deg q_i<m+n$. Since $g^{(i-1)}\equiv g_0\mod\mathfrak{m}^{s+1}$ and $h^{(i-1)}\equiv h_0\mod\mathfrak{m}^{s+1}$, we have
    $$R(g^{(i-1)},h^{(i-1)})\equiv R(g_0,h_0)\mod\mathfrak{m}^{s+1}.$$
    Applying \eqref{resultantcontainment}, there are polynomials $G_i,H_i\in\mathcal{O}_K[x]$, with $\deg G_i<m$ and $\deg H_i<n$, such that
    $$\varpi^s q_i=g^{(i-1)}H_i+h^{(i-1)}G_i.$$
    Set
    $$g^{(i)}=g^{(i-1)}+\varpi^{s+i}G_i,\qquad h^{(i)}=h^{(i-1)}+\varpi^{s+i}H_i.$$
    Then
    $$f-g^{(i)}h^{(i)}=-\varpi^{2s+2i}G_iH_i\in\mathfrak{m}^{2s+i+1}[x],$$
    since $i\geq1$. This proves the induction step.

    The sequences $g^{(i)}$ and $h^{(i)}$ converge coefficientwise, since $\mathcal{O}_K$ is complete. Let their limits be $g$ and $h$. The construction gives $f=gh$ and the required congruences. Since all the corrections have degree strictly less than $m$ and $n$, respectively, the degrees and leading coefficients are unchanged.
\end{proof}

\begin{R}[Topologies on $\mathbb{A}_F^n/S_n(\overline{F})$]\label{topologiesaresame}
    \begin{enumerate}
        \item The isomorphism $\mathbb{A}_F^n/S_n \simeq \mathbb{A}_F^n$ induced by the isomorphism $F[x_1, x_2, \cdots, x_n]^{S_n} \simeq F[s_1, s_2, \cdots, s_n]$ as explained in \ref{GIT} defines a topology on $(\mathbb{A}_F^n/S_n)(\overline{F})$ via the identifications $(\mathbb{A}_F^n/S_n)(\overline{F}) \simeq \mathbb{A}_F^n(\overline{F}) = \overline{F}^n$, which is the product topology.
        \item As explained in \ref{GIT}, we also have $(\mathbb{A}_F^n/S_n)(\overline{F}) = \mathbb{A}_F^n(\overline{F})/S_n = \overline{F}^n/S_n$. This also defines a topology on $(\mathbb{A}_F^n/S_n)(\overline{F})$, which is the quotient topology.
    \end{enumerate}
    We claim that these two topologies are the same. Let us denote the topologies in 1. and 2. by $\tau_1$ and $\tau_2$ respectively. Note that the map which sends an $n$-tuple to the coefficients of the degree $n$ polynomial whose roots correspond to the $n$-tuple: $$(\alpha_1, \cdots, \alpha_n) \in \overline{F}^n \xrightarrow[]{} (s_1(\alpha_1,\cdots,\alpha_n), s_2(\alpha_1, \cdots, \alpha_n), \cdots, s_n(\alpha_1, \cdots, \alpha_n)) \in \overline{F}^n$$ is a continuous map. Hence, it follows that the map $$((\mathbb{A}_F^n/S_n)(\overline{F}), \tau_2) \xrightarrow[]{} ((\mathbb{A}_F^n/S_n)(\overline{F}), \tau_1)$$ is continuous. The fact that this is actually a homeomorphism follows from the following lemma
\end{R}

\begin{Le}\label{continuityofroots}
    Let $p(x) \in \overline{F}[x]$ be fixed with $$p(x) = x^n + a_{n-1}x^{n-1} + \cdots + a_0 = (x - \alpha_1)(x - \alpha_2) \cdots (x - \alpha_n)$$ For every $\epsilon > 0$, there exists $\delta > 0$ (depending on $p(x)$ and $\epsilon$) such that: 
    
    For any $p'(x) \in \overline{F}[x]$ (not to be confused with the derivative of $p(x)$) with $$p'(x) = x^n + a_{n-1}'x^{n-1}+ \cdots + a_0' = (x - \alpha'_1)(x - \alpha'_2) \cdots (x - \alpha'_n)$$  satisfying $\| a_i - a'_i \|_p < \delta$ for all $0 \leq  i \leq n-1$, there exists a permutation $\sigma \in S_n$ satisfying $\| \alpha_j - \alpha'_{\sigma(j)} \|_p < \epsilon$ for all $1 \leq j\leq n$.
\end{Le}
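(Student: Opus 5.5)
The natural route is a direct estimate on the roots, with Lemma \ref{henselrychlik} as a possible alternative. All roots lie in a finite extension, so the absolute value $\|\cdot\|_p$ extends uniquely to $\overline{F}$; I fix that extension.

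\textbf{Step 1: a uniform bound on the roots of $p'$.} If $\|a_i - a_i'\|_p < \delta \le 1$, then $\|a_i'\|_p \le \max(\|a_i\|_p,1)$. The usual Cauchy-type bound, obtained from the ultrametric inequality applied to $p'(\alpha')=0$, then gives
$$\|\alpha'\|_p \le C := \max_i\bigl(1,\|a_i\|_p+1\bigr)$$
for every root $\alpha'$ of $p'$. The constant $C$ depends only on $p$.

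\textbf{Step 2: each root of $p'$ is close to some root of $p$.} For any root $\alpha'$ of $p'$,
$$\|p(\alpha')\|_p = \|p(\alpha') - p'(\alpha')\|_p \le \max_i \|a_i-a_i'\|_p\,C^{i} < \delta C^{n}.$$
Writing $p(\alpha') = \prod_j(\alpha'-\alpha_j)$, some factor satisfies $\|\alpha'-\alpha_j\|_p < (\delta C^n)^{1/n}$. Choosing $\delta$ small therefore puts every root of $p'$ within any prescribed distance of the root set of $p$.

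\textbf{Step 3: matching multiplicities to get the permutation.} This is the main obstacle: Step 2 only shows each $\alpha'$ is near some $\alpha_j$, not that roots are hit with the correct multiplicities.

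Let $\beta_1,\dots,\beta_r$ be the distinct roots of $p$, with multiplicities $m_k$. Let
$$\eta = \min_{k\neq l}\|\beta_k-\beta_l\|_p,$$
and shrink $\epsilon$ so that $\epsilon < \eta$. The open balls $B(\beta_k,\epsilon)$ are then pairwise disjoint, since in ultrametric geometry two balls are either disjoint or nested. By Step 2, every root of $p'$ lies in one of these balls.

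To count the roots of $p'$ in $B(\beta_k,\epsilon)$, I repeat the Step 2 estimate more carefully. For a root $\alpha'$ in this ball we have $\|\alpha'-\beta_l\|_p = \|\beta_k-\beta_l\|_p$ for $l\neq k$, by the ultrametric property. Hence
$$\|\alpha'-\beta_k\|_p^{m_k} \prod_{l\neq k}\|\beta_k-\beta_l\|_p^{m_l} < \delta C^n .$$
This forces every such root into a smaller ball $B(\beta_k,\epsilon')$, with $\epsilon'\to 0$ as $\delta\to0$. It does not yet count them, however.

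For the count I would use a continuity argument via Newton polygons. Substitute $x = y+\beta_k$ and compare the Newton polygon of $p'(y+\beta_k)$ with that of $p(y+\beta_k)$. The coefficients of $p(y+\beta_k)$ vanish in degrees below $m_k$, and its coefficient in degree $m_k$ is nonzero. The coefficients of $p'(y+\beta_k)$ differ from those of $p(y+\beta_k)$ by $O(\delta)$. For $\delta$ small, the Newton polygon of $p'(y+\beta_k)$ therefore has exactly $m_k$ roots of valuation greater than some fixed threshold, namely those with $\|y\|_p < \epsilon$. Its remaining roots have $\|y\|_p$ close to $\|\beta_l-\beta_k\|_p \ge \eta > \epsilon$.

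So each ball contains exactly $m_k$ roots of $p'$, counted with multiplicity. Matching them arbitrarily within each ball gives the permutation $\sigma$.

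Alternatively, for this step one could apply Lemma \ref{henselrychlik} over a finite extension $K$ containing all the coefficients and roots of $p$. Take $g_0 = (x-\beta_k)^{m_k}$ and $h_0 = p/g_0$, which have nonzero resultant, after rescaling $x$ so that everything is integral. This gives a factorization $p' = gh$ with $g$ of degree $m_k$ close to $g_0$. However, $p'$ may have coefficients outside $K$, so the Newton polygon argument, which works directly over $\overline{F}$, is the route I would commit to.
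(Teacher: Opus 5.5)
Your argument is correct, but it counts multiplicities by a different mechanism from the paper's. The paper never first locates individual roots of $p'$. After rescaling so that $p$ is integral, it applies Lemma \ref{henselrychlik} to $f=p'$ with $g_0=(x-\gamma_i)^{m_i}$ and $h_0=\prod_{j\neq i}(x-\gamma_j)^{m_j}$. This gives, for each distinct root $\gamma_i$, a monic factor $h_i$ of $p'$ of degree $m_i$ congruent to $(x-\gamma_i)^{m_i}$, and the roots of $h_i$ are then shown to lie within $\epsilon$ of $\gamma_i$. Disjointness of the $\epsilon$-balls makes the $h_i$ pairwise coprime, and a degree count gives $p'=\prod_i h_i$, so the multiplicities come for free from the degrees.

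Your Newton polygon for $p'(y+\beta_k)$ does the same counting directly over $\overline{F}$. The steep initial part, up to the point in degree $m_k$, yields exactly $m_k$ roots in a shrinking ball around $\beta_k$. For small $\delta$ the right part of the polygon is unchanged, so the remaining roots lie at distances $\|\beta_l-\beta_k\|_p\geq\eta$. In fact this alone makes your Steps 1--2 unnecessary.

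Your stated reason for rejecting the Hensel route is not a real obstacle. The paper simply works over a finite extension $L$ containing the roots of both $p$ and $p'$, so $L$ depends on $p'$. It makes $\delta$ independent of $L$ by choosing $\delta<\|\pi\|_p c_0^2$, with $c_0$ built from the resultants $R(q_i,Q_i)$ and the numbers $\epsilon^{m_i}$. The inequality $\|\pi\|_p\leq\|\varpi_L\|_p$ then turns this into the congruence $p'\equiv p \bmod \mathfrak{m}_L^{2s+1}$ for the appropriate $s$.

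The Newton-polygon route avoids the reduction to integral coefficients and the bookkeeping with varying uniformizers. The Hensel route gives an explicit factorization of $p'$ and a quantitative $\delta$ in terms of resultants.

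A minor point: the uniqueness of the extension of $\|\cdot\|_p$ to $\overline{F}$ comes from the completeness of $F$, not from the roots lying in a finite extension.
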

\begin{proof}

    \textit{Reduce to the integral case.} We start with the observation that one can reduce to the case when $p(x)$ is integral i.e. $p(x) \in \mathcal{O}_{\overline{F}}[x]$. Choose $u \in \mathcal{O}_{\overline{F}} \neq 0$ such that $\beta_i := u \alpha_i \in \mathcal{O}_{\overline{F}}$. Consider the polynomials $$p_0(x) = x^n + b_{n-1}x^{n-1} + \cdots + b_0 := (x - \beta_1)(x-\beta_2) \cdots (x-\beta_n) \in \mathcal{O}_{\overline{F}}[x]$$  $$p_0'(x) = (x - \beta_1')(x - \beta_2') \cdots (x - \beta_n') := (x - u \alpha_1')(x-u\alpha_2') \cdots (x-u\alpha_n') \in \overline{F}[x].$$
    Let $p_0'(x) = x^n + b_{n-1}' x^{n-1} + \cdots + b_0'$ and $c := \|u \|_p$. Then $b_i = u^{n-i}a_i$ and $b_i'=u^{n-i}a_i'$. If the lemma for $p_0(x)$ and $c\epsilon$ supplies a coefficient tolerance $\eta$, then $\|a_i-a_i'\|_p<\eta$ for all $i$ implies $\|b_i-b_i'\|_p<\eta$, since $c\leq 1$. The resulting inequalities $\|\beta_j-\beta'_{\sigma(j)}\|_p<c\epsilon$ are equivalent to $\|\alpha_j-\alpha'_{\sigma(j)}\|_p<\epsilon$. Hence, proving the lemma for $p_0(x) \in \mathcal{O}_{\overline{F}}[x]$ implies it for $p(x)$, so without loss of generality we can assume that $p(x) \in \mathcal{O}_{\overline{F}}[x]$.
    
    \textit{Constructing $\delta$ given $p(x)$ and $\epsilon$.} Replacing $\epsilon$ by a smaller positive number if necessary, assume that $\epsilon<1$. Let $\gamma_1,\ldots,\gamma_r$ be the distinct roots of $p(x)$, with multiplicities $m_1,\ldots,m_r$, so that $$p(x)=\prod_{i=1}^r(x-\gamma_i)^{m_i}\in\mathcal{O}_{\overline{F}}[x].$$ We may also assume that $\epsilon<\|\gamma_i-\gamma_j\|_p$ for every $1\leq i<j\leq r$. For $1\leq i\leq r$, put
    $$q_i(x)=(x-\gamma_i)^{m_i},\qquad Q_i(x)=\prod_{j\neq i}(x-\gamma_j)^{m_j},\qquad R_i=R(q_i,Q_i).$$
    Since $q_i$ and $Q_i$ have no common root, $R_i\neq0$. Let
    $$c_0=\min_{1\leq i\leq r}\{\|R_i\|_p,\epsilon^{m_i}\}>0,$$
    and choose $\delta>0$ such that
    $$\delta<\min\{1,\|\pi\|_p c_0^2\}.$$
    Let $$p'(x) = x^n + a_{n-1}'x^{n-1} + \cdots + a_0' = (x - \alpha_1')(x-\alpha_2')\cdots (x-\alpha_n') \in \overline{F}[x]$$ satisfy $\| a_i - a_i' \|_p < \delta$ for all $0 \leq i \leq n-1$. Let $L$ be a finite extension of $F$ containing all $\gamma_i$ and $\alpha_j'$. Since $p(x)\in\mathcal{O}_L[x]$ and $\delta<1$, it follows that $p'(x)\in\mathcal{O}_L[x]$.

    \textit{Applying Hensel's lemma.} Let $\mathfrak{m}_L$ be the maximal ideal of $\mathcal{O}_L$ and let $\varpi_L$ be a uniformizer of $L$. Choose $s\geq0$ such that
    $$\|\varpi_L\|_p^{s+1}<c_0\leq\|\varpi_L\|_p^s.$$
    Since $\|\pi\|_p\leq\|\varpi_L\|_p$, the choice of $\delta$ gives
    $$\|a_i-a_i'\|_p<\delta<\|\varpi_L\|_p^{2s+1},$$
    and hence $p'\equiv p\mod\mathfrak{m}_L^{2s+1}$. Moreover, $\|R_i\|_p\geq c_0>\|\varpi_L\|_p^{s+1}$, so $R_i\notin\mathfrak{m}_L^{s+1}$. Applying Lemma \ref{henselrychlik} to $f=p'$, $g_0=q_i$ and $h_0=Q_i$, for every $1\leq i\leq r$ we obtain a monic factor $h_i(x)$ of $p'(x)$ such that
    $$h_i(x)\equiv q_i(x)\mod\mathfrak{m}_L^{s+1},\qquad \deg h_i=m_i.$$
    Write $q_i(x)=h_i(x)+e_i(x)$. Every coefficient of $e_i(x)$ has absolute value at most $\|\varpi_L\|_p^{s+1}<c_0\leq\epsilon^{m_i}$. If $\alpha'$ is a root of $h_i(x)$, then $\alpha'\in\mathcal{O}_L$, since it is a root of the monic integral polynomial $p'(x)$. Thus
    $$\|\alpha'-\gamma_i\|_p^{m_i}=\|e_i(\alpha')\|_p<\epsilon^{m_i},$$
    and hence $\|\alpha'-\gamma_i\|_p<\epsilon$. The $\epsilon$-balls around the distinct $\gamma_i$ are disjoint, so the factors $h_i(x)$ are pairwise coprime. Since they all divide $p'(x)$ and their degrees sum to $n$, their product is $p'(x)$. Matching the roots of $h_i(x)$ with the $m_i$ copies of $\gamma_i$ defines the required permutation $\sigma$.
    \end{proof}

\begin{D}[val]
    We have the valuation map $\val_{\pi}: \overline{F} \xrightarrow[]{} \mathbb{Q}$ normalized by the condition that $\val(\pi) = 1$. Since $\val_{\pi}$ is not defined at $0$, we restrict to the open subscheme $\mathbb{G}_m^n \subset \mathbb{A}_F^n$; on $\mathbb{G}_m^n(\overline{F})$ we obtain a map $\val: \mathbb{G}_m^n(\overline{F}) \xrightarrow[]{} \mathbb{Q}^n$ which is continuous for the discrete topology on $\mathbb{Q}^n$, since each fibre $\{ x \mid \val_{\pi}(x) = q \}$ is open. We have the following diagram, where the vertical maps are quotient maps:
    \begin{center}
        \begin{tikzcd}
            \mathbb{G}_m^n(\overline{F}) \arrow{r}{\val} \arrow{d}{} & \mathbb{Q}^n \arrow{d}{} \\
            \mathbb{G}_m^n(\overline{F})/S_n \arrow{r}{\val} & \mathbb{Q}^n/S_n
        \end{tikzcd}
    \end{center}
    Since the quotients in the above diagram are induced with the quotient topology (see Remark \ref{topologiesaresame}), it follows that the map $\val: \mathbb{G}_m^n(\overline{F})/S_n \xrightarrow[]{} \mathbb{Q}^n/S_n$ is continuous, again with the discrete topology on $\mathbb{Q}^n/S_n$.
\end{D}

\begin{const}\label{constructionGLn}
Consider the $F$ sub-schemes $\GL_n \subset M_n$ and $T = \mathbb{G}_m^n \subset \mathbb{A}_F^n$. Identifying $T$ as a maximal torus in $\GL_n$ consisting of diagonal matrices with non-zero entries, we obtain that $S_n = N_G(T)/T = \Omega_{\GL_n}(\overline{F})$ is the Weyl group of $\GL_n$. Since the eigenvalues of an invertible matrix are non-zero, $\roots \circ \charpoly$ carries $\GL_n(\overline{F})$ into $(\mathbb{G}_m^n/S_n)(\overline{F})$, which is where $\val$ is defined. Combining all the above constructions, we get the following diagram.
\begin{center}
    \begin{tikzcd}
        \GL_n(\overline{F}) \arrow{rr}{\roots \circ \charpoly} & &(\mathbb{G}_m^n/S_n)(\overline{F}) \arrow{r}{\val} & \mathbb{Q}^n/S_n\\
        \GL_n(\overline{F}) \arrow{rr}{} \arrow[equal]{u} & & T(\overline{F})/\Omega_{\GL_n}(\overline{F}) \arrow{r}{} \arrow{u}{} & (X_*(T)\otimes \mathbb{Q}) / \Omega_{\GL_n}(\overline{F}) \arrow{u}{}
    \end{tikzcd}
\end{center}
where the vertical arrows are the natural inclusions. Since we have shown that maps in the top row are continuous, it follows now that the maps in the bottom row are also continuous.
\end{const}

The setup for the rest of the subsection is as follows:
\begin{itemize}
    \item $G$ denotes a connected reductive group over $F$
    \item $T$ denotes a maximal torus in $G$ defined over $F$
    \item $\Omega_G$ denotes the Weyl group scheme over $F$
\end{itemize}

\begin{const}\label{sspartG}
The horizontal maps in the bottom row of the diagram from \ref{constructionGLn} can be generalized to the setting of a general connected reductive group $G$ over $F$ as follows:
$$\semisimplepart: G(\overline{F}) \xrightarrow[]{} T(\overline{F})/\Omega_G(\overline{F}) $$
is the map that sends $g \in G(\overline{F})$ to the conjugacy class of its semisimple part $g_m$. Semisimple conjugacy classes in $G(\overline{F})$ are parametrized by $T(\overline{F})/\Omega_G(\overline{F})$. Note that the map $\semisimplepart$ coincides with $\roots \circ \charpoly$ in the case $G = \GL_n$. Note also that if $T'$ is another maximal torus of $G$ over $F$, then there is a canonical bijection of the sets $T(\overline{F}) / \Omega_G(\overline{F}) = T'(\overline{F})/ \Omega_G(\overline{F})$, where the bijection is given by conjugation by any element of $G(\overline{F})$ that maps $T$ to $T'$.

Fix $x \in T(\overline{F})$. The assignment $\lambda \mapsto \val(\lambda(x))$ is defined for $\lambda \in X^*(T)$ and is extended $\mathbb{Q}$-linearly to all of $X^*(T) \otimes \mathbb{Q}$, as in \ref{valQextension}. With this convention there is a unique $\mu \in X_*(T) \otimes \mathbb{Q}$ such that
$$ \val(\lambda(x)) =  \langle \mu, \lambda \rangle $$
for all $\lambda \in X^*(T) \otimes \mathbb{Q}$, since the pairing $\langle \cdot, \cdot \rangle: (X_*(T)\otimes{\mathbb{Q}}) \times (X^*(T) \otimes \mathbb{Q} )\xrightarrow[]{} \mathbb{Q}$ is perfect. Moreover, this pairing is $\Omega_G(\overline{F})$-equivariant and hence it follows that this induces a map $$ \val: T(\overline{F})/\Omega_G(\overline{F}) \xrightarrow[]{} (X_*(T) \otimes \mathbb{Q})/ \Omega_G(\overline{F}).$$
Composing the two maps above we get $$\val \circ \semisimplepart: G(\overline{F}) \xrightarrow[]{} (X_*(T) \otimes \mathbb{Q})/ \Omega_G(\overline{F}). $$
\end{const}

\begin{Le}\label{slopeconstruction3}
    Recall that for $\gamma \in G(F)$, there is the associated $m_G(\gamma) \in \Hom_F(\mathbb{D}, G)$ constructed in \ref{delignecocharacter}. This is the same as the slope morphism $\nu_G(\gamma) \in \Hom_L(\mathbb{D},G)$ associated to $\gamma$, by Lemma \ref{equalityofslopes}. Further, there are the maps
    $$ \Hom_F(\mathbb{D},G) \xrightarrow[]{} (\Hom_{\overline{F}}(\mathbb{D},G)/G(\overline{F}))^{\Gamma_F} = ((X_*(T) \otimes \mathbb{Q})/\Omega_G(\overline{F}))^{\Gamma_F}. $$
    Then, we claim that the following diagram commutes
    \begin{center}
        \begin{tikzcd}
            G(F) \arrow{r}{m_G / \nu_G} \arrow{d}{} & \Hom_F(\mathbb{D}, G) \arrow{r}{} & ((X_*(T) \otimes \mathbb{Q})/\Omega_G(\overline{F}))^{\Gamma_F} \arrow{d}{} \\
            G(\overline{F}) \arrow{rr}{\val \circ \semisimplepart} & & (X_*(T) \otimes \mathbb{Q})/ \Omega_G(\overline{F}) 
        \end{tikzcd}
    \end{center}
\end{Le}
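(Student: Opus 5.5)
The plan is to reduce to the semisimple case and then to a maximal torus, where both routes around the square can be computed explicitly. Let $\gamma \in G(F)$ have Jordan decomposition $\gamma = \gamma_m\gamma_u$.

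\emph{Reduction to $\gamma_m$.} For the top route, Definition \ref{delignecocharacter} gives $m_G(\gamma) = m_G(\gamma_m)$, since $\langle \gamma \rangle_m = \langle \gamma_m \rangle$. For the bottom route, $\semisimplepart(\gamma)$ is by definition the conjugacy class of $\gamma_m$. So both routes depend only on $\gamma_m$, and we may assume $\gamma$ is semisimple.

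\emph{Reduction to a torus.} Let $I = G_\gamma^0$ and choose a maximal $F$-torus $T'$ of $I$. Then $T'$ contains $\gamma$ and is a maximal torus of $G$. By the canonical bijection $T(\overline{F})/\Omega_G(\overline{F}) = T'(\overline{F})/\Omega_G(\overline{F})$ noted in Construction \ref{sspartG}, and the corresponding identification on $(X_*(-)\otimes\mathbb{Q})/\Omega_G(\overline{F})$ given by conjugating by some $g \in G(\overline{F})$ with $gTg^{-1} = T'$, we may replace $T$ by $T'$. This identification is compatible with $\val$, because $\val(\lambda(gxg^{-1})) = \val((\lambda\circ\Int(g))(x))$ and the pairing is transported along $\Int(g)$. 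So assume $\gamma \in T(F)$.

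\emph{Torus computation.} By functoriality (2) of Definition \ref{delignecocharacter} applied to $T \hookrightarrow G$, $m_G(\gamma)$ is the image of $m_T(\gamma) \in X_*(T)\otimes\mathbb{Q}$. By Lemma \ref{equalityofslopes} and Corollary \ref{slopetorispecialcase}, $m_T(\gamma) = \nu_T(\gamma)$ is the unique $\mu$ with $\val(\lambda(\gamma)) = \langle \mu, \lambda\rangle$ for all $\lambda \in X^*(T)\otimes\mathbb{Q}$. This is exactly the $\mu$ used in Construction \ref{sspartG} to define $\val$ of the class of $x=\gamma$. The map $\Hom_F(\mathbb{D},G)\to (X_*(T)\otimes\mathbb{Q})/\Omega_G(\overline{F})$ sends a rational cocharacter factoring through $T$ to its $\Omega_G(\overline{F})$-orbit, since $G(\overline{F})$-conjugacy classes of cocharacters into $T$ correspond to Weyl orbits. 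Hence the two routes agree. The right vertical arrow is the inclusion of $\Gamma_F$-invariants, and the element is $\Gamma_F$-invariant because $m_G(\gamma)$ is defined over $F$.

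\emph{Expected obstacle.} The main difficulty is the compatibility of the identifications. One must check that the identification $(\Hom_{\overline{F}}(\mathbb{D},G)/G(\overline{F}))^{\Gamma_F} = ((X_*(T)\otimes\mathbb{Q})/\Omega_G(\overline{F}))^{\Gamma_F}$ for the chosen $T$ agrees with the transported one for $T'$. I would handle this by noting that both are induced by conjugating the cocharacter into the torus, and that two such conjugations differ by an element of $N_G(T)(\overline{F})$, which acts through $\Omega_G(\overline{F})$.
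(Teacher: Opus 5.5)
Your proposal is correct and follows essentially the same route as the paper: reduce to the semisimple part $\gamma_m$, place it in a maximal $F$-torus, and identify $\val(\gamma_m)$ with $\nu_T(\gamma_m)$ via Corollary \ref{slopetorispecialcase} and the perfectness of the pairing. Your explicit handling of the change of maximal torus is a point the paper leaves implicit by silently reusing the letter $T$.
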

\begin{proof}
    Let $\gamma = \gamma_m \gamma_u$ be the Jordan decomposition of $\gamma$ with $\gamma_m \in G(F)$ the semisimple part of $\gamma$. By construction, $m_G(\gamma) = m_G(\gamma_m)$. Let $T$ be a maximal torus of $G$ over $F$ such that $\gamma_m \in T(F)$ (this can be arranged, for example by choosing a maximal torus defined over $F$ in the identity component of the centralizer of $\gamma_m$ in $G$).
    
    Now, we have that $m_G(\gamma_m) = \nu_G(\gamma_m)$ by Lemma \ref{equalityofslopes}. To show that $\val \circ \semisimplepart (\gamma) = \nu_G(\gamma)$, it suffices to show that $\val(\gamma_m) = \nu_T(\gamma_m)$. Here $\val(\gamma_m)$ is, by \ref{sspartG}, the unique element of $X_*(T)\otimes \mathbb{Q}$ satisfying $\val(\lambda(\gamma_m)) = \langle \val(\gamma_m), \lambda \rangle$ for all $\lambda \in X^*(T)$, the integral characters sufficing by \ref{valQextension}. On the other hand, $\gamma_m \in T(F)$, so Corollary \ref{slopetorispecialcase} gives $\val(\lambda(\gamma_m)) = \langle \nu_T(\gamma_m), \lambda \rangle$ for the same $\lambda$. Since the pairing is perfect, $\val(\gamma_m) = \nu_T(\gamma_m)$ in $X_*(T) \otimes \mathbb{Q}$, and passing to $\Omega_G(\overline{F})$-orbits gives the asserted equality in $(X_*(T)\otimes \mathbb{Q})/\Omega_G(\overline{F})$. Finally, $\nu_G(\gamma) = \nu_G(\gamma_m)$ by the general case of \ref{equalityofslopes}, so the result follows.
\end{proof}

\begin{D}[Trace characters]
    Let $\tr: M_n \xrightarrow[]{} \mathbb{A}^1_F$ be the affine morphism sending a matrix to its trace. Let $\rho: G \xrightarrow[]{} \GL_n$ denote a representation of $G$ over $\overline{F}$. Then, there is an affine morphism $\tr \circ \rho: G_{\overline{F}} \xrightarrow[]{} \mathbb{A}^1_{\overline{F}}$. Restricting this map to $T(\overline{F})$ and noting that the map is invariant under conjugacy classes we obtain a map
    $$(\tr \circ \rho)|_T: (T/\Omega_G)(\overline{F}) = T(\overline{F})/\Omega_G(\overline{F}) \xrightarrow[]{} \mathbb{A}^1_{\overline{F}}$$
    which is known as the trace character of the representation $\rho$. Let $\mathcal{O}((T/\Omega_G)_{\overline{F}})$ denote the ring of global sections of the affine variety $(T/ \Omega_G)_{\overline{F}}$. The trace character can be viewed as an element of $\mathcal{O}((T / \Omega_G)_{\overline{F}})$.
\end{D}

\begin{D}[Fundamental set of representations]
    A set $S$ of representations of $G$ over $\overline{F}$ is called a fundamental set if the trace characters of all exterior powers of the representations in $S$ generate $\mathcal{O}((T / \Omega_G)_{\overline{F}})$ as an $\overline{F}$-algebra.
\end{D}

\begin{Le}\label{fundamentalsetexists}
    A finite fundamental set of representations of $G$ over $\overline{F}$ exists.
\end{Le}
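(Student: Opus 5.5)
The plan is to reduce to the classical theorem of Chevalley--Steinberg on the invariant ring $\mathcal{O}(T/\Omega_G)$, working over $\overline{F}$ throughout. Since everything is over $\overline{F}$, I may replace $G$ by the split group $G_{\overline{F}}$ and $T$ by a split maximal torus. Then $\mathcal{O}((T/\Omega_G)_{\overline{F}}) = \overline{F}[X^*(T)]^{\Omega_G}$. Restriction of class functions gives a surjection onto this invariant ring: every $\Omega_G$-invariant element of $\overline{F}[X^*(T)]$ is a linear combination of the orbit sums $\sum_{\lambda' \in \Omega_G\lambda} e^{\lambda'}$ for dominant $\lambda$. By induction on the dominance order, each orbit sum is in turn a linear combination of characters $\chi_V$ of irreducible representations $V$ with highest weight $\lambda$. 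This is the standard triangularity argument: $\chi_{V_\lambda} = m_\lambda + \sum_{\mu<\lambda} c_\mu m_\mu$, and for fixed $\lambda$ only finitely many dominant $\mu$ lie below it.

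It therefore suffices to find a finite set $S$ of representations such that every irreducible character is a polynomial in the characters of exterior powers of members of $S$. I would use the structure of the monoid of dominant weights. Let $X_+ \subset X^*(T)$ denote the dominant weights. This is a finitely generated monoid: it is the intersection of the lattice $X^*(T)$ with a rational polyhedral cone (Gordan's lemma). Choose generators $\lambda_1, \dots, \lambda_k$, which may include $\pm$ a basis of the central characters, so that the non-semisimple part is also covered. Take $S = \{ V_{\lambda_1}, \dots, V_{\lambda_k}\}$.

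I then show by induction on dominance that each $\chi_{V_\lambda}$ is a polynomial in the $\chi_{V_{\lambda_i}}$. Write $\lambda = \sum n_i \lambda_i$. The product $\prod \chi_{V_{\lambda_i}}^{n_i}$ is the character of $\bigotimes V_{\lambda_i}^{\otimes n_i}$. This tensor product contains $V_\lambda$ with multiplicity one, and its other constituents have highest weights strictly smaller than $\lambda$. Subtracting those constituents and applying induction finishes the argument. The induction terminates because, modulo the central part, the dominant weights below a given one form a finite set.

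Exterior powers are not needed for generation, but they are harmless. The degree-one exterior power of $V_{\lambda_i}$ is $V_{\lambda_i}$ itself, so the trace characters of all exterior powers of members of $S$ still generate. Inverses, needed for the Laurent-polynomial nature of the torus part, are handled by including the dual characters among the generators, that is, by including $-\lambda$ for the central generators. Alternatively, note that $\det$ of a faithful representation appears as a top exterior power, and include its dual.

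The main obstacle is carrying out the triangularity and induction cleanly when $G$ is not semisimple. There the set of dominant weights below $\lambda$ is infinite unless one fixes the central character. I would handle this by grading by central character, since tensor products preserve the sum of central characters, and running the induction inside each graded piece, where finiteness holds. As an alternative, one could simply cite Steinberg's result that $\overline{F}[X^*(T)]^{\Omega}$ is generated by fundamental characters together with the central characters and their inverses.
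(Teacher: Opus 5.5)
Your proof is correct, but it takes a different route from the paper. The paper's argument is non-constructive. It takes finitely many algebra generators $f_1,\dots,f_m$ of $\mathcal{O}(T_{\overline{F}})^{\Omega_G}$, which exist because invariants of a finite group acting on a finitely generated algebra are finitely generated. It expands each $f_j$ in the basis of irreducible characters and lets $S$ be the finitely many irreducibles that occur. You instead write down an explicit $S$: the irreducibles whose highest weights generate the dominant monoid $X_+$, which is finitely generated by Gordan's lemma. You then prove generation directly, using that $V_\lambda$ occurs with multiplicity one as the top constituent of $\bigotimes V_{\lambda_i}^{\otimes n_i}$ and inducting on the dominance order. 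Both proofs rest on the same triangularity that makes irreducible characters a basis. The paper's is shorter and needs no tensor-product argument. Yours controls $S$, shows every irreducible character is a polynomial with nonnegative exponents in the characters of $S$, and re-proves finiteness of the invariant ring along the way. This is closer in spirit to the explicit fundamental sets of Kret--Shin mentioned after the lemma.

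Two of your side remarks should be simplified or corrected.

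First, the ``main obstacle'' does not arise. Here $\mu\le\lambda$ means $\lambda-\mu\in\mathbb{Z}_{\geq 0}\Phi^+$, and roots are trivial on $Z(G)$, so the central part is automatically fixed. Hence $\{\mu\in X_+ : \mu\le\lambda\}$ is already finite for reductive $G$, and no grading by central character is needed. Similarly, $X_+$ contains the group $X^*(T)^{\Omega_G}$, so any monoid generators of $X_+$ already produce the negatives of central characters. Your separate treatment of inverses is therefore redundant.

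Second, the fallback citation of Steinberg (generation by fundamental characters together with central characters) is valid only when $G^{\mathrm{der}}$ is simply connected. For $\mathrm{PGL}_2$, for instance, there is no fundamental weight in $X^*(T)$. The lemma is stated for arbitrary connected reductive $G$ and is later applied to endoscopic groups $H$, so the Gordan argument is the one to keep.
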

\begin{proof}
    The variety $(T/\Omega_G)_{\overline{F}}$ is affine of finite type, so $\mathcal{O}((T/\Omega_G)_{\overline{F}}) = \mathcal{O}(T_{\overline{F}})^{\Omega_G}$ is a finitely generated $\overline{F}$-algebra; fix generators $f_1, \cdots, f_m$. By highest weight theory, the trace characters of the irreducible representations of $G_{\overline{F}}$ form an $\overline{F}$-basis of $\mathcal{O}(T_{\overline{F}})^{\Omega_G}$, so each $f_j$ is a finite linear combination of such characters. Let $S$ be the finite set of irreducible representations occurring in these expressions. The subalgebra generated by their trace characters contains every $f_j$, hence is all of $\mathcal{O}((T/\Omega_G)_{\overline{F}})$, and since $\wedge^1 \rho = \rho$ these trace characters occur among those of exterior powers of members of $S$. Hence $S$ is a fundamental set.
\end{proof}

The above argument uses only the representations themselves, and not their exterior powers. Allowing exterior powers in the definition makes it possible to work with much smaller fundamental sets. Kret and Shin give explicit examples for the classical groups, see $\S$ 1 of \cite{MR4556781}.

\begin{Le}\label{arnolemma}
    Let $S$ be a fundamental set of representations of $G$ over $\overline{F}$. Let $g_1, g_2 \in G(\overline{F})$ be semisimple. Then, $g_1, g_2$ are conjugate in $G(\overline{F})$ iff $\rho(g_1), \rho(g_2)$ are conjugate in $\GL_n(\overline{F})$ for all $\rho: G \xrightarrow[]{} \GL_n$ in $S$.
\end{Le}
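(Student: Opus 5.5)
The plan is to reduce the statement to the classical fact that semisimple conjugacy classes in $G(\overline{F})$ are separated by the invariant functions $\mathcal{O}((T/\Omega_G)_{\overline{F}}) = \mathcal{O}(G_{\overline{F}})^{G}$, and that in $\GL_n(\overline{F})$ two semisimple elements are conjugate exactly when they have the same characteristic polynomial.

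The forward direction is immediate: if $g_2 = hg_1h^{-1}$ then $\rho(g_2) = \rho(h)\rho(g_1)\rho(h)^{-1}$ for every $\rho$. For the converse, first conjugate $g_1,g_2$ into $T(\overline{F})$, writing $t_1,t_2$ for the conjugates; this is possible since semisimple elements over an algebraically closed field lie in a maximal torus and all maximal tori are conjugate. It then suffices to show that $t_1,t_2$ have the same image in $(T/\Omega_G)(\overline{F}) = T(\overline{F})/\Omega_G(\overline{F})$, using the description of $\overline{F}$-points of a finite quotient recalled in \ref{GIT}. Indeed, two elements of $T$ in the same $\Omega_G$-orbit are conjugate in $G$ via a representative in $N_G(T)$.

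Now suppose $\rho(t_1)$ and $\rho(t_2)$ are conjugate in $\GL_n(\overline{F})$ for every $\rho \in S$. Then for every exterior power $\wedge^k \rho$, the matrices $\wedge^k\rho(t_1)$ and $\wedge^k\rho(t_2)$ are conjugate, so $\tr \wedge^k\rho(t_1) = \tr\wedge^k\rho(t_2)$. In other words, every trace character of an exterior power of a member of $S$ takes the same value at the images of $t_1$ and $t_2$. Since these trace characters generate $\mathcal{O}((T/\Omega_G)_{\overline{F}})$ as an $\overline{F}$-algebra, every regular function on the affine variety $(T/\Omega_G)_{\overline{F}}$ agrees at the two points. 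Regular functions separate $\overline{F}$-points of an affine variety, so the two points coincide and $t_1 \in \Omega_G(\overline{F})\cdot t_2$.

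The only step needing care is this last identification of points of the quotient with $\Omega_G$-orbits, meaning that the invariants separate orbits. This holds because $\Omega_G$ is finite, so the map $T \to T/\Omega_G$ is finite and surjective with fibres exactly the orbits, as in \ref{GIT}. I do not expect a serious obstacle; the argument is essentially formal once the definition of a fundamental set is in place. Note that the hypothesis is stronger than needed: conjugacy of $\rho(g_i)$ is used only through equality of characteristic polynomials, which records the traces of all $\wedge^k\rho$.
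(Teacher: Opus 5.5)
Your proposal is correct and follows the paper's proof essentially step for step: pass to exterior powers and their traces, use that these trace characters generate $\mathcal{O}((T/\Omega_G)_{\overline{F}})$, and conclude because regular functions separate $\overline{F}$-points of the affine variety $(T/\Omega_G)_{\overline{F}}$, whose points are the $\Omega_G$-orbits. Your explicit reduction to $T(\overline{F})$, and your remark that elements of $T(\overline{F})$ in the same $\Omega_G$-orbit are conjugate via $N_G(T)$, only spell out the identification the paper uses implicitly when it regards the conjugacy classes of $g_1,g_2$ as points of $(T/\Omega_G)(\overline{F})$.
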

\begin{proof}
    One direction is clear. For the other, view the $G(\overline{F})$-conjugacy classes of $g_1$ and $g_2$ as elements $x_1, x_2$ of $(T / \Omega_G)(\overline{F})$. If $\rho(g_1)$ and $\rho(g_2)$ are conjugate for every $\rho \in S$, then so are $\wedge^k \rho(g_1)$ and $\wedge^k \rho(g_2)$ for every $k$, and the trace is a class function, so the trace characters of all exterior powers of members of $S$ take the same value at $x_1$ and at $x_2$. These functions generate $\mathcal{O}((T/\Omega_G)_{\overline{F}})$, so $f(x_1) = f(x_2)$ for every $f \in \mathcal{O}((T/\Omega_G)_{\overline{F}})$. Since $(T/\Omega_G)_{\overline{F}}$ is an affine variety, two of its $\overline{F}$-points agree if and only if all global functions take the same value on them, so $x_1 = x_2$. This is Lemma 1.1 of \cite{MR4556781}.
\end{proof}

\begin{Le}\label{truncationcontinuous}
    Recall the following map from construction \ref{sspartG}
    $$ \val \circ \semisimplepart: G(\overline{F}) \xrightarrow[]{} (X_*(T) \otimes \mathbb{Q})/\Omega_G(\overline{F}).$$
    We claim that the map is continuous under the natural topology on $G(\overline{F})$ and the discrete topology on $(X_*(T) \otimes \mathbb{Q})/\Omega_G(\overline{F})$.
\end{Le}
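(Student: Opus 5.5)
We reduce to the case of $\GL_n$, where continuity has already been established in Construction \ref{constructionGLn}, by means of a finite fundamental set of representations.

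By Lemma \ref{fundamentalsetexists}, choose a finite fundamental set $S=\{\rho_1,\dots,\rho_k\}$ with $\rho_i: G_{\overline{F}}\to \GL_{n_i}$. Each $\rho_i$ is defined over some finite extension, so it is a morphism of varieties and hence continuous on $\overline{F}$-points for the natural topologies. For each $i$, composition with the top row of the diagram in \ref{constructionGLn} gives a continuous map
$$\Phi_i=\val\circ\roots\circ\charpoly\circ\rho_i: G(\overline{F})\longrightarrow \mathbb{Q}^{n_i}/S_{n_i},$$
where the target carries the discrete topology. The product $\Phi=(\Phi_1,\dots,\Phi_k)$ is then continuous into a discrete set. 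Consequently, it suffices to show that $\val\circ\semisimplepart$ factors through $\Phi$, that is, that $\val\circ\semisimplepart(g)$ is determined by $\Phi(g)$. Once this is known, each fibre of $\val\circ\semisimplepart$ is a union of fibres of $\Phi$, and these are open.

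For the factorization, conjugate so that $g_m$ lies in $T(\overline{F})$. Since $\rho_i$ preserves Jordan decompositions, $\Phi_i(g)$ is the multiset of values $\val(\chi(g_m))$, where $\chi$ runs over the weights of $\rho_i$ with multiplicity. Equivalently, if $\mu\in X_*(T)\otimes\mathbb{Q}$ represents $\val\circ\semisimplepart(g)$, then $\Phi_i(g)=\{\langle\mu,\chi\rangle\}_{\chi\in\mathrm{wt}(\rho_i)}$. We therefore need that the $\Omega_G$-orbit of $\mu$ is determined by these multisets.

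This is the main obstacle. The plan is to imitate Lemma \ref{arnolemma} with $\mu$ in place of a semisimple element. Choose an integer $N$ with $N\mu\in X_*(T)$ and consider $t_\mu=(N\mu)(\pi)\in T(\overline{F})$. Suppose $\mu'$ gives the same multisets. Then $\rho_i(t_\mu)$ and $\rho_i(t_{\mu'})$ are diagonal with eigenvalues $\pi^{N\langle\mu,\chi\rangle}$. Because the multisets of exponents agree, these eigenvalue multisets agree, so $\rho_i(t_\mu)$ and $\rho_i(t_{\mu'})$ are conjugate in $\GL_{n_i}(\overline{F})$ for every $i$. Lemma \ref{arnolemma} then shows that $t_\mu$ and $t_{\mu'}$ are $\Omega_G$-conjugate. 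Since $\chi\mapsto\val\chi(\cdot)$ recovers $N\mu$ from $t_\mu$ equivariantly, $N\mu$ and $N\mu'$, and hence $\mu$ and $\mu'$, are $\Omega_G$-conjugate.

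The delicate point is ensuring that conjugacy of $t_\mu$ and $t_{\mu'}$ by some $w$ forces $w\mu=\mu'$, and not merely agreement up to torsion. This holds because $\val\circ\lambda$ on $w(t_\mu)=t_{w\mu}$ recovers $w\mu$ exactly. One must also check that a single choice of $N$ works for both $\mu$ and $\mu'$, which it does after taking a common multiple.
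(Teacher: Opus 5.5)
Your proposal is correct and follows the paper's proof essentially step for step. Both arguments reduce to the $\GL_n$ case through a finite fundamental set of representations, and both prove that the induced map on $\Omega_G(\overline{F})$-orbits of rational cocharacters is injective by evaluating at $(N\mu)(\pi)$, applying Lemma \ref{arnolemma}, and recovering $w\mu$ from the valuations (the paper phrases this last step as ``$\pi$ is not a root of unity'').
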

\begin{proof}
    The lemma has already been established in the case $G = \GL_n$, see construction \ref{constructionGLn}. For a general connected reductive group $G$, let $S = \{\rho_1, \rho_2, \cdots \rho_k \}$ denote a set of fundamental representations of $G$, where $\rho_i: G \xrightarrow[]{} \GL_{n_i}$ for all $1 \leq i \leq k$. Let $T_i \subset \GL_{n_i}$ denote the subset of diagonal matrices for all $1 \leq i \leq k$. Consider the following diagram
    \begin{center}
        \begin{tikzcd}
            G(\overline{F}) \arrow{r}{\rho_1 \times \cdots \times \rho_k} \arrow{d}{\semisimplepart_G} \arrow{rdd}{\theta} &\GL_{n_1}(\overline{F}) \times \cdots \times \GL_{n_k}(\overline{F}) \arrow{d}{\semisimplepart_{\GL_n}} \\
            T(\overline{F}) / \Omega_G(\overline{F}) \arrow{d}{\val_G} & T_1(\overline{F}) / \Omega_{\GL_{n_1}}(\overline{F}) \times \cdots \times T_k(\overline{F}) / \Omega_{\GL_{n_k}}(\overline{F}) \arrow{d}{\val_{\GL_n}} \\
            (X_*(T) \otimes \mathbb{Q})/\Omega_G(\overline{F}) \arrow[hook]{r}{} & (X_*(T_1) \otimes \mathbb{Q})/\Omega_{\GL_{n_1}}(\overline{F}) \times \cdots \times (X_*(T_k) \otimes \mathbb{Q})/\Omega_{\GL_{n_k}}(\overline{F})
        \end{tikzcd}
    \end{center}
    Note that the bottom arrow is injective. Indeed, let $\nu, \nu' \in X_*(T) \otimes \mathbb{Q}$ have the same image, and choose an integer $d > 0$ such that $d\nu, d\nu' \in X_*(T)$. Put $x = (d\nu)(\pi)$ and $x' = (d\nu')(\pi)$, which are semisimple elements of $T(\overline{F})$. Writing $\lambda_1, \cdots, \lambda_{n_i}$ for the weights of $\rho_i$ with multiplicity, the matrix $\rho_i(x)$ is diagonal with entries $\pi^{\langle \lambda_j, d\nu \rangle}$, and similarly for $x'$. By hypothesis these two families of exponents agree up to a permutation, so $\rho_i(x)$ and $\rho_i(x')$ are conjugate in $\GL_{n_i}(\overline{F})$ for every $i$. Lemma \ref{arnolemma} then gives that $x$ and $x'$ are conjugate in $G(\overline{F})$, and since semisimple conjugacy classes are parametrized by $T(\overline{F})/\Omega_G(\overline{F})$, we get $(d\nu')(\pi) = (w(d\nu))(\pi)$ for some $w \in \Omega_G(\overline{F})$. Evaluating $\lambda \in X^*(T)$ on this identity gives $\pi^{\langle \lambda, d\nu' \rangle} = \pi^{\langle \lambda, w(d\nu) \rangle}$, and since $\pi$ is not a root of unity it follows that $\langle \lambda, d\nu' \rangle = \langle \lambda, w(d\nu) \rangle$ for every $\lambda \in X^*(T)$. Hence $d\nu' = w(d\nu)$ and $\nu' = w(\nu)$. Note that this argument applies \ref{arnolemma} to the elements $(d\nu)(\pi)$, and not to the cocharacters themselves; the lemma separates semisimple conjugacy classes, not $\Omega_G$-orbits of rational cocharacters. 

    The arrows on the right are continuous from the $\GL_n$ case. Since the topology on $(X_*(T) \otimes \mathbb{Q})/\Omega_G(\overline{F})$ is discrete, it follows that showing $(\val \circ \semisimplepart)_G$ is continuous is equivalent to showing that the diagonal map $\theta$ is continuous. But this is clear after viewing $\theta$ as the composition of continuous maps $\rho_1 \times \cdots \times \rho_k$ and $(\val \circ \semisimplepart)_{\GL_n}$. The proof is now complete.
\end{proof}

\begin{C}\label{continuousnewtonpoint}
    Let $\gamma \in G(F)$. Recall the slope morphism $\nu_G(\gamma) \in \Hom_L(\mathbb{D}, G)$ associated to $\gamma$ by viewing it as an element of $G(L)$. In Definition \ref{delignecocharacter}, we gave an alternate construction for this map which we denoted by $m_G(\gamma)$. We claim that the following composition is continuous:
    $$ G(F) \xrightarrow{m_G / \nu_G} \Hom_F(\mathbb{D}, G) \xrightarrow[]{} ((X_*(T) \otimes \mathbb{Q})/\Omega_G(\overline{F}))^{\Gamma_F}. $$
\end{C}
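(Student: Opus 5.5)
The plan is to factor the composition through the commutative square of Lemma \ref{slopeconstruction3}, and then to invoke the continuity result Lemma \ref{truncationcontinuous}. Concretely, write
\[
\Phi: G(F) \xrightarrow{m_G/\nu_G} \Hom_F(\mathbb{D},G) \longrightarrow \bigl((X_*(T)\otimes\mathbb{Q})/\Omega_G(\overline{F})\bigr)^{\Gamma_F}.
\]
Both source and target carry natural topologies: $G(F)$ has its $p$-adic topology, and the target is given the discrete topology. The discrete topology on the target agrees with the subspace topology inherited from the discrete set $(X_*(T)\otimes\mathbb{Q})/\Omega_G(\overline{F})$.

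First, Lemma \ref{slopeconstruction3} identifies $\Phi$, after composing with the inclusion
\[
j:\bigl((X_*(T)\otimes\mathbb{Q})/\Omega_G(\overline{F})\bigr)^{\Gamma_F}\hookrightarrow (X_*(T)\otimes\mathbb{Q})/\Omega_G(\overline{F}),
\]
with the restriction to $G(F)$ of $\val\circ\semisimplepart : G(\overline{F})\to (X_*(T)\otimes\mathbb{Q})/\Omega_G(\overline{F})$. Thus $j\circ\Phi = (\val\circ\semisimplepart)\circ \iota$, where $\iota: G(F)\hookrightarrow G(\overline{F})$ is the natural inclusion.

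Second, I check that $\iota$ is continuous. The topology on $G(\overline{F})$ is defined via a closed embedding $G\hookrightarrow \mathbb{A}^N$ and the valuation topology on $\overline{F}$. It restricts on $G(F)$ to the usual topology, because the absolute value on $\overline{F}$ extends the one on $F$. Hence $\iota$ is continuous, indeed a topological embedding.

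Third, Lemma \ref{truncationcontinuous} gives that $\val\circ\semisimplepart$ is continuous into the discrete set. Hence $j\circ\Phi$ is continuous. Since $j$ is injective and both spaces are discrete, the preimage of a point under $\Phi$ equals the preimage of its image under $j\circ\Phi$. That preimage is open, so $\Phi$ is continuous, i.e. locally constant.

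The only delicate point is making sure Lemma \ref{truncationcontinuous} really applies in the setting here, where its proof used a fundamental set of representations defined over $\overline{F}$. Continuity of each $\rho_i : G(\overline{F})\to\GL_{n_i}(\overline{F})$ holds because the $\rho_i$ are morphisms of varieties, and the relevant topology on $G(\overline{F})$ is the one restricting to that of $G(F)$. Apart from this compatibility check, which was already needed in Lemma \ref{truncationcontinuous}, the argument is a formal consequence of the two cited lemmas.
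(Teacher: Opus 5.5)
Your proposal is correct and follows the paper's proof, which is the one-line combination of Lemma \ref{slopeconstruction3} and Lemma \ref{truncationcontinuous}. You additionally write out the continuity of $G(F)\hookrightarrow G(\overline{F})$ and the injectivity of the inclusion of $\Gamma_F$-fixed points, both of which the paper leaves implicit.
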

\begin{proof}
    This follows from combining Lemma \ref{slopeconstruction3} and Lemma \ref{truncationcontinuous}. 
\end{proof}

\subsection{Completion of the proofs}
\begin{itemize}
    \item Let $(H,s,\eta)$ denote an endoscopic triple for a connected reductive group $G$ over a $p$-adic field $F$.
\end{itemize}

We first recall the statement of Lemma A from Section \ref{intro2}.

\begin{LemA}
For an endoscopic triple $\mathcal{H} =(H,s,\eta)$ for a connected reductive group $G$ over a $p$-adic field there are canonical maps $f^{\mathcal{H}}: \mathcal{N}(H) \xrightarrow[]{} \mathcal{N}(G)$ and a $\Gamma_F$-equivariant map $\psi^{\mathcal{H}}: \pi_1(H) \xrightarrow[]{} \pi_1(G)$. We abuse notation by also using $\psi^{\mathcal{H}}$ to denote the induced maps on the invariants and the coinvariants. The following diagram commutes
\begin{center}
\begin{tikzcd}
    \mathcal{N}(H) \arrow{r}{\delta_H} \arrow{d}{f^{\mathcal{H}}} & \pi_1(H)^{\Gamma_F} \otimes \mathbb{Q} \arrow{d}{\psi^{\mathcal{H}}} & \pi_1(H)_{\Gamma_F} \arrow{l} \arrow{d}{\psi^{\mathcal{H}}} \\
    \mathcal{N}(G) \arrow{r}{\delta_G} & \pi_1(G)^{\Gamma_F} \otimes \mathbb{Q} & \pi_1(G)_{\Gamma_F} \arrow{l}
\end{tikzcd}
\end{center}

The map $f^{\mathcal{H}}:\mathcal{N}(H) \xrightarrow[]{} \mathcal{N}(G)$ has finite fibres.
\end{LemA}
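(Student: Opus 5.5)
The plan is to build both maps from a common maximal torus on the dual side, and then check the diagram and the finiteness there. Choose a maximal torus $T_H$ of $H$ defined over $F$; since $H$ is quasi-split, it can be taken to be the centralizer of a maximal split torus. Write $\Hat{T}$ for its dual. The canonical $\Hat{H}$-conjugacy class $\Hat{T}\to\Hat{H}$ of \ref{firstconst}, composed with $\eta$, lands in the $\Gamma_F$-invariant $\Hat{G}$-class that defines admissible embeddings (\ref{admissibleembedding}). This gives an identification $X_*(T_H)\simeq X_*(T)$ for a maximal torus $T$ of $G$ over $\overline{F}$, well defined up to $\Omega_G(\overline{F})$.

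\emph{The map $f^{\mathcal H}$.} Using Lemma \ref{slopesoverclosure} and Definition \ref{newtonmap}, I identify $\mathcal{N}(H)$ with $((X_*(T_H)\otimes\mathbb{Q})/\Omega_H(\overline{F}))^{\Gamma_F}$, and similarly for $G$. Define $f^{\mathcal H}$ by sending an $\Omega_H$-orbit to the $\Omega_G$-orbit of its image under the admissible identification. This is well defined because $\Omega_H\subset\Omega_G$ under the identification; indeed $\Phi(\Hat{H})\subset\Phi(\Hat{G})$ by \ref{rootdataendogrp}. It is $\Gamma_F$-equivariant because the $\Hat{G}$-class of the admissible embedding is $\Gamma_F$-invariant (\ref{endotriples}(2), \ref{secondconst}). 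The Galois actions on $X_*(T_H)$ coming from $H$ and from $G$ differ only by elements of $\Omega_G$, and this is exactly the cocycle of \ref{rootdataendogrp}. It follows that invariant $\Omega_H$-orbits go to invariant $\Omega_G$-orbits. Independence of the choice of $T_H$ follows because any two choices are $H(\overline F)$-conjugate.

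\emph{The map $\psi^{\mathcal H}$.} I take $\psi^{\mathcal H}$ to be the dual of the restriction $Z(\Hat{G})\hookrightarrow Z(\Hat{H})$, via Remark \ref{observation}. This inclusion exists because $\eta(\Hat H)=\Hat G^0_{\eta(s)}$ contains $Z(\Hat G)$ in its center, and it is $\Gamma_F$-equivariant by the remark following Definition \ref{endodef}. Concretely, $\psi^{\mathcal H}$ is the map $X_*(T_H)/\mathbb{Z}\check\Phi_H\to X_*(T)/\mathbb{Z}\check\Phi_G$ induced by the same identification, which makes sense since coroots of $H$ are coroots of $G$. The induced maps on invariants and coinvariants then exist by equivariance.

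\emph{Commutativity and finite fibres.} The right-hand square commutes formally, since both horizontal arrows are the natural $\Gamma_F$-maps and $\psi^{\mathcal H}$ is equivariant. For the left-hand square, I would rewrite $\delta_G$ more intrinsically: it sends the class of $\nu\in X_*(T)\otimes\mathbb{Q}$ to the image of $\pr^{\Gamma_F}\pr^{\Omega_G}(\nu)$, or equivalently to the image of $\nu$ in $\pi_1(G)\otimes\mathbb{Q}$, which lands in the invariants. This uses that $X_*(A)\otimes\mathbb Q\to X_*(A_G)\otimes\mathbb Q$ is $\Omega$-averaging, and that averaging kills coroots. With this description both compositions are the projection $X_*(T)\otimes\mathbb{Q}\to\pi_1(G)\otimes\mathbb{Q}$, and commutativity follows. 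The main obstacle is making this reformulation of $\delta_G$ rigorous: one has to check that the construction through $A_G$ and $Z(\Hat G)^{\Gamma_F}$ agrees with the quotient by coroots, which I expect is a Galois-averaging bookkeeping argument. Finally, for finite fibres: a fibre of $f^{\mathcal H}$ over the $\Omega_G$-orbit of $\nu$ consists of $\Omega_H$-orbits of elements of the finite set $\Omega_G\cdot\nu$, so it has at most $|\Omega_G|$ elements.
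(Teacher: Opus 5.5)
Your proposal is correct and follows essentially the same route as the paper. The four ingredients coincide: the inclusion $\Omega_H\subset\Omega_G$ coming from $\Phi(\Hat{H})\subset\Phi(\Hat{G})$ defines $f^{\mathcal H}$; restriction along $Z(\Hat{G})\subset Z(\Hat{H})$ defines $\psi^{\mathcal H}$; the inclusion of coroot lattices gives the left square once $\delta$ is read as the projection to $\pi_1(-)\otimes\mathbb{Q}$; and finiteness of $\Omega_G(\overline F)$ gives the finite fibres.

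The only difference is in the choice of embedding. The paper fixes an admissible embedding $T_H\to G_0$ defined over $F$ (Lemma \ref{quasisplit}), which makes $\Gamma_F$-equivariance immediate. You instead use the $\Gamma_F$-invariant $G(\overline F)$-class of admissible embeddings. That already suffices on its own, so your appeal to the cocycle of Remark \ref{rootdataendogrp} is unnecessary.
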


The four lemmas below construct the two maps occurring in this statement, establish the commutativity of the diagram, and show that $f^{\mathcal H}$ has finite fibres. The proof of Lemma A then consists of assembling them.

\begin{Le}\label{endoscopicnewtonmap}
    The endoscopic triple $(H,s,\eta)$ determines a canonical map
    $$f^{\mathcal H}: \mathcal{N}(H) \xrightarrow[]{} \mathcal{N}(G).$$
    More precisely, let $G_0$ be the quasi-split inner form of $G$, let $T_H$ be a maximal torus of $H$ over $F$, and choose an admissible embedding $\iota:T_H \xrightarrow[]{} G_0$ defined over $F$. If $T=\iota(T_H)$, then $f^{\mathcal H}$ is induced by
    $$[\nu]_{\Omega_H(\overline F)} \longmapsto [\iota_*(\nu)]_{\Omega_{G_0}(\overline F)}.$$
\end{Le}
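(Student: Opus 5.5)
We construct $f^{\mathcal H}$ on the level of $\overline F$-points, using Definition \ref{newtonmap} and Lemma \ref{slopesoverclosure}. These identify $\mathcal N(H)$ with $((X_*(T_H)\otimes\mathbb Q)/\Omega_H(\overline F))^{\Gamma_F}$, and $\mathcal N(G)=\mathcal N(G_0)$ with $((X_*(T)\otimes\mathbb Q)/\Omega_{G_0}(\overline F))^{\Gamma_F}$. First we fix a maximal torus $T_H$ of $H$ over $F$. We then apply Lemma \ref{quasisplit} to $G_0$, which is quasi-split, to obtain an admissible embedding $\iota:T_H\to G_0$ defined over $F$. The map $\iota_*:X_*(T_H)\otimes\mathbb Q\to X_*(T)\otimes\mathbb Q$ is $\Gamma_F$-equivariant because $\iota$ is defined over $F$.

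The first point to check is that $\iota$ carries $\Omega_H(\overline F)$ into $\Omega_{G_0}(\overline F)$. Admissibility means that $\Hat{T}_H\to\Hat H\xrightarrow{\eta}\Hat G$ is in the canonical class. Dually, $\Omega_H$ acting on $X_*(T_H)=X^*(\Hat T_H)$ is the Weyl group of $\Hat H$ relative to $\Hat T_H$, and via $\eta$ this is a subgroup of the Weyl group of $\Hat G$ relative to the same torus, which is identified with $\Omega_{G_0}$ acting on $X_*(T)$. It follows that $[\nu]_{\Omega_H}\mapsto[\iota_*\nu]_{\Omega_{G_0}}$ is well defined. Since $\iota_*$ is $\Gamma_F$-equivariant, $\Gamma_F$-invariant orbits go to $\Gamma_F$-invariant orbits, so the map lands in $\mathcal N(G)$.

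The second point, which I expect to be the main obstacle, is canonicity: independence of the choice of $T_H$ and of $\iota$. For a fixed $T_H$, Definition \ref{admissibleembedding} says that any two admissible embeddings differ by conjugation by some $g\in G_0(\overline F)$. The map $\Int(g)$ identifies $X_*(T)$ with $X_*(T')$ compatibly with the canonical bijection of orbit spaces in Construction \ref{sspartG}, so the resulting orbit is unchanged. For two tori $T_H,T'_H$ of $H$ over $F$, we choose $h\in H(\overline F)$ with $hT_Hh^{-1}=T'_H$. Then $\iota'\circ\Int(h)$ is again an admissible embedding of $T_H$, because admissibility only depends on $\Hat H$-conjugacy classes and Construction \ref{firstconst} is insensitive to inner conjugation. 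By the previous case, it agrees with $\iota$ up to $G_0(\overline F)$-conjugacy. Since $\Int(h)$ induces the canonical identification of $\Omega_H$-orbit spaces, this gives independence of $T_H$.

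Finally, the map does not depend on the choice of inner twisting $G\to G_0$ used to define $\mathcal N(G)$, by the last sentence of Definition \ref{newtonmap}. One could also phrase the whole construction purely on the dual side, via a common maximal torus $\mathscr T$ of $\Hat H\subset\Hat G$. This makes canonicity essentially tautological, and it is the fallback if the $\overline F$-conjugacy bookkeeping above becomes awkward.
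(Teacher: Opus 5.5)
Your proposal is correct and takes essentially the same route as the paper. Both arguments use an $F$-rational admissible embedding from Lemma \ref{quasisplit}, read off the inclusion $\Omega_H(\overline F)\subset\Omega_{G_0}(\overline F)$ on the dual side, use $\Gamma_F$-equivariance of $\iota_*$, and prove independence of choices via $G_0(\overline F)$- and $H(\overline F)$-conjugation; the only cosmetic difference is that you pull $\iota'$ back to $T_H$ via $\Int(h)$, whereas the paper pushes $\iota$ forward to $T_H'$ via $\Int(h^{-1})$.
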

\begin{proof}
    \textit{Well-definedness.} By Lemma \ref{endotriples}, $(H,s,\eta)$ is also an endoscopic triple for $G_0$. The admissible embedding $\iota$ exists by Lemma \ref{quasisplit}. Identify $\Hat{H}$ with its image under $\eta$, and choose a common maximal torus $\mathscr T \subset \Hat{H} \subset \Hat{G_0}$. Then
    $$N_{\Hat{H}}(\mathscr T) \subset N_{\Hat{G_0}}(\mathscr T),$$
    and hence
    $$\Omega_{\Hat{H}} \subset \Omega_{\Hat{G_0}}.$$
    Equivalently, by Remark \ref{rootdataendogrp}, the Weyl group of $\Hat{H}$ is generated by the reflections belonging to the roots $\alpha$ of $\Hat{G_0}$ satisfying $\alpha(s)=1$, and is therefore a subgroup of the Weyl group of $\Hat{G_0}$. Under the identifications of root data determined by the admissible embedding $\iota$, this gives
    $$\Omega_H(\overline F) \subset \Omega_{G_0}(\overline F)$$
    as groups acting on $X_*(T_H)\otimes\mathbb Q=X_*(T)\otimes\mathbb Q$. Thus $\iota_*$ induces a map on the Weyl-group quotients. Since $\iota$ is defined over $F$, this map is $\Gamma_F$-equivariant, and hence restricts to a map
    $$\left((X_*(T_H)\otimes\mathbb Q)/\Omega_H(\overline F)\right)^{\Gamma_F}
    \longrightarrow
    \left((X_*(T)\otimes\mathbb Q)/\Omega_{G_0}(\overline F)\right)^{\Gamma_F}.$$
    By Definition \ref{newtonmap}, these two sets are $\mathcal N(H)$ and $\mathcal N(G_0)$, respectively, and the latter is identified with $\mathcal N(G)$ through the inner twisting, as in \ref{newtonmap}.

    \textit{Independence of the choices.} For a fixed $T_H$, any two admissible embeddings are conjugate under $G_0(\overline F)$, by \ref{admissibleembedding}. Conjugation identifies their image tori and the corresponding cocharacter spaces; changing the conjugating element by an element of the normalizer changes this identification by $\Omega_{G_0}(\overline F)$, which is invisible in the target quotient. If $T_H'$ is another maximal torus of $H$, choose $h\in H(\overline F)$ with $T_H'=hT_Hh^{-1}$; such an $h$ exists since any two maximal tori of $H$ are conjugate over $\overline{F}$. The embedding $\iota\circ\Int(h^{-1})$ of $T_H'$ is again admissible, since by \ref{firstconst} the canonical $\Hat{H}$-conjugacy class of embeddings $\Hat{T}_{H}' \xrightarrow[]{} \Hat{H}$ is the one obtained from that of $\Hat{T}_H$ by transport along $\Int(h)$; any other admissible embedding of $T_H'$ is then covered by the preceding argument. Changing $h$ by an element of the normalizer changes the source identification by $\Omega_H(\overline F)$, and the Weyl-group inclusion above shows that the displayed formula is again unchanged. Finally, replacing the common dual torus, or replacing $\eta$ within its $\Hat{G}$-conjugacy class, which is $\Gamma_F$-invariant by condition (2) of \ref{endotriples}, only conjugates the construction, and by \ref{secondconst} the canonical $\Gamma_F$-invariant $\Hat{G}$-conjugacy class of embeddings $\Hat{T} \xrightarrow[]{} \Hat{G}$ is unaffected; the resulting cocharacter identification therefore changes by $\Omega_{G_0}(\overline F)$ only. This proves that $f^{\mathcal H}$ is canonical.
\end{proof}

\begin{Le}[The map $\psi^{\mathcal H}$]\label{endoscopicpimap}
    The endoscopic triple $(H,s,\eta)$ determines a canonical $\Gamma_F$-equivariant map
    $$\psi^{\mathcal H}: \pi_1(H) \xrightarrow[]{} \pi_1(G).$$
\end{Le}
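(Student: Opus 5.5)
The cleanest route is through the centres of the dual groups, using the identification $\pi_1(G)=X^*(Z(\Hat{G}))$ of Remark \ref{observation}. Identify $\Hat{H}$ with its image under $\eta$, so that $\eta(\Hat{H})=\Hat{G}^0_{\eta(s)}$ by condition (1) of \ref{endotriples}. Since $\eta(\Hat{H})$ contains a maximal torus $\mathscr T$ of $\Hat{G}$, it centralizes $Z(\Hat{G})$, and $Z(\Hat{G})\subset \mathscr T\subset \Hat{H}$. Hence $Z(\Hat{G})$ is contained in the centralizer of $\Hat{H}$ in $\Hat{H}$, which gives an embedding $Z(\Hat{G})\hookrightarrow Z(\Hat{H})$. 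We then define
\[
\psi^{\mathcal H}:\pi_1(H)=X^*(Z(\Hat{H}))\longrightarrow X^*(Z(\Hat{G}))=\pi_1(G)
\]
to be restriction of characters along this embedding.

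For canonicity, note that replacing $\eta$ by a $\Hat{G}$-conjugate does not change the map on centres, because conjugation acts trivially on $Z(\Hat{G})$. Likewise, an isomorphism of triples only transports $Z(\Hat{H})$ compatibly.

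$\Gamma_F$-equivariance is exactly the content of the remark following \ref{endodef}: the $\Gamma_F$-actions on $Z(\Hat{G})$ obtained by restriction from $\Hat{G}$ and from $\Hat{H}$ coincide, so the inclusion $Z(\Hat{G})\to Z(\Hat{H})$ is $\Gamma_F$-equivariant. I would justify this directly via condition (2) of \ref{endotriples}. For each $\sigma\in\Gamma_F$ we have $\sigma_G\circ\eta\circ\sigma_H^{-1}=\Int(g_\sigma)\circ\eta$ for some $g_\sigma\in\Hat{G}$. Restricting to $Z(\Hat{G})$, which is pointwise fixed by $\Int(g_\sigma)$, gives $\sigma_G=\sigma_H$ there. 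This is the main point to check, and it is where the independence of the $\Gamma_F$-action on centres from the choice of pinning (Definition \ref{Lgroupdef}) enters.

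It remains to compare with a more concrete description, which will be needed in the later commutativity lemma. Choose a maximal torus $T_H$ over $F$ and an admissible embedding $\iota:T_H\to G_0$ as in Lemma \ref{endoscopicnewtonmap}. Then $\iota_*:X_*(T_H)\to X_*(T)$ is an isomorphism. It sends the coroots of $H$ into the coroots of $G_0$, since $\Phi(\Hat{H})\subset\Phi(\Hat{G})$ by Remark \ref{rootdataendogrp}, so it descends to a map
\[
X_*(T_H)/\langle\Check\Phi_H\rangle\to X_*(T)/\langle\Check\Phi_G\rangle,
\]
that is, $\pi_1(H)\to\pi_1(G)$, using $\pi_1(G_0)=\pi_1(G)$ for inner forms. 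Dualizing the inclusions $Z(\Hat{G})\subset Z(\Hat{H})\subset\mathscr T=\Hat{T}$, this map agrees with restriction of characters. This gives a second proof of equivariance when $\iota$ is defined over $F$. The maps induced on invariants and coinvariants then follow formally from equivariance.
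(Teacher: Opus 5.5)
Your proposal is correct and follows the paper's proof. In both, $\psi^{\mathcal H}$ is restriction of characters along the inclusion $Z(\Hat{G})\hookrightarrow Z(\Hat{H})$, transported to $\pi_1$ by Remark \ref{observation}, and it does not depend on replacing $\eta$ by a $\Hat{G}$-conjugate because inner automorphisms fix $Z(\Hat{G})$ pointwise. Your direct derivation of $\Gamma_F$-equivariance from condition (2) of \ref{endotriples} makes explicit what the paper takes from the remark after Definition \ref{endodef}, and your cocharacter-level description via an admissible embedding is what the paper uses later in the proof of Lemma \ref{endoscopiccompatibility}.
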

\begin{proof}
    After identifying $\Hat{H}$ with its image under $\eta$, there is an inclusion $Z(\Hat{G})\subset Z(\Hat{H})$. It is $\Gamma_F$-equivariant, as observed after Definition \ref{endodef}, and therefore restriction of characters, together with Remark \ref{observation}, gives a $\Gamma_F$-equivariant map
    $$\psi^{\mathcal H}:\pi_1(H)=X^*(Z(\Hat{H}))\longrightarrow X^*(Z(\Hat{G}))=\pi_1(G).$$
    This map is unchanged if $\eta$ is replaced by a $\Hat{G}$-conjugate, since inner automorphisms act trivially on $Z(\Hat{G})$.
\end{proof}

\begin{Le}[Compatibility of $f^{\mathcal H}$ and $\psi^{\mathcal H}$]\label{endoscopiccompatibility}
    The diagram of Lemma A commutes. In particular $\delta_G\circ f^{\mathcal H}=\psi^{\mathcal H}\circ\delta_H$.
\end{Le}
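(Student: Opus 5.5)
The plan is to prove the two squares of the diagram separately, working throughout on the quasi-split inner form $G_0$. Using Lemma \ref{quasisplit}, I fix an elliptic or arbitrary maximal torus $T_H$ of $H$ over $F$ together with an admissible embedding $\iota:T_H\to G_0$ defined over $F$, and write $T=\iota(T_H)$. Dually, this identifies $\Hat{T}_H=\Hat{T}$, embedded in $\Hat{H}\subset\Hat{G}$ via $\eta$, with $Z(\Hat{G})\subset Z(\Hat{H})\subset\Hat{T}$. The first step is to express $\pi_1$ and $\delta$ through the torus. The quotient map $X_*(T)\to\pi_1(G)$ is, under Remark \ref{observation}, the restriction of characters $X^*(\Hat{T})\to X^*(Z(\Hat{G}))$, and similarly for $H$. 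Hence the composite $X_*(T_H)\to\pi_1(H)\xrightarrow{\psi^{\mathcal H}}\pi_1(G)$ equals $X_*(T_H)\xrightarrow{\iota_*}X_*(T)\to\pi_1(G)$, since restricting first to $Z(\Hat{H})$ and then to $Z(\Hat{G})$ is restriction to $Z(\Hat{G})$. This identity is $\Gamma_F$-equivariant because $\iota$ is defined over $F$ and $Z(\Hat{G})\subset Z(\Hat{H})$ is equivariant.

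The right square then follows formally. The horizontal maps $\pi_1(\cdot)_{\Gamma_F}\to\pi_1(\cdot)^{\Gamma_F}\otimes\mathbb{Q}$ are $\Gamma_F$-averaging, and $\psi^{\mathcal H}$ is $\Gamma_F$-equivariant, so it commutes with averaging and with passage to coinvariants and invariants.

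For the left square, I would first reformulate $\delta_G$ intrinsically. I claim that $\delta_G([\nu])$ is the image of $\pr^{\Gamma_F}(\nu)$ under $X_*(T)\otimes\mathbb{Q}\to\pi_1(G)\otimes\mathbb{Q}$, for any representative $\nu\in X_*(T)\otimes\mathbb{Q}$ and any maximal $F$-torus $T$. Indeed, the map $X_*(A)\otimes\mathbb{Q}\to X_*(A_G)\otimes\mathbb{Q}\cong X^*(Z(\Hat{G})^{\Gamma_F})\otimes\mathbb{Q}$ in the construction of $\delta_G$ is, after tensoring with $\mathbb{Q}$, the projection killing coroots, namely averaging over $\Omega$, followed by restriction to $Z(\Hat{G})$. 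Since the coroots die in $\pi_1(G)$, the image of $\nu$ in $\pi_1(G)\otimes\mathbb{Q}$ is $\Omega_{G_0}(\overline F)$-invariant. Because $[\nu]$ is $\Gamma_F$-fixed, the image is also $\Gamma_F$-invariant, so averaging is harmless and the claim is independent of $T$. This reduction is the step I expect to be the main obstacle. One must check carefully that the isomorphism $X_*(A_G)\otimes\mathbb{Q}\cong\pi_1(G)^{\Gamma_F}\otimes\mathbb{Q}$ used in the construction agrees with the natural map $X_*(Z(G)^0)\otimes\mathbb{Q}\to\pi_1(G)\otimes\mathbb{Q}$, including across the inner twist to $G_0$, and that the map of \ref{paraboliclemma} is the orthogonal projection. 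I would attempt this by testing both maps against $X^*(D(G))^{\Gamma_F}\otimes\mathbb{Q}$, which separates points of $\pi_1(G)^{\Gamma_F}\otimes\mathbb{Q}$.

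Granting this claim, I take $[\nu]\in\mathcal N(H)$ with $\nu\in X_*(T_H)\otimes\mathbb{Q}$. Then $\delta_G(f^{\mathcal H}[\nu])$ is the class of $\iota_*\nu$ in $\pi_1(G)\otimes\mathbb{Q}$, by Lemma \ref{endoscopicnewtonmap}. By the torus identity of the first paragraph, this equals $\psi^{\mathcal H}$ of the class of $\nu$ in $\pi_1(H)\otimes\mathbb{Q}$, which is $\psi^{\mathcal H}(\delta_H[\nu])$. The $\Gamma_F$-invariance needed for both sides to lie in the invariant parts follows from the equivariance of $\psi^{\mathcal H}$, so the left square commutes.
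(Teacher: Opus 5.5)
Your proposal is correct and follows the paper's route. Both arguments compare $\iota_*$ on the cocharacter lattice of a transferred torus with $\psi^{\mathcal H}$ on $\pi_1$, use the triviality of the Weyl action on $\pi_1$, tensor with $\mathbb{Q}$ and Galois-average, and obtain the right square from the functoriality of averaging. The only differences are minor: you get the torus identity from transitivity of restriction along $Z(\Hat{G})\subset Z(\Hat{H})\subset\Hat{T}$ rather than from the inclusion of coroot lattices, and you make explicit (with a sound sketch) the description of $\delta_G$ as the class in $\pi_1(G)\otimes\mathbb{Q}$, which the paper leaves to ``the constructions'' of Section \ref{2.1}.
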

\begin{proof}
    \textit{The square involving the maps $\delta$.} Retain the tori and the admissible embedding $\iota:T_H\xrightarrow[]{}T$ used in Lemma \ref{endoscopicnewtonmap}, and let $q_H$ and $q_{G_0}$ denote the quotient maps from the corresponding cocharacter lattices to the algebraic fundamental groups, using the canonical identification $\pi_1(G_0)=\pi_1(G)$. The inclusion of dual root systems in Remark \ref{rootdataendogrp} gives an inclusion of coroot lattices for $H$ and $G_0$, and hence
    $$q_{G_0}\circ\iota_*=\psi^{\mathcal H}\circ q_H.$$
    The Weyl groups act trivially on the algebraic fundamental groups, so the displayed equality descends to the Weyl-group quotients. We then tensor with $\mathbb{Q}$ and pass to $\Gamma_F$-invariants. Here the identification
    $$\pi_1(-)_{\Gamma_F} \otimes \mathbb{Q} \xrightarrow[]{\sim} \pi_1(-)^{\Gamma_F} \otimes \mathbb{Q}$$
    is the Galois average, sending the class of $x$ to $\mid \Gamma_F \cdot x \mid^{-1} \sum_{x' \in \Gamma_F \cdot x} x'$, normalized exactly as in \ref{galoisaverage}. This normalization is the one that enters the maps $\delta$ of \ref{sspartG}, and it matters: a different scaling would change $\delta_H$ and $\delta_G$ by a constant and so alter the identity below. Being an average, it is functorial for $\Gamma_F$-equivariant homomorphisms, and $\psi^{\mathcal H}$ is one by \ref{endoscopicpimap}. With this convention the equality above becomes
    $$\delta_G\circ f^{\mathcal H}=\psi^{\mathcal H}\circ\delta_H,$$
    by the constructions of the Newton points and the maps $\delta$ in Section \ref{2.1}.

    \textit{The square relating invariants and coinvariants.} This commutes because Galois averaging from coinvariants to invariants tensored with $\mathbb Q$ is functorial for $\Gamma_F$-equivariant homomorphisms, and $\psi^{\mathcal H}$ is such a homomorphism by \ref{endoscopicpimap}.
\end{proof}

\begin{Le}[Finiteness of the fibres]\label{endoscopicfinitefibres}
    The map $f^{\mathcal H}: \mathcal{N}(H) \xrightarrow[]{} \mathcal{N}(G)$ has finite fibres.
\end{Le}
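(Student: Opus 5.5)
The plan is to work entirely on the level of cocharacter spaces, using the explicit description of $f^{\mathcal H}$ from Lemma \ref{endoscopicnewtonmap}. Fix a maximal torus $T_H$ of $H$ over $F$ and an admissible embedding $\iota: T_H \to G_0$ over $F$ with image $T$. Then
$$\mathcal N(H)=\bigl((X_*(T_H)\otimes\mathbb Q)/\Omega_H(\overline F)\bigr)^{\Gamma_F}, \qquad \mathcal N(G)=\bigl((X_*(T)\otimes\mathbb Q)/\Omega_{G_0}(\overline F)\bigr)^{\Gamma_F},$$
and $f^{\mathcal H}$ sends the $\Omega_H$-orbit of $\nu$ to the $\Omega_{G_0}$-orbit of $\iota_*\nu$. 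Since $\mathcal N(H)$ and $\mathcal N(G)$ are subsets of the full orbit spaces, it suffices to show that the map on full orbit spaces has finite fibres. Passing to $\Gamma_F$-invariants only restricts the domain, so it cannot enlarge the fibres.

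For the main step, fix a point of $\mathcal N(G)$ and represent it by some $\mu\in X_*(T)\otimes\mathbb Q$. Its fibre consists of the $\Omega_H$-orbits of those $\nu$ with $\iota_*\nu\in\Omega_{G_0}\cdot\mu$. The linear map $\iota_*$ is an isomorphism $X_*(T_H)\otimes\mathbb Q\xrightarrow{\sim}X_*(T)\otimes\mathbb Q$, so these $\nu$ form the set $\iota_*^{-1}(\Omega_{G_0}\cdot\mu)$. This set is finite, of cardinality at most $|\Omega_{G_0}(\overline F)|$, because the Weyl group is finite. Its image in the set of $\Omega_H$-orbits is therefore finite as well.

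Combining the two steps, each fibre of $f^{\mathcal H}$ has at most $|\Omega_{G_0}(\overline F)|/|\Omega_H(\overline F)|$ elements. This bound uses the inclusion $\Omega_H(\overline F)\subset\Omega_{G_0}(\overline F)$ established in Lemma \ref{endoscopicnewtonmap}: the fibre is the set of $\Omega_H$-orbits contained in a single $\Omega_{G_0}$-orbit, and each such $\Omega_H$-orbit has size $|\Omega_H|/|\mathrm{Stab}_{\Omega_H}|$.

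The only point requiring care is the identification of both Newton sets with Weyl-orbit spaces through the same torus, so that $\iota_*$ is genuinely a linear isomorphism intertwining the two orbit descriptions. This is already provided by Lemma \ref{endoscopicnewtonmap} and Definition \ref{newtonmap}, so I do not expect a serious obstacle.
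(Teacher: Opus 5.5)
Your proposal is correct and matches the paper's proof: both identify the two cocharacter spaces via $\iota_*$, view $f^{\mathcal H}$ as the restriction to $\Gamma_F$-fixed points of $V/\Omega_H(\overline F)\to V/\Omega_{G_0}(\overline F)$, and note that a finite $\Omega_{G_0}(\overline F)$-orbit contains only finitely many $\Omega_H(\overline F)$-orbits. Your explicit bound $|\Omega_{G_0}(\overline F)|/|\Omega_H(\overline F)|$ is an extra that the paper does not state. It is true, since the fibre embeds in the double coset space $\Omega_H\backslash\Omega_{G_0}/\mathrm{Stab}_{\Omega_{G_0}}(\mu)$, but your orbit-size argument also needs $|\mathrm{Stab}_{\Omega_H}(\nu)|\le|\mathrm{Stab}_{\Omega_{G_0}}(\nu)|$.
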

\begin{proof}
    Put $V=X_*(T)\otimes\mathbb Q$, using $\iota$ to identify the two cocharacter spaces. The map $f^{\mathcal H}$ is the restriction to $\Gamma_F$-fixed points of
    $$V/\Omega_H(\overline F)\longrightarrow V/\Omega_{G_0}(\overline F).$$
    Since $\Omega_{G_0}(\overline F)$ is finite, each of its orbits is a finite set, and is therefore a finite union of $\Omega_H(\overline F)$-orbits. Hence the fibres of the displayed map, and so also those of $f^{\mathcal H}$, are finite.
\end{proof}

\begin{proof}[Proof of Lemma A]
    The two maps are constructed in \ref{endoscopicnewtonmap} and \ref{endoscopicpimap}, the diagram commutes by \ref{endoscopiccompatibility}, and the fibres of $f^{\mathcal H}$ are finite by \ref{endoscopicfinitefibres}.
\end{proof}

The definition of the set $\Eell(G)_b$ in Section \ref{details} carries a finiteness assertion, which we prove here since it uses the same circle of ideas.

\begin{Le}\label{finitelymanybH}
    Let $b \in B(G)$. Then only finitely many $b_H \in B(H)$ satisfy
    $$f^{\mathcal H}\bigl(\overline{\nu}_H(b_H)\bigr) = \overline{\nu}_G(b).$$
    In particular only finitely many $b_H$ satisfy the two conditions defining $\Eell(G)_b$.
\end{Le}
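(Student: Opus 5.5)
The plan is to split the problem along the two invariants that determine an element of $B(H)$. By Lemma \ref{Newonkottwitzcharisoc}, the map $\overline{\nu}_H \times \kappa_H$ is injective, so it suffices to show two things: the possible Newton points $\overline{\nu}_H(b_H)$ form a finite set, and over each such Newton point only finitely many Kottwitz points $\kappa_H(b_H)$ can occur.

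\emph{Newton points.} The condition $f^{\mathcal H}(\overline{\nu}_H(b_H)) = \overline{\nu}_G(b)$ says that $\overline{\nu}_H(b_H)$ lies in the fibre of $f^{\mathcal H}$ over $\overline{\nu}_G(b)$. By Lemma \ref{endoscopicfinitefibres} this fibre is finite, so only finitely many Newton points occur.

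\emph{Kottwitz points.} Fix $\nu_H \in \mathcal{N}(H)$. I will show that the set of $\kappa_H(b_H)$ with $\overline{\nu}_H(b_H)=\nu_H$ is finite.
\begin{itemize}
\item By the commutative square of Lemma \ref{Newonkottwitzcharisoc}, the image of $\kappa_H(b_H)$ under $\pi_1(H)_{\Gamma_F} \to \pi_1(H)^{\Gamma_F}\otimes\mathbb{Q}$ equals $\delta_H(\nu_H)$, which is fixed.
\item Hence $\kappa_H(b_H)$ lies in a single coset of the kernel of this map.
\item This kernel is $(\pi_1(H)_{\Gamma_F})_{\mathrm{tors}}$, because the map is rationalisation followed by the isomorphism $\pi_1(H)_{\Gamma_F}\otimes\mathbb{Q}\simeq\pi_1(H)^{\Gamma_F}\otimes\mathbb{Q}$.
\item $\pi_1(H)_{\Gamma_F}$ is a finitely generated abelian group, being a quotient of the lattice $X_*(T_H)$. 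So its torsion subgroup is finite; by Remark \ref{observation} it is also identified with $X^*(\pi_0(Z(\Hat{H})^{\Gamma_F}))$, which is finite.
\end{itemize}
Thus each fibre of $\overline{\nu}_H$ is finite. This is the only point where some care is needed: the map $\overline{\nu}_H$ on its own need not be injective, and it is the Kottwitz invariant that separates the classes.

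\emph{Conclusion.} Combining the two steps, the set of $b_H$ satisfying the Newton condition injects into a finite union of finite sets, so it is finite. The second condition defining $\Eell(G)_b$, namely $\psi^{\mathcal H}(\kappa_H(b_H))=\kappa_G(b)$, only cuts this set down further, which gives the last assertion. No step here presents a real obstacle.
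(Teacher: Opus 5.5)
Your proposal is correct and follows the paper's proof essentially step for step. The paper also uses the finiteness of the fibre of $f^{\mathcal H}$ (Lemma~\ref{endoscopicfinitefibres}) and the injectivity of $\overline{\nu}_H\times\kappa_H$ (Lemma~\ref{Newonkottwitzcharisoc}). It then observes that, for a fixed Newton point, the Kottwitz point is determined modulo the finite torsion subgroup of the finitely generated group $\pi_1(H)_{\Gamma_F}$.
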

\begin{proof}
    By \ref{endoscopicfinitefibres} the set $S = (f^{\mathcal H})^{-1}(\overline{\nu}_G(b)) \subset \mathcal{N}(H)$ is finite, and the displayed condition says that $\overline{\nu}_H(b_H)$ lies in $S$. By \ref{Newonkottwitzcharisoc} an element of $B(H)$ is determined by its Newton point together with its Kottwitz point, so it suffices to show that, for each fixed $\nu \in S$, only finitely many values of $\kappa_H(b_H)$ are possible.

    Fix such a $\nu$. The commutative square of \ref{Newonkottwitzcharisoc} says that the image of $\kappa_H(b_H) \in \pi_1(H)_{\Gamma_F}$ in $\pi_1(H)^{\Gamma_F} \otimes \mathbb{Q}$ equals $\delta_H(\nu)$, which depends only on $\nu$. Hence $\kappa_H(b_H)$ is determined modulo the kernel of $\pi_1(H)_{\Gamma_F} \xrightarrow[]{} \pi_1(H)_{\Gamma_F} \otimes \mathbb{Q}$, that is, modulo the torsion subgroup of $\pi_1(H)_{\Gamma_F}$. Now $\pi_1(H)$ is a quotient of the cocharacter lattice of a maximal torus of $H$, hence finitely generated, and therefore so is $\pi_1(H)_{\Gamma_F}$; its torsion subgroup is thus finite. There are consequently finitely many possibilities for $\kappa_H(b_H)$ for each of the finitely many $\nu \in S$, and the lemma follows.
\end{proof}

We recall the statement of Lemma B from Section \ref{intro2}. Here $F = \mathbb{Q}_p$, the integer $j$ is the one fixed there by $p^j = q^n$, and $\chi_{b,n}^{\mathcal H}: H(\mathbb{Q}_p) \xrightarrow[]{} \{0,1\}$ is the characteristic function of the set of $x_H \in H(\mathbb{Q}_p)$ with
$$
f^{\mathcal H}\bigl(\overline{\nu}_H([x_H])\bigr)=j\cdot\overline{\nu}_G(b),
$$
where $[x_H]\in B(H_{\mathbb Q_p})$ denotes the $\sigma$-conjugacy class of the image of $x_H$ in $H(L)$.

\begin{LemB}
    The function $\chi_{b,n}^{\mathcal{H}}$ is smooth and constant on stable conjugacy classes of semisimple elements in $H(\mathbb{Q}_p)$.
\end{LemB}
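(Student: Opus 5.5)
We plan to rewrite $\chi_{b,n}^{\mathcal H}$ as the pullback of a single point under a continuous map into a discrete set. The map $x_H \mapsto \overline{\nu}_H([x_H])$ on $H(\mathbb{Q}_p)$ sends $x_H$ to the $\Omega_H(\overline F)$-orbit of the slope morphism $\nu_H(x_H)$. By Lemma \ref{equalityofslopes} this slope morphism equals $m_H(x_H)$, and by Lemma \ref{slopeconstruction3} its class in $((X_*(T_H)\otimes\mathbb{Q})/\Omega_H(\overline F))^{\Gamma_F}$ is $\val\circ\semisimplepart(x_H)$. The identification of this set with $\mathcal N(H)$ in Definition \ref{newtonmap} is compatible with this class, since $H$ is quasi-split and we may take $\psi=\id$. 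Hence
$$\chi_{b,n}^{\mathcal H}=\mathbf 1_{\{x_H\,:\,f^{\mathcal H}(\val\circ\semisimplepart(x_H))=j\cdot\overline{\nu}_G(b)\}}.$$

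\textbf{Smoothness.} By Corollary \ref{continuousnewtonpoint} the map $x_H\mapsto \overline{\nu}_H([x_H])$ is continuous from $H(\mathbb{Q}_p)$ to a set carrying the discrete topology. Composing with any map of sets $f^{\mathcal H}$, which is automatically continuous for discrete topologies, we see that the preimage of the point $j\cdot\overline{\nu}_G(b)$ is open, and its complement is open as well. Therefore $\chi_{b,n}^{\mathcal H}$ is locally constant, i.e.\ smooth. The fibres of $f^{\mathcal H}$ are finite by Lemma \ref{endoscopicfinitefibres}, which shows the set is a finite union of level sets of $\overline{\nu}_H$, but discreteness already suffices here.

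\textbf{Constancy on stable classes.} If $x_H,x_H'\in H(\mathbb{Q}_p)$ are semisimple and stably conjugate, Lemma \ref{isocrystalstableconj} gives $[x_H]=[x_H']$ in $B(H)$, so their Newton points agree and $\chi_{b,n}^{\mathcal H}$ takes the same value. Alternatively, $\val\circ\semisimplepart$ depends only on the $H(\overline F)$-conjugacy class, which covers even $\overline F$-conjugacy, as noted in the remark after Lemma \ref{equalityofslopes}. In fact the function depends only on the semisimple part, so it is constant on stable classes of arbitrary elements as well.

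The main point requiring care is the first step: confirming that the topology-based continuity statement (Corollary \ref{continuousnewtonpoint}, built on Lemma \ref{truncationcontinuous} and the root-continuity Lemma \ref{continuityofroots}) really computes $\overline{\nu}_H$ of the $\sigma$-conjugacy class in $H(L)$, including for non-semisimple $x_H$. Lemma \ref{equalityofslopes} handles this through the Jordan decomposition. The argument also relies on the Newton-point identification being canonical, which uses the independence statements of Definition \ref{newtonmap}. Once these identifications are in place, everything else is formal.
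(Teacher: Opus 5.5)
Your proposal is correct and takes essentially the same route as the paper. Smoothness comes from continuity of $x_H\mapsto\overline{\nu}_H([x_H])$ into the discrete set $\mathcal N(H)$ (Corollary \ref{continuousnewtonpoint}), and constancy on stable classes from Lemma \ref{isocrystalstableconj}; the only difference is that the paper writes $\chi_{b,n}^{\mathcal H}$ as a finite sum of characteristic functions of level sets of $\overline{\nu}_H$ using the finite fibres from Lemma A, whereas you rightly note that discreteness alone already makes the preimage clopen.
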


\begin{proof}[Proof of Lemma B]
    \textit{Finiteness of the relevant set of Newton points.} Let
    $$S_b^{\mathcal H}=(f^{\mathcal H})^{-1}(j\cdot\overline{\nu}_G(b))\subset\mathcal N(H).$$
    This set is finite by Lemma A.

    \textit{Smoothness.} The map $\overline{\nu}_H:H(F)\rightarrow\mathcal N(H)$ is continuous by Corollary \ref{continuousnewtonpoint}, which in turn uses Lemmas \ref{slopeconstruction3} and \ref{truncationcontinuous}. Since $\mathcal N(H)$ has the discrete topology, the characteristic function of $\overline{\nu}_H^{-1}(\nu)$ is smooth for every $\nu\in\mathcal N(H)$. The function $\chi_{b,n}^{\mathcal H}$ is the finite sum of these characteristic functions over $\nu\in S_b^{\mathcal H}$, and is therefore smooth.

    \textit{Constancy on stable conjugacy classes.} If two semisimple elements of $H(F)$ are stably conjugate, their associated elements of $B(H)$ agree by Lemma \ref{isocrystalstableconj}. Their Newton points are therefore equal, so $\chi_{b,n}^{\mathcal H}$ takes the same value on them.
\end{proof}

\section{Proof of Main Theorem and the modified Langlands-Kottwitz method}\label{mainproof}

We now turn to the proof of the main theorem, Theorem \ref{maintheorem}. Recall from section \ref{informalintro} that our goal is to describe the Lefschetz number
$$c_b(n, f^p, \xi) = \sum_i (-1)^i \tr(\Frob_q^n \times f^p \mid H^i_{c,\mathrm{et}}(\overline{S}_{K^p}(b)_{\overline{F}_q}, \mathcal{F}_{\xi}))$$
of the Frobenius--Hecke correspondence on the compactly supported cohomology of the Newton stratum $\overline{S}_{K^p}(b)$ (see Section \ref{newtonstratdetails}) in a form suitable for comparison with automorphic spectral data.

As mentioned in sections \ref{informalintro} and \ref{intro2}, the analogous question for the whole Shimura variety $S_K$ was answered by Kottwitz \cite{MR1044820} in the PEL case, and later by Kisin--Shin--Zhu \cite{kisin2021stabletraceformulashimura} for Shimura varieties of abelian type. Conditional on Arthur's conjectures, Kottwitz obtained from this a complete automorphic description of the cohomology.

His method is known as the Langlands-Kottwitz method, and proceeds in three steps. The Lefschetz-Verdier trace formula is first used to rewrite the Lefschetz number $c(n,f,\xi)$ of $\Frob_q^n \times f$ on $H_c^*(S_K(\mathbb{C}), \mathcal{F}_{\xi})$ as a sum of orbital and twisted orbital integrals. This expression is then stabilized, so that it becomes a sum of elliptic terms on the geometric sides of the stable trace formulas of the elliptic endoscopic groups of $G$. Finally one invokes the stabilized Arthur-Selberg trace formula to pass from these geometric terms to their spectral counterparts, which is what yields the automorphic description; it is this last step that is conditional on Arthur's conjectures.

Only the first two steps are carried out in Part I of \cite{MR1044820}, and their outcome is a stable trace formula rather than a spectral description. We follow Part I closely and explain how to modify each of its steps so as to treat the Newton stratum $\overline{S}_{K^p}(b)$ in place of $S_K$; we call this the modified Langlands-Kottwitz method. The results so obtained are unconditional. We will first review Kottwitz's argument for $c(n,f,\xi)$ in sections 4.1-4.3 below, explaining along the way the modifications needed to treat $c_b(n,f^p,\xi)$ instead.

We will always assume $(G,X)$ satisfies the following two hypotheses, which are the same as the hypotheses of the Main Theorem (see \ref{maintheorem}):

\vspace{1mm}\hspace{2mm} (\textbf{Hyp 1}) $G^{der}$ is simply connected

\vspace{1mm}\hspace{2mm} (\textbf{Hyp 2}) The maximal $\mathbb{Q}$-split torus coincides with the maximal $\mathbb{R}$-split torus in the center of $G$

For Shimura varieties of abelian type, the point-count formula needed below is supplied by Kisin--Shin--Zhu: their Theorem 6.3.6 proves it without either of the hypotheses imposed here. Under (\HypOne{}) and (\HypTwo{}), their formula reduces to the formula conjectured by Kottwitz in \cite{MR1044820}, which is the one reviewed below.

We impose (\HypOne{}) and (\HypTwo{}) as simplifying hypotheses, in order to keep the discussion at the same level of technicality as \cite{MR1044820}. We expect that a similar result should hold in the generality of Kisin--Shin--Zhu. In this section we point out at various places where (\HypOne{}) is used, to indicate to the reader how it simplifies some technical difficulties. A similar discussion regarding (\HypTwo{}) is carried out in Appendix \ref{hyp2appendix}.

\textbf{Structure of this section.} A word on how this section is organized. The material here is technical and follows Part I of \cite{MR1044820} closely, the point being to adapt it to the Newton stratum $\overline{S}_{K^p}(b)$. The formulas involved carry a large number of terms, and not all of them matter for that adaptation: several are needed only to make the statements meaningful, and play no role in the passage from $S_K$ to $\overline{S}_{K^p}(b)$. Accordingly, each of the following subsections is arranged in two parts. It opens with the formula in question and with those of its ingredients that are used in the modification, and it closes with a series of complements explaining the remaining terms in more detail. The closing complements are included for completeness only and have no bearing on the modification to Newton strata; a reader interested only in the proof of the Main Theorem may skip them and refer back as needed.

\subsection{Lefschetz-Verdier trace formula}\label{lvformula}
In addition to the notation introduced in \ref{details}, we will use the following notation for the rest of this section.
\begin{itemize}
    \item $\mathbb{Q}_{p^j}$ denotes the unramified extension of $\mathbb{Q}_p$ of degree $j$.
    \item $\sigma$ denotes the Frobenius automorphism of $\mathbb{Q}_{p^j}$ over $\mathbb{Q}_p$, which lifts $x \mapsto x^p$ on $\mathbb{F}_{p^j}$ (we also use $\sigma$ to denote the induced automorphism of $G(\mathbb{Q}_{p^j})$).
    \item $\delta, \delta' \in G(\mathbb{Q}_{p^j})$ are $\sigma$-conjugate if $\delta = g \delta' \sigma(g^{-1})$ for some $g \in G(\mathbb{Q}_{p^j})$. This is the notion of $\sigma$-conjugacy defined in section \ref{twistedconjsection} (taking $E = \mathbb{Q}_{p^j}$).
\end{itemize}

For $f \in \mathcal{H}(G(\mathbb{A}_f)//K)$ and $n \in \mathbb{Z}_{>0}$, let
$$c(n,f,\xi) = \sum_i (-1)^i \tr(\Frob_q^n \times f \mid H^i_{c,\mathrm{et}}((S_K)_{\overline E}, \mathcal{F}_{\xi}))$$
denote the Lefschetz number of $\Frob_q^n \times f$ on the cohomology of $S_K$. We will always take $f$ to be of the special form $f = f^p \times \mathbf{1}_{K_{p,0}}$, where $f^p \in \mathcal{H}(G(\mathbb{A}_f^p)//K^p)$ and $K_{p,0}$ is a hyperspecial subgroup, so that $c(n,f,\xi)$ is directly comparable with $c_b(n,f^p,\xi)$.

The Lefschetz-Verdier trace formula relates the trace of the operator $\Frob_q^n \times f$ on the cohomology
$$H^i_{c,\mathrm{et}}((S_K)_{\overline E}, \overline{\mathbb{Q}}_\ell)$$
with the fixed points of the associated correspondence on $S_K$. Using an adaptation of this formula by Fujiwara \cite{MR1431137}, in the case of PEL type Shimura varieties, Kottwitz \cite{MR1124982} obtained the following formula, for a sufficiently large $n$, later generalized to Shimura varieties of abelian type by Kisin--Shin--Zhu \cite{kisin2021stabletraceformulashimura}:
$$
    c(n,f,\xi) = \sum_{\gamma_0 \in G(\mathbb{Q})} \tr\,\xi(\gamma_0) \Biggl( \sum_{(\gamma, \delta) \text{ with } \alpha(\gamma_0,\gamma, \delta) = 0} c(\gamma_0, \gamma, \delta) \Or_{\gamma} (f^p) \TO_{\delta} (\phi_j) \Biggr).
$$
Here, and for the rest of this section, $j$ denotes the positive integer with $p^j = q^n$.

\textbf{Adaptation to Newton strata.}
Recall from section \ref{newtonstratdetails} that the Newton stratum $\overline{S}_{K^p}(b)$ is indexed by an isocrystal $b \in B(G_{\mathbb{Q}_p}, \mu_h^{-1})$ (see Remark \ref{BGmudef} for $B(G_{\mathbb{Q}_p}, \mu_h^{-1})$). Every $\delta$ occurring above lies in $G(\mathbb{Q}_{p^j}) \subset G(L)$ and therefore has a well-defined $\sigma$-conjugacy class in $G(L)$, that is, an image under $G(\mathbb{Q}_{p^j}) \xrightarrow[]{} B(G_{\mathbb{Q}_p})$; we write $\delta \mapsto b$ when this image is the fixed isocrystal $b$. We claim that the modification needed for the Newton stratum is precisely to keep only those $\delta$:
$$
    c_b(n,f^p,\xi) = \sum_{\gamma_0 \in G(\mathbb{Q})} \tr\,\xi(\gamma_0) \Biggl( \sum_{\substack{(\gamma, \delta) \text{ with } \alpha(\gamma_0,\gamma, \delta) = 0 \\ \delta \mapsto b}} c(\gamma_0, \gamma, \delta) \Or_{\gamma} (f^p) \TO_{\delta} (\phi_j) \Biggr).
$$
To justify the claim, return briefly to the proof of the formula of Kisin--Shin--Zhu. In their Lefschetz--Verdier calculation, fixed points are grouped according to admissible Kottwitz parameters $\mathfrak c=(\gamma_0,a,[b_I])$. By \cite[\S\S 2.2.7, 3.5.1]{kisin2021stabletraceformulashimura}, the image $[b_I]_G\in B(G_{\mathbb Q_p})$ is the isocrystal attached to a point represented by $\mathfrak c$. The $\delta$-component of the associated classical Kottwitz triple is represented by
$$
\delta=u^{-1}b_I\sigma(u)
$$
for some $u\in G(L)$, by \cite[Definition 1.6.5 and Lemma 1.6.7]{kisin2021stabletraceformulashimura}. Hence $[\delta]=[b_I]_G$ in $B(G_{\mathbb Q_p})$.

By Section \ref{newtonstratdetails}, the Frobenius--Hecke correspondence restricts to $\overline{S}_{K^p}(b)$. Applying the same Lefschetz--Verdier argument to this restricted correspondence therefore gives a sum over precisely those fixed points of the global correspondence that lie in $\overline{S}_{K^p}(b)$. By the preceding identification, a fixed point represented by $\mathfrak c$ lies in this stratum if and only if $[b_I]_G=b$, equivalently if and only if $[\delta]=b$. The local-term calculation is unchanged, and this proves the displayed restricted formula.

\textbf{Further discussion.}
We now describe the objects entering these formulas, following $\S$ 2 and $\S$ 3 of \cite{MR1044820} and pointing out the features relevant to the modification addressing Question \ref{lknumberquestion}.

\begin{itemize}
    \item The outer sum is over semisimple $\gamma_0 \in G(\mathbb{Q})$ which are $\mathbb{R}$-elliptic (Definition \ref{ellipticdef} with $F = \mathbb{R}$), taken up to $G(\overline{\mathbb{Q}})$-conjugacy. We write $I_0$ for the centralizer of $\gamma_0$ in $G$, which is connected because $G^{der}$ is simply connected by (\HypOne{}).
    \item The inner sum is over pairs $(\gamma, \delta)$ with $\gamma = (\gamma_l)_{l \neq p} \in G(\mathbb{A}_f^p)$ and $\delta \in G(\mathbb{Q}_{p^j})$, taken up to $G(\mathbb{A}_f^p)$-conjugacy and $\sigma$-conjugacy in $G(\mathbb{Q}_{p^j})$ respectively, such that $(\gamma_0, \gamma, \delta)$ is a \textit{Kottwitz triple}. This means:
    \begin{enumerate}
        \item $\gamma_l$ is stably conjugate to $\gamma_0$ (\ref{StableConjugacy}) for every prime $l \neq p$;
        \item the norm $N\delta$ (\ref{normmap}) is conjugate to $\gamma_0$ under $G(\overline{\mathbb{Q}}_p)$;
        \item a further condition on $\delta$ involving $\mu_h$, stated in Remark \ref{kottwitztripledetails} below.
    \end{enumerate}
    \item $\alpha(\gamma_0, \gamma, \delta)$ is the \textit{Kottwitz invariant} of the triple, an element of the finite abelian group $\mathfrak{K}(I_0)^D$; the inner sum runs only over triples whose invariant is trivial. Here $\mathfrak{K}(I_0) := \mathfrak{K}(I_0/\mathbb{Q})$ is the group of Definition \ref{kappadef} taken with $F = \mathbb{Q}$ and $I = I_0$, and $\mathfrak{K}(I_0)^D = \Hom(\mathfrak{K}(I_0), \mathbb{C}^{\times})$ is its Pontryagin dual.
    \item $\tr\,\xi(\gamma_0)$ is the trace of $\gamma_0$ on the representation $\xi$ defining the local system (Section \ref{detailssetup}). Since $\xi$ is algebraic, this trace depends only on the $G(\overline{\mathbb{Q}})$-conjugacy class of $\gamma_0$, which is exactly how $\gamma_0$ is indexed in the outer sum. It is the only place where the local system enters, and it is constant on the inner sum; for $\xi$ trivial it equals $1$ and the formula reduces to the one for constant coefficients.
\end{itemize}

The element $\gamma_0$ is taken up to $G(\overline{\mathbb{Q}})$-conjugacy, whereas its centralizer $I_0$ depends on the $G(\mathbb{Q})$-conjugacy class of $\gamma_0$. If $\gamma_0, \gamma'_0 \in G(\mathbb{Q})$ are $G(\overline{\mathbb{Q}})$-conjugate, then their centralizers $I_0$ and $I'_0$ are in general only \textit{inner forms} of one another. Since $\mathfrak{K}(I_0)$ and $\mathfrak{K}(I'_0)$ are isomorphic for inner forms, the group $\mathfrak{K}(I_0)$ nevertheless depends only on the $G(\overline{\mathbb{Q}})$-conjugacy class of $\gamma_0$, as required for the formula to make sense.
    
The invariant $\alpha(\gamma_0, \gamma, \delta)$ is constructed in $\S$ 2 of \cite{MR1044820} from local invariants $\alpha_v(\gamma_0,\gamma,\delta)$ indexed by the places $v$ of $\mathbb{Q}$. Each $\alpha_v$ is a character of $Z(\Hat{I_0})^{\Gamma_{\mathbb{Q}_v}}$ and is trivial for all but finitely many $v$. By Remark \ref{kappasimplification} the group $\mathfrak{K}(I_0)$ is contained in $Z(\Hat{I_0})^{\Gamma_{\mathbb{Q}_v}}Z(\Hat{G})/Z(\Hat{G})$ for every $v$. Remark \ref{kottwitztripledetails} constructs $\alpha_p$ and $\alpha_{\infty}$, and $\alpha_l$ for a finite place $l \neq p$ is constructed below; here we only explain how the $\alpha_v$ are assembled into a character of $\mathfrak{K}(I_0)$.

Consider a lift of an element $x \in \mathfrak{K}(I_0)$ to
$$\Tilde{x} \in \bigcap_v Z(\Hat{I_0})^{\Gamma_{\mathbb{Q}_v}} Z(\Hat{G}),$$
well defined up to $Z(\Hat{G})$. In order to evaluate the local invariants on such a lift, each $\alpha_v$ is first extended to a character $\beta_v$ of the larger group $Z(\Hat{I_0})^{\Gamma_{\mathbb{Q}_v}} Z(\Hat{G})$. Since a character of a product of two abelian groups is the same thing as a pair of characters agreeing on their intersection, such an extension amounts to prescribing $\beta_v$ on $Z(\Hat{G})$ compatibly with $\alpha_v$ on
$$Z(\Hat{G})^{\Gamma_{\mathbb{Q}_v}} = Z(\Hat{G}) \cap Z(\Hat{I_0})^{\Gamma_{\mathbb{Q}_v}}.$$
Kottwitz takes $\beta_v$ to be trivial on $Z(\Hat{G})$ for $v \neq p, \infty$, which is consistent because $\alpha_v$ is trivial on $Z(\Hat{G})^{\Gamma_{\mathbb{Q}_v}}$ at these places, and takes $\beta_p = -\mu_h^{\natural}$ and $\beta_{\infty} = \mu_h^{\natural}$ on $Z(\Hat{G})$, in the notation of Remark \ref{BGmudef}; this is consistent because the restrictions of $\alpha_p$ and $\alpha_{\infty}$ to $Z(\Hat{G})^{\Gamma_{\mathbb{Q}_p}}$ and $Z(\Hat{G})^{\Gamma_{\mathbb{R}}}$ are $-\mu_h^{\natural}$ and $\mu_h^{\natural}$ respectively, which at $p$ is exactly what condition (3) on $\delta$ asserts.

The $\beta_v$ are again trivial for almost all $v$, so the product $\prod_v \beta_v(\Tilde{x})$ has only finitely many factors different from $1$ and makes sense. Moreover $\prod_v \beta_v$ is trivial on $Z(\Hat{G})$, the contributions at $p$ and $\infty$ cancelling, so the product is unchanged if $\Tilde{x}$ is altered by an element of $Z(\Hat{G})$. Setting
$$\langle \alpha(\gamma_0, \gamma, \delta), x \rangle = \prod_v \beta_v(\Tilde{x})$$
therefore gives a well-defined element $\alpha(\gamma_0, \gamma, \delta) \in \mathfrak{K}(I_0)^D$.

By condition (1), $\gamma_0$ and $\gamma_l$ are stably conjugate, which under (\HypOne{}) is the same as being conjugate under $G(\overline{\mathbb{Q}}_\ell)$ (\ref{conjugacyclassdefoverFlemma}). A priori, the set
$$\{ G(\mathbb{Q}_l)\text{-conj. classes within the stable conj. class of }\gamma_0 \}$$
is only a pointed set. However, Lemma \ref{conjclassinsidestable} (with $F = \mathbb{Q}_l$ and $I = I_0$) identifies it with $\ker(H^1(\mathbb{Q}_l, I_0) \to H^1(\mathbb{Q}_l, G))$ and, since $\mathbb{Q}_l$ is $p$-adic, with a group of characters of $Z(\Hat{I_0})^{\Gamma_{\mathbb{Q}_l}}$ whose restriction to $Z(\Hat{G})^{\Gamma_{\mathbb{Q}_l}}$ is the trivial. The invariant $\alpha_l$ is the image of $\gamma_l$ under this identification. The fact that $\alpha_l$ is trivial for almost all places $l$ follows from Proposition 7.1 of \cite{MR858284}. It is precisely this upgrade of a pointed set to an abelian group that makes the prestabilization of Section \ref{prestab} possible, since the argument there pairs $\alpha(\gamma_0,\gamma,\delta)$ against characters $\kappa \in \mathfrak{K}(I_0)$ and sums.

\begin{itemize}
    \item $c(\gamma_0, \gamma, \delta)$ is a constant attached to the Kottwitz triple, and factors as
    $$c(\gamma_0, \gamma, \delta) = c_1 ( \gamma_0, \gamma, \delta) \cdot c_2 (\gamma_0).$$
    The first factor is the volume term $c_1(\gamma_0,\gamma,\delta) = \vol (I(\mathbb{Q}) \backslash I(\mathbb{A}_f))$, taken with respect to a certain inner form $I$ of $I_0$ constructed below; the second is the cardinality $c_2(\gamma_0) = \mid \ker[\ker^1(\mathbb{Q}, I_0) \xrightarrow{} \ker^1(\mathbb{Q}, G)] \mid$ of a finite cohomological set.
\end{itemize}

Following $\S$ 2 of \cite{MR1044820}, the group $I$ is obtained from local data. Attached to the Kottwitz triple $(\gamma_0, \gamma, \delta)$ and the Shimura datum $(G,X)$ there are groups $I(v)$ over $\mathbb{Q}_v$, one for each place $v$ of $\mathbb{Q}$, each an inner form of $I_{0,\mathbb{Q}_v}$. The group $I$ over $\mathbb{Q}$ is then the group with $I_{\mathbb{Q}_v} \simeq I(v)$ for every $v$, whenever such a group exists; its existence is equivalent to $\alpha(\gamma_0, \gamma, \delta) = 0$. The $I(v)$ are constructed as follows.

\begin{itemize}
    \item At a finite place $v = l \neq p$: $I(l)$ is the centralizer of $\gamma_l$ in $G_{\mathbb{Q}_l}$. By condition (1) and (\HypOne{}), $\gamma_l$ is conjugate to $\gamma_0$ under $G(\overline{\mathbb{Q}}_\ell)$ (\ref{conjugacyclassdefoverFlemma}), so any $g_l \in G(\overline{\mathbb{Q}}_\ell)$ with $g_l \gamma_0 g_l^{-1} = \gamma_l$ gives an inner twisting $\Int(g_l) : I_{0,\mathbb{Q}_l} \xrightarrow{\sim} I(l)$, well defined up to inner automorphisms of $I_{0,\mathbb{Q}_l}$.
    \item At $v = p$: recall $R = \Res_{\mathbb{Q}_{p^j}/\mathbb{Q}_p}(G)$ and $s = \sigma \in \Aut_{\mathbb{Q}_p}(R)$ from \ref{twistedconjsection}, so that $\delta \in G(\mathbb{Q}_{p^j}) = R(\mathbb{Q}_p)$. The group $I(p)$ is the $\sigma$-centralizer $I_{s\delta} = \{ g \in R \mid g \delta \sigma(g^{-1}) = \delta \}$ of $\delta$ (\ref{sigmacentralizerdef}), which is the same group that appears in the twisted orbital integral $\TO_{\delta}(\phi_j)$ below. It is an inner form of $I_{0,\mathbb{Q}_p}$ by Lemma \ref{twistedlemma}, which identifies $(I_{s\delta})_{\mathbb{Q}_{p^j}}$ with the centralizer of the norm $N\delta$ (\ref{normmap}) in $G_{\mathbb{Q}_{p^j}}$: choosing $c$ with $c \gamma_0 c^{-1} = N\delta$ as in Remark \ref{kottwitztripledetails}, which is possible by condition (2), the map $\Int(c)$ carries $I_0 = Z_G(\gamma_0)$ to $Z_G(N\delta)$ and so gives an inner twisting $I_{0,\mathbb{Q}_p} \xrightarrow{\sim} I(p)$, well defined up to inner automorphisms.
    \item At $v = \infty$: we refer the reader to $\S$ 2 of \cite{MR1044820} for the construction of $I(\infty)$. We need only the properties that $I(\infty)$ is an inner form of $I_{0,\mathbb{R}}$, so that $I_{\mathbb{R}} \simeq I(\infty)$, and that $I(\infty)/Z(G)$ is anisotropic over $\mathbb{R}$.

    These two properties give the finiteness of the volume terms occurring in the formula. Write $A_G$ for the maximal $\mathbb{Q}$-split torus in $Z(G)$ and $A_G(\mathbb{R})^0$ for the identity component of $A_G(\mathbb{R})$. By (\HypTwo{}), $A_G$ is also the maximal $\mathbb{R}$-split torus of $Z(G)$, so $Z(G)(\mathbb{R})$ is compact modulo $A_G(\mathbb{R})^0$; and by the anisotropy of $I(\infty)/Z(G)$ so is $I(\mathbb{R}) \simeq I(\infty)(\mathbb{R})$. Since invariant measures are finite on compact sets, the quotient $A_G(\mathbb{R})^0 \backslash I(\infty)(\mathbb{R})$ has finite non-zero volume; this is what makes the term $c_{\infty}$ of Section \ref{prestab} well defined. The same anisotropy forces every $\mathbb{Q}$-split subtorus of $I$, being $\mathbb{R}$-split, to lie in $Z(G)$ and hence in $A_G$, so $I/A_G$ is anisotropic over $\mathbb{Q}$ and $I(\mathbb{Q}) \backslash I(\mathbb{A})^1$ is compact by Appendix \ref{hyp2appendix}. Separating off the compact archimedean factor leaves $c_1(\gamma_0, \gamma, \delta) = \vol(I(\mathbb{Q}) \backslash I(\mathbb{A}_f))$ finite.
\end{itemize}

\begin{R}\label{alphaIv}
    The local invariants $\alpha_v$ and the groups $I(v)$ record the same local data. Write $I_0^{ad}$ for the adjoint group of $I_0$. The inner twisting $\psi_v : I_{0,\mathbb{Q}_v} \xrightarrow{\sim} I(v)$ determines a class $x_v \in H^1(\mathbb{Q}_v, I_0^{ad})$, inner forms being classified by the adjoint group, and by $\S$ 2 of \cite{MR1044820} the image of $x_v$ in $\pi_0(Z(\Hat{I_0^{ad}})^{\Gamma_{\mathbb{Q}_v}})^D$ is the image of $\alpha_v(\gamma_0, \gamma, \delta)$ under
    $$X^*(Z(\Hat{I_0})^{\Gamma_{\mathbb{Q}_v}}) \xrightarrow[]{} X^*(Z(\Hat{I_0^{ad}})^{\Gamma_{\mathbb{Q}_v}}).$$
    So $\alpha_v$ determines $I(v)$, and it is shown there that $\alpha(\gamma_0, \gamma, \delta) = 0$ implies the existence of the global group $I$: the invariant is the obstruction to gluing the $I(v)$ together. The factor $c_2(\gamma_0)$ measures the complementary ambiguity. Applying \ref{conjclassinsidestable} over $\mathbb{Q}$ and over each $\mathbb{Q}_v$, the elements $\gamma' \in G(\mathbb{Q})$ that are $G(\mathbb{Q}_v)$-conjugate to $\gamma_0$ at every place are parametrized by $\ker[\ker^1(\mathbb{Q}, I_0) \xrightarrow[]{} \ker^1(\mathbb{Q}, G)]$, so $c_2(\gamma_0)$ is the number of $G(\mathbb{Q})$-conjugacy classes inside the $G(\mathbb{A})$-conjugacy class of $\gamma_0$, which is the interpretation used in Section \ref{prestab}. Such $\gamma'$ have the same $I(v)$ and the same $\alpha_v$ as $\gamma_0$, so $\alpha$ decides whether the local data glue, while $c_2$ counts the rational classes carrying the same local data.
\end{R}

\begin{itemize}
    \item The orbital integral $\Or_{\gamma}(-)$ is the distribution $\mathcal{C}_c^{\infty}(G(\mathbb{A}_f^p)) \xrightarrow[]{} \mathbb{C}$ given by
    $$\Or_{\gamma}(f^p) = \int_{G(\mathbb{A}_f^p)_{\gamma} \backslash G(\mathbb{A}_f^p)} f^p(x^{-1} \gamma x) \, d\overline{x},$$
    where $G(\mathbb{A}_f^p)_{\gamma}$ is the centralizer of $\gamma$ in $G(\mathbb{A}_f^p)$ and $d\overline{x}$ is the quotient of a chosen Haar measure on $G(\mathbb{A}_f^p)$ by one on $G(\mathbb{A}_f^p)_{\gamma}$.
    \item The twisted orbital integral $\TO_{\delta}(-)$ is the distribution $\mathcal{C}^{\infty}_c(G(\mathbb{Q}_{p^j})) \xrightarrow[]{} \mathbb{C}$ taken with respect to the $\sigma$-conjugacy of \ref{sigmaconjdef}, given by
    $$\TO_{\delta}(\phi_j) = \int_{I_{s\delta}(\mathbb{Q}_p) \backslash G(\mathbb{Q}_{p^j})} \phi_j(y^{-1} \delta \sigma(y)) \, d\overline{y},$$
    where $I_{s\delta}$ is the $\sigma$-centralizer of $\delta$ (\ref{sigmacentralizerdef}), that is, the group $I(p)$ above, and $d\overline{y}$ is defined similarly, using a Haar measure on $I_{s\delta}(\mathbb{Q}_p)$.
\end{itemize}

Each of $\Or_{\gamma}(-)$, $\TO_{\delta}(-)$ and the volume $c_1(\gamma_0, \gamma, \delta)$
depends on a choice of Haar measure, so none of these terms is well defined on its own. On
the groups $G(\mathbb{A}_f^p)$ and $G(\mathbb{Q}_{p^j})$ we fix measures once and for all,
normalized by $\vol(K^p) = 1$ and $\vol(\mathcal{G}(\mathbb{Z}_{p^j})) = 1$; these do not
vary with the Kottwitz triple. The remaining choices are on the centralizers, and here the
three terms are tied together by the group $I$: since $I_{\mathbb{Q}_v} \simeq I(v)$ for
every $v$, the constructions of the $I(v)$ above give
$$I(\mathbb{A}_f) \simeq G(\mathbb{A}_f^p)_{\gamma} \times I_{s\delta}(\mathbb{Q}_p),$$
the first factor being the centralizer occurring in $\Or_{\gamma}$ and the second the
$\sigma$-centralizer occurring in $\TO_{\delta}$. The compatible choice is therefore to fix
a single Haar measure on $I(\mathbb{A}_f)$ and use its prime-to-$p$ part in $\Or_{\gamma}$,
its $p$-part in $\TO_{\delta}$, and the measure itself in $c_1$. This is unambiguous even
though the inner twistings $I_{\mathbb{Q}_v} \xrightarrow{\sim} I(v)$ are only well defined
up to inner automorphisms: on p.\ 632 of \cite{MR942522} Kottwitz defines what it means for
Haar measures on two inner forms to be \textit{compatible}, and shows that the notion does
not depend on the choice of inner twisting nor on the invariant differential form used to
define it. Throughout what follows, measures on a group and on an inner form of it are
understood to be chosen compatibly in this sense.

With this choice the dependence cancels. Rescaling the measure on $I(\mathbb{A}_f)$ by
$t > 0$ multiplies $c_1(\gamma_0,\gamma,\delta) = \vol(I(\mathbb{Q}) \backslash
I(\mathbb{A}_f))$ by $t$, while $\Or_{\gamma}(f^p)$ and $\TO_{\delta}(\phi_j)$ are computed
against quotient measures and so scale by the inverse of the corresponding factor of $t$.
Hence the product $c(\gamma_0, \gamma, \delta) \Or_{\gamma}(f^p) \TO_{\delta}(\phi_j)$
appearing in the formula is independent of all choices, even though no individual factor
is.

\begin{itemize}
    \item The function $\phi_j$ lies in the Hecke algebra $\mathcal{H}(G(\mathbb{Q}_{p^j})//\mathcal{G}(\mathbb{Z}_{p^j}))$, where $\mathcal{G}$ is the hyperspecial integral model of section \ref{newtonstratdetails} (see section \ref{detailssetup} for the Hecke algebra notation), and is constructed as follows. Since $p$ is good for $G$, the completion $E_{\mathfrak{p}}$ of the reflex field at the place above $p$ fixed in section \ref{newtonstratdetails} is unramified over $\mathbb{Q}_p$ with residue field $\mathbb{F}_q$; as $p^j = q^n$, the field $\mathbb{Q}_{p^j}$ is precisely the unramified extension of $E_{\mathfrak{p}}$ of degree $n$ inside $\overline{\mathbb{Q}_p}$, which is the field denoted $F$ in $\S$ 2 of \cite{MR1044820}. In particular $E_{\mathfrak{p}} \subset \mathbb{Q}_{p^j}$, so the $G(\mathbb{C})$-conjugacy class of $\mu_h$ gives a $G(\overline{\mathbb{Q}_p})$-conjugacy class of cocharacters $\mu : \mathbb{G}_m \xrightarrow[]{} G_{\overline{\mathbb{Q}_p}}$ fixed by $\Gal(\overline{\mathbb{Q}_p}/\mathbb{Q}_{p^j})$.

    To define $\phi_j$ we need a representative of the class of $\mu$ that is defined over
    $\mathbb{Q}_{p^j}$, equivalently one factoring through a maximal $\mathbb{Q}_{p^j}$-split
    torus; only then is $\mu(\pi^{-1})$ a point of $G(\mathbb{Q}_{p^j})$ and the double coset
    below a subset of $G(\mathbb{Q}_{p^j})$. Let $S$ be the maximal $\mathbb{Q}_{p^j}$-split subtorus of $T$, for $T \subset B = TN$ as in Remark \ref{BGmudef}; it extends to a maximal $\mathbb{Z}_{p^j}$-split torus of $\mathcal{G}$ over $\mathbb{Z}_{p^j}$. The dominant representative $\mu_h \in X_*(T)$ of the class of $\mu$ factors through $S$. Indeed, $T$ and $B = TN$ are defined over $\mathbb{Q}_p$, so $\Gamma_{\mathbb{Q}_{p^j}}$ acts on $X_*(T)$ and permutes the roots of $T$ in $\Lie(N)$; since the pairing is Galois-equivariant, $\langle \alpha, \tau\mu \rangle = \langle \tau^{-1}\alpha, \mu \rangle$ for $\tau \in \Gamma_{\mathbb{Q}_{p^j}}$, so the action preserves dominance. As the class of $\mu$ is $\Gamma_{\mathbb{Q}_{p^j}}$-stable and contains a unique dominant element of $X_*(T)$ (Remark \ref{BGmudef}), that element is fixed by $\Gamma_{\mathbb{Q}_{p^j}}$, so
    $$\mu_h \in X_*(T)^{\Gamma_{\mathbb{Q}_{p^j}}} = X_*(S).$$
    Then $\phi_j$ is the characteristic function of the double coset $\mathcal{G}(\mathbb{Z}_{p^j}) \, a \, \mathcal{G}(\mathbb{Z}_{p^j})$, where
    $$a = \mu_h(\pi^{-1})$$
    for a uniformizer $\pi$ of $\mathbb{Q}_{p^j}$, for instance $\pi = p$ since $\mathbb{Q}_{p^j}/\mathbb{Q}_p$ is unramified. This double coset is independent of the choices of $S$, of the representative and of $\pi$.
\end{itemize}

\begin{R}\label{interpretation}
    When the Shimura variety can be interpreted as a moduli space of abelian varieties of dimension $g$ with extra structure, the Kottwitz triple admits the following interpretation. The fixed points of the correspondence $\Frob_q^n \times f$ are the fixed points of the correspondence $f$ on the moduli of abelian varieties over $\mathbb{F}_{q^n} = \mathbb{F}_{p^j}$ with the extra structure prescribed by the moduli problem. Let $A$ over $\mathbb{F}_{p^j}$ be the abelian variety corresponding to such a fixed point. Then $\gamma_0 \in G(\mathbb{Q})$ corresponds to the \textit{Frobenius endomorphism} of $A$, the element $\gamma \in G(\mathbb{A}_f^p)$ to the map it induces on the \textit{Tate module} of $A$, and $\delta$ to the map it induces on the \textit{Dieudonn\'e module} of $A$; in these terms, $\delta \mapsto b$ holds exactly when $A$ lies in the Newton stratum $\overline{S}_{K^p}(b)$. If $(\gamma_0,\gamma,\delta)$ is a Kottwitz triple arising in this way from an abelian variety $A$, then $\alpha(\gamma_0, \gamma, \delta) = 0$ by Theorem 7.9 of \cite{MR1155229}.
\end{R}

\begin{R}
The term $c(\gamma_0, \gamma, \delta) \Or_{\gamma}(f^p) \TO_{\delta}(\phi_j)$ is non-zero for only finitely many Kottwitz triples $(\gamma_0, \gamma, \delta)$. In the setting of Remark \ref{interpretation}, the finitely many $\gamma_0$ contributing non-zero terms can be classified by Honda-Tate theory.
\end{R}

\subsubsection*{Complements}

\begin{Rc}[$B(G,\mu)$]\label{BGmudef}
    We use the notation $T$, $B = TN$, $A$ and $\overline{C}_{\mathbb{Q}}$ fixed in the discussion of the Newton map in Section \ref{2.1}, taking $F = \mathbb{Q}_p$. Since $p$ is good for $G$, the group $G_{\mathbb{Q}_p}$ is quasi-split, so we may take $G_0 = G_{\mathbb{Q}_p}$ there: thus $A$ is a maximal split torus of $G_{\mathbb{Q}_p}$, $T$ is its centralizer, and $B = TN$ is a Borel subgroup containing $T$, all defined over $\mathbb{Q}_p$.

    We recall two standard notions attached to this data.
    \begin{itemize}
        \item A cocharacter $\mu \in X_*(T)$ is \textit{dominant} if $\langle \alpha, \mu \rangle \geq 0$ for every root $\alpha$ of $T$ in $\Lie(N)$.
        \item For $\nu_1, \nu_2 \in \overline{C}_{\mathbb{Q}}$, the \textit{dominance order} $\nu_1 \leq \nu_2$ holds if $\nu_2 - \nu_1$ is a non-negative $\mathbb{Q}$-linear combination of positive coroots of $A$.
    \end{itemize}

    Recall from section \ref{detailssetup} that the Shimura datum $(G,X)$ determines a $G(\mathbb{C})$-conjugacy class of Hodge cocharacters of $G_{\mathbb{C}}$. Since $G$ is defined over $\mathbb{Q}$, every cocharacter of $G_{\mathbb{C}}$ is already defined over $\overline{\mathbb{Q}}$, so this class is the base change of a unique $G(\overline{\mathbb{Q}})$-conjugacy class of cocharacters of $G_{\overline{\mathbb{Q}}}$. Fix an embedding $\overline{\mathbb{Q}} \hookrightarrow \overline{\mathbb{Q}_p}$ inducing the place of the reflex field above $p$ chosen in section \ref{newtonstratdetails}; base change along it carries the class to a $G(\overline{\mathbb{Q}_p})$-conjugacy class of cocharacters of $G_{\overline{\mathbb{Q}_p}}$. Any such class meets $X_*(T)$ in a single orbit of the absolute Weyl group $\Omega_{G_{\mathbb{Q}_p}}(\overline{\mathbb{Q}_p})$ of Section \ref{2.1}, and this group permutes the Weyl chambers simply transitively, so the orbit contains exactly one dominant element. This is the dominant cocharacter $\mu_h \in X_*(T)$ referred to above.

    We will repeatedly use the image of $\mu_h$ under the surjection
    $$X_*(T) = X^*(\Hat{T}) \twoheadrightarrow X^*(Z(\Hat{G})),$$
    where the equality identifies cocharacters of $T$ with characters of the dual torus $\Hat{T}$ (\ref{dualtorusdef}) and the surjection is restriction of characters along $Z(\Hat{G}) \subset \Hat{T}$ (using the embedding of $\Hat{T}$ in $\Hat{G}$ as discussed in Remark \ref{secondconst}). We write $\mu_h^{\natural} \in X^*(Z(\Hat{G}))$ for this image. Note that $\mu_h^{\natural}$ is a character of $Z(\Hat{G})$ itself, so it may be restricted to $Z(\Hat{G})^{\Gamma_{\mathbb{Q}_v}}$ for any place $v$; this is how it is used at $v = p$ and $v = \infty$ above and in Remark \ref{kottwitztripledetails}.

    Following $\S$ 6 of \cite{MR1485921}, this construction is carried out for an arbitrary $G(\overline{\mathbb{Q}_p})$-conjugacy class of cocharacters. Let $\mu$ be such a class, and write $\mu^{\natural} \in X^*(Z(\Hat{G}))$ for the image of any representative of $\mu$ lying in $X_*(T)$ under the surjection above; since $Z(\Hat{G}) \subset \Hat{T}$ is fixed pointwise by the Weyl group, this does not depend on the representative chosen. The subset $B(G_{\mathbb{Q}_p}, \mu) \subset B(G_{\mathbb{Q}_p})$ consists of those $b \in B(G_{\mathbb{Q}_p})$ satisfying two conditions:
    \begin{itemize}
        \item The Kottwitz point $\kappa_G(b)$ (\ref{kottwitzmap}) equals the restriction of $\mu^{\natural}$ to $Z(\Hat{G})^{\Gamma_{\mathbb{Q}_p}}$, that is, the image of $\mu$ under the composite (using $\pi_1(G) = X^*(Z(\Hat{G}))$ as in \ref{observation})
        $$X_*(T) = X^*(\Hat{T}) \twoheadrightarrow X^*(Z(\Hat{G})) \twoheadrightarrow X^*(Z(\Hat{G})^{\Gamma_{\mathbb{Q}_p}}),$$
        the last map being passage to $\Gamma_{\mathbb{Q}_p}$-coinvariants.
        \item The Newton point $\overline{\nu}_G(b)$ (\ref{newtonmap}) is $\leq$ the image of $\mu$ under the $\Gamma_{\mathbb{Q}_p}$-averaging map of \ref{galoisaverage}, in the dominance order above.
    \end{itemize}
    By $\S$ 6.4 of \cite{MR1485921}, $B(G_{\mathbb{Q}_p}, \mu)$ is a finite set.

    We apply this with $\mu = \mu_h^{-1}$, the class of the inverse of the Hodge cocharacter, whose dominant representative is $-w_0\mu_h$ for $w_0$ the longest element of $\Omega_{G_{\mathbb{Q}_p}}(\overline{\mathbb{Q}_p})$. Thus $b \in B(G_{\mathbb{Q}_p}, \mu_h^{-1})$ if and only if $\kappa_G(b)$ is the restriction of $-\mu_h^{\natural}$ and $\overline{\nu}_G(b)$ is bounded above by the $\Gamma_{\mathbb{Q}_p}$-average of $\mu_h^{-1}$.
\end{Rc}

\begin{Rc}[Normalisation of the Newton map]\label{newtonnormalisation}
    We use the covariant normalisation of the map $x \mapsto b_x$ of Section \ref{newtonstratdetails}, as announced there. It is the one forced by the function $\phi_j$, which is supported on the double coset of $\mu_h(\pi^{-1})$: the class of $\delta$ then has Kottwitz point the restriction of $-\mu_h^{\natural}$ (\ref{hconstruction}), which is what condition (3) on a Kottwitz triple asserts (\ref{kottwitztripledetails}) and is the constant value of $\kappa_G$ on $B(G_{\mathbb{Q}_p}, \mu_h^{-1})$ by the preceding complement. Accordingly the class of $\delta$ is $b$ itself, which is what is used in Section \ref{completionmainproof}.
\end{Rc}

\begin{Rc}\label{kappasimplification}
    The group $\mathfrak{K}(I_0)$ simplifies using ellipticity of $\gamma_0$. By Definition \ref{ellipticdef} (with $F = \mathbb{R}$), $\gamma_0$ being $\mathbb{R}$-elliptic means precisely that $(Z(\Hat{I_0})/Z(\Hat{G}))^{\Gamma_{\mathbb{R}}}$ is finite (\ref{ellipticlem3}). Since $\Gamma_{\mathbb{R}} \subset \Gamma_{\mathbb{Q}}$ (via the embedding $\overline{\mathbb{Q}} \hookrightarrow \mathbb{C}$ fixing the real place), the group $(Z(\Hat{I_0})/Z(\Hat{G}))^{\Gamma_{\mathbb{Q}}}$ is a subgroup of $(Z(\Hat{I_0})/Z(\Hat{G}))^{\Gamma_{\mathbb{R}}}$, hence finite too; by \ref{ellipticlem3} again, now with $F = \mathbb{Q}$, this says that $\gamma_0$ is also $\mathbb{Q}$-elliptic. Being finite, $(Z(\Hat{I_0})/Z(\Hat{G}))^{\Gamma_{\mathbb{Q}}}$ equals its own group of connected components, so the group $\pi_0((Z(\Hat{I_0})/Z(\Hat{G}))^{\Gamma_{\mathbb{Q}}})$ appearing in Definition \ref{kappadef} (with $F = \mathbb{Q}$, $I = I_0$) is simply $(Z(\Hat{I_0})/Z(\Hat{G}))^{\Gamma_{\mathbb{Q}}}$ itself.

    Next we restate the ``locally trivial'' condition of Definition \ref{kappadef} concretely. Fix an element
    $$x \in (Z(\Hat{I_0})/Z(\Hat{G}))^{\Gamma_{\mathbb{Q}}}.$$
    Applying \ref{keylemmaendo} to the exact sequence $$1 \to Z(\Hat{G}) \to Z(\Hat{I_0}) \to Z(\Hat{I_0})/Z(\Hat{G}) \to 1$$ both over $\mathbb{Q}$ and over each completion $\mathbb{Q}_v$, gives connecting maps $\delta: (Z(\Hat{I_0})/Z(\Hat{G}))^{\Gamma_{\mathbb{Q}}} \to H^1(\mathbb{Q}, Z(\Hat{G}))$ and $\delta_v: (Z(\Hat{I_0})/Z(\Hat{G}))^{\Gamma_{\mathbb{Q}_v}} \to H^1(\mathbb{Q}_v, Z(\Hat{G}))$, compatible with restriction to $\mathbb{Q}_v$. By exactness of the sequence in \ref{keylemmaendo}, $\ker(\delta_v)$ is exactly the image of $Z(\Hat{I_0})^{\Gamma_{\mathbb{Q}_v}}$ in $(Z(\Hat{I_0})/Z(\Hat{G}))^{\Gamma_{\mathbb{Q}_v}}$, namely $Z(\Hat{I_0})^{\Gamma_{\mathbb{Q}_v}}Z(\Hat{G})/Z(\Hat{G})$. So the restriction of $\delta(x)$ to $\mathbb{Q}_v$ vanishes exactly when $x \in Z(\Hat{I_0})^{\Gamma_{\mathbb{Q}_v}}Z(\Hat{G})/Z(\Hat{G})$, and since ``$\delta(x)$ is locally trivial'' means that this holds for every $v$, Definition \ref{kappadef} gives
    $$\mathfrak{K}(I_0) = (Z(\Hat{I_0})/Z(\Hat{G}))^{\Gamma_{\mathbb{Q}}} \cap \Bigl( \bigcap_v Z(\Hat{I_0})^{\Gamma_{\mathbb{Q}_v}} Z(\Hat{G})/Z(\Hat{G}) \Bigr).$$

    Now we claim that the first factor is redundant: it is implied by the second, by the Chebotarev density theorem. Indeed, $Z(\Hat{I_0})/Z(\Hat{G})$ is of multiplicative type, so the $\Gamma_{\mathbb{Q}}$-action on it factors through $\Gal(M/\mathbb{Q})$ for some finite Galois extension $M/\mathbb{Q}$. For $v$ unramified in $M$, the image of $\Gamma_{\mathbb{Q}_v}$ in $\Gal(M/\mathbb{Q})$ is generated by $\Frob_v$, so an element fixed by $\Gamma_{\mathbb{Q}_v}$ is in particular fixed by $\Frob_v$. By Chebotarev density theorem, every element of $\Gal(M/\mathbb{Q})$ arises as $\Frob_v$ for some unramified $v$. Hence an element of the second factor is fixed by $\Frob_v$ for every unramified $v$, therefore by all of $\Gal(M/\mathbb{Q})$, therefore by $\Gamma_{\mathbb{Q}}$; that is, it already lies in the first factor. We conclude
    $$\mathfrak{K}(I_0) = \Bigl( \bigcap_v Z(\Hat{I_0})^{\Gamma_{\mathbb{Q}_v}} Z(\Hat{G}) \Bigr)/Z(\Hat{G}),$$
    the intersection being over all places $v$ of $\mathbb{Q}$. In particular $\mathfrak{K}(I_0) \subset Z(\Hat{I_0})^{\Gamma_{\mathbb{Q}_v}}Z(\Hat{G})/Z(\Hat{G})$ for each $v$, which is the containment used above in restricting the local invariants $\alpha_v$ to $\mathfrak{K}(I_0)$.
\end{Rc}

\begin{Rc}\label{kottwitztripledetails}
    The notion of a Kottwitz triple is defined relative to the Shimura datum $(G,X)$ and depends in particular on the Hodge cocharacter $\mu_h$; so are the components $\alpha_p$ and $\alpha_{\infty}$ of the invariant. Following $\S$ 2 of \cite{MR1044820}, we recall the construction of $\alpha_p$, and briefly that of $\alpha_{\infty}$, which plays little role for Newton strata. Throughout, $\mu_h^{\natural} \in X^*(Z(\Hat{G}))$ is as in Remark \ref{BGmudef}.

    Write $\Hat{I_0}$ for a dual group of $I_0$, so that the embedding $Z(\Hat{G}) \hookrightarrow Z(\Hat{I_0})$ dual to $I_0 \subset G$ is one of $\Gamma_{\mathbb{Q}_p}$-modules. Since $H^1(L, I_0)$ is trivial by a theorem of Steinberg, condition (2) above, that $N\delta$ is conjugate to $\gamma_0$ under $G(\overline{\mathbb{Q}}_p)$, upgrades to conjugacy under $G(L)$: choose $c \in G(L)$ with $c \gamma_0 c^{-1} = N\delta$ (\ref{normmap}) and set $\delta_0 = c^{-1} \delta \sigma(c)$. Applying $\sigma$ to this equation shows $\delta_0 \in I_0(L)$, and $\delta_0$ is well defined up to $\sigma$-conjugacy (\ref{sigmaconjdef}) in $I_0(L)$, hence determines a class in $B((I_0)_{\mathbb{Q}_p})$. Applying the Kottwitz map (\ref{kottwitzmap}) for $I_0$ to this class gives $\alpha_p \in X^*(Z(\Hat{I_0})^{\Gamma_{\mathbb{Q}_p}})$. Condition (3) above, the extra condition on $\delta$ beyond (2), is that the restriction of $\alpha_p$ to $Z(\Hat{G})^{\Gamma_{\mathbb{Q}_p}}$ equals $-\mu_h^{\natural}$ restricted to that subgroup.

    At the infinite place, choose an elliptic maximal $\mathbb{R}$-torus $T$ of $G$ containing $\gamma_0$, which is then also a maximal torus of $I_0$, and $h \in X_{\infty}$ factoring through $T$, so that $\mu_h \in X_*(T) = X^*(\Hat{T})$. Restricting along $Z(\Hat{I_0}) \hookrightarrow \Hat{T}$ gives $\alpha_{\infty} \in X^*(Z(\Hat{I_0})^{\Gamma_{\mathbb{R}}})$, independent of the choices of $T$ and $h$; its restriction to $Z(\Hat{G})^{\Gamma_{\mathbb{R}}}$ is $+\mu_h^{\natural}$ restricted to that subgroup, of sign opposite to that of $\alpha_p$.
\end{Rc}

\subsection{Prestabilization of Lefschetz-Verdier trace formula}\label{prestab}
An important intermediate step before \textit{stabilizing} the Lefschetz-Verdier formula is known as prestabilization. It consists of rewriting the formula of Section \ref{lvformula} as a sum of terms, each of which can be matched with a stable distribution on an \textit{endoscopic group} of $G$. Following $\S$ 4 of \cite{MR1044820}, the prestabilized expression for $c(n,f)$ is
$$
    c(n,f,\xi) = \tau(G) \sum_{\gamma_0} \tr\,\xi(\gamma_0) \sum_{\kappa} \sum_{(\gamma, \delta)} \langle \alpha(\gamma_0, \gamma, \delta), \kappa \rangle \, e(\gamma, \delta) \, \Or_\gamma (f^p) \, \TO_{\delta} (\phi_j) \, c_{\infty}.
$$

\textbf{Adaptation to Newton strata.}
For the Newton stratum indexed by $b$, prestabilization gives the same expression with the innermost sum restricted to the pairs for which $\delta \mapsto b$:
\begin{equation}\label{prestabnewton}
    c_b(n,f^p,\xi) = \tau(G) \sum_{\gamma_0} \tr\,\xi(\gamma_0) \sum_{\kappa} \sum_{\substack{(\gamma, \delta) \\ \delta \mapsto b}} \langle \alpha(\gamma_0, \gamma, \delta), \kappa \rangle \, e(\gamma, \delta) \, \Or_\gamma (f^p) \, \TO_{\delta} (\phi_j) \, c_{\infty}.
\end{equation}
The factor $\tr\,\xi(\gamma_0)$ plays no role in prestabilization: it depends only on $\gamma_0$, so it is constant on the sums over $\kappa$ and over $(\gamma,\delta)$ that are rearranged below, and it passes through the argument unchanged, exactly as the volume factor $c_{\infty}$ does.

Indeed, fix $\gamma_0$ and restrict throughout the argument of Complement \ref{prestabkey} to the set of Kottwitz triples with $\delta \mapsto b$. For a triple in this set, character orthogonality gives
$$
    \sum_{\kappa \in \mathfrak{K}(I_0)} \langle \alpha(\gamma_0,\gamma,\delta),\kappa \rangle
    =
    \begin{cases}
        \lvert \mathfrak{K}(I_0) \rvert, & \alpha(\gamma_0,\gamma,\delta)=0,\\
        0, & \alpha(\gamma_0,\gamma,\delta)\neq 0.
    \end{cases}
$$
For the triples with trivial invariant, Complement \ref{prestabkey} identifies the resulting coefficient $\tau(G)\,\lvert \mathfrak{K}(I_0) \rvert\,c_{\infty}$ with $c(\gamma_0,\gamma,\delta)$; the sign $e(\gamma,\delta)$ is then $1$ by Remark \ref{sign}. Thus the surviving terms are exactly those in the Newton-restricted Lefschetz--Verdier formula of Section \ref{lvformula}. Triples with $\delta \mapsto b$ and non-trivial invariant occur in \eqref{prestabnewton}, but cancel after summing over $\kappa$. Triples with the wrong Newton point are absent; those with trivial invariant must be absent, while those with non-trivial invariant would cancel in the same way if they were included. This proves \eqref{prestabnewton}.

In the stabilization step of Section \ref{stab}, the outer sums $\sum_{\gamma_0} \sum_{\kappa}$ will be replaced by a sum $\sum_{\Eell(G)} \sum_{\gamma_H}$, where $\Eell(G)$ is the set of elliptic endoscopic data associated with $G$ (\ref{endodef}) and $\gamma_H$ runs over $\mathbb{Q}$-elliptic conjugacy classes in $H(\mathbb{Q})$. The innermost sum is referred to as a \textit{$\kappa$-orbital integral}, and the goal of the stabilization can be stated roughly as an identity of the form
$$ \sum_{\gamma_0} \sum_{\kappa} \text{``$\kappa$-orbital integral on } G \text{''} = \sum_{\Eell(G)} \sum_{\gamma_H} \text{``stable orbital integral on }H \text{''}.$$

\textbf{Further discussion.}
We now explain the terms in the prestabilized expression.

\begin{itemize}
    \item The outermost sum $\sum_{\gamma_0}$ is the same as the outer sum in the Lefschetz-Verdier formula of Section \ref{lvformula}: it runs over the $G(\overline{\mathbb{Q}})$-conjugacy classes of semisimple $\gamma_0 \in G(\mathbb{Q})$ that are $\mathbb{R}$-elliptic (Definition \ref{ellipticdef}). As there, $I_0$ denotes the centralizer of $\gamma_0$ in $G$.
    \item The sum $\sum_{\kappa}$ is over all $\kappa \in \mathfrak{K}(I_0)$, the finite abelian group of Definition \ref{kappadef} described concretely in Remark \ref{kappasimplification}.
    \item The sum $\sum_{(\gamma, \delta)}$ is over pairs $(\gamma, \delta)$ such that $(\gamma_0, \gamma, \delta)$ is a Kottwitz triple, taken up to the equivalence used in Section \ref{lvformula}. In \eqref{prestabnewton}, it is further restricted by $\delta \mapsto b$.
\end{itemize}

The last of these differs from the corresponding sum in the Lefschetz-Verdier formula in an important way: it is \emph{not} restricted to the Kottwitz triples with trivial invariant $\alpha(\gamma_0, \gamma, \delta) = 0$. The restriction is instead enforced by the character sum, since
$$\lvert \mathfrak{K}(I_0) \rvert^{-1} \sum_{\kappa \in \mathfrak{K}(I_0)} \langle \alpha(\gamma_0, \gamma, \delta), \kappa \rangle$$
is $1$ if $\alpha(\gamma_0, \gamma, \delta) = 0$ and $0$ otherwise, by orthogonality of characters of the finite abelian group $\mathfrak{K}(I_0)$.

\begin{itemize}
    \item $\tau(G)$ denotes the \textit{Tamagawa number} of $G$.
\end{itemize}

Tamagawa numbers are invariants attached to a connected reductive group over a number field. They are defined by means of the \textit{Tamagawa measure}, a Haar measure on $G(\mathbb{A})$ which is canonical, in the sense that it involves no choices: it is built from an invariant differential form of top degree on $G$ defined over the base field, and the dependence on that form disappears by the product formula. We shall not reproduce the construction, for which we refer to $\S$ 3.5 and $\S$ 5.3 of \cite{MR4615820}. Writing $A_G$ for the maximal $\mathbb{Q}$-split torus in the center of $G$, as in Section \ref{lvformula}, and $A_G(\mathbb{R})^0$ for the identity component of $A_G(\mathbb{R})$, the Tamagawa number of $G$ is
$$\tau(G) = \vol \bigl(G(\mathbb{Q})A_G(\mathbb{R})^0 \backslash G(\mathbb{A})\bigr),$$
the volume being taken with respect to the Tamagawa measure. Since that measure is canonical, no choice of compatible measures is required here, and $\tau(G)$ depends on nothing but $G$.

\begin{R}\label{Tamagawa}
    Two properties of Tamagawa numbers are used repeatedly in what follows. Both are difficult results, proved in \cite{MR942522}.
    \begin{itemize}
        \item[(T1)] $\tau(G) = 1$ for a semisimple, simply connected group $G$.
        \item[(T2)] $\tau(G) = \tau(G_0)$ whenever $G_0$ is an inner form of $G$.
    \end{itemize}
    Property (T2) is what lets the Tamagawa number of the group $I$, which is the one appearing through $c_1(\gamma_0,\gamma,\delta)$, be replaced by that of $I_0$: since $I$ is an inner form of $I_0$ we have $\tau(I) = \tau(I_0)$. Property (T1) enters through (\HypOne{}), which makes $G^{der}$ semisimple and simply connected.
\end{R}

\begin{itemize}
    \item $c_{\infty} := \vol(A_G(\mathbb{R})^0 \backslash I(\infty)(\mathbb{R}))^{-1}$, where $I(\infty)$ is the archimedean group constructed in Section \ref{lvformula}.
\end{itemize}

This volume is finite and non-zero by the discussion of $I(\infty)$ in Section \ref{lvformula} and Appendix \ref{hyp2appendix}, so $c_{\infty}$ is well defined; note that it depends only on $\gamma_0$ and not on the pair $(\gamma, \delta)$. The term arises through the Tamagawa number of $I$, in the case $\alpha(\gamma_0, \gamma, \delta) = 0$ where the group $I$ exists. Indeed, with the measures normalized as above,
$$\tau (I) = c_1(\gamma_0, \gamma, \delta) \cdot \vol (A_G(\mathbb{R})^0 \backslash I(\mathbb{R})), \qquad \text{that is,} \qquad c_1(\gamma_0, \gamma, \delta) = \tau(I) \, c_{\infty},$$
and since $I$ is an inner form of $I_0$ we have $\tau(I) = \tau(I_0)$ by (T2) of Remark \ref{Tamagawa}.

\begin{itemize}
    \item $e(\gamma,\delta) := \prod_v e(I(v))$ is the sign attached to the groups $I(v)$ of Section \ref{lvformula}, the product being over all places $v$ of $\mathbb{Q}$.
\end{itemize}

In \cite{MR697075}, Kottwitz constructs, for a connected reductive group $G$ over a field $F$, an element $e(G)$ in the Brauer group of $F$; when $F$ is a local field this may be regarded as a sign.

\begin{R}\label{sign}
    The three properties of this sign that matter here are the following.
    \begin{itemize}
        \item[(S1)] $e(G) = 1$ for a quasi-split $G$.
        \item[(S2)] (Product formula) For a group $G$ over a global field, $\prod_v e(G_v) = 1$, the product being over all places $v$. Since $G_v$ is quasi-split for all but finitely many $v$, this infinite product is well defined by (S1).
        \item[(S3)] $e(T) = 1$ for any torus $T$. This is used in Section \ref{stab}, where the signs enter the definition of the stable orbital integral through the centralizers of the elements being integrated over.
    \end{itemize}
    Together these give $e(\gamma,\delta) = 1$ whenever $\alpha(\gamma_0, \gamma, \delta) = 0$: the $I(v)$ are then the localizations of the single group $I$ (Remark \ref{alphaIv}), so (S2) applies to $I$. Consequently prestabilization holds even with the sign dropped from the formula, and its presence does not affect the prestabilization step. We nevertheless carry it, because in Section \ref{stab} it plays an essential role in the matching of orbital integrals on different groups: unlike the prestabilization, the transfer identity \eqref{endo} is false without it (Remark \ref{signessential}).
\end{R}

\subsubsection*{Complements}

The following complement explains how the prestabilized expression is derived from the Lefschetz-Verdier formula. The Newton-stratum formula \eqref{prestabnewton} uses the same argument after restricting the indexing set by $\delta \mapsto b$.

\begin{Rc}\label{prestabkey}
    The crucial step in passing from the Lefschetz-Verdier formula to prestabilization is the identity
    $$c(\gamma_0, \gamma, \delta) = \tau(G)\,\lvert \mathfrak{K}(I_0) \rvert\,c_{\infty}$$
    for a Kottwitz triple $(\gamma_0, \gamma, \delta)$ with $\alpha(\gamma_0, \gamma, \delta) = 0$. Granting it, the two formulas agree: writing $c(\gamma_0,\gamma,\delta) \Or_{\gamma}(f^p) \TO_{\delta}(\phi_j)$ in this form and using the orthogonality relation above to replace the restriction to triples with $\alpha(\gamma_0, \gamma, \delta) = 0$ by a sum over all Kottwitz triples weighted by $\lvert \mathfrak{K}(I_0) \rvert^{-1} \sum_{\kappa} \langle \alpha(\gamma_0, \gamma, \delta), \kappa \rangle$, one obtains for each fixed $\gamma_0$
    $$\sum_{\substack{(\gamma, \delta) \text{ with } \\ \alpha(\gamma_0,\gamma,\delta) = 0}} c(\gamma_0, \gamma, \delta) \Or_{\gamma}(f^p) \TO_{\delta}(\phi_j) = \tau(G) \sum_{\kappa \in \mathfrak{K}(I_0)} \sum_{(\gamma, \delta)} \langle \alpha(\gamma_0, \gamma, \delta), \kappa \rangle \Or_{\gamma}(f^p) \TO_{\delta}(\phi_j) \, c_{\infty},$$
    the inner sum on the right being over all $(\gamma,\delta)$ with $(\gamma_0, \gamma, \delta)$ a Kottwitz triple. Inserting the sign $e(\gamma,\delta)$, which is $1$ on the triples that contribute, gives the prestabilized expression.

    Since $c(\gamma_0, \gamma, \delta) = c_1(\gamma_0,\gamma,\delta) \cdot c_2(\gamma_0)$ and $c_1(\gamma_0,\gamma,\delta) = \tau(I_0) \, c_{\infty}$ as above, the identity to be proved is equivalent to
    $$c_2(\gamma_0) = \lvert \mathfrak{K}(I_0) \rvert\,\tau(G)\,\tau(I_0)^{-1},$$
    which is the difficult part. To prove this, the first step is to interpret $c_2(\gamma_0) = \mid \ker (\ker^1 (\mathbb{Q}, I_0) \xrightarrow[]{} \ker^1(\mathbb{Q},G)) \mid$ as the number of $G(\mathbb{Q})$-conjugacy classes in the $G(\mathbb{A})$-conjugacy class of $\gamma_0$, as in Remark \ref{alphaIv}; here the $G(\mathbb{A})$-conjugacy class of $\gamma_0$ is the set of all $\gamma' \in G(\mathbb{Q})$ such that $\gamma' = g^{-1} \gamma_0 g$ for some $g \in G(\mathbb{A})$. After this, the result follows using mainly the properties in Remark \ref{Tamagawa}. An important consequence is that the number of $G(\mathbb{Q})$-conjugacy classes in the $G(\mathbb{A})$-conjugacy class of $\gamma_0$ depends only on the stable conjugacy class of $\gamma_0$ (\ref{StableConjugacy}).
\end{Rc}

\subsection{Stabilization of the Lefschetz-Verdier trace formula}\label{stab}
Throughout this section we use the Weyl group scheme $\Omega_G = N_G(T)/T$ of a reductive group $G$ over a field $F$, together with its absolute Weyl group $\Omega_G(\overline{F})$ and relative Weyl group $\Omega_G(F)$, as recalled in Section \ref{2.1}.

The final stabilized expression for $c(n,f,\xi)$ is obtained by transferring the $\kappa$-orbital integrals on $G$ appearing in the prestabilization of Section \ref{prestab} to stable orbital integrals on endoscopic groups of $G$. Following $\S$ 7 of \cite{MR1044820}, it reads
\begin{equation}\label{final}
    c(n,f,\xi) = \sum_{\mathcal{H} \in \Eell(G)} \iota (G,H) \sum_{\gamma_H} \mid (H_{\gamma_H}/H_{\gamma_H}^{0})(\mathbb{Q}) \mid ^{-1} \tau(H) \SO_{\gamma_H}(h).
\end{equation}

\begin{itemize}
    \item $\Eell(G)$ denotes the set of elliptic endoscopic data associated with $G$ (Definition \ref{endodef}), taken up to equivalence.
\end{itemize}

An endoscopic datum $\mathcal{H}$ associated with $G$ was defined in \ref{endodef}; equivalently, under (\HypOne{}), it may be described by an endoscopic triple $(H,s,\eta)$ as in \ref{endotriples}. It consists of a quasi-split group $H$ over $\mathbb{Q}$, known as the endoscopic group, together with additional data. Some consequences of these data which are relevant for stabilization are the following.
\begin{enumerate}
    \item For a maximal torus $T_H$ of $H$ and a maximal torus $T_G$ of $G$, both defined over $\mathbb{Q}$, there is an \textit{admissible homomorphism} $T_H \xrightarrow[]{} T_G$ (see \ref{quasisplitlemma}) such that the isomorphism $T_H(\overline{\mathbb{Q}}) \simeq T_G(\overline{\mathbb{Q}})$ transports the action of an element of $\Omega_H(\overline{\mathbb{Q}})$ on $T_H(\overline{\mathbb{Q}})$ to the action of an element of $\Omega_G(\overline{\mathbb{Q}})$ on $T_G(\overline{\mathbb{Q}})$. The induced map $T_H(\overline{\mathbb{Q}}) / \Omega_H(\overline{\mathbb{Q}}) \xrightarrow[]{} T_G(\overline{\mathbb{Q}}) / \Omega_G(\overline{\mathbb{Q}})$ is $\Gamma_{\mathbb{Q}}$-equivariant.
    \item Given a semisimple element $\gamma_H \in H(\mathbb{Q})$, taken up to $H(\overline{\mathbb{Q}})$-conjugacy, the induced map produces a \textit{transferred conjugacy class}: a $\Gamma_{\mathbb{Q}}$-invariant $G(\overline{\mathbb{Q}})$-conjugacy class in $G(\overline{\mathbb{Q}})$. Suppose this class contains an element $\gamma_0 \in G(\mathbb{Q})$. Then the $H(\overline{\mathbb{Q}})$-conjugacy class of $\gamma_H$ and the $G(\overline{\mathbb{Q}})$-conjugacy class of $\gamma_0$ are said to be \textit{matching}; this is the notion of related conjugacy classes of \ref{relatedelementsdef}.
    \item A further notion relevant for stabilization is that of a \textit{$(G,H)$-regular element} $\gamma_H \in H(\mathbb{Q})$ for an endoscopic group $H$ of $G$ (Definition \ref{GH-regular}). If an element is $(G,H)$-regular then so is every element of its stable conjugacy class (\ref{StableConjugacy}). Let $\gamma_H \in H(\mathbb{Q})$ be $(G,H)$-regular with a matching $\gamma_0 \in G(\mathbb{Q})$. From the pair $(\mathcal{H}, \gamma_H)$ one obtains an element $\kappa \in \mathfrak{K}(I_0)$, using the part of the data of $\mathcal{H}$ beyond $H$ itself. When the $G(\overline{\mathbb{Q}})$-conjugacy class of $\gamma_0$ and the element $\kappa \in \mathfrak{K}(I_0)$ arise in this way from an endoscopic datum $\mathcal{H}$ and a $(G,H)$-regular $\gamma_H \in H(\mathbb{Q})$, we write $(\mathcal{H}, \gamma_H) \xrightarrow[]{} (\gamma_0, \kappa)$.
\end{enumerate}

Using these properties of endoscopic data we can now explain the remaining terms in \eqref{final}.

\begin{itemize}
    \item The \textit{stable orbital integral} $\SO_{\gamma_H}(-)$ is a \textit{stable} distribution, defined below; the definition applies to $h$ despite its compact-mod-centre support, as explained there.
    \item The function $h=h^ph_ph_{\infty}$ on $H(\mathbb{A})$ has compactly supported finite factors $h^p$ and $h_p$, while $h_{\infty}$ is compactly supported modulo $A_G(\mathbb{R})^0$; it is constructed as follows.
\end{itemize}

The elements $\gamma_H$ that occur in \eqref{final} are $(G,H)$-regular and the function $h$ is decomposable, and in this situation the stable orbital integral takes a simple form. Locally, let $F$ be a local field and let $\gamma_H \in H(F)$ be $(G,H)$-regular semisimple, with centralizer $I_H = H_{\gamma_H}^0$. Fix Haar measures $dg$ on $H(F)$ and $di$ on $I_H(F)$, and let
$$\Or_{\gamma_H}(f) = \int_{I_H(F) \backslash H(F)} f(g^{-1} \gamma_H g) \, \frac{dg}{di}, \qquad f \in \mathcal{C}_c^{\infty}(H(F)),$$
be the orbital integral. At the archimedean place the same integral is defined for $h_{\infty}$, since its support is compact modulo the central subgroup $A_G(\mathbb{R})^0$, which is contained in $I_H(\mathbb{R})$. For a stable conjugate $\gamma_H'$ of $\gamma_H$ (\ref{StableConjugacy}) the centralizer of $\gamma_H'$ is an inner form of $I_H$, so that $\Or_{\gamma_H'}$ requires a Haar measure on a group other than $I_H(F)$. We adopt the following convention, in force here and in all the complements below.
\begin{itemize}
    \item \textbf{Measures for stable orbital integrals.} The centralizers of the stable conjugates of a semisimple element are inner forms of one another, and the Haar measures used on them are always taken to be compatible in the sense of $\S$ 1 of \cite{MR942522}, as recalled in Section \ref{lvformula}. Without this the individual terms below could be rescaled independently and their sum would carry no information.
\end{itemize} The \textit{stable orbital integral} is
$$\SO_{\gamma_H}(f) = \sum_{\gamma_H'} e(H_{\gamma_H'}) \, \Or_{\gamma_H'}(f),$$
where $\gamma_H'$ runs over a set of representatives for the $H(F)$-conjugacy classes inside the stable conjugacy class of $\gamma_H$, a finite set by \ref{conjclassinsidestable}, and $e(-)$ is the sign of Remark \ref{sign}. These signs are all $1$ exactly when the centralizers occurring are tori, by (S3) of that remark; this is the case for $G$-regular $\gamma_H$, but not in general for the $(G,H)$-regular elements considered here, so the signs cannot be dropped. Since $\gamma_H$ is $(G,H)$-regular these centralizers are connected, which is what makes the formula this simple; the definition for a general semisimple element carries an extra factor and is recalled in Complement \ref{SOgeneral}.

Globally, the function $h = h^p h_p h_{\infty}$ is a product of local functions, so its stable orbital integral is the corresponding product of local stable orbital integrals:
$$\SO_{\gamma_H}(h) = \SO_{\gamma_H}(h^p) \cdot \SO_{\gamma_H}(h_p) \cdot \SO_{\gamma_H}(h_{\infty}),$$
where $\SO_{\gamma_H}(h^p) = \prod_{v \neq p, \infty} \SO_{\gamma_H}(h_v)$, almost all of whose factors are $1$. The three factors are computed separately below.

Let $\mathcal{H} \in \Eell(G)$ and let $H$ denote the associated endoscopic group. Using the Fundamental Lemma, Kottwitz \cite{MR1044820} shows that there exists a function $h$ on $H(\mathbb{A})$, decomposing locally as a product $h^p h_p h_{\infty}$ of functions on $H(\mathbb{A}_f^p)$, $H(\mathbb{Q}_p)$ and $H(\mathbb{R})$ respectively, such that the following identity holds for every $(G,H)$-regular semisimple $\gamma_H \in H(\mathbb{Q})$ with $(\mathcal{H}, \gamma_H) \xrightarrow[]{} (\gamma_0, \kappa)$. The three factors, and the identities that characterize their stable orbital integrals, are recorded in Complement \ref{hconstruction}; the way they combine to give \eqref{final} is explained in Complement \ref{route72}.
\begin{equation}\label{endo}
    \SO_{\gamma_H}(h) = \tr\,\xi(\gamma_0) \sum_{(\gamma, \delta)} \langle \alpha(\gamma_0, \gamma, \delta), \kappa \rangle e(\gamma, \delta)\Or_{\gamma}(f^p) \TO_{\delta}(\phi_j) c_{\infty}.
\end{equation}
Here the sum $\sum_{(\gamma, \delta)}$ is over pairs $(\gamma, \delta)$ such that $(\gamma_0, \gamma, \delta)$ is a Kottwitz triple, taken up to the equivalence of Section \ref{lvformula} and, as in the prestabilization, not restricted to those with $\alpha(\gamma_0,\gamma,\delta) = 0$. The right-hand side is exactly the $\kappa$-orbital integral attached to $(\gamma_0, \kappa)$ in Section \ref{prestab}.

\begin{R}\label{signessential}
    Unlike prestabilization, which holds even without the sign term $e(\gamma, \delta)$ (see Remark \ref{sign}), equation \eqref{endo} does not hold without it. This sign is essential for the existence of a function $h$ on $H(\mathbb{A})$ whose stable orbital integrals match the $\kappa$-orbital integrals on the right.
\end{R}

\begin{R}\label{kappaisreached}
    Since $\gamma_0 \in G(\mathbb{Q})$ in the outermost sum of the prestabilization is $\mathbb{R}$-elliptic (\ref{ellipticdef}), it follows from Lemma 9.7 of \cite{MR858284} that for every $\kappa \in \mathfrak{K}(I_0)$ there exist $\mathcal{H} \in \Eell(G)$ and a $(G,H)$-regular $\gamma_H \in H(\mathbb{Q})$ with $(\mathcal{H},\gamma_H) \xrightarrow[]{} (\gamma_0, \kappa)$. Combining this with \eqref{endo}, the terms of the prestabilization can be replaced by stable orbital integrals.
\end{R}

\begin{itemize}
    \item $\iota(G,H) = \tau(G) \tau(H)^{-1} \mid \Aut(\mathcal{H})/H_{ad}(\mathbb{Q}) \mid^{-1}$, where $\tau$ denotes the Tamagawa number of Section \ref{prestab}.
\end{itemize}

This term accounts for the fibres of the map $(\mathcal{H}, \gamma_H) \mapsto (\gamma_0, \kappa)$. Two pairs $(\mathcal{H}, \gamma'_H)$ and $(\mathcal{H}, \gamma_H)$ map to the same $(\gamma_0, \kappa)$ if and only if there is an automorphism of $\mathcal{H}$ carrying the stable conjugacy class of $\gamma'_H$ to that of $\gamma_H$. An automorphism of $\mathcal{H}$ consists of an automorphism $\alpha: H \xrightarrow[]{} H$ over $\mathbb{Q}$ of the endoscopic group satisfying compatibility conditions with the remaining data of $\mathcal{H}$; the group $H_{ad}(\mathbb{Q})$ is a normal subgroup of $\Aut(\mathcal{H})$. Thus $\mid \Aut(\mathcal{H}) / H_{ad}(\mathbb{Q}) \mid$ is the number of stable conjugacy classes $\gamma'_H$ in $H(\mathbb{Q})$ with $(\mathcal{H}, \gamma'_H) \xrightarrow[]{} (\gamma_0, \kappa)$.

\begin{itemize}
    \item The inner sum $\sum_{\gamma_H}$ in \eqref{final} runs over a set of representatives for the $\mathbb{Q}$-elliptic semisimple stable conjugacy classes in $H(\mathbb{Q})$. Note that the range of this sum is not restricted to the $(G,H)$-regular classes, so the stable orbital integrals occurring in it are in general those of Complement \ref{SOgeneral} rather than the simplified ones above; it is only the terms that survive, by Remark \ref{SOvanishing}, for which the two definitions agree.
    \item $H_{\gamma_H}^0$ denotes the identity component of the centralizer of $\gamma_H$ in $H$; the order of the finite group $(H_{\gamma_H}/H_{\gamma_H}^0)(\mathbb{Q})$ depends only on the stable conjugacy class of $\gamma_H$, so that the summand is well defined.
\end{itemize}

\begin{R}\label{SOvanishing}
    For $\mathcal{H} \in \Eell(G)$, the function $h$ has the following additional property. One has $\SO_{\gamma_H}(h) = 0$ unless
    \begin{enumerate}
        \item[(i)] $\gamma_H$ is elliptic in $H(\mathbb{R})$,
        \item[(ii)] $\gamma_H$ is $(G,H)$-regular, and
        \item[(iii)] there exists a pair $(\gamma_0, \kappa)$ with $(\mathcal{H}, \gamma_H) \xrightarrow[]{} (\gamma_0, \kappa)$.
    \end{enumerate}
    Parts (i) and (iii) are proved by Kottwitz in \cite{MR1044820}; part (ii) is due to Morel; see \cite[Prop.~3.3.4, Rem.~3.3.5]{MR2567740}. Condition (i) matches the outermost sum of the prestabilization being taken over $\mathbb{R}$-elliptic $\gamma_0$: by $\S$ 7 of \cite{MR1044820}, $\gamma_0$ is elliptic in $G(\mathbb{R})$ if and only if $\gamma_H$ is elliptic in $H(\mathbb{R})$. Such a $\gamma_H$ is in particular $\mathbb{Q}$-elliptic, by the argument of Remark \ref{kappasimplification} applied to $H$, which is why the inner sum in \eqref{final} may be taken over $\mathbb{Q}$-elliptic stable conjugacy classes.
\end{R}

It follows from Remark \ref{SOvanishing} that the non-zero terms of the inner sum in \eqref{final} correspond to $(G,H)$-regular $\gamma_H$. For such $\gamma_H$ the centralizer $H_{\gamma_H}$ is connected, so $\mid (H_{\gamma_H}/H_{\gamma_H}^0)(\mathbb{Q}) \mid = 1$ and the factor may be omitted. It is retained in \eqref{final} for later comparison with the stabilized Arthur-Selberg trace formula, where the sum is over all $\mathbb{Q}$-elliptic stable conjugacy classes and the factor is not identically $1$.

\subsubsection*{Complements}

\begin{Rc}[Stable orbital integrals in general]\label{SOgeneral}
    The general definition is worth recording because it is the one that occurs in the elliptic part of the geometric side of the Arthur-Selberg trace formula, as stated in Section \ref{intro2}: there the sum defining $\ST_e$ runs over \emph{all} elliptic stable conjugacy classes in $H(F)$, with no $(G,H)$-regularity imposed, so the centralizers occurring need not be connected. For a semisimple element that is not assumed $(G,H)$-regular the centralizer may be disconnected, and the definition of the stable orbital integral carries a correction factor. Following $\S$ 5 of \cite{MR858284}, let $F$ be a local field, $H$ a connected reductive group over $F$ and $\gamma \in H(F)$ semisimple with $I = H_{\gamma}^0$. For a stable conjugate $\gamma'$ the group $I' = H_{\gamma'}^0$ is an inner twist of $I$, so a Haar measure $di$ on $I(F)$ determines one on $I'(F)$.
    \begin{itemize}
        \item \textbf{Measures for stable orbital integrals.} Recall the convention of Section \ref{stab}: the Haar measures on the centralizers of the stable conjugates are always taken compatible in the sense of $\S$ 1 of \cite{MR942522}.
    \end{itemize}
    With this understood, one sets
    $$\SO_{\gamma}(f) = \sum_{\gamma'} \mid \ker[H^1(F, I') \xrightarrow[]{} H^1(F, H_{\gamma'})] \mid \cdot \; e(I') \cdot \Or_{\gamma'}(f),$$
    the sum being over representatives for the $H(F)$-conjugacy classes in the stable conjugacy class of $\gamma$. The cardinality is $1$ exactly when $H_{\gamma'}$ is connected, which is automatic if $H^{der}$ is simply connected and, more to the point here, for $(G,H)$-regular elements; so the formula used above is the special case of this one. That $\SO_{\gamma}$ is a stable distribution is Conjecture 5.3 of \cite{MR858284}.
\end{Rc}

\begin{Rc}[Transfer, transfer factors and the fundamental lemma]\label{transferFL}
    The function $h$ is produced by transferring functions from $G$ to $H$. We collect the statements that give the terms in \eqref{endo} their meaning, following $\S\S$ 5--7 of \cite{MR858284}. Nothing is proved here. The following notation is used here and in the complements that follow.
    \begin{itemize}
        \item $(H,s,\eta)$ is an endoscopic triple for $G$ (\ref{endotriples}), and $\eta'$ denotes a fixed extension of $\eta : \Hat{H} \xrightarrow[]{} \Hat{G}$ to an $L$-homomorphism ${}^L H \xrightarrow[]{} {}^L G$.
        \item $\Delta$ denotes a transfer factor. Over a local field it is written $\Delta$, and in the global setting $\Delta_v$ denotes the factor at the place $v$ and $\Delta = \prod_v \Delta_v$ the adelic one.
        \item $\Or^{\kappa}_{\gamma}$ denotes the $\kappa$-orbital integral, defined below, which is the innermost sum of the prestabilization of Section \ref{prestab}.
    \end{itemize}

    \textbf{Transfer.} Let $F$ be a local field and let $G$ be a connected reductive group over $F$ with $G^{der}$ simply connected. An extension $\eta'$ as above exists because $G^{der}$ is simply connected \cite{MR540901}. Here, as in \cite{MR1044820}, the $L$-groups are formed with the Weil group $W_F$ rather than with $\Gamma_F$ as in Definition \ref{Lgroupdef}; this is what the existence statement requires. There is then a correspondence $f \mapsto f^H$ between functions $f \in \mathcal{C}_c^{\infty}(G(F))$ and $f^H \in \mathcal{C}_c^{\infty}(H(F))$ such that
    $$\SO_{\gamma_H}(f^H) = \sum_{\gamma} \Delta(\gamma_H, \gamma) \, e(G_{\gamma}) \, \Or_{\gamma}(f)$$
    for every $(G,H)$-regular semisimple $\gamma_H \in H(F)$, the sum being over the $G(F)$-conjugacy classes of semisimple $\gamma \in G(F)$ related to $\gamma_H$ in the sense of \ref{relatedelementsdef}. If no element of $G(F)$ is related to $\gamma_H$ the sum is empty and the right-hand side is $0$.
    \begin{itemize}
        \item \textbf{Measures for matching orbital integrals.} In an identity of this kind the left-hand side is formed with a Haar measure on $H_{\gamma_H}^0(F)$ and the right-hand side with one on $G_{\gamma}(F)$. These two groups are inner forms of one another precisely because $\gamma_H$ is $(G,H)$-regular (\ref{GHregular}), and the two measures are always taken compatible in the sense of $\S$ 1 of \cite{MR942522}. Otherwise the two sides could be scaled independently and the identity would say nothing.
    \end{itemize}

    The force of this statement is easily mistaken. The quantifiers run: $\Delta$ is fixed first, independently of any test function, and $f^H$ is then required to work at every $\gamma_H$ at once. So what is asserted is that the function of $\gamma_H$ given by the right-hand side, which $f$ and $\Delta$ determine completely, lies in the image of $f^H \mapsto (\gamma_H \mapsto \SO_{\gamma_H}(f^H))$.

    \textbf{Transfer factors.} The complex numbers $\Delta(\gamma_H, \gamma)$ are the \textit{transfer factors}. Their first property involves the following invariant. For stably conjugate $\gamma, \gamma' \in G(F)$, choose $g \in G(\overline{F})$ with $g \gamma g^{-1} = \gamma'$; then $\tau \mapsto g^{-1}\tau(g)$ is a $1$-cocycle of $\Gamma_F$ with values in $I = G_{\gamma}$, and its class $\inv(\gamma,\gamma') \in H^1(F,I)$ lies in $\ker[H^1(F,I) \to H^1(F,G)]$, hence by \ref{conjclassinsidestable} is a character of $\pi_0(Z(\Hat{I})^{\Gamma_F})$. This is the construction of the local invariants of Section \ref{lvformula}: for a finite place $l \neq p$ one has $\alpha_l = \inv(\gamma_0, \gamma_l)$. Two properties of $\Delta$ are used repeatedly.
    \begin{itemize}
        \item[(D1)] For $\gamma'$ stably conjugate to $\gamma$, $\Delta(\gamma_H, \gamma') = \Delta(\gamma_H, \gamma) \langle \inv(\gamma, \gamma'), \kappa \rangle$, where $\kappa$ is the image of $s$ under $Z(\Hat{H}) \hookrightarrow Z(\Hat{I_H}) \xrightarrow[]{} Z(\Hat{I})$ and $I_H = H_{\gamma_H}^0$.
        \item[(D2)] $\Delta(\cdot,\cdot)$ is not canonically attached to $(H,s,\eta')$: its construction in \cite{MR909227} depends on further auxiliary data, and altering them multiplies $\Delta$ by a scalar $c \in \mathbb{C}^{\times}$. To \textit{normalize} the transfer factors means to fix such data. Replacing $\Delta$ by $c\Delta$ replaces $f^H$ by $cf^H$, so the statement above is insensitive to the choice.
    \end{itemize}
    Property (D2) is harmless locally, since each local statement is insensitive to the scalar, but not globally: an adelic product $\prod_v \Delta_v$ is altered by rescaling a single factor, and nothing so far forces almost all factors to equal $1$.

    Using (D1), the transfer identity may be rewritten in terms of a single $\gamma$ related to $\gamma_H$:
    $$\SO_{\gamma_H}(f^H) = \Delta(\gamma_H, \gamma) \, \Or^{\kappa}_{\gamma}(f), \qquad \Or^{\kappa}_{\gamma}(f) = \sum_{\gamma'} \langle \inv(\gamma, \gamma'), \kappa \rangle \, e(G_{\gamma'}) \, \Or_{\gamma'}(f),$$
    the right-hand side being the \textit{$\kappa$-orbital integral} referred to in Section \ref{prestab}. Both the correspondence and the transfer factors depend on the choice of $\eta'$, and not only on $(H,s,\eta)$.

    \textbf{The fundamental lemma for the unit element.} Let $F$ be $p$-adic, let $G$ and $H$ be unramified, and assume that $\eta'$ is unramified, that is, trivial on the inertia subgroup; such a datum is called an unramified endoscopic datum (Complement \ref{satakeFL}). Let $K \subset G(F)$ and $K_H \subset H(F)$ be hyperspecial maximal compact subgroups. For the unramified normalization of the transfer factors, $\mathbf{1}_{K_H}$ is a transfer of $\mathbf{1}_K$:
    $$\SO_{\gamma_H}(\mathbf{1}_{K_H}) = \Delta(\gamma_H, \gamma) \, \Or^{\kappa}_{\gamma}(\mathbf{1}_K).$$
    Here the \textit{unramified normalization} means that the auxiliary data of (D2) are chosen compatibly with the integral structures on $G$ and $H$ underlying $K$ and $K_H$. Note that both functions are now prescribed, so that unlike the statement above there is no existential quantifier left: this is a bare identity between two explicitly given functions of $\gamma_H$. This is the fundamental lemma; see \cite{MR3051198} for its statement and history and \cite{MR2653248} for the proof.
\end{Rc}

\begin{Rc}[The Satake isomorphism and the unramified fundamental lemma]\label{satakeFL}
    At the prime $p$ the transfer of Complement \ref{transferFL} takes an explicit form: the transferred function is not merely asserted to exist, but is written down by means of the Satake isomorphism. We use the Hecke algebra conventions of Section \ref{detailssetup}.

    \textbf{The algebra $\mathcal{O}(\Hat{G}\sigma)^{\Hat{G}}$.} Let $\mathcal{G}$ be the reductive model over $\mathbb{Z}_p$ fixed in Section \ref{newtonstratdetails} and put $K_p=\mathcal{G}(\mathbb{Z}_p)$, a hyperspecial maximal compact subgroup of $G(\mathbb{Q}_p)$; normalize the Haar measure by $\vol(K_p)=1$. Since $p$ is good for $G$, the group $G_{\mathbb{Q}_p}$ is unramified, so the $L$-action of $\Gamma_{\mathbb{Q}_p}$ on $\Hat{G}$ factors through $\Gal(E/\mathbb{Q}_p)$ for some finite unramified extension $E/\mathbb{Q}_p$. Because $E/\mathbb{Q}_p$ is unramified, reduction identifies $\Gal(E/\mathbb{Q}_p)$ with the Galois group of the residue field extension, so it is cyclic with a canonical generator $\sigma$, the arithmetic Frobenius, characterized by inducing $x\mapsto x^p$ on the residue field of $E$; this is the same convention as for the Frobenius of $\mathbb{Q}_{p^j}$ over $\mathbb{Q}_p$ in Section \ref{mainproof}. The $L$-group ${}^L G = \Hat{G}\rtimes\Gamma_{\mathbb{Q}_p}$ of Definition \ref{Lgroupdef} therefore surjects onto the complex linear algebraic group
    $${}^L G_E = \Hat{G}\rtimes\Gal(E/\mathbb{Q}_p),$$
    whose identity component is $\Hat{G}$ and whose group of connected components is $\Gal(E/\mathbb{Q}_p)$. Let
    $$\Hat{G}\sigma \subset {}^L G_E$$
    denote the connected component lying over $\sigma$; passing to the finite form is what makes this well defined, a Frobenius element of $\Gamma_{\mathbb{Q}_p}$ or of $W_{\mathbb{Q}_p}$ being determined only modulo inertia. It is not a subgroup. The map $g\mapsto g\sigma$ is an isomorphism of varieties $\Hat{G}\xrightarrow{\sim}\Hat{G}\sigma$, so $\Hat{G}\sigma$ is a smooth affine variety and has a coordinate ring $\mathcal{O}(\Hat{G}\sigma)$. The group $\Hat{G}$ acts on $\Hat{G}\sigma$ by conjugation,
    $$x\cdot(g\sigma)=x(g\sigma)x^{-1}=\bigl(x\,g\,\sigma(x)^{-1}\bigr)\sigma, \qquad x,g\in\Hat{G},$$
    which under the above isomorphism is $\sigma$-twisted conjugation on $\Hat{G}$. We write $\mathcal{O}(\Hat{G}\sigma)^{\Hat{G}}$ for the algebra of regular functions on $\Hat{G}\sigma$ invariant under this action. Only the automorphism of $\Hat{G}$ induced by $\sigma$ enters, so the construction is independent of the choice of $E$.

    \textbf{The Satake isomorphism.} There is a canonical isomorphism of $\mathbb{C}$-algebras
    $$\mathcal{H}(G(\mathbb{Q}_p)//K_p)\xrightarrow{\sim}\mathcal{O}(\Hat{G}\sigma)^{\Hat{G}},$$
    for which we refer to $\S\S$ 6.6--7.1 of \cite{MR546608}. Under it the characters of the Hecke algebra correspond to the semisimple $\Hat{G}$-conjugacy classes in $\Hat{G}\sigma$.

    \textbf{Unramified endoscopic data.} An endoscopic datum $(H,s,\eta)$ for $G$ (\ref{endodef}, \ref{endotriples}) is \textit{unramified} if $H$ is unramified over $\mathbb{Q}_p$ and $\eta:\Hat{H}\xrightarrow[]{}\Hat{G}$ admits an $L$-extension $\eta'$ inflated from a homomorphism ${}^L H_E\xrightarrow[]{}{}^L G_E$ of the finite forms above, after enlarging $E$ to a common unramified splitting extension if necessary.

    Being inflated is a genuine condition on $\eta'$, and not a consequence of the $L$-actions being unramified. Let $I_p$ denote the inertia subgroup. An $L$-homomorphism lies over the group used to form the $L$-groups, so for $\tau\in I_p$ one has $\eta'(\tau)=z_{\tau}\tau$ for some $z_{\tau}\in\Hat{G}$; that $\eta'$ is a homomorphism compatible with the $L$-actions forces $z_{\tau}$ to centralize $\eta(\Hat{H})$, but does not force $z_{\tau}=1$. Thus the triviality of the action of $I_p$ on $\Hat{G}$ and on $\Hat{H}$ does not imply inflation. Inflation is the additional requirement that $z_{\tau}=1$ for every $\tau\in I_p$, that is, $\eta'(\tau)=\tau$; equivalently, that $\eta'$ factors through ${}^L H_E\xrightarrow[]{}{}^L G_E$. Such an extension exists locally at $p$ \cite[p.~180]{MR1044820}; fix one. Restricting to unramified data costs nothing: by Proposition 7.5 of \cite{MR858284}, if $H$ is not unramified then $\Or^{\kappa}_{\gamma}(f)=0$ for every $f\in\mathcal{H}(G(\mathbb{Q}_p)//K_p)$.

    \textbf{Transfer between unramified Hecke algebras.} Fix a hyperspecial maximal compact subgroup $K_H\subset H(\mathbb{Q}_p)$ and normalize $\vol(K_H)=1$. Being inflated from ${}^L H_E\xrightarrow[]{}{}^L G_E$, the map $\eta'$ carries $\Hat{H}\sigma$ into $\Hat{G}\sigma$, and $\eta'(hxh^{-1})=\eta(h)\eta'(x)\eta(h)^{-1}$ for $h\in\Hat{H}$. Pullback of functions along $\eta'$ therefore gives a homomorphism $\mathcal{O}(\Hat{G}\sigma)^{\Hat{G}}\xrightarrow[]{}\mathcal{O}(\Hat{H}\sigma)^{\Hat{H}}$, and through the two Satake isomorphisms a homomorphism
    $$b_{\eta'}:\mathcal{H}(G(\mathbb{Q}_p)//K_p)\xrightarrow[]{}\mathcal{H}(H(\mathbb{Q}_p)//K_H).$$
    Being a pullback of regular functions, $b_{\eta'}$ is a ring homomorphism and carries $\mathbf{1}_{K_p}$ to $\mathbf{1}_{K_H}$. It depends on the chosen $L$-extension $\eta'$, in keeping with Complement \ref{transferFL}.

    \textbf{The fundamental lemma in the unramified case.} Use the unramified normalization of the transfer factors from Complement \ref{transferFL} and compatible measures on matching centralizers, as in Section \ref{lvformula}. The assertion is that $b_{\eta'}(f)$ is a transfer of $f$ for \emph{every} element of the Hecke algebra, and not only for the unit element: for every $(G,H)$-regular semisimple $\gamma_H\in H(\mathbb{Q}_p)$ (\ref{GH-regular}),
    $$\SO_{\gamma_H}(b_{\eta'}(f))=\Delta(\gamma_H,\gamma)\,\Or^{\kappa}_{\gamma}(f), \qquad f\in\mathcal{H}(G(\mathbb{Q}_p)//K_p),$$
    where $\gamma\in G(\mathbb{Q}_p)$ is related to $\gamma_H$ (\ref{relatedelementsdef}) and $\kappa$ is attached to $s$ as in Complement \ref{transferFL}. Taking $f=\mathbf{1}_{K_p}$ recovers the fundamental lemma for the unit element of Complement \ref{transferFL}, since $b_{\eta'}(\mathbf{1}_{K_p})=\mathbf{1}_{K_H}$; conversely, the statement for the whole Hecke algebra follows from that special case, by \cite{MR1350645}.
\end{Rc}

\begin{Rc}[Global transfer factors]\label{globaltransfer}
    The transfer statements of Complement \ref{transferFL} are local. Section \ref{stab} requires a global input, and we record it here. Let $F$ be a number field and assume that the local transfer statement of Complement \ref{transferFL} holds at every place $v$ of $F$, with local transfer factors $\Delta_v$. Let $\gamma_H \in H(F)$ be $(G,H)$-regular semisimple, and say that $\gamma \in G(\mathbb{A})$ \textit{comes from} $\gamma_H$ if for every place $v$ the component $\gamma_v \in G(F_v)$ is related to $\gamma_H$ over $F_v$ in the sense of \ref{relatedelementsdef}. Fix an inner twisting $\psi : G_0 \xrightarrow[]{} G$ with $G_0$ quasi-split. Since $G_0$ is quasi-split and $G^{der}$ is simply connected, the $\Gamma_F$-stable semisimple conjugacy class in $G_0(\overline{F})$ determined by $\gamma_H$ contains an element $\gamma_0 \in G_0(F)$ (\ref{conjugacyclassdefoverFlemma}), so that $\gamma_H$ and $\gamma_0$ are related in the sense of \ref{relatedelementsdef}. Let $I_0$ be the centralizer of $\gamma_0$ in $G_0$ and let $\kappa \in \mathfrak{K}(I_0/F)$ be the image of $s$ under $Z(\Hat{H}) \hookrightarrow Z(\Hat{I_H}) \xrightarrow[]{} Z(\Hat{I_0})$. A $\gamma$ coming from $\gamma_H$ is then conjugate to $\psi(\gamma_0)$ under $G(\overline{\mathbb{A}})$, and there is a class
    $$\obs(\gamma) \in \mathfrak{K}(I_0/F)^D,$$
    constructed in 6.5 of \cite{MR858284} out of the local relative positions of $\gamma_v$ and $\psi(\gamma_0)$. It satisfies $\obs(\gamma') = \obs(\gamma) \cdot \inv(\gamma,\gamma')$, and it is trivial precisely when the $G(\mathbb{A})$-conjugacy class of $\gamma$ contains an element of $G(F)$.

    With this notation, the local transfer factors admit a normalization for which
    \begin{itemize}
        \item[(a)] all but finitely many of the $\Delta_v(\gamma_H, \gamma)$ equal $1$, so that $\Delta(\gamma_H,\gamma) := \prod_v \Delta_v(\gamma_H,\gamma)$ is defined, and
        \item[(b)] $\Delta(\gamma_H, \gamma) = \langle \obs(\gamma), \kappa \rangle$.
    \end{itemize}
    The scalar ambiguity of (D2) is thereby fixed globally, once and for all, so that the product in (a) is defined and the identity in (b) holds. The statement is used in the form (b) in $\S$ 7 of \cite{MR1044820}.

    The two statements above are 5.5 and 6.10 of \cite{MR858284}, where they are conjectural; both are now known. Transfer factors were constructed by Langlands and Shelstad \cite{MR909227}, and the fundamental lemma is a theorem of Ng\^o \cite{MR2653248}, from which the transfer statement follows; see \cite{MR3051198}.
\end{Rc}

\begin{Rc}[The three local components of $h$]\label{hconstruction}
    We record the identities satisfied by the three factors of $h = h^p h_p h_{\infty}$, following $\S$ 7 of \cite{MR1044820}. Throughout we use the adelic normalization $\Delta=\prod_v\Delta_v$ of Complement \ref{globaltransfer}, choosing its local representatives so that the usual untwisted factor $\Delta_p$ has the unramified normalization of Complement \ref{satakeFL}. Changing this local normalization by a scalar changes $h_p$ by the same scalar. Only the stable orbital integrals of the three factors are used and are canonical; for the constructions we refer to \textit{loc.\ cit.} Throughout, $\mathcal{H} = (H,s,\eta)$ is an endoscopic triple and $\eta':{}^L H \xrightarrow[]{} {}^L G$ is a fixed $L$-extension of $\eta$. Recall that $h$ is defined to be $0$ unless $H$ is unramified at $p$ and the elliptic maximal tori of $G_{\mathbb{R}}$ come from $H_{\mathbb{R}}$.

    \textbf{The factor $h^p$.} Write $\Delta^p = \prod_{v \neq p, \infty} \Delta_v$ for the product of the local transfer factors away from $p$ and $\infty$, in the normalization of Complement \ref{globaltransfer}. There is a function $h^p \in \mathcal{C}_c^{\infty}(H(\mathbb{A}_f^p))$ such that
    $$\SO_{\gamma_H}(h^p) = \sum_{\gamma} \Delta^p(\gamma_H, \gamma) \cdot e^p(\gamma) \cdot \Or_{\gamma}(f^p) \eqno{(7.1)}$$
    for every $(G,H)$-regular semisimple $\gamma_H \in H(\mathbb{A}_f^p)$, where this means componentwise regularity and regular reduction at almost all places. The sum is over the $G(\mathbb{A}_f^p)$-conjugacy classes of semisimple $\gamma \in G(\mathbb{A}_f^p)$ coming from $\gamma_H$, and $e^p(\gamma) = \prod_{v \neq p, \infty} e(G_{\gamma_v})$ is the product of the signs of Remark \ref{sign} attached to the centralizers of the components of $\gamma$. The global normalization of Complement \ref{globaltransfer} makes $\Delta^p$ well defined, while local transfer and the fundamental lemma for the unit element at almost all places give $h^p$; the sum has finite support by Proposition 7.1 of \cite{MR858284}.

    \textbf{The factor $h_p$.} Put $F=\mathbb{Q}_{p^j}$ and $K_F=\mathcal{G}(\mathbb{Z}_{p^j})$, and let $R = \Res_{F/\mathbb{Q}_p}(G)$ and $\theta = \sigma$ be as in \ref{twistedconjsection}. For a $(G,H)$-regular semisimple $\gamma_H \in H(\mathbb{Q}_p)$, choose a semisimple $\gamma_0 \in G(\mathbb{Q}_p)$ coming from $\gamma_H$, which is possible because $G_{\mathbb{Q}_p}$ is quasi-split, and put $I_0=G_{\gamma_0}$. The sum below is over representatives for the $\sigma$-conjugacy classes of $\delta \in G(F)$ for which the norm $N\delta$ (\ref{normmap}) is conjugate to $\gamma_0$ under $G(\overline{\mathbb{Q}}_p)$. For such a $\delta$, write $I_{\delta}^{\sigma}=\{x\in R \mid x\delta\theta(x^{-1})=\delta\}$ for its $\sigma$-centralizer (the group denoted $I_{s\delta}$ in \ref{sigmacentralizerdef}); this is the group $I(p)$ of Section \ref{lvformula}.
    The twisted orbital integral $\TO_{\delta}(\phi_j)$ vanishes unless the image of the $\sigma$-conjugacy class of $\delta$ under $B(G_{\mathbb{Q}_p}) \xrightarrow[]{} X^*(Z(\Hat{G})^{\Gamma_{\mathbb{Q}_p}})$ equals the restriction of $-\mu_h^{\natural}$, in the notation of Remark \ref{BGmudef}. For such $\delta$, the character $\alpha(\gamma_0;\delta)$ extends to a character $\beta(\gamma_0;\delta)$ of $Z(\Hat{I_0})^{\Gamma_{\mathbb{Q}_p}}Z(\Hat{G})$ by declaring it to be $-\mu_h^{\natural}$ on $Z(\Hat{G})$, as in Section \ref{lvformula}. The transfer theorem used in $\S$ 7 of \cite{MR1044820}, which we take as a black box, gives a function $h_p \in \mathcal{C}_c^{\infty}(H(\mathbb{Q}_p))$ such that
    $$\SO_{\gamma_H}(h_p) = \sum_{\delta} \langle \beta(\gamma_0;\delta), s \rangle \cdot \Delta_p(\gamma_H, \gamma_0) \cdot e(I_{\delta}^{\sigma}) \cdot \TO_{\delta}(\phi_j). \eqno{(7.3)}$$
    Passing from $\alpha$ to $\beta$ makes the pairing meaningful for a general $s \in Z(\Hat{H})^{\Gamma_{\mathbb{Q}_p}}Z(\Hat{G})$.

    We now describe $h_p$. Write $s=s_0z$ with $s_0\in Z(\Hat{H})^{\Gamma_{\mathbb{Q}_p}}$ and $z\in Z(\Hat{G})$. From $\eta':{}^L H\xrightarrow[]{}{}^L G$ and the invariant representative $s_0$, Kottwitz constructs an \textit{allowed} $L$-embedding
    $$\widetilde{\eta}:{}^L H\xrightarrow[]{}{}^L R.$$
    Roughly, if $\Hat{R}\simeq\Hat{G}^j$ and $\Hat{\theta}$ cyclically permutes the factors, then $\widetilde{\eta}(\Hat{H})$ is the identity component of the $\Hat{\theta}$-twisted centralizer of a suitable semisimple $t\in\Hat{R}$. We refer to pages 179--180 of \cite{MR1044820} for the construction. Equation (7.3) shows that the passage from $s_0$ to $s=s_0z$ multiplies the required stable orbital integrals by $\mu_h^{\natural}(z)^{-1}$, so we scale $h_p$ by this factor \cite[p.~181]{MR1044820}.

    Assume next that $\eta'$ is unramified in the sense of Complement \ref{satakeFL}; then $\widetilde{\eta}$ is unramified and induces a map of spherical Hecke algebras. For an arbitrary $\eta'$, choose an unramified reference extension $\eta'_{\mathrm{ur}}$. Their difference in $H^1(W_{\mathbb{Q}_p},Z(\Hat{H}))$ determines a quasi-character $\chi$ of $H(\mathbb{Q}_p)$, and $h_p=\chi h_{p,\mathrm{ur}}$ \cite[p.~181]{MR1044820}. It therefore remains to describe $h_{p,\mathrm{ur}}$; below $\widetilde{\eta}$ denotes the allowed embedding constructed from $s_0$ and $\eta'_{\mathrm{ur}}$.

    Under $R(\mathbb{Q}_p)=G(F)$ put $K_R=K_F$. The Satake isomorphism of Complement \ref{satakeFL} gives
    $$\mathcal{H}(R(\mathbb{Q}_p)//K_R)=\mathcal{H}(G(F)//K_F)\xrightarrow{\sim}\mathcal{O}(\Hat{R}\sigma)^{\Hat{R}}\simeq\mathcal{O}(\Hat{G}\sigma^j)^{\Hat{G}}.$$
    Restriction of unramified parameters is the norm map
    $$N_j^{\vee}:(\Hat{G}\sigma)//\Hat{G}\xrightarrow[]{}(\Hat{G}\sigma^j)//\Hat{G}, \qquad [g\sigma]\longmapsto[(g\sigma)^j],$$
    whose pullback gives the ordinary base-change homomorphism
    $$b_{F/\mathbb{Q}_p}:\mathcal{H}(R(\mathbb{Q}_p)//K_R)\xrightarrow[]{}\mathcal{H}(G(\mathbb{Q}_p)//K_p).$$
    This is dual to the norm $N\delta=\delta\sigma(\delta)\cdots\sigma^{j-1}(\delta)$ of \ref{normmap}; equivalently, it comes from the diagonal $L$-homomorphism ${}^L G\xrightarrow[]{}{}^L R$ associated with the restriction of scalars in \ref{restrictionofscalars}.

    The allowed embedding induces
    $$\widetilde{\eta}_{\sigma}:(\Hat{H}\sigma)//\Hat{H}\xrightarrow[]{}(\Hat{R}\sigma)//\Hat{R},$$
    and pullback along this map, through the Satake isomorphisms, defines
    $$b_{\widetilde{\eta}}:\mathcal{H}(R(\mathbb{Q}_p)//K_R)\xrightarrow[]{}\mathcal{H}(H(\mathbb{Q}_p)//K_H).$$
    For the trivial endoscopic datum $H=G$, $s_0=1$, the allowed embedding $\widetilde{\eta}:{}^L G\xrightarrow[]{}{}^L R$ may be taken to be the diagonal one, and then $b_{\widetilde{\eta}}=b_{F/\mathbb{Q}_p}$. Thus $b_{\widetilde{\eta}}$ for general $H$ is a generalization of ordinary unramified base change, not a composite through $\mathcal{H}(G(\mathbb{Q}_p)//K_p)$. The spherical function attached to $s_0$ and $\eta'_{\mathrm{ur}}$ is
    $$h_{p,\mathrm{ur}}=b_{\widetilde{\eta}}(\phi_j).$$
    Thus $h_{p,\mathrm{ur}}$ is the unramified transfer of $\phi_j\in\mathcal{H}(R(\mathbb{Q}_p)//K_R)$ for the datum defined by $\widetilde{\eta}$. The fundamental lemma for this transfer is the black box giving (7.3); see \cite[\S\S 7.4.12--7.4.18]{kisin2021stabletraceformulashimura}. For the construction of $\phi_j$ and its twisted orbital integrals, see \cite{MR761308}. For the original $s=s_0z$ and $L$-extension $\eta'$ this gives
    $$h_p=\chi\,\mu_h^{\natural}(z)^{-1}b_{\widetilde{\eta}}(\phi_j).$$

    \textbf{The factor $h_{\infty}$.} The function $h_{\infty}$ is compactly supported modulo $A_G(\mathbb{R})^0$ and is built as a linear combination of pseudo-coefficients for the discrete series representations of $H(\mathbb{R})$ having the same infinitesimal and central characters as the contragredient of $\xi$; see p.~185 of \cite{MR1044820}. This is the only place in the argument where the local system is used, and for $\xi$ trivial it is the packet attached to the trivial representation. Its stable orbital integrals are prescribed as follows: $\SO_{\gamma_H}(h_{\infty}) = 0$ unless $\gamma_H$ is elliptic in $H(\mathbb{R})$, in which case
    $$\SO_{\gamma_H}(h_{\infty}) = \langle \beta(\gamma_0), s \rangle \cdot \Delta_{\infty}(\gamma_H, \gamma_0) \cdot e(I(\infty)) \cdot \vol^{-1} \cdot \tr\,\xi(\gamma_0). \eqno{(7.4)}$$
    Here $\gamma_0 \in T(\mathbb{R})$ comes from $\gamma_H$ for an elliptic maximal torus $T$ of $G_{\mathbb{R}}$, $I_0$ is its centralizer, and $\beta(\gamma_0) \in X^*(Z(\Hat{I_0})^{\Gamma_{\mathbb{R}}})$ is formed as $\beta_{\infty}$ was in Section \ref{lvformula}, using $\mu_h$ for an $h \in X_{\infty}$ factoring through $T$; the pairing with $s$ uses the canonical embedding $Z(\Hat{H}) \xrightarrow[]{} Z(\Hat{I_0})$. The product $\langle \beta(\gamma_0), s\rangle \Delta_{\infty}(\gamma_H,\gamma_0)$ does not depend on the choice of $\gamma_0$. The group $I(\infty)$ is the inner form of $I_0$ with $I(\infty)/Z(G)$ anisotropic over $\mathbb{R}$, as in Section \ref{lvformula}; by (\HypTwo{}) this is equivalent to $I(\infty)/A_G$ being anisotropic over $\mathbb{R}$ (Appendix \ref{hyp2appendix}), and $\vol$ abbreviates $\vol(A_G(\mathbb{R})^0 \backslash I(\infty)(\mathbb{R}))$, so that $\vol^{-1}=c_{\infty}$ of Section \ref{prestab}. Finally, the central characters of the representations occurring in $h_{\infty}$ all restrict to the same quasi-character of $A_G(\mathbb{R})^0$, determined by $\xi$; accordingly the stable trace formula for $H$ is taken with the corresponding central character datum, which is trivial exactly when $\xi$ is. See Appendix \ref{hyp2appendix}.
\end{Rc}

\begin{Rc}[From the local identities to the stabilized formula]\label{route72}
    We indicate how the three identities of Complement \ref{hconstruction} combine, following pages 188--189 of \cite{MR1044820}. The main point is the termwise identity \eqref{endo}: for a fixed $(\mathcal H,\gamma_H)\to(\gamma_0,\kappa)$, the stable orbital integral of $h$ is the corresponding $\kappa$-orbital sum over Kottwitz pairs $(\gamma,\delta)$. Once this is established, the passage to \eqref{final} is a matter of reindexing the sum and accounting for the fibres of $(\mathcal H,\gamma_H)\mapsto(\gamma_0,\kappa)$.

    \textbf{The function $h$ and its stable orbital integrals.} Set $h = h^p h_p h_{\infty}$. With compatible choices, replacing $s$ by $sz$ for $z \in Z(\Hat{G})$ multiplies the stable orbital integrals of $h_p$ and $h_{\infty}$ by $\mu_h^{\natural}(z)^{-1}$ and $\mu_h^{\natural}(z)$ respectively, so their product is unchanged. We use throughout the adelic transfer factor attached to the fixed $L$-extension $\eta'$ of Complement \ref{hconstruction}; only the resulting stable orbital integrals of $h$ will be used.

    \textbf{Which $\gamma_H$ contribute.} Let $\gamma_H \in H(\mathbb{Q})$ be $(G,H)$-regular semisimple. Then $\SO_{\gamma_H}(h) = 0$ unless
    \begin{enumerate}
        \item[(i)] $\gamma_H$ is elliptic in $H(\mathbb{R})$, and
        \item[(ii)] $\gamma_H$ appears in $G(\mathbb{Q}_v)$ for every place $v$ of $\mathbb{Q}$.
    \end{enumerate}
    At $p$ this local matching is automatic because $G$ is quasi-split over $\mathbb{Q}_p$ and $G^{der}$ is simply connected \cite[Theorem 4.1]{MR683003}; at $\infty$ it follows from (i) and the condition on elliptic maximal tori imposed in the definition of $h$. At the remaining places it is enforced by the local transfer in (7.1).

    \textbf{Descent to $\mathbb{Q}$.} Conditions (i) and (ii) imply that $\gamma_H$ has a matching element $\gamma_0\in G(\mathbb{Q})$. This is proved in \cite[p.~188]{MR1044820}: Theorem 4.1 of \cite{MR683003} supplies a rational representative in the quasi-split inner form, the obstruction of \cite[\S6.5]{MR858284} can be killed by changing one finite local component within its stable class, and \cite[Theorem 6.6]{MR858284} then gives the required rational element of $G$. We use only this conclusion.

    \textbf{Assembling the three identities.} Fix such a $\gamma_0$; it is elliptic in $G(\mathbb{R})$. Since $\obs(\gamma_0)=1$, Complement \ref{globaltransfer} gives $\Delta(\gamma_H,\gamma_0)=1$. On comparing the local representatives in (7.1), (7.3) and (7.4) with $\gamma_0$, the variation of the local transfer factors, together with the characters $\beta(\gamma_0;\delta)$ and $\beta(\gamma_0)$, gives $\langle \alpha(\gamma_0,\gamma,\delta),s\rangle$. The image of $s$ in $\mathfrak{K}(I_0)$ is the element $\kappa$ attached to $(\mathcal H,\gamma_H)$, so this pairing equals $\langle \alpha(\gamma_0,\gamma,\delta),\kappa\rangle$. Moreover,
    $$e^p(\gamma)e(I_\delta^{\sigma})e(I(\infty))=e(\gamma,\delta), \qquad \vol^{-1}=c_\infty.$$
    Multiplying the three identities therefore gives
    $$\SO_{\gamma_H}(h) = \tr\,\xi(\gamma_0) \sum_{(\gamma, \delta)} \langle \alpha(\gamma_0, \gamma, \delta), \kappa \rangle \, e(\gamma,\delta) \, \Or_{\gamma}(f^p) \, \TO_{\delta}(\phi_j) \, c_{\infty},$$
    which is \eqref{endo}.

    \textbf{Comparison with the prestabilization.} Remark \ref{kappaisreached} shows that every pair $(\gamma_0,\kappa)$ in the prestabilized expression arises from some $(\mathcal H,\gamma_H)$. By Lemma 9.7 of \cite{MR858284}, the fibres have cardinality
    $$\mid\Aut(\mathcal H)/H_{ad}(\mathbb{Q})\mid,$$
    and
    $$\iota(G,H)\tau(H)=\tau(G)\mid\Aut(\mathcal H)/H_{ad}(\mathbb{Q})\mid^{-1}.$$
    Thus the automorphism factor corrects the fiber multiplicity and the remaining factor is the $\tau(G)$ of the prestabilization. The required twisted sum at $p$ vanishes when $H$ is ramified, by \cite[Proposition 7.4.16]{kisin2021stabletraceformulashimura}. Finally, Remark \ref{SOvanishing} allows the sum to be written over all $\mathbb{Q}$-elliptic stable classes in $H(\mathbb{Q})$: only the $(G,H)$-regular classes survive, for which $H_{\gamma_H}$ is connected. Retaining the component-group factor in the stable trace formula notation gives exactly \eqref{final}, hence Theorem 7.2 of \cite{MR1044820}.

    This separation is what will be used in Section \ref{completionmainproof}. There the Newton condition is imposed on the $\delta$-terms in the termwise identity \eqref{endo} by modifying the factor $h_p$. The descent and fibre-counting arguments used to pass from \eqref{endo} to \eqref{final} are unaffected, so the same comparison gives the stabilized formula for a fixed Newton stratum.
\end{Rc}

\subsection{Completion of the proof}\label{completionmainproof}

\textbf{The Newton restriction on the endoscopic side.}
We now combine the prestabilization for the Newton stratum in \eqref{prestabnewton} with the stabilization of Section \ref{stab}. Fix an elliptic endoscopic triple $\mathcal H=(H,s,\eta)$ and a $(G,H)$-regular semisimple $\gamma_H\in H(\mathbb{Q})$ with $(\mathcal H,\gamma_H)\to(\gamma_0,\kappa)$. For a pair $(\gamma,\delta)$ occurring on the right-hand side of \eqref{endo}, the stable norm $\mathfrak N\delta$ is the stable conjugacy class of $\gamma_0$. Here $N\delta$ is conjugate to $\gamma_0$ under $G(\overline{\mathbb{Q}}_p)$ by condition (2) of the Kottwitz triple, hence is semisimple, so Lemma \ref{normisolem2} applies. That lemma, with $[\mathbb{Q}_{p^j}:\mathbb{Q}_p]=j$, gives the equality $\nu_G(\mathfrak N \delta) = j\,\nu_G(\delta)$ of slope morphisms, and passing to Newton points
$$
    \overline\nu_G(\gamma_0)=j\,\overline\nu_G(\delta).
$$
The Kottwitz point of every $\delta$ for which $\TO_\delta(\phi_j)$ is non-zero is the restriction of $-\mu_h^{\natural}$, by Complement \ref{hconstruction}; and $\kappa_G$ is constant on $B(G_{\mathbb{Q}_p},\mu_h^{-1})$ with that same value, by Complement \ref{BGmudef}. So the Kottwitz points of $\delta$ and of $b$ agree. That the two constants coincide, rather than differing by a sign, is exactly the covariant normalisation fixed in Convention \ref{newtonnormalisation}. Hence Lemma \ref{Newonkottwitzcharisoc} shows that
\begin{equation}\label{newtonrestrictiondelta}
    \delta\mapsto b
    \quad\Longleftrightarrow\quad
    \overline\nu_G(\gamma_0)=j\,\overline\nu_G(b)
\end{equation}
for the terms occurring on the right-hand side of \eqref{endo}. Indeed, the forward implication follows from the displayed norm formula. Conversely, the equality on the right of \eqref{newtonrestrictiondelta} gives $\overline\nu_G(\delta)=\overline\nu_G(b)$, and the Kottwitz points are already equal, so the two classes in $B(G_{\mathbb{Q}_p})$ coincide.

Since $\gamma_H$ and $\gamma_0$ are related by an admissible embedding, the construction of the endoscopic Newton map in Lemma \ref{endoscopicnewtonmap}, together with Lemma \ref{isocrystalstableconj}, gives
$$
    f^{\mathcal H}\bigl(\overline\nu_H(\gamma_H)\bigr)
    =\overline\nu_G(\gamma_0).
$$
It follows from \eqref{newtonrestrictiondelta} that the restriction $\delta\mapsto b$ in \eqref{prestabnewton} is equivalent, after passage to the endoscopic side, to
$$
    \chi_{b,n}^{\mathcal H}(\gamma_H)=1.
$$
By Lemma B, the function $\chi_{b,n}^{\mathcal H}$ is smooth and constant on stable conjugacy classes. Consequently
$$
    \SO_{\gamma_H}(h_p\chi_{b,n}^{\mathcal H})
    =\chi_{b,n}^{\mathcal H}(\gamma_H)\SO_{\gamma_H}(h_p),
$$
so replacing $h_p$ by $h_p\chi_{b,n}^{\mathcal H}$ in \eqref{endo} has exactly the effect of retaining the terms with $\delta\mapsto b$. The product $h_p\chi_{b,n}^{\mathcal H}$ is compactly supported because $h_p$ is compactly supported.

\textbf{Which endoscopic triples contribute.}
It remains to determine which endoscopic groups can occur. Suppose that the term attached to $\mathcal H$ is non-zero. By Remark \ref{SOvanishing} and the preceding discussion, there are a $(G,H)$-regular $\gamma_H$, a corresponding $\gamma_0$, and a pair $(\gamma,\delta)$ occurring on the right-hand side of \eqref{endo} with $\delta\mapsto b$. Put
$$
    I_H=H_{\gamma_H}^0,
    \qquad
    I_0=G_{\gamma_0}^0,
$$
over $\mathbb{Q}_p$. Construction \ref{GHregular} identifies $I_H$ and $I_0$ as inner forms.

\begin{const}[The endoscopic isocrystal]\label{endoscopicisocrystal}
Let $(\mathcal H,\gamma_H)\to(\gamma_0,\kappa)$ and let $(\gamma,\delta)$ be a pair occurring in \eqref{endo} with $\delta\mapsto b$. We construct from these a class $b_H\in B(H_{\mathbb{Q}_p})$.

\textit{A class $b_{I_0}\in B(I_0)$ with image $b$ in $B(G)$.} By condition (2) of the Kottwitz triple, $N\delta$ is conjugate to $\gamma_0$ under $G(\overline{\mathbb{Q}}_p)$. Since $H^1(L, I_0)$ is trivial by a theorem of Steinberg, this upgrades to conjugacy under $G(L)$, so we may choose $c \in G(L)$ with
$$
    c\gamma_0c^{-1}=N\delta,
$$
as was done in Complement \ref{kottwitztripledetails}. Put
$$
    \delta_0=c^{-1}\delta\sigma(c)\in I_0(L).
$$
Let $b_{I_0}\in B(I_0)$ be its class. Its image in $B(G)$ is $[\delta]=b$.

\textit{The class $b_{I_0}$ is basic.} We must show that the centralizer of its slope morphism is all of $(I_0)_L$, so that the slope morphism is central. To see this, let $J_{\delta_0}$ be the group attached to the $I_0$-isocrystal $\delta_0$ in Construction \ref{J}. Conjugation by $c$ identifies $J_{\delta_0}$ with $I(p)$. Indeed, the relation $c\delta_0=\delta\sigma(c)$ gives
$$
    c\bigl(\delta_0\sigma(x)\delta_0^{-1}\bigr)c^{-1}
    =\delta\,\sigma(cxc^{-1})\,\delta^{-1},
    \qquad x\in I_0(L),
$$
so $\Int(c)$ intertwines the twisted descent data defining $J_{\delta_0}$ and the $\sigma$-centralizer $I(p)=I_{s\delta}$ of Definition \ref{sigmacentralizerdef}. By Lemma \ref{twistedlemma} and the equality $c\gamma_0c^{-1}=N\delta$, the latter is an inner form of $I_0$. Hence $J_{\delta_0}$ is an inner form of $I_0$.

Applying part (1) of Lemma \ref{leviofcontracted} to $\delta_0\in I_0(L)$ gives
$$
    (J_{\delta_0})_L\simeq Z_{(I_0)_L}\bigl(\nu_{I_0}(\delta_0)\bigr).
$$
Since $J_{\delta_0}$ is an inner form of $I_0$, it has the same dimension; the centralizer on the right is therefore a closed subgroup of the connected group $(I_0)_L$ of dimension $\dim I_0$, and so equals $(I_0)_L$. Thus $\nu_{I_0}(\delta_0)$ commutes with all of $(I_0)_L$, that is, it factors through $Z(I_0)_L$. By \ref{basicisocrystaldef} this says precisely that $b_{I_0}$ is basic.

\textit{The corresponding basic class $b_{I_H}\in B(I_H)$, and its image $b_H\in B(H)$.} The inner forms $I_H$ and $I_0$ have canonically identified centres of their dual groups, $Z(\Hat{I_H})\simeq Z(\Hat{I_0})$, and hence canonically identified targets $X^*(Z(\Hat{I_H})^{\Gamma_{\mathbb{Q}_p}})$ and $X^*(Z(\Hat{I_0})^{\Gamma_{\mathbb{Q}_p}})$ for the Kottwitz maps. By Corollary \ref{classificationofbasic}, there is a basic class $b_{I_H}\in B(I_H)$ whose Kottwitz point corresponds to that of $b_{I_0}$. Since the two groups have the same quasi-split inner form, their Newton maps have the same target, as in the discussion of Lemma \ref{isocrystalsandinnerforms}; the compatibility of the Newton and Kottwitz maps in Lemma \ref{Newonkottwitzcharisoc} shows that their Newton points correspond as well. Let $b_H\in B(H)$ be the image of $b_{I_H}$ under $B(I_H)\to B(H)$.

This produces the class $b_H\in B(H_{\mathbb{Q}_p})$ attached to $(\mathcal H,\gamma_H)$ and $b$.
\end{const}

We claim that the squares
\begin{center}
    \begin{tikzcd}
        \mathcal{N}(I_H) \arrow{r} \arrow{d}{\wr} & \mathcal{N}(H) \arrow{d}{f^{\mathcal H}} & & \pi_1(I_H) \arrow{r} \arrow{d}{\wr} & \pi_1(H) \arrow{d}{\psi^{\mathcal H}} \\
        \mathcal{N}(I_0) \arrow{r} & \mathcal{N}(G) & & \pi_1(I_0) \arrow{r} & \pi_1(G)
    \end{tikzcd}
\end{center}
commute, where the horizontal maps are induced by the inclusions $I_H \subset H$ and $I_0 \subset G$, and the left vertical maps are the identifications attached to the inner twisting of Construction \ref{GHregular}. Indeed, let $T_H$ be a maximal torus of $I_H$ over $F$ and let $T = \iota(T_H) \subset I_0$ be its image under the admissible embedding $\iota$ of \ref{endoscopicnewtonmap}, so that $T_H$ is also a maximal torus of $H$ and $T$ one of $G$. By \ref{GHregular} the inner twisting $I_{H,\overline{F}} \xrightarrow[]{\sim} I_{0,\overline{F}}$ extends $\iota$, so all four maps in each square are induced by the single homomorphism $\iota_*: X_*(T_H) \xrightarrow[]{} X_*(T)$: the horizontal ones by enlarging the Weyl group from $\Omega_{I_H}$ to $\Omega_H$, respectively from $\Omega_{I_0}$ to $\Omega_{G_0}$, and the vertical ones by \ref{endoscopicnewtonmap} and \ref{endoscopicpimap}. Commutativity is therefore the compatibility of $\iota_*$ with these inclusions of Weyl groups, which is the content of the proof of \ref{endoscopicnewtonmap}.

Applying the two squares to $b_{I_H}$, whose image in $\mathcal{N}(I_0)$ and $\pi_1(I_0)$ is that of $b_{I_0}$ by Construction \ref{endoscopicisocrystal}, now gives
$$
    f^{\mathcal H}\bigl(\overline\nu_H(b_H)\bigr)=\overline\nu_G(b),
    \qquad
    \psi^{\mathcal H}\bigl(\kappa_H(b_H)\bigr)=\kappa_G(b).
$$
Thus $\mathcal H\in\Eell(G)_b$ by the definition in Section \ref{details}. Note that $\gamma_H$ itself has transferred Newton point $j\,\overline\nu_G(b)$, whereas the auxiliary isocrystal $b_H$ witnessing membership in $\Eell(G)_b$ has transferred Newton point $\overline\nu_G(b)$.

We have proved that endoscopic triples outside $\Eell(G)_b$ make no contribution, and that for the remaining triples the Newton restriction is imposed by replacing $h_p$ with $h_p\chi_{b,n}^{\mathcal H}$. Applying the stabilization of Section \ref{stab} to \eqref{prestabnewton} therefore gives
$$
    c_b(n,f^p,\xi)
    =\sum_{\mathcal H\in\Eell(G)_b}\iota(G,H)\,
    \ST_e^H\bigl(h^p\cdot h_p\chi_{b,n}^{\mathcal H}\cdot h_\infty\bigr),
$$
which proves Theorem \ref{maintheorem}.

\appendix
\section{On Hypothesis 2}\label{hyp2appendix}

Hypothesis (\HypTwo{}) asserts that the maximal $\mathbb{Q}$-split and the maximal $\mathbb{R}$-split tori in the center of $G$ coincide. Its role in the proof is confined to the volume normalizations in Sections \ref{lvformula}--\ref{stab}. It makes the two quotients defining
$$
    c_1(\gamma_0,\gamma,\delta)
    =\vol\bigl(I(\mathbb{Q})\backslash I(\mathbb{A}_f)\bigr),
    \qquad
    c_{\infty}
    =\vol\bigl(A_G(\mathbb{R})^0\backslash I(\infty)(\mathbb{R})\bigr)^{-1}
$$
well defined and of finite non-zero volume. Complement \ref{prestabkey} uses the resulting identity $c_1=\tau(I)c_{\infty}$, and Complement \ref{hconstruction} uses the same $c_{\infty}$ in the archimedean factor $h_{\infty}$. Behind these volume terms is a general criterion for compactness of adelic quotients, which we now recall; it is also the criterion used in Section \ref{intro2} to discuss the trace formula when $G/A_G$ is anisotropic.

Let $\mathbb{A}_F^1\subset\mathbb{A}_F^{\times}$ be the norm-one idele group
$$
    \mathbb{A}_F^1
    =\left\{(x_v)_v\in\mathbb{A}_F^{\times}\ \middle|\
    \prod_v\lvert x_v\rvert_v=1\right\},
$$
where $v$ runs over all places of $F$. Using the convention $\lvert z\rvert=z\overline z$ at a complex place, embed $\mathbb{R}_{>0}$ diagonally in $\prod_{v\mid\infty}F_v^{\times}$ by $t\mapsto t$ at a real place and $t\mapsto\sqrt t$ at a complex place. Multiplication then gives a decomposition $\mathbb{A}_F^{\times}=\mathbb{A}_F^1\times\mathbb{R}_{>0}$. The quotient $F^{\times}\backslash\mathbb{A}_F^1$ is compact, as follows from the next two facts:
\begin{enumerate}
\item There is a map $F^{\times}\backslash\mathbb{A}_F^1 \xrightarrow[]{} \Cl_F$ sending $\prod_{v_{\mathfrak{p}} \nmid \infty} (x_{v_{\mathfrak{p}}})_{v_{\mathfrak{p}}} \times \prod_{v \mid \infty}(x_v)_v \mapsto \prod_{\mathfrak{p}} \mathfrak{p}^{v_{\mathfrak{p}}(x_{v_{\mathfrak{p}}})}$, where $\Cl_F$ denotes the ideal class group of $F$. The group $\Cl_F$ is finite.
\item The fibres of the above map are compact. Indeed, fix an ideal class $[\mathfrak a]\in \Cl_F$ and choose an idele $y=(y_v)_v\in \mathbb A_F^1$ whose associated fractional ideal is in the class $[\mathfrak a]$. Then the fibre over $[\mathfrak a]$ is represented by
$$
(\prod_{\mathfrak p} \mathcal O_{\mathfrak p}^{\times} \times \{(x_v)_{v\mid\infty}\in \prod_{v\mid\infty}F_v^\times: \prod_{v\mid\infty}\lvert x_v\rvert_v = c \}) / \mathcal O_F^\times $$
for some positive constant $c$ depending on $y$. The finite-adelic factor $\prod_{\mathfrak p}\mathcal O_{\mathfrak p}^{\times}$ is compact. On the archimedean factor the map $(x_v)\mapsto(\log\lvert x_v\rvert_v)$ has compact kernel and image an affine hyperplane in $\mathbb R^{r_1+r_2}$. The group $\mathcal O_F^\times$ acts through the logarithmic embedding $u\mapsto(\log\lvert u\rvert_v)_{v\mid\infty}$, whose image is a lattice in the hyperplane
$$
    \left\{(t_v)_{v\mid\infty}:\sum_{v\mid\infty}t_v=0\right\}
$$
by Dirichlet's unit theorem. Hence the quotient of the archimedean factor by $\mathcal O_F^\times$ is compact, and therefore every fibre is compact.
\end{enumerate}

For a connected reductive group $G$ over $F$, write $X_F^*(G)$ for the group of characters of $G$ defined over $F$ and define
$$
    H_G:G(\mathbb{A}_F)\longrightarrow\Hom(X_F^*(G),\mathbb{R}),
    \qquad
    H_G(g)(\chi)=\log\prod_v\lvert\chi(g_v)\rvert_v.
$$
The product formula gives $G(F)\subset\ker(H_G)$. The norm-one adelic subgroup is
$$
    G(\mathbb{A}_F)^1:=\ker(H_G)
    =\left\{g\in G(\mathbb{A}_F)\ \middle|\
    \prod_v\lvert\chi(g_v)\rvert_v=1
    \text{ for every }\chi\in X_F^*(G)\right\}.
$$

Let $A_G$ denote the maximal $F$-split torus in the center of $G$. The diagonal embedding of $\mathbb R_{>0}$ used above gives a subgroup
$$
    A_G^+\simeq X_*(A_G)\otimes_{\mathbb Z}\mathbb R_{>0}
    \subset A_G(F\otimes_{\mathbb Q}\mathbb R).
$$
The restriction of $H_G$ to $A_G^+$ is an isomorphism after taking logarithms, and multiplication gives the decomposition
$$
    G(\mathbb{A}_F)=G(\mathbb{A}_F)^1\times A_G^+.
$$

The decomposition extends a character of $A_G^+$ to $G(\mathbb{A}_F)$ by making it trivial on $G(\mathbb{A}_F)^1$. For a unitary character $\chi$ of $A_G^+$, the notation $L^2_{\chi}(G(F)\backslash G(\mathbb{A}_F))$ means the space of measurable functions satisfying
$$
    \varphi(ag)=\chi(a)\varphi(g),\qquad a\in A_G^+,
$$
which are square-integrable on $G(F)A_G^+\backslash G(\mathbb{A}_F)$. Restriction to $G(\mathbb{A}_F)^1$ identifies this space, as a $G(\mathbb{A}_F)^1$-representation, with $L^2(G(F)\backslash G(\mathbb{A}_F)^1)$. By \cite[\S 5.3, Theorem 5.6]{MR4615820}, the latter quotient is compact if and only if $G/A_G$ is anisotropic. This is the compactness criterion used in Section \ref{intro2}; in that case the fixed-central-character representation has discrete spectrum and the compact-quotient trace formula of \cite[\S\S 1--3]{MR2192011} applies.

We now apply the criterion to the group $I$ attached in Section \ref{lvformula} to a Kottwitz triple with trivial invariant. Recall that $I_{\mathbb{R}}\simeq I(\infty)$ and $I(\infty)/Z(G)$ is anisotropic over $\mathbb{R}$. By (\HypTwo{}), $A_G$ is the maximal $\mathbb{R}$-split torus in $Z(G)$. Every $\mathbb{R}$-split torus in $I(\infty)$ has trivial image in $I(\infty)/Z(G)$ and therefore lies in $Z(G)$. Hence
$$
    A_G(\mathbb{R})^0\backslash I(\infty)(\mathbb{R})
$$
is compact, which proves that $c_{\infty}$ is finite and non-zero.

Similarly, every $\mathbb{Q}$-split torus in $I$ becomes $\mathbb{R}$-split and hence lies in $Z(G)$, so it is contained in $A_G$. Since $A_G\subset Z(I)$, it follows that $A_G$ is the maximal $\mathbb{Q}$-split torus in the center of $I$ and that $I/A_G$ is anisotropic over $\mathbb{Q}$. Thus $I(\mathbb{Q})\backslash I(\mathbb{A})^1$ is compact. Moreover, under the same hypotheses $I(\mathbb{Q})$ is discrete in $I(\mathbb{A}_f)$, as observed on p.~172 of \cite{MR1044820}. With the compatible measures fixed in Sections \ref{lvformula} and \ref{prestab}, the Tamagawa volume decomposes as
$$
    \tau(I)
    =\vol\bigl(I(\mathbb{Q})\backslash I(\mathbb{A}_f)\bigr)
     \cdot\vol\bigl(A_G(\mathbb{R})^0\backslash I(\mathbb{R})\bigr).
$$
Consequently
$$
    c_1(\gamma_0,\gamma,\delta)=\tau(I)c_{\infty},
$$
so both volume terms used in Sections \ref{lvformula}--\ref{stab} are finite. Without (\HypTwo{}) these quotients need not be compact, and the volume terms are not available in this form. Kisin--Shin--Zhu \cite{kisin2021stabletraceformulashimura} remove (\HypOne{}) and (\HypTwo{}) by working with a more general stabilization.

\phantomsection
\addcontentsline{toc}{section}{References}
\bibliography{links}
\end{document}